\title{Operads and moduli spaces}
\author{Christopher Braun}
\date{April 2012}
\address{Department of Mathematics\\
University of Leicester}
\theoremstyle{theorem}
\newtheorem{theorem}{Theorem}[section]
\newcommand{\newautoreftheorem}[2]{
\newaliascnt{#1}{theorem}\newtheorem{#1}[#1]{#2}\aliascntresetthe{#1}%
\expandafter\def\csname #1autorefname\endcsname{#2}}
\newtheorem*{theorem*}{Theorem}
\theoremstyle{definition}
\newtheorem*{notation*}{Notation}
\numberwithin{section}{chapter}
\numberwithin{equation}{section} 
\numberwithin{figure}{section}   
\newcommand{\cstart}{\xybox{*\xycircle(1,3){-}}}
\newcommand{\cstop}{\xybox{\ellipse(1,3){.}\ellipse(1,3)_,=:a(180){-}}}
\newcommand{\handle}{\xybox{(4,0.9)="A";"A"-(8,0.5)**\crv{"A"-(4,3.5)},(3,0.3)="A";"A"-(6,0.4)**\crv{"A"+(-4.3,1.5)}}}
\newcommand{\puncture}{\xybox{*\xycircle(2,2){-}}}
\newcommand{\crosscap}{\xybox{{\ellipse(2.5,2){.}}*\xycircle(0,2){-}*\xycircle(2.5,0){-}}}
\newcommand{\cobord}[8][(10,0)]{\xybox{@=#6@@{@i@={#6}@@{*{\handle}}@i}
@=#7@@{@i@={#7}@@{*{\puncture}}@i}@=#8@@{@i@={#8}@@{*{\crosscap}}@i},
(0,0)="d"@=#2@@{@i-(0,3)="B1"="T1"@={#2}
@@{;"T1";*{#4}-(0,3)="b"**\crv{"T1"+"d"&"b"+"d"}+(0,6)="T1",(5,0)="d"}@i},
(0,0)="d"@=#3@@{@i-(0,3)="B2"="T2"@={#3}
@@{;"T2";*{#5}-(0,3)="b"**\crv{"T2"-"d"&"b"-"d"}+(0,6)="T2",(5,0)="d"}@i},
#1="S"@=@(@=#2@@{@i@(@=@(@=#3@@{@i,
"T1";"T2"**\crv{"T1"+"S"&"T2"-"S"},"B1";"B2"**\crv{"B1"+"S"&"B2"-"S"}
@)}@i@)@@{@i,"T1";"B1"**\crv{"T1"+"S"&"B1"+"S"}@)}@)}
@i@)@@{@i@=#3@@{@i,"T2";"B2"**\crv{"T2"-"S"&"B2"-"S"}}@)}}}
\newcommand{\basiccob}[5][(10,0)]{\cobord[#1]{#2}{#3}{#4}{#5}{@i}{@i}{@i}}
\newcommand{\cylinder}[2]{\basiccob{(0,0)}{(20,0)}{#1}{#2}}
\newcommand{\shortcylinder}[2]{\basiccob{(0,0)}{(10,0)}{#1}{#2}}
\newcommand{\twist}[2]{\xybox{*{\basiccob[(5,0)]{(0,0)}{(20,12)}{#1}{#2}}*{\basiccob[(5,0)]{(0,12)}{(20,0)}{#1}{#2}}}}
\newcommand{\pants}[2]{\basiccob{(0,0),(0,12)}{(20,6)}{#1}{#2}}
\newcommand{\pair}[1]{\basiccob[(15,0)]{(0,0),(0,12)}{@i}{#1}{}}
\newcommand{\copants}[2]{\basiccob{(0,6)}{(20,0),(20,12)}{#1}{#2}}
\newcommand{\copair}[1]{\basiccob[(15,0)]{@i}{(20,0),(20,12)}{}{#1}}
\newcommand{\birth}[1]{\basiccob{@i}{(0,0)}{}{#1}}
\newcommand{\death}[1]{\basiccob{(0,0)}{@i}{#1}{}}
\newcommand{\rp}[1]{\xybox{*{\birth{}},(10,0)*{\shortcylinder{}{#1}},(7,0)*{\crosscap}}}
\newcommand{\flip}[2]{\xybox{(0,0)*{#1},(10,0)*{#2},(0,3);(10,-3)**\crv{(3,3)&(7,-3)},(0,-3);(10,3)**\crv{(3,-3)&(7,3)}}}
\newcommand{\co}{\colon\thinspace}
\newcommand{\twistshriek}{{\ensuremath{\overset{\boldsymbol{\wr}}{.}}}}
\newcommand{\dual}{\mathbf{D}}
\newcommand{\cobar}{\mathbf{B}}
\newcommand{\feyn}{\mathbf{F}}
\newcommand{\mob}{\mathrm{M}}
\newcommand{\modc}[1]{\overline{#1}}
\newcommand{\modn}[1]{\underline{#1}}
\newcommand{\cyc}[1]{\mathrm{Cyc}(#1)}
\newcommand{\unimod}{\mathsf{K}}
\newcommand{\ctft}{\mathrm{TFT}}
\newcommand{\otft}{\mathrm{OTFT}}
\newcommand{\cktft}{\mathrm{pKTFT}}
\newcommand{\oktft}{{\mathrm{OKTFT}}}
\newcommand{\com}{\mathcal{C}om}
\newcommand{\ass}{\mathcal{A}ss}
\newcommand{\lie}{\mathcal{L}ie}
\newcommand{\mcom}{\mob\com}
\newcommand{\mass}{\mob\ass}
\newcommand{\klie}{\unimod\lie}
\newcommand{\dass}{\dual\ass}
\newcommand{\dmass}{\dual\mass}
\newcommand{\modcom}{\modc{\com}}
\newcommand{\modass}{\modc{\ass}}
\newcommand{\modmcom}{\modc{\mcom}}
\newcommand{\modmass}{\modc{\mass}}
\newcommand{\moddass}{\modc{\dass}}
\newcommand{\moddmass}{\modc{\dmass}}
\newcommand{\no}{\mathcal{N}}
\newcommand{\ko}{\mathcal{K}}
\newcommand{\mo}{\mathcal{M}^\mathbb{R}}
\newcommand{\nb}{\overline{\no}}
\newcommand{\kb}{\overline{\ko}}
\newcommand{\mb}{\overline{\mo}}
\newcommand{\dr}{D^\mathbb{R}}
\newcommand{\Mat}{\mathrm{Mat}}
\newcommand{\CC}{\mathrm{CC}}
\newcommand{\HCC}{\mathrm{HC}}
\newcommand{\CD}{\mathrm{CD}}
\newcommand{\HCD}{\mathrm{HD}}
\newcommand{\CE}{\mathrm{CE}}
\newcommand{\HCE}{\mathrm{HCE}}
\newcommand{\hoch}{\mathrm{CH}}
\newcommand{\Hhoch}{\mathrm{HH}}
\newcommand{\MC}{\mathrm{MC}}
\newcommand{\MCmoduli}{\mathscr{MC}}
\newcommand{\Def}{\mathrm{Def}}
\newcommand{\Der}{\mathrm{Der}}
\newcommand{\cycl}{\mathrm{cycl}}
\newcommand{\id}{\mathrm{id}}
\newcommand{\degree}[1]{\bar{#1}}
\DeclareMathOperator{\tr}{tr}
\DeclareMathOperator{\ad}{ad}
\DeclareMathOperator{\im}{Im}
\DeclareMathOperator*{\colim}{colim}
\DeclareMathOperator{\vertices}{Vert}
\DeclareMathOperator{\halfedges}{Half}
\DeclareMathOperator{\edges}{Edge}
\DeclareMathOperator{\flags}{Flag}
\DeclareMathOperator{\legs}{Leg}
\DeclareMathOperator{\inputs}{In}
\DeclareMathOperator{\ob}{Ob}
\DeclareMathOperator{\iso}{Iso}
\DeclareMathOperator{\Hom}{Hom}
\DeclareMathOperator{\End}{End}
\DeclareMathOperator{\ctsHom}{Hom_{\mathrm{cts}}}
\DeclareMathOperator{\IHom}{\underline{Hom}}
\DeclareMathOperator{\Det}{Det}
\newcommand{\cob}{\mathbf{2Cob}}
\newcommand{\cobo}{\mathbf{2Cob}^{\mathrm{o}}}
\newcommand{\cobcl}{\mathbf{2Cob}^{\mathrm{cl}}}
\newcommand{\kcob}{\mathbf{2KCob}}
\newcommand{\kcobo}{\mathbf{2KCob}^{\mathrm{o}}}
\newcommand{\kcobcl}{\mathbf{2KCob}^{\mathrm{cl}}}
\newcommand{\vect}{\mathbf{Vect}_k}
\newcommand{\dgvect}{\mathbf{dgVect}_k}
\newcommand{\fdgvect}{\mathscr{F}\dgvect}
\newcommand{\topol}{\mathbf{Top}}
\newcommand{\dgop}{\mathbf{dgOp}}
\newcommand{\klein}{\mathbf{Klein}}
\newcommand{\dklein}{\mathbf{dKlein}}
\newcommand{\symriem}{\mathbf{SymRiem}}
\newcommand{\dsymriem}{\mathbf{dSymRiem}}
\newcommand{\nklein}{\mathbf{nKlein}}
\newcommand{\dnklein}{\mathbf{dnKlein}}
\newcommand{\nsymriem}{\mathbf{nSymRiem}}
\newcommand{\dnsymriem}{\mathbf{dnSymRiem}}
\begin{document}

\frontmatter

\begin{abstract}
This thesis is concerned with the application of operadic methods, particularly modular operads, to questions arising in the study of moduli spaces of surfaces as well as applications to the study of homotopy algebras and new constructions of `quantum invariants' of manifolds inspired by ideas originating from physics.

We consider the extension of classical $2$--dimensional topological quantum field theories to Klein topological quantum field theories which allow unorientable surfaces. We generalise open topological conformal field theories to open Klein topological conformal field theories and consider various related moduli spaces, in particular deducing a M\"obius graph decomposition of the moduli spaces of Klein surfaces, analogous to the ribbon graph decomposition of the moduli spaces of Riemann surfaces.

We also begin a study, in generality, of quantum homotopy algebras, which arise as `higher genus' versions of classical homotopy algebras. In particular we study the problem of quantum lifting. We consider applications to understanding invariants of manifolds arising in the quantisation of Chern--Simons field theory.
\end{abstract}

\maketitle

\chapter*{Acknowledgements}
I shall not attempt to list here all the people I want to thank since such a list would be far too long and impersonal. However there are some people I do wish to mention here.

First and foremost I wish to thank my supervisor, Andrey Lazarev. His knowledge, inspiration and good company made my years as a PhD student thoroughly enjoyable. I cannot imagine a better supervisor and without his good advice, teaching and patience I would have been lost.

I am also grateful to my friends and family, who have always been a source of support, advice and much happiness. Most of all, I want to thank my parents, Jan and Mike Braun. I don't think I will ever be able to fully appreciate everything they've done for me. I humbly dedicate this thesis to them.

\tableofcontents

\mainmatter
\def\chapterautorefname{Chapter}
\def\sectionautorefname{Section}

\chapter{Introduction}
This thesis is concerned with the application of operadic methods, particularly modular operads, to questions arising in the study of moduli spaces of surfaces as well as applications to the study of homotopy algebras and new constructions of `quantum invariants' of manifolds inspired by ideas originating from physics.

The structure of this thesis can be broadly regarded as naturally divided into two parts. The first part involves the study of moduli spaces of surfaces, in particular Klein surfaces, while the second part concerns the study of homotopy algebras and quantum homotopy algebras. In this introduction we will briefly outline these areas and summarise the main results in each.

\begin{notation*}
Throughout this thesis $k$ will be used to denote a field which, for simplicity and convenience, we will normally assume to be $\mathbb{Q}$ unless stated otherwise. Many of the definitions and results should of course work over more general fields, however this is certainly not a topic we wish to concern ourselves with here.
\end{notation*}

\section{Moduli spaces of Klein surfaces and related operads}
One property of the original axiomatic definition by Atiyah \cite{atiyah} of a topological quantum field theory (TFT) is that all the manifolds considered are oriented. Alexeevski and Natanzon \cite{alexeevskinatanzon} considered a generalisation to manifolds that are not oriented (or even necessarily orientable) in dimension $2$. An unoriented TFT in this sense is then called a Klein topological quantum field theory (KTFT).

It is well known that $2$--dimensional closed TFTs are equivalent to commutative Frobenius algebras and open TFTs are equivalent to symmetric (but not necessarily commutative) Frobenius algebras, for example see Moore \cite{moore2} and Segal \cite{segal}. Theorems of this flavour identifying the algebraic structures of KTFTs have also been shown. In the language of modular operads, developed by Getzler and Kapranov \cite{getzlerkapranov}, these results for oriented TFTs say that the modular operads governing closed and open TFTs are $\modcom$ and $\modass$ which are the modular closures (the smallest modular operad containing a cyclic operad) of $\com$ and $\ass$, which govern commutative and associative algebras.

It is also possible to generalise TFTs by adding extra structure to our manifolds such as a complex structure which gives the notion of a topological conformal field theory (TCFT). We can also find topological modular operads governing TCFTs constructed from moduli spaces of Riemann surfaces.

The ribbon graph decomposition of moduli space is an orbi-cell complex homeomorphic to $\mathcal{M}_{g,n}\times \mathbb{R}^n_{>0}$ with cells labelled by ribbon graphs, introduced in Harer \cite{harer} and Penner \cite{penner}. Ribbon graphs arise from the modular closure of the $A_\infty$ operad (cf Kontsevich \cite{kontsevich}). Indeed the cellular chain complex of the operad given by gluing stable holomorphic discs with marked points on the boundary is equivalent to the $A_\infty$ operad and can be thought of as the genus $0$ part of the operad governing open TCFTs. It was shown by Kevin Costello \cite{costello1,costello2} that this gives a dual point of view on the ribbon graph decomposition of moduli space: The operad governing open TCFTs is homotopy equivalent to the modular closure of the suboperad of conformal discs and so this gives a quasi-isomorphism on the chain complex level to the modular closure of the $A_\infty$ operad. The moduli spaces underlying the open TCFT operad are those of stable Riemann surfaces with boundary and marked points on the boundary. In particular this yields new proofs of ribbon graph complexes computing the homology of these moduli spaces.

We wish to consider the corresponding theory for KTFTs. Alexeevski and Natanzon \cite{alexeevskinatanzon} considered open--closed KTFTs and Turaev and Turner \cite{turaevturner} considered just closed KTFTs. We will concentrate mainly on the open version in order to parallel the theory outlined above. We begin by recasting the definitions for KTFTs in terms of modular operads. We show that the open KTFT operad is given by the modular closure of the cyclic operad $\mass$ which is the operad governing associative algebras with involution. The corresponding notion of a ribbon graph, a M\"obius graph, is also developed to identify $\mass$ and the various operads obtained from it. On the other hand the closed KTFT operad is not the modular closure of a cyclic operad.

We then generalise to the Klein analogue of open TCFTs (open KTCFTs). The correct notion here of an `unoriented Riemann surface' is a Klein surface, where we allow transition functions between charts to be anti-analytic. Alling and Greenleaf \cite{allinggreenleaf} developed some of the classical theory of Klein surfaces and showed that Klein surfaces are equivalent to smooth projective real algebraic curves. We find appropriate partial compactifications of moduli spaces of Klein surfaces which form the modular operad governing open KTCFTs. We also consider other different (although more common) partial compactifications giving rise to a quite different modular operad. The underlying moduli spaces of this latter operad are spaces of `admissible' stable symmetric Riemann surfaces (which are open subspaces of the usual compactifications containing all stable symmetric surfaces).

By following the methods of Costello \cite{costello1,costello2} we can obtain graph decompositions of these moduli spaces. Precisely this means we find orbi-cell complexes homotopy equivalent to these spaces with each orbi-cell labelled by a type of graph. As a consequence we see that open KTCFTs are governed by the modular closure of the operad governing $A_\infty$--algebras with involution and we obtain a M\"obius graph complex computing the homology of the moduli spaces of smooth Klein surfaces. We also obtain a different graph complex computing the homology of the other partial compactifications.

\subsection{Outline and main results}
\autoref{chap:tqfts} and \autoref{chap:operads} provide the necessary background and notation. In \autoref{chap:tqfts} definitions of topological quantum field theories and their Klein analogues are briefly introduced in terms of symmetric monoidal categories of cobordisms. The known results concerning the structure of KTFTs are stated and we provide some pictures that hopefully shed light on how these results arise. In \autoref{chap:operads} the definitions from the theory of modular operads that we use is recalled and the cobar construction is outlined. We include a slight generalisation of modular operads: extended modular operads (which is very similar to the generalisation of Chuang and Lazarev \cite{chuanglazarev}). For the reader familiar with modular operads this section will likely be of little interest apart from making clear the notation used here.

\autoref{chap:mobius} introduces the open KTFT modular operad denoted $\oktft$. M\"obius trees and graphs are discussed in detail and the operad $\mass$ is defined in terms of M\"obius trees. This is the operad governing associative algebras with an involutive anti-automorphism. We then show $\oktft\cong\modmass$ thereby providing a generators and relations description of $\oktft$ in terms of M\"obius graphs. We show $\mass$ is its own quadratic dual, is Koszul and identify the dual dg operad $\dmass$ (governing $A_\infty$--algebras with an involution) and its modular closure. Finally we generalise our construction and discuss the closed KTFT operad, showing that only part of the closed KTFT operad is the modular closure of an operad $\mcom$.

In \autoref{chap:moduli} we generalise to open KTCFTs. We discuss the necessary definitions and theory of Klein surfaces and nodal Klein surfaces. A subtlety arises when considering nodal surfaces and we find there are two different natural notions of a node. We provide some clarity on this difference by establishing some equivalences of categories: we show that one sort of nodal Klein surface is equivalent to a certain sort of symmetric nodal Riemann surface with boundary and the other is equivalent to a certain sort of symmetric nodal Riemann surface without boundary. We obtain moduli spaces $\kb_{g,u,h,n}$ of stable nodal Klein surfaces with $g$ handles, $u$ crosscaps, $h$ boundary components and $n$ oriented marked points using one definition of a node. We also obtain quite different moduli spaces $\mb_{\tilde{g},n}$ of `admissible' stable symmetric Riemann surfaces without boundary of genus $\tilde{g}$ and $n$ fixed marked points using the other definition. The spaces $\kb_{g,u,h,n}$ are homotopy equivalent to their interiors which are the spaces $\ko_{g,u,h,n}$ of smooth Klein surfaces with oriented marked points. The spaces $\mb_{\tilde{g},n}$ are partial compactifications of the spaces $\mo_{\tilde{g},n}$ of smooth symmetric Riemann surfaces, which are the same as the spaces of smooth Klein surfaces with unoriented marked points. Let $D_{g,u,h,n}\subset\kb_{g,u,h,n}$ be the locus of surfaces such that each irreducible part is a disc. Let $\dr_{\tilde{g},n}$ be the corresponding subspace of $\mb_{\tilde{g},n}$. We obtain topological modular operads $\kb$ and $\mb$ by gluing at marked points. The operad $\kb$ gives the correct generalisation governing open KTCFTs. We then show the inclusions of the suboperads arising from the spaces $D_{g,u,h,n}$ and $\dr_{\tilde{g},n}$ are homotopy equivalences.

\begin{theorem*}
\Needspace*{3\baselineskip}\mbox{}
\begin{itemize}
\item The inclusion $D\hookrightarrow\kb$ is a homotopy equivalence of extended topological modular operads.
\item The inclusion $\dr\hookrightarrow\mb$ is a homotopy equivalence of extended topological modular operads.
\end{itemize}
\end{theorem*}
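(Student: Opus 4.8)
The plan is to deduce the statement from a homotopy equivalence of the underlying spaces, checked one component of the operad at a time, following Costello's argument for the orientable open case.

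First I would reduce to the spaces. A morphism of extended topological modular operads whose structure maps are all ordinary homotopy equivalences is a homotopy equivalence in the relevant sense, and since the composition maps of $\kb$, $D$, $\mb$, $\dr$ are given by gluing surfaces at marked points --- operations with which the inclusions visibly commute --- it suffices to prove that $D_{g,u,h,n}\hookrightarrow\kb_{g,u,h,n}$ and $\dr_{\tilde g,n}\hookrightarrow\mb_{\tilde g,n}$ are homotopy equivalences for every choice of invariants. Both $\kb_{g,u,h,n}$ and $\mb_{\tilde g,n}$ carry the stratification by topological type of the underlying nodal surface --- equivalently by the decorated dual graph $\Gamma$ --- which exhibits them as orbi-cell complexes with orbi-cells $X_\Gamma$, the open stratum being the locus of smooth surfaces. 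In this language $D_{g,u,h,n}$ and $\dr_{\tilde g,n}$ are precisely the unions of the orbi-cells all of whose vertices are discs (genus $0$, one boundary component, no crosscaps); using the classification of stable degenerations --- pinching a boundary arc splits a disc into two discs, whereas pinching a loop would create an unstable sphere component --- one checks that these are \emph{closed} unions of strata, hence honest subcomplexes.

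The heart of the matter is then to construct a deformation retraction of $\kb_{g,u,h,n}$ onto $D_{g,u,h,n}$, built over the stratification and controlled by an induction on complexity. The base case is the disc, $\kb_{0,0,1,n}=D_{0,0,1,n}$, where every component is automatically a disc. For the inductive step one uses the local structure of the partial compactification transverse to a stratum $X_\Gamma$ that has a non-disc vertex: a neighbourhood of $X_\Gamma$ is, modulo the finite group $\mathrm{Aut}(\Gamma)$, the product of a neighbourhood in $X_\Gamma$ with one smoothing-parameter factor for each node of $\Gamma$ --- a small disc or a half-open interval according to the kind of node, in either case contractible with a distinguished point $0$ --- so such a neighbourhood retracts onto $X_\Gamma$. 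On $X_\Gamma$, which is (a finite quotient of) the product over the vertices $v$ of the smooth moduli spaces of the decorated surfaces $\Sigma_v$, one invokes the inductive hypothesis at each non-disc vertex, retracting its moduli space onto its disc-degeneration locus; once this has been done everywhere the surfaces obtained have only disc components and so lie in $D_{g,u,h,n}$. Patching these local deformations into a single global one --- choosing compatible collars across all strata, respecting the orbifold automorphisms, and carrying out the retractions one complexity-level at a time so that a later step leaves earlier ones undisturbed --- yields the required deformation retraction. The case $\dr\hookrightarrow\mb$ is entirely parallel: admissibility cuts out an \emph{open} union of strata, so the stratification, the local models, and the whole induction restrict to it, once one notes that disc-degenerations of admissible surfaces are admissible.

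The step I expect to be the main obstacle is making the transverse local analysis precise in the Klein setting. It rests on the deformation theory of nodal Klein surfaces --- in particular on the local model of each partial compactification at each of the \emph{two} kinds of node (which is what makes some smoothing-parameter factors one-dimensional and others two-dimensional), and on understanding the finite automorphism groups of nodal Klein surfaces, which pick up extra copies of $\mathbb{Z}/2$ from the anti-analytic transition functions and from the nodes. Once these local models and a coherent choice of collars are established, assembling the individual retractions is a routine if lengthy verification of the standard stratified-space kind; and passing to the orientable complex double reduces the bookkeeping in the $\mb$--$\dr$ case to the one already carried out.
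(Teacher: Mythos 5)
Your reduction to the individual spaces and the observation that $D_{g,u,h,n}$ (respectively $\dr_{\tilde g,n}$) is a closed union of the deepest strata are fine, but the core of your argument has a genuine gap: nothing in it moves a \emph{smooth} Klein surface towards the nodal locus. The collar/local-model retractions you describe only retract a neighbourhood of a nodal stratum $X_\Gamma$ onto $X_\Gamma$ itself; the union of such neighbourhoods does not cover the open stratum $\ko_{g,u,h,n}$, and a point of $\ko_{g,u,h,n}$ far from the boundary of the moduli space has no canonical path into $\partial\kb_{g,u,h,n}$ supplied by your construction. Your inductive step is also circular at exactly this point: on the open stratum the single vertex has the full topological type $(g,u,h,n)$, so ``invoking the inductive hypothesis at each non-disc vertex'' there is invoking the statement being proved, not a lower case. (Two smaller inaccuracies: the stratification by dual graph does \emph{not} make $\kb_{g,u,h,n}$ an orbi-cell complex --- the smooth stratum is not contractible, and indeed its nontrivial homology is the whole point; and these spaces are not compact, since only boundary-node degenerations are allowed, so one cannot appeal to any formal compact stratified-space argument. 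Finally, the theorem asserts a homotopy equivalence, which is what the argument actually delivers; a strong deformation retraction of $\kb$ onto $D$ is more than is needed or proved.)

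The missing ingredient is precisely the non-formal geometric input used in the paper (following Costello): for $n=0$ one puts the canonical hyperbolic metric on a Klein surface (via its complex/orienting double) and flows the boundary inwards along the geodesic flow; this produces nodes in finite time and gives a deformation retraction of $\kb_{g,u,h,0}$ onto $\partial\kb_{g,u,h,0}$. Marked points are then handled not by flowing but by showing the forgetful map $\kb_{g,u,h,n+1}\rightarrow\kb_{g,u,h,n}$ is a locally trivial fibration, which propagates the boundary equivalence to all $n$, and an induction in the style of Costello's argument then upgrades ``boundary inclusion is an equivalence'' to ``$D\hookrightarrow\kb$ is an equivalence.'' The statement for $\dr\hookrightarrow\mb$ is deduced by observing that $\mb_{\tilde g,0}$ is obtained from $\coprod\kb_{g,u,h,0}$ by identifications occurring only on the boundary strata, so the geodesic-flow retraction descends, and the same fibration-plus-induction scheme applies. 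If you want to salvage your stratified approach, you must either import this flow (or some substitute, e.g.\ a Fenchel--Nielsen length-shrinking argument) to handle the open stratum, at which point your local bookkeeping near the nodal strata becomes the easier half of the proof.
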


Applying an appropriate chain complex functor $C_*$ from topological spaces to dg vector spaces over $\mathbb{Q}$ we obtain dg modular operads and the above result translates to:
\begin{theorem*}
There are quasi-isomorphism of extended dg modular operads over $\mathbb{Q}$
\begin{gather*}
C_*(D)\simeq C_*(\kb)\\
C_*(D)/(a=1)\simeq C_*(\mb)
\end{gather*}
where $a\in C_*(D)((0,2))\cong\mathbb{Q}[\mathbb{Z}_2]$ is the involution.
\end{theorem*}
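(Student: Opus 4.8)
The plan is to deduce both quasi-isomorphisms from the topological statement by applying the chain functor $C_*$ and then analysing what the two homotopy equivalences of operads become on chains. The first line is essentially formal: since $C_*$ takes homotopy equivalences of spaces to quasi-isomorphisms of complexes and is (lax) monoidal in a way compatible with the operadic composition maps, the homotopy equivalence $D\hookrightarrow\kb$ of extended topological modular operads from the preceding theorem yields a levelwise quasi-isomorphism $C_*(D)\to C_*(\kb)$ of extended dg modular operads. I would spell out only that the monoidal-functor structure maps for $C_*$ (Eilenberg--Zilber) are themselves quasi-isomorphisms, so that $C_*$ genuinely lands in dg modular operads and preserves the relevant structure; the homotopy equivalence statement then passes through verbatim.

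The second line requires identifying the quotient $C_*(D)/(a=1)$ with $C_*(\mathbb{D}^{\mathbb{R}})$. Here I would first pin down the relationship between the spaces $D_{g,u,h,n}$ and $\dr_{\tilde g,n}$: a surface in $D$ has all irreducible components discs and carries $n$ \emph{oriented} marked points, whereas the corresponding surface in $\dr$ carries $n$ \emph{unoriented} marked points. Forgetting orientations of marked points is exactly the operation of quotienting by the $\mathbb{Z}_2$-action at each marked point, and the generator $a\in C_*(D)((0,2))\cong\mathbb{Q}[\mathbb{Z}_2]$ is precisely the class implementing an orientation reversal glued onto a leg. So at the level of spaces $\dr$ is obtained from $D$ by identifying each leg with its orientation-reversed version, and on chains this is the coequaliser that presents $C_*(D)/(a=1)$. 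I would make this precise by exhibiting the map $C_*(D)\to C_*(\dr)$ induced by the forgetful map on surfaces, checking it is a map of extended dg modular operads, showing it kills $a-1$, and then verifying the induced map $C_*(D)/(a=1)\to C_*(\dr)$ is an isomorphism of operads (a degreewise check on the generating spaces, using that the $\mathbb{Z}_2$-actions on marked points are free enough that the quotient of chains computes the chains of the quotient over $\mathbb{Q}$).

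Finally I would combine this identification with the topological homotopy equivalence $\dr\hookrightarrow\mb$: applying $C_*$ gives $C_*(\dr)\simeq C_*(\mb)$, and composing with the isomorphism $C_*(D)/(a=1)\cong C_*(\dr)$ gives the desired quasi-isomorphism $C_*(D)/(a=1)\simeq C_*(\mb)$. The main obstacle I anticipate is the middle step: justifying that passing to the quotient $(a=1)$ on chains commutes with taking $C_*$ of the orientation-forgetting map, i.e.\ that no higher homological correction terms appear. Since we work over $\mathbb{Q}$ and the relevant $\mathbb{Z}_2$-actions come from reparametrising marked points (so that on the orbi-cell complexes the quotient is again an orbi-cell complex with rationally well-behaved isotropy), this should go through, but it is the point where the argument is genuinely doing something rather than transporting the topological theorem formally, and it is where I would concentrate the written details.
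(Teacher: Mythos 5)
Your overall route is the same as the paper's: apply $C_*$ to the two homotopy equivalences $D\hookrightarrow\kb$ and $\dr\hookrightarrow\mb$, and reduce the second line to the identification $C_*(D)/(a=1)\cong C_*(\dr)$. The first line and the final composition are fine. The problem is your justification of the middle identification.

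You assert that ``$\dr$ is obtained from $D$ by identifying each leg with its orientation-reversed version'' and propose to conclude by quotienting the chains by the $\mathbb{Z}_2$--actions at the marked points, using rational coefficients. This is not the right description of the relation between $D$ and $\dr$, and the paper warns against exactly this in \autoref{rem:notgrpquotient}: the spaces $\dr_{\tilde g,n}$ are \emph{not} obtained from $\coprod D_{g,u,h,n}$ by a group action on marked points. Passing from $D$ to $\dr$ forgets not only the orientations of the marked points but also the dianalytic structure at the boundary nodes (a surface in $D$ is a nodal Klein surface with dianalytic structure at the nodes, a surface in $\dr$ is a na\"ive nodal one), and the cell-by-cell identifications this induces involve switching colours of \emph{all} half edges of a M\"obius graph, not just the legs; these identifications do not assemble into a global group action. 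Correspondingly, on the algebraic side the operadic ideal generated by $a-1$ is strictly larger than the arity-wise span of leg recolourings: contracting an element of the form $\Gamma\circ_i(a-1)$ produces differences of graphs that differ by a colour change on an \emph{internal} edge, and it is precisely this larger quotient (M\"obius graphs modulo arbitrary recolouring, i.e.\ dianalytic ribbon graphs) that matches $C_*(\dr)$. If you only quotient by the marked-point actions you obtain neither $C_*(\dr)$ nor $C_*(D)/(a=1)$, so your ``degreewise check'' as described would compare the wrong objects. The verification genuinely needs the orbi-cell decompositions of \autoref{prop:celldecomp} and \autoref{prop:dianalyticgraphs} together with the identification $C_*(D)\cong\moddmass$ of \autoref{prop:cellchainiso} (or an equivalent explicit graph description), so that both sides are exhibited as spanned by oriented reduced M\"obius, respectively dianalytic ribbon, graphs and the quotient map can be seen to be an isomorphism; your proposal never invokes this cellular description, and without it the key step does not go through as written.
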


The spaces $D_{g,u,h,n}$ decompose into orbi-cells labelled by M\"obius graphs and so we can identify the cellular chain complexes $C_*(D)$ in terms of the operad $\mass$ so that $C_*(D)\cong\moddmass$. Therefore we see that an open KTCFT is a Frobenius $A_\infty$--algebra with involution and we also obtain M\"obius graph complexes computing the homology of the moduli spaces of smooth Klein surfaces as well as different graph complexes (arising from $\moddmass/(a=1)$) computing the homology of the partial compactifications given by $\mb$. Unlike $H_\bullet(\kb)$, the genus $0$ part of $H_\bullet(\mb)$ has non-trivial components in higher degrees. The gluings for the operad $\mb$ can be thought of as `closed string' gluings similar to those for the Deligne--Mumford operad.

We finish \autoref{chap:moduli} by unwrapping our main theorems to give concrete and elementary descriptions of the different graph complexes and explain the isomorphisms of homology without reference to operads.

Finally, in \autoref{chap:dihedral} we examine in more detail algebras over $\dmass$ and $\moddmass$, which we christen involutive $A_\infty$--algebras and cyclic involutive $A_\infty$--algebras. We construct the natural cohomology theories associated to these algebras, which govern their deformations, and relate this to dihedral cohomology theory \cite{lodaybook,lodaydihedral}.

\section{Quantum homotopy algebras and Chern--Simons field theory}
Let $\mathcal{P}$ be a cyclic Koszul operad. A quantum homotopy $\mathcal{P}$--algebra (or sometimes a loop homotopy $\mathcal{P}$--algebra \cite{markl}) is a `higher genus' generalisation of a cyclic homotopy $\mathcal{P}$--algebra. Just as a homotopy $\mathcal{P}$--algebra is governed by a certain operad obtained naturally from $\mathcal{P}$ so a quantum homotopy $\mathcal{P}$--algebra is governed by a certain \emph{modular} operad obtained from $\mathcal{P}$. It is a very natural question to ask when a cyclic homotopy $\mathcal{P}$--algebra structure is itself naturally a part of a richer quantum homotopy $\mathcal{P}$--algebra structure, or indeed if a cyclic homotopy $\mathcal{P}$--algebra can be lifted to a quantum homotopy $\mathcal{P}$--algebra at all. In \autoref{chap:quantum} we will begin a study of quantum homotopy $\mathcal{P}$--algebras and quantum lifts.

A good example of an application of the question of quantum lifting is given for the case $\mathcal{P}=\ass$. Just as Kontsevich \cite{kontsevich} showed that cyclic $A_\infty$--algebras give rise to homology classes in the moduli space of Riemann surfaces it is also known \cite{barannikov, hamilton1} that quantum $A_\infty$--algebras give rise to homology classes in a certain compactification of this moduli space. Given a class in the (one point compactification of) the moduli space of surfaces one can ask if it lifts to this compactified moduli space. In \cite{hamilton2} Hamilton reinterprets this question as a problem of lifting cyclic $A_\infty$--algebras to quantum $A_\infty$--algebras and provides an explicit obstruction theory. One of the main aims of \autoref{chap:mc} and \autoref{chap:quantum} is to generalise this obstruction theory for general $\mathcal{P}$.

In a rather different direction a rather intriguing manifestation of these ideas appears also in \cite{costello4} which concerns the quantisation of Chern--Simons field theory via an infinite dimensional Batalin--Vilkovisky formalism. Given a closed oriented manifold $M$ and a cyclic Lie algebra $\mathfrak{g}$ the cohomology of the cyclic Lie algebra $\Omega^{\bullet}(M)\otimes\mathfrak{g}$ has the structure of a cyclic $L_\infty$--algebra\footnote{Technically this is an \emph{odd} cyclic $L_\infty$--algebra structure, but this will be dealt with later once we have established a general framework.} by transferring the Lie algebra structure via the techniques of homological perturbation theory or the theory of minimal models. It is shown in \cite{costello4} that this structure is then in turn part of a richer structure, a quantum $L_\infty$--algebra, although we will show an additional assumption not mentioned in \cite{costello4} is necessary in the case when $M$ is even dimensional. This structure arises from the quantisation of Chern--Simons field theory. Cattaneo and Mn\"ev \cite{cattaneomnev} studied this construction in more detail by first modelling $\Omega^{\bullet}(M)$ as a \emph{finite dimensional} Frobenius algebra. Our second main aim is to provide the foundations for understanding this circle of ideas from the perspective of modular operads and quantum lifts of homotopy algebras in the hope that it will be more conceptual and will shed light on the nature of this quantum $L_\infty$--algebra structure.

To this end, in \autoref{chap:mc} we recall the theory of Maurer--Cartan elements in differential graded Lie algebras and the construction of the Maurer--Cartan moduli set. We extend this theory to \emph{curved} Lie algebras and we then develop a theory of lifting Maurer--Cartan elements in curved Lie algebras, which provides a general theory for tackling a large class of deformation problems. In particular this provides the necessary theory needed to generalise Hamilton's obstruction theory.

In \autoref{chap:quantum} we begin by reviewing the theory of hyperoperads, which is necessary in order to treat quantum homotopy algebras in maximum generality. We also extend the relevant definitions of Koszul duality for cyclic operads in terms of hyperoperads in order to provide a more elegant and unified presentation. Hyperoperads are somewhat technical and underused, but a full review of the technical issues is not the primary concern of this thesis so the reader unfamiliar with hyperoperads should consult the original paper of Getzler--Kapranov \cite{getzlerkapranov} for a more complete reference.

We then proceed to review and develop the theory of Maurer--Cartan elements in a modular operad and prove some of the key general fundamental results which will be of particular use for us. Then we introduce the definitions of quantum and semi-quantum homotopy $\mathcal{P}$--algebras and examine the lifting problem, relating it to the theory developed in \autoref{chap:mc}. We also examine the structure of tensor products of quantum homotopy $\mathcal{P}$--algebras with $\mathcal{P}^!$--algebras proving what can be understood as a very general version of the well known observation that the tensor product of a commutative algebra with a Lie algebra has the natural structure of a Lie algebra.

We then consider the problem of lifting a cyclic homotopy $\mathcal{P}$--algebra to a quantum homotopy $\mathcal{P}$--algebra in the case that we start with a strict algebra. This admits a particularly simple and pleasant solution and it turns out that the important property required for a strict algebra to lift is \emph{unimodularity}, suitably generalised for algebras over an arbitrary quadratic operad. We also unwrap the explicit obstruction theory governing quantum lifting of $L_\infty$--algebras in the style Hamilton did for $A_\infty$--algebras.

Finally we consider how our theory applies to the work of Costello \cite{costello4} on Chern--Simons field theory and attempt to provide some conceptual insight.

\chapter{Introduction to topological quantum field theories}\label{chap:tqfts}
This chapter provides a brief introduction to the basic axiomatic definition of topological quantum field theories due to Atiyah \cite{atiyah}. In this formulation a closed $n$--dimensional topological quantum field theory is a rule associating to each closed oriented manifold $\Sigma$ of dimension $n-1$ a vector space and to each oriented $n$--manifold with boundary $\Sigma$ a vector in this vector space. One of the clearest formulations of this idea is in categorical terms. First one defines a symmetric monoidal category whose objects are $n-1$--manifolds and whose morphisms are cobordisms, with composition given by gluing along boundaries. Then a topological quantum field theory is a symmetric monoidal functor to vector spaces. An open--closed $n$--dimensional topological quantum field theory can be defined similarly, except the $n-1$--manifold $\Sigma$ is not required to be closed.

The main purpose of this chapter is to explain how to generalise these definitions to obtain a Klein topological quantum field theory, where the $n-1$--manifold $\Sigma$ is not required to be oriented. This was first done for closed Klein topological quantum field theories by Turaev--Turner \cite{turaevturner} and for open--closed theories by Alexeevski--Natanzon \cite{alexeevskinatanzon}.

The main results of this chapter are now mostly regarded as classical. In dimension $2$, by using the classification of surfaces, it is possible to describe explicitly the algebraic structures which determine and are determined by a topological quantum field theory. For example, closed $2$--dimensional topological quantum field theories turn out to be equivalent to commutative Frobenius algebras, in other words a commutative algebra $A$ with a non-degenerate symmetric bilinear form (non-degenerate meaning that it induces an isomorphism $A\cong A^*$) which is invariant with respect to the multiplication: for all $a,b,c\in A$ then $\langle ab, c \rangle = \langle a, bc \rangle$. A gentle introduction and detailed proof of this result can be found in the book by Kock \cite{kock}. Other sorts of $2$--dimensional quantum field theories turn out to be equivalent to similar Frobenius type structures.

Standard examples of Frobenius algebras include matrix algebras and group algebras. A more interesting example is the cohomology of a closed oriented manifold, which is in a natural way a graded Frobenius algebra. Frobenius algebras have shown up in a variety of topological contexts and these results make more precise the idea that Frobenius algebras are essentially topological structures.

Since we will be working in dimension $2$ we restrict our definitions to dimension $2$. It is possible to give definitions in arbitrary dimension easily for closed field theories, but for open field theories it is necessary to mention manifolds with faces in order to glue cobordisms properly. This is a technical concern not of interest to us here. We will first briefly recall the details of oriented topological quantum field theories and then define a Klein topological quantum field theory and recall well known results about dimension $2$ topological field theories and their unoriented analogues.

\section{Oriented topological field theories}
We begin by recalling the classical definitions.

\begin{definition}
We define the category $\cob$ as follows:
\begin{itemize}
\item Objects of $\cob$ are compact oriented $1$--manifolds (disjoint unions of circles and intervals).
\item Morphisms between a pair of objects $\Sigma_0$ and $\Sigma_1$, are oriented cobordisms from $\Sigma_0$ to $\Sigma_1$ up to diffeomorphism. That is a compact, oriented $2$--manifold $M$ together with orientation preserving diffeomorphisms $\Sigma_0\simeq\partial M_{\mathrm{in}}\subset\partial M$ and $\Sigma_1\simeq\overline{\partial M}_{\mathrm{out}}\subset\partial M$ (where $\overline{\partial M}_{\mathrm{out}}$ means $\partial M_{\mathrm{out}}$ with the opposite orientation) with $\partial M_{\mathrm{in}}\cap\partial M_{\mathrm{out}}=\emptyset$. We call $\partial M_{\mathrm{in}}$, $\partial M_{\mathrm{out}}$ and $\partial M_{\mathrm{free}}=\partial M\setminus(\partial M_{\mathrm{in}}\cup\partial M_{\mathrm{out}})$ the \emph{in boundary}, the \emph{out boundary} and the \emph{free boundary} respectively. We say two cobordisms $M$ and $M'$ are diffeomorphic if there is an orientation preserving diffeomorphism $\psi\co M\stackrel{\sim}{\rightarrow}M'$ where the following commutes:
\[\xymatrix{
 & M\ar[dd]_\psi^\simeq\\
\Sigma_0\ar[ur]\ar[dr] & &\Sigma_1\ar[ul]\ar[dl]\\
 & M'
}\]
\item Composition is given by gluing cobordisms together. As mentioned above, care must be taken to ensure that gluing is well defined up to diffeomorphism. In dimension $2$ we know that smooth structure depends only on the topological structure of our manifold so we will avoid discussing the technical issues. Gluing is associative and the identity morphism from $\Sigma$ to itself is given by the cylinder $\Sigma\times I$.
\end{itemize}
\end{definition}

It can be shown that the category $\cob$ is a symmetric monoidal category with the tensor product operation given by disjoint union of manifolds.

\begin{definition}
An \emph{open--closed topological field theory} is a symmetric monoidal functor $\cob\rightarrow\vect$, where $\vect$ is the category of vector spaces over the field $k$.
\end{definition}

We can now consider open and closed theories separately by restricting to the appropriate subcategory.

\begin{definition}
\Needspace*{3\baselineskip}\mbox{}
\begin{itemize}
\item The category $\cobcl$ is the (symmetric monoidal) subcategory of $\cob$ with objects closed oriented $1$--manifolds (disjoint unions of circles) and morphisms with empty free boundary. A \emph{closed topological field theory} is a symmetric monoidal functor to $\vect$.
\item The category $\cobo$ is the full (symmetric monoidal) subcategory of $\cob$ with those objects which are not in $\cobcl$ (disjoint unions of intervals). An \emph{open topological field theory} is a symmetric monoidal functor to $\vect$.
\end{itemize}
\end{definition}

We then have the following classical results:

\begin{proposition}\label{prop:ctft}
Closed topological field theories of dimension $2$ are equivalent to commutative Frobenius algebras (see for example the book by Kock \cite{kock}).
\end{proposition}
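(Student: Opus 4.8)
The plan is to establish an equivalence of categories between $2$--dimensional closed topological field theories and commutative Frobenius algebras by exploiting the well-known generators-and-relations presentation of the symmetric monoidal category $\cobcl$. First I would recall that every object of $\cobcl$ is a disjoint union of circles, hence is a monoidal power of the single circle $S^1$; thus a symmetric monoidal functor $F\co\cobcl\to\vect$ is determined up to isomorphism by the vector space $A\coloneqq F(S^1)$ together with its images of the generating morphisms. Since $F$ is monoidal, $F(\emptyset)=k$, so any connected cobordism with $m$ incoming and $n$ outgoing circles is sent to a linear map $A^{\otimes m}\to A^{\otimes n}$. The key structural input is the classification of compact oriented surfaces: every connected such cobordism is determined up to diffeomorphism by its genus and the partition of its boundary circles into incoming and outgoing, and consequently $\cobcl$ is generated as a symmetric monoidal category by the cup (birth, $\emptyset\to S^1$), the cap (death, $S^1\to\emptyset$), the pair of pants (multiplication, $S^1\sqcup S^1\to S^1$), the copair of pants (comultiplication, $S^1\to S^1\sqcup S^1$), together with the symmetry isomorphism on $S^1\sqcup S^1$.

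Next I would extract the defining relations among these generators, again using the classification of surfaces together with the diffeomorphisms that realise each relation. The multiplication and unit make $A$ a commutative associative unital algebra (associativity from the two ways of pairing three pants into a four-holed sphere, commutativity from the half-twist diffeomorphism of the pair of pants, unitality from the diffeomorphism of the cylinder); dually the comultiplication and counit make $A$ a cocommutative coassociative counital coalgebra; and the Frobenius relation --- that comultiplication is a bimodule map, equivalently that the two ways of cutting a genus-zero surface with two inputs and two outputs agree --- follows from the corresponding diffeomorphism of that surface. From the cap and cup one obtains a pairing $\langle a,b\rangle\coloneqq \varepsilon(ab)$ and a copairing, and the zig-zag (S-bend) identity, coming from the diffeomorphism straightening a bent cylinder, shows these are mutually inverse, so the pairing is nondegenerate; invariance $\langle ab,c\rangle=\langle a,bc\rangle$ is immediate from associativity and the definition. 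Conversely, given a commutative Frobenius algebra $A$ one defines $F$ on generators by the above assignments and checks that all the surface relations are satisfied, so $F$ extends to a well-defined symmetric monoidal functor; one should also note that the handle operator (the torus-with-two-discs-removed endomorphism of $A$) is recovered as multiplication by the ``handle element'' $\sum a_i b_i$ dual to the pairing, confirming that higher-genus cobordisms introduce no new data.

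Finally I would organise these observations into an equivalence of categories: sending a closed TFT $F$ to the Frobenius algebra $(F(S^1),F(\text{pants}),F(\text{cup}),F(\text{cap}))$ is a functor, the reverse assignment just described is a functor, and monoidal natural transformations of functors out of $\cobcl$ correspond precisely to morphisms of Frobenius algebras since everything is controlled by what happens on $S^1$ and the generating morphisms. The essential surjectivity and faithfulness are then formal consequences of the generators-and-relations description.

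The main obstacle is the rigorous proof that $\cobcl$ really does admit this presentation by generators and relations --- that is, that every oriented cobordism decomposes (via a Morse function / handle decomposition) into a composite of the five generators, and that any two such decompositions of diffeomorphic cobordisms are related by the listed relations (which amounts to a Cerf-theory style analysis of paths between Morse functions). This is precisely the content worked out in detail in Kock's book, so in the write-up I would invoke \cite{kock} for this combinatorial-topological core and focus the exposition on the translation between the categorical relations and the Frobenius algebra axioms.
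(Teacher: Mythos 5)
Your outline is correct and is exactly the standard argument (generators cup, cap, pants, copants plus the surface-classification/Morse--Cerf presentation of $\cobcl$) that the paper itself does not reproduce but simply cites from Kock \cite{kock}. Since you correctly identify the generators-and-relations presentation as the genuine content and defer it to the same reference, your treatment matches the paper's.
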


\begin{proposition}\label{prop:otft}
Open topological field theories of dimension $2$ are equivalent to symmetric Frobenius algebras (in other words not necessarily commutative but the bilinear form is symmetric, see Moore \cite{moore2}, Segal \cite{segal} or Chuang and Lazarev \cite{chuanglazarev}).
\end{proposition}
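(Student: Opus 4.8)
The plan is to present the symmetric monoidal category $\cobo$ by generators and relations and then observe that a symmetric monoidal functor out of such a presentation is exactly the data of a symmetric Frobenius algebra. First I would fix an open topological field theory $Z\co\cobo\rightarrow\vect$ and put $A=Z(I)$ for $I$ a single interval; monoidality forces $Z(I^{\sqcup n})\cong A^{\otimes n}$ and $Z(\emptyset)\cong k$. The elementary open cobordisms then produce structure maps on $A$: the open pair of pants (a disc with two incoming and one outgoing interval windows on its boundary, the rest free) gives a multiplication $\mu\co A\otimes A\rightarrow A$; the disc with a single outgoing window gives a unit $\eta\co k\rightarrow A$; the disc with two incoming windows gives a bilinear form $\beta\co A\otimes A\rightarrow k$; the disc with two outgoing windows gives a copairing $\gamma\co k\rightarrow A\otimes A$; and the symmetry of the monoidal structure supplies the flip on $A\otimes A$.

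Second, I would establish the generators-and-relations presentation of $\cobo$: every connected open cobordism between disjoint unions of intervals admits a handle decomposition (Morse theory on the surface, relative to the free boundary) exhibiting it as a composite of the elementary pieces above together with cylinders and symmetries, and the relations among these composites are generated by a short explicit list --- associativity and unitality of $\mu$, the ``snake''/zigzag identities relating $\beta$ and $\gamma$ (which express non-degeneracy of $\beta$), the Frobenius/cyclicity identity $\beta(\mu(a,b),c)=\beta(a,\mu(b,c))$, and symmetry $\beta(a,b)=\beta(b,a)$. The symmetry of $\beta$ comes from the existence of an orientation-preserving diffeomorphism of the two-incoming-window disc (a rotation) swapping the two windows, whereas the analogous swap is not available for the pair of pants, which is why one does not obtain commutativity. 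Translating these relations through $Z$ shows $(A,\mu,\eta,\beta)$ is a symmetric Frobenius algebra. Conversely, given a symmetric Frobenius algebra one defines a symmetric monoidal functor on the generators above and checks the listed relations hold, so it extends to $\cobo$; the two constructions are mutually inverse, and a monoidal natural transformation corresponds precisely to a morphism of symmetric Frobenius algebras, which yields the asserted equivalence of categories.

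The main obstacle is the second step: proving that the short list of relations is complete, i.e.\ that $\cobo$ really is the free symmetric monoidal category on this presentation. This requires a genuine argument in surface topology --- a normal-form and handle-slide analysis of open cobordisms, keeping careful track of the free boundary arcs and of the in/out window structure on each boundary circle --- rather than anything formal, and it is the point at which one must invoke (or reprove) the classification of surfaces with boundary in the ``windowed'' setting; this is exactly the content worked out by Moore and Segal \cite{moore2,segal} and also by Chuang and Lazarev \cite{chuanglazarev}. The remaining steps --- extracting the structure maps, verifying the Frobenius axioms, constructing the inverse functor, and assembling the equivalence --- are then routine; in particular non-degeneracy of $\beta$ is immediate once the zigzag identities are available, since they exhibit $\gamma$ as an explicit two-sided inverse to the map $A\rightarrow A^{*}$ induced by $\beta$.
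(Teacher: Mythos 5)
Your outline is correct and is essentially the classical Moore--Segal-style argument, but it is worth noting that the paper does not prove this proposition at all: it is stated as a known result with citations, and to the extent the thesis proves statements of this kind it takes a different, operadic route. Where you present $\cobo$ by generators (open pair of pants, unit/counit discs, pairing/copairing discs) and relations and then identify symmetric monoidal functors out of that presentation, the paper's framework --- following Chuang--Lazarev, and carried out explicitly for the Klein analogue in \autoref{thm:oktftmass} --- identifies the modular operad of open cobordisms with the modular closure $\modass$ of the cyclic associative operad via ribbon graphs, so that algebras over it are exactly cyclic $\ass$--algebras with invariant inner product, i.e.\ symmetric Frobenius algebras (with the unit/counit handled separately, cf.\ \autoref{rem:nounit}). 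Your approach is more elementary and self-contained in spirit, but the completeness of your relation list is precisely the surface-topology input you defer to the references, and gluing open cobordisms strictly needs the manifolds-with-faces care the paper itself flags; the operadic approach packages that same classification into a graph-theoretic normal-form argument and has the advantage of generalising directly (it is how the paper actually proves the unoriented analogue \autoref{prop:oktft}). One small caution: your closing claim that monoidal natural transformations correspond exactly to morphisms of symmetric Frobenius algebras should be treated loosely --- such transformations force compatibility with the pairing and unit, which cuts the morphisms down considerably --- but since the proposition only asserts an equivalence of structures, this does not affect the substance of your argument.
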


\begin{proposition}\label{prop:octft}
Open--closed topological field theories of dimension $2$ are equivalent to `knowledgeable Frobenius algebras' (see Lauda and Pfeiffer \cite{laudapfeiffer} for definitions and proof or also Lazaroiu \cite{lazaroiu} and Moore \cite{moore1}).
\end{proposition}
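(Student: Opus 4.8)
The plan is to follow the standard generators-and-relations strategy: first obtain a presentation of the symmetric monoidal category $\cob$, and then unwrap what a symmetric monoidal functor $\cob\to\vect$ amounts to in terms of that presentation. A symmetric monoidal functor is determined by its values on generating objects and generating morphisms, subject to the images of the relations holding; so once the presentation is established, the proof becomes a matter of matching relations against axioms.

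First I would establish the presentation of $\cob$. Applying Morse theory to an open--closed cobordism (a compact surface with boundary and corners) and choosing a generic height function exhibits the surface as a composite of tensor products of a short list of elementary pieces: for the closed sector the cup, cap, multiplication and comultiplication on the circle; for the open sector the analogous four pieces on the interval together with the identity interval carrying its two free boundary segments; and the two open--closed transition cobordisms, the ``zipper'' $C\to A$ and the ``cozipper'' $A\to C$. The relations arise from the moves relating two Morse functions on a fixed surface (births/deaths of critical points and exchanges of adjacent critical values), tracked through the free boundary: they yield the commutative Frobenius relations on the circle, the symmetric Frobenius relations on the interval, the statements that the zipper is a unital algebra homomorphism with central image, that the cozipper is its adjoint with respect to the two Frobenius pairings, and the Cardy condition obtained by reading the genus-zero one-holed cobordism two ways. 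Making this precise is exactly the presentation theorem of Lauda--Pfeiffer \cite{laudapfeiffer}, building on Moore \cite{moore1} and Segal \cite{segal}. Given the presentation, a symmetric monoidal functor $F\colon\cob\to\vect$ is determined by $A=F(\text{interval})$ and $C=F(\text{circle})$ together with the images of the generators, and functoriality plus the relations translate term by term into the axioms of a knowledgeable Frobenius algebra: restricting $F$ along $\cobcl\hookrightarrow\cob$ and $\cobo\hookrightarrow\cob$ and invoking \autoref{prop:ctft} and \autoref{prop:otft} makes $C$ a commutative Frobenius algebra and $A$ a symmetric Frobenius algebra; the zipper and cozipper become the structure maps $\iota\colon C\to A$ and $\iota^*\colon A\to C$; and the surviving relations are precisely centrality of $\iota$, the adjointness of $\iota$ and $\iota^*$, and the Cardy condition. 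Conversely a knowledgeable Frobenius algebra assigns data to the generators satisfying the relations and hence defines a functor, the two assignments are mutually inverse, and a morphism of knowledgeable Frobenius algebras is exactly a monoidal natural transformation, giving the asserted equivalence of categories (equivalently of groupoids, for the invertible morphisms).

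The main obstacle is the presentation theorem for $\cob$: showing that the listed relations are \emph{complete}, i.e.\ that any two decompositions of an open--closed cobordism into elementary pieces are connected by a finite sequence of these moves. This rests on a careful Cerf-theory and handle-cancellation analysis for surfaces with corners, with the free boundary carried along at each step; it is the one genuinely nontrivial input, and I would import it from \cite{laudapfeiffer}. Once it is in hand the remainder is essentially bookkeeping, and the purely closed and purely open portions of that bookkeeping are already packaged in \autoref{prop:ctft} and \autoref{prop:otft}.
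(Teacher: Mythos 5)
The paper gives no proof of this proposition at all---it simply cites Lauda--Pfeiffer (and Lazaroiu, Moore) for both the definition of a knowledgeable Frobenius algebra and the equivalence---and your sketch is a faithful outline of exactly that cited argument, with the one genuinely hard step (completeness of the relations via Cerf theory for surfaces with corners) imported from \cite{laudapfeiffer}, just as the paper implicitly does. So your proposal is correct and takes essentially the same route as the paper, namely deferring to the presentation theorem of \cite{laudapfeiffer} and then translating generators and relations into the algebraic axioms.
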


\section{Klein topological field theories}
To extend to the unorientable case we suppress all mentions of orientations. This leads to the following definition:

\begin{definition}
We define the category $\kcob$ as follows:
\begin{itemize}
\item Objects of $\kcob$ are compact $1$--manifolds (disjoint unions of circles and intervals).
\item Morphisms between a pair of objects $\Sigma_0$ and $\Sigma_1$, are (not necessarily orientable) cobordisms from $\Sigma_0$ to $\Sigma_1$ up to diffeomorphism. That is a compact $2$--manifold $M$ together with diffeomorphisms $\Sigma_0\simeq\partial M_{\mathrm{in}}\subset\partial M$ and $\Sigma_1\simeq\partial M_{\mathrm{out}}\subset\partial M$ with $\partial M_{\mathrm{in}}\cap\partial M_{\mathrm{out}}=\emptyset$. We say two cobordisms $M$ and $M'$ are diffeomorphic if there is a diffeomorphism $\psi\co M\stackrel{\sim}{\rightarrow}M'$ where the following commutes:
\[\xymatrix{
 & M\ar[dd]_\psi^\simeq\\
\Sigma_0\ar[ur]\ar[dr] & &\Sigma_1\ar[ul]\ar[dl]\\
 & M'
}\]
\item Composition is given by gluing cobordisms together. The identity morphism from $\Sigma$ to itself is given by the cylinder $\Sigma\times I$.
\end{itemize}
\end{definition}

As in the orientable case $\kcob$ is a symmetric monoidal category by disjoint union of manifolds.

It is convenient to identify $\cob$ and $\kcob$ with their skeletons. Recall that since all oriented circles are isomorphic (since $S^1$ is diffeomorphic to itself with the opposite orientation) the skeleton of $\cob$ is the full subcategory with objects disjoint unions of copies of a single oriented $S^1$ (so the set of objects can be identified with the natural numbers). Similarly the skeleton of $\kcob$ is the full subcategory with objects disjoint unions of copies of a single unoriented $S^1$ (so again the set of objects can be identified with the natural numbers). In this way we can think of $\cob$ as a subcategory of $\kcob$ by forgetting orientations. Note that even if the underlying manifold $M$ of a cobordism in $\kcob$ is orientable the cobordism itself is not necessarily in $\cob$, since it may not be possible to choose an orientation of $M$ such that the embeddings $\Sigma_0\hookrightarrow\partial M\hookleftarrow\Sigma_1$ are orientation preserving. Consider for example:
\[
\begin{xy}
*{\cylinder{\cstart}{\cstop}},
(-9,-1)*\dir2{<},(11,-1)*\dir2{<}
\end{xy}
\qquad\neq\qquad
\begin{xy}
*{\cylinder{\cstart}{\cstop}},
(-9,-1)*\dir2{<},(11,1)*\dir2{>}
\end{xy}
\]
The cobordisms above are both morphisms from $S^1$ to itself (where the arrows denote the directions of the embeddings of $S^1$). However while the cobordism on the left is the identity morphism, the cobordism on the right is in $\kcob$ but not in $\cob$.

\begin{definition}
An \emph{open--closed Klein topological field theory} is a symmetric monoidal functor $\kcob\rightarrow\vect$.
\end{definition}

\begin{definition}
\Needspace*{3\baselineskip}\mbox{}
\begin{itemize}
\item The category $\kcobcl$ is the (symmetric monoidal) subcategory of $\kcob$ with objects closed $1$--manifolds without boundary (disjoint unions of unoriented circles) and morphisms with empty free boundary. A \emph{closed Klein topological field theory} is a symmetric monoidal functor to $\vect$.
\item The category $\kcobo$ is the full (symmetric monoidal) subcategory of $\kcob$ with those objects which are not in $\kcobcl$ (disjoint unions of intervals). An \emph{open Klein topological field theory} is a symmetric monoidal functor to $\vect$.
\end{itemize}
\end{definition}

We then have analogues of \autoref{prop:ctft}, \autoref{prop:otft} and \autoref{prop:octft}.

\begin{proposition}\label{prop:ocktft}
Open--closed Klein topological field theories of dimension $2$ are equivalent to `structure algebras' (see Alexeevski and Natanzon \cite{alexeevskinatanzon} for a definition and a proof).
\end{proposition}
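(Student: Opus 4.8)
The plan is to follow the strategy that establishes \autoref{prop:ctft}, \autoref{prop:otft} and \autoref{prop:octft}: produce a finite generators-and-relations presentation of the symmetric monoidal category $\kcob$ and then read off the structure that a symmetric monoidal functor to $\vect$ must carry. First I would pass to a skeleton and invoke the classification of compact surfaces — now including the non-orientable ones, so that each component is a sphere with some number of handles, cross-caps and boundary circles — together with a Morse/handle decomposition of cobordisms into elementary pieces. On the closed side this gives the familiar generators (cup, cap, pair of pants, copair of pants and the symmetry cylinder) together with the genuinely new \emph{cross-cap} generator: the real projective plane with a disc removed, $\mathbb{RP}^2\setminus D^2$, viewed as a morphism $S^1\to\emptyset$ (equivalently a cross-cap attached to the cylinder). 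On the open side one gets the generators for a symmetric Frobenius algebra on the interval object $A$ — multiplication, unit and the pairing — together with the open-to-closed "zipper" and its adjoint, and one new generator: the M\"obius band viewed as a cobordism from the interval to itself, which will become an involutive anti-automorphism of $A$.

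Next I would determine the relations. Every relation of a "knowledgeable Frobenius algebra" (those behind \autoref{prop:octft}) survives, since the oriented cobordism category sits inside $\kcob$; to these one must adjoin the relations forced by the non-orientable generators. For the M\"obius generator: two M\"obius bands glued along an interval form an annulus, so it squares to the identity cylinder; it reverses the order of multiplication and fixes the unit and the Frobenius pairing, so it is an involutive anti-automorphism compatible with the Frobenius structure; and it is compatible with the zipper. For the cross-cap: the way it interacts with the closed Frobenius structure and with the zipper, and — crucially — the "cross-cap slide" together with the connected-sum relation expressing two cross-caps on a surface as a handle on $\mathbb{RP}^2$ (the Dyck/Klein-bottle relation). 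Verifying that \emph{these} relations are complete, i.e.\ that any two handle decompositions of diffeomorphic surfaces are connected by moves generated by the chosen ones, is the technical heart and the step I expect to be the main obstacle; it is handled exactly as in the oriented case (cf.\ Kock \cite{kock} and Lauda--Pfeiffer \cite{laudapfeiffer}), by a Cerf-theory / handle-slide argument, now additionally bookkeeping the non-orientable handle moves and using the classification of surfaces to control the normal forms.

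Finally I would package the outcome: a symmetric monoidal functor $\kcob\to\vect$ amounts exactly to a pair of vector spaces $(A,C)$ carrying a knowledgeable Frobenius algebra structure (the open symmetric Frobenius algebra $A$, the closed commutative Frobenius algebra $C$, and the zipper maps $C\to A$, $A\to C$ obeying the Cardy condition), together with an involutive anti-automorphism of $A$ preserving the Frobenius form and commuting appropriately with the zipper, and the image of the cross-cap, an element of $C$ (equivalently the induced cross-cap operator), subject to the cross-cap relations above. This is precisely the list of axioms defining a "structure algebra" in Alexeevski--Natanzon \cite{alexeevskinatanzon}; conversely every structure algebra manifestly determines such a functor, and morphisms of structure algebras correspond to monoidal natural transformations, giving the claimed equivalence of categories.
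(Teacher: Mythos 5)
The paper does not actually prove this proposition: it is stated as a known result and the ``proof'' is the citation to Alexeevski--Natanzon \cite{alexeevskinatanzon}, whose argument is essentially the generators-and-relations strategy you outline. So what you have written is a sketch of the cited proof rather than something to compare against an argument in the thesis; and as a sketch it has two concrete problems. First, your generating set for the closed sector is incomplete. Besides the cross-cap, $\kcobcl$ needs the orientation-reversing cylinder $S^1\to S^1$ (the mapping cylinder of an orientation-reversing diffeomorphism of the circle), which is \emph{not} in $\cobcl$ and is not a composite of the oriented generators, the cross-cap, or anything reachable through the open sector; the thesis singles it out explicitly in \autoref{fig:2kcob}. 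Its image is the involutive anti-automorphism on the closed Frobenius algebra, which is part of the data of a structure algebra -- compare \autoref{prop:cktft}, where the closed theory obtained from a structure algebra by setting the open part to zero carries exactly this involution together with the element $U$ satisfying $(aU)^*=aU$ and $U^2=\sum\alpha_i\beta_i^*$. Your final packaging (``knowledgeable Frobenius algebra plus an involution on $A$ plus a cross-cap element of $C$'') therefore does not match the axioms of a structure algebra: the closed involution and its compatibilities (with the pairing, the cross-cap element and the zipper) are missing.

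Second, you correctly identify that the sufficiency of the relations is the technical heart, but you defer it to ``a Cerf-theory / handle-slide argument as in the oriented case.'' That step is precisely the content of the theorem being cited; in the non-orientable setting it is not a routine transcription of Kock or Lauda--Pfeiffer, because one must normalise surfaces using the non-orientable moves (cross-cap slides, the relation turning three cross-caps into a handle plus a cross-cap, and the interaction of cross-caps with boundary circles and with the open--closed zipper) and show these suffice -- this is where the extra closed-sector relations such as $U^2=\sum\alpha_i\beta_i^*$ and $(aU)^*=aU$ are actually forced and shown to be enough. As it stands, your proposal reproduces the overall shape of the Alexeevski--Natanzon proof but neither supplies this completeness argument nor lands on the correct target structure, so it does not yet establish the proposition.
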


In particular we can immediately deduce from the above result proved in \cite{alexeevskinatanzon}, by setting the open part of a structure algebra to $0$, the result for closed KTFTs. It is also proved separately by Turaev and Turner \cite{turaevturner}.

\begin{proposition}\label{prop:cktft}
Closed Klein topological field theories of dimension $2$ are equivalent to the following structures.
\begin{itemize}
\item A commutative Frobenius algebra $A$ with an involutive anti-automorphism\footnote{Since $A$ is commutative an anti-automorphism is of course just an automorphism. Here however it is best thought of as an anti-automorphism on an algebra that just happens to be commutative for comparison with open KTFTs.} $x\mapsto x^*$ preserving the pairing. That is, $(x^*)^*=x$, $(xy)^*=y^*x^*$ and $\langle x^*,y^*\rangle=\langle x,y \rangle$.
\item There is an element $U\in A$ such that $(aU)^*=aU$ for any $a\in A$ and $U^2=\sum\alpha_i\beta_i^*$, where the copairing $\Delta\co k\rightarrow A\otimes A$ is given by $\Delta(1)=\sum\alpha_i\otimes\beta_i$.
\end{itemize}
\end{proposition}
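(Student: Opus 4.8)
The plan is to read the algebraic structure off a closed Klein topological field theory $Z\co\kcobcl\to\vect$ by analysing a small set of generating cobordisms, and conversely to reconstruct $Z$ from such data by invoking a presentation of $\kcobcl$. Since $\cobcl$ embeds into $\kcobcl$ as the subcategory of cobordisms admitting a compatible orientation, the restriction of $Z$ to $\cobcl$ is an ordinary closed $2$--dimensional TFT, so $A:=Z(S^1)$ is automatically a commutative Frobenius algebra by \autoref{prop:ctft}. Everything beyond this comes from two new features of the unoriented category: the orientation-reversing cylinder $\rho\co S^1\to S^1$ (the cylinder that is not the identity morphism), and the Möbius band, regarded either as a cobordism $\emptyset\to S^1$ or, with an input disc removed, as the ``cross-cap cylinder'' $c\co S^1\to S^1$.

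First I would put $x^*:=Z(\rho)(x)$ and verify the listed identities by exhibiting diffeomorphisms of surfaces: the composite $\rho\circ\rho$ is diffeomorphic rel boundary to the identity cylinder, giving $(x^*)^*=x$; pushing $\rho$ across a pair of pants identifies $Z(\text{pants})\circ(\rho\otimes\rho)$ with $Z(\rho)\circ Z(\text{pants})$, so $*$ is an (anti-)automorphism — the distinction being empty here since $A$ is commutative, but recorded as an anti-automorphism to match the open case — and capping off a leg with a disc shows $*$ preserves the Frobenius pairing. Next I would set $U:=Z(\text{M\"obius band})(1)\in A$ and observe that $Z(c)$ is the map $x\mapsto Ux$: a cross-cap can be slid freely along a surface, which makes $Z(c)$ an $A$--module map, hence determined by $Z(c)(1)=U$. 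The relation $(aU)^*=aU$ for all $a$ is then the statement that $c$ admits a self-diffeomorphism restricting to the identity on one boundary circle and to the reflection $\rho$ on the other (``drag the boundary disc once around the cross-cap''). Finally $U^2=\sum\alpha_i\beta_i^*$ holds because both sides equal $Z$ of the Klein bottle with an open disc removed: the left-hand side presents that surface as two M\"obius bands glued along a pair of pants, using that $\mathbb{RP}^2\#\mathbb{RP}^2$ is the Klein bottle, and the right-hand side presents it as an annulus whose two ends are glued together through $\rho$.

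For the converse I would, given $(A,*,U)$ as in the statement, define $Z$ on objects by sending the disjoint union of $n$ circles to $A^{\otimes n}$ and on the above generators in the evident way, and then appeal to the classification of compact surfaces (possibly with boundary and possibly non-orientable) to see that these cobordisms generate $\kcobcl$ and that every relation among them follows from the commutative--Frobenius relations together with the three relations just established; this is exactly the analysis carried out by Turaev and Turner \cite{turaevturner}. Well-definedness of $Z$ then reduces to checking that each defining relation is sent to an identity of linear maps, which is the content of the computations above read backwards.

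The main obstacle is this converse step — specifically, being sure there are no relations in $\kcobcl$ beyond the expected ones, which requires a genuine normal-form argument for unoriented cobordisms (controlling cross-cap sliding and the interaction of $\rho$ with handles and cross-caps). A shortcut that sidesteps reproving this is to note that any closed KTFT extends to an open--closed KTFT by declaring the open sector to be zero (every interval, and every cobordism with free boundary, maps to $0$), so that the proposition is the ``open part $=0$'' specialisation of \autoref{prop:ocktft}; one then only has to unwind Alexeevski--Natanzon's notion of a structure algebra in that degenerate case. I would present the direct surface-theoretic argument as the proof and use this as a consistency check.
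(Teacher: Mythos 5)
Your proposal is correct and matches the paper's treatment almost exactly: the paper likewise does not give a full proof, deducing the result from the open--closed classification of Alexeevski--Natanzon by setting the open sector to zero (and citing Turaev--Turner for an independent proof), and then sketches precisely your generator-and-relation picture --- the involution from the orientation-reversing cylinder, $U$ from the crosscap/M\"obius band, the anti-automorphism relation by reflecting the pair of pants, $U^2=\sum\alpha_i\beta_i^*$ from the Klein bottle with a hole, and $(aU)^*=aU$ by dragging the hole once around the M\"obius strip. Your explicit acknowledgement that completeness of the relations needs a normal-form argument is, if anything, more candid than the paper's ``it is not too difficult to convince oneself'' remark.
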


We will not reproduce a proof of \autoref{prop:cktft}, however we will now briefly recall with pictures where each part of the structure comes from. In pictures of cobordisms we denote a crosscap attached to a surface by a dotted circle with a cross. So for example the following is an unorientable cobordism with an underlying surface made with $1$ handle, $1$ crosscap and $5$ holes:
\[
\begin{xy}
*{\cobord{(0,6),(0,18)}{(20,0),(20,12),(20,24)}{\cstart}{\cstop}{(10,8)}{@i}{(10,16)}},
(-9,5)*\dir2{<},(11,1)*\dir2{>},
(-9,-7)*\dir2{<},(11,11)*\dir2{<},
(11,-13)*\dir2{<}
\end{xy}
\]
\autoref{fig:2cob} shows the generators of the orientable part of $\kcobcl$.

\begin{figure}[ht!]
\centering
\begin{gather*}
\begin{xy}
*{\cylinder{\cstart}{\cstop}},
(-9,-1)*\dir2{<},(11,-1)*\dir2{<}
\end{xy}\qquad
\begin{xy}
*{\pants{\cstart}{\cstop}},
(-9,5)*\dir2{<},(11,-1)*\dir2{<},(-9,-7)*\dir2{<}
\end{xy}\qquad
\begin{xy}
*{\copants{\cstart}{\cstop}},
(-9,-1)*\dir2{<},(11,5)*\dir2{<},(11,-7)*\dir2{<}
\end{xy}
\\
\begin{xy}
*{\birth{\cstop}},
(5.5,-1)*\dir2{<}
\end{xy}\qquad
\begin{xy}
*{\death{\cstart}},
(-3.5,-1)*\dir2{<}
\end{xy}\qquad
\begin{xy}
*{\twist{\cstart}{\cstop}},
(-9,-7)*\dir2{<},(-9,5)*\dir2{<},(11,5)*\dir2{<},(11,-7)*\dir2{<}
\end{xy}
\end{gather*}
\caption{Generators of $\cobcl$ (considered as a subcategory of $\kcobcl$)}
\label{fig:2cob}
\end{figure}

\begin{figure}[ht!]
\centering
\[
\begin{xy}
*{\cylinder{\cstart}{\cstop}},
(-9,-1)*\dir2{<},(11,1)*\dir2{>}
\end{xy}\qquad
\begin{xy}
*{\rp{\cstop}},
(10.25,-1)*\dir2{<}
\end{xy}
\]
\caption{Additional generators of $\kcobcl$ not in $\cobcl$}
\label{fig:2kcob}
\end{figure}

By moving crosscaps and flipping orientations of boundaries we can decompose any cobordism into an orientable cobordism composed with copies of the two cobordisms in \autoref{fig:2kcob}. For example we can decompose our previous example as:
\[
\begin{xy}
*{\cobord{(0,6),(0,18)}{(20,0),(20,12),(20,24)}{\cstart}{\cstop}{(10,8)}{@i}{(10,16)}},
(-9,5)*\dir2{<},(11,1)*\dir2{>},
(-9,-7)*\dir2{<},(11,11)*\dir2{<},
(11,-13)*\dir2{<}
\end{xy}\qquad \cong \quad
\begin{xy}
*{\cobord{(0,6),(0,18),(0,30)}{(20,0),(20,12),(20,24)}{}{\cstop}{(10,8)}{@i}{@i}},
(-9.5,2)*\dir2{<},(10.5,-4)*\dir2{<},
(-9.5,-10)*\dir2{<},(10.5,8)*\dir2{<},
(10.5,-16)*\dir2{<},(-9.5,14)*\dir2{<},
(-19.75,15)*{\rp{\cstop}},
(20,-3)*{\cylinder{}{\cstop}},(30.5,-2)*\dir2{>},
(-10.5,-9)*{\cstart},
(-10.5,3)*{\cstart}
\end{xy}
\]

This shows us that the cobordisms in \autoref{fig:2cob} and \autoref{fig:2kcob} together generate $\kcobcl$. In particular we see that a closed KTFT is given by a commutative Frobenius algebra $A$ together with a linear map corresponding to the cobordism on the left in \autoref{fig:2kcob} which is clearly an involution and an element $U\in A$ given by the image of $1\in k$ under the map corresponding to the cobordism on the right. That the involution is an anti-automorphism corresponds to the relation
\[
\begin{xy}
*{\pants{\cstart}{\cstop}},
(-9,5)*\dir2{<},(11,1)*\dir2{>},(-9,-7)*\dir2{<}
\end{xy}\qquad\cong\qquad
\begin{xy}
*{\pants{\cstart}{\cstop}},
(-9,7)*\dir2{>},(11,-1)*\dir2{<},(-9,-5)*\dir2{>}
\end{xy}
\]
which can be seen by reflecting the cobordism in a suitable horizontal plane.

The relation $U^2=\sum\alpha_i\beta_i^*$ arises from the fact that $2$ crosscaps are diffeomorphic to a Klein bottle with a hole which can be decomposed into orientable surfaces:
\[
\begin{xy}
*{\pants{}{\cstop}},
(10.5,-1)*\dir2{<},
(-20,6)*{\rp{\cstop}},
(-20,-6)*{\rp{\cstop}}
\end{xy}\qquad\cong\quad
\begin{xy}
*{\copair{\cstop}},
(8,5)*\dir2{<},(8,-7)*\dir2{<}
\end{xy}\circ
\begin{xy}
(0,6)*{\cylinder{\cstart}{\cstop}},
(0,-6)*{\cylinder{\cstart}{\cstop}},
(-9,5)*\dir2{<},(11,5)*\dir2{<},(-9,-7)*\dir2{<},(11,-5)*\dir2{>}
\end{xy}\circ
\begin{xy}
*{\pants{\cstart}{\cstop}},
(-9,5)*\dir2{<},(11,-1)*\dir2{<},(-9,-7)*\dir2{<}
\end{xy}
\]

Finally the relation $(aU)^*=aU$ can be seen by considering a M\"obius strip (which is equivalent to a crosscap) with a hole:

\begin{gather*}
\begin{xy}
*{\pants{}{\cstop}},
(10.5,1)*\dir2{>},
(-20,6)*{\rp{\cstop}},
(-9.5,-7)*\dir2{<},
(-10.5,-6)*{\cstart}
\end{xy}\qquad\cong\quad
\begin{xy}
(-12.5,0)*{\copair{}},
(0,6)*{\shortcylinder{}{}},
(0,-6)*{\flip{}{}},
(12.5,0)*{\pair{}},
(-10,-0.5)*\dir2{<},
(16.2,1)*\dir2{>},
(-12,0)*{\puncture}
\end{xy}\\\\
\cong\quad
\begin{xy}
(-12.5,0)*{\copair{}},
(0,6)*{\shortcylinder{}{}},
(0,-6)*{\flip{}{}},
(12.5,0)*{\pair{}},
(-10,0.5)*\dir2{>},
(16.2,1)*\dir2{>},
(-12,0)*{\puncture}
\end{xy}
\quad\cong\qquad
\begin{xy}
*{\pants{}{\cstop}},
(10.5,-1)*\dir2{<},
(-20,6)*{\rp{\cstop}},
(-9.5,-7)*\dir2{<},
(-10.5,-6)*{\cstart}
\end{xy}
\end{gather*}
Here the second diffeomorphism can be seen by pushing the left hole once around the M\"obius strip (so its orientation changes when it passes through the twist).

It is not too difficult to convince oneself that these relations generate all relations and hence give a sufficient set of relations.

Open KTFTs are however our main object of study. We will prove the following result for open KTFTs later as a corollary of our approach using operads.

\begin{proposition}\label{prop:oktft}
Open Klein topological field theories of dimension $2$ are equivalent to symmetric Frobenius algebras together with an involutive anti-automorphism $x\mapsto x^*$ preserving the pairing.
\end{proposition}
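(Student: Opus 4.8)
The plan is to mirror the classical proof of \autoref{prop:otft}, which identifies open $2$--dimensional TFTs with symmetric Frobenius algebras, by working with a generators-and-relations presentation of the category $\kcobo$ (identified with its skeleton) and then showing that the extra generator forced by non-orientability is precisely an involutive anti-automorphism preserving the pairing. First I would recall the presentation of $\cobo$: its skeleton has objects the natural numbers (disjoint unions of a single fixed interval $I$), and as a symmetric monoidal category it is generated by the open pair of pants (multiplication $\mu$), the open cylinder with both ends on the in-boundary or out-boundary (the unit $\eta$ and the pairing $\langle-,-\rangle$, equivalently the copairing), subject to the relations making $A=F(I)$ an associative unital algebra with a non-degenerate invariant \emph{symmetric} bilinear form — the symmetry coming from the diffeomorphism of the disc that swaps the two boundary marked points. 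This is exactly \autoref{prop:otft}, which I am entitled to assume.

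Next I would determine the additional generators and relations needed to pass from $\cobo$ to $\kcobo$. Just as in the closed case illustrated in \autoref{fig:2cob} and \autoref{fig:2kcob}, forgetting orientations means (i) there is a new morphism $I\to I$ given by the cylinder with one boundary embedded orientation-reversingly — call it $\ast$ — and (ii) there is a crosscap/M\"obius generator. The key topological input is that for \emph{open} surfaces every connected non-orientable cobordism can be reduced, by sliding crosscaps towards a free boundary arc and converting a crosscap adjacent to a free boundary into a boundary-orientation reversal, to an orientable cobordism post-composed with copies of $\ast$; this is the open analogue of the decomposition displayed after \autoref{fig:2kcob} and means no genuinely new generator beyond $\ast$ survives — the M\"obius strip with one open boundary is just $\ast$ itself. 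Then I would read off the relations: $\ast$ squares to the identity (the cylinder with both ends reversed is diffeomorphic rel boundary to the trivial cylinder), $\ast$ is compatible with $\mu$ up to the order-reversing symmetry — i.e. $(xy)^\ast=y^\ast x^\ast$, obtained by reflecting the open pair of pants exactly as in the closed-case picture but now using the interval reflection that reverses the cyclic order of the three boundary arcs — and $\ast$ preserves the pairing, $\langle x^\ast,y^\ast\rangle=\langle x,y\rangle$, from reflecting the disc with two marked boundary points. Conversely, any symmetric Frobenius algebra equipped with such a $\ast$ determines a symmetric monoidal functor on $\kcobo$ because these relations are complete.

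The main obstacle I expect is the completeness claim: showing that the relations listed actually generate \emph{all} relations in $\kcobo$, equivalently that the classification of compact surfaces with boundary and with a decomposition of the boundary into in/out/free arcs can be carried out entirely through these moves. Concretely one must check that two words in the generators representing diffeomorphic cobordisms can be connected by the relations — the delicate points being that crosscaps can be freely slid past handles and past each other once a free boundary component is present (so no independent "crosscap generator" is needed in the open setting, in contrast to the closed setting of \autoref{prop:cktft}), and that the interaction of $\ast$ with the unit and counit introduces nothing new. Rather than prove this by hand, the cleanest route — and the one the paper evidently intends, given the remark that \autoref{prop:oktft} will be "a corollary of our approach using operads" — is to defer completeness to the operadic result $\oktft\cong\modmass$ proved in \autoref{chap:mobius}: an open KTFT is the same data as an algebra over the relevant PROP/modular operad built from $\mass$, and since $\mass$ by definition governs associative algebras with an involutive anti-automorphism and its modular closure encodes the Frobenius pairing, one obtains \autoref{prop:oktft} immediately by unwinding that isomorphism. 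So in the final write-up I would state the generators-and-relations heuristic with the pictures above for intuition, and then give the actual proof as a one-line deduction from the M\"obius-graph description of $\oktft$.
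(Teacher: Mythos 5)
Your proposal ends up exactly where the paper does: \autoref{prop:oktft} is deduced as a corollary of $\oktft\cong\modmass$ (\autoref{thm:oktftmass}), since algebras over $\oktft$ are cyclic $\mass$--algebras with invariant inner product, i.e.\ symmetric Frobenius algebras with an involutive anti-automorphism preserving the pairing, with the passage back to the functorial (unital) TFT formulation handled by the observation in \autoref{rem:nounit} that only a unit and counit are added. The generators-and-relations discussion you give is the same intuition the paper sketches pictorially, and like you it defers the completeness of the relations to the M\"obius-graph proof of the operadic isomorphism, so the approaches coincide.
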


\begin{examples}
\Needspace*{3\baselineskip}\mbox{}
\begin{itemize}
\item Any matrix algebra over a field is a symmetric Frobenius algebra with $\langle A,B \rangle = \operatorname{tr} AB$. With an involution given by the transpose we obtain an open KTFT.
\item Let $G$ be a finite group. Then the group algebra $\mathbb{C}[G]$ is a symmetric Frobenius algebra with bilinear form $\langle a,b \rangle$ given by the coefficient of the identity element in $ab$. Define an involution as the linear extension of $g\mapsto g^{-1}$. This is an open KTFT.
\item If $G$ is abelian, then $\mathbb{C}[G]$ forms a closed KTFT with \mbox{$U=\frac{1}{\sqrt{|G|}}\sum g^2$}.
\end{itemize}
\end{examples}

\chapter{Preliminaries on operads}\label{chap:operads}

In this chapter we shall review the main technical machinery we will use as well as fixing our notation and conventions. We will make extensive use of the language of operads, cyclic operads and modular operads. This chapter is not intended as a complete review of operads but rather a convenient reference as well as a chance to make clear our choice of notation. We will however make some slight alterations to the usual definitions, for example a very mild generalisation of modular operads, extended modular operads. We will mainly be concerned with algebraic operads and as such the relevant references are Ginzburg--Kapranov \cite{ginzburgkapranov} which concerns Koszul duality for operads and Getzler--Kapranov \cite{getzlerkapranov} for modular operads.

\section{Some categories}

Denote by $\topol$ the symmetric monoidal category of topological spaces with the usual product of spaces.

Denote by $\vect$ the symmetric monoidal category of vector spaces over $k$ with the usual tensor product. We denote the set of morphisms from $V$ to $W$ by $\Hom(V,W)$. The category $\vect$ is a symmetric monoidal closed category with internal $\Hom$ given by $\IHom(V,W)=\Hom(V,W)$.

Denote by $\dgvect$ the symmetric monoidal category of differential cohomologically $\mathbb{Z}$--graded $k$--linear vector spaces with symmetry isomorphism $s\co V\otimes W \rightarrow W\otimes V$ given by $s(v\otimes w) = (-1)^{\degree{v}\degree{w}} w \otimes v$. Here $\degree{v}$ and $\degree{w}$ are the degrees of the homogeneous elements $v$ and $w$. We denote the set of morphisms from $V$ to $W$, which are linear maps preserving the grading and the differentials, by $\Hom(V,W)$. This has the structure of a vector space.

Let $\Sigma k$ be the one dimensional vector space concentrated in degree $-1$ with zero differential and let $\Sigma^{-1} k$ be the one dimensional vector space concentrated in degree $1$ with zero differential. We define the suspension functor by $V \mapsto \Sigma V = \Sigma k \otimes V$ and the desuspension functor by $V \mapsto \Sigma^{-1} V = \Sigma^{-1} k \otimes V$. By $\Sigma^n V$ we mean the $n$--fold suspension/desuspension of $V$. Observe that there are natural isomorphisms $\Sigma^n V \otimes \Sigma^m W \cong \Sigma^{n+m} (V \otimes W)$ induced by the maps \[\id^{\otimes n}\otimes s \otimes \id_W \co (\Sigma k)^{\otimes n} \otimes V \otimes (\Sigma k)^{\otimes m} \otimes W \longrightarrow (\Sigma k)^{\otimes n} \otimes (\Sigma k)^{\otimes m} \otimes V \otimes W.\]

Let $V,W$ be differential graded vector spaces. Denote by $\IHom(V,W)^n$ the vector space of homogeneous linear maps of degree $n$ (maps of graded vector spaces $f\co V\rightarrow \Sigma^n W$ preserving the grading but not necessarily the differential). Denote by $\IHom(V,W)=\bigoplus_n\Sigma^{-n}\IHom(V,W)^n$. This is a differential graded vector space with differential given by $dm = d_W\circ m - (-1)^{\degree{m}}m\circ d_V$ where $d_V$ and $d_W$ are the differentials on $V$ and $W$ respectively and $m$ is a homogeneous map of degree $\degree{m}$. Observe that $m$ is a chain map if both $\degree{m}=0$ and $dm=0$. In fact $\dgvect$ is then a symmetric monoidal closed category with internal $\Hom$ given by $\IHom$.

If one wishes to work with homologically graded objects then set $\Sigma k$ to be concentrated in degree $1$ and $\Sigma^{-1} k$ concentrated in degree $-1$. Then the definitions of the suspension and desuspension are the same as above. Similarly we can also work with supergraded objects, in which case $\Sigma=\Sigma^{-1}$. Everything presented will work in any of these gradings unless otherwise specified. When we wish to explicitly emphasise that we are working in the supergraded setting we will call this functor \emph{parity reversion} and denote it by $\Pi$.

From \autoref{chap:dihedral} onwards we will also on occasion consider formal vector spaces. Precisely, a differential formal $\mathbb{Z}$--graded $k$--linear vector space $V$ is an inverse limit of finite dimensional $\mathbb{Z}$--graded $k$--linear vector spaces $V_i$, so that $V=\lim_{\leftarrow} V_i$, equipped with the inverse limit topology and a continuous differential. Morphisms between such spaces are required to be continuous linear maps preserving the grading and the differentials. We denote the set of morphisms from $V$ to $W$ by $\ctsHom(V,W)$. This has the structure of a vector space.

The \emph{completed tensor product} of two formal spaces $V=\lim_{\leftarrow} V_i$ and $W=\lim_{\leftarrow}W_j$ is the formal space $V \otimes W=\lim_{\leftarrow} V_i\otimes W_j$. Denote by $\fdgvect$ the symmetric monoidal category of differential formal $\mathbb{Z}$--graded $k$--linear vector spaces with symmetry isomorphism $s\co V \otimes W\rightarrow W \otimes V$ given by $s(v \otimes w) = (-1)^{\degree{v}\degree{w}} w \otimes v$. We have suspension and desuspension functors defined in the same way as before.

In particular given $V\in\dgvect$ then $V$ is the direct limit of its finite dimensional subspaces so $V=\lim_{\rightarrow}V_i$. We write $V^*\in\fdgvect$ for the space $V^*=\lim_{\leftarrow}\IHom(V_i, k)$. Similarly given $V\in\fdgvect$ with $V=\lim_{\leftarrow}V_i$ we write $V^*\in\dgvect$ for the space $V^*=\lim_{\rightarrow}\IHom(V_i,k)$. With this convention we have $V^{**}\cong V$ and $(V\otimes W)^*\cong V^*\otimes W^*$ for any pair $V$ and $W$ both in either $\dgvect$ or $\fdgvect$. In fact the functor $V\mapsto V^*$ is an anti-equivalence of symmetric monoidal categories. In particular an algebra in the category $\fdgvect$ is the same as a coalgebra in $\dgvect$.

Given $V=\lim_\leftarrow V_i\in\fdgvect$ and $W\in\dgvect$ define $V\otimes W$ to be the space $\lim_\leftarrow V_i\otimes W$, which in general is neither formal nor discrete. However, note that in this case $(V\otimes W)^0$ is the space of linear maps from $V^*$ to $W$ preserving the grading.

For clarity we will write $\Sigma V^*$ to mean $\Sigma (V^*)$. There is a natural isomorphism $(\Sigma V)^* \cong \Sigma^{-1} V^*$.

Note that in general we will not require algebras to be unital unless stated. By an augmented associative or commutative algebra it is meant a unital algebra $A$ equipped with an algebra map $A\rightarrow k$. The augmentation ideal $A_+$ is the kernel of this map.

Given $V\in\dgvect$ we write $TV$ for the free augmented differential graded associative algebra generated by $V$ given explicitly by the tensor algebra
\[
TV = \bigoplus_{n=0}^\infty V^{\otimes n} = k \oplus V \oplus (V\otimes V) \oplus \dots
\]
and we write $SV$ for the free augmented differential graded commutative algebra generated by $V$ given explicitly by the symmetric algebra
\[
SV = \bigoplus_{n=0}^{\infty} (V^{\otimes n})_{S_n}
\]
where the coinvariants are taken with respect to the action of $S_n$ permuting the factors of $V^{\otimes n}$ via the symmetry isomorphism $s$.

Similarly given $V\in\fdgvect$ we write $\widehat{T}V$ for the free formal augmented differential graded associative algebra generated by $V$ given explicitly by the completed tensor algebra
\[
\widehat{T}V = \prod_{n=0}^{\infty} V^{\otimes n} = k \times (V\otimes V) \times \dots
\]
and we write $\widehat{S}V$ for the free formal augmented differential graded commutative algebra generated by $V$ given explicitly by the completed symmetric algebra
\[
\widehat{S}V = \prod_{n=0}^{\infty} (V^{\otimes n})_{S_n}.
\]

These algebras all have an alternative natural grading determined by the order of the tensors and when we wish to refer to this grading we will refer to the order of an element rather than the degree. Furthermore we write $T_{\geq n}V$ (or similar notation for the other algebras) for the subalgebra of elements of order $n$ and above. This gives a decreasing filtration of these algebras.

\section{Trees, graphs, operads and modular operads}
In this section we will outline the notation we will use and recall for convenience the definitions of (modular) operads with some minor modifications. For full details see Ginzburg and Kapranov \cite{ginzburgkapranov} and Getzler and Kapranov \cite{getzlerkapranov}.

We need the notions of graphs and trees. A graph is what we expect but we allow graphs with external half edges (legs). Precisely a graph can be defined as follows:

\begin{definition}
A graph $G$ consists of the following data:
\begin{itemize}
\item Finite sets $\vertices(G)$ and $\halfedges(G)$ with a map $\lambda\co\halfedges(G)\rightarrow\vertices(G)$
\item An involution $\sigma\co\halfedges(G)\rightarrow\halfedges(G)$
\end{itemize}
The set $\vertices(G)$ is the set of \emph{vertices} of $G$ and $\halfedges(G)$ is the set of \emph{half edges} of $G$. A half edge $a$ is connected to a vertex $v$ if $\lambda(a)=v$. We denote the set of half edges connected to $v$ by $\flags(v)$ and we write $n(v)$ for the cardinality of $\flags(v)$ (the \emph{valence} of $v$). Two half edges $a\neq b$ form an \emph{edge} if $\sigma(a)=b$. The set $\edges(G)$ is the set of unordered pairs of half edges forming an edge. We call half edges that are fixed by $\sigma$ the \emph{legs} of $G$ and denote the set of legs as $\legs(G)$.
\end{definition}

\begin{definition}
An isomorphism of graphs $f\co G\rightarrow G'$ consists of bijections $f_1\co\vertices(G)\rightarrow\vertices(G')$ and  $f_2\co\halfedges(G)\rightarrow\halfedges(G')$ satisfying $\lambda\circ f_2 = f_1\circ\lambda$ and $\sigma\circ f_2 = f_2\circ\sigma$.
\end{definition}

Given a graph $G$ we can associate a finite $1$--dimensional cell complex $|G|$ in the obvious way with $0$--cells corresponding to vertices and the ends of legs and $1$--cells corresponding to edges and legs. We say $G$ is \emph{connected} if $|G|$ is connected.

\begin{definition}
By a \emph{tree} we mean a connected graph $T$ with at least $2$ legs such that $\dim{H_1(|T|)}=0$ (equivalently $|T|$ is contractible).
\end{definition}

\begin{definition}
A \emph{labelled graph} is a connected non-empty graph $G$ together with a labelling of the $n$ legs of $G$ by the set $\{1,\dots,n\}$ and a map $g\co \vertices(G)\rightarrow \mathbb{Z}_{\geq 0}$. We call the value $g(v)$ the \emph{genus} of $v$. The genus of a labelled graph $G$ is defined by the formula:
\[g(G) = \dim{H_1(|G|)}+\sum_{v\in\vertices(G)}g(v)\]
Clearly this is the number of loops in the graph obtained by gluing $g(v)$ loops to each vertex $v$ of the underlying graph and contracting all internal edges that are not loops. A vertex of a labelled graph is called \emph{stable} if $2g(v)+n(v)>2$. A labelled graph is called stable if all its vertices are stable. An extended stable graph is defined in the same way except a vertex is called extended stable if $2g(v)+n(v) \geq 2$. An isomorphism of labelled graphs is an isomorphism of graphs preserving the label of each leg and the genus of each vertex.
\end{definition}

\begin{definition}
 By a \emph{labelled tree} we mean a tree $T$ with $n+1\geq 2$ legs with a labelling of the legs by the set $\{1,\dots,n+1\}$. Given such a labelled tree we call the leg labelled by $n+1$ the output or root of $T$ and the other legs the inputs of $T$, denoted $\inputs(T)$. This induces  a direction on the tree where each half edge is directed towards the output and given a vertex $v$ we write $\inputs(v)\subset\flags(v)$ for the set of $n(v)-1$ incoming half edges at $v$. Note that $n(v)\geq 2$ for all vertices $v$. We call $v$ \emph{reduced} if $n(v)>2$. We call a labelled tree reduced if all its vertices are reduced. An isomorphism of labelled trees is an isomorphism of trees that preserves the labelling. 

We denote by $\edges^+(T)=\edges(T)\cup\inputs(T)$ the set of internal edges together with the inputs of $T$.
\end{definition}

\begin{remark}
Note that a labelled tree is equivalent to an extended stable graph of genus $0$ (by assigning a genus of $0$ to each vertex). Reduced trees can then be thought of as stable graphs of genus $0$. We use the term `reduced' as opposed to `stable' here to emphasise the fact that we do not consider the vertices as having a genus.
\end{remark}

Given a labelled graph $G$ we denote by $G/e$ the labelled graph obtained by contracting the internal edge $e$. The genus of each of the vertices of $G/e$ is defined in the natural way, so that the overall genus of the graph remains constant. More precisely, if we contract an edge $e$ connected to two different vertices $v_1$ and $v_2$ into a single vertex $v$ then we set $g(v)=g(v_1)+g(v_2)$. If we contract an edge $e$ connected to a single vertex $v$ (so $e$ is a loop) then the genus of $v$ increases by one.

Observe that if we contract multiple edges it does not matter (up to isomorphism) in which order we contract them. We write $\Gamma((g,n))$ for the category of extended stable graphs of genus $g$ with $n$ legs with morphisms generated by isomorphisms of labelled graphs and edge contractions. For a labelled tree $T$ we define $T/e$ similarly. We denote by $T((n))$ the category of trees with $n$ legs. By an $n$--tree we mean a tree with $n$ inputs (equivalently $n+1$ legs). We denote by $T(n)$ the category of $n$--trees. Note that $T(n)$ is isomorphic to $T((n+1))$ and $\Gamma((0,n+1))$.

We can glue graphs with legs. If $G'$ has $n>0$ legs and $G$ has $m>0$ legs then we write $G\circ_i G'$ for the graph obtained by gluing the leg of $G'$ labelled by $n$ to the leg of $G$ labelled by $i$. For trees this corresponds to gluing the output of one tree to the $i$--th input of the other.

Fix $\mathcal{C}$ to be one of the symmetric monoidal categories $\vect$, $\dgvect$ or $\topol$.

\begin{definition}
\Needspace*{3\baselineskip}\mbox{}
\begin{itemize}
\item An \emph{$S$--module} is a collection $V=\{V(n) : n\geq 1\}$ with $V(n)\in\ob\mathcal{C}$ equipped with a left action of $S_n$ (the symmetric group on $n$ elements) on $V(n)$.
\item A \emph{cyclic $S$--module} is a collection $U=\{U((n)) : n\geq 2\}$ with $U((n))\in\ob\mathcal{C}$ equipped with a left action of $S_n$ on $U((n))$.
\item An \emph{extended stable $S$--module}\footnote{This differs slightly from the definition in \cite{chuanglazarev} since we also allow the pair $(g,n)=(1,0)$. This makes very little difference in practice however.} is a collection $W=\{W((g,n)) : n,g \geq 0\}$ with $W((g,n))\in\ob\mathcal{C}$ equipped with a left action of $S_n$ on $W((g,n))$ and where $W((g,n))=0$ whenever $2g+n \leq 1$. We call an extended stable $S$--module a \emph{stable $S$--module} if $W((g,n))=0$ whenever $2g+n \leq 2$.
\end{itemize}
A morphism of (cyclic/extended stable) $S$--modules is given by a collection of $S_n$--equivariant morphisms.
\end{definition}

\begin{remark}\label{rem:cyclic}
Note that a cyclic $S$--module can also be defined as an $S$--module $V$ with an action of $S_{n+1}$ extending the action of $S_n$ on $V(n)$. This can be seen by setting $V((n))=V(n-1)$. Similarly given a cyclic $S$--module $U$, by restricting to the action of $S_{n}\subset S_{n+1}$ on $U(n)=U((n+1))$ we see that a cyclic $S$--module has an underlying $S$--module.
\end{remark}

Given an $S$--module $V$ and a finite set $I$ with $n$ elements we define
\[
V(I)=\left(\bigoplus_{f\in\iso([n],I)}V(n)\right)_{S_n}
\]
the coinvariants with respect to the simultaneous action of $S_n$ on $\iso([n],I)$ and $V(n)$ (where $[n]=\{1,\dots,n\}$). Similarly given a cyclic $S$--module $U$ we define
\[
U((I))=\left(\bigoplus_{f\in\iso([n],I)}U((n))\right)_{S_n}
\]
and given an extended stable $S$--module $W$ we define:
\[
W((g,I))=\left(\bigoplus_{f\in\iso([n],I)}W((g,n))\right)_{S_n}
\]

\begin{remark}
For simplicity we have used direct sums above since we shall normally be working in the category of (differential graded) vector spaces. More generally one should use coproducts so, for example, in the case that $V$ is an $S$--module in $\topol$ direct sums in the above definitions are replaced by disjoint unions.
\end{remark}

If $T$ is a labelled tree and $V$ is an $S$--module then we define the space of $V$--decorations on $T$ as:
\[
V(T) = \bigotimes_{v\in\vertices(T)}V(\inputs(v))
\]
Similarly for $U$ a cyclic $S$--module the space of $U$--decorations on $T$ is
\[
U((T)) = \bigotimes_{v\in\vertices(T)}U((\flags(v)))
\]
and for $W$ an extended stable module and $G$ an extended stable graph we define the space of $W$--decorations on $G$ as:
\[
W((G)) = \bigotimes_{v\in\vertices(G)}W((g(v),\flags(v)))
\]

Given an isomorphism of labelled graphs $G\rightarrow G'$ or labelled trees $T\rightarrow T'$ there are induced isomorphisms on the corresponding spaces of decorations. 

Note that if $W$ is a stable $S$--module, then $W((G))=0$ unless $G$ is also stable.

\begin{definition}
We define an endofunctor $\mathbb{O}$ on the category of $S$--modules by the formula:
\[
\mathbb{O}V(n)=\operatorname*{colim}_{T\in\iso T(n)}V(T)
\]

We define an endofunctor $\mathbb{C}$ on the category of cyclic $S$--modules by the formula:
\[
\mathbb{C}U((n))=\operatorname*{colim}_{T\in\iso T((n))}U((T))
\]

We define an endofunctor $\mathbb{M}$ on the category of extended stable $S$--modules by the formula:
\[
\mathbb{M}W((g,n))=\operatorname*{colim}_{G\in\iso\Gamma((g,n))}W((G))
\]

Each of these endofunctors can be given the structure of a monad (triple) in the natural way as shown by Getzler and Kapranov \cite{getzlerkapranov}. We call an algebra over these monads an operad, a cyclic operad and an extended modular operad respectively.  A modular operad is an extended modular operad whose underlying $S$--module is stable.
\end{definition}

\begin{convention}
We use the term `extended modular operad' to bring our definitions closer to \cite{getzlerkapranov,chuanglazarev}. However we are not concerned with the distinction between a modular operad and an extended modular operad. Therefore we will from now on use the term `modular operad' to mean extended modular operad unless explicitly stated otherwise.
\end{convention}

\begin{remark}\label{rem:classicaloperaddefs}
We can unpack these somewhat technical definitions to gain more concrete descriptions closer to the classical definition of operads.
\begin{itemize}
\item An operad is an $S$--module $\mathcal{P}$ together with composition maps $\circ_i\co\mathcal{P}(n)\otimes\mathcal{P}(m)\rightarrow\mathcal{P}(n+m-1)$ for $n,m\geq 1$, $1\leq i\leq n$. These maps must satisfy equivariance and associativity conditions.
\item A cyclic operad is a cyclic $S$--module $\mathcal{Q}$ together with composition maps $\circ_i\co\mathcal{Q}((n))\otimes\mathcal{Q}((m))\rightarrow\mathcal{Q}((n+m-2))$ for $n,m\geq 2$, $1\leq i\leq n$. These maps must satisfy equivariance and associativity conditions.
\item A modular operad is an extended stable $S$--module $\mathcal{O}$ together with composition maps $\circ_i\co\mathcal{O}((g,n))\otimes\mathcal{O}((g',m))\rightarrow\mathcal{O}((g+g',n+m-2))$ for  $n,m\geq 1$, $1\leq i \leq n$ and contraction maps $\xi_{ij}\co\mathcal{O}((g,n))\rightarrow\mathcal{O}((g+1,n-2))$ for $n\geq 2$, $1\leq i\neq j \leq n$. These maps must satisfy equivariance and associativity conditions.
\end{itemize}
\end{remark}

We can understand the associativity and equivariance conditions mentioned in \autoref{rem:classicaloperaddefs} in a simple way using trees and graphs as in \cite{getzlerkapranov,ginzburgkapranov}. Given a tree $T$ with a vertex $v$ with $n(v)=n$ and an $S$--module $V$ we observe that choosing a particular direct summand representing $V(\inputs(v))$ is equivalent to choosing a labelling of $\inputs(v)$ by the set $[n-1]$. Similarly given a cyclic $S$--module $U$ choosing a particular direct summand representing $U((\flags(v)))$ is equivalent to choosing a labelling of $\flags(v)$ by $[n]$. Given an extended stable graph $G$ with a vertex $v$ and an extended stable module $W$ choosing a particular direct summand representing $W((g(v),\flags(v)))$ is equivalent to choosing a labelling of $\flags(v)$ by $[n]$.

By choosing appropriate labellings of $\inputs(v)$ or $\flags(v)$ at two vertices connected by the edge $e$ in a tree $T$ we can use the composition maps of an operad $\mathcal{P}$ or a cyclic operad $\mathcal{Q}$ to define a map $\mathcal{P}(T)\rightarrow\mathcal{P}(T/e)$ or $\mathcal{Q}((T))\rightarrow\mathcal{Q}((T/e))$ in the obvious way by considering $\circ_i$ as gluing the output (labelled by $n(v)$ in the cyclic case) at one vertex to the $i$--th input/leg (the leg labelled by $i$) at the other vertex. The equivariance condition simply says that this is well defined regardless of the particular labellings (choice of direct summands) we choose. The associativity condition corresponds to these maps assembling to a well defined functor on $T(n)$ or $T((n))$. Precisely this simply means that no matter in which order we contract the edges of a tree $T$, the induced map on $\mathcal{P}(T)$ or $\mathcal{Q}((T))$ is the same.

In the case of modular operads the same applies but since we are using graphs the edge $e$ could be a loop at a vertex $v$ and then we must use the contraction maps, considering $\xi_{ij}$ as gluing together the half edges making up $e$ labelled by $i$ and $j$ to define a map $\mathcal{O}((G))\rightarrow\mathcal{O}((G/e))$.

\begin{definition}
A \emph{unital} operad is an operad $\mathcal{P}$ with an element $1\in\mathcal{P}(1)$ such that $1\circ_1 a=a=a\circ_i 1$ for any $a\in\mathcal{P}(n)$ with $n\geq 1$ and $1\leq i\leq n$.
\end{definition}

For completeness we note the following lemma/alternative definition which follows from considering \autoref{rem:cyclic}. This allows us to ask whether an operad can be given the additional structure of a cyclic operad.

\begin{lemma}\label{lem:cyclic}
A cyclic operad is a cyclic $S$--module $\mathcal{Q}$ whose underlying $S$--module has the structure of an operad such that $(a\circ_m b)^* = b^*\circ_1 a^*$ for any $a\in\mathcal{Q}(m), b\in\mathcal{Q}(n)$ where $c^*$ is the result of applying the cycle $(1\quad 2\,\dots\, n+1)\in S_{n+1}$  to $c\in \mathcal{Q}(n)=\mathcal{Q}((n+1))$
\end{lemma}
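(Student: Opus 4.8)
The plan is to recognise the asserted description as a direct repackaging of the monad $\mathbb{C}$, by means of the equivalence $T(n)\cong T((n+1))$ noted above: a cyclic tree with $n+1$ legs is the same thing as a rooted tree with $n$ inputs once we single out the leg labelled $n+1$ as the root. Writing $V(k):=\mathcal{Q}((k+1))$ for the underlying $S$--module (as in \autoref{rem:cyclic}), singling out a root leg matches the input set $\inputs(v)$ at each vertex with the flag set $\flags(v)$, and the only discrepancy---that $V(\inputs(v))$ carries an $S_{n(v)-1}$--symmetry whereas $\mathcal{Q}((\flags(v)))$ carries an $S_{n(v)}$--symmetry---is exactly absorbed by the cyclic $S$--module structure on $\mathcal{Q}$. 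Hence $V(T)\cong\mathcal{Q}((T))$ for every tree $T$, and therefore $\mathbb{O}V(n)\cong\mathbb{C}\mathcal{Q}((n+1))$, naturally. The upshot is that an $\mathbb{O}$--algebra structure on $V$ is the same as a $\mathbb{C}$--algebra structure on $\mathcal{Q}$ \emph{except} that the latter's structure map must be equivariant for the extra $S_{n+1}$--action carried by the colimit, and the relation $(a\circ_m b)^*=b^*\circ_1 a^*$ is precisely the shadow of that requirement.

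First I would dispatch the easy direction. Given a cyclic operad structure on $\mathcal{Q}$, restricting its structure map along the isomorphism above makes $V$ an $\mathbb{O}$--algebra, i.e.\ an operad, and unwinding the colimit identifies $\circ_i$ with cyclic composition into a fixed slot. The relation then merely records that $a\circ_m b$ and $b^*\circ_1 a^*$ name the image of one and the same decoration of the single edge tree obtained by gluing $a$ to $b$, evaluated after relabelling the legs by the cyclic permutations; this coincidence is forced by the $S_{m+1}$-- and $S_{n+1}$--equivariance built into $\mathbb{C}$, equivalently by naturality of the structure map with respect to the tree isomorphism that moves which leg is the root. No real calculation intervenes here.

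The substance is the converse. Suppose $V$ carries an operad structure and the relation holds. I would define the candidate $\mathbb{C}$--algebra structure map $\mathbb{C}\mathcal{Q}((n+1))\to\mathcal{Q}((n+1))$ on a decorated cyclic tree $T$ by choosing \emph{some} leg as root, contracting all internal edges by means of the operad composition, and applying the element of $S_{n+1}$ that repairs the labelling. The content is to check this is well defined, which splits into (i) independence of the choice of root and (ii) compatibility of the resulting maps---the associativity, equivariance and unit axioms of a $\mathbb{C}$--algebra. For (ii) the monad formalism reduces everything to one-- and two--edge trees; the one--edge case is (i), and the two--edge cases follow from operad associativity and equivariance together with the relation. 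Step (i) is the crux: by operad associativity one may contract every edge not lying on the path between the two candidate root legs first, thereby reducing to a tree with a single internal edge joining two vertices decorated by $a$ and $b$, where re-rooting across that edge is exactly the move whose invariance is $(a\circ_m b)^*=b^*\circ_1 a^*$ (the cases where the gluing uses a non--final slot being recovered from this one via the $S_{m+1}$-- and $S_{n+1}$--actions). Since any two rootings of a tree are joined by a chain of such elementary re-rootings, the construction descends to the colimit. Finally, the two constructions are manifestly mutually inverse---one restricts to a fixed root, the other extends symmetrically over all roots---so the identification is complete. The main obstacle is precisely step (i): organising the combinatorics of re-rooting a decorated tree so that the single relation, propagated through operad equivariance and associativity, really does force root-independence; everything else is bookkeeping.
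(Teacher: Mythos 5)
The paper itself offers no argument for this lemma beyond the remark that it ``follows from considering \autoref{rem:cyclic}'', so there is nothing to compare line by line; your proposal supplies exactly the verification that the paper leaves implicit, and it is the standard one (it is essentially how cyclic operads were originally axiomatised by Getzler--Kapranov): identify $T(n)$ with $T((n+1))$ so that $V(T)\cong\mathcal{Q}((T))$, define the would-be $\mathbb{C}$--algebra structure map by rooting a decorated tree and contracting with the operadic $\circ_i$, and show independence of the rooting. Your reduction is sound, with one point worth tightening: it is not literally true that any two rootings are joined by a chain of re-rootings \emph{across single internal edges}, since the intermediate ``rootings'' along the path are not at legs of the tree, and the two candidate root legs may even sit at the same vertex. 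The clean way to organise this is a simultaneous induction on the number of internal edges, proving root-independence and full $S_{n+1}$--equivariance together: the corolla case is the cyclic $S$--module structure, and the inductive step contracts a chosen edge last, so that the comparison of two rootings becomes a statement about a single composition $a\circ_i b$ of the two side-composites. For that one needs not only the stated relation $(a\circ_m b)^*=b^*\circ_1 a^*$ but also its consequences for the other slots (identities of the shape $(a\circ_i b)^*=a^*\circ_{i+1}b$ for $i<m$), which are derived from the stated relation together with ordinary operadic $S_m$--equivariance and the group structure of the symmetric group actions --- this is what your parenthetical about recovering non-final slots gestures at, and once it is made explicit the same-vertex case is covered as well. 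With that bookkeeping spelled out, your argument is complete and is the intended proof.
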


There is clearly a functor from cyclic operads to operads. Given a modular operad $\mathcal{O}$, the genus $0$ part consisting of the spaces $\mathcal{O}((0,n))$ forms a cyclic operad. This gives a functor from modular operads to cyclic operads. If $\mathcal{Q}$ is a cyclic operad then the \emph{modular closure}\footnote{This is also sometimes called the modular envelope and denoted $\mathbf{Mod}(\mathcal{Q})$ as in \cite{costello1}.} $\modc{\mathcal{Q}}$ is the left adjoint functor to this functor and the \emph{na\"ive closure} $\underline{\mathcal{Q}}$ is the right adjoint.

The modular closure is obtained from $\mathcal{Q}$ by freely adjoining the contraction maps and imposing only those relations necessary for associativity and equivariance to still hold. The na\"ive closure is obtained by setting all contraction maps to zero.

\begin{definition}
Let $\mathcal{C}$ be one of $\vect$ or $\dgvect$ and let $V\in\ob\mathcal{C}$. The \emph{endomorphism operad} of $V$, denoted $\End[V]$, is defined as having underlying $S$--module given by setting $\End[V](n)=\IHom(V^{\otimes n},V)$ with the natural action of $S_n$. Composition maps are given by composing morphisms in the obvious way.

Now assume we have a symmetric non-degenerate bilinear form $\langle -, - \rangle\co V\otimes V\rightarrow k$. We define the \emph{endomorphism cyclic operad} of $V$ as having underlying cyclic $S$--module $\mathcal{E}[V]((n))=V^{\otimes n}$ with the natural action of $S_n$. If $a\in V^{\otimes n}$ and $b\in V^{\otimes m}$ then $a\circ_i b\in V^{\otimes(n+m-2)}$ is defined by contracting $a\otimes b$ with the bilinear form, applied to the $i$--th factor of $a$ and the $m$--th factor of $b$. Using the isomorphism $V^{\otimes(n+1)}\cong\IHom(V^{\otimes n},V)$ we see the underlying operad of the endomorphism cyclic operad is just the endomorphism operad. We define the \emph{endomorphism modular operad} as having underlying $S$--module $\mathcal{E}[V]((g,n))=V^{\otimes n}$ with composition maps defined as for the endomorphism cyclic operad and for $a\in\mathcal{E}[V]((g,n))$ we define $\xi_{ij}(a)\in\mathcal{E}[V]((g+1,n-2))$ by contracting the $i$--th factor and the $j$--th factor of $a$ using the bilinear form.
\end{definition}

\begin{definition}
Given an operad $\mathcal{P}$ in $\vect$ or $\dgvect$ an algebra over $\mathcal{P}$ is a vector space/differential graded vector space $V$ together with a morphism of operads $\mathcal{P}\rightarrow\End[V]$. Similarly an algebra over a cyclic/modular operad $\mathcal{O}$ is a vector space/differential graded vector space $V$ with a symmetric non-degenerate bilinear form $B$, together with a morphism of cyclic/modular operads $\mathcal{O}\rightarrow\mathcal{E}[V]$.
\end{definition}

Clearly algebras over various types of operads can be given by a collection of maps in $\IHom(V^{\otimes n},V)$ satisfying certain conditions.

\begin{remark}
We will refer to operads in the category $\dgvect$ as dg operads and operads in the category $\topol$ as topological operads. Obviously by considering a vector space as concentrated in degree $0$ with the zero differential we can consider $\vect$ as a subcategory of $\dgvect$ and hence an operad in $\vect$ can be considered as a dg operad.
\end{remark}

\section{Koszul duality for operads}
We now restrict ourselves to operads and recall the theory of Koszul duality from \cite{ginzburgkapranov}. Let $k$ be a field and let $K$ be an associative unital $k$--algebra. All our operads in this section are required to be unital.

\begin{definition}
A \emph{$K$--collection} is a collection $E=\{E(n) : n\geq 2\}$ of $k$--vector spaces equipped with the following structures:
\begin{itemize}
\item A left $S_n$ action on $E(n)$ for each $n\geq 2$
\item A $(K,K^{\otimes n})$--bimodule structure on $E(n)$ that is compatible with the $S_n$ action. This means for any $\sigma\in S_n$, $\mu,\lambda_i\in K$ and $a\in E(n)$ we have $\sigma(\mu a) = \mu\sigma(a)$ and
\[
\sigma(a(\lambda_1\otimes\ldots\otimes\lambda_n)) = \sigma(a)(\lambda_{\sigma(1)}\otimes\ldots\otimes\lambda_{\sigma(n)})
\]
\end{itemize}
\end{definition}

By setting $E(1)=K$, a $K$--collection should be thought of as an $S$--module $E$ together with composition maps $\circ_i\co E(n)\otimes E(1)\rightarrow E(n)$ and $\circ_1\co E(1)\otimes E(n)\rightarrow E(n)$ satisfying associativity and equivariance conditions. A morphism of $K$--collections is then a morphism of the underlying $S$--modules that preserve these composition maps.

Given a reduced tree $T$ and a $K$--collection $E$ we define
\[
E(T)=\bigotimes_{v\in \vertices(T)}E(\inputs(v))
\]
where the tensor product is taken over $K$ using the $(K,K^{\otimes\inputs(v)})$--bimodule structure on each $E(\inputs(v))$. Given an isomorphism of trees $T\rightarrow T'$ we have an induced isomorphism $E(T)\rightarrow E(T')$.

Clearly if $\mathcal{P}$ is an operad with $\mathcal{P}(1)=K$ then $\{\mathcal{P}(n) : n\geq 2\}$ is a $K$--collection. Given a $K$--collection $E$ we can form the free operad $F(E)$ consisting of $E$--decorated reduced trees with composition given by gluing trees. More precisely, denoting the category of reduced $n$--trees by $\widehat{T}(n)$, we set
\[
F(E)(n) = \operatorname*{colim}_{T\in\iso \widehat{T}(n)}E(T)
\]
and compositions are induced by the natural maps
\[\circ_i\co E(T)\otimes E(T')\rightarrow E(T)\otimes_K E(T')\cong E(T\circ_i T')\]
where the tensor product over $K$ is using the right $K$--module structure on $E(T)$ corresponding to the $i$--th input.

Let $K$ be semisimple and let $E$ be a finite dimensional $K$--collection with $E(n)=0$ for $n>2$. We will denote the $(K,K^{\otimes 2})$--bimodule also by $E$. Let $R\subset F(E)(3)$ be an $S_3$--stable $(K,K^{\otimes 3})$--sub-bimodule. Let $(R)$ be the ideal in $F(E)$ generated by $R$. We define an operad $\mathcal{P}(K,E,R)=F(E)/(R)$. An operad of type $\mathcal{P}(K,E,R)$ is called a \emph{quadratic operad}.

\begin{definition}
Given a $(K,K^{\otimes n})$--bimodule $E$ with a compatible $S_n$ action we denote by $E^{*}=\Hom_K(E,K)$ the space of (left) $K$--linear maps. This has the natural structure of a $(K^{\mathrm{op}},(K^{\mathrm{op}})^{\otimes n})$--bimodule with the transposed action of $S_n$. We can also equip it with the transposed action of $S_n$ twisted by the sign representation in which case we denote it $E^{\vee}=\Hom_K(E,K)\otimes\mathrm{sgn}_n$.
\end{definition}

\begin{definition}
Given a quadratic operad $\mathcal{P}(K,E,R)$ we can form a $K^{\mathrm{op}}$--collection from $E^{\vee}$. Observe that $F(E^{\vee})(3)=F(E(3))^{\vee}$. Let $R^{\perp}\subset F(E^{\vee})(3)$ be the orthogonal complement of $R$, which is an $S_3$--stable $(K,K^{\otimes 3})$--sub-bimodule. We define the dual quadratic operad $\mathcal{P}^!$ to be
\[
\mathcal{P}^!=\mathcal{P}(K^{\mathrm{op}},E^{\vee},R^{\perp})
\]
\end{definition}

We next briefly recall the definitions and results on the cobar construction and the dual dg operad, full details of which can be found in Ginzburg and Kapranov \cite{ginzburgkapranov}. Recall that for a dg operad $\mathcal{P}$ the cobar construction is the operad $F(\mathcal{P}^*[-\mathrm{1}])$ with differential coming from the internal differential and the unique differential dual to the composition of $\mathcal{P}$. Here we give the construction explicitly.

Let $V$ be a finite dimensional vector space. We denote by $\Det(V)$ the top exterior power of $V$. Given a tree $T$ we set $\det(T)=\Det(k^{\edges(T)})$\label{subsec:detT} and $\Det(T)=\Det(k^{\edges^+(T)})$. We denote by $|T|$ the number of internal edges of $T$.

Let $\mathcal{P}$ be a dg operad with $\mathcal{P}(n)$ finite dimensional and $K=\mathcal{P}(1)$ a semisimple unital $k$--algebra concentrated in degree $0$. We call such a dg operad \emph{admissible} and denote the category of admissible dg operads by $\dgop(K)$. For $n\geq 2$ we construct complexes $C'(\mathcal{P})(n)^s = 0 $ for $s\leq 0$ and
\[
C'(\mathcal{P})(n)^s = \bigoplus_{\substack{n\text{--trees }T\\|T|=s-1}}\mathcal{P}(T)^*\otimes\det(T)
\]
where the direct sums are over isomorphism classes of reduced trees and $\mathcal{P}(T)$ is defined by considering the underlying dg $K$--collection of $\mathcal{P}$ (and so tensor products are taken over $K$).

To define the differential $\delta$ recall if $T_2=T_1/e$ is obtained by contracting an internal edge $e$, we have a composition map $\gamma_{T_1,T_2}\co\mathcal{P}(T_1)\rightarrow\mathcal{P}(T_2)$. We define $\delta$ on the direct summands by maps
\[
\delta_{T'}\co\mathcal{P}(T')^*\otimes\det(T')\rightarrow\bigoplus_{\substack{n\text{--trees }T\\|T|=i+1}}\mathcal{P}(T)^*\otimes\det(T)
\]
for $T'$ an $n$--tree with $|T'|=i$, with
\[
\delta_{T'}=\bigoplus_{\substack{(T,e)\\T'=T/e}}(\gamma_{T,T'})^*\otimes l_e
\]
and $l_e\co\det(T')\rightarrow\det(T)$ is defined by:
\[
l_e(f_1\wedge\ldots\wedge f_i)=e\wedge f_1\wedge\ldots\wedge f_i
\]

Since $\mathcal{P}$ is a dg operad each term of the complex just defined has an internal differential $d$. This is compatible with $\delta$ and we write $C(\mathcal{P})(n)^{\bullet}$ for the total complex of the double complex. These complexes together form a dg $K^{\mathrm{op}}$--collection $C(\mathcal{P})$. 

\begin{definition}
It can be shown (by comparing to the operad $F(\mathcal{P^*[-\mathrm{1}]})$) that $C(\mathcal{P})$ has a natural structure of a dg operad. We call this operad the \emph{cobar construction} of $\mathcal{P}$.
\end{definition}

Let $T$ and $T'$ be $n$--trees and $m$--trees respectively with $|T|=p$ and $|T'|=q$. Composition can be obtained explicitly using the maps $\circ_i\co(\mathcal{P}(T)\otimes\det(T))\otimes(\mathcal{P}(T')\otimes\det(T'))\rightarrow\mathcal{P}(T\circ_i T')\otimes\det(T\circ_i T')$ given by\label{subsec:cobar}
\[
\begin{split}
(a_1\otimes\ldots\otimes a_{p+1})\otimes(e_1\wedge\ldots\wedge e_p)\circ_i(b_1\otimes\ldots\otimes b_{q+1})(f_1\wedge\ldots\wedge f_q)
\\=
(a_1\otimes\ldots\otimes a_{p+1}\otimes b_1\otimes\ldots\otimes b_{q+1})\otimes(e_1\wedge\ldots\wedge e_p\wedge f_1\wedge\ldots\wedge f_q\wedge e)
\end{split}
\]
where $e$ is the new internal edge formed from gluing the root of $T'$ to the $i$--th input of $T$.

\begin{definition}\label{def:dualdg}
The \emph{dual dg operad} $\dual\mathcal{P}$ is defined as
\[
\dual\mathcal{P}=C(\mathcal{P})\otimes\Lambda
\]
where $\Lambda$ is the determinant operad with $\Lambda(n)=k$ concentrated in degree $1-n$ carrying the sign representation of $S_n$.
\end{definition}

Further, from the definitions, it follows that $\mathcal{P}\mapsto \dual\mathcal{P}$ extends to a contravariant functor $\dual\co\dgop(K)\rightarrow\dgop(K^{\mathrm{op}})$ which takes quasi-isomorphisms to quasi-isomorphisms.

\begin{remark}\label{rem:dualdg}
$\dual\mathcal{P}$ can also be obtained from the cobar construction by shifting the grading by $1-n$, twisting by the sign representation and introducing a sign $(-1)^{(m-1)i-1}$ to the composition $\circ_i\co\dual\mathcal{P}(n)\otimes\dual\mathcal{P}(m)\rightarrow\dual\mathcal{P}(n+m-1)$. If $\mathcal{P}$ is an admissible dg operad concentrated in degree $0$ then the highest non-zero term of $\dual\mathcal{P}$ is in degree $0$ and is given by:
\[
\dual\mathcal{P}(n)^0=\bigoplus_{\substack{n\text{--trees }T\\|T|=n-2}}\mathcal{P}(T)^*\otimes\Det(T)
\]
\end{remark}

To justify the notion of duality we have the following shown by Ginzburg and Kapranov \cite{ginzburgkapranov}:

\begin{theorem}
Let $\mathcal{P}$ be an admissible dg operad. Then there is a canonical quasi-isomorphism $\dual\dual\mathcal{P}\rightarrow\mathcal{P}$.
\end{theorem}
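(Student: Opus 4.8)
The plan is to exhibit the map of the statement concretely, following Ginzburg and Kapranov, and to reduce the quasi-isomorphism claim to the acyclicity of an elementary Koszul-type complex attached to the edge set of a tree. First I would unwind $\dual\dual\mathcal{P}=C\bigl(C(\mathcal{P})\otimes\Lambda\bigr)\otimes\Lambda$ combinatorially. A direct summand of $C(C(\mathcal{P})\otimes\Lambda)(n)$ is indexed by a reduced $n$--tree $S$ together with, at each vertex $v$, a summand of $\bigl(C(\mathcal{P})(\inputs(v))\otimes\Lambda(\inputs(v))\bigr)^{*}$; admissibility makes every $\mathcal{P}(T_{v})$ finite dimensional, so $\mathcal{P}(T_{v})^{**}\cong\mathcal{P}(T_{v})$ and such a summand is the same datum as a reduced tree $T_{v}$ decorated by $\mathcal{P}(T_{v})$, up to determinant lines. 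Grafting the $T_{v}$ into the vertices of $S$ yields a single reduced $n$--tree $T$ with $\edges(T)=\edges(S)\sqcup\bigsqcup_{v}\edges(T_{v})$, and this is a bijection between such two--level trees $(S,(T_{v}))$ and pairs $(T,F)$ with $F=\edges(S)\subseteq\edges(T)$ an arbitrary subset of the internal edges. Hence, as a graded vector space, $\dual\dual\mathcal{P}(n)\cong\bigoplus_{T}\bigoplus_{F\subseteq\edges(T)}\mathcal{P}(T)\otimes\ell(T,F)$ over reduced $n$--trees $T$, the line $\ell(T,F)$ being assembled from the various copies of $\det$, $\Det$ and $\Lambda$. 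The sign twist in $E^{\vee}$ and the twist by $\Lambda$ built into $\dual$ are precisely what is needed so that all the $\ell(T,F)$ are canonically identified with one line, independent of $F$, sitting in degree $0$ at the $n$--corolla $C_{n}$, the unique $n$--tree with no internal edge.

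Next I would write down the map $\psi\colon\dual\dual\mathcal{P}\to\mathcal{P}$. On the summand with $F=\edges(T)$ — equivalently, all $T_{v}$ are corollas and $S=T$ — set $\psi$ to be the total operad composition $\gamma_{T}\colon\mathcal{P}(T)\to\mathcal{P}(n)$ along all edges of $T$ (via the canonical trivialisation of the line), and set $\psi=0$ on every summand with $F\subsetneq\edges(T)$. That $\psi$ is a morphism of dg operads follows from the explicit composition formula for the cobar construction, which grafts the outer trees and wedges in the new internal edge, together with associativity of composition in $\mathcal{P}$; that $\psi$ is a chain map reduces, on the summands with $|F|=|\edges(T)|-1$, to the identity $\gamma_{T}=\gamma_{T/e}\circ(\text{contract }e)$, which is again operad associativity, the two terms so produced cancelling by the sign conventions.

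Then, for the quasi-isomorphism, I would describe the differential on $\dual\dual\mathcal{P}(n)$ in the $(T,F)$ picture as the sum of the internal differential $d_{\mathcal{P}}$ (which preserves $(T,F)$), a promotion term $(T,F)\mapsto\sum_{e\in\edges(T)\setminus F}\pm(T,F\cup\{e\})$, and a contraction term $(T,F)\mapsto\sum_{e\in\edges(T)\setminus F}\pm(T/e,F)$, and then filter by $|\edges(T)|$. Since reduced $n$--trees have at most $n-2$ internal edges, this is a bounded increasing filtration by subcomplexes — only contraction lowers it — with $E_{0}$--differential $d_{\mathcal{P}}+{}$promotion. For fixed $T$ the promotion complex $\bigl(\bigoplus_{F\subseteq\edges(T)}\ell(T,F),\text{ promotion}\bigr)$ is the $|\edges(T)|$--fold tensor power of the acyclic two--term complex $(k\to k,\ \id)$, hence acyclic as soon as $\edges(T)\neq\emptyset$; tensoring with $\mathcal{P}(T)$ over the field $k$ kills it, so $E_{1}^{p}=0$ for $p\geq1$ and $E_{1}^{0}=H^{\bullet}(\mathcal{P}(n))$, concentrated at $T=C_{n}$. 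The spectral sequence degenerates and converges, so the inclusion $\iota\colon\mathcal{P}(n)\hookrightarrow\dual\dual\mathcal{P}(n)$ of the corolla summand — a subcomplex carrying only $d_{\mathcal{P}}$ — is a quasi-isomorphism. As $\psi\circ\iota=\id_{\mathcal{P}}$ (there are no edges at $C_{n}$, so $\gamma_{C_{n}}=\id$), the induced $H^{\bullet}(\psi)$ is a one--sided inverse of the isomorphism $H^{\bullet}(\iota)$, hence itself an isomorphism, and $\psi$ is the required canonical quasi-isomorphism.

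The genuinely delicate step — the one I expect to be the main obstacle — is the sign and determinant--line bookkeeping: tracking the canonical isomorphisms relating $\det(S)$, the $\det(T_{v})$, $\det(T)$, the operad $\Lambda$ evaluated at each vertex and at $T$, and the sign twist in $(-)^{\vee}$, so that the lines $\ell(T,F)$ really are identified independently of $F$, the promotion term is exactly the Koszul differential (with the signs that make it acyclic), and $\psi$ is on the nose a chain map with the cancellations above. Everything else — the two--level--tree bijection, the spectral sequence, the K\"unneth step — is routine; admissibility is used only for $\mathcal{P}(T)^{**}\cong\mathcal{P}(T)$ and to keep all tensor products over $K$ well behaved, and the restriction to reduced trees is what bounds the filtration. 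One could alternatively note that, modulo the cancelling $\Lambda$--twists, $\dual\dual\mathcal{P}$ is the operadic cobar of the bar construction $\Omega B\mathcal{P}$ and invoke the standard bar--cobar resolution, but the argument above needs nothing beyond the cobar construction already set up.
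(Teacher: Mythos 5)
The paper does not actually prove this statement---it quotes it from Ginzburg--Kapranov---and your argument is essentially their proof: the counit map that composes $\mathcal{P}$-decorations along the outer tree, the identification of $\dual\dual\mathcal{P}(n)$ with reduced trees carrying a marked subset of internal edges, and the filtration by edge number whose associated graded is, for each tree with at least one edge, a tensor power of acyclic two-term complexes. The outline is correct, and the determinant-line and sign bookkeeping you flag is indeed the only delicate point; it is handled in \cite{ginzburgkapranov}, to which the paper defers.
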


Finally we briefly recall the definition of a Koszul operad. Let $\mathcal{P}=\mathcal{P}(K,E,R)$ be a quadratic operad. As in \autoref{rem:dualdg} for every $n$ we have
\[
\dual\mathcal{P}(n)^0=
\bigoplus_{\substack{\text{binary}\\n\text{--trees }T}}E^{*}(T)\otimes\Det(T)=F(E)(n)^{\vee}=F(E^{\vee})(n)
\]
and so we have a morphism of dg operads $\gamma_{\mathcal{P}}\co\dual\mathcal{P}\rightarrow\mathcal{P}^!$ given in degree $0$ by taking the quotient of $\dual\mathcal{P}(n)^0$ by the relations in $R^{\perp}$. In fact this induces an isomorphism $H^0(\dual\mathcal{P}(n))\rightarrow\mathcal{P}^!(n)$. 

\begin{definition}
We call $\mathcal{P}$ \emph{Koszul} if $\gamma_{\mathcal{P}}$ is a quasi-isomorphism. In other words each $\dual\mathcal{P}(n)$ is exact everywhere but the right end.
\end{definition}

\begin{definition}
If $\mathcal{P}$ is Koszul then a homotopy $\mathcal{P}$--algebra\footnote{More generally, a homotopy $\mathcal{P}$--algebra is an algebra over a cofibrant replacement for $\mathcal{P}$. That $\mathcal{P}$ is Koszul means that $\dual(\mathcal{P}^!)$ is such a cofibrant replacement. For simplicity we take this to be the definition, so as to avoid the need to discuss in any detail the model category structure on $\dgop(K)$.} is an algebra over $\dual(\mathcal{P}^!)$.
\end{definition}

\chapter{M\"obius graphs and unoriented surfaces}\label{chap:mobius}
In this chapter we will introduce and examine in detail the modular operad governing open Klein topological field theories. We shall introduce the important combinatorial notion of a M\"obius graph and show that these play the same role as ribbon graphs do for oriented topological field theories.

The first main result of this chapter is to identify the modular operad governing open Klein topological field theories as the modular closure of the quadratic operad $\mass$, which provides a generators and relations description of this modular operad. Consequently it follows that KTFTs are equivalent to symmetric Frobenius algebras with an involution thus proving \autoref{prop:oktft}.

We will then embark on a detailed examination of the properties of $\mass$. Since $\mass$ governs associative algebras with an involution it is natural to expect it to have the same nice properties as $\ass$ and indeed this will be seen to be true. In particular we'll show it is its own Koszul dual. We'll also spend some time unwrapping the structure of $\dual\mass$ and the modular closure $\modc{\dual\mass}$ in terms of M\"obius graph complexes in preparation for the results of \autoref{chap:moduli}.

\section{M\"obius trees and the operad $\mass$}

\begin{definition}
A \emph{planar tree} is a labelled tree with a cyclic ordering of the half edges at each vertex. An isomorphism of planar trees is an isomorphism of labelled trees that preserves the cyclic ordering at each vertex.
\end{definition}

\begin{definition}
A \emph{M\"obius tree} is a planar tree $T$ with a colouring of the half edges by two colours. Here a `colouring by two colours' means a map $c\co \halfedges(T)\rightarrow\{0,1\}$. An isomorphism of M\"obius trees is an isomorphism of labelled trees such that at each vertex $v$ either
\begin{enumerate}
\item the map preserves the cyclic ordering at $v$ and the colouring of the half edges connected to $v$
\item the map reverses the cyclic ordering at $v$ and reverses the colouring of the half edges connected to $v$ (we refer to this as \emph{reflection at $v$}).
\end{enumerate}
\end{definition}

\begin{remark}
A planar tree can be drawn in the plane with the cyclic ordering at each vertex given by the clockwise ordering. When drawing labelled trees we shall place the output leg unlabelled at the bottom (so the induced direction on the tree is downwards). For example the following two trees are isomorphic as labelled trees, but not as planar trees:
\[
\xygraph{!{<0cm,0cm>;<0.8cm,0cm>:<0cm,0.8cm>::}
1="1"&&2="2" \\
&*{\bullet}="v"\\
&="0"
"v"-"1" "v"-"2" "v"-"0"
}\qquad\qquad
\xygraph{!{<0cm,0cm>;<0.8cm,0cm>:<0cm,0.8cm>::}
2="2"&&1="1" \\
&*{\bullet}="v"\\
&="0"
"v"-"1" "v"-"2" "v"-"0"
}
\]

When drawing M\"obius trees, we shall draw the half edges coloured by $0$ as straight lines and the half edges coloured by $1$ as dotted lines. For example:
\[
\xygraph{!{<0cm,0cm>;<0.8cm,0cm>:<0cm,0.8cm>::}
1="1"&&2="2"\\
&*{\bullet}="v1"\\
&&="v1v2"&&3="3"\\
&&&*{\bullet}="v2"\\
&&&="0"
"1"-"v1" "2"-@{.}"v1" "v1"-"v1v2" "v1v2"-@{.}"v2" "v2"-"3" "v2"-@{.}"0"
}\qquad\cong\qquad
\xygraph{!{<0cm,0cm>;<0.8cm,0cm>:<0cm,0.8cm>::}
&&1="1"&&2="2"\\
&&&*{\bullet}="v1"\\
3="3"\\
&*{\bullet}="v2"\\
&="0"
"1"-"v1" "2"-@{.}"v1" "v1"-"v2" "v2"-@{.}"3" "v2"-"0"
}
\]
\end{remark}

If $T$ is a planar tree then we can define edge contraction by equipping the vertex in $T/e$ that the edge $e$ contracts to with the obvious cyclic ordering coming from the two cyclic orderings at the vertices either end of $e$, for example:

\[\xygraph{!{<0cm,0cm>;<0.8cm,0cm>:<0cm,0.8cm>::}
1="1"&&2="2"\\
&*{\bullet}="v1"\\
&&&&3="3"\\
&&&*{\bullet}="v2"\\
&&&="0"
"1"-"v1" "2"-"v1" "v1"-^e"v2" "v2"-"3" "v2"-"0"
}\qquad\longmapsto\qquad
\xygraph{!{<0cm,0cm>;<0.8cm,0cm>:<0cm,0.8cm>::}
\\1="1"&&2="2"&&3="3"\\
&&*{\bullet}="v"\\
&&="0"
"1"-"v" "2"-"v" "v"-"3" "v"-"0"
}
\]

If $T$ is a M\"obius tree and $e$ is an internal edge where both the half edges of $e$ are coloured the same then we define the tree $T/e$ as for planar trees, with the obvious colouring on $T/e$. If $f\co T'\rightarrow T$ is an isomorphism with $f(e)$ also an edge where both half edges are coloured the same we note that $T/e \cong T'/f(e)$. Therefore this is a well defined operation on isomorphism classes of M\"obius trees. Furthermore for any internal edge of $T$ we can find a tree in the same isomorphism class of $T$ such that this edge is made up of two similarly coloured half edges (by considering, if necessary, a tree with one of the vertices adjacent to the edge reflected). Therefore we have an edge contraction operation on isomorphism classes of M\"obius trees defined for any edge. For example:

\[\xygraph{!{<0cm,0cm>;<0.8cm,0cm>:<0cm,0.8cm>::}
1="1"&&2="2"\\
&*{\bullet}="v1"\\
&&="v1v2"&&3="3"\\
&&&*{\bullet}="v2"\\
&&&="0"
"1"-"v1" "2"-@{.}"v1" "v1"-"v1v2" "v1v2"-@{.}"v2" "v2"-"3" "v2"-@{.}"0"
}\qquad\longmapsto\qquad
\xygraph{!{<0cm,0cm>;<0.8cm,0cm>:<0cm,0.8cm>::}
\\3="1"&&1="2"&&2="3"\\
&&*{\bullet}="v"\\
&&="0"
"1"-@{.}"v" "2"-"v" "v"-@{.}"3" "v"-"0"
}
\]

Finally we observe that $(T/e)/e'\cong(T/e')/e$ so it does not matter in which order we contract edges.

Now recall that the associative operad $\ass$ can be defined as consisting of the vector spaces generated by planar corollas (isomorphism classes of planar trees with $1$ vertex) where composition is given by gluing such corollas and contracting internal edges. This leads us to the following definition:

\begin{definition}
The operad $\mass$ is defined as follows:
\begin{itemize}
\item $\mass(n)$ is the vector space generated by M\"obius corollas (isomorphism classes of M\"obius trees with $1$ vertex) with $n$ inputs, where $S_n$ acts by relabelling the inputs.
\item Composition maps are given by gluing corollas and contracting the internal edges. These maps satisfy associativity since, as mentioned previously, it does not matter in which order we contract internal edges.
\end{itemize}
\end{definition}

\begin{remark}
It is easy to see that as for $\ass$ the operad $\mass$ can be given the structure of a cyclic operad in the obvious way.
\end{remark}

It is important to note that with these definitions planar trees are $\ass$--decorated trees and M\"obius trees are $\mass$--decorated trees (where we are considering decorations by the underlying $S$--modules), since at each vertex $v$ decorated by a M\"obius corolla (an element of $\mass$) one obtains a cyclic ordering at $v$ from the ordering of the inputs of the M\"obius corolla and a colouring of the half edges of $v$ from the colouring of the corolla. Therefore given a labelled tree $T$, the space of $\mass$--decorations on $T$ is generated by the set of M\"obius trees up to isomorphism whose underlying labelled tree is $T$.

\begin{remark}\label{rem:assinmass}
$\ass$ is a suboperad of $\mass$. In fact $\mass$ is obtained from the operad generated by adjoining an involutive operation to $\ass$ by taking the quotient by the ideal generated by the reflection relation for the binary operation:
\[\xygraph{!{<0cm,0cm>;<0.8cm,0cm>:<0cm,0.8cm>::}
1="1"&&2="2" \\
&*{\bullet}="v"\\
&="0"\\
&*{\circ}="a"\\
&="00"
"v"-"1" "v"-"2" "v"-"0" "0"-"a" "a"-"00"
}\qquad=\qquad
\xygraph{!{<0cm,0cm>;<0.8cm,0cm>:<0cm,0.8cm>::}
2="2"&&1="1" \\
*{\circ}="a1"&&*{\circ}="a2"\\
="22"&&="11"\\
&*{\bullet}="v"\\
&="0"
"v"-"11" "v"-"22" "v"-"0" "11"-"a2" "22"-"a1" "a1"-"2" "a2"-"1"
}\]
Here
$\xygraph{!{;<0.5cm,0cm>:::}
="l"&*{\circ}="a"&="r"
"l"-"a" "a"-"r"}$
denotes the involution.
\end{remark}

\begin{proposition}
$\mass$ is the operad governing (non-unital) associative algebras with an involutive anti-automorphism.
\end{proposition}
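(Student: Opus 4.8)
The plan is to unwind the definition of $\mass$ as a quadratic operad and match its algebras, by the standard dictionary between operads and their algebras, with the claimed structures. First I would recall that an algebra over an operad $\mathcal{P}$ is a vector space $V$ with a morphism $\mathcal{P}\to\End[V]$, so it suffices to translate the generators and relations of $\mass$ into operations and identities on $V$. By \autoref{rem:assinmass}, $\mass$ is generated as an operad by two operations: a binary operation $m\in\mass(2)$, corresponding to the straight-line M\"obius corolla with two inputs, and a unary operation $\ast\in\mass(1)$, the involution corolla. Under a morphism to $\End[V]$ these give a bilinear product $ab := m(a,b)$ and a linear map $a\mapsto a^\ast$ on $V$.

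Next I would enumerate the defining relations and read off what each imposes. The unary operation is declared involutive, which gives $(a^\ast)^\ast = a$; equivalently $\ast\circ_1\ast = 1\in\mass(1)$, reflecting that the M\"obius tree obtained by gluing two dotted unary corollas, with the two internal half-edges of the edge coloured oppositely, contracts (after a reflection) to the identity corolla. The associativity relation inherited from $\ass\subset\mass$ gives $(ab)c = a(bc)$. The crucial relation is the reflection relation of \autoref{rem:assinmass}: reflecting the binary vertex reverses its cyclic order and swaps the colours of its three half-edges, so the straight binary corolla with an extra involution on the output equals the corolla with inputs transposed and an involution on each input. Decoding the colours through the contraction rule, this says exactly $(ab)^\ast = b^\ast a^\ast$, i.e. $\ast$ is an anti-automorphism with respect to the product. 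I would also check that no further independent relations appear in $\mass(3)$ beyond those generated by these, which is precisely the content of \autoref{rem:assinmass} describing $\mass$ as the quotient of the free product $\ass\ast(\text{involution})$ by the ideal generated by the single binary reflection relation.

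Conversely, given any (non-unital) associative algebra $V$ with an involutive anti-automorphism $\ast$, I would define a map $\mass\to\End[V]$ on generators by $m\mapsto$ the product and $\ast\mapsto$ the involution, and verify it descends to the quotient: associativity and $(a^\ast)^\ast=a$ and $(ab)^\ast=b^\ast a^\ast$ are exactly the hypotheses, so all the defining relations of $\mass$ are killed, giving a well-defined operad morphism. Together with the previous paragraph this establishes a natural bijection between $\mass$-algebra structures on $V$ and the structure of a non-unital associative algebra with involutive anti-automorphism, which is what it means for $\mass$ to govern these algebras. One should note the bookkeeping point that the equivariance of the $\mass(n)$ under $S_n$ corresponds to the fact that the $n$-ary operations built from $m$ and $\ast$ transform correctly under permutation of arguments, which is automatic from the tree combinatorics.

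The main obstacle is the careful translation of the colour-combinatorics of M\"obius trees into the algebraic identity $(ab)^\ast = b^\ast a^\ast$: one must be precise about how the colouring $c\co\halfedges(T)\to\{0,1\}$ on the output half-edge of a corolla corresponds to composing with $\ast$, how edge contraction of an edge with oppositely coloured half-edges requires first performing a reflection at one vertex (which flips all three colours and reverses the cyclic order), and hence why gluing an involution corolla onto an input or output of $m$ produces the expected composite. Once this correspondence is set up cleanly — essentially the observation already made in the text that M\"obius trees are exactly $\mass$-decorated trees — the rest is the routine operad/algebra dictionary and poses no difficulty.
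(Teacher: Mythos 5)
Your proof is correct and follows essentially the same route as the paper, which simply observes that the claim is immediate from \autoref{rem:assinmass} (the presentation of $\mass$ as $\ass$ with an adjoined involution modulo the reflection relation); you have merely spelled out the generators-and-relations dictionary that the paper leaves implicit.
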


\begin{proof}
This follows immediately from \autoref{rem:assinmass}.
\end{proof}

\section{The open KTFT modular operad and M\"obius graphs}
We now define the modular operad governing Klein topological quantum field theories.

\begin{definition}
We define the $k$--linear extended modular operad $\oktft$ (open Klein topological field theory) as follows:
\begin{itemize}
\item For $n,g \geq 0$ and $2g+n \geq 2$ the vector space $\oktft((g,n))$ is generated by diffeomorphism classes of surfaces with $m$ handles, $u$ crosscaps and $h$ boundary components with $2m+h+u-1=g$ and with $n$ disjoint copies of the unit interval embedded in the boundary labelled by $\{1,\ldots,n\}$, with an action of $S_n$ permuting the labels.
\item Composition and contraction is given by gluing along intervals.
\end{itemize}
\end{definition}

\begin{remark}\label{rem:nounit}
Since the connected sum of $3$ crosscaps is diffeomorphic to the connected sum of $1$ handle and $1$ crosscap, the value of $2m+h+u-1$ is well defined regardless of how we choose to represent the topological type of the surface. We note that all classes of surfaces feature in $\oktft$ except the disc with no marked points and the disc with one marked point due to the condition $2g+n\geq 2$.
\end{remark}

We recall that in the case of oriented topological field theories the corresponding modular operads are the modular closures of their genus $0$ part which in turn are identified with the commutative and associative operads: $\ctft\cong\modcom$ and $\otft\cong\modass$ (this formulation in terms of modular operads can be seen in Chuang and Lazarev \cite[Theorem 2.7]{chuanglazarev}). In particular the genus $0$ cyclic part contains all the relations. These results are modular operad versions of \autoref{prop:ctft} and \autoref{prop:otft} identifying $2$--dimensional TFTs as Frobenius algebras. 

The same is true for $\oktft$, giving us the desired simple algebraic description of $\oktft$.

\begin{theorem}\label{thm:oktftmass}
$\oktft\cong\modmass$.
\end{theorem}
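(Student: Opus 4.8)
The plan is to exhibit $\oktft$ as the modular closure of its genus $0$ part, mirroring the oriented statement $\otft\cong\modass$ recalled above.

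First I would compute the cyclic operad underlying the genus $0$ part of $\oktft$. An element of $\oktft((0,n))$ is a diffeomorphism class of disc carrying $n$ directed intervals in its boundary; since every self-diffeomorphism of the disc is isotopic to the identity or to a single reflection, such a class amounts to a cyclic ordering of the $n$ intervals together with, for each interval, a bit recording whether its direction agrees with the induced boundary orientation, all modulo simultaneously reversing the cyclic order and flipping every bit. This is precisely the data of a M\"obius corolla with $n$ flags, and gluing two discs along a boundary interval matches the operadic composition in $\mass$, the colour bits keeping track of whether a gluing is untwisted or twisted. Hence $\oktft((0,n))\cong\mass((n))$ as cyclic operads. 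Since the modular closure is left adjoint to the functor sending a modular operad to its genus $0$ cyclic operad, this isomorphism extends canonically to a morphism of extended modular operads $\Phi\co\modmass\to\oktft$ which is an isomorphism in genus $0$; explicitly $\Phi$ sends a M\"obius graph to the surface assembled from one polygonal disc per vertex, glued along the edges and twisted along exactly those edges whose two half edges are coloured differently. That $\Phi$ is well defined on the colimit is automatic, since contracting an edge corresponds to pre-gluing the two incident discs into one.

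Second I would prove $\Phi$ surjective, i.e.\ that $\oktft$ is generated under gluing by its genus $0$ part. Given a surface $\Sigma$ with marked intervals that is not already a disc, cut it along a properly embedded arc with endpoints between marked intervals that is not boundary parallel; this strictly decreases a suitable complexity and, iterated, exhibits $\Sigma$ as obtained by composing and contracting discs, taking a little care that no disc with fewer than two marked or cut arcs is produced. Thus $\Sigma$ lies in the image of $\Phi$. Equivalently, one can read this off from the generators of $\kcobcl$ and its open analogue discussed in \autoref{chap:tqfts}.

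The main obstacle is injectivity, which amounts to the statement that any two disc decompositions of a fixed surface $\Sigma$ (compatible with the marked intervals) are connected by a finite sequence of moves, each either a graph isomorphism or an edge contraction/expansion, these being exactly the defining relations of $\modmass$. This is a Pachner/Matveev-type theorem for surfaces. I would prove it by reducing to the oriented case: the orientable part of the argument is the known isomorphism $\otft\cong\modass$, and what is left is to check that the Klein-specific moves — sliding a crosscap across the surface, interchanging two crosscaps, and carrying a twisted gluing past a vertex — are all consequences of the $\mass$-relations, i.e.\ of the reflection and anti-automorphism relations already pictured in \autoref{chap:tqfts}. Alternatively one argues by normal forms: using the relations, reduce an arbitrary M\"obius graph to a standard shape corresponding to the canonical presentation of the underlying surface as a connected sum of handles and crosscaps with a fixed distribution of the marked intervals, and note that diffeomorphic surfaces yield the same standard form. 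Either route establishes that $\Phi$ is a bijection in every arity, hence an isomorphism of extended modular operads; \autoref{prop:oktft} then follows by identifying algebras over the genus $0$ part of each side.
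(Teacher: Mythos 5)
Your proposal follows essentially the same route as the paper: the disc-and-strip construction giving the map $\modmass\to\oktft$ (an isomorphism in genus $0$ by the adjunction), surjectivity from the classification of compact unorientable surfaces with boundary, and injectivity by reducing M\"obius graphs to a normal form and invoking the oriented isomorphism $\modass\cong\otft$ for the remaining ribbon-graph part. The ``Klein-specific moves'' you defer (sliding and interchanging crosscaps, in effect the relation that three twisted loops equal a handle plus one twisted loop) are precisely what the paper verifies explicitly as its graph relations \ref{eqn:rel1} and \ref{eqn:rel2} before running the same normal-form argument with at most two twisted loops.
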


Before proving \autoref{thm:oktftmass} we will identify the operad $\modmass$ in terms of graphs. Therefore we need to extend our definitions of M\"obius trees to graphs.

\begin{definition}
A \emph{ribbon graph} is a graph with all vertices having valence at least $2$ equipped with a cyclic ordering of the half edges at each vertex and a labelling of the legs. An isomorphism of ribbon graphs is an isomorphism of graphs that preserves the cyclic ordering at each vertex and the labelling of the legs.
\end{definition}

\begin{definition}
A \emph{M\"obius graph} is a ribbon graph with a colouring of the half edges by two colours. An isomorphism of M\"obius graphs is an isomorphism of graphs preserving the labelling of the legs such that at each vertex $v$ either
\begin{enumerate}
\item the map preserves the cyclic ordering at $v$ and the colouring of the half edges at $v$
\item the map reverses the cyclic ordering at $v$ and reverses the colouring of the half edges connected to $v$ (again we refer to this as reflection at $v$).
\end{enumerate}
\end{definition}

\begin{remark}
Obviously our notions of planar and M\"obius trees correspond to ribbon and M\"obius graphs with no loops. Once again we can draw these graphs in the plane (although possibly with some edges intersecting of course) with the cyclic ordering at each vertex given by the clockwise ordering:
\[
\xygraph{!{<0cm,0cm>;<1.2cm,0cm>:<0cm,1.2cm>::}
!{(0,0)}*{\bullet}="v1"
!{(2,0)}*{\bullet}="v2"
!{(3,0.5)}*+{2}="2"
!{(3,-0.5)}*+{1}="1"
!{(0,-1)}*+{3}="3"
!{(-1,0)}*{}="loop"
!{(1,0.4)}*{}="u"
!{(1,-0.4)}*{}="d"
"v1" -@`{c+(0,-0.5),p+(0,-0.5)}@{.} "loop"
"loop" -@`{c+(0,0.5),p+(0,0.5)} "v1"
"v1"-"3"
"v1" -@`{c+(0.25,0.3),p+(-0.25,0)} "u"
"u" -@`{c+(0.25,0),p+(-0.25,0.3)} "v2"
"v1" -@`{c+(0.25,-0.3),p+(-0.25,0)} "d"
"d" -@`{c+(0.25,0),p+(-0.25,-0.3)}@{.} "v2"
"v2"-@{.} "2"
"v2"-"1"
}
\]
\end{remark}

\begin{remark}\label{rem:decgraphs}
Ribbon and M\"obius graphs are $\underline{\ass}$--decorated graphs and $\underline{\mass}$--decorated graphs respectively.
\end{remark}

If $G$ is a ribbon graph and $e$ is an internal edge of $G$ which is not a loop we can define edge contraction by equipping $G/e$ with the obvious cyclic ordering coming from $G$ as for trees.

If $G$ is a M\"obius graph and $e$ is an internal edge of $G$ that is not a loop, where both the half edges of $e$ are the same colour then we define $G/e$ as we did for trees. Further we observe that as for M\"obius trees this is well defined on isomorphism classes and can be extended to all internal edges except loops regardless of colour.

Let $G$ be a M\"obius or ribbon graph. Given two internal edges $e$ and $e'$ of $G$ that are not loops we have $(G/e)/e'\cong (G/e')/e$ provided both sides are defined. However if $e$ and $e'$ are connected to the same vertices contracting one will make the other into a loop. As a result we do not obtain a well defined operation on graphs by repeatedly contracting edges until we have only one vertex, which we did for trees. See \autoref{fig:graphcontract} for an example.

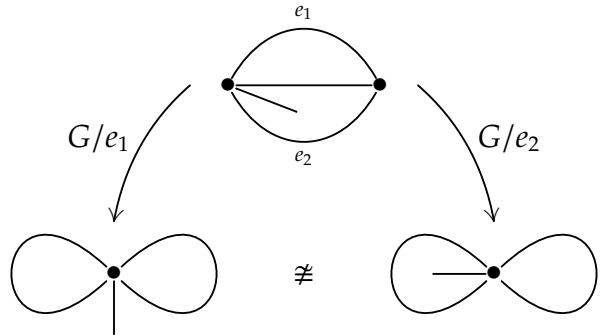
\begin{figure}[ht!]
\centering
\[
\begin{xy}
(0,10)*\xybox{
(0,0)*\xybox{
\xygraph{!{<0cm,0cm>;<1cm,0cm>:<0cm,1cm>::}
!{(0,0)}*{\bullet}="v1"
!{(2,0)}*{\bullet}="v2"
!{(0.9,-0.35)}*{}="o"
"v1" -@`{(0.5,1),(1.5,1)} "v2" ^{e_1}
"v1" -@`{(0.5,-1),(1.5,-1)} "v2" _{e_2}
"v1" - "v2"
"v1" - "o"
}};
(-15,-25)*\xybox{
\xygraph{!{<0cm,0cm>;<1cm,0cm>:<0cm,1cm>::}
!{(0,0)}*{\bullet}="v1"
!{(0,-0.8)}*{}="o"
"v1" -@`{(-1.8,-1.8),(-1.8,+1.8)} "v1" ^{}
"v1" -@`{(+1.8,-1.8),(+1.8,+1.8)} "v1"
"v1" - "o"
}};
(35,-25)*\xybox{
\xygraph{!{<0cm,0cm>;<1cm,0cm>:<0cm,1cm>::}
!{(0,0)}*{\bullet}="v1"
!{(-0.8,0)}*{}="o"
"v1" -@`{(-1.8,-1.8),(-1.8,+1.8)} "v1"
"v1" -@`{(+1.8,-1.8),(+1.8,+1.8)} "v1"
"v1" - "o"
}};
(10,-25)*{\ncong};
(-5,0);(-15,-18)
**\crv{(-13,-7)};?(1)*\dir2{>};
(25,0);(35,-18)
**\crv{(33,-7)}?(1)*\dir2{>};
(-17,-7)*{G/e_1};
(37,-7)*{G/e_2}
}
\end{xy}
\]
\caption{Contracting all edges that are not loops is not well defined for ribbon graphs.}
\label{fig:graphcontract}
\end{figure}

Consequently we define a relation $\approx$ on (isomorphism classes of) M\"obius or ribbon graphs where $G\approx G'$ whenever one is obtained from the other by an edge contraction so that $G=G'/e$ or $G'=G/e$. The transitive closure of this is then an equivalence relation we will also denote by $\approx$. All elements of an equivalence class have the same genus and the same number of legs. There is also at least one graph with one vertex in each class. Observe that the space of corollas is obtained from the space of trees modulo this relation. As a result $\ass$ and $\mass$ could be defined as the operads of planar and M\"obius trees modulo $\approx$ with composition given by gluing trees.

We can now describe the operad $\modmass$. Recall that $\modass$ is the modular operad given by ribbon graphs up to the relation $\approx$ with composition and contraction given by the gluing legs of graphs. This is true since the modular closure of $\ass$ is generated by freely adding contractions and applying just those relations necessary to ensure that associativity and equivariance holds. More explicitly, we can first identify the space of planar corollas with contractions added in freely as ribbon graphs with $1$ vertex with loops directed and ordered. The equivariance condition means that we must forget the directions and order of the loops. Composition is given by gluing such objects and contracting internal edges that are not loops. The associativity condition requires that it does not matter in what order we contract internal edges. The relation $\approx$ (induced on ribbon graphs with $1$ vertex) is precisely the minimal relation required to ensure this is true. For example the bottom two graphs in \autoref{fig:graphcontract} are equivalent under $\approx$ but not isomorphic. It is clear that the same argument holds true for $\modmass$.

\begin{lemma}
The extended modular operad $\modmass$ can be described as follows:
\begin{itemize}
\item If $2g+n\geq 2$ then $\modmass((g,n))$ is the vector space generated by isomorphism classes of M\"obius graphs with $n$ legs and genus $g$ modulo the relation $\approx$.
\item Composition and contraction are given by gluing legs of graphs.
\qed
\end{itemize}
\end{lemma}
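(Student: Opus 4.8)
The plan is to follow essentially verbatim the argument sketched in the paragraph immediately preceding the statement, which was given for $\modmass$ in a slightly informal way; the lemma simply asks us to record the conclusion cleanly. First I would recall that, by definition, $\modmass$ is the modular closure of the cyclic operad $\mass$, i.e. the value at $(g,n)$ of the free extended modular operad $\mathbb{M}$ applied to the underlying stable $S$--module of $\mass$, quotiented by exactly those relations forced by the modular operad axioms (equivariance and associativity of the $\circ_i$ and $\xi_{ij}$). Using the colimit description $\mathbb{M}W((g,n)) = \operatorname*{colim}_{G\in\iso\Gamma((g,n))}W((G))$ together with \autoref{rem:decgraphs}, which identifies $\underline{\mass}$--decorations (and hence the relevant free decorations) on a labelled graph $G$ with M\"obius graphs whose underlying labelled graph is $G$, one sees that $\mathbb{M}(\mass)((g,n))$ before imposing relations is spanned by isomorphism classes of M\"obius graphs with loops \emph{directed and ordered}, since a choice of direct summand representing $\mass((\flags(v)))$ at a vertex that has become a loop amounts to such extra data.

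Next I would identify the relations. Equivariance of the contraction maps $\xi_{ij}$ says precisely that the direction and order of loops must be forgotten — contracting $i$ to $j$ gives the same element as contracting $j$ to $i$, and the $S_n$--coinvariants in the definition of $W((g,n))$ kill the ordering — so after this step we are left with the free vector space on isomorphism classes of genuine M\"obius graphs with $n$ legs and genus $g$. Then associativity of the composition/contraction maps says the induced map $\mass((G)) \to \mass((G/e))$ is independent of the order in which edges are contracted; as explained in the excerpt, the minimal relation on M\"obius graphs with one vertex that enforces this is exactly $\approx$ (the transitive closure of the single--edge--contraction relation), the point being illustrated by \autoref{fig:graphcontract} where $G/e_1 \not\cong G/e_2$ as ribbon graphs. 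Here one uses that every $\approx$--equivalence class contains at least one one--vertex graph, so that $\modmass((g,n))$ — which a priori is a quotient of the span of one--vertex M\"obius graphs with $2g+n\geq 2$ — is equally well described as the span of \emph{all} M\"obius graphs with $n$ legs and genus $g$ modulo $\approx$. Finally, tracing through the monad structure of $\mathbb{M}$ shows that the composition and contraction maps of $\modmass$ are literally given by gluing legs of the representing graphs (which is well defined on $\approx$--classes since $\approx$ commutes with such gluings), giving the last bullet.

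The one point requiring genuine care — the main obstacle — is the claim that $\approx$ is exactly the relation induced on one--vertex M\"obius graphs by the modular operad axioms, no more and no less. That it is \emph{necessary} is the content of the $(G/e)/e' \cong (G/e')/e$ remark together with the observation that one--vertex graphs arising from contracting a tree in two different orders land in the same $\approx$--class; that it is \emph{sufficient} (i.e. that imposing $\approx$ really does make the edge--contraction assignments into a well--defined functor on $\Gamma((g,n))$, so that nothing further is forced) is the analogue for $\mass$ of the corresponding statement for $\ass$, which the excerpt asserts (``It is clear that the same argument holds true for $\modmass$''). I would make this precise by checking that the two possible contraction maps out of $\mass((G))$ associated to a pair of parallel edges differ precisely by the $\approx$--relation on the target, exploiting that a M\"obius corolla is determined by a cyclic order plus a two--colouring up to the reflection symmetry, so the combinatorics is controlled entirely by that of $\ass$ decorated with colours; everything else is bookkeeping of coinvariants and colimits and can be left to the reader as ``routine''.
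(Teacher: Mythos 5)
Your proposal is correct and follows essentially the same route as the paper, which proves this lemma in the paragraph immediately preceding it: identify the modular closure as planar/M\"obius corollas with contractions freely adjoined (loops directed and ordered), use equivariance to forget the directions and ordering of loops, and observe that $\approx$ is exactly the minimal relation needed for associativity of edge contractions, with composition and contraction given by gluing legs. Your additional care about why $\approx$ is sufficient (no further relations are forced) fills in the step the paper dismisses with ``the same argument holds true for $\modmass$'', but does not change the argument.
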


We next describe the main construction arising in the proof of \autoref{thm:oktftmass}. Let $G$ be a ribbon graph. The ribbon structure of $G$ allows one to replace each edge with a thin oriented strip and each vertex with an oriented disc using the cyclic ordering to glue the strips to discs in an orientation preserving manner. As such we obtain an oriented surface with boundary well defined up to diffeomorphism. Further we can identify the legs as labelled copies of the interval embedded in the boundary in an orientation preserving manner.

We can generalise this to a similar construction for M\"obius graphs. We replace each vertex $v$ with an oriented disc and we replace each edge $e$ with an oriented strip. We then use the cyclic ordering to glue the strips to discs. If the edge $e$ is connected to the vertex $v$ by a half edge coloured by $0$ we glue the strip corresponding to $e$ to the disc corresponding to $v$ such that their orientations are compatible. However if the half edge is coloured by $1$ we glue such that the orientations are not compatible. We identify the legs as labelled copies of the interval embedded compatibly with the disc's orientation if the leg is coloured by $0$ and incompatibly otherwise. We finally forget all the orientations on each part of our surface. This yields a surface that is not necessarily orientable. These constructions coincide for those M\"obius graphs that are just ribbon graphs (that is, graphs all of the same colour).

We should verify this construction is well defined up to diffeomorphism. However this is clear since applying the reflection relation at a vertex $v$ corresponds to constructing a surface identical everywhere except at the disc corresponding to $v$ which has been reflected (see \autoref{fig:discreflection}). Reflection of the disc is a smooth (orientation reversing) map so the construction yields a diffeomorphic surface.

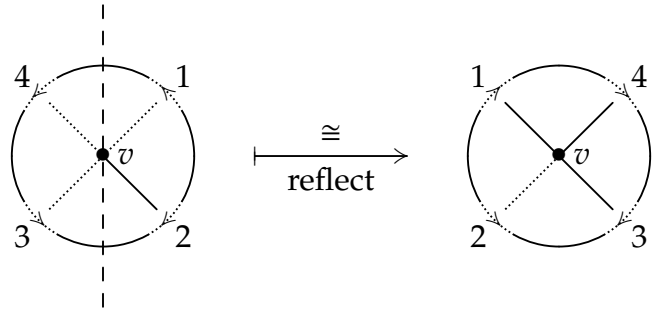
\begin{figure}[ht!]
\centering
\[
\begin{xy}
(0,0)*\xybox{*\xybox{\ellipse(12,12):a(60),:a(120){-}},
*\xybox{\ellipse(12,12):a(150),:a(210){-}},
*\xybox{\ellipse(12,12):a(240),:a(300){-}},
*\xybox{\ellipse(12,12):a(330),:a(30){-}},
*\xybox{\ellipse(12,12):a(30),:a(60){.}},
*\xybox{\ellipse(12,12):a(120),:a(150){.}},
*\xybox{\ellipse(12,12):a(210),:a(240){.}},
*\xybox{\ellipse(12,12):a(300),:a(330){.}},
*{\bullet},(3,0)*{v},
(0,0);a(45):(0,0);(10,0)**@{.}+(5,0)*{1},
(0,0);(-10,0)**@{.}-(5,0)*{3},
(-1,12)*\dir2{>},(-1,-12)*\dir2{>},
(0,0);(0,1):(0,0);(10,0)**@{.}+(5,0)*{4},
(0,0);(-10,0)**@{-}-(5,0)*{2},
(-1,12)*\dir2{>},(1,-12)*\dir2{<}},
(0,20);(0,-20)**@{--},
(60,0)*\xybox{*\xybox{\ellipse(12,12):a(60),:a(120){-}},
*\xybox{\ellipse(12,12):a(150),:a(210){-}},
*\xybox{\ellipse(12,12):a(240),:a(300){-}},
*\xybox{\ellipse(12,12):a(330),:a(30){-}},
*\xybox{\ellipse(12,12):a(30),:a(60){.}},
*\xybox{\ellipse(12,12):a(120),:a(150){.}},
*\xybox{\ellipse(12,12):a(210),:a(240){.}},
*\xybox{\ellipse(12,12):a(300),:a(330){.}},
*{\bullet},(3,0)*{v},
(0,0);a(45):(0,0);(10,0)**@{-}+(5,0)*{4},
(0,0);(-10,0)**@{.}-(5,0)*{2},
(1,12)*\dir2{<},(-1,-12)*\dir2{>},
(0,0);(0,1):(0,0);(10,0)**@{-}+(5,0)*{1},
(0,0);(-10,0)**@{-}-(5,0)*{3},
(-1,12)*\dir2{>},(-1,-12)*\dir2{>}},
(20,0);(40,0)**@{-}?(0)*\dir2{|}?(1)*\dir2{>}?(0.5)-(0,3)*{\textrm{reflect}}+(0,6)*{\cong}
\end{xy}
\]
\caption{The reflection relation at a vertex $v$ corresponds to reflection of the disc associated to $v$.}
\label{fig:discreflection}
\end{figure}

Since contracting an edge corresponds to contracting a strip this construction is in fact well defined on equivalence classes of $\approx$. \autoref{fig:basicsurfaces} shows the basic graphs corresponding to a handle, a crosscap and a boundary component (annulus). From this we can see that if a M\"obius graph has genus $g$ and the corresponding surface consists of $m$ handles, $u$ crosscaps and $h$ boundary components then $2m+h+u-1 = g$. This means that by this construction we obtain maps of the underlying vector spaces $\modmass((g,n))\rightarrow\oktft((g,n))$.

\begin{figure}[ht!]
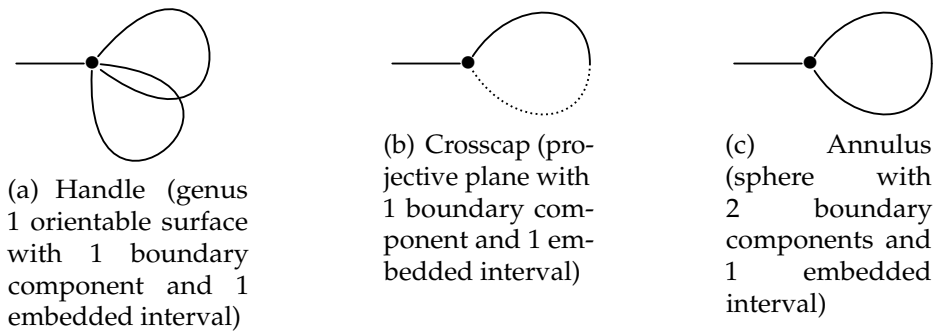

\centering
\subfigure[Handle (genus $1$ orientable surface with $1$ boundary component and $1$ embedded interval)]{
\xygraph{!{<0cm,0cm>;<1cm,0cm>:<0cm,1cm>::}
!{(0,0)}*{\bullet}="v1"
!{(-1,0)}*{}="o"
"v1" -@`{(+2.8,-0.2),(-0.2,-2.8)} "v1"
"v1" -@`{(+2.3,-1.8),(+1.8,+2.3)} "v1"
"v1" - "o"}}
\qquad\qquad
\subfigure[Crosscap (projective plane with $1$ boundary component and $1$ embedded interval)]{
\xygraph{!{<0cm,0cm>;<1cm,0cm>:<0cm,1cm>::}
!{(0,0)}*{\bullet}="v1"
!{(-1,0)}*{}="o"
!{(1.6,0)}*{}="loop"
"v1" -@`{c+(0.5,-0.9),p+(0,-0.9)}@{.} "loop"
"loop" -@`{c+(0,0.9),p+(0.5,0.9)} "v1"
"v1"-"o"}}
\qquad\qquad
\subfigure[Annulus (sphere with $2$ boundary components and $1$ embedded interval)]{
\xygraph{!{<0cm,0cm>;<1cm,0cm>:<0cm,1cm>::}
!{(0,0)}*{\bullet}="v1"
!{(-1,0)}*{}="o"
!{(1.6,0)}*{}="loop"
"v1" -@`{c+(0.5,-0.9),p+(0,-0.9)} "loop"
"loop" -@`{c+(0,0.9),p+(0.5,0.9)} "v1"
"v1"-"o"}}
\caption{M\"obius graphs corresponding to basic surfaces}
\label{fig:basicsurfaces}
\end{figure}

It is also clear that these constructions are compatible with operadic gluings so we obtain maps $\modass\rightarrow\otft$ and $\modmass\rightarrow\oktft$. As shown by Chuang and Lazarev \cite{chuanglazarev} the former is an isomorphism. We can now prove \autoref{thm:oktftmass} by showing the latter is too.

\begin{proof}[Proof of \autoref{thm:oktftmass}]
It is sufficient to show the map $\modmass\rightarrow\oktft$ described above is an isomorphism of the underlying $S$--modules. The surjectivity of this map follows from the classification of unoriented topological surfaces with boundary and \autoref{fig:basicsurfaces}, which shows how to build a surface of any topological type. To see that it is injective it is necessary to show that any two graphs with the same topological type are equivalent\footnote{This is analogous to proving the sufficiency of a set of relations on the generators of $\kcob$ if we were proving \autoref{prop:oktft} without mention of operads.} under the relation $\approx$. We first note two graphs that are equivalent as shown by the following diagram:
\begin{equation}\label{eqn:rel1}
\begin{split}
\xygraph{!{<0cm,0cm>;<1cm,0cm>:<0cm,1cm>::}
!{(0,0)}*{\bullet}="v1"
!{(-1,+0.5)}*+{1}="1"
!{(-1,-0.5)}*+{2}="2"
!{(1.3,0)}*{}="loop"
"v1" -@`{c+(0.5,-0.7),p+(0,-0.7)} "loop"
"loop" -@`{c+(0,0.7),p+(0.5,0.7)}@{.} "v1"
"v1"-"1" "v1"-"2"}
\quad\approx\quad
\xygraph{!{<0cm,0cm>;<1cm,0cm>:<0cm,1cm>::}
!{(0,0)}*{\bullet}="v1"
!{(1.5,0)}*{\bullet}="v2"
!{(-1,0)}*+{2}="2"
!{(2.5,0)}*+{1}="1"
!{(0.75,0.4)}*{}="u"
!{(0.75,-0.4)}*{}="d"
"v1" -@`{c+(0.25,0.3),p+(-0.25,0)} "u"
"u" -@`{c+(0.25,0),p+(-0.25,0.3)} "v2"
"v1" -@`{c+(0.25,-0.3),p+(-0.25,0)} "d"
"d" -@`{c+(0.25,0),p+(-0.25,-0.3)}@{.} "v2"
"v2"- "1"
"v1"-"2"}
\\
\cong\quad
\xygraph{!{<0cm,0cm>;<1cm,0cm>:<0cm,1cm>::}
!{(0,0)}*{\bullet}="v1"
!{(1.5,0)}*{\bullet}="v2"
!{(-1,0)}*+{2}="2"
!{(0.6,0)}*+{1}="1"
!{(0.75,0.4)}*{}="u"
!{(0.75,-0.4)}*{}="d"
"v1" -@`{c+(0.25,0.3),p+(-0.25,0)} "u"
"u" -@`{c+(0.25,0),p+(-0.25,0.3)}@{.} "v2"
"v1" -@`{c+(0.25,-0.3),p+(-0.25,0)} "d"
"d" -@`{c+(0.25,0),p+(-0.25,-0.3)} "v2"
"v2"-@{.} "1"
"v1"-"2"}
\quad\approx\quad
\xygraph{!{<0cm,0cm>;<1cm,0cm>:<0cm,1cm>::}
!{(0,0)}*{\bullet}="v1"
!{(1,0)}*+{1}="1"
!{(-1,0)}*+{2}="2"
!{(1.3,0)}*{}="loop"
"v1" -@`{c+(0.5,-0.7),p+(0,-0.7)}@{.} "loop"
"loop" -@`{c+(0,0.7),p+(0.5,0.7)} "v1"
"v1"-@{.}"1" "v1"-"2"}
\end{split}
\end{equation}

Using this we can prove the relation corresponding to the fact that the connected sum of $3$ crosscaps corresponds to the connected sum of a handle and a crosscap:

\begin{equation}\label{eqn:rel2}
\xygraph{!{<0cm,0cm>;<1cm,0cm>:<0cm,1cm>::}
!{(0,0)}*{\bullet}="v1"
!{(1.3,0)}*{}="loop"
!{(-1.3,0)}*{}="loop2"
!{(0,+1.3)}*{}="loop3"
!{(0,-1)}*{}="o"
"v1" -@`{c+(0.5,-0.5),p+(0,-0.5)} "loop"
"loop" -@`{c+(0,0.5),p+(0.5,0.5)}@{.} "v1"
"v1" -@`{c+(-0.5,-0.5),p+(0,-0.5)} "loop2"
"loop2" -@`{c+(0,0.5),p+(-0.5,0.5)}@{.} "v1"
"v1" -@`{c+(-0.5,0.5),p+(-0.5,0)} "loop3"
"loop3" -@`{c+(0.5,0),p+(0.5,0.5)}@{.} "v1"
"v1"-"o"
}
\quad\approx\quad
\xygraph{!{<0cm,0cm>;<1cm,0cm>:<0cm,1cm>::}
!{(0,0)}*{\bullet}="v1"
!{(0,-1)}*{}="o"
!{(-1.3,0)}*{}="loop2"
"v1" -@`{(+2.8,+0.2),(-0.2,+2.8)} "v1"
"v1" -@`{(+2.3,+1.8),(+1.8,-2.3)} "v1"
"v1" -@`{c+(-0.5,-0.5),p+(0,-0.5)} "loop2"
"loop2" -@`{c+(0,0.5),p+(-0.5,0.5)}@{.} "v1"
"v1" - "o"}
\end{equation}
This can be shown by drawing graphs and repeatedly applying relation \ref{eqn:rel1}. We leave this to the reader.

Given a graph we can contract all internal edges that are not loops. Then we can ensure that all loops which are composed of half edges of the same colour (which we will call untwisted loops) are all coloured by $0$ since a loop coloured by $1$ is equivalent to a loop coloured by $0$ (by expanding the loop into $2$ edges of different colours and contracting the edge of colour $1$). We then apply  relation \ref{eqn:rel1} repeatedly to ensure that all the twisted loops are adjacent and have no half edges or legs on their inside. Finally we use relation \ref{eqn:rel2} repeatedly until there are at most $2$ twisted loops remaining. Therefore any graph is equivalent to a `normal form' consisting of either a ribbon graph with $1$ vertex or the connected sum (by which we mean vertices connected by a single untwisted edge) of a ribbon graph with $1$ vertex and a M\"obius graph with $1$ vertex and at most $2$ twisted loops. If two graphs have the same topological type then in this normal form the M\"obius graph components must be isomorphic. But the ribbon graph components must therefore be of the same topological type and we know that they are equivalent under the relation $\approx$ since $\modass\rightarrow\otft$ is an isomorphism.
\end{proof}

\begin{corollary}
Algebras over the modular operad $\oktft$ are (non-unital) symmetric Frobenius algebras together with an involution preserving the inner product.
\end{corollary}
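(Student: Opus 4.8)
The plan is to read off the corollary from \autoref{thm:oktftmass} together with the universal property of the modular closure. Since $\oktft\cong\modmass$ by \autoref{thm:oktftmass}, an algebra over $\oktft$ is, by definition, a vector space $V$ with a non-degenerate symmetric bilinear form $B$ and a morphism of modular operads $\modmass\to\mathcal{E}[V]$. As $\modmass$ is the modular closure of $\mass$, and the modular closure is left adjoint to the genus-$0$ part functor from modular operads to cyclic operads, such morphisms correspond bijectively and naturally to morphisms of cyclic operads $\mass\to\mathcal{E}[V]$, where the target is the endomorphism cyclic operad, which is exactly the genus-$0$ part of the endomorphism modular operad. So an $\oktft$-algebra is the same datum as a cyclic $\mass$-algebra: a pair $(V,B)$ together with a cyclic operad map $\mass\to\mathcal{E}[V]$.

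It then remains to unwind a cyclic $\mass$-algebra. Forgetting the cyclic structure, the underlying operad map $\mass\to\mathcal{E}[V]$ makes $V$ a non-unital associative algebra with an involutive anti-automorphism $x\mapsto x^*$, by the identification of $\mass$ as the operad governing such algebras. The remaining content is that the map be compatible with the cyclic (equivariant) structures, i.e.\ that $B$ be invariant under the operations in the image; by \autoref{rem:assinmass} it suffices to impose this on the two generators. For the binary multiplication this says $\langle xy,z\rangle$ is cyclically invariant, and together with symmetry of $B$ this is the Frobenius identity $\langle xy,z\rangle=\langle x,yz\rangle$, so $(V,B)$ is a (non-unital) symmetric Frobenius algebra. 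For the unary generator $\iota$ spanning the involutive part of $\mass(1)=\mass((2))$, the $S_2$-action there is trivial, so the image of $\iota$ in $\mathcal{E}[V]((2))=V^{\otimes 2}$ must be symmetric, which under the identification with self-adjoint operators furnished by $B$ means $\langle x^*,y\rangle=\langle x,y^*\rangle$; since $(x^*)^*=x$ this is equivalent to $\langle x^*,y^*\rangle=\langle x,y\rangle$, i.e.\ the involution preserves the pairing. Conversely, any symmetric Frobenius algebra with an involutive anti-automorphism preserving the pairing plainly assembles into a cyclic $\mass$-algebra.

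The step I expect to require the most care is the final one: checking that $\mass$ is generated \emph{as a cyclic operad} by the binary multiplication and the involution, so that invariance of $B$ need only be verified on these, and correctly transporting the $S_2$-action on the unary part of $\mass$ through the isomorphism $\mathcal{E}[V]((2))\cong\IHom(V,V)$ induced by $B$ to obtain exactly the pairing-preservation condition. Given \autoref{thm:oktftmass} and the adjunction defining the modular closure, the rest is formal.
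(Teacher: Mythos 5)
Your proposal is correct and follows essentially the same route as the paper: the paper likewise reduces the statement to the fact that algebras over the modular closure $\modmass$ are exactly cyclic $\mass$--algebras (citing Chuang--Lazarev where you invoke the adjunction directly), and then identifies cyclic $\mass$--algebras as symmetric Frobenius algebras with a pairing-preserving involution. Your extra unwinding of the $S_2$--action on $\mass((2))$ and of the Frobenius identity is just a more explicit version of the paper's final sentence.
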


\begin{proof}
Since $\oktft$ is the modular closure of its genus $0$ cyclic operad then algebras over $\oktft$ are simply algebras over $\mass$ considered as a cyclic operad (this is immediate as in \cite[Proposition 2.4]{chuanglazarev}). Cyclic $\mass$--algebras are just $\mass$--algebras with an invariant inner product which are precisely Frobenius algebras with an involution.
\end{proof}

In the formulation of topological field theories as a symmetric monoidal functor from some cobordism category the only difference is that we have a unit and counit (see \autoref{rem:nounit}). Therefore we have now fulfilled our earlier promise and shown:

\begin{corollary}[\autoref{prop:oktft}]
Open Klein topological field theories of dimension $2$ are equivalent to symmetric Frobenius algebras together with an involutive anti-automorphism preserving the pairing.
\qed
\end{corollary}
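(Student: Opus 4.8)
The plan is to obtain this directly from the preceding corollary, which identifies algebras over the modular operad $\oktft$ with non-unital symmetric Frobenius algebras carrying an involutive anti-automorphism that preserves the pairing, by supplying the one piece of structure that distinguishes the cobordism-category formulation of \autoref{prop:oktft} from the modular-operad formulation, namely a unit.

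First I would set up the dictionary between a symmetric monoidal functor $F\co\kcobo\to\vect$ and an $\oktft$-algebra. Put $A=F(I)$; since $F$ is monoidal it is determined by $A$ together with its values on the generating cobordisms (surfaces with labelled intervals embedded in the boundary, each interval marked ``in'' or ``out''). The disc with two ``in'' intervals gives a bilinear form $B\co A\otimes A\to k$; the disc with one ``in'' and one ``out'' interval is the identity cylinder, and together with the discs with two ``out'' and with two ``in'' intervals this forces the zig-zag identities, so $B$ is non-degenerate and $A$ is finite-dimensional. Using $B$ to turn every ``out'' interval into an ``in'' interval, gluing of cobordisms along intervals becomes exactly the operadic composition and contraction of $\oktft$, and the restriction of $F$ to cobordisms with at least two marked intervals — i.e.\ to $2g+n\geq2$ in the notation of the definition of $\oktft$ — is precisely the datum of an $\oktft$-algebra structure on $A$. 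The only cobordisms not accounted for are the disc with one marked interval and the disc with no marked interval, which are excluded from $\oktft$ by the stability bound $2g+n\geq2$ (cf.\ \autoref{rem:nounit}).

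Next I would observe that the disc with a single ``out'' interval gives $\eta\co k\to A$ and the disc with a single ``in'' interval gives $\epsilon\co A\to k$, that $\epsilon=B(-,\eta(1))$ (glue the unit disc into one input of the pairing disc), and that the disc with no marked interval is just $\epsilon\circ\eta$. So the extra data in $F$ beyond the $\oktft$-algebra structure is precisely the element $1_A=\eta(1)\in A$, and gluing the unit disc onto one leg of the disc with three marked intervals (the multiplication) forces $1_A$ to be a two-sided unit for the multiplication $\mu$ coming from the $\oktft$-algebra structure. Combined with the preceding corollary this presents an open KTFT as a finite-dimensional non-unital symmetric Frobenius algebra with an involutive anti-automorphism preserving the pairing, equipped with a choice of multiplicative unit — i.e.\ as a (unital) symmetric Frobenius algebra with such an involution. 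One checks with no effort that $1_A$ is automatically $*$-invariant, since $a^*=(1_Aa)^*=a^*1_A^*$ for all $a$ forces $1_A^*=1_A$, and that $\epsilon(ab)=B(ab,1_A)=B(a,b)$ by invariance of $B$, so $B$ is recovered as the Frobenius form of $(A,\epsilon)$. The converse runs the same way in reverse: starting from a unital symmetric Frobenius algebra with an involutive anti-automorphism preserving the pairing, regard it as a cyclic $\mass$-algebra and hence, via \autoref{thm:oktftmass}, as an algebra over $\oktft\cong\modmass$, then extend to a symmetric monoidal functor on $\kcobo$ by letting the disc with one ``out'' interval act as the unit and the disc with one ``in'' interval as $B(-,1_A)$.

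The substantive work — showing that surfaces of the same topological type give equivalent operations, which in the Klein setting involves the new relations among crosscaps, handles and boundary circles — has already been carried out in the proof of \autoref{thm:oktftmass}, and the analogous oriented statement is \autoref{prop:otft}. So the only real obstacle left in writing this corollary out is the routine bookkeeping that the passage from the cobordism category $\kcobo$ to the modular operad $\oktft$ discards nothing except a unit: this is the same modification that relates a unital symmetric Frobenius algebra to its non-unital modular-operad incarnation $\modass$, now performed in the presence of an involution, and it is precisely what is meant by saying that in the functorial formulation the only difference is the presence of a unit and counit.
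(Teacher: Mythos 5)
Your proposal is correct and follows exactly the route the paper takes: the paper's own ``proof'' is the one-sentence remark preceding the corollary, that after \autoref{thm:oktftmass} and the corollary on $\oktft$--algebras the only difference in the cobordism-category formulation is the presence of a unit and counit (cf.\ \autoref{rem:nounit}). What you have written is simply a careful unwrapping of that remark --- the dictionary between monoidal functors on $\kcobo$ and $\oktft$--algebras, the discs with at most one marked interval supplying $\eta$ and $\epsilon$, and the checks that $1_A^*=1_A$ and $\epsilon=B(-,1_A)$ --- all of which is sound.
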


\section{Cobar duality for $\mass$}
We will now consider the operad $\mass$ in more detail.

Recall that the free operad generated by the vector space $\ass(2)$ over $k$ is the operad of binary planar trees and that $\ass$ is the quotient of this by the associativity relation. It is therefore a quadratic operad since the associativity relation is a quadratic relation. Further $\ass^!\cong\ass$.

$\mass$ is also quadratic: let $K$ be the semisimple algebra $\mass(1)=\langle 1, a\rangle / (a^2=1)=k[\mathbb{Z}_2]$. By taking the quotient of the free operad generated by the $(K,K^{\otimes 2})$--bimodule $\mass(2)$ by the associativity relation we obtain $\mass$. In fact, as we shall see, all the usual duality properties of $\ass$ hold for $\mass$.

\begin{proposition}\label{prop:massqdual}
$(\mass)^!\cong\mass$.
\end{proposition}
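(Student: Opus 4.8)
The plan is to compute the quadratic dual $(\mass)^!$ directly from the recipe $\mathcal{P}^! = \mathcal{P}(K^{\mathrm{op}}, E^\vee, R^\perp)$, using that $\mass = \mathcal{P}(K, E, R)$ with $K = k[\mathbb{Z}_2]$, $E = \mass(2)$ the generating $(K, K^{\otimes 2})$-bimodule, and $R \subset F(E)(3)$ the submodule of associativity relations. Since $k[\mathbb{Z}_2]$ is commutative, $K^{\mathrm{op}} = K$, so the underlying algebra $K$ will match on both sides straight away. The real content is to identify $E^\vee$ with $E$ as a $(K, K^{\otimes 2})$-bimodule with $S_2$-action, and $R^\perp$ with $R$ under that identification.

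First I would pin down $E = \mass(2)$ explicitly. By \autoref{rem:assinmass} and the discussion of M\"obius corollas, $\mass(2)$ is spanned by the binary M\"obius corollas: the vertex has two inputs and one output (three half-edges total), each half-edge carries a colour in $\{0,1\}$, and we quotient by the reflection relation which simultaneously swaps the cyclic order and flips all three colours. One checks this leaves a $4$-dimensional space, matching $\dim_k E = \dim_k K^{\otimes 2} \cdot \dim_k \mass(2)/\cdots$; concretely $\mass(2)$ is free of rank $2$ as a left $K$-module (the multiplication $\mu$ and its "flipped" companion $a\mu$), with the right $K^{\otimes 2}$-action and the $S_2$-action recording how $a$ propagates through and how reflection acts. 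I would write down this $(K, K^{\otimes 2})$-bimodule-with-$S_2$ structure on a basis, then compute $E^\vee = \Hom_K(E,K) \otimes \mathrm{sgn}_2$ and exhibit an explicit isomorphism $E^\vee \cong E$ — this mirrors the classical fact $\ass(2)^\vee \cong \ass(2)$ but now one must also track the extra $K$-module structure, and that is where care is needed.

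Next I would handle the relations. In $F(E)(3)$ one has the two planar-tree shapes (left-comb and right-comb), each decorated by M\"obius corollas, giving a $(K, K^{\otimes 3})$-bimodule; $R$ is the $S_3$-stable sub-bimodule generated by the single associativity relation $(xy)z = x(yz)$ together with its $K$-twisted versions forced by the $(K,K^{\otimes 3})$-structure and $S_3$-action (the reflection/involution compatibility built into $\mass$). A dimension count, as in the $\ass$ case, should show $R$ and $R^\perp$ have equal dimension, so it suffices to verify the pairing $\langle R, R^\perp\rangle = 0$ for one generator — i.e. that the associativity relation for $\mass$ is, under the self-duality $E^\vee \cong E$ of the previous step, orthogonal to itself, exactly as for $\ass$. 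Since the duality isomorphism is $K$-linear and $S_3$-equivariant by construction, orthogonality of the generator propagates to all of $R$.

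The main obstacle I expect is bookkeeping rather than anything conceptual: the sign twist in $E^\vee = \Hom_K(E,K)\otimes\mathrm{sgn}$, the passage to $K^{\mathrm{op}}$ (harmless here since $K$ is commutative, but it must be said), and above all checking that the chosen isomorphism $E^\vee \cong E$ is compatible with \emph{both} the left $K$-action \emph{and} the right $K^{\otimes 2}$-action \emph{and} the $S_2$-action simultaneously — it is easy to produce a linear isomorphism that respects only some of these. A clean way to organise this is to exploit \autoref{rem:assinmass}: present $\mass$ as $\ass$ with an adjoined involution $\iota \in \mass(1)$ modulo $\iota^{\otimes 2}$-reflection, so that "everything reduces to the known $\ass^! \cong \ass$ plus a short check that the involution is self-dual." I would lean on that reduction to keep the quadratic-data computation to a minimum, and only fall back on the explicit bimodule description if the reduction needs justification.
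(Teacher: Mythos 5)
Your overall strategy is the same as the paper's: compute $(\mass)^!$ directly from the quadratic data $(K,E,R)$, exhibit an explicit isomorphism $E\cong E^\vee$ compatible with the $S_2$-action and the bimodule structure, and then identify $R^\perp$ with the image of $R$ by a dimension count plus a single orthogonality check on the associativity generator. Two concrete slips in your setup, however, need fixing. First, the dimension count: $\mass(2)$ is $8$-dimensional over $k$, not $4$-dimensional; as a left $K$-module it is free of rank $4$ (the paper's basis is $B=\{m,\sigma m(a\otimes 1),\sigma m(1\otimes a),m(a\otimes a)\}$), and your proposed rank-$2$ basis $\{\mu, a\mu\}$ is not even $K$-linearly independent, since $a\mu$ lies in $K\mu$. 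Second, and more substantively, your claim that the arity-one part ``will match straight away'' because $K=k[\mathbb{Z}_2]$ is commutative is exactly where the subtlety you flag actually bites: the twist by $\mathrm{sgn}_2$ in $E^\vee=\Hom_K(E,K)\otimes\mathrm{sgn}_2$ forces the isomorphism $K\to K^{\mathrm{op}}=K$ to send $a\mapsto -a$ (the paper's $\psi_1$); the verification $\psi_2(ae)=\psi_2(\sigma e(a\otimes a))=\sigma(\psi_2(e(a\otimes a)))=-a\,\psi_2(e)$ is precisely where the sign appears, so with the natural choice of $\psi_2$ the identity on $K$ does not work. You correctly identified simultaneous compatibility with the left $K$-action, the right $K^{\otimes 2}$-action and the $S_2$-action as the danger point, but you mislocated it by declaring the $K$-level identification harmless.

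Your fallback route --- presenting $\mass$ as $\ass$ with an adjoined involution and reducing to $\ass^!\cong\ass$ plus a check that the involution is self-dual --- is viable and is essentially how the paper later proves the general statement $(\mob\mathcal{P})^!\cong\mob(\mathcal{P}^!)$ in \autoref{thm:mobiusisation}; note that there too the dual of the involution $a$ is $-a$, so the same sign must be tracked.
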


\begin{proof}
As in the case of $\ass$ we can simply give an explicit isomorphism. The only potential difficulty arises from the quadratic dual being twisted by the sign representation, however this turns out not to be an issue. Let $K=\mass(1)=\langle 1, a\rangle / (a^2=1)=k[\mathbb{Z}_2]$ and $E=\mass(2)$.

Let $\psi_1\co K\rightarrow K^{\mathrm{op}}=K$ be the isomorphism with $\psi_1(a)=-a$. We define a map $\psi_2\co E\rightarrow E^{\vee}$ of $k$--linear $S_2$--representations as follows. Let $\sigma\in S_2$ denote the transposition and denote by $m$ the corolla:
\[m = 
\xygraph{!{<0cm,0cm>;<0.8cm,0cm>:<0cm,0.8cm>::}
1="1"&&2="2" \\
&*{\bullet}="v"\\
&="0"\\
"v"-"1" "v"-"2" "v"-"0"}
\]

Let $B=\{m,\sigma m(a \otimes 1), \sigma m(1 \otimes a), m(a\otimes a)\}$. Observe that $B$ is a $K$--linear basis for the left $K$--module $E$. For each $e\in B$ we denote by $e^*\in\Hom_K(E,K)$ the element of the dual basis for $E^{\vee}$. By this we mean $e^*$ is defined on each $e'\in B$ by $e^*(e') = 1$ if $e'=e$ and $e^*(e')=0$ otherwise. For $e\in B$ set $\psi_2(e)=e^*$. Now observe that $B$ also freely generates $E$ as a $k$--linear $S_2$--module, so $\psi_2$ extends to an isomorphism $E\rightarrow E^{\vee}$ of $k$--linear $S_2$--modules. Explicitly this sends an element $f$ of the $K$--linear basis $\sigma B=\{\sigma m, m(a\otimes 1), m(1\otimes a), \sigma m(a\otimes a)\}$ to $-f^*$ (where $f^*$ denotes the element of the dual basis of $\sigma B$).

We claim the map $\Psi = (\psi_1,\psi_2,0,\ldots)$ gives an isomorphism of $K$--collections. By definition it is an isomorphism of $S$--modules. Some straightforward calculations verify that it is also a map of $K$--collections. For example, for $e\in B$ we have $\psi_2(ae)=\psi_2(\sigma e (a\otimes a))=\sigma(\psi_2(e(a\otimes a))) = -a\psi_2(e) = \psi_1(a)\psi_2(e)$.

Therefore $F(E)\cong F(E^{\vee})$ with $\Psi$ extending to an isomorphism of operads. Let $R\subset F(E)(3)$ be the $S_3$--stable sub-bimodule generated by the associativity relation for $m$. It remains to show that $R^{\perp}=\Psi(R)$. Since $\dim(R)=\dim(F(E)(3))/2$ it is sufficient to check the associativity relation for $m$ is in $R^{\perp}$. This is a simple check, which we omit.
\end{proof}

We now describe the cobar construction for $\mass$. To do this we will need to identify the space of decorations on a tree $T$ by the underlying $K$--collection of $\mass$ (recall that in general this is different from the space of decorations on a tree $T$ by the underlying $S$--module). We therefore define the notion of reduced M\"obius and planar trees.

\begin{definition}\label{def:reducedtree}
Given a planar or M\"obius tree with at least one vertex of valence at least $3$ we can associate (possibly several) reduced trees by repeatedly contracting an edge attached to a vertex of valence $2$ until the tree is reduced. We say two reduced M\"obius or planar trees are equivalent if they are obtained from the same tree in this manner. When we refer to a reduced M\"obius or planar tree we will mean an isomorphism class of reduced M\"obius or planar trees up to this equivalence.
\end{definition}

\begin{remark}\label{rem:reducedmobius}
This has no effect for planar trees but for M\"obius trees we have that the following reduced M\"obius trees are the same for example:
\[
\xygraph{!{<0cm,0cm>;<0.8cm,0cm>:<0cm,0.8cm>::}
1="1"&&2="2"\\
&*{\bullet}="v1"\\
&&="v1v2"&&3="3"\\
&&&*{\bullet}="v2"\\
&&&="0"
"1"-"v1" "2"-@{.}"v1" "v1"-"v1v2" "v1v2"-@{.}"v2" "v2"-"3" "v2"-@{.}"0"
}\qquad\simeq\qquad
\xygraph{!{<0cm,0cm>;<0.8cm,0cm>:<0cm,0.8cm>::}
1="1"&&2="2"\\
&*{\bullet}="v1"\\
&&="v1v2"&&3="3"\\
&&&*{\bullet}="v2"\\
&&&="0"
"1"-"v1" "2"-@{.}"v1" "v1"-@{.}"v1v2" "v1v2"-"v2" "v2"-"3" "v2"-@{.}"0"
}
\]
Also note that edge contraction is still well defined on reduced M\"obius trees.
\end{remark}

Thus defined, the space of decorations on a reduced tree $T$ by the $k$--collection $\ass$ is generated by the set of reduced planar trees whose underlying tree is $T$. The space of decorations on $T$ by the $K$--collection $\mass$ is spanned by the set of reduced M\"obius trees whose underlying tree is $T$.

\begin{definition}\label{def:orientedtree}
The space of oriented planar (or M\"obius) trees is generated by planar (or M\"obius) trees equipped with an ordering of the internal edges subject to the relations arising by requiring that swapping the order of two edges is the same as multiplying by $-1$ so that, for example, the space $\ass(T)\otimes\det(T)$ (see \hyperref[subsec:detT]{Section~\ref*{subsec:detT}}) can then be identified with the space of reduced oriented planar trees whose underlying tree is $T$.
\end{definition}

Now recall the operad $\ass$ is Koszul, $\ass(n)\cong\ass(n)^*$ and $\dass$ is the operad of reduced oriented planar trees (where $S_n$ acts by the sign representation) which governs $A_\infty$--algebras.

By sending a corolla $m$ in $\mass(n)$ to the map $\psi(m)(m)=1$, $\psi(m)(am)=a$, $\psi(m)(m')=0$ for the other corollas $m'$ (similar to the map in the proof of \autoref{prop:massqdual} but without the different signs since $\mass(n)^*$ is not twisted by the sign representation) we obtain an $S_n$--equivariant map $\mass(n)\rightarrow\mass(n)^*$ that is also a map of $(K,K^{\otimes n})$--bimodules. Therefore the underlying spaces $C(\mass)(n)$ are spanned by reduced oriented M\"obius trees with $n$ inputs, graded appropriately by the number of internal edges. Composition corresponds to gluing oriented trees. The differential corresponds to expanding vertices of valence greater than 3, for example:
\[
\xygraph{!{<0cm,0cm>;<0.7cm,0cm>:<0cm,0.7cm>::}
1="1"&&2="2"&&3="3"\\
&&*{\bullet}="v"\\
&&="0"
"1"-@{.}"v" "2"-"v" "3"-"v" "v"-@{.}"0"
}
\longmapsto
\xygraph{!{<0cm,0cm>;<0.7cm,0cm>:<0cm,0.7cm>::}
1="1"&&2="2"\\
&*{\bullet}="v1"\\
&&="v1v2"&&3="3"\\
&&&*{\bullet}="v2"\\
&&&="0"
"1"-@{.}"v1" "2"-"v1" "v1"-"v1v2" "v1v2"-"v2" "v2"-"3" "v2"-@{.}"0"
}
+
\xygraph{!{<0cm,0cm>;<0.7cm,0cm>:<0cm,0.7cm>::}
&&2="2"&&3="3"\\
&&&*{\bullet}="v1"\\
1="1"\\
&*{\bullet}="v2"\\
&="0"
"2"-"v1" "3"-"v1" "v1"-"v2" "v2"-@{.}"1" "v2"-@{.}"0"
}
\]
When drawing oriented M\"obius trees like the above we give them the orientation on the edges by ordering the internal edges from left to right, from bottom to top.

\begin{remark}\label{rem:cassincmass}
Observe as in \autoref{rem:assinmass} that $C(\ass)$ is a suboperad of $C(\mass)$, since planar trees are M\"obius trees with straight edges. Indeed once again $C(\mass)$ is generated by adjoining an involution of degree $0$ to $C(\ass)$, this time modulo the reflection relation on all corollas.
\end{remark}

\begin{lemma}\label{lem:koszul}
As dg vector spaces $C(\mass)(n)=\bigoplus^{2^n}C(\ass)(n)$.
\end{lemma}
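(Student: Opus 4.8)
The plan is to write down an explicit isomorphism of complexes. Recall from the discussion preceding the lemma that $C(\mass)(n)$ has a basis given by reduced oriented M\"obius trees with $n$ inputs and $C(\ass)(n)$ one given by reduced oriented planar trees with $n$ inputs, that the former contains the latter as in \autoref{rem:cassincmass}, and that the relation identifying M\"obius trees is generated by reflections at vertices together with the moves of \autoref{rem:reducedmobius} (which, using $a^2=1$, amount to swapping the two colours on an internal edge, and hence also to replacing a $(1,1)$-coloured internal edge by a $(0,0)$-coloured one). For a function $\epsilon\co\{1,\dots,n\}\to\{0,1\}$ write $C(\ass)(n)_\epsilon$ for a copy of $C(\ass)(n)$, and define $\Phi_\epsilon\co C(\ass)(n)_\epsilon\to C(\mass)(n)$ on the basis by sending a reduced oriented planar tree $P$ to the reduced oriented M\"obius tree with the same underlying oriented tree in which the $i$-th input leg is coloured $\epsilon(i)$ and every other half edge is coloured $0$. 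I claim that $\Phi=\bigoplus_\epsilon\Phi_\epsilon\co\bigoplus^{2^n}C(\ass)(n)\to C(\mass)(n)$ is an isomorphism of complexes.

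First, $\Phi$ is a chain map. The differential of either cobar complex is built from the composition maps $\gamma$ and the maps $l_e$ occurring in the cobar differential, neither of which sees the colouring beyond the fact that composing straight-coloured corollas in $\mass$ is the same as composing in $\ass\subset\mass$; moreover $\det(T)=\Det(k^{\edges(T)})$ depends only on the set of edges, so the reflection and colour-swap moves act trivially on orientations and no extra signs intervene. Since expanding a vertex leaves the legs (hence $\epsilon$) untouched and the new internal edge may be taken straight, one gets $d\,\Phi_\epsilon(P)=\Phi_\epsilon(dP)$.

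Next, $\Phi$ is surjective. Given a representative of a reduced M\"obius tree, root the underlying tree at its output leg. Because the vertex--edge incidence map $(\mathbb{Z}_2)^{\vertices(T)}\to(\mathbb{Z}_2)^{\edges(T)}$ is onto for a tree, a suitable sequence of reflections makes every internal edge untwisted; then replacing each $(1,1)$-edge by a $(0,0)$-edge makes every internal edge straight. Finally, reflect at the root if necessary to make the output leg straight, and repeat the first two steps on the subtree below the root; the reflections this requires propagate outward towards the leaves and terminate there, and what remains is a representative $\Phi_\epsilon(P)$ for some planar tree $P$ and some $\epsilon$. Hence $\Phi$ is onto. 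A dimension count then finishes the proof: for any reduced $n$-tree $T$ one has $\dim_k\mass(T)=2^n\dim_k\ass(T)$, since $\mass(m)$ (the space of M\"obius $m$-corollas) is free of rank $\dim\ass(m)=m!$ over the group ring $k[(\mathbb{Z}_2)^m]$ acting by recolouring individual half edges — the diagonal, being a reflection, acting trivially — so each of the $\#\edges(T)=\#\vertices(T)-1$ tensor products over $K=\mass(1)$ in $\mass(T)=\bigotimes_{v}\mass(\inputs(v))$ halves the dimension, and $\sum_v(n(v)-1)-(\#\vertices(T)-1)=n$. Summing over reduced $n$-trees gives $\dim C(\mass)(n)=2^n\dim C(\ass)(n)$, so the surjection $\Phi$ between spaces of equal dimension is an isomorphism.

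The main obstacle is the surjectivity step: each individual move is trivial, but organising the reflections so that straightening the output leg does not leave a residue of twisted internal edges forces the outward propagation described above, and one must check it terminates. An alternative is to prove injectivity directly: the $\mathbb{Z}_2$-valued quantity assigning to a M\"obius tree the sum of the colours of the half edges along the path from the $i$-th input leg to the output leg is preserved by reflections and colour-swaps and equals $\epsilon(i)$ on $\Phi_\epsilon(P)$, so it recovers $\epsilon$; one then argues (via the fact that any product of reflections fixing a colouring supported on legs is trivial, together with a propagation argument showing no word in the relations can fix such a colouring while acting nontrivially on the planar structure) that $P$ is also determined. Either way the combinatorial content is essentially the normalisation of colourings, and combining it with the dimension count above yields the stated decomposition.
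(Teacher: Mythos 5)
Your proof is correct and takes essentially the same route as the paper: the paper likewise normalises every reduced M\"obius tree to the unique representative whose root and internal half edges are all coloured $0$, so that only the input-leg colours remain, giving exactly your $2^n$ copies of $C(\ass)(n)$ with coinciding differentials, and your dimension count (or leg-path invariant) merely replaces the paper's remark that this normalisation process is deterministic and hence the normal form unique. One small slip worth fixing: the diagonal recolouring is not a reflection and does not act trivially on $\mass(m)$ (composed with the reflection relation it acts by reversing the cyclic order), but $\mass(m)$ is indeed free of rank $m!$ over the subgroup $(\mathbb{Z}_2)^m$ recolouring only the inputs, so the count $\dim_k\mass(T)=2^n\dim_k\ass(T)$ and the resulting isomorphism stand.
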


\begin{proof}
Given a reduced M\"obius tree $T$ we will find a unique reduced tree $T'$ isomorphic to it with the root and all internal edges coloured by $0$ (in other words all straight lines). This is then a tree in $C(\ass)$ with coloured inputs of which there are $2^n$ possibilities. The differentials clearly coincide as $K$ is concentrated in degree $0$.

To find such a tree we apply a sequence of transformations to $T$ that result in an isomorphic tree at each stage. The basic transformations are either using the reflection relation at a vertex or swapping the colourings of an edge (as in, for example, \autoref{rem:reducedmobius}) in a reduced tree. The process is as follows: we first apply the reflection relation if necessary to ensure the root is coloured by $0$. We then apply the edge relations to all the inputs of the bottom vertex to ensure all the half edges connected to the bottom vertex are coloured by $0$. We then repeat this inductively at each of the vertices at the next level until we have transformed the whole tree. The resulting tree $T'$ is unique since there is no choice in this process.
\end{proof}

\begin{corollary}\label{cor:koszul}
$\mass$ is Koszul.
\end{corollary}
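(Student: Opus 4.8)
The plan is to read this off \autoref{lem:koszul} together with the Koszulness of $\ass$. Recall from the definition that $\mass$ is Koszul precisely when each complex $\dual\mass(n)$ is exact in every degree except the right end. Since $\dual\mass(n) = C(\mass)(n)\otimes\Lambda(n)$ differs from $C(\mass)(n)$ only by an overall degree shift by $1-n$ and a twist of the $S_n$-action by the sign representation — neither of which affects whether cohomology vanishes — this is equivalent to the assertion that $H^s\bigl(C(\mass)(n)\bigr) = 0$ for all $s$ except the top degree $s = n-1$, the summand indexed by binary trees. So the whole statement reduces to computing the cohomology of the complexes $C(\mass)(n)$.

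First I would invoke \autoref{lem:koszul}, which gives an isomorphism of differential graded vector spaces $C(\mass)(n)\cong\bigoplus^{2^n}C(\ass)(n)$, with both sides graded by the number of internal edges. Since cohomology commutes with finite direct sums, this yields $H^s\bigl(C(\mass)(n)\bigr)\cong\bigoplus^{2^n}H^s\bigl(C(\ass)(n)\bigr)$ for every $s$. Now apply the same reduction to $\ass$ itself: because $\ass$ is Koszul, $H^s\bigl(C(\ass)(n)\bigr) = 0$ for all $s\neq n-1$. Combining the two facts gives $H^s\bigl(C(\mass)(n)\bigr) = 0$ for all $s\neq n-1$, which is exactly the statement that $\mass$ is Koszul.

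There is essentially no obstacle here; the proof is genuinely a one-line consequence of \autoref{lem:koszul}. The only point worth a moment's care is that the decomposition of \autoref{lem:koszul} respects the cohomological grading, so that the vanishing can be checked summand by summand in each fixed degree — but this is built into the phrase \emph{isomorphism of differential graded vector spaces}, the grading being by internal edge count in both cases. One could instead argue through the comparison map $\gamma_{\mass}\co\dual\mass\rightarrow\mass^!$ of the definition of Koszulness, using \autoref{prop:massqdual} to identify $\mass^!\cong\mass$, but the direct-sum argument above is cleaner and sidesteps any need to analyse that map explicitly.
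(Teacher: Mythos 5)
Your argument is correct and is essentially the paper's own proof: both reduce Koszulness of $\mass$ to exactness of the complexes $C(\mass)(n)$ away from the right end and then conclude via the direct-sum decomposition of \autoref{lem:koszul} together with the Koszulness of $\ass$. The extra remarks about the shift and sign twist relating $\dual\mass(n)$ to $C(\mass)(n)$ are fine but add nothing beyond what the paper's definition of Koszulness already encodes.
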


\begin{proof}
$\mass$ is Koszul if and only if the complexes $C(\mass)(n)$ are exact everywhere but the right end. This is true by \autoref{lem:koszul} since $\ass$ is Koszul.
\end{proof}

We now consider homotopy $\mass$--algebras. That is to say, algebras over $\dmass$. From \autoref{rem:cassincmass} we have that $\dmass$ is generated by the operations $m_i\in\dass$ for $i\geq 2$, together with an involution of degree $0$, which by convention we will say corresponds to $-a\in K$. The differential on $\dmass$ is the same as that on $\dass$ for the operations $m_i$ so it yields the usual $A_\infty$ conditions. The reflection relation on the $m_i$ however introduces an extra sign since we have now twisted $C(\mass)$ by the sign representation. The sign of the permutation reversing $n$ labels is $(-1)^{n(n-1)/2}$. So we have shown the following:

\begin{proposition}
Algebras over $\dmass$ are $A_\infty$--algebras with an involution such that
\[
m_n(x_1,\dots,x_n)^*=(-1)^{\epsilon}(-1)^{n(n+1)/2-1}m_n(x_n^*,\dots,x_1^*)
\]
where $\epsilon=\sum_{i=1}^{n}\degree{x}_i\left(\sum_{j=i+1}^{n}\degree{x}_j\right)$ arises from permuting the $x_i$ with degrees $\degree{x}_i$.
\qed
\end{proposition}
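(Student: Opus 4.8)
The plan is to read the structure straight off \autoref{rem:cassincmass}: the dg operad $\dmass$ is generated by its sub-dg-operad $\dass$ (the operations $m_i$ for $i\ge 2$, carrying the $A_\infty$ differential) together with one extra degree $0$ generator, the involution, which by convention corresponds to $-a\in K=\mass(1)$, subject to $\iota^2=\id$ and to the reflection relation on every corolla $m_n$. So the first step is to unwind this into algebra data: an algebra over $\dmass$ is a dg vector space $V$ with maps $m_n\co V^{\otimes n}\to V$ making it an $A_\infty$--algebra --- this is exactly the $\dass$ part, and $\iota$ is automatically a chain map because $K$ sits in degree $0$ with zero differential --- together with a degree $0$ linear map $x\mapsto x^*$ satisfying $(x^*)^*=x$, such that the relations forced by the reflection relations on the $m_n$ hold. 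Everything then reduces to computing exactly what the reflection relation on the corolla $m_n$ says about $V$.

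Next I would track, step by step with signs, what reflection does to the standard corolla $m_n$ (inputs $1,\dots,n$, cyclic order $(1,2,\dots,n,\mathrm{root})$, every half edge coloured $0$): it reverses the cyclic order, hence acts on the inputs by the order-reversing permutation $w_0\in S_n$, and it flips the colour of all $n+1$ half edges at the vertex. Translating each ingredient into an operation on $V$: flipping an input's colour means precomposing that slot with $a\in\mass(1)$, so --- since the algebra's involution is $-a$ --- it contributes a factor $-1$ and turns the argument $x_i$ into $x_i^*$; flipping the root's colour likewise means postcomposing with $a$, contributing one further $-1$ and applying $*$ to the output. That is $n+1$ colour flips in all, giving $(-1)^{n+1}=(-1)^{n-1}$. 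The action of $w_0$ on the arguments contributes the Koszul sign $(-1)^{\epsilon}$ of $\dgvect$, with $\epsilon=\sum_{i<j}\degree{x}_i\degree{x}_j$ (the map $*$ is degree $0$, so it produces no Koszul signs of its own, neither here nor when applied to the $x_i$). Finally, because $\dmass=C(\mass)\otimes\Lambda$ and the corolla summand of $C(\mass)(n)$ is twisted by the sign representation $\Lambda(n)$, the action of $w_0$ carries the extra factor $\mathrm{sgn}(w_0)=(-1)^{n(n-1)/2}$; there are no further hidden signs, since a corolla has no internal edges, so its $\det(T)$ is canonically $k$.

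Assembling the three sign contributions, the reflection relation becomes
\[
m_n(x_1,\dots,x_n)=(-1)^{\epsilon}(-1)^{(n-1)+n(n-1)/2}\,\bigl(m_n(x_n^*,\dots,x_1^*)\bigr)^*,
\]
and since $(n-1)+n(n-1)/2=n(n+1)/2-1$, applying $*$ to both sides and using $(x^*)^*=x$ gives the stated formula. The one genuinely delicate point is this sign bookkeeping, and in particular two things: that reflection flips the root half edge as well as the $n$ input half edges, so the colour-flip sign is $(-1)^{n+1}$ rather than $(-1)^{n}$ --- this is precisely where the ``$-a$'' convention of \autoref{rem:cassincmass} is designed to pay off --- and that the Koszul sign coming from the symmetry of $\dgvect$ must be kept separate from the sign-representation twist in $C(\mass)\otimes\Lambda$, as only the former survives unchanged when $*$ is applied to the arguments. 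The rest is routine.
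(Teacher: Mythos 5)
Your argument is correct and follows essentially the same route as the paper: read off from \autoref{rem:cassincmass} that $\dmass$ is $\dass$ plus a degree-zero involution corresponding to $-a$, and extract the stated relation from the reflection relation on corollas, with the sign being the product of the $(-1)^{n+1}$ from the $n+1$ colour flips under the $-a$ convention, the Koszul sign $(-1)^{\epsilon}$, and the sign-representation twist $(-1)^{n(n-1)/2}$ coming from $\dmass=C(\mass)\otimes\Lambda$. Your bookkeeping, including the identity $(n-1)+n(n-1)/2=n(n+1)/2-1$, is exactly the computation the paper leaves implicit, just spelled out in more detail.
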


\begin{remark}
$\dmass$ can be given the structure of a cyclic operad in the obvious way (permuting the labellings of M\"obius trees).
\end{remark}

An important operad for us (which we shall see later controls open Klein topological conformal field theory) is the modular operad $\moddmass$ which we shall now describe explicitly by identifying it as the operad of reduced oriented M\"obius graphs with the expanding differential. This is the analogue of the fact that $\moddass$ is the operad of reduced oriented ribbon graphs with the expanding differential. We need to define these terms of course, which are analogues of \autoref{def:reducedtree} and \autoref{def:orientedtree}.

\begin{definition}
A \emph{reduced} M\"obius or ribbon graph\footnote{As for trees we use the word `reduced' as opposed to `stable' to emphasise that the vertices of these graphs are not equipped with a genus.} is a graph where each vertex has valence at least $3$. Given a graph with at least one vertex of valence at least $3$ we can associate (possibly several) reduced graphs to it by repeatedly contracting an edge attached to a vertex of valence $2$ until the graph is reduced. We say two reduced graphs are equivalent if they are obtained from the same graph in this manner. When we refer to a reduced graph we will mean an isomorphism class of reduced graphs up to this equivalence.
\end{definition}

\begin{remark}\label{rem:reducedmobgraph}
As for trees this equivalence on reduced graphs has no effect for ribbon graphs. However for stable M\"obius graphs we have an additional relation changing the colours on half edges belonging to the same edge as in \autoref{rem:reducedmobius}. If two half edges in an edge are coloured by $0$ then we can replace them by half edges coloured by $1$ and get an equivalent reduced graph. If they are different colours we can swap the colours and get an equivalent reduced graph. It is clear relations of this form are the only ones arising from this equivalence relation.
\end{remark}

Let $G$ be a stable graph of genus $g$ with $n$ legs and $e$ edges. Let $\det(G)=\Det(k^{\edges(G)})\otimes\Det(H_1(|G|))$ be concentrated in degree $e+3-3g-n$.

\begin{proposition}\label{prop:orientedmobgraph}
There are isomorphisms of chain complexes
\[
\moddmass((g,n))\cong\bigoplus_{G\in\iso\Gamma((g,n))}\underline{\mass}((G))\otimes\det(G)
\]
where here $\underline{\mass}((G))=\bigotimes_v\underline{\mass}((g(v),\flags(v)))$ is defined by taking the tensor product over $K$ using the $(K,K^{\otimes \flags(v)})$--bimodule structures. The action of $S_n$ on the right permutes labels of $G$ twisted by the sign. The differential is the natural differential expanding vertices of valence greater than $3$.
\end{proposition}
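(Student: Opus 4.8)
The plan is to compute the modular closure $\moddmass$ by the same device already used to describe $\modmass$, and then to unwrap the result, paralleling the known description of $\moddass$ in terms of reduced oriented ribbon graphs; the only genuinely new feature is the $\mathbb{Z}_2$-colouring of half edges, which is controlled by the self-duality $\mass^!\cong\mass$ of \autoref{prop:massqdual}. For any cyclic operad $\mathcal{Q}$ the modular closure has $\modc{\mathcal{Q}}((g,n))=\colim_{G\in\iso\Gamma((g,n))}\mathcal{Q}((G))$, concretely spanned by $\mathcal{Q}$-decorated graphs modulo the relation $\approx$ (contraction of non-loop edges) with composition and contraction given by gluing legs --- exactly as $\modmass$ was described in terms of M\"obius graphs. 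Applied to the cyclic operad $\mathcal{Q}=\dmass$, this says $\moddmass((g,n))$ is the space of $\dmass$-decorated reduced graphs modulo $\approx$, with grading and differential induced vertexwise from $\dmass$: the modular closure only adjoins degree-$0$ contraction operations, so it introduces no new differential, and the differential of $\moddmass$ is the one applying the internal differential of $\dmass$ at each vertex.

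Next I would unwrap a single vertex. By \autoref{prop:massqdual} and the explicit dual-basis identification $\psi$ described after \autoref{def:orientedtree}, $\mass((n))\cong\mass((n))^{*}$ as $(K,K^{\otimes n})$-bimodules, so $C(\mass)((n))$ is spanned by reduced oriented M\"obius trees and $\dmass((n))=C(\mass)((n))\otimes\Lambda(n)$ is the same space with its $S_n$-action twisted by the sign. Decorating a graph $G$ this way at every vertex and then collecting all the orientation data --- the edge-orderings internal to each vertex-tree together with the internal edges of $G$ --- identifies a $\dmass$-decoration of $G$ with an element of $\underline{\mass}((G))\otimes\det(G)$, where $\underline{\mass}((G))$ is the $K$-tensor product of M\"obius-corolla decorations and $\det(G)=\Det(k^{\edges(G)})\otimes\Det(H_1(|G|))$ sits in degree $e+3-3g-n$. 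The factor $\Det(H_1(|G|))$ and the precise degree shift come out of the usual determinant-line bookkeeping: each vertex contributes a shift $1-n(v)$ from $\Lambda$, the edge-orderings give $\Det(k^{\edges(G)})$, and rewriting $\bigotimes_v\Det(k^{\flags(v)})$ in terms of $\Det(k^{\edges(G)})$, $\Det(k^{n})$ and $\Det(H_1(|G|))$ (using $|V|-e=1-\dim H_1(|G|)$) reassembles everything into the stated normalisation --- this is word for word the computation for $\moddass$. A bivalent vertex is decorated by $K=k[\mathbb{Z}_2]$ and gets absorbed into a recolouring of the adjoining edge, so the $\approx$-relation on $\dmass$-decorated graphs becomes exactly the reduction equivalence on reduced M\"obius graphs of \autoref{rem:reducedmobgraph}, and no further quotient survives.

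It then remains to match the extra structure. The internal differential of $\dmass$ splits one vertex of a M\"obius tree into two joined by a new non-loop edge, and induced on $G$ this is exactly the differential of the right-hand side expanding a vertex of valence $>3$; the $S_n$-action permutes legs, and the promised twist by the sign representation is precisely the $\Lambda$-twist inherited from $\dmass=C(\mass)\otimes\Lambda$; compatibility with composition and contraction is immediate, since both are just gluing legs. The hard part is the sign and determinant-line bookkeeping in the previous paragraph: verifying that the per-vertex $\Lambda$-shifts and the edge orderings really do reassemble into $\Det(k^{\edges(G)})\otimes\Det(H_1(|G|))$ in degree $e+3-3g-n$, that this is compatible with edge contraction (the $\approx$-relation) and with the expanding differential including all signs, and that bivalent vertices collapse into the half-edge colouring as in \autoref{rem:reducedmobgraph}. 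Everything here is parallel to the established ribbon-graph description of $\moddass$; once that template is transcribed, the $\mathbb{Z}_2$-colouring contributes nothing beyond a single appeal to $\mass^!\cong\mass$.
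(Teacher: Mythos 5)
Your proposal is correct and follows essentially the same route as the paper: both sides are compared through the free modular operad generated by $\mass$, with the per-vertex sign/orientation data of $\dmass$-decorations reassembled into the factor $\Det(k^{\edges(G)})\otimes\Det(H_1(|G|))$. The ``hard part'' you defer is precisely what the paper's (also brief) proof addresses: it defines the isomorphism inductively by directing each new edge and the cycle created by a contraction, checks the two key sign compatibilities (a leg transposition reverses the orientation of the new cycle; reordering contractions swaps both the cycles and the new edges), and then concludes by surjectivity and a dimension comparison with the free modular operad on $\mass$.
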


\begin{proof}
It is easy to convince oneself this is true since both sides are related to the free modular operad generated by $\mass$. We just need to explain how the $\det(G)$ term arises. First we observe that as in \autoref{rem:decgraphs} the space $\underline{\mass}((G))$ can be identified with the space generated by reduced M\"obius graphs. Given a reduced oriented M\"obius tree $T$ with $\omega\in\det(T)$ representing the orientation of $T$ and a contraction $\xi_{ij}$ with $i<j$ we can glue the $i$--th and $j$--th legs of $T$ to obtain a reduced M\"obius graph $G$ with newly formed edge $e$. We direct the edge $e$ such that the $i$--th leg is outgoing and the $j$--th leg incoming. This gives an oriented cycle $c$ in $H_1(|G|)$, by using the canonical direction on the tree $T$. Therefore we map $\xi_{ij}(T)$ to $G\otimes\omega\wedge e\otimes c$. We then extend this map inductively by mapping $\xi_{kl}(G)$ with $k<l$ to the graph $G'$ obtained by gluing the $k$--th and $l$--th legs of $G$, orienting the new edge as before, which gives a new oriented cycle $c'$ so we take the element $\omega'\wedge c'\in\Det(H_1(|G'|))$ given $\omega'\in\Det(H_1(|G|)$. We must check of course that this is a well defined map. In particular we must check it is well defined for the various associativity and equivariance relations. We omit the details, however the main point to observe is that the minus sign arising when we apply the transposition $(ij)\in S_n$ to a reduced oriented M\"obius tree and then contract the $i$--th and $j$--th legs is reflected in the fact that the direction of the edge formed by gluing legs $i$ and $j$ is then reversed so the orientation of the cycle $c$ is reversed and also when we carry out contractions in a different order, we swap the ordering of the cycles, but we also swap the ordering of the new edges. 

It is completely clear that the gradings and the differentials coincide. To see this map is an isomorphism note it is clearly surjective then compare dimensions by observing that both sides are closely related to the free modular operad generated by $\mass$.
\end{proof}

The compositions are of course simply gluing graphs and ordering the edges in the same way as we do for oriented trees (cf \hyperref[subsec:cobar]{Section~\ref*{subsec:cobar}}). Contractions are also obvious and induce the orientation as detailed in the above proof.

We can talk about oriented graphs as we did for trees in \autoref{def:orientedtree}.

\begin{definition}
  The space of oriented ribbon/M\"obius graphs is generated by ribbon/M\"obius graphs $G$ equipped with an ordering of the internal edges and an ordering of a basis of cycles in $H_1(|G|)$ subject to the relations arising by requiring that swapping the order of two edges or of two cycles is the same as multiplying by $-1$. In particular the space $\underline{\mass}((G))\otimes\det(G)$ from above can be identified as the space of reduced oriented M\"obius graphs whose underlying graph is $G$.
\end{definition}

\begin{remark}\label{rem:orientation}
When $k=\mathbb{Q}$ or $k=\mathbb{R}$ an orientation on a graph is equivalent to an ordering of its vertices and directing its edges, up to an even permutation. For example see \cite{conantvogtmann,getzlerkapranov}.
\end{remark}

\section{M\"obiusisation of operads and closed KTFTs}
We briefly outline the general construction for operads that follows from considering the above arguments and we also briefly consider closed KTFTs. As usual we let $k$ be a field and $K$ be the unital associative algebra over $k$ generated by an involution $a$ so $K=\langle 1, a \rangle / (a^2=1)=k[\mathbb{Z}_2]$.

\begin{definition}
Let $\mathcal{P}\in\dgop(k)$ be an admissible dg operad so $\mathcal{P}(1)=k$ is concentrated in degree $0$. The \emph{M\"obiusisation} of $\mathcal{P}$ is an operad $\mob\mathcal{P}\in\dgop(K)$ obtained by freely adjoining an element $a$ to $\mathcal{P}(1)$ in degree $0$ and imposing the relations $da=0$, $a^2=1$ and $am = \tau_n(m)a^{\otimes n}$ (the reflection relation) for all $m\in\mathcal{P}(n)$ where $\tau_n=(1\quad n)(2\quad n-1)(3\quad n-2)\dots\in S_n$ is the permutation reversing $n$ labels. This construction extends to a functor $\mob\co \dgop(k)\rightarrow\dgop(K)$.

Given a unital extended modular operad $\mathcal{O}$ with $\mathcal{O}((0,2))=k$ we define $\mob\mathcal{O}$ in a similar way.
\end{definition}

Note that $\mathcal{P}$ is a suboperad of $\mob\mathcal{P}$. Clearly $\mass$ as defined above is indeed the M\"obiusisation of $\ass$. We have the following properties that generalise those shown for $\mass$ in the previous section:

\begin{theorem}\label{thm:mobiusisation}
Let $\mathcal{P}\in\dgop(k)$.
\begin{enumerate}
\item If $\mathcal{P}$ is quadratic then so is $\mob\mathcal{P}$ and $(\mob\mathcal{P})^!\cong\mob(\mathcal{P}^!)$
\item As dg vector spaces $C(\mob\mathcal{P})(n)=\bigoplus^{2^n}C(\mathcal{P})(n)$
\item $C(\mob\mathcal{P})=\mob C(\mathcal{P})$
\item If $\mathcal{P}$ is Koszul then $\mob\mathcal{P}$ is Koszul
\item If $\mathcal{P}$ is cyclic then $\modc{\mob\mathcal{P}}=\mob\modc{\mathcal{P}}$
\end{enumerate}
\end{theorem}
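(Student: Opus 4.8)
The plan is to prove the five assertions in turn, in each case following closely the argument already given for the special case $\mathcal{P}=\ass$, $\mob\mathcal{P}=\mass$. For part (1), write $\mathcal{P}=\mathcal{P}(k,E,R)$ with $E=\mathcal{P}(2)$ and $K=k[\mathbb{Z}_2]$. I would first observe (as was done for $\mass$ at the start of the section) that $\mob\mathcal{P}$ has the quadratic presentation $\mathcal{P}(K,\tilde E,R)$, where $\tilde E$ is the $(K,K^{\otimes 2})$-bimodule obtained from $E$ by adjoining $a$ and imposing the arity-$2$ reflection relation $ae=\sigma e\,(a\otimes a)$, and $R\subset F(\tilde E)(3)$ is the image of the original relations — this already gives the first clause of (1), since the higher reflection relations are composites of the arity-$2$ one. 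For the duality I would generalise $\Psi=(\psi_1,\psi_2,0,\dots)$ of \autoref{prop:massqdual} verbatim: take $\psi_1\colon K\to K^{\mathrm{op}}=K$ with $\psi_1(a)=-a$, and build $\psi_2\colon\tilde E\to\tilde E^\vee$ from a $k$-linear dual basis of $E$, the sign twist in $E\mapsto E^\vee$ being absorbed exactly on the $S_2$-translates of the chosen basis. This exhibits $\tilde E^\vee\cong\widetilde{E^\vee}$ as $(K^{\mathrm{op}},(K^{\mathrm{op}})^{\otimes 2})$-bimodules, whence $(\mob\mathcal{P})^!=\mathcal{P}(K^{\mathrm{op}},\widetilde{E^\vee},R^\perp)$; and decomposing the pairing on $F(\tilde E)(3)$ according to the colourings of the three inputs — the same colourings that reappear in part (2) — shows that $R^\perp$ is, summand by summand, the orthogonal complement computed for $\mathcal{P}$, i.e.\ the relation space of $\mob(\mathcal{P}^!)$. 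The main obstacle of the whole theorem lives here: verifying that the sign twist built into $\psi_2$ is compatible with the sign twist in $E\mapsto E^\vee$ and with the reflection relation. This is the only genuinely fiddly computation, and it is essentially the one already carried out (and partly "omitted as a simple check") in \autoref{prop:massqdual}.

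For parts (2) and (3) I would first record the analogues of \autoref{rem:assinmass} and \autoref{rem:cassincmass}: $\mathcal{P}$ is a suboperad of $\mob\mathcal{P}$, and $C(\mathcal{P})$ is a suboperad of $C(\mob\mathcal{P})$ — the latter because the cobar construction is computed from $\mob\mathcal{P}$-decorations on reduced trees, $\mob\mathcal{P}(T)=\bigotimes_v\mob\mathcal{P}(\inputs(v))$ (tensored over $K$), which receives $\mathcal{P}(T)$ by colouring every edge $0$, and because the dualisation $\mathcal{P}(n)\to\mathcal{P}(n)^*$ used to write the cobar differential explicitly may be taken $S_n$- and $K$-equivariantly just as for $\mass$. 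Part (3) then says that $C(\mob\mathcal{P})$ is generated over $C(\mathcal{P})$ by a degree-$0$ involution subject to the reflection relation on every corolla, i.e.\ that $\mob$ commutes with $C$; this is immediate because the reflection relations involve only the operadic composition (on which $C$ acts by the cobar differential) and $C(\mathcal{P})$ is free, so reflection on corollas propagates to all of it. Part (2) is then exactly the argument of \autoref{lem:koszul}: in a reduced $\mob\mathcal{P}$-decorated tree one may, using the reflection relations together with the colour swaps on internal edges (cf.\ \autoref{rem:reducedmobius}), bring the root and all internal edges to colour $0$ in a unique way, leaving the data of a $\mathcal{P}$-decorated tree plus an arbitrary colouring of the $n$ input legs; since $K$ is concentrated in degree $0$ the differential is unchanged, so $C(\mob\mathcal{P})(n)=\bigoplus^{2^n}C(\mathcal{P})(n)$. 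Part (4) is then formal, as in \autoref{cor:koszul}: $\mob\mathcal{P}$ is Koszul exactly when each $\dual(\mob\mathcal{P})(n)=C(\mob\mathcal{P})(n)\otimes\Lambda(n)$ is acyclic away from its right-hand end, and by part (2) this complex is a direct sum of $2^n$ copies of $C(\mathcal{P})(n)\otimes\Lambda(n)=\dual\mathcal{P}(n)$, which has the required exactness precisely when $\mathcal{P}$ is Koszul.

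For part (5) I take $\mathcal{P}$ to be a cyclic operad with admissible underlying operad, so that $\mathcal{P}((2))=k$ and hence $\modc{\mathcal{P}}((0,2))=k$, which is what makes $\mob\modc{\mathcal{P}}$ defined. I would show that $\mob\modc{\mathcal{P}}$ has the universal property of $\modc{\mob\mathcal{P}}$. By the adjunction defining the modular closure, for a modular operad $\mathcal{O}$ a morphism $\modc{\mob\mathcal{P}}\to\mathcal{O}$ is the same as a cyclic morphism $\mob\mathcal{P}\to\mathcal{O}$; unwinding the presentation of $\mob\mathcal{P}$, this is a cyclic morphism $\mathcal{P}\to\mathcal{O}$ (equivalently a modular morphism $\modc{\mathcal{P}}\to\mathcal{O}$) together with an element of $\mathcal{O}((0,2))$ — the image of $a$ — squaring to the unit and satisfying the reflection relations. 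That is exactly the data of a modular morphism $\mob\modc{\mathcal{P}}\to\mathcal{O}$, naturally in $\mathcal{O}$; the one point to check is that the reflection relations are untouched by $\modc{}$, which holds because they involve only the cyclic composition and not the contractions that $\modc{}$ freely adjoins. Alternatively, and perhaps more transparently, one can describe both $\modc{\mob\mathcal{P}}$ and $\mob\modc{\mathcal{P}}$ as spaces of $\mathcal{P}$-decorated graphs equipped with a half-edge colouring, taken modulo the relation $\approx$, exactly as $\modmass$ was identified with M\"obius graphs in the proof of \autoref{thm:oktftmass}; the two descriptions then manifestly agree, the colouring being a "legs-only" decoration that does not interact with edge contraction.
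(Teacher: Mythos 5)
Your overall architecture is the paper's own (generalise \autoref{prop:massqdual} for (1), \autoref{lem:koszul} for (2), propagate the reflection relation inductively for (3), deduce (4) formally as in \autoref{cor:koszul}, and reduce (5) to the coinciding genus-zero parts), and parts (2)--(5) are fine as sketched: the paper phrases (2) as an induction on internal edges giving $\mob\mathcal{P}(T)^*\cong\mathcal{P}(T)^*\otimes_k K^{\otimes n}$, which is the same argument as your colour-normalisation of the root and internal edges. The one caution outside (1) is in (5): in $\mob\modc{\mathcal{P}}$ the reflection relation is imposed against \emph{all} elements of $\modc{\mathcal{P}}$, so your adjunction argument still needs the check that the genus-zero reflection relations force those against contracted elements (this uses $a^2=1$ and the compatibility of contractions with composition); saying the relations are ``untouched by the modular closure'' glosses over exactly that point, though it is an easy verification and is also implicit in the paper's one-line proof.

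The genuine gap is your implementation of (1). You propose to build $\psi_2\co\tilde{E}\to\tilde{E}^{\vee}$ from a dual basis of $E$ ``with the sign twist absorbed on the $S_2$-translates of the chosen basis'', i.e.\ to copy the basis trick of \autoref{prop:massqdual} verbatim. That trick depends on $S_2$ acting \emph{freely} on the corolla basis, which is special to $\ass$; for a general quadratic $\mathcal{P}$ it fails, and the map you describe need not even exist. Concretely, for $\mathcal{P}=\com$ one has $E=k$ with trivial $S_2$-action, $\mob E$ is spanned by $m, m(a\otimes a), m(a\otimes 1), m(1\otimes a)$ and decomposes as $3\cdot\mathrm{triv}\oplus\mathrm{sgn}$ as an $S_2$-module, whereas $(\mob E)^{\vee}=\Hom_K(\mob E,K)\otimes\mathrm{sgn}_2$ decomposes as $\mathrm{triv}\oplus 3\cdot\mathrm{sgn}$; so there is \emph{no} equivariant isomorphism $\tilde{E}\to\tilde{E}^{\vee}$, and no choice of basis with signs on its $S_2$-translates can produce one (indeed no basis of $\mob E$ carries a free $S_2$-action). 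What the statement actually requires is an isomorphism $\mob(E^{\vee})\cong(\mob E)^{\vee}$ --- for $\com$ both sides are $\mathrm{triv}\oplus 3\cdot\mathrm{sgn}$ --- and it has to be written down basis-freely: the paper defines, for $\psi\in\Hom_k(E,k)$, a $K$-\emph{valued} functional $\psi'$ on $E'=K\otimes_k E\otimes_k K^{\otimes 2}$ by $\psi'(1\otimes m\otimes 1\otimes 1)=\psi(m)$, $\psi'(a\otimes m\otimes 1\otimes 1)=a\psi(m)$, $\psi'(1\otimes m\otimes a\otimes a)=a\psi(\sigma(m))$ and $0$ on the mixed terms, checks that this descends along the reflection relations to $\Hom_K(\mob E,K)$, and combines it with $\psi_1(a)=-a$ to obtain $\Psi\co\mob(E^{\vee})\to(\mob E)^{\vee}$ with $\Psi(\mob(R^{\perp}))=(\mob R)^{\perp}$. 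So the ``fiddly computation'' you defer to \autoref{prop:massqdual} is not the one carried out there; it must be redone in this $K$-valued, basis-free form.
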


\begin{proof}[Sketch proof]
\Needspace*{3\baselineskip}\mbox{}
\begin{enumerate}
\item This is a general version of ideas in \autoref{prop:massqdual}. Let $\mathcal{P}=\mathcal{P}(k,E,R)$. Let $E'=K\otimes_k E\otimes _k K^{\otimes 2}$ and $\mob E=E'/I$ where $I$ is generated by the reflection relations $a\otimes m\otimes 1\otimes 1=1 \otimes \sigma(m)\otimes a\otimes a$. Then $\mob R$ is generated by $R\subset F(E)(3)\subset F(\mob E)(3)$ and $\mob\mathcal{P}=\mob\mathcal{P}(K,\mob E,\mob R)$. Given $\psi\in\Hom_k(E,k)$ we define $\psi'\in\Hom_K(E',K)$ by $\psi'(1\otimes m\otimes 1\otimes 1)=\psi(m)$, $\psi'(a\otimes m\otimes 1\otimes 1)=a\psi(m)$, $\psi'(1\otimes m\otimes a\otimes a)=a\psi(\sigma(m))$ and $\psi(1\otimes m\otimes a\otimes 1)=\psi(1\otimes m\otimes 1\otimes a)=0$ for $m\in E$ and $\sigma=(1\quad 2)$. This in turn gives a well defined element of $\Hom_K(\mob E, K)$. This map extends to an isomorphism of $K$--collections $\Psi\co M(E^{\vee})\rightarrow (\mob E)^{\vee}$ and the result follows since $\Psi(\mob(R^{\perp}))=(\mob R)^{\perp}$.
\item This is a general version of \autoref{lem:koszul}. The same simple inductive proof works in the general case. Let $T$ be a tree with $n$ inputs. It's enough to show that $\mob\mathcal{P}(T)^* \cong \mathcal{P}(T)^*\otimes_k K^{\otimes n}$. Write $T$ as $T=T''\circ_i T'$ for $T'$ a corolla with $n'$ inputs. Then by induction on the number of internal edges $\mob\mathcal{P}(T)^* \cong (\mathcal{P}(T'')^*\otimes_k K^{\otimes n-n'+1})\otimes_K (\mathcal{P}(T')^*\otimes_k K^{\otimes n'})\cong \mathcal{P}(T)^*\otimes_k K^{\otimes n}$.
\item This follows from the above result together with a similar inductive argument showing that the reflection relation does indeed hold for any $m\in C(\mathcal{P})(n)\subset C(\mob\mathcal{P})(n)$.
\item This follows from the above results (cf \autoref{cor:koszul}).
\item We observe that both operads are generated by their genus $0$ parts which coincide.
\qedhere
\end{enumerate}
\end{proof}

Finally we briefly consider the situation of closed KTFTs.

\begin{definition}
We define the $k$--linear extended modular operad $\cktft$ (partial closed Klein topological field theory) as follows:
\begin{itemize}
\item For $n,g\geq 0$ and $2g+n\geq 2$ the vector space $\cktft((g,n))$ is generated by diffeomorphism classes of surfaces with $m$ handles and $u$ crosscaps and $n$ boundary components with $m+u/2 = g$ with $n$ copies of the circle embedded into the boundary, labelled by $\{1,\dots,n\}$ with an action of $S_n$ permuting the labels.
\item Composition and contraction is given by gluing boundary components.
\end{itemize}
\end{definition}

\begin{remark}
This is `partial' closed Klein topological field theory in the sense that $u$ must be divisible by $2$. Therefore this operad features just those surfaces obtained as the connected sum of tori and Klein bottles. Since the connected sum of $2$ Klein bottles is diffeomorphic to the sum of $1$ handle and $1$ Klein bottle we see that $m+u/2$ is well defined regardless of how we represent the topological type. 
\end{remark}

Note that a Klein bottle (with boundary) must have genus $1$ in the modular operad sense since it is obtained from self gluing a genus $0$ surface. Therefore in full closed KTFT a crosscap would necessarily have genus $\frac{1}{2}$.

\begin{theorem}
$\cktft\cong\modmcom$.
\end{theorem}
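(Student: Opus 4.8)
The plan is to mirror the proof of \autoref{thm:oktftmass}, replacing the open--string picture there (intervals embedded in the boundary, strips glued to discs) by the closed--string picture (circles embedded in the boundary, tubes glued to oriented surfaces with holes). The first step is a purely formal reduction. Since $\com$ is a cyclic operad, \autoref{thm:mobiusisation}(5) gives $\modmcom=\modc{\mob\com}=\mob\modcom$, and the classical identification of the closed oriented $2$--dimensional topological field theory operad shows $\ctft\cong\modcom$. As $\mob$ is a functor preserving the $((0,2))=k$ condition, this yields $\modmcom\cong\mob\,\ctft$, so it suffices to construct an isomorphism of extended modular operads $\cktft\cong\mob\,\ctft$; this is the exact closed analogue of \autoref{thm:oktftmass}.

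Next I would construct the comparison morphism $\Phi\colon\mob\,\ctft\to\cktft$. On the suboperad $\ctft$ it is the evident map sending an oriented genus--$g$ surface with $n$ boundary circles to its underlying (orientable) Klein surface with the embedded circles retained, and it sends the adjoined degree--$0$ involution $a\in\mob\,\ctft((0,2))$ to the class in $\cktft((0,2))\cong k[\mathbb{Z}_2]$ of the orientation--reversing cylinder $S^1\to S^1$ (the closed analogue of the left--hand cobordism in \autoref{fig:2kcob}). Equivalently, regarding an element of $\mob\modcom$ as a stable graph with $2$--coloured half edges, $\Phi$ is the thickening construction used for \autoref{thm:oktftmass} with strips and discs replaced by tubes and oriented surfaces: a vertex of genus $h$ and valence $\nu$ becomes an oriented genus--$h$ surface with $\nu$ holes, each edge becomes a tube glued orientation--compatibly when its two half edges carry the same colour and with a flip when they carry different colours, and one then forgets all orientations. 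I would check $\Phi$ is well defined: $a^{2}=1$ because concatenating two orientation--reversing cylinders is diffeomorphic to the identity cylinder; the reflection relation $am=\tau_{n}(m)a^{\otimes n}$ for $m\in\ctft((0,n))$ follows by reflecting the genus--$0$ surface in a plane, exactly as in the pictures following \autoref{prop:cktft}; descent to $\approx$--classes and $S_{n}$--equivariance are routine, as in the open case; and a tube glued with a single colour flip contributes precisely one Klein handle, i.e.\ raises the modular genus by $1$ and the crosscap number by $2$, which is why $u$ is forced to be even in $\cktft$.

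Then I would show that $\Phi$ is an isomorphism of the underlying $S$--modules. Surjectivity follows from the classification of surfaces together with the basic building blocks (the closed analogue of \autoref{fig:basicsurfaces}): any surface with $m$ handles, $u$ crosscaps and $n$ boundary circles with $u$ even is obtained from an orientable surface by attaching $u/2$ twisted tubes, hence lies in the image. Injectivity is the substantive part, handled as in \autoref{thm:oktftmass}: first establish the geometric moves needed for a normal form --- a twisted tube may be slid past other half edges and legs (the closed analogue of \autoref{eqn:rel1}), and the connected sum of two Klein handles equals the connected sum of one ordinary handle and one Klein handle (the even--$u$ analogue of \autoref{eqn:rel2}, which is the graphical form of the relation $U^{2}=\sum\alpha_{i}\beta_{i}^{*}$ of \autoref{prop:cktft} and is derivable from $a^{2}=1$ and the reflection relation). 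This reduces every class to an orientable surface possibly connect--summed with a single twisted tube; if two such normal forms have the same diffeomorphism type, the twisted parts agree trivially and the orientable parts agree because $\ctft\cong\modcom$ is already known to be an isomorphism.

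The main obstacle is, as in the open case, the sufficiency--of--relations step inside injectivity: one must verify that $a^{2}=1$ together with the reflection relation generate \emph{all} identifications among surface presentations, which requires carefully setting up the crosscap/Klein--handle sliding moves and the two--Klein--handle relation and then running the normal--form reduction, bootstrapping on the already established oriented statement. Checking that the tube--thickening construction descends to $\approx$--classes and is equivariant is a secondary technical point, but it runs exactly parallel to the arguments already given for $\modmass$ and is not expected to cause trouble.
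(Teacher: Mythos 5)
Your proposal takes essentially the same approach as the paper's own (sketch) proof: describe $\modmcom$ by graphs obtained from M\"obius graphs by forgetting the cyclic orderings at vertices, thicken vertices to spheres with holes and edges to cylinders/tubes (with a flip on twisted edges), and then run the surjectivity and normal-form injectivity argument in parallel with the proof of \autoref{thm:oktftmass}. Your preliminary reduction through $\mob\,\ctft$ via \autoref{thm:mobiusisation} and the observation that twisted loops contribute Klein handles (hence $u$ even) are consistent with the paper's remarks, so the argument is the same in substance, just spelled out in more detail.
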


\begin{proof}[Idea of proof]
The operad $\modmcom$ can be described in terms of graphs by forgetting mention of cyclic orderings of half edges at the vertices in our definition of M\"obius graphs. By replacing vertices with spheres with holes and edges by cylinders we obtain surfaces corresponding to such graphs. Then the above proposition follows by a similar argument to the proof of \autoref{thm:oktftmass}.
\end{proof}

It should not be at all surprising that $\modmcom$ does not describe full closed KTFT since \autoref{prop:cktft} tells us closed KTFTs are not just commutative Frobenius algebras with involution but rather have additional structure and additional relations that do not arise from relations in genus $0$, unlike in the open case.

\chapter{Moduli spaces of Klein surfaces}\label{chap:moduli}
In this chapter we will obtain results that are analogues of results concerning the ribbon graph decomposition of moduli spaces of Riemann surfaces with boundary. In particular we will be following methods of Costello \cite{costello1,costello2} which relate the operad $\moddass$ to certain moduli spaces and show $\moddass$ governs open topological conformal field theory. For us the unoriented analogue of a Riemann surface is a Klein surface and M\"obius graphs serve the same role as ribbon graphs.

Klein surfaces are `unoriented Riemann surfaces' (or more correctly Riemann surfaces are oriented Klein surfaces) in the sense that they have a dianalytic structure instead of an analytic structure. Klein surfaces are equivalent to symmetric Riemann surfaces (Riemann surfaces with an antiholomorphic involution) without boundary. In fact it follows Klein surfaces are equivalent to projective real algebraic curves (see Alling and Greenleaf \cite{allinggreenleaf} or Natanzon \cite{natanzon}). Since we wish to use techniques from hyperbolic geometry we will be concerned with the analytic theory.

\section{Informal discussion}
We will first provide an informal discussion outlining the content of this chapter since there are some (slightly tedious) technical issues arising from the need to consider nodal surfaces, which are more subtle for Klein surfaces than for Riemann surfaces and which can hide the more important general picture.

A Klein surface is the natural extension of a Riemann surface allowing unorientable surfaces. Klein surfaces have a dianalytic structure instead of an analytic structure. However, given a Klein surface we can construct a double cover (the complex double) for the surface which is a Riemann surface so that we can use much of the theory of Riemann surfaces to study Klein surfaces. Indeed it is actually the case that Klein surfaces are equivalent to symmetric Riemann surfaces (Riemann surfaces with an antiholomorphic involution) identifying the Klein surface with the quotient. This is the standard way of approaching Klein surfaces, where a Klein surface is then simply a pair $(X,\sigma)$ with $X$ a Riemann surface and $\sigma$ and antiholomorphic involution.

However we will want to consider surfaces with nodes. With our previous comment in mind the obvious way to approach this is to define a nodal Klein surface as a pair $(X,\sigma)$ where $X$ is now a nodal Riemann surface and $\sigma$ is an antiholomorphic involution. This is the standard approach used for example by Sepp\"al\"a \cite{seppala} to construct a compactification of the moduli space of Klein surfaces/symmetric Riemann surfaces. 

There is also another natural way to define a nodal Klein surface without passing to the complex double. A nodal Klein surface is then a surface with some nodal singularities and a dianalytic structure, including at the nodes.

Although one might expect these two notions to coincide they do not. The reason for this is that it is no longer possible to form a unique complex double of a nodal Klein surface in the latter sense. This can be understood by considering a `strangulated' M\"obius strip (see \autoref{fig:strangulatedmobius}). In the second definition the dianalytic structure about the node encodes the twist in the M\"obius strip. If we pass to the complex double of a M\"obius strip, which is a torus, then a node on a strangulated torus does not encode any form of twisting. Indeed if we take the quotient of such a torus by an antiholomorphic involution then there is not a well defined way of giving a dianalytic structure at the node in the quotient.

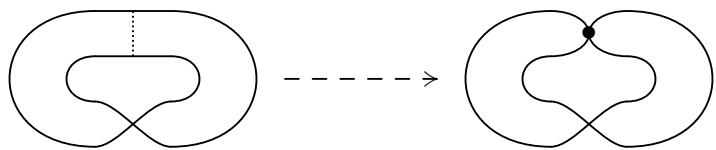
\begin{figure}[ht!]
\centering
\[
\begin{xy}
(-30,0)*\xybox{
(-12.5,0)*{\copair{}},
(0,6)*{\shortcylinder{}{}}="top",
c+U="a","top",c+D;"a"**@{.},
(0,-6)*{\flip{}{}},
(12.5,0)*{\pair{}}},
(30,0)*\xybox{
(-12.5,0)*{\copair{}},
(-5,6)*\xybox{\ellipse(5,3)_,=:a(180){-}},
(5,6)*\xybox{\ellipse(5,3)^,=:a(180){-}},
(0,6)*{\bullet},
(0,-6)*{\flip{}{}},
(12.5,0)*{\pair{}}},
(-10,0);(10,0)**\dir{--};?(1)*\dir2{>}
\end{xy}
\]
\caption{A strangulated M\"obius strip, obtained by contracting the dotted line to a node.}
\label{fig:strangulatedmobius}
\end{figure}

With this in mind it is natural to ask if there is another double cover that we can construct for this second type of nodal surface. The solution is given by the orienting double which is a Riemann surface but possibly with a boundary.

The important difference between the complex double and the orienting double is that the complex double takes the boundary of a Klein surface to the fixed points in the interior of a symmetric Riemann surface and, as mentioned above, if we take the quotient of a symmetric Riemann surface with an interior node fixed by the symmetry then there is not a well defined way of giving a dianalytic structure at the corresponding boundary node in the quotient. However the orienting double takes the boundary of a Klein surface to the boundary of a symmetric Riemann surface and so boundary nodes now correspond to boundary nodes. For example, the quotient of a strangulated torus (with an antiholomorphic involution such that the node in the quotient is a boundary node) could be given the dianalytic structure of either a strangulated M\"obius strip or a strangulated annulus. However the orienting doubles of these are respectively an annulus with two strangulated points (the covering map wraps such an annulus twice around the M\"obius strip) and a disjoint union of two strangulated annuli.

It is also natural to ask if there is another notion of a boundary node on a Klein surface that is equivalent to an interior node fixed by the symmetry on a symmetric Riemann surface. The answer to this is a na\"ive node, which is simply a singularity without a dianalytic structure at the node.

So we can still obtain equivalences for each of these types of nodal surface, although they are different. It is perhaps not entirely necessary to consider these equivalences of categories in too much detail in order to obtain our main results but we give a fairly detailed overview in order to make the subtlety arising from the different types of nodes clearer.

The conclusion of all this is that we obtain two different partial compactifications of the moduli space of Klein surfaces by allowing nodes on the boundary. It turns out the second type of nodal surface is the natural notion for defining Klein topological conformal field theory since it generalises the gluing of intervals discussed in the previous sections. We obtain a topological modular operad which we will denote $\kb$, the operad of Klein surfaces with boundary nodes. This notation reflects the notation $\nb$ used by Costello \cite{costello1,costello2} for the operad of Riemann surfaces with boundary and boundary nodes. The first type of nodal surface gives rise to an operad that is closer in spirit to the Deligne--Mumford operad since we are gluing symmetric Riemann surfaces without boundary at interior marked points. We obtain an operad which we will denote $\mb$, the operad of `admissible' symmetric nodal Riemann surfaces without boundary. This notation reflects the common notation for the space of symmetric Riemann surfaces and the fact that symmetric Riemann surfaces are equivalent to real algebraic curves. Note however that for us $\mb$ is \emph{not} the full space of nodal symmetric Riemann surfaces (stable real algebraic curves) and should not be confused with the full compactification obtained by taking all ways of forming nodes (it is an open subspace of this).

We wish to apply the methods of Costello \cite{costello1,costello2} to find a graph decomposition of both of these operads. The operad $\kb$, being the operad of Klein topological conformal field theory, is similar to $\nb$ and most of the results concerning the latter have a corresponding version for the former. In particular $\moddmass$ (over $\mathbb{Q}$) is a chain model for the homology of $\kb$ (which is homotopy equivalent to the moduli space of smooth Klein surfaces) just as $\moddass$ is for $\nb$. It is from this that we obtain a M\"obius graph decomposition of moduli spaces of Klein surfaces which is a direct analogue to the ribbon graph decomposition of moduli spaces of Riemann surfaces.

In the second case we find that $\moddmass/(a=1)$ (where $a\in\dmass(1)=\mathbb{Q}[\mathbb{Z}_2]$ is the involution) is a chain model for the homology of $\mb$. This gives a `dianalytic ribbon graph' decomposition of the partial compactification $\mb$. This partial compactification is quite different from the other. For example unlike $\moddmass$ the quotient has non-trivial homology in genus $0$.

Later we give a concrete explanation of the graph complexes obtained for each of these spaces and the corresponding isomorphisms on homology without using the language of operads.

We will finish this outline with a few words about the proof of these results. It is important to note that the proof of the results by Costello \cite{costello2} transfers easily to our situation and as such we reference \cite{costello2} heavily. This is not that surprising since we have already mentioned that we can form the orienting double, which is a Riemann surface without boundary, which are the objects considered in \cite{costello1,costello2}. In addition hyperbolic geometry features heavily in the proof and the same methods apply directly to Klein surfaces (which again can be seen by considering an appropriate double).

We begin by reviewing the necessary definitions and theory of Klein surfaces following Alling and Greenleaf \cite{allinggreenleaf} and Liu \cite{liu}, with some modifications.

\section{Klein surfaces and symmetric Riemann surfaces}
Let $D$ be a non-empty open subset of $\mathbb{C}$ and $f\co D\rightarrow\mathbb{C}$ be a smooth map. Recall $f$ is analytic on $D$ if $\frac{\partial f}{\partial \bar{z}}=0$ and anti-analytic if $\frac{\partial f}{\partial z}=0$. We say $f$ is \emph{dianalytic} if its restriction to each component of $D$ is either analytic or anti-analytic. If $A$ and $B$ are any non-empty subsets of $\mathbb{C}^+$ (the upper half plane) we say a function $g\co A\rightarrow B$ is analytic (or anti-analytic) on $A$ if it extends to an analytic (respectively anti-analytic) function $g'\co U\rightarrow\mathbb{C}$ where $U$ is an open neighbourhood of $A$. Once again we call $g$ dianalytic if its restriction to each component of $A$ is either analytic or anti-analytic.

For us a surface is a compact and connected (unless otherwise stated) topological manifold of dimension $2$. Our surfaces can have a boundary. Recall that a smooth structure on a surface is determined by a smooth atlas (an atlas $\mathcal{A}$ such that all the transition functions of $\mathcal{A}$ are smooth) and similarly an analytic structure is given by an atlas such that all transition functions are analytic. A Riemann surface\footnote{When we use the term Riemann surface we are allowing surfaces possibly with non-empty boundary.} is a surface with an analytic structure and morphisms of Riemann surfaces are non-constant analytic maps (maps that are analytic on coordinate charts) that restrict to maps on the boundary. In order to bring our definitions closer to \cite{liu} we refer to Riemann surfaces with non-empty boundary as bordered Riemann surfaces. A Riemann surface is canonically oriented by its analytic structure.

\begin{definition}
An atlas $\mathcal{A}$ on a surface $K$ is dianalytic if all the transition functions of $\mathcal{A}$ are dianalytic. A \emph{dianalytic structure} on $K$ is a maximal dianalytic atlas. A \emph{Klein surface} is a surface equipped with a dianalytic structure.
\end{definition}

An analytic structure can be extended to a dianalytic structure and so a Riemann surface can be viewed as a Klein surface. In doing so we no longer have a canonical orientation. Klein surfaces in general need not be orientable. It is shown in \cite{allinggreenleaf} that every compact surface can carry a dianalytic structure.

\begin{definition}
A morphism between Klein surfaces $K$ and $K'$ is a non-constant continuous map $f\co (K,\partial K)\rightarrow(K',\partial K')$ such that for all $x\in K$ there are charts $(U,\phi)$ and $(V,\psi)$ around $x$ and $f(x)$ respectively and an analytic function $F\co \phi(U)\rightarrow\mathbb{C}$ such that the following diagram commutes:
\[\xymatrix{
U\ar[rr]^f\ar[d]^\phi & & V\ar[d]^\psi \\
\phi(U)\ar[r]^F & \mathbb{C}\ar[r]^\Phi & \mathbb{C}^+
}\]
Here $\Phi(x+iy)=x+i|y|$ and is called the \emph{folding map}. We call $f$ a \emph{dianalytic morphism} if we can choose charts so that $\Phi\circ F$ in the above diagram is dianalytic.
\end{definition}

\begin{remark}
Note that when we consider Riemann surfaces as Klein surfaces morphisms of Riemann surfaces can be thought of as morphisms of Klein surfaces. Note also that morphisms of Klein surfaces are not always dianalytic since we are allowing maps which `fold' along the boundary of $K'$. This is useful since, for example, it means that the category of Klein surfaces is the correct domain for the complex double (see Alling and Greenleaf \cite{allinggreenleaf}) and other quotients and that the category of Klein surfaces is equivalent to the category of symmetric Riemann surfaces without boundary (and then Klein surfaces are real algebraic curves, again see \cite{allinggreenleaf}). If $K,K'$ have no boundary then morphisms between them are dianalytic.
\end{remark}

A morphism $f$ is dianalytic if and only if $f^{-1}(\partial K')=\partial K$. The composition of two dianalytic morphisms is dianalytic.

\begin{definition}
A \emph{symmetric Riemann surface}\footnote{Note again that this is a slightly different definition to that which is normally found elsewhere since we are allowing our Riemann surfaces to have a boundary unless otherwise stated.} $(X,\sigma)$ is a Riemann surface with an antiholomorphic involution $\sigma\co X\rightarrow X$ (which of course restricts to the boundary if our surface is bordered). A morphism $f\co (X,\sigma)\rightarrow(X',\sigma')$ is a morphism of Riemann surfaces such that $f\circ\sigma=\sigma'\circ f$. By convention we allow symmetric Riemann surfaces to be disconnected provided the quotient surface $X/\sigma$ is connected.
\end{definition}

Given a symmetric Riemann surface $(X,\sigma)$ the quotient surface $X/\sigma=K$ has a unique dianalytic structure such that the quotient map $q$ is a morphism of Klein surfaces, see \cite{allinggreenleaf}. Again $q^{-1}(\partial K)=\partial X$ if and only if $q$ is a dianalytic morphism of Klein surfaces. In this case we call $(X,\sigma)$ a \emph{dianalytic symmetric Riemann surface}. 

\begin{definition}
\Needspace*{3\baselineskip}\mbox{}
\begin{itemize}
\item The category $\klein$ has objects Klein surfaces with morphisms as defined above.
\item The category $\dklein$ has objects Klein surfaces with just the dianalytic morphisms.
\item The category $\symriem$ has objects symmetric Riemann surfaces without boundary and morphisms analytic maps as defined above.
\item The category $\dsymriem$ has objects dianalytic symmetric Riemann surfaces (possibly with boundary) and morphisms analytic maps as defined above.
\end{itemize}
\end{definition}

To understand the category of Klein surfaces better we recall the existence of the orienting double of a Klein surface.

\begin{lemma}\label{lem:odouble}
\Needspace*{3\baselineskip}\mbox{}
\begin{itemize}
\item Let $K$ be a Klein surface. Then there exists a Riemann surface $K_\mathbf{O}$ and a morphism $f\co K_\mathbf{O}\rightarrow K$ such that $f^{-1}(\partial K)=\partial K_\mathbf{O}$ (so $f$ is dianalytic) and $K_\mathbf{O}$ is universal with respect to this property. This means if $X$ is a Riemann surface and $h\co X\rightarrow K$ is a morphism with $h^{-1}(\partial K)=\partial X$ then there is a unique analytic morphism $g\co X\rightarrow K_\mathbf{O}$ such that $h=f\circ g$ (more succinctly $K_\mathbf{O}$ is the universal Riemann surface over $K$ in the category $\dklein$). In particular this means $K_\mathbf{O}$ is unique up to unique isomorphism. We call it the \emph{orienting double} of $K$.
\item The map $f\co K_\mathbf{O}\rightarrow K$ is a double cover.
\item $K_\mathbf{O}$ has an antiholomorphic involution $\sigma$ such that $f\circ \sigma=f$.
\item Any double cover $h\co X\rightarrow K$ admitting such an involution and satisfying the property $h^{-1}(\partial K)=\partial X$ is universal with respect to this property (and hence is uniquely isomorphic to $K_\mathbf{O}$ as a double cover).
\item The map $f$ is unramified, $\sigma$ is unique and $K_\mathbf{O}$ is disconnected if and only if $K$ is orientable.
\end{itemize}
\end{lemma}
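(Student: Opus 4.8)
The plan is to construct $K_\mathbf{O}$ by hand, as a ``complex orientation double cover'', and then read off all five assertions from the construction; this runs parallel to the construction of the complex double in Alling and Greenleaf \cite{allinggreenleaf}, the only difference being that the gluing is arranged so that boundary charts cover boundary charts. Concretely, I would fix a maximal dianalytic atlas $\{(U_i,\phi_i)\}$ on $K$ and record for each overlap a sign $c_{ij}\in\{0,1\}$, equal to $0$ where the transition $\phi_j\circ\phi_i^{-1}$ is analytic and $1$ where it is anti-analytic; since a composite of two anti-analytic maps is analytic, $(c_{ij})$ is a $\mathbb{Z}_2$--valued $1$--cocycle. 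Then set $K_\mathbf{O}=\bigl(\bigsqcup_i U_i\times\mathbb{Z}_2\bigr)/{\sim}$ with $(x,\epsilon)\sim(x,\epsilon+c_{ij})$, put the chart $\phi_i$ on one sheet and its complex conjugate $\overline{\phi_i}$ on the other, and check that the surviving transition functions ($\phi_j\circ\phi_i^{-1}$ where $c_{ij}=0$, its conjugate where $c_{ij}=1$) are all analytic, so that $K_\mathbf{O}$ is a Riemann surface. The sheet projection gives a local homeomorphism $f\co K_\mathbf{O}\to K$, hence an unramified double cover, with $f^{-1}(\partial K)=\partial K_\mathbf{O}$ because the boundary is detected in charts; and the sheet swap $(x,\epsilon)\mapsto(x,\epsilon+1)$ is a non-trivial involution $\sigma$ with $f\circ\sigma=f$ which in the chosen charts is $z\mapsto\overline z$, hence antiholomorphic. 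This already delivers the second and third bullets, the unramifiedness in the fifth, and the uniqueness of $\sigma$ (the deck group of $f$ is $\{\mathrm{id},\sigma\}$ and $\mathrm{id}$ is not antiholomorphic).

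For the universal property I would proceed as follows. Given a morphism $h\co X\to K$ from a Riemann surface with $h^{-1}(\partial K)=\partial X$, the fold never occurs, so $h$ is dianalytic; covering $X$ by charts over which $\phi_{i}\circ h$ is analytic or anti-analytic, I lift $h$ into the $0$--sheet in the analytic case and the $1$--sheet in the anti-analytic case, the local choices being compatible precisely by the cocycle identity for $(c_{ij})$ together with the analyticity of the transitions of $X$; this produces an analytic $g\co X\to K_\mathbf{O}$ with $f\circ g=h$ (and automatically $g^{-1}(\partial K_\mathbf{O})=\partial X$). Uniqueness: a second lift $g'$ satisfies $g'\in\{g,\sigma\circ g\}$ pointwise, hence globally by connectedness of $X$, and $\sigma\circ g$ is antiholomorphic, so if it too were analytic its derivative would vanish identically and $h$ would be constant --- impossible for a morphism. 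Hence $g$ is unique and $K_\mathbf{O}$ is determined up to unique isomorphism, giving the first bullet. The fourth bullet then follows: if $h\co X\to K$ is a double cover with $h^{-1}(\partial K)=\partial X$ and a non-trivial antiholomorphic involution $\tau$ over $K$ (necessarily free, since a fixed point would force $\tau=\mathrm{id}$ via $h\circ\tau=h$ and local injectivity of $h$), the universal $g$ exists, and comparing the two antiholomorphic lifts $g\circ\tau$ and $\sigma\circ g$ of $h$ by the same connectedness argument gives $g\circ\tau=\sigma\circ g$; so $g$ descends to a map $X/\tau\to K_\mathbf{O}/\sigma$ which, under the identifications $X/\tau\cong K\cong K_\mathbf{O}/\sigma$ coming from $h$ and $f$, is the identity. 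Thus $g$ is a morphism of finite covers over the connected base $K$ inducing the identity, hence a covering map of degree $1$, i.e.\ a homeomorphism, and so an analytic isomorphism compatible with $f$, $h$ and the involutions; therefore $X$ is universal too.

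What remains of the fifth bullet is that $K_\mathbf{O}$ is disconnected exactly when $K$ is orientable: $K_\mathbf{O}$ is disconnected iff the cocycle $(c_{ij})$ is a coboundary, and since $c_{ij}$ measures whether $\phi_i$ and $\phi_j$ induce compatible local orientations, its class is the orientation (first Stiefel--Whitney) class of $K$, which vanishes iff $K$ is orientable. I expect the genuinely fiddly points to be (a) checking at boundary charts that the glued atlas really is an analytic structure on a manifold with boundary (handling images in $\mathbb{C}^+$, and observing that the absence of the folding map in a dianalytic morphism is exactly what makes the lifting in the universal property possible), and (b) the patching of the local lifts; everything else is bookkeeping with the cocycle $(c_{ij})$ and the observation, used repeatedly, that $\sigma$ is antiholomorphic while the maps we must produce are required to be analytic.
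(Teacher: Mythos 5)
Your proof is correct in substance, but it takes a genuinely different route from the thesis: there the lemma is dispatched in one line as ``a slight rewording of Alling and Greenleaf \cite[Theorem 1.6.7]{allinggreenleaf}'', with no construction given, whereas you rebuild the orienting double from scratch via the $\mathbb{Z}_2$--valued cocycle of analytic versus anti-analytic transition functions. Your construction is essentially the classical one underlying the cited reference, so what your write-up buys is self-containedness, and it makes visible exactly where the hypotheses enter (no folding is what permits the local lifts; freeness of the sheet swap is what gives uniqueness of $\sigma$ and the fibrewise injectivity in the fourth bullet), at the cost of the chart bookkeeping the thesis deliberately outsources. A few points to tighten if you keep your version: (i) the sign $c_{ij}$ is only locally constant on $U_i\cap U_j$, since different components of an overlap can have different characters, so either refine to an atlas with connected overlaps or treat $(c_{ij})$ as a cocycle of locally constant functions --- the pointwise cocycle identity is also what guarantees each fibre of $f$ has exactly two points, i.e.\ that $\sigma$ is free; (ii) in the fourth bullet the dichotomy for the two lifts $g\circ\tau$ and $\sigma\circ g$ of $h$ must be resolved on each connected component of $X$ (the lemma allows $X$, like $K_\mathbf{O}$, to be disconnected when $K$ is orientable), and the alternative $g\circ\tau=g$ is excluded by the same analytic-versus-antiholomorphic clash you use for uniqueness, since it would force $g$ to be constant on that component; (iii) rather than appealing to ``a morphism of covers over $K$ inducing the identity has degree one'', it is cleaner to check bijectivity of $g$ directly: points in the same fibre of $h$ are separated because $g\circ\tau=\sigma\circ g$ and $\sigma$ is free, points in different fibres because $f\circ g=h$, and surjectivity follows since $g(X)$ is compact, $\sigma$--stable and maps onto $K$ under $f$; compactness then upgrades the continuous bijection to an isomorphism. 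None of these affects the overall correctness of your argument.
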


\begin{proof}
This is just a slight rewording of Alling and Greenleaf \cite[Theorem 1.6.7]{allinggreenleaf}.
\end{proof}
 
If $K$ is a Klein surface then $(K_\mathbf{O},\sigma)$ is a dianalytic symmetric Riemann surface, bordered if and only if $\partial K \neq \emptyset$. Given a dianalytic morphism of Klein surfaces it lifts to a morphism of dianalytic symmetric Riemann surfaces. This defines a functor from  Klein surfaces with dianalytic morphisms to dianalytic symmetric Riemann surfaces. Given a dianalytic symmetric Riemann surface $(X,\sigma')$ then $(X,q)\cong((X/\sigma')_\mathbf{O},f)$. Since $f$ is unramified then maps of dianalytic symmetric Riemann surfaces give dianalytic maps of the underlying Klein surfaces. In particular we can deduce:

\begin{proposition}\label{prop:odoubleequiv}
There is an equivalence of categories $\dklein\rightarrow\dsymriem$ given by taking the orienting double.
\qed
\end{proposition}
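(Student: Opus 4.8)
The plan is to produce an explicit quasi-inverse and to check that both round trips are naturally isomorphic to the identity; all the genuinely analytic input has in fact already been assembled in the discussion preceding the statement and in \autoref{lem:odouble}, so what remains is essentially formal. Write $F\co\dklein\to\dsymriem$ for the orienting-double functor, $F(K)=(K_\mathbf{O},\sigma)$, whose value on a dianalytic morphism $\phi\co K\to K'$ is the unique analytic lift $F(\phi)\co K_\mathbf{O}\to K'_\mathbf{O}$ with $f'\circ F(\phi)=\phi\circ f$ supplied by the universal property in \autoref{lem:odouble} (applicable because $\phi\circ f$ is a dianalytic Klein morphism, so $(\phi\circ f)^{-1}(\partial K')=\partial K_\mathbf{O}$). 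First I would record that $F$ actually lands in $\dsymriem$: the composite $\sigma'\circ F(\phi)\circ\sigma$ is again analytic (antiholomorphic $\circ$ analytic $\circ$ antiholomorphic) and satisfies $f'\circ(\sigma'\circ F(\phi)\circ\sigma)=f'\circ F(\phi)\circ\sigma=\phi\circ f$, using $f'\circ\sigma'=f'$ and $f\circ\sigma=f$, so by uniqueness of lifts it equals $F(\phi)$; that is, $F(\phi)\circ\sigma=\sigma'\circ F(\phi)$. Functoriality of $F$ is the same uniqueness argument. Dually, let $G\co\dsymriem\to\dklein$ send $(X,\sigma_X)$ to the quotient $X/\sigma_X$ with its canonical dianalytic structure (\cite{allinggreenleaf}) and send an equivariant morphism $\psi$ to the induced map $G(\psi)$ on quotients, which is a non-constant dianalytic Klein morphism because the quotient maps are unramified, as noted above.

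Next I would build the two natural isomorphisms. For $G\circ F\cong\id_{\dklein}$: since $f\co K_\mathbf{O}\to K$ is an unramified double cover with $f\circ\sigma=f$, it identifies $K$ with $K_\mathbf{O}/\sigma=G(F(K))$, giving an isomorphism $\eta_K$; naturality $\phi\circ\eta_K=\eta_{K'}\circ G(F(\phi))$ is checked by precomposing with the surjective quotient map $K_\mathbf{O}\to K_\mathbf{O}/\sigma$, after which both sides become $f'\circ F(\phi)=\phi\circ f$. For $F\circ G\cong\id_{\dsymriem}$ I would use the isomorphism $(X,q)\cong((X/\sigma_X)_\mathbf{O},f)$ over $X/\sigma_X$ already recorded in the text: call it $\epsilon_{(X,\sigma_X)}\co X\to(X/\sigma_X)_\mathbf{O}$. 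Conjugating $\sigma_X$ by $\epsilon$ produces an antiholomorphic involution of $(X/\sigma_X)_\mathbf{O}$ commuting with $f$, which by the uniqueness clause of \autoref{lem:odouble} must be the canonical involution; hence $\epsilon$ is an isomorphism in $\dsymriem$. Naturality of $\epsilon$ is one further uniqueness-of-lifts argument: for an equivariant $\psi\co(X,\sigma_X)\to(X',\sigma_{X'})$, both $\epsilon_{(X',\sigma_{X'})}\circ\psi$ and $F(G(\psi))\circ\epsilon_{(X,\sigma_X)}$ are analytic lifts of the dianalytic Klein morphism $G(\psi)\circ q_X$ through the orienting-double covering of $X'/\sigma_{X'}$, so they agree.

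Assembling these, $F$ and $G$ are functors equipped with natural isomorphisms $G\circ F\cong\id$ and $F\circ G\cong\id$, so $F$ is an equivalence of categories with quasi-inverse $G$, which is the assertion of \autoref{prop:odoubleequiv}. I expect the only step requiring genuine care — everything else being either imported from \autoref{lem:odouble} or a diagram chase — to be the verification that $F$ is a functor \emph{into $\dsymriem$}, i.e.\ that the lift $F(\phi)$ is analytic and intertwines the antiholomorphic involutions; once this is phrased in terms of uniqueness of analytic lifts to the orienting double, as above, it and all the naturality statements follow by the same device.
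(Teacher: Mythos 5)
Your proposal is correct and takes essentially the same route as the paper: the paper's own (very terse) argument likewise defines the functor by lifting dianalytic morphisms via the universal property of \autoref{lem:odouble}, takes the quotient as quasi-inverse, and uses the identification $(X,q)\cong((X/\sigma')_\mathbf{O},f)$ together with unramifiedness to see the two constructions are mutually inverse. You have simply made explicit the equivariance of the lift, functoriality and the naturality checks, all by the same uniqueness-of-lifts device.
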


Given a Klein surface $K$ we can also construct the \emph{complex double} $K_\mathbf{C}$ of a Klein surface $K$. The complex double $K_\mathbf{C}$ is a symmetric Riemann surface without boundary that is disconnected if and only if $K$ is orientable and has empty boundary. In particular for an orientable surface of genus $g$ with $h$ boundary components it is obtained by taking two copies of the surface with opposite orientations and gluing along the boundary in an orientation preserving manner to give a symmetric Riemann surface without boundary of genus $2g+h-1$, with the antiholomorphic involution switching the two copies. For an unorientable surface with $g$ handles, $u$ crosscaps and $h$ boundary components the complex double is a connected symmetric Riemann surface without boundary of genus $2g+h+u-1$ although the construction in this case is less simple to describe and we refer to Alling and Greenleaf \cite[Theorem 1.6.1]{allinggreenleaf} for full details. In particular, similar to the orienting double, the complex double can in fact be realised as the universal Riemann surface without boundary over $K$ in the category $\klein$. It is then not hard to follow the same process as above and show the well known result:

\begin{proposition}\label{prop:cdoubleequiv}
There is an equivalence of categories $\klein\rightarrow\symriem$ given by taking the complex double.
\qed
\end{proposition}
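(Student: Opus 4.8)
The plan is to mimic, almost verbatim, the deduction of \autoref{prop:odoubleequiv} from \autoref{lem:odouble}, with the orienting double replaced by the complex double and the category $\dklein$ replaced by the full category $\klein$. The two functors in play are the complex double $F\co\klein\rightarrow\symriem$, $K\mapsto(K_{\mathbf C},\sigma)$, and the quotient $G\co\symriem\rightarrow\klein$, $(X,\sigma)\mapsto X/\sigma$. That $G$ is well defined is immediate from what has been recalled: by Alling and Greenleaf the quotient $X/\sigma$ carries a canonical dianalytic structure making it a Klein surface, our convention forces $X/\sigma$ to be connected, and a morphism $f\co(X,\sigma)\rightarrow(X',\sigma')$ descends to the quotients because $f\circ\sigma=\sigma'\circ f$.

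First I would upgrade $F$ to a functor. On objects this is the construction recalled above: $(K_{\mathbf C},\sigma)$ is a symmetric Riemann surface without boundary with $K_{\mathbf C}/\sigma=K$ connected, so $(K_{\mathbf C},\sigma)\in\ob\symriem$. On morphisms I would use the universal property quoted in the text, that $K_{\mathbf C}$ is the universal Riemann surface without boundary over $K$ in $\klein$: given $\phi\co K\rightarrow K'$, the composite $K_{\mathbf C}\xrightarrow{f}K\xrightarrow{\phi}K'$ is a morphism in $\klein$ out of a surface without boundary, hence factors uniquely as $f'\circ g$ with $g\co K_{\mathbf C}\rightarrow K'_{\mathbf C}$; set $F(\phi)=g$. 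Uniqueness of such factorisations gives functoriality, and the standard argument (both $g\circ\sigma$ and $\sigma'\circ g$ are lifts of $\phi\circ f$ through $f'$, using $f\circ\sigma=f$) shows $g\circ\sigma=\sigma'\circ g$, so $F(\phi)$ really is a morphism of symmetric Riemann surfaces.

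Next I would exhibit the natural isomorphisms $G\circ F\cong\id_{\klein}$ and $F\circ G\cong\id_{\symriem}$. The first is essentially the defining property of the complex double: $G(F(K))=K_{\mathbf C}/\sigma=K$, naturally in $K$. For the second, given $(X,\sigma)\in\symriem$ set $K=X/\sigma$; the quotient map $q\co X\rightarrow K$ presents $X$ as a Riemann surface without boundary over $K$ carrying an involution $\sigma$ with $q\circ\sigma=q$, so by the universal characterisation of the complex double (the analogue, for $K_{\mathbf C}$, of the fourth bullet of \autoref{lem:odouble}) there is a unique isomorphism $(X,\sigma)\xrightarrow{\sim}(K_{\mathbf C},\sigma')$ over $K$. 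Naturality follows once more from uniqueness of lifts.

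The hard part will not be the categorical bookkeeping but establishing that the complex double genuinely enjoys the universal property making all of this work — that $X\rightarrow X/\sigma$ is the universal Riemann surface without boundary over $X/\sigma$ for \emph{every} symmetric Riemann surface without boundary $(X,\sigma)$, including the case in which $\sigma$ has fixed points, so that the covering map is branched over $\partial(X/\sigma)$ and the quotient may be unorientable or bordered; this is precisely where one invokes Alling and Greenleaf \cite[Theorem 1.6.1]{allinggreenleaf}. Once that universal property is in hand the equivalence is formal, exactly as \autoref{prop:odoubleequiv} was deduced from \autoref{lem:odouble}.
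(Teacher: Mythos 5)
Your proposal is correct and is essentially the paper's own argument: the text asserts that the complex double is the universal Riemann surface without boundary over $K$ in $\klein$ (citing Alling--Greenleaf, Theorem 1.6.1) and that the equivalence then follows "by the same process" used to deduce \autoref{prop:odoubleequiv} from \autoref{lem:odouble}, which is exactly the functoriality-plus-quotient-quasi-inverse bookkeeping you carry out. The only cosmetic point is that in checking $g\circ\sigma=\sigma'\circ g$ one should apply the uniqueness clause to the \emph{analytic} lift $\sigma'\circ g\circ\sigma$ of $\phi\circ f$ rather than to the two antiholomorphic maps directly, but this is the standard one-line adjustment.
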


\begin{remark}
The categories $\klein$ and $\dklein$ have the same objects and will also have the same moduli spaces (which can be identified with those of symmetric Riemann surfaces without boundary by \autoref{prop:cdoubleequiv}). As mentioned in the outline of this section the difference becomes much more noticeable when we consider nodal Klein surfaces, which are then no longer equivalent to nodal symmetric Riemann surfaces without boundary. However nodal Klein surfaces (with just dianalytic morphisms) are still equivalent to nodal symmetric Riemann surfaces possibly with boundary. Therefore we will actually obtain two different partial compactifications of moduli spaces.
\end{remark}

Given a Klein or Riemann surface whose underlying surface has $g$ handles, $0\leq u\leq 2$ crosscaps and $h$ boundary components we define the topological type to be $(g,u,h)$.

\subsection{Klein surfaces and hyperbolic geometry}
Recall that a connected hyperbolic Riemann surface without boundary admits a unique complete hyperbolic metric. If $X$ is a bordered Riemann surface whose complex double $X_\mathbf{C}$ (which is connected without boundary) is hyperbolic, then the antiholomorphic involution on $X_\mathbf{C}$ is an isometry with respect to the unique hyperbolic metric on $X_\mathbf{C}$ and so we can construct a unique (up to conformal isometry) hyperbolic metric on $X$, compatible with the analytic structure, such that the boundary (which corresponds to the fixed points of the involution) is geodesic.

If $K$ is a Klein surface whose complex double is hyperbolic then we can repeat this construction by taking the unique complete hyperbolic metric on $K_\mathbf{C}$. Therefore $K$ has a unique (up to isometry) hyperbolic metric, compatible with the dianalytic structure, such that the boundary is geodesic. Dianalytic morphisms of Klein surfaces correspond to conformal maps on hyperbolic surfaces. Since our surfaces are now unoriented by conformal maps we mean maps which preserve angles (as opposed to oriented angles).

The only Klein surfaces with a non-hyperbolic complex double are those of topological type $(g,u,h)$ with $2g+h+u-2 \leq 0$, since the complex double has topological type $(2g+h+u-1,0,0)$. The only such surfaces with $h> 0$ are the disc, the annulus and the M\"obius strip.

Let $K$ be a Klein surface. Then since $K_\mathbf{O}$ covers $K$ we can pull back the hyperbolic metric on $K$ such that the involution on $K_\mathbf{O}$ is an isometry. Analytic maps between orienting double covers correspond to conformal maps of double covers and the boundary of $K_\mathbf{O}$ is geodesic and this is the same metric inherited from $(K_\mathbf{O})_\mathbf{C}$.

Using the hyperbolic metric on a Klein surface $K$ we can adapt the methods outlined by Costello in \cite{costello1} and elaborated upon in \cite{costello2} to construct a deformation retract on the moduli space of Klein surfaces which we will do below.

\section{Nodal Klein surfaces}
We will need to allow Riemann surfaces and Klein surfaces with certain nodes and marked points.

A singular topological surface $(X,N)$ is a Hausdorff space $X$ with a discrete set $N\subset X$ (the set of singularities) such that $X\setminus N$ is a topological surface. As usual such surfaces will be compact (so $N$ will be finite) and connected and may have boundary unless otherwise stated. The boundary of a singular surface is defined to be the boundary of $X\setminus N$.

\begin{definition}
Let $(X,N)$ be a singular surface. A \emph{boundary node} is a singularity $z\in N$ with a neighbourhood homeomorphic to a neighbourhood of $(0,0)\in\{(x,y)\in(\mathbb{C}^+)^2 : xy=0\}$ such that $z\mapsto (0,0)$. Similarly an \emph{interior node} is a singularity with a neighbourhood homeomorphic to $(0,0)\in\{(x,y)\in\mathbb{C}^2 : xy=0\}$. We set $N_1$ to be the set of interior nodes and $N_2$ to be the set of boundary nodes. If $X$ has only nodal singularities then an atlas on $X$ is given by charts on $X\setminus N$ together with charts at the nodes as described. We call a singular surface with only nodal singularities a \emph{nodal surface}.
\end{definition}

Let $B=\{(x,y)\in(\mathbb{C}^+)^2 : xy=0\}$ and $B^*=B\setminus (0,0)$. Let $I=\{(x,y)\in\mathbb{C}^2 : xy=0\}$ and $I^*=I\setminus (0,0)$. Regarding the smooth curves $B^*$ and $I^*$ as Riemann surfaces with boundary we have a notion of analytic and anti-analytic maps to or from subsets of $B^*$ and $I^*$. We say a map to or from a neighbourhood $U$ of $(0,0)\in B$ or $U'$ of $(0,0)\in I$ is analytic or anti-analytic if it is analytic or anti-analytic when restricted to $U\cap B^*$ or $U'\cap I^*$. Dianalytic maps are again maps which restrict to analytic or anti-analytic maps on each connected component. Note in particular that if $U$ or $U'$ is connected then dianalytic maps on $U$ or $U'$ are either analytic or anti-analytic everywhere (even though $U\cap B^*$ and $U\cap I^*$ are disconnected). We therefore have a notion of a transition function between two charts on a nodal surface being analytic or dianalytic.

\begin{definition}
A \emph{nodal Riemann surface} is a nodal surface $(X,N)$ together with a maximal analytic atlas. A \emph{nodal Klein surface} is a nodal surface $(K,N)$ together with a maximal dianalytic atlas. By an irreducible component of a nodal surface we mean a connected component of the surface obtained by pulling apart all the nodes. Note that this is different from a connected component of $K\setminus N$ since an irreducible component will include points that were formerly nodes.
\end{definition}

We will mostly be concerned with Klein surfaces having only boundary nodes. We will also need a second different notion of a boundary node on a Klein surface.

\begin{definition}
A \emph{na\"ive nodal Klein surface} is a nodal surface $(K,N)$ with only boundary nodes together with a maximal dianalytic atlas on each irreducible component.
\end{definition}

Note that this does differ from the previous notion in the sense that we no longer have a dianalytic structure around the boundary nodes. Indeed, on a neighbourhood of a boundary node there are charts on the intersection with each irreducible component, but not a chart on the whole neighbourhood.

A morphism of nodal Riemann surfaces is a non-constant continuous map that is analytic on the charts (including at nodes). We can also define morphisms easily for na\"ive nodal Klein surfaces: a morphism is given by a non-constant continuous map which takes irreducible components to irreducible components and such that the map induced on each irreducible component is a morphism of smooth Klein surfaces. 

\begin{definition}
A \emph{nodal symmetric Riemann surface} $(X, \sigma)$ is a nodal Riemann surface with an antiholomorphic involution $\sigma\co X\rightarrow X$.
\end{definition}

We will now discuss quotients of nodal Riemann surfaces informally. Given a nodal symmetric Riemann surface $(X,\sigma)$ we can form the quotient $q\co X\rightarrow X/\sigma$. This is a topological nodal surface and each irreducible component has a canonical non-singular dianalytic structure. Let $n\in N_1$ be an interior node. Since $\sigma$ must take nodes to nodes, if $\sigma(n)\neq n$ then $q(n)$ will be an interior node and we can extend the dianalytic structure about $q(n)$ in a unique way such that $q$ is dianalytic on the charts about $n$ and $q(n)$. If $n$ is fixed by $\sigma$ then $q(n)$ will be either an interior point or a boundary node. In the second case there is not a canonical way of choosing a dianalytic structure about $q(n)$ and so $q(n)$ is a na\"ive boundary node. Let $n'\in N_2$ be a boundary node. Similarly $q(n')$ will be either a boundary node or, if $n'$ is fixed by $\sigma$, a boundary point. In either case we can choose a dianalytic structure about $q(n')$ in a canonical way. Since we are interested mainly in Klein surfaces with only boundary nodes we make the following definition:

\begin{definition}
An \emph{admissible symmetric Riemann surface} $(X,\sigma)$ is a nodal symmetric Riemann surface $(X,N)$ such that $q(n)$ is a boundary node (a na\"ive boundary node if $n$ is an interior node) for all nodes $n\in N$.
\end{definition}

It is now not too difficult to work out what a morphism of nodal Klein surfaces should be, allowing folding maps along nodes as just described. Then the above informal discussion can be made precise by saying that there is a unique structure of a nodal Klein surface (possibly with some na\"ive nodes) on the quotient of a nodal Riemann surface such that the quotient map is a morphism of Klein surfaces. We will not give the details however since we only really need to consider dianalytic morphisms here. A dianalytic morphism $f\co K\rightarrow K'$ of nodal Klein surfaces is a non-constant continuous map that is dianalytic on all the charts (including at nodes). In particular such a map induces dianalytic maps on the irreducible components. A dianalytic nodal symmetric Riemann surface is an admissible symmetric Riemann surface such that the quotient map is dianalytic. In particular such surfaces have only boundary nodes.

From now on all our surfaces may be nodal unless otherwise stated. We need nodal surfaces with marked points.

\begin{definition}
A \emph{Klein/Riemann surface with $n$ marked points} $(X,\mathbf{p})$ is a nodal Klein/Riemann surface $(X,N)$ equipped with an ordered $n$--tuple $\mathbf{p}=(p_1,\dots,p_n)$ of distinct points on $X\setminus N$. A morphism $f\co(X,\mathbf{p})\rightarrow(X',\mathbf{p'})$ of surfaces with $n$ marked points is a morphism of the underlying surface such that $f(p_i)=p'_i$.
\end{definition}

\begin{definition}
\Needspace*{3\baselineskip}\mbox{}
\begin{itemize}
\item A \emph{symmetric Riemann surface $X$ with $(m,n)$ marked points} $(X,\sigma,\mathbf{p},\mathbf{p'})$ is a nodal symmetric Riemann surface $(X,\sigma)$ with an ordered $2m$--tuple $\mathbf{p}=(p_1,\dots,p_{2m})$ of distinct points on $X\setminus N$ such that $\sigma(p_i)=p_{m+i}$ for $i=1,\dots,m$ and an ordered $n$--tuple $\mathbf{p'}=(p'_1,\dots,p'_{n})$ of distinct points on $X\setminus N$ such that $\sigma(p'_j)=p'_j$ for $j=1,\dots,n$.
\item A morphism $f\co (X,\sigma,\mathbf{p},\mathbf{p'})\rightarrow(Y,\tau,\mathbf{r},\mathbf{r'})$ is a morphism of the underlying symmetric Riemann surfaces such that $f(p_i)=r_i$ and $f(p'_j)=r'_j$.
\item We say a marked symmetric Riemann surface is \emph{admissible} if the underlying symmetric Riemann surface is and the points $q(p_i)$ and $q(p'_j)$ all lie in the boundary of the quotient. In this case all the $p_i$ must be on the boundary.
\end{itemize}
\end{definition}

Once again we can discuss quotients of marked symmetric Riemann surfaces. The quotient Klein surface is in a natural way a Klein surface with $m+n$ marked points (and if the surface is admissible all the marked points of the Klein surface lie on the boundary). In fact it has more structure: if $p_i$ is a marked point of a symmetric Riemann surface with $\sigma(p_i)=p_{m+i}\neq p_i$ then this gives locally an orientation about $q(p_i)$ induced from the chart about $p_i$. This motivates the following definition:

\begin{definition}
A \emph{Klein surface with $n$ oriented marked points} is a Klein surface with marked points $(K,\mathbf{p})$ equipped with a choice of orientation locally about each marked point (more precisely, a choice of one of the two germs of orientations on orientable neighbourhoods at each marked point).
\end{definition} 

Note finally that dianalytic marked symmetric Riemann surfaces (which are by definition admissible) can only have marked points on the boundary.

We are now ready to define our categories of interest. In particular we are interested in Klein surfaces with nodes and marked points all on the boundary. Equivalently this means we are also interested in admissible symmetric Riemann surfaces.

\begin{definition}
\Needspace*{3\baselineskip}\mbox{}
\begin{itemize}
\item The category $\nklein$ has objects Klein surfaces with only na\"ive boundary nodes and marked points (not oriented) on the boundary with morphisms as defined above for na\"ive nodal surfaces.
\item The category $\dnklein$ has objects Klein surfaces with only boundary nodes (but not na\"ive nodes) and oriented marked points on the boundary with dianalytic morphisms as defined above.
\item The category $\nsymriem$ has objects admissible symmetric Riemann surfaces with marked points and without boundary. Morphisms are analytic maps as defined above.
\item The category $\dnsymriem$ has objects dianalytic symmetric Riemann surfaces (possibly with boundary) with marked points. Morphisms are analytic maps as defined above.
\end{itemize}
\end{definition}

We can extend the notion of an orienting double to objects $(K,\mathbf{p})$ in $\dnklein$ by first constructing the orienting double on each irreducible component and gluing in the canonical way induced by the dianalytic structure at the nodes to obtain a dianalytic symmetric Riemann surface $K_\mathbf{O}$. If $f\co K_\mathbf{O}\rightarrow K$ is the covering map then $f^{-1}(p_i)$ gives two points in $K_\mathbf{O}$. To make $K_\mathbf{O}$ a marked surface we need to order these two points for each $i$. But we can do this using the local orientation about $p_i$ which allows us to canonically choose an $n$--tuple $\mathbf{q}=(q_1,\dots,q_n)$ of distinct points on the boundary of $K_\mathbf{O}$ such that $f(q_i)=p_i$ and $f$ preserves the local orientations about $q_i$ and $p_i$. Then the orienting double of $K$ is defined as $(K_\mathbf{O},\sigma,\mathbf{p'},\mathbf{0})$ where $\mathbf{p'}=(q_1,\dots,q_n,\sigma(q_1),\dots,\sigma(q_n))$ and $\sigma$ is the antiholomorphic involution on $K_\mathbf{O}$. This is an object in $\dnsymriem$.

Given a Klein surface with marked (but not necessarily oriented) points choosing such an $n$--tuple $\mathbf{q}$ of points in $K_\mathbf{O}$ is clearly equivalent to providing local orientations at each $p_i$. Since this data can sometimes be easier to work with we will therefore also denote a Klein surface with $n$ oriented marked points by $(K,\mathbf{p},\mathbf{q})$.

\begin{remark}\label{rem:riemklein}
 A Riemann surface with marked points can be thought of as a Klein surface with oriented marked points using the canonical orientation. If it is in $\dnklein$ then its orienting double is a disjoint union of two copies of itself  and so the canonical orientation means we choose points $q_i$ in the component that maps analytically under the quotient map.
\end{remark}

By showing nodal versions of the properties in \autoref{lem:odouble} it is not too difficult to obtain the following marked nodal analogue of \autoref{prop:odoubleequiv}:

\begin{proposition}\label{prop:odoubleequiv2}
There is an equivalence of categories $\dnklein\rightarrow\dnsymriem$ given by taking the orienting double.
\qed
\end{proposition}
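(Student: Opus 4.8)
The plan is to follow the proof of \autoref{prop:odoubleequiv} almost verbatim, the genuinely new ingredient being the local behaviour at a boundary node. First I would establish the nodal, marked analogue of \autoref{lem:odouble}. Given an object $(K,\mathbf{p},\mathbf{q})$ of $\dnklein$, each irreducible component $K_{i}$ of $K$ is a \emph{smooth} Klein surface (it contains the points that used to be nodes), so \autoref{lem:odouble} produces an orienting double $f_{i}\co (K_{i})_{\mathbf{O}}\to K_{i}$ together with its antiholomorphic involution $\sigma_{i}$. At each boundary node $z$ of $K$ the two local branches of $K$ through $z$ lie in irreducible components, and each carries a chart compatible with the dianalytic structure of $K$ at $z$; these charts prescribe \emph{canonically} how to identify the two corresponding pairs of points in the $(K_{i})_{\mathbf{O}}$. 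Performing all of these identifications glues the $(K_{i})_{\mathbf{O}}$ into a single nodal Riemann surface $K_{\mathbf{O}}$, carrying an antiholomorphic involution $\sigma$ (assembled from the $\sigma_{i}$) and a dianalytic double cover $f\co K_{\mathbf{O}}\to K$ with $f^{-1}(\partial K)=\partial K_{\mathbf{O}}$; it is unramified, it is disconnected precisely when $K$ is orientable, and it is universal among dianalytic morphisms to $K$ from nodal Riemann surfaces --- the universal property follows branch-by-branch from the non-nodal case together with the fact that the gluing above is forced. Adjoining marked points exactly as in the paragraph preceding the statement --- the chosen germ of orientation at $p_{i}$ selects one of the two points of $f^{-1}(p_{i})$, giving $\mathbf{q}=(q_{1},\dots,q_{n})$ --- produces the object $(K_{\mathbf{O}},\sigma,(q_{1},\dots,q_{n},\sigma q_{1},\dots,\sigma q_{n}),\mathbf{0})$ of $\dnsymriem$. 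On morphisms one lifts a dianalytic morphism $K\to K'$ uniquely to $K_{\mathbf{O}}\to K'_{\mathbf{O}}$ by the universal property, and this lift automatically intertwines the involutions and respects the marked points; functoriality is then immediate from uniqueness.

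Next I would build the candidate quasi-inverse by taking quotients. Given $(X,\sigma,\mathbf{p},\mathbf{p'})$ in $\dnsymriem$, as in the non-nodal case the condition that $X\to X/\sigma$ be dianalytic forces $\sigma$ to be fixed-point-free, so $\mathbf{p'}$ is empty and the marked points occur in pairs $\{p_{i},\sigma p_{i}\}$. Form $q\co X\to X/\sigma=:K$. By the discussion of quotients of nodal Riemann surfaces each irreducible component of $X$ maps to an irreducible component of $K$ with a canonical dianalytic structure, and since $X$ carries only boundary nodes a node of $X$ descends to a genuine (non-na\"ive) boundary node of $K$ whose dianalytic structure is determined canonically by the charts upstairs; thus $K$ lies in $\dnklein$ once we equip $q(p_{i})$ with the germ of orientation coming from the chart about $p_{i}$. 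This is visibly functorial, since $\sigma$-equivariant analytic maps descend to dianalytic maps of quotients.

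It remains to check that the two functors are mutually quasi-inverse, with natural isomorphisms. In one direction the cover $f\co K_{\mathbf{O}}\to K$ exhibits $K$ as $K_{\mathbf{O}}/\sigma$, and by construction the oriented marked points of the quotient are exactly those of $K$, giving a natural isomorphism from the composite to $\id$. In the other direction, for $(X,\sigma)$ in $\dnsymriem$ the cover $X\to X/\sigma$ is an unramified double cover admitting $\sigma$ and satisfying $q^{-1}(\partial(X/\sigma))=\partial X$, so the nodal analogue of the fourth bullet of \autoref{lem:odouble} identifies it with $(X/\sigma)_{\mathbf{O}}$; chasing the marked points and orientations through the construction upgrades this to an isomorphism in $\dnsymriem$, and uniqueness in the universal property makes the family of these isomorphisms natural.

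The step I expect to be the main obstacle is entirely local, and it is precisely the phenomenon illustrated in \autoref{fig:strangulatedmobius}: one must verify that the dianalytic structure of a nodal Klein surface \emph{at} a boundary node records exactly the data needed to glue the two orienting-double branches in a unique way --- equivalently, that the quotient of a dianalytic nodal symmetric Riemann surface acquires a canonical, non-na\"ive dianalytic structure at each boundary node --- and that this is independent of the two ways of ordering the branches. Once this local model is nailed down, everything else is a routine componentwise application of \autoref{lem:odouble} and \autoref{prop:odoubleequiv}, plus bookkeeping of the orderings of marked points and the germs of orientation.
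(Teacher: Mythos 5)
Your proposal is correct and follows essentially the same route as the paper, which only sketches the argument: construct the orienting double componentwise, glue canonically at boundary nodes using the dianalytic structure there, select the points $q_i$ via the germs of orientation, and deduce the equivalence from nodal versions of the properties in \autoref{lem:odouble} exactly as in \autoref{prop:odoubleequiv}, with the quotient construction as quasi-inverse. Your identification of the local gluing at a node as the only genuinely new point (and your observation that dianalyticity forces $\sigma$ to be fixed-point-free, so $\mathbf{p'}$ is empty) matches the paper's intent.
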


Similarly given an object in $\nklein$ we can construct the complex double by first constructing the complex double on each irreducible component and then gluing to obtain an admissible symmetric Riemann surface $K_\mathbf{C}$ without boundary. If $f\co K_\mathbf{C}\rightarrow K$ is the covering map then $f^{-1}(p_i)$ is a single point in $K_\mathbf{C}$ so we obtain an object in $\nsymriem$. Once again we can show the marked nodal analogue of \autoref{prop:cdoubleequiv}:

\begin{proposition}\label{prop:cdoubleequiv2}
There is an equivalence of categories $\nklein\rightarrow\nsymriem$ given by taking the complex double.
\qed
\end{proposition}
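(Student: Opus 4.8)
The plan is to imitate the proof of the smooth statement \autoref{prop:cdoubleequiv}, working one irreducible component at a time and then re-gluing, while keeping careful track of the correspondence between na\"ive boundary nodes on the Klein side and interior nodes fixed by the involution on the symmetric Riemann side. To define the functor: given $(K,\mathbf{p})$ in $\nklein$, pull apart all its na\"ive boundary nodes to obtain the disjoint union of its irreducible components --- each a smooth Klein surface with boundary carrying marked points on the boundary --- together with the finite set of point-pairs that were identified at the nodes. Apply the smooth complex double of Alling--Greenleaf to each component; the two preimages of each former node lie on the fixed locus of the antiholomorphic involution of that component's double. Re-glue the doubles at these fixed points: a na\"ive boundary node of $K$ sitting between components $C_1,C_2$ (or within a single component) yields, after identifying the corresponding fixed points on the doubles of $C_1$ and $C_2$, a single point which is an interior node of the resulting surface $K_\mathbf{C}$, fixed by the involution $\sigma$ assembled from the component involutions. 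Since the marked points of $K$ are unoriented and lie on the boundary, each $f^{-1}(p_i)$ (with $f\co K_\mathbf{C}\to K$ the covering map) is a single $\sigma$-fixed interior point, so $(K_\mathbf{C},\sigma)$ with this tuple is a symmetric Riemann surface with marked points and without boundary; and by construction $q\co K_\mathbf{C}\to K_\mathbf{C}/\sigma=K$ carries every node of $K_\mathbf{C}$ --- each an interior node fixed by $\sigma$ --- to a na\"ive boundary node of $K$, so $(K_\mathbf{C},\sigma,\ldots)$ is admissible, i.e. lies in $\nsymriem$. A morphism in $\nklein$ restricts to morphisms of the smooth components; these lift uniquely to $\sigma$-equivariant analytic maps of the component doubles by \autoref{prop:cdoubleequiv}, and the lifts are automatically compatible with the re-gluing, hence patch to a morphism of $K_\mathbf{C}$. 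This gives the functor and its functoriality.

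For essential surjectivity, given $(X,\sigma,\mathbf{p},\mathbf{p'})$ admissible and without boundary, set $K=X/\sigma$; admissibility says exactly that each node of $X$ maps to a boundary node of $K$, an interior node of $X$ mapping to a na\"ive boundary node, and all marked points land on $\partial K$, so $K$ with these marked points is an object of $\nklein$. Pulling apart the nodes of $X$ exhibits it as glued from smooth symmetric Riemann surfaces whose quotients are the irreducible components of $K$; the smooth equivalence \autoref{prop:cdoubleequiv} identifies each such piece with the complex double of its quotient, and the gluing data match, giving a natural isomorphism $X\cong K_\mathbf{C}$. For full faithfulness one argues the same way: a morphism $K\to L$ of na\"ive nodal Klein surfaces is exactly the data of morphisms on irreducible components compatible along the gluing loci, and \autoref{prop:cdoubleequiv} turns this, component by component, into the data of $\sigma$- resp. $\tau$-equivariant analytic maps of the component doubles compatible along the same loci, i.e. a single morphism $K_\mathbf{C}\to L_\mathbf{C}$ in $\nsymriem$; marked points are matched automatically since their preimages on $K_\mathbf{C}$ and $L_\mathbf{C}$ are single points. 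Alternatively one may establish a nodal analogue of the universal property of \autoref{lem:odouble} --- that $f\co K_\mathbf{C}\to K$ is the universal Riemann surface without boundary over $K$ in the nodal category corresponding to $\klein$ --- from which full faithfulness is immediate.

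The delicate point, and the main obstacle, is the local analysis at the nodes: one must check that the component-wise complex doubles really glue to a \emph{nodal} Riemann surface, with a bona fide analytic chart of the form $\{xy=0\}\subset\mathbb{C}^2$ at each newly created interior node, that the assembled involution is antiholomorphic there, and conversely that the quotient of an interior node of $X$ fixed by $\sigma$ genuinely carries no canonical dianalytic structure --- so that it is a na\"ive node and not an ordinary nodal Klein point --- exactly as flagged in the informal discussion preceding \autoref{prop:odoubleequiv2}. Once this node bookkeeping is settled, everything else reduces cleanly to the already-proved smooth statement \autoref{prop:cdoubleequiv}, in close parallel with \autoref{prop:odoubleequiv2}.
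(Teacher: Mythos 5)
Your proposal is correct and follows essentially the same route as the paper: the paper likewise constructs $K_\mathbf{C}$ by taking the complex double of each irreducible component and gluing at the (fixed) preimages of the na\"ive boundary nodes, notes that each marked point has a single preimage so that one lands in $\nsymriem$, and then deduces the equivalence as the marked nodal analogue of \autoref{prop:cdoubleequiv}, with the node bookkeeping you flag treated exactly as in the informal discussion. No substantive difference in approach.
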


\begin{remark}
We can still define the complex double for surfaces in $\dnklein$ however two Klein surfaces that are not isomorphic may have isomorphic complex doubles.
\end{remark}

Given a boundary node on a Riemann surface $X$ we can replace the node with a narrow oriented strip. We can also replace interior nodes with a narrow oriented cylinder. In this way we can obtain from $X$ a non-singular oriented topological surface.

We define the topological type of a Riemann surface $X$ with $n$ marked points as $(g,0,h,n)$ where $(g,0,h)$ is the topological type of the non-singular oriented surface obtained by the above process.

Given a Klein surface $K$ in $\dnklein$ with $n$ marked points we consider $K_\mathbf{O}$ as a Riemann surface by forgetting the symmetry and let $(\tilde{g},0,\tilde{h},2n)$ be its topological type. Then if $K_\mathbf{O}$ is disconnected the topological type of $K$ is defined as the topological type of one of the connected components of $K_\mathbf{O}$. If $K_\mathbf{O}$ is connected then the topological type of $K$ is defined as $(g,u,h,n)$ where $h=\frac{\tilde{h}}{2}$ and $g$ and $u$ are the unique solutions to $\tilde{g}+1=2g+u$ with $0<u\leq 2$.

For admissible symmetric Riemann surfaces without boundary in $\nsymriem$ we define their topological type as that of the underlying marked Riemann surface obtained by forgetting the symmetry.

The topological type of a symmetric Riemann surface in $\dnsymriem$ is defined as the topological type of its quotient Klein surface in $\dnklein$ and the topological type of a na\"ive nodal Klein surface in $\nklein$ is defined as the topological type of its complex double in $\nsymriem$.

\begin{definition}
A Klein or Riemann surface with $n$ (possibly oriented) marked points is \emph{stable} if it has only finitely many automorphisms.
\end{definition}

A non-singular Klein surface with $n$ (possibly oriented) marked points on the boundary (which we will assume is non-empty) is unstable precisely if it has the topological type of a disc and $n\leq 2$ or if it is an annulus with $n=0$ or a M\"obius strip with $n=0$ (since the orienting double of a M\"obius strip is an annulus). If the Klein surface has singularities (and so is in either $\nklein$ or $\dnklein$) then it is stable if and only if each connected component of its normalisation is. The normalisation is given by pulling apart all the nodes where each node gives two extra boundary marked points. It does not matter how we order these extra marked points.

\section{Moduli spaces of Klein surfaces}
In this section we discuss various moduli spaces and their relationships.

Let $\kb_{g,u,h,n}$ be the moduli space of stable Klein surfaces in $\dnklein$ with topological type $(g,u,h,n)$ and $h\geq 1$. Let $\ko_{g,u,h,n}\subset \kb_{g,u,h,n}$ be the subspace of non-singular Klein surfaces.

Due to \autoref{prop:odoubleequiv2} this can be identified with the moduli space of stable dianalytic symmetric Riemann surfaces in $\dnsymriem$. These moduli spaces are non-empty except for the cases when \[(g,u,h,n)\in\{(0,0,1,0),(0,0,1,1),(0,0,1,2),(0,0,2,0),(0,1,1,0)\}.\] There is an action of the group $\mathbb{Z}_2^{\times n}$ on $\kb_{g,u,h,n}$ given by flipping the orientations of marked points.

Let $\mb_{\tilde{g},n}$ be the moduli space of stable admissible symmetric Riemann surfaces in $\nsymriem$ with topological type $(\tilde{g},0,0,n)$. Let $\mo_{\tilde{g},n}\subset\mb_{\tilde{g},n}$ be the subspace of non-singular Riemann surfaces.

Due to \autoref{prop:cdoubleequiv2} this can be identified with the moduli space of stable Klein surfaces in $\nklein$. These moduli spaces are non-empty except for the cases when $(g,n)\in\{(0,0),(0,1),(0,2),(1,0)\}$. We observe that:
\[\left ( \coprod_{2g+h+u-1=\tilde{g}}\ko_{g,u,h,n}\right )\Biggr/\mathbb{Z}_2^{\times n} \quad\cong\quad \mo_{\tilde{g},n}\]

\begin{remark}
The slight abuse of notation here is potentially misleading. The full compactification of stable symmetric Riemann surfaces (the space of stable real algebraic curves) allowing all nodal Riemann surfaces without boundary, is very often denoted by $\mb$ (for example as in \cite{seppala,liu}). For us however it is an open subspace of this: the subspace of \emph{admissible} surfaces. Here neither $\mb_{\tilde{g},n}$ or $\kb_{g,u,h,n}$ are compact in general.
\end{remark}

Let $\nb_{g,h,n}$ be the moduli space of stable bordered Riemann surfaces with only boundary nodes and marked points on the boundary with topological type $(g,0,h,n)$. Let $\no_{g,h,n}\subset \nb_{g,h,n}$ be the subspace of non-singular bordered Riemann surfaces.

These are the moduli spaces considered by Costello \cite{costello1,costello2}. These spaces are non-empty except for the cases when \[(g,h,n)\in\{(0,1,0),(0,1,1),(0,1,2),(0,2,0)\}.\] By \autoref{rem:riemklein} we have a map $\nb_{g,h,n}\rightarrow\kb_{g,0,h,n}$, injective for $n>0$.

We now outline the construction of these spaces and their topology. We will follow Liu \cite{liu} closely and more details may be found there. The basic idea is to use the symmetric pants decomposition of the complex double to obtain Fenchel--Nielsen coordinates. In fact this will give an orbifold structure.

Recall that any stable Riemann surface $X$ of topological type $(\tilde{g},0,0,n)$ admits a \emph{pants decomposition}. More precisely there are $3\tilde{g} - 3 + n$ disjoint curves $\alpha_i$ on the surface which is obtained from puncturing $X$ at each marked point, with each curve being either a closed geodesic (in the hyperbolic metric) or a node, decomposing $X$ into a disjoint union of $2\tilde{g} - 2 + n$ pairs of pants whose boundary components are either one of the $\alpha_i$ or a puncture corresponding to a marked point. Furthermore if $(X,\sigma)$ is a stable symmetric Riemann surface then there exists a \emph{symmetric pants decomposition} that is invariant with respect to $\sigma$ (see Buser and Sepp\"al\"a \cite{buserseppala}). By this we mean that $\sigma$ induces a permutation on decomposing pairs of pants. 

We call a pants decomposition oriented if the pairs of pants are ordered and the boundary components of each pair of pants are ordered and each have a basepoint and each decomposing curve is oriented. This induces an ordering on the decomposing curves and so defines \emph{Fenchel--Nielsen coordinates} \[(l_1,\dots,l_{3\tilde{g}-3+n},\theta_1,\dots,\theta_{3\tilde{g}-3+n})\] where the $l_i$ are the lengths (in the hyperbolic metric) of the decomposing curves, and the $\theta_i$ are the angles between the basepoints of the two boundary components corresponding to the $i$--th decomposing curve. More precisely the ordering of the pairs of pants determines an ordering of the two basepoints on each decomposing curve and we set $\theta_i=2\pi\frac{\tau_i}{l_i}$ where $\tau_i$ is the distance one travels from the first basepoint to the second basepoint on the $i$--th decomposing curve in the direction that the curve is oriented. Note that we have $l_i\geq 0$ and $0\leq \theta_i < 2\pi$.

In the case that the pants decomposition is symmetric we may assume that the orientation of the pants decomposition has been chosen so that the symmetry permutes basepoints and reverses the orientation of decomposing curves which are not completely fixed by the symmetry. Note that in this case not all the coordinates are independent. In particular if a decomposing curve $\alpha_i$ is mapped to itself by the symmetry, then $\theta_i = 0$ or $\theta_i=\pi$. Similarly if the decomposing curves $\alpha_i$ and $\alpha_j$ are permuted by the symmetry then $l_i=l_j$ and $\theta_i=2\pi-\theta_j$.

\begin{definition}
A \emph{strong deformation} from a stable symmetric Riemann surface $(X',N',\sigma')$ to a stable symmetric Riemann surface $(X,N,\sigma)$ both of topological type $(\tilde{g},0,\tilde{h},n)$ is a continuous map $\kappa\co (X',N',\sigma')\rightarrow (X,N,\sigma)$ such that
\begin{itemize}
\item $\kappa$ takes boundary components to boundary components, interior nodes to interior nodes, boundary nodes to boundary nodes, preserves marked points and $\kappa\circ \sigma' = \sigma \circ\kappa$
\item for each interior node $n$ we have that $\kappa^{-1}(n)$ is either an interior node or an embedded circle in a connected component of $X'\setminus N'$
\item for each boundary node $n$ we have that $\kappa^{-1}(n)$ is either a boundary node or an embedded arc in a connected component of $X'\setminus N'$ with ends in $\partial X'$
\item $\kappa$ restricts to a diffeomorphism $\kappa\co\kappa^{-1}(X\setminus N)\rightarrow X\setminus N$.
\end{itemize}
\end{definition}

Given $X$ and $X'$ stable symmetric Riemann surfaces without boundary with oriented symmetric pants decompositions having decomposing curves $\alpha_i$ and $\alpha_i'$ respectively and a strong deformation $\kappa\co X'\rightarrow X$ we say that $\kappa$ is compatible with the oriented pants decompositions if $\kappa(\alpha_i')=\alpha_i$ and all the orientation data (the ordering of the pants, the ordering of the boundary components, the basepoints of the boundary components and the orientation of each $\alpha_i$) is preserved under $\kappa$. 

Given an oriented symmetric pants decomposition of $X$ with decomposing curves $\alpha_i$ and any strong deformation $\kappa\co X'\rightarrow X$, by choosing as decomposing curves the closed geodesics homotopic to each of the $\kappa^{-1}(\alpha_i)$ we can obtain a symmetric pants decomposition of $X'$ with decomposing curves $\alpha_i'$. Further there exists another strong deformation $\kappa'$ such that $\kappa'(\alpha_i')=\alpha_i$ and also an orientation of the pants decomposition of $X'$ so that it is pulled back from the oriented pants decomposition of $X$ under $\kappa'$. In particular $\kappa'$ is compatible with the oriented pants decompositions.

Recall that the complex structure on a pair of pants is determined, up to equivalence, by the lengths of the three boundary curves in the unique hyperbolic metric where the boundary curves are geodesic. Recall also that gluing pairs of pants along boundary components of common length is determined completely by the angle between two basepoints on the boundary components. Therefore if $X$ and $X'$ have the same Fenchel--Nielsen coordinates with respect to the pants decompositions preserved by $\kappa'$ then they are in fact biholomorphic.

\begin{definition}
A strong deformation from a stable Klein surface $(K',N')$ in $\dnklein$ to a stable Klein surface $(K,N)$ of topological type $(g,u,h,n)$ is a strong deformation between the orienting doubles.
\end{definition}

Note that a strong deformation of stable Klein surfaces induces a strong deformation on the complex doubles of the underlying na\"ive nodal Klein surfaces. Of course the converse is not true.

We are now ready to describe the topology on $\mb_{\tilde{g},n}$ and $\kb_{g,u,h,n}$.

Given a surface $X\in\mb_{\tilde{g},n}$ and an oriented symmetric pants decomposition of $X$ with coordinates $l_i,\theta_j$ denote by $U(X,\epsilon,\delta)$ the set of surfaces $X'$ with an oriented symmetric pants decomposition having coordinates $l_i',\theta_j'$ and admitting a strong deformation $\kappa\co X'\rightarrow X$ compatible with the pants decompositions such that $|l_i'-l_i|<\epsilon$ and $|\theta_j'-\theta_j|<\delta$. The collection $\{U(X,\epsilon,\delta) : X\in\mb_{\tilde{g},n}, \epsilon > 0,\delta > 0 \}$ then generates the topology on $\mb_{\tilde{g},n}$.

Set $z_j=l_j e^{i\theta_j}$ and let $\tilde{U}$ be the fixed locus under the symmetry of $X$, which, up to permutation of coordinates, consists of points of the form \[(z_1,\bar{z}_1,z_2,\bar{z}_2,\dots,\bar{z}_{d_1},x_1,\dots,x_{d_2})\] where $2d_1+d_2=3\tilde{g}-3+n$, $z_i\in \mathbb{C}$ and $x_i\in\mathbb{R}$. This is an open subset of $\mathbb{R}^d$ where $d=3\tilde{g}-3+n$. In particular the open sets $U(X,\epsilon,\delta)$ are homeomorphic to $\tilde{U}/\Gamma$ for an appropriate open subset $\tilde{U}$ of $\mathbb{R}^d$ and $\Gamma$ the automorphism group of $X$. Therefore the space $\mb_{\tilde{g},n}$ is an orbifold.

Similarly given a surface $K\in\kb_{g,u,h,n}$ and an oriented symmetric pants decomposition of $K_\mathbf{C}$ with coordinates $l_i,\theta_j$ denote by $U(K,\epsilon,\delta)$ the set of surfaces $K'$ with an oriented symmetric pants decomposition of $K_\mathbf{C}'$ having coordinates $l_i',\theta_j'$ and admitting a strong deformation $\kappa\co K_\mathbf{O}'\rightarrow K_\mathbf{O}$ compatible with the pants decompositions on the complex doubles such that $|l_i'-l_i|<\epsilon$ and $|\theta_j'-\theta_j|<\delta$. The collection $\{U(K,\epsilon,\delta) : K\in\kb_{g,u,h,n}, \epsilon > 0,\delta > 0 \}$ then generates the topology on $\kb_{g,u,h,n}$.

In this case the open sets are now homeomorphic to $\tilde{U}/\Gamma$ for some open neighbourhood $\tilde{U}$ of $\mathbb{R}^m_{\geq 0}\times \mathbb{R}^{d-m}$ and $\Gamma$ the automorphism group of $K$, where $d=6g+3h+3u+n-6$ and $m$ is the number of nodes of $K$. The value of $d$ is obtained by noting that the complex double of $K$ has topological type $(2g+h+u-1,0,0,n)$, so that the number of Fenchel--Nielsen coordinates of the double is $6(2g+h+u-1)-6+2n$ and again only half of these are independent. However, as discussed previously, the interior nodes of the complex double do not encode the dianalytic structure of the boundary nodes in the quotient, nor are the marked points oriented. As a result there is only one way to smooth a node in $K$ given the dianalytic structure so the coordinates corresponding to (smoothings of) the $m$ nodes lie in $\mathbb{R}^m_{\geq 0}$. In terms of the coordinates on the complex double this corresponds to the fact that a node in the complex double is fixed by the symmetry so as we smooth it the Fenchel--Nielsen coordinate corresponding to the gluing angle is either $0$ or $\pi$. However only one choice is possible if the quotient is to have the correct topological type and orientation of marked points. Therefore the space $\kb_{g,u,h,n}$ is an orbifold with corners. Furthermore an orbifold with corners is homotopy equivalent to its interior which in this case is $\ko_{g,u,h,n}$.

We can also carry out a similar construction for the spaces $\nb_{g,h,n}$. See also \cite{liu,costello1}.

Our discussion can be summarised in the following lemma:

\begin{lemma}
\Needspace*{3\baselineskip}\mbox{}
\begin{itemize}
\item $\nb_{g,h,n}$ is an orbifold with corners of dimension $6g-6+3h+n$. The interior is $\no_{g,h,n}$ and the inclusion $\no_{g,h,n}\hookrightarrow\nb_{g,h,n}$ is then a homotopy equivalence.
\item $\kb_{g,u,h,n}$ is an orbifold with corners of dimension $6g+3h+3u+n-6$. The interior is $\ko_{g,u,h,n}$ and the inclusion $\ko_{g,u,h,n}\hookrightarrow\kb_{g,u,h,n}$  is then a homotopy equivalence.
\item $\mb_{\tilde{g},n}$ is an orbifold of dimension $3\tilde{g}-3+n$.
\end{itemize}
\end{lemma}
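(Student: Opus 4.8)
The plan is to read all three assertions off the explicit local coordinate descriptions assembled in the preceding discussion, so that the proof is essentially an organisation of facts already in hand. First I would record the orbifold structures. For a point of $\mb_{\tilde g,n}$, resp.\ $\kb_{g,u,h,n}$, resp.\ $\nb_{g,h,n}$, we have exhibited a neighbourhood $U(\cdot,\epsilon,\delta)$ together with a homeomorphism onto $\tilde U/\Gamma$, where $\Gamma$ is the automorphism group of the surface --- finite by stability --- acting on an open subset $\tilde U$ of $\mathbb{R}^d$ in the case of $\mb$, and on an open subset of $\mathbb{R}^m_{\ge 0}\times\mathbb{R}^{d-m}$ (with $m$ the number of nodes) in the cases of $\kb$ and $\nb$. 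To promote these to an effective orbifold atlas one checks that the changes of chart, expressed through the Fenchel--Nielsen coordinates attached to two overlapping oriented symmetric pants decompositions, are real-analytic and satisfy the cocycle condition; this is the standard real-analyticity of Fenchel--Nielsen coordinates together with compatibility of strong deformations, and I would cite Liu \cite{liu} and Costello \cite{costello1,costello2} for it rather than reprove it. The half-space factor $\mathbb{R}^m_{\ge 0}$ for $\kb$ and $\nb$, versus its absence for $\mb$, is exactly the phenomenon already isolated: smoothing a boundary node of a surface in $\dnklein$ is rigid, the dianalytic structure at the node fixing the gluing angle and giving a half-line of smoothings, whereas smoothing a na\"ive node of an admissible symmetric Riemann surface can be done in either of two directions, giving a full real line of smoothings and hence an interior codimension-one nodal locus. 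This is why $\mb_{\tilde g,n}$ is only asserted to be an orbifold, and why no deformation-retraction statement accompanies it.

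Next I would do the dimension arithmetic. For $\mb_{\tilde g,n}$ the fixed locus $\tilde U$ sits inside $\mathbb{R}^d$ with $d=3\tilde g-3+n$, which is the claimed dimension. For $\kb_{g,u,h,n}$ one uses that the complex double has topological type $(2g+h+u-1,0,0,n)$, so its Teichm\"uller space has real dimension $6(2g+h+u-1)-6+2n$, of which exactly half the Fenchel--Nielsen coordinates are independent under the symmetry, giving $6g+3h+3u+n-6$. The computation for $\nb_{g,h,n}$ is identical with the double now of genus $2g+h-1$, yielding $6g-6+3h+n$.

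Then I would identify the interior and prove the homotopy equivalences. In the charts $\mathbb{R}^m_{\ge 0}\times\mathbb{R}^{d-m}$ the boundary strata $\{t_i=0\}$ correspond precisely to the loci where the $i$-th decomposing curve of the (symmetric) pants decomposition has collapsed to a node, so the open stratum $\mathbb{R}^m_{>0}\times\mathbb{R}^{d-m}$ consists of the smooth surfaces; these glue to give $\no_{g,h,n}\subset\nb_{g,h,n}$ and $\ko_{g,u,h,n}\subset\kb_{g,u,h,n}$ as the interiors. For the homotopy equivalence I would invoke the standard fact that a metrizable orbifold with corners admits an invariant collar of its boundary and therefore deformation retracts onto its interior: one builds $\Gamma$-equivariant collar retractions $\mathbb{R}^m_{\ge 0}\times\mathbb{R}^{d-m}\to\mathbb{R}^m_{>0}\times\mathbb{R}^{d-m}$ in the local models, patches them with an equivariant partition of unity, and passes to the quotients. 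Equivalently, since the orienting double of a surface in $\dnklein$ is precisely a bordered Riemann surface of the kind studied in \cite{costello1,costello2}, the retraction $\nb_{g,h,n}\simeq\no_{g,h,n}$ constructed there can be run $\sigma$-equivariantly and descends to $\kb_{g,u,h,n}\simeq\ko_{g,u,h,n}$.

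The main obstacle is not depth but bookkeeping: verifying that the Fenchel--Nielsen transition functions are genuine orbifold chart changes and that the collar retractions can be made simultaneously compatible with all the finite automorphism groups. I would dispatch this by reducing, via the orienting- and complex-double constructions, to the corresponding statements for bordered Riemann surfaces in Costello \cite{costello1,costello2} and for symmetric Riemann surfaces in Liu \cite{liu}, rather than carrying out the equivariant collar construction from scratch.
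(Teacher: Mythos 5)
Your proposal is correct and follows essentially the same route as the paper: the lemma there is simply a summary of the preceding discussion, which constructs exactly these Fenchel--Nielsen charts $\tilde U/\Gamma$ in $\mathbb{R}^m_{\geq 0}\times\mathbb{R}^{d-m}$ (half-space factors coming from the rigid smoothing of nodes in $\dnklein$, none for $\mb$), performs the same dimension count via the complex double of type $(2g+h+u-1,0,0,n)$ with half the coordinates independent, and invokes the general fact that an orbifold with corners is homotopy equivalent to its interior. Your extra care about chart compatibility and the equivariant collar retraction is just an expansion of what the paper cites to Liu and Costello and of that general fact, not a different argument.
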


\autoref{rem:riemklein} can be taken further. Given a Riemann surface with $n$ marked points together with a colouring of the marked points by $\{0,1\}$ we can map it to a Klein surface with $n$ oriented marked points in $\dnklein$ by choosing the canonical orientation about points coloured by $0$ and the opposite orientation otherwise. Equivalently we choose $q_i$ in the component of the orienting double that maps analytically under the quotient map when $i$ is such that $p_i$ is coloured by $0$ and in the component which maps anti-analytically otherwise.

Two isomorphism classes of Riemann surfaces with $n$ coloured marked points map to the same class of Klein surface precisely when there is an antiholomorphic map between them that reverses the colourings of all the marked points and this map is therefore $2$--to--$1$ for $n>0$. For $n=0$ this map is then $2$--to--$1$ on isomorphism classes except when a Klein surface has isomorphic underlying analytic structures (or equivalently when considering a Riemann surface as a Klein surface its automorphism group is no larger, or equivalently the Riemann surface admits an antiholomorphic automorphism).

In particular for $n>0$ if we restrict to Riemann surfaces where the first marked point is coloured by $0$ then this map is injective.

The following lemma follows from this discussion.

\begin{lemma}\label{lem:riemklein}
There is an isomorphism
\[\left (\bigoplus^{2^{n}}\nb_{g,h,n}\right )\biggr/\sim\quad\longrightarrow\quad \kb_{g,0,h,n}\]
where $\sim$ identifies Riemann surfaces with coloured markings that give rise to the same Klein surfaces with oriented markings. For $n>0$ the left hand side is isomorphic to $\bigoplus^{2^{n-1}}\nb_{g,h,n}$.
\end{lemma}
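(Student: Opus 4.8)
The plan is to analyse the map constructed in the discussion immediately preceding the lemma and then compute its fibres. Recall that to a stable bordered Riemann surface $X\in\nb_{g,h,n}$ together with a colouring $c\co\{1,\dots,n\}\to\{0,1\}$ of its marked points one associates the Klein surface in $\dnklein$ with the underlying dianalytic structure of $X$ and with the orientation at $p_i$ taken to be the canonical one when $c(i)=0$ and the opposite one when $c(i)=1$; equivalently, in the $(K,\mathbf p,\mathbf q)$ description, $q_i$ is chosen in the component of the orienting double mapping analytically or anti-analytically to $K$ according to the colour of $i$. Summing over the $2^n$ colourings produces a map $\Phi\co\bigoplus^{2^n}\nb_{g,h,n}\to\kb_{g,0,h,n}$, and everything to be proved is a statement about $\Phi$ and the equivalence $\sim$.

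First I would verify surjectivity. An object of $\kb_{g,0,h,n}$ has topological type with $u=0$ crosscaps, so by \autoref{lem:odouble} its orienting double is disconnected and the Klein surface $K$ is orientable; transporting the analytic structure of one of the two components of $K_\mathbf O$ back to $K$ exhibits $K$ as the Klein surface underlying a bordered Riemann surface $X\in\nb_{g,h,n}$ (in the nodal case one does this componentwise and reglues, using that the orienting double of a surface in $\dnklein$ is assembled from the orienting doubles of its irreducible components), and recording for each $i$ whether $q_i$ lies in the chosen component yields a colouring $c$ with $\Phi(X,c)=K$.

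Next I would pin down exactly when $\Phi(X,c)=\Phi(X',c')$, which is the content of $\sim$. Changing the chosen component of $K_\mathbf O$ replaces $X$ by the conjugate Riemann surface $\bar X$ and simultaneously replaces $c$ by $1-c$, so $(X,c)$ and $(\bar X,1-c)$ always have the same image; conversely an isomorphism of Klein surfaces with oriented marked points between $\Phi(X,c)$ and $\Phi(X',c')$ is either orientation preserving, hence a holomorphic isomorphism $X\cong X'$ with $c=c'$, or orientation reversing, hence a holomorphic isomorphism $X\cong\bar{X'}$ with $c=1-c'$; thus $\sim$ is generated precisely by the move $(X,c)\mapsto(\bar X,1-c)$. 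Hence $\Phi$ descends to a bijection $\bigl(\bigoplus^{2^n}\nb_{g,h,n}\bigr)/{\sim}\to\kb_{g,0,h,n}$, and this is an isomorphism of (orbifold) spaces because $\Phi$ is built from the very Fenchel--Nielsen coordinates and strong deformations used to topologise $\nb_{g,h,n}$ and $\kb_{g,0,h,n}$ in the previous section, so it is a local homeomorphism whose sheets are permuted exactly by this conjugation-and-flip. For $n>0$ the move is free, since $c$ and $1-c$ differ at the first marked point, and taking the representative with $c(1)=0$ identifies the quotient with the sub-collection of colourings in which $p_1$ is coloured $0$, that is, with $\bigoplus^{2^{n-1}}\nb_{g,h,n}$, proving the last assertion.

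The only genuinely delicate point is the $n=0$ bookkeeping: there the involution $X\mapsto\bar X$ may have fixed points — Riemann surfaces carrying an antiholomorphic automorphism — so it is not a free $2$-to-$1$ quotient and the clean count degenerates; one must therefore phrase surjectivity and injectivity in terms of isomorphism classes exactly as in the discussion above rather than as a free quotient. Since the lemma claims only the first isomorphism in that case and the arguments of the two preceding paragraphs already give it verbatim, this causes no real difficulty, but it is the step where I would be most careful to avoid overclaiming.
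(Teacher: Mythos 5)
Your proposal is correct and follows essentially the same route as the paper: the map via colourings of marked points (equivalently the choice of component of the orienting double), the observation that two coloured Riemann surfaces have the same image exactly when they differ by an antiholomorphic isomorphism reversing all colourings, and injectivity for $n>0$ after normalising the colour of the first marked point. Your extra care about the $n=0$ case (surfaces admitting antiholomorphic automorphisms) matches the caveat the paper itself records in the discussion preceding the lemma, so nothing is missing.
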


Let $D_{g,u,h,n}\subset\kb_{g,u,h,n}$ be the subspace consisting of those Klein surfaces whose irreducible components are all discs. Let $D_{g,h,n}\subset\nb_{g,h,n}$ be the corresponding subspace of bordered Riemann surfaces. Let $\dr_{\tilde{g},n}\subset\mb_{\tilde{g},n}$ be the subspace consisting of those admissible Riemann surfaces whose irreducible components are all spheres. Note that when we consider $\mb_{\tilde{g},n}$ as a moduli space of na\"ive nodal Klein surfaces then $\dr_{\tilde{g},n}$ is the subspace consisting of those Klein surfaces whose irreducible components are all discs.

\section{The open KTCFT operad and related operads}

We recall (see \cite{costello1}) that the spaces $\nb_{g,h,n}$ form a modular operad $\nb$ controlling open topological conformal field theory (TCFT). The spaces $D_{g,h,n}$ form a suboperad. Further it was shown by Costello \cite{costello1,costello2} that these spaces are compact orbispaces and admit a decomposition into orbi-cells and if $n>0$ then $D_{g,h,n}$ is an ordinary space instead of an orbispace so we obtain a cell decomposition. Further these orbi-cells are labelled by reduced ribbon graphs.

The collection of spaces $\kb_{g,u,h,n}$ form a topological modular operad $\kb$ by gluing Klein surfaces with oriented marked points such that the orientations are compatible. We can describe the gluing explicitly via the orienting double. Given dianalytic symmetric Riemann surfaces $(X,\sigma,\mathbf{p})$ and $(X',\sigma',\mathbf{p'})$ with $n$ and $m$ marked points respectively we can define an operation gluing along marked points $p_i$ and $q_j$ as follows: we glue the underlying Riemann surfaces at these points and we also glue the points $\sigma(p_i)$ and $\sigma'(q_i)$. Clearly we can use $\sigma$ and $\sigma'$ to define an antiholomorphic involution on the resulting surface which will clearly be dianalytic and will have $n+m-2$ marked points. We also note that the topological type of the resulting surface is the sum of the topological types of $X$ and $X'$. Similarly we can define contractions/self gluings of dianalytic symmetric Riemann surfaces in this way, in which case the resulting topological type either increases the number of boundary components or the number of crosscaps by $1$.

Therefore the space $\kb((\tilde{g},n))$ is the disjoint union of the spaces $\kb_{g,u,h,n}$ with $\tilde{g}=2g+h+u-1$. The group $S_n$ acts by reordering the $n$--tuple of marked points. Composition and contraction is given by gluing the marked point as described above. This gives us the modular operad controlling open Klein topological conformal\footnote{The use of the word `conformal' here is potentially confusing since dianalytic maps correspond to maps preserving angles but not necessarily oriented angles. A conformal map in the presence of the word `Klein' should therefore be understood in this sense.} field theory (KTCFT). The spaces $D_{g,u,h,n}$ form a suboperad which we denote $D$.

We will think of these two operads as extended modular operads by setting $D((0,2))=\kb((0,2))$ to be the discrete group $\mathbb{Z}_2$ which acts on Klein surfaces by switching the orientation of marked points.

Similarly by gluing admissible symmetric Riemann surfaces without boundary at marked points in the natural way the spaces $\mb_{\tilde{g},n}$ form a modular operad $\mb$ and the spaces $\dr_{\tilde{g},n}$ form a suboperad which we denote $\dr$.

\begin{remark}
The gluings of the operad $\mb$ when thought of as gluings of admissible symmetric Riemann surfaces are `closed string' gluings. In this way the operad $\mb$ is closer in spirit to the Deligne--Mumford operad (see, for example, Getzler and Kapranov \cite{getzlerkapranov}).
\end{remark}

\begin{proposition}\label{prop:celldecomp}
The spaces $D_{g,u,h,n}$ admit a decomposition into orbi-cells labelled by reduced M\"obius graphs.
\end{proposition}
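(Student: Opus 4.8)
The plan is to follow the strategy Costello uses in \cite{costello1,costello2} for the spaces $\nb_{g,h,n}$, adapting it to Klein surfaces via the orienting double. The starting point is the observation that a Klein surface $K$ in $\dnklein$ all of whose irreducible components are discs is determined by combinatorial data: each disc has some marked points and some nodes on its boundary, arranged in a cyclic order (coming from the boundary orientation of the disc), and the nodes glue the discs together. A node connecting two discs carries no extra data, but the way the boundary orientations of the two glued discs interact is exactly the ``colour'' datum: the orienting double $K_\mathbf{O}$ of $K$ either separates the two discs into two sheets (orientation-compatible gluing, colour $0$) or wraps a single annular piece across the node (orientation-reversing gluing, colour $1$). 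Spelling this out, the dual graph of $K$ — a vertex for each disc, an edge for each node, a leg for each marked point — is precisely a reduced M\"obius graph: the cyclic ordering at each vertex comes from the boundary of the disc, the two-colouring of half-edges records the local orientations, and the reflection relation at a vertex corresponds exactly to reversing the chosen orientation of that disc (which, by \autoref{prop:odoubleequiv2}, is invisible at the level of the isomorphism class). The condition that each vertex have valence $\geq 3$ is forced by stability of the disc components.

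Next I would set up the orbi-cell itself. Fix a reduced M\"obius graph $G$ of topological type $(g,u,h,n)$. The surfaces in $D_{g,u,h,n}$ with dual graph a refinement of $G$ (i.e.\ obtained by smoothing some nodes of a surface with dual graph $G$, equivalently by contracting some edges of graphs mapping onto $G$ under $\approx$) should form an open orbi-cell. Following Costello, one produces this cell from hyperbolic geometry: using the hyperbolic metric on $K$ (available since, as noted in the excerpt, the relevant complex doubles are hyperbolic once we exclude the small unstable cases) and the associated spine/ribbon-graph construction, the moduli of such a configuration is coordinatised by the lengths of the edges of $G$, living in an open simplex $\mathbb{R}^{\edges(G)}_{>0}$ modulo the automorphism group $\operatorname{Aut}(G)$ of the M\"obius graph. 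The key point is that Costello's arc-system/fatgraph construction is \emph{local} on the surface and is compatible with passing to the orienting double: applying it to $K_\mathbf{O}$ (a bordered Riemann surface without interior nodes, exactly the setting of \cite{costello1}) produces a ribbon graph with a free $\mathbb{Z}_2$-action whose quotient is the M\"obius graph $G$. So the cell decomposition of $D_{g,u,h,n}$ is the $\mathbb{Z}_2$-quotient of (a piece of) Costello's cell decomposition of the appropriate $\nb$-spaces, which is consistent with \autoref{lem:riemklein}.

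The steps, in order, would be: (1) show that a surface in $D_{g,u,h,n}$ has a well-defined dual M\"obius graph, and that two such surfaces are isomorphic iff their M\"obius graphs are isomorphic (here the reflection relation is exactly the ambiguity in \autoref{prop:odoubleequiv2}); (2) invoke Costello's construction applied to orienting doubles to produce, for each reduced M\"obius graph $G$, an orbi-cell $\cong \mathbb{R}^{\edges(G)}_{>0}/\operatorname{Aut}(G)$ mapping into $D_{g,u,h,n}$; (3) check these cells are disjoint, cover $D_{g,u,h,n}$, and that the closure of the cell for $G$ is the union of cells for graphs $G'$ with $G' \approx G'/e \cong \cdots$ obtainable by edge contraction — i.e.\ the attaching maps are given by the edge-contraction/degeneration combinatorics, exactly as for ribbon graphs; (4) note that the cells are genuine cells (not just orbi-cells) when $n>0$ via the rigidification coming from labelled marked points, paralleling the $\nb$ case. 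I expect step (3) — verifying that the cell closures and attaching maps match the combinatorics of $\approx$ on M\"obius graphs, uniformly across the degenerations — to be the main obstacle, though it should reduce fairly mechanically to Costello's analysis of how arc systems degenerate as hyperbolic lengths go to zero, now carried out $\mathbb{Z}_2$-equivariantly on the orienting double. The small unstable exceptional topological types (discs with $\leq 2$ marked points, the annulus, the M\"obius strip) must be excluded throughout, just as in \autoref{lem:odouble} and the surrounding discussion.
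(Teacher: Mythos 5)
Your step (1) --- assigning to each surface a dual M\"obius graph, reading the colouring off the orienting double and absorbing the choice of lift into the reflection relation --- is essentially the paper's argument. The gap is in steps (2)--(3): you have misidentified what the cells of $D_{g,u,h,n}$ are. Points of $D_{g,u,h,n}$ are nodal surfaces \emph{all} of whose irreducible components are discs; smoothing a node leaves $D_{g,u,h,n}$ (it lands in $\kb_{g,u,h,n}$), so the stratum attached to a graph $G$ cannot consist of ``surfaces with dual graph a refinement of $G$ obtained by smoothing nodes'', and it is not coordinatised by hyperbolic edge lengths in $\mathbb{R}^{\edges(G)}_{>0}$. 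A nodal surface whose components are discs carries no hyperbolic-length moduli at all: its only moduli are the positions of the marked points and nodes on the boundary circles of the discs. The cell labelled by $G$ is the locus of surfaces whose dual graph is \emph{exactly} $G$, and it is $X(G)/\operatorname{Aut}(G)$ with $X(G)=\prod_v X(v)$, where $X(v)$ is a cell in the space of configurations of $n(v)\geq 3$ points on $S^1$ modulo $PSL_2(\mathbb{R})$, further quotiented by the anti-analytic reflection (which permutes the cells indexed by coloured cyclic orders freely --- this is precisely where M\"obius corollas enter). Its dimension is $\sum_v(n(v)-3)$, not $\lvert\edges(G)\rvert$; already for a corolla the two disagree. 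The incidence structure is correspondingly the opposite of what you describe: the boundary of the cell for $G$ lies in cells labelled by \emph{expansions} of $G$ (two boundary points colliding bubbles off a disc, cf.\ \autoref{rem:cellattach}), not in cells obtained by contracting edges of $G$.

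Relatedly, Costello's hyperbolic/arc-system machinery is not the right tool for this proposition and is not used in the paper's proof; in both the paper and \cite{costello2} it enters only in the separate statement that the inclusion $D\hookrightarrow\kb$ is a homotopy equivalence (\autoref{prop:main1}), where one flows the boundary inwards along the hyperbolic metric. The present proposition is the elementary genus-zero statement about moduli of marked Klein discs: the unit disc has automorphism group $PSL_2(\mathbb{R})$, the space of $n\geq 3$ marked boundary points decomposes into cells indexed by (coloured) cyclic orders, the orientation-reversing map acts freely on these cells so the quotient is again a cell complex labelled by M\"obius corollas, and taking products over the vertices of $G$ and quotienting by $\operatorname{Aut}(G)$ gives the orbi-cell. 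Replacing your steps (2)--(3) by this configuration-space argument (keeping your step (1) and the observation about triviality of automorphisms when $n>0$) recovers the paper's proof.
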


\begin{proof}
This is not hard. It is intuitively obvious how we can label a surface by a M\"obius graph but in order to specify the colouring of the edges we need to understand the dianalytic structure about the nodes. It is easiest (although unenlightening) to do this via the orienting double. Let $\mob\Gamma_{g,u,h,n}$ denote the set of reduced M\"obius graphs of topological type $(g,u,h)$ with $n$ legs. Let $(K,\mathbf{p},\mathbf{q})\in D_{g,u,h,n}$. We associate a graph $\gamma(K)\in\mob\Gamma_{g,u,h,n}$ to $K$ as follows: There is one vertex for each irreducible component of $K$, an edge for each node and a leg for each marked point. This yields a graph. We need to specify a ribbon structure and a colouring of the half edges and verify we can do this in a well defined manner. We consider the orienting double $(K_\mathbf{O},f,\sigma)$ and for each irreducible component $A$ of $K$ we choose an irreducible component $\hat{A}$ of $f^{-1}(A)\subset K_\mathbf{O}$. Since $\hat{A}$ is an oriented disc this gives a natural cyclic order on the half edges of $\gamma(K)$. We colour the leg corresponding to $p_i$ by $0$ if $q_i\in\hat{A}$ and by $1$ otherwise. A node $x$ of $K$ lies on the boundary of either $1$ or $2$ irreducible components. If it lies on only $1$ component, $B$ say, then we colour the edge associated to it by $0$ if a preimage of $x$ in $K_\mathbf{O}$ lies only on $\hat{B}$, else we colour the half edges by different colours (by \autoref{rem:reducedmobgraph} it does not matter how we do this). If $x$ lies on the boundary of both $B$ and $C$, then we colour the edge associated to it by $0$ if $\hat{B}$ and $\hat{C}$ intersect at a preimage of $x$ and by different colours otherwise.

This yields a reduced M\"obius graph. We must show it is well defined since we made a choice of irreducible components in $K_\mathbf{O}$. Given an irreducible component $A$ of $K$, if we had chosen the other preimage of $A$ then the cyclic ordering at the corresponding vertex would be reversed and the colouring also reversed. Thus the resulting M\"obius graphs would be isomorphic.

We now show that for any graph $G\in\mob\Gamma_{g,u,h,n}$ the space of surfaces $K\in D_{g,u,h,n}$ with $\gamma(K)=G$ is an orbi-cell. This follows from the topology of the moduli space of discs: let $D$ be a disc with an analytic structure. Then $D$ is holomorphic to the unit disc in the complex plane which has automorphism group $PSL_2(\mathbb{R})$ and so the space of $n\geq 3$ marked points on the unit disc is the configuration space of marked points on $S^1$ modulo $PSL_2(\mathbb{R})$. Further, automorphisms of the unit disc preserve the cyclic ordering of marked points and so this space decomposes into cells labelled by ribbon corollas. As noted in \autoref{lem:riemklein} the moduli space of marked Klein discs can be identified with the moduli space of coloured marked unit discs modulo the action of the anti-analytic map reversing the cyclic ordering of the marked points. The space of coloured marked unit discs decomposes into cells and the action reversing the cyclic ordering freely maps cells to cells and so we have a cell decomposition of the moduli space of marked Klein discs. Clearly each cell is labelled by a different M\"obius corolla. Therefore to each vertex $v$ of a M\"obius graph we associate a cell $X(v)$. Then we can let $X(G)=\prod_v X(v)$. We can identify the orbispace of surfaces with $\gamma(K)=G$ as $X(G)/\operatorname{Aut}(G)$, which is an orbi-cell.
\end{proof}

\begin{remark}\label{rem:cellattach}
The orbi-cell labelled by a graph $G$ is attached to the the cells labelled by the graphs given by all ways of expanding the vertices of $G$ of valence greater than $3$, since two marked points meeting corresponds to bubbling off a disc. This should of course remind us of the differential of the operad $\moddmass$.
\end{remark}

\begin{lemma}
A stable Klein surface with $n>0$ oriented marked points has no non-trivial automorphisms.
\end{lemma}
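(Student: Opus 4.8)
The statement is that a stable Klein surface with $n>0$ oriented marked points has no non-trivial automorphisms. The natural strategy is to reduce everything to the orienting double, where we can invoke the corresponding facts about (symmetric) Riemann surfaces with marked points on the boundary. Recall from \autoref{prop:odoubleequiv2} that the category $\dnklein$ of such Klein surfaces is equivalent to $\dnsymriem$ via $K\mapsto (K_\mathbf{O},\sigma,\mathbf{p'},\mathbf{0})$, so an automorphism of $(K,\mathbf{p},\mathbf{q})$ is the same thing as an automorphism $\phi$ of the Riemann surface $K_\mathbf{O}$ which commutes with the involution $\sigma$ and fixes each of the $2n$ marked points $q_1,\dots,q_n,\sigma(q_1),\dots,\sigma(q_n)$ on the boundary of $K_\mathbf{O}$.

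First I would reduce to a single irreducible component. An automorphism of a nodal surface permutes the irreducible components and the nodes, and hence induces an automorphism of the dual (M\"obius) graph; after passing to the normalisation it suffices to treat each connected component of the normalisation separately, where each node contributes an extra boundary marked point and a component carrying a marked point of the original surface carries at least one such point (here is where stability and $n>0$ enter: the graph is connected, has at least one leg, and every vertex is stable, so at least one path of components reaches the leg; but in fact one must argue that \emph{every} component, after normalisation, is stable and carries at least one marked point — this follows because a component with no marked or nodal point at all would be a closed surface, contradicting connectedness of the graph unless the surface is irreducible, and an irreducible component with no marked points contradicts $n>0$). So it is enough to prove the claim for a \emph{smooth} stable Klein surface with at least one oriented marked point, equivalently a smooth bordered Riemann surface (the orienting double, or one connected component of it) with marked points on the boundary including the distinguished ones coming from the $q_i$ and from nodes.

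The core analytic input is then: a hyperbolic bordered Riemann surface with a marked point on its boundary has no non-trivial automorphism fixing that marked point. I would deduce this from the non-nodal, smooth hyperbolic case as follows. A dianalytic automorphism corresponds (see the subsection on hyperbolic geometry) to an isometry of the unique hyperbolic metric with geodesic boundary; passing to the complex double $X_\mathbf{C}$ it lifts to an isometry of the closed hyperbolic surface $X_\mathbf{C}$ commuting with the anti-holomorphic involution, and such an isometry fixing a point $p$ on the (geodesic) boundary together with the tangent data there must be the identity — an orientation-preserving isometry of a hyperbolic surface fixing a point and a tangent direction is trivial, and fixing a boundary point pins down enough of the tangent data because the boundary geodesic is preserved. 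For the low-genus exceptional topological types where the complex double is not hyperbolic — the disc, the annulus, the M\"obius strip (from \autoref{lem:odouble} and the list in the hyperbolic subsection) — I would instead argue directly using the explicit automorphism groups: the disc has automorphism group $PSL_2(\mathbb{R})$, which acts on the boundary circle, and stability forces $n\geq 2$ marked points on the boundary of a disc component (here one must be careful: a disc with a single marked point is unstable, so in the normalisation any disc component has at least two special points), and an element of $PSL_2(\mathbb{R})$ fixing two distinct points of $S^1$ is either trivial or a hyperbolic element — but then it cannot fix the orientation data / a third point, and more to the point the \emph{oriented} marked point rigidifies it; the annulus and M\"obius strip with $n=0$ are unstable so do not occur as components carrying no special points, and with a special point their automorphism groups ($S^1$-type) are killed by fixing a point.

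**The main obstacle.** The genuine bookkeeping difficulty is the reduction to components and the careful use of stability: one must check that after normalisation every component is stable and carries at least one special point, and that no automorphism can act non-trivially on the graph (permuting components or nodes or legs) while fixing all $n$ oriented marked points — the leg-fixing kills any graph automorphism moving a vertex adjacent to a leg, and then stability plus connectedness propagates triviality to the whole graph. The analytic heart — a hyperbolic surface automorphism fixing a boundary marked point is trivial — is standard and I would state it crisply, citing the hyperbolic-geometry discussion above and Costello's treatment in \cite{costello1,costello2} for the Riemann-surface analogue, of which this is the obvious unoriented counterpart via the orienting double.
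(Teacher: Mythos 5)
Your opening reduction is exactly the paper's proof: pass to the orienting double via \autoref{prop:odoubleequiv2} (restricting to a connected component if the double is disconnected), and observe that it suffices to know that a stable bordered Riemann surface with marked points on the boundary has no non-trivial automorphisms --- a fact the paper simply cites as \cite[Lemma 3.0.11]{costello2}. Had you stopped there, your argument would be essentially identical to the paper's two-line proof.

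Where you go further and try to reprove the bordered Riemann surface statement, two steps would fail as written. First, the propagation step: ``stability plus connectedness propagates triviality to the whole graph'' is false at the combinatorial level, since an automorphism of a stable graph fixing all legs need not be trivial (take a genus-zero vertex carrying the single leg and two edges to a pair of isomorphic stable vertices; the swap fixes the leg). What actually propagates is analytic rigidity, component by component: the automorphism is shown to be the identity on the component containing a marked point, hence it fixes the nodes of that component, hence each adjacent component acquires a fixed boundary special point, and one inducts along the (connected) graph. The key input at each stage is that an analytic automorphism preserves the cyclic order of the special points on each boundary circle, so fixing one of them forces it to fix all of them on that component; only then do you have enough fixed points to conclude. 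Second, the disc case is off: stability requires at least \emph{three} special points on a disc component (a disc with two marked points is still unstable), and the orientations of the marked points cannot be invoked again at this stage --- they have already been spent in passing to $K_\mathbf{O}$, where all automorphisms are analytic. A hyperbolic element of $PSL_2(\mathbb{R})$ genuinely fixes two boundary points, so two fixed points do not suffice; it is the third special point, fixed via the cyclic-order argument above, that kills it. With these repairs your expansion recovers Costello's lemma, but the paper sidesteps all of this by citing it directly.
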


\begin{proof}
We must show that the orienting double has no non-trivial automorphisms. If the orienting double is disconnected then an automorphism would necessarily restrict to an automorphism of one of the connected components. The result is true for stable bordered Riemann surfaces \cite[Lemma 3.0.11]{costello2} so the orienting double has no non-trivial automorphisms.
\end{proof}

\begin{corollary}
If $n>0$ then $D_{g,u,h,n}$ is an ordinary space and decomposes into a cell complex.
\qed
\end{corollary}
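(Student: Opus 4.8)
The plan is to deduce this from the preceding lemma together with \autoref{prop:celldecomp}, mirroring the way the corresponding fact for $\nb$ follows from Costello's rigidity result \cite{costello2}. First I would observe that, since a stable Klein surface with $n>0$ oriented marked points has no non-trivial automorphisms, the moduli problem defining $\kb_{g,u,h,n}$ is rigid, so $\kb_{g,u,h,n}$ and its subspace $D_{g,u,h,n}$ carry no orbifold isotropy and are genuine topological spaces. This is the exact Klein analogue of the remark made earlier that, for $\nb$, having $n>0$ forces $D_{g,h,n}$ to be an ordinary space rather than an orbispace.

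Next I would upgrade the orbi-cell decomposition of \autoref{prop:celldecomp} to an honest cell decomposition. Recall from its proof that the stratum of surfaces $K\in D_{g,u,h,n}$ with $\gamma(K)=G$ is $X(G)/\operatorname{Aut}(G)$, where $X(G)=\prod_v X(v)$ is a product of cells — hence itself a cell — and $\operatorname{Aut}(G)$ is the finite automorphism group of the reduced M\"obius graph $G$. The key step is to show that $\operatorname{Aut}(G)$ is trivial whenever $G$ has $n>0$ legs: if some $\varphi\in\operatorname{Aut}(G)$ had prime order, then the finite-order homeomorphism it induces on the acyclic finite-dimensional space $X(G)$ would have a nonempty fixed-point set by Smith theory, and a fixed point of $\varphi$ is precisely a Klein surface $K$ with $\gamma(K)=G$ equipped with a non-trivial dianalytic automorphism inducing $\varphi$, contradicting the preceding lemma. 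Hence $\operatorname{Aut}(G)=1$, each orbi-cell $X(G)/\operatorname{Aut}(G)$ equals the cell $X(G)$, and the decomposition of \autoref{prop:celldecomp} becomes an honest CW decomposition of the ordinary space $D_{g,u,h,n}$, with attaching maps described as in \autoref{rem:cellattach}.

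I do not expect a serious obstacle here; this corollary is essentially a formal consequence of what precedes it. The one point meriting care — and the closest thing to a main difficulty — is the identification, implicit in the construction of $\gamma$, of a fixed point of $\varphi$ acting on $X(G)$ with a surface carrying a genuine automorphism inducing $\varphi$ on $G$. This is routine once one unwinds that the ribbon-and-colouring data used to define the cells $X(v)$ (and hence $X(G)$) is recovered from the surface together with its oriented marked points, up to isomorphism of M\"obius graphs, so that a self-homeomorphism of the labelled surface is the same as an element of $\operatorname{Aut}(G)$ stabilising the corresponding point of $X(G)$.
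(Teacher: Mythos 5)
Your proposal is correct and follows essentially the route the paper intends: the corollary is an immediate consequence of the preceding lemma (no non-trivial automorphisms when $n>0$) applied both to the orbifold charts and to the orbi-cells $X(G)/\operatorname{Aut}(G)$ of \autoref{prop:celldecomp}. The only addition is your explicit Smith-theory argument that $\operatorname{Aut}(G)$ itself must be trivial (rather than merely acting freely), a point the paper leaves implicit, and your use of the rigidity of marked discs to turn a fixed point into a genuine surface automorphism is exactly the detail needed to make that step precise.
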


\begin{proposition}\label{prop:dianalyticgraphs}
The spaces $\dr_{\tilde{g},n}$ admit a decomposition into orbi-cells labelled by a certain type of reduced graph (which we call a dianalytic ribbon graph).
\end{proposition}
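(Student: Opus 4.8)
The plan is to follow the proof of \autoref{prop:celldecomp} almost line for line, with the \emph{complex} double playing the role that the orienting double played there. Recall from \autoref{prop:cdoubleequiv2} that $\mb_{\tilde g,n}$ is equally the moduli space of stable na\"ive nodal Klein surfaces in $\nklein$, and that under this identification $\dr_{\tilde g,n}$ is the locus of surfaces whose irreducible components are all discs. The first step is to pin down the combinatorial gadget: a \emph{dianalytic ribbon graph} of genus $\tilde g$ with $n$ legs is a genus $\tilde g$ graph with labelled legs, all vertices of valence $\geq 3$, equipped at each vertex $v$ with a cyclic ordering of $\flags(v)$ \emph{only up to reversal} (a dihedral structure at each vertex), taken up to the equivalence $\approx$ generated by edge contraction, exactly as for reduced M\"obius graphs. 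The point to be justified is that, compared with a reduced M\"obius graph, the colouring of the legs disappears because the marked points of a surface in $\mb$ are not oriented, and the colouring of the internal half-edges disappears because a na\"ive node carries no dianalytic structure and hence no twisting datum (this is precisely the phenomenon discussed around \autoref{fig:strangulatedmobius}); all that survives of the M\"obius structure is that a reflection at a vertex, which reverses the cyclic order, is not detected.

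Next I would construct, for $(K,\mathbf p)\in\dr_{\tilde g,n}$ viewed as a na\"ive nodal Klein surface with disc components, its dianalytic ribbon graph $\gamma(K)$: one vertex per irreducible component, one edge per node, one leg per marked point. To produce the dihedral structure at a vertex $v$ corresponding to a disc $A$, I pass to the complex double $(K_\mathbf{C},f,\sigma)$ of \autoref{prop:cdoubleequiv2}; the component of $K_\mathbf{C}$ over $A$ is a single sphere, i.e.\ a copy of $\mathbb{P}^1$ with the standard real structure whose fixed circle is $\partial A$, and the nodes and marked points lying on $\partial A$ become distinct real points of this $\mathbb{P}^1$. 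Their cyclic order along the fixed circle gives a cyclic order on $\flags(v)$, canonical up to the action of the non-identity component of $\operatorname{Aut}(\mathbb{P}^1,\sigma)=PGL_2(\mathbb R)$, which reverses the fixed circle; hence $\gamma(K)$ is well defined. Unlike in \autoref{prop:celldecomp}, no choice of a lift of $A$ is needed because the complex double of a disc is connected. One then checks, just as before, that the na\"ive nodes contribute no further decoration: there is no canonical way to orient a na\"ive boundary node, so no half-edge colour arises.

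The third step is to show that for each dianalytic ribbon graph $G$ the locus $\{K\in\dr_{\tilde g,n}:\gamma(K)=G\}$ is an orbi-cell, mirroring the final paragraph of the proof of \autoref{prop:celldecomp}. The moduli space of a single disc with $n\geq 3$ unoriented marked points on its boundary is the configuration space of $n$ points on $S^1$ modulo $PSL_2(\mathbb R)$, further quotiented by the reversal action (this is \autoref{lem:riemklein} with the colourings discarded), and it decomposes into cells indexed by dihedral corollas. Assigning to each vertex $v$ of $G$ the corresponding open cell $X(v)$, the locus in question is identified with $\left(\prod_v X(v)\right)/\operatorname{Aut}(G)$, where $\operatorname{Aut}(G)$ absorbs the graph automorphisms together with the vertex-reversal symmetries fixing $G$; this is an orbi-cell. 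As in \autoref{rem:cellattach}, this orbi-cell is attached along the orbi-cells of the graphs obtained by expanding vertices of valence $>3$ (two marked points colliding corresponds to a disc bubbling off), which matches the differential one expects on $\moddmass/(a=1)$.

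The part I expect to be the main obstacle is getting the definition of a dianalytic ribbon graph exactly right and justifying it, rather than any hard analysis: one must argue carefully that a na\"ive node genuinely erases the half-edge colouring of \autoref{prop:celldecomp} and that the only residual symmetry is reversal of the cyclic order at each vertex, coming from the extra component of $PGL_2(\mathbb R)$. A closely related point, which is what forces this to be an orbi-cell decomposition and not a cell decomposition even when $n>0$, is that — in contrast to $\kb$, where oriented marked points rigidify the surfaces — surfaces in $\dr_{\tilde g,n}$ can have nontrivial automorphisms (for example reflecting a disc component carrying a palindromic configuration of nodes and legs, or interchanging the two branches of a loop), and it is this feature that ultimately produces the higher-degree classes in the genus $0$ part of $H_\bullet(\mb)$. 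As usual the finitely many exceptional unstable values of $(\tilde g,n)$ should be checked by hand.
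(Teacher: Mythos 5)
Your proposal is correct and takes essentially the same route as the paper's proof: define dianalytic ribbon graphs as ribbon graphs up to isomorphisms that may reverse the cyclic order at each vertex, associate the graph by choosing an orientation of (equivalently, a component of the double over) each irreducible disc, and identify the locus of a given graph $G$ with $\left(\prod_v X(v)\right)/\operatorname{Aut}(G)$ using that the moduli of dianalytic marked discs is the moduli of oriented marked discs modulo the reversal action, which permutes cells freely since $n\geq 3$. The one caveat is your phrase ``taken up to the equivalence $\approx$ generated by edge contraction'': the orbi-cell labels are \emph{reduced} dianalytic ribbon graphs (the only identification being the one coming from contracting edges at valence-$2$ vertices, which is vacuous here), not classes under the full edge-contraction relation $\approx$, which would conflate orbi-cells of different dimension.
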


\begin{proof}
Each orbi-cell will be labelled by a (reduced) ribbon graph where two ribbon graphs are considered equivalent if there is an isomorphism of the underlying graphs that at each vertex either preserves or reverses the cyclic ordering (note that this definition can be thought of as M\"obius graphs without a colouring). We will call such graphs up to this equivalence \emph{dianalytic ribbon graphs}. Given a surface $K\in \dr_{\tilde{g},n}$ we first choose an orientation for each irreducible disc. We can then associate a ribbon graph to it where there is a vertex for each irreducible component of $K$, an edge for each node and a leg for each marked point. Again this is well defined since choosing a different orientation of an irreducible component reverses the cyclic ordering at the vertex associated to that component.

It remains to show that the space of surfaces corresponding to a given graph $G$ is an orbi-cell. This follows from the fact that the moduli space of stable dianalytic discs with $n$ marked points can be identified with the moduli space of marked oriented discs modulo the action of the map reversing the orientation. This action freely maps cells to cells (since there are at least $3$ marked points on a disc so reversing the orientation gives a different cell) so we get a cell decomposition of the moduli space of stable dianalytic discs with cells labelled by dianalytic ribbon corollas. We can associate to each vertex $v$ of $G$ a cell $X(v)$ and once again the orbi-cell of surfaces corresponding to the graph is $X(G)/\operatorname{Aut}(G)$ where $X(G)=\prod_vX(v)$.
\end{proof}

\begin{remark}\label{rem:notgrpquotient}
Note that although each orbi-cell of $\dr_{\tilde{g},n}$ is the quotient of orbi-cells in $\coprod D_{g,u,h,n}$ (where the disjoint union is as usual taken over surfaces such that $2g+u+h-1=\tilde{g}$) by the action of a finite group switching the colourings of half edges, this does not extend to a global action and so the space $\dr_{\tilde{g},n}$ is not obtained as a quotient of $\coprod D_{g,u,h,n}$ by some group action, unlike the space of smooth surfaces $\mo_{\tilde{g},n}$ which, as mentioned, is obtained as the quotient of $\coprod \ko_{g,u,h,n}$ by an action of $\mathbb{Z}_2^{\times n}$.
\end{remark}

We now come to our main result concerning the moduli space of Klein surface with oriented marked points. We have the following main result by Costello \cite{costello1,costello2}:

\begin{proposition}\label{prop:costello}
The inclusion $D_{g,h,n}\hookrightarrow\nb_{g,h,n}$ is a homotopy equivalence.
\end{proposition}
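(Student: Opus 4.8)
The plan is to follow Costello's strategy \cite{costello1,costello2} and exhibit an explicit $\operatorname{Aut}$--equivariant strong deformation retraction of $\nb_{g,h,n}$ onto $D_{g,h,n}$ built from the hyperbolic geometry of bordered surfaces. First I would equip each stable bordered Riemann surface with marked boundary points representing a point of $\nb_{g,h,n}$ with the unique complete hyperbolic metric on each of its irreducible components, with geodesic boundary and a cusp at every marked point and at every boundary node, exactly as in the discussion of hyperbolic structures above. This replaces the moduli space of complex curves by a moduli space of hyperbolic surfaces with geodesic arc systems, which is the setting in which a retraction can be written down by hand.

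The retraction itself goes as follows. To a hyperbolic bordered surface $X$ one attaches the canonical system of disjoint simple geodesic arcs running between cusps and boundary that is \emph{dual to the spine} (the cut locus of $\partial X$ together with the marked points); cutting $X$ along this system produces a disjoint union of ideal polygons, that is, of discs carrying finitely many cusps, and its combinatorial type is precisely a reduced ribbon graph of the given topological type. One then introduces length coordinates adapted to this arc system and defines a flow $t\mapsto X_{t}$, $t\in[0,1]$, scaling all of these arc lengths down to $0$ simultaneously, each pinching arc being replaced in the limit by a boundary node. At $t=1$ the surface has become a nodal gluing of discs, hence a point of $D_{g,h,n}$, while a surface already lying in $D_{g,h,n}$ (for which the relevant arcs are already nodes of zero length) is fixed for every $t$. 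It then remains to check that this assignment is continuous, satisfies the homotopy axioms, and commutes with relabelling: the arc system is canonical, so automorphisms of $X$ permute its finite combinatorial ambiguities and the flow is equivariant.

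The main obstacle, and the real content of \cite{costello1,costello2}, is that the combinatorial type of the canonical arc system jumps across the walls separating the orbi-cells of \autoref{prop:celldecomp}, and also along the already-nodal boundary of $\nb_{g,h,n}$, so one must verify that the flow is continuous and well defined \emph{globally} and that it is a genuine strong deformation retraction rather than merely a map inducing bijections on strata. This needs the hyperbolic length estimates controlling how the canonical arc system degenerates, and it is here that one appeals to Costello's analysis. Finally, for $n>0$ the orbispace subtleties evaporate, since such surfaces have no non-trivial automorphisms (\cite[Lemma 3.0.11]{costello2}), so one obtains an honest deformation retraction of spaces; for $n=0$ one carries the orbifold structure through the construction, the retraction being equivariant by design. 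In all cases the inclusion $D_{g,h,n}\hookrightarrow\nb_{g,h,n}$ is a homotopy equivalence.
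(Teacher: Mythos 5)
Your proposal does not in fact follow Costello's strategy, and the point where it diverges is exactly where it has a gap. The proof that the thesis relies on (and reproduces for the Klein analogue in \autoref{prop:main1}) runs as follows: equip a smooth surface with the hyperbolic metric with geodesic boundary and flow the boundary \emph{inwards} along the normal geodesic flow until nodes form; this gives a deformation retraction of $\nb_{g,h,0}$ onto its boundary $\partial\nb_{g,h,0}$ (cf.\ \autoref{lem:incl1} and \cite[Lemmas 3.0.8--3.0.9]{costello2}); marked points are then handled by the locally trivial fibration forgetting a marked point (cf.\ \autoref{lem:incl2}), and the equivalence $D_{g,h,n}\hookrightarrow\nb_{g,h,n}$ follows by an induction over the stratification \cite[Lemma 3.0.12]{costello2}. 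Your proposal replaces this by a single flow that contracts the arc system dual to the spine, in the style of the classical spine constructions of Penner \cite{penner} and Bowditch--Epstein, and then defers "the real content" --- global continuity of that flow across walls and at the nodal locus --- to "Costello's analysis". But Costello's analysis concerns a different flow entirely; it contains no estimates about how the spine-dual arc system degenerates, so the one step you identify as essential is neither proved nor actually available in the cited source.

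Beyond the misattribution, the flow itself is not defined. Saying you "scale all of these arc lengths down to $0$ simultaneously" does not determine a path in $\nb_{g,h,n}$: the hyperbolic lengths of the dual arcs are not by themselves coordinates, so you must specify what is held fixed (e.g.\ an arc system together with widths or $\lambda$-length-type data), prove the resulting family of surfaces is independent of the auxiliary choices, check continuity across the walls where the spine has vertices of valence greater than $3$ (precisely the lower-dimensional orbi-cells of \autoref{prop:celldecomp}), and extend the construction continuously to already-nodal surfaces with non-disc components, where it must be applied componentwise. You should also note that imposing cusps at the boundary marked points decorates the moduli problem, so compatibility with $\nb_{g,h,n}$ as defined needs checking. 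None of this is routine --- it is essentially the content of the arc-system/ribbon-graph decomposition itself --- so as written the proposal is a sketch of a plausible alternative strategy rather than a proof. If you want an argument that actually goes through with the tools at hand, use the inward boundary flow, the forgetful fibration, and the induction over strata, exactly as in \autoref{prop:main1}.
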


Our main result concerning the moduli space of Klein surfaces will follow from the Klein analogue of this:

\begin{proposition}\label{prop:main1}
The inclusion $D_{g,u,h,n}\hookrightarrow\kb_{g,u,h,n}$ is a homotopy equivalence.
\end{proposition}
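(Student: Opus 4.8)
The plan is to mimic Costello's argument for \autoref{prop:costello}, transported to the Klein setting via the orienting double. Recall that $\kb_{g,u,h,n}$ is homotopy equivalent to its interior $\ko_{g,u,h,n}$ (the moduli space of smooth Klein surfaces), and $\ko_{g,u,h,n}$ carries a unique hyperbolic metric with geodesic boundary on each surface (since the complex double is hyperbolic once $2g+h+u-2>0$; the finitely many exceptional low-complexity cases must be checked by hand). So the first step is to reduce the claim to exhibiting an explicit deformation retraction of $\ko_{g,u,h,n}$ onto the locus $D_{g,u,h,n}$ of surfaces all of whose components are discs. The tool for this is the decomposition of a hyperbolic surface with geodesic boundary into its thick and thin parts, and more precisely the \emph{spine}: for a hyperbolic surface with boundary, the cut locus of the boundary is a graph (the spine) onto which the surface deformation retracts, and the combinatorics of this spine is exactly a ribbon graph. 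In our unoriented situation the spine of a Klein surface $K$ is a M\"obius graph, as follows from applying the classical construction to the orienting double $K_{\mathbf{O}}$ (which is a bordered Riemann surface) equivariantly with respect to the involution $\sigma$, and then passing to the quotient; the colouring of the half-edges records exactly the dianalytic gluing data at the nodes, as in the proof of \autoref{prop:celldecomp}.

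The second step is to run Costello's flow. On the moduli space of bordered Riemann surfaces one has the decomposition into orbi-cells indexed by ribbon graphs, and Costello constructs a flow (using the hyperbolic metric, contracting along the spine) whose limit lands in the maximally degenerate stratum where every component is a disc. Everything in this construction is natural with respect to the involution $\sigma$ on $K_{\mathbf{O}}$: $\sigma$ is an isometry for the hyperbolic metric, hence permutes the cells of the spine decomposition and commutes with the contraction flow. Therefore the flow descends to the quotient, i.e. to $\ko_{g,u,h,n}\cong\dnklein$-moduli via \autoref{prop:odoubleequiv2}, and its limit lands in $D_{g,u,h,n}$ by \autoref{prop:celldecomp} (whose orbi-cells are indexed by reduced M\"obius graphs, with the disc stratum corresponding to corollas-with-loops, i.e. one-vertex graphs after contraction). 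One checks that this flow fixes $D_{g,u,h,n}$ pointwise and is a deformation retraction onto it; combined with the homotopy equivalence $D_{g,u,h,n}\hookrightarrow\kb_{g,u,h,n}$ induced by $\ko_{g,u,h,n}\hookrightarrow\kb_{g,u,h,n}$, this gives the claim. Alternatively, and perhaps cleaner, one can apply \autoref{lem:riemklein}: since $\kb_{g,0,h,n}$ is a quotient of copies of $\nb_{g,h,n}$ by the colouring equivalence $\sim$, and $D_{g,0,h,n}$ is the corresponding quotient of copies of $D_{g,h,n}$, the homotopy equivalence of \autoref{prop:costello} descends provided the equivalence $\sim$ acts compatibly on the cell structures --- but this only handles $u=0$, so the general $u$ case genuinely needs the double-cover argument above.

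The main obstacle I expect is not the flow itself --- that is essentially Costello's argument verbatim once one observes $\sigma$-equivariance --- but rather the careful bookkeeping at the boundary nodes: one must verify that the spine construction on $K_{\mathbf{O}}$ really does descend to a M\"obius-graph spine on $K$ with the \emph{correct} colouring data, i.e. that the half-edge colours produced by the geometric contraction agree (on equivalence classes under the relation $\approx$ and the reduced-graph relation of \autoref{rem:reducedmobgraph}) with the combinatorial colouring of \autoref{prop:celldecomp}. This is where the distinction between the two notions of node, emphasised in the informal discussion, could cause trouble: one must be sure the flow never tries to smooth a node ``the wrong way'' --- but since in $\dnklein$ the dianalytic structure at each boundary node determines a unique smoothing (this is precisely the point made about the Fenchel--Nielsen coordinate near a node lying in $\mathbb{R}^m_{\geq 0}$ rather than $\mathbb{R}^m$), the flow is forced to respect the dianalytic structure, and the colouring is transported consistently. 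The remaining routine points --- compactness/orbispace structure, that the retraction is continuous and cellular, and the handful of unstable exceptional $(g,u,h,n)$ --- are dealt with exactly as in \cite{costello1,costello2}.
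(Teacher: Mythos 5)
Your overall strategy (use the hyperbolic metric via the orienting double, import Costello's estimates equivariantly, reduce $u=0$ to \autoref{prop:costello} through \autoref{lem:riemklein}) is the right one, and the last of these is exactly how the paper handles $u=0$. But the central step is misdescribed, and the gap is not cosmetic. Costello's argument is \emph{not} a one-step flow "contracting along the spine" whose limit lies in the deepest stratum where every component is a disc. The inward normal geodesic flow from $\partial K$ only runs until the \emph{first} time the surface becomes nodal; its limit lies in $\partial\kb_{g,u,h,n}$, not in $D_{g,u,h,n}$, and in general the resulting nodal surface has irreducible components that are far from being discs. What the flow actually proves (and what the paper proves, via the double and \cite[Lemmas 3.0.8, 3.0.9]{costello2}) is that $\partial\kb_{g,u,h,0}\hookrightarrow\kb_{g,u,h,0}$ is a homotopy equivalence. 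Getting from there to $D\hookrightarrow\kb$ requires the rest of the machine you have omitted: the forgetful map $\kb_{g,u,h,n+1}\rightarrow\kb_{g,u,h,n}$ being a locally trivial fibration (so the boundary inclusion is an equivalence for all $n$, not just $n=0$), and then an induction over strata by number of irreducible components as in \cite[Lemma 3.0.12]{costello2}, applying the boundary retraction to each component separately. Your proposal replaces all of this with the assertion that the flow "fixes $D_{g,u,h,n}$ pointwise and is a deformation retraction onto it", which is false for the flow as constructed; note also that $D_{g,u,h,n}$ is not even contained in $\ko_{g,u,h,n}$ (its points are nodal), so "a deformation retraction of $\ko_{g,u,h,n}$ onto $D_{g,u,h,n}$" is not meaningful as stated.

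The spine/cut-locus picture you invoke is the classical Penner--Bowditch--Epstein route to the ribbon graph decomposition, not Costello's, and leaning on it here is close to circular in the context of this paper: the identification of the cut locus of a (Klein) hyperbolic surface with a point of the maximally degenerate stratum, compatibly with the topology of $\kb$ and with the colouring data of \autoref{prop:celldecomp}, is essentially the M\"obius graph decomposition itself, which is an \emph{output} of \autoref{prop:main1} rather than an available input. If you want to pursue the spine route you would have to carry out the full unoriented analogue of the classical construction (equivariant spines on $K_{\mathbf{O}}$, continuity of the cut locus in families, compatibility at the nodal boundary), which is a substantial independent argument, not the "essentially verbatim" transfer you describe. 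Otherwise, restore the missing structure: (i) boundary retraction for $n=0$ via the inward flow on the double; (ii) fibration argument to add marked points; (iii) induction on strata.
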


\begin{proof}
By \autoref{lem:riemklein} with \autoref{prop:costello} this is clear if $u=0$. We therefore will restrict our attention to $u\neq 0$. Fortunately for us the proof of \autoref{prop:costello} carries over easily to a proof of this. As in that case, to prove \autoref{prop:main1} we first show the following:

\begin{lemma}\label{lem:incl1}
The inclusion $\partial\kb_{g,u,h,0}\hookrightarrow\kb_{g,u,h,0}$ is a homotopy equivalence of orbispaces.
\end{lemma}

\begin{proof}
The key idea of the proof is to construct a deformation retract of $\kb_{g,u,h,0}$ onto its boundary $\partial\kb_{g,u,h,0}$. This is done by using the hyperbolic metric on a Klein surface $K$ to flow the boundary $\partial K$ inwards until $K$ becomes singular. Some of the work involved can be avoided by passing to the orienting double and using the facts for Riemann surfaces proved in \cite{costello2}.

Let $K\in\ko_{g,u,h,0}$ be a Klein surface. Since we are assuming $u\geq 1$ and $(g,u,h)\neq(0,1,1)$ (since $\kb_{0,1,1,0}$ is empty) then the complex double of $K$ is a hyperbolic surface and so there is a unique hyperbolic metric on $K$ such that the boundary is geodesic. By taking the unit inward pointing normal vector field on $\partial K$ and using the geodesic flow on $K$ we can flow $\partial K$ inwards. Let $K_t$ be the surface with boundary obtained by flowing in $\partial K$ a distance $t$. Note that this process lifts to the orienting double $K_\mathbf{O}$ which is connected and corresponds to the process obtained using the hyperbolic metric on $K_\mathbf{O}$. In \cite[Lemma 3.0.8]{costello2} it is shown that this process applied to a Riemann surface eventually yields a singular surface and further that all the singularities are nodes. So by considering the orienting double we see the same is true for the Klein surface $K$. More precisely, for some $T$ we have $K_T\in\partial\kb_{g,u,h,0}$.

Let $S\in\mathbb{R}_{\geq 0}$ be the smallest number such that $K_S\in\partial\kb_{g,u,h,0}$ so that $K_t$ is in the interior $\ko_{g,u,h,0}$ for all $t<S$. We have a map $\Phi\co \ko_{g,u,h,0}\times[0,1]\rightarrow\kb_{g,u,h,0}$ defined by $\Phi(K,x)=K_{Sx}$. We extend this to a map $\Phi'\co \kb_{g,u,h,0}\times[0,1]\rightarrow\kb_{g,u,h,0}$ by setting $\Phi'(K',t)=K'$ for $K'\in\partial\kb_{g,u,h,0}$. To see this extends $\Phi$ continuously we take a sequence $K_i$ of surfaces converging to $K\in\partial\kb_{g,u,h,0}$ and let $x_i$ be any sequence. We must show $\Phi'(K_i,x_i)\rightarrow K$. Observing that, after forgetting the symmetry, $(K_i)_\mathbf{O}\rightarrow K_\mathbf{O}$ and comparing to the proof of \cite[Lemma 3.0.9]{costello2} this is clear. Then $\Phi'$ is a deformation retract of the inclusion as required.
\end{proof}

\begin{lemma}\label{lem:incl2}
The inclusion $\partial\kb_{g,u,h,n}\hookrightarrow\kb_{g,u,h,n}$ is a homotopy equivalence.
\end{lemma}

\begin{proof}
Since the moduli space of a M\"obius strip with an oriented marked point is the same as that of an annulus with a marked point then if $(g,u,h,n)=(0,1,1,1)$ \autoref{lem:incl2} can be seen directly.

There is a map $\kb_{g,u,h,n+1}\rightarrow\kb_{g,u,h,n}$ forgetting the last marked point and contracting any resulting unstable components which is a locally trivial fibration (in the orbispace sense). This follows by considering for some $K\in\kb_{g,u,h,n}$ the space of ways of adding an oriented marked point to $\partial K$. This is the same as the space of ways of adding a single marked point to $\partial K_\mathbf{O}$. Then by the same argument as \cite[Lemma 3.0.5]{costello2} it is clear that this map is a locally trivial fibration. Therefore if $\partial\kb_{g,u,h,n}\hookrightarrow\kb_{g,u,h,n}$ is a homotopy equivalence then so is $\partial\kb_{g,u,h,n+1}\hookrightarrow\kb_{g,u,h,n+1}$ and \autoref{lem:incl2} follows.
\end{proof}

We can now see that \autoref{prop:main1} follows from \autoref{lem:incl2} by an inductive argument like that in \cite[Lemma 3.0.12]{costello2}.
\end{proof}

We can also obtain such a result for the operad $\mb$. We consider $K\in\mb$ as a na\"ive nodal Klein surface and then consider the space of ways of adding a marked point to $\partial K$ by an identical argument to \cite[Lemma 3.0.9]{costello2} to see the map $\mb_{g,n+1}\rightarrow\mb_{g,n}$ forgetting the last marked point and stabilising is a locally trivial fibration in the orbispace sense.

Since $\mb_{\tilde{g},0}$ can be obtained from $\coprod \kb_{g,u,h,0}$ by identifying only points in $\coprod \partial \kb_{g,u,h,0}$ (by forgetting the dianalytic structure at nodes), it follows immediately from \autoref{lem:incl1} that there is a deformation retract of the map $\mb_{\tilde{g},0}\setminus\mo_{\tilde{g},0}\hookrightarrow \mb_{\tilde{g},0}$. Therefore by the same argument as above we can deduce the following:

\begin{proposition}\label{prop:main2}
The inclusion $\dr_{g,n}\hookrightarrow\mb_{g,n}$ is a homotopy equivalence.
\qed
\end{proposition}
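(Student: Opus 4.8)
The plan is to reproduce, mutatis mutandis, the proof of \autoref{prop:main1}, working with na\"ive nodal Klein surfaces in place of Klein surfaces with boundary nodes. The starting point is the analogue of \autoref{lem:incl1}: the inclusion $\mb_{\tilde{g},0}\setminus\mo_{\tilde{g},0}\hookrightarrow\mb_{\tilde{g},0}$ is a homotopy equivalence of orbispaces. As noted just before the statement, $\mb_{\tilde{g},0}$ is obtained from $\coprod\kb_{g,u,h,0}$ (disjoint union over $2g+h+u-1=\tilde{g}$) by forgetting the dianalytic structure at the nodes, so the only identifications occur inside $\coprod\partial\kb_{g,u,h,0}$, while the interiors $\ko_{g,u,h,0}$ still inject into $\mo_{\tilde{g},0}$. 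The inward geodesic flow used in \autoref{lem:incl1} depends only on the hyperbolic (equivalently conformal) structure, which is unaffected by these identifications, so it descends to a deformation retract of $\mb_{\tilde{g},0}$ onto its boundary; continuity of the extension over the boundary is checked exactly as in \autoref{lem:incl1}, using the convergence statement of \cite[Lemma 3.0.9]{costello2} applied to orienting doubles.

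Next I would pass to $n>0$ by bootstrapping along the forgetful maps, exactly as in \autoref{lem:incl2}. The map $\mb_{\tilde{g},n+1}\rightarrow\mb_{\tilde{g},n}$ deleting the last marked point and stabilising is a locally trivial fibration in the orbispace sense --- this is the assertion made before the statement, proved by viewing $K\in\mb$ as a na\"ive nodal Klein surface and identifying the fibre with the space of ways of adding one marked point to $\partial K$, as in \cite[Lemma 3.0.5]{costello2}. Hence if $\partial\mb_{\tilde{g},n}\hookrightarrow\mb_{\tilde{g},n}$ is a homotopy equivalence then so is $\partial\mb_{\tilde{g},n+1}\hookrightarrow\mb_{\tilde{g},n+1}$, after disposing of the finitely many small cases directly; by induction on $n$ the inclusion $\partial\mb_{\tilde{g},n}=\mb_{\tilde{g},n}\setminus\mo_{\tilde{g},n}\hookrightarrow\mb_{\tilde{g},n}$ is a homotopy equivalence for all $\tilde{g}$ and $n$.

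Finally I would induct on the dimension of $\mb_{\tilde{g},n}$, following \cite[Lemma 3.0.12]{costello2}. The case $\tilde{g}=0$ is trivial, since every stable genus-zero curve already has all components spheres, so $\dr_{0,n}=\mb_{0,n}$ and the inclusion is the identity. For $\tilde{g}\geq 1$ the locus $\dr_{\tilde{g},n}$ lies inside $\partial\mb_{\tilde{g},n}$, and $\partial\mb_{\tilde{g},n}$ is covered by the images of the operadic gluing and self-gluing maps from products $\mb_{\tilde{g}',n'}\times\mb_{\tilde{g}'',n''}$ and from $\mb_{\tilde{g}-1,n+2}$, all of strictly smaller dimension; these gluing maps carry products of disc loci into $\dr_{\tilde{g},n}$. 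By the inductive hypothesis each factor retracts onto its disc locus, and patching the product retractions over the strata --- compatibly on overlaps, which is the content of Costello's Mayer--Vietoris argument --- shows that $\dr_{\tilde{g},n}\hookrightarrow\partial\mb_{\tilde{g},n}$ is a homotopy equivalence. Composing with $\partial\mb_{\tilde{g},n}\hookrightarrow\mb_{\tilde{g},n}$ from the previous step yields \autoref{prop:main2}.

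The genuine analytic input --- that the inward geodesic flow degenerates exactly to a nodal surface, together with the attendant continuity estimates --- is already imported from \cite{costello2} through \autoref{lem:incl1}, so the obstacle here is bookkeeping rather than new geometry. The two points needing care are: checking that the retraction of \autoref{lem:incl1} is genuinely compatible with the non-free identifications defining $\mb_{\tilde{g},0}$ from $\coprod\kb_{g,u,h,0}$ (here, unlike for $D$, there is no simplification from $n>0$, since $\mb_{\tilde{g},n}$ is always an honest orbispace), and making the patching in the last step precise at the level of orbispaces. I expect this Mayer--Vietoris step, tracking how the strata of $\partial\mb_{\tilde{g},n}$ and the cell decomposition of \autoref{prop:dianalyticgraphs} fit together, to be the most delicate part to write out in full.
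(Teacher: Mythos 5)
Your proposal is correct and follows essentially the same route as the paper: descend the geodesic-flow retraction of \autoref{lem:incl1} to $\mb_{\tilde{g},0}$ using the fact that only points of $\coprod\partial\kb_{g,u,h,0}$ get identified, bootstrap over marked points via the forgetful fibration as in \autoref{lem:incl2}, and conclude by the dimension induction of \cite[Lemma 3.0.12]{costello2}. The paper simply compresses the last two steps into ``by the same argument as above,'' so your write-up is a more explicit rendering of the same proof.
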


We now obtain our main theorem immediately from \autoref{prop:main1} and \autoref{prop:main2}.

\begin{theorem}
\Needspace*{3\baselineskip}\mbox{}
\begin{itemize}
\item The inclusion $D\hookrightarrow\kb$ is a homotopy equivalence of extended topological modular operads.
\item The inclusion $\dr\hookrightarrow\mb$ is a homotopy equivalence of extended topological modular operads.
\qed
\end{itemize}
\end{theorem}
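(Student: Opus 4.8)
The strategy is to reduce the statement to \autoref{prop:main1} and \autoref{prop:main2}, which already carry out all of the geometric work; what remains is only to promote those statements about individual moduli spaces to statements about the ambient operads.

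First I would note that the inclusions $D\hookrightarrow\kb$ and $\dr\hookrightarrow\mb$ really are morphisms of extended topological modular operads, i.e.\ that $D$ and $\dr$ are suboperads. This is because gluing two Klein discs at oriented marked points, or self-gluing a single Klein disc at two of its marked points, produces a nodal Klein surface each of whose irreducible components is again a disc (and likewise with spheres for $\dr\subset\mb$), so the subspaces of all-disc (respectively all-sphere) surfaces are closed under composition and contraction; the extended part $D((0,2))=\kb((0,2))=\mathbb{Z}_2$ is inherited verbatim. Thus we have honest operad maps, and it suffices to check that they are homotopy equivalences in each arity.

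Next, for each $(\tilde{g},n)$ the space $\kb((\tilde{g},n))$ is the disjoint union of the $\kb_{g,u,h,n}$ over the finitely many triples with $2g+h+u-1=\tilde{g}$, and correspondingly $D((\tilde{g},n))$ is the disjoint union of the $D_{g,u,h,n}$, with the inclusion respecting this decomposition componentwise. By \autoref{prop:main1} each $D_{g,u,h,n}\hookrightarrow\kb_{g,u,h,n}$ is a homotopy equivalence, and a coproduct of homotopy equivalences is a homotopy equivalence; hence $D((\tilde{g},n))\hookrightarrow\kb((\tilde{g},n))$ is a homotopy equivalence for every $(\tilde{g},n)$. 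The identical argument with \autoref{prop:main2} in place of \autoref{prop:main1} shows that $\dr((\tilde{g},n))\hookrightarrow\mb((\tilde{g},n))$ is a homotopy equivalence for every $(\tilde{g},n)$. Since a morphism of extended topological modular operads which is a homotopy equivalence of underlying spaces in each arity is, by definition, a homotopy equivalence of extended topological modular operads, both assertions follow.

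I do not expect any serious obstacle beyond what has already been established; the only point worth a line of justification is that the deformation retractions of \autoref{prop:main1} and \autoref{prop:main2} are built out of the geodesic flow for the canonical hyperbolic metric and so vary naturally with the surface, hence in particular commute with relabelling the marked points. Consequently the homotopy equivalences above can be taken $S_n$-equivariant, although this is not even needed for the subsequent passage to chain complexes, where one simply applies $C_*$ to a levelwise homotopy equivalence to obtain a levelwise quasi-isomorphism of (extended) dg modular operads.
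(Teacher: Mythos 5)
Your proposal is correct and takes essentially the same route as the paper: the theorem is deduced immediately from \autoref{prop:main1} and \autoref{prop:main2}, since the operad spaces $\kb((\tilde{g},n))$ and $\mb((\tilde{g},n))$ decompose as disjoint unions of the moduli spaces treated there and a levelwise homotopy equivalence of suboperads is what is meant by a homotopy equivalence of extended topological modular operads. Your extra remarks on closure of $D$ and $\dr$ under gluing and on equivariance are consistent with, and only make explicit, what the paper leaves implicit.
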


Given an appropriate chain complex $C_*$ (we take coefficients in $\mathbb{Q}$) with a K\"unneth map $C_*(X)\otimes C_*(Y)\rightarrow C_*(X\times Y)$ then $C_*(\kb)$ is an extended dg modular operad. An algebra over this is called an open KTCFT. Since the space of dianalytic ribbon graphs is obtained from the space of M\"obius graphs by forgetting the colouring we see that $C_*(\dr)=C_*(D)/(a=1)$ where $a\in C_*(D)((0,2))\cong\mathbb{Q}[\mathbb{Z}_2]$ is the involution. The above then translates into:

\begin{theorem}\label{thm:main}
There are quasi-isomorphism of extended dg modular operads over $\mathbb{Q}$
\begin{gather*}
C_*(D)\simeq C_*(\kb)\\
C_*(D)/(a=1)\simeq C_*(\mb)
\end{gather*}
where $a\in C_*(D)((0,2))\cong\mathbb{Q}[\mathbb{Z}_2]$ is the involution.
\qed
\end{theorem}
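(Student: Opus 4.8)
The plan is to deduce this from the preceding (topological) theorem, together with the combinatorial descriptions of the cell decompositions of $D$ and $\dr$. The facts I would use about the chain functor $C_*$ are: being lax symmetric monoidal via the Eilenberg--Zilber/K\"unneth map $C_*(X)\otimes C_*(Y)\to C_*(X\times Y)$, it carries a topological modular operad to a dg modular operad and a morphism of such to a morphism of dg modular operads; it sends a homotopy equivalence of (orbi)spaces to a quasi-isomorphism over $\mathbb{Q}$; and on $D$ and $\dr$, which are (orbi-)CW by \autoref{prop:celldecomp} and \autoref{prop:dianalyticgraphs}, it may be taken to be cellular chains. On the $((0,2))$ component $C_*$ sends the discrete group $\mathbb{Z}_2$ to $\mathbb{Q}[\mathbb{Z}_2]$, so the extended structure is respected and $a$ is the image of the orientation-flipping involution. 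Applying $C_*$ to the inclusions $D\hookrightarrow\kb$ and $\dr\hookrightarrow\mb$ of extended topological modular operads thus produces morphisms of extended dg modular operads over $\mathbb{Q}$; since a morphism of dg modular operads is a quasi-isomorphism exactly when it is one on each $((g,n))$, and the preceding theorem gives a homotopy equivalence on every component, both $C_*(D)\to C_*(\kb)$ and $C_*(\dr)\to C_*(\mb)$ are quasi-isomorphisms, which is the first line.

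For the second line it remains to identify $C_*(\dr)$ with $C_*(D)/(a=1)$ as dg modular operads; composing with the quasi-isomorphism $C_*(\dr)\xrightarrow{\sim}C_*(\mb)$ then completes the proof. Here the cells of $D$ are indexed by reduced M\"obius graphs and those of $\dr$ by reduced dianalytic ribbon graphs, which are by definition reduced M\"obius graphs with the colouring forgotten, and in both cases a cell is attached to the cells obtained by expanding vertices of valence $>3$ (\autoref{rem:cellattach}), so the differentials match; in fact the oriented-graph refinement of \autoref{prop:orientedmobgraph} identifies $C_*(D)$ with $\moddmass$. Under this identification the colouring data at a vertex is precisely the $\mathbb{Q}[\mathbb{Z}_2]$-bimodule structure generated by the adjoined involution $a$ at the $((0,2))$ level, so forgetting the colouring while keeping the reduced-graph relations is exactly passing to the operadic quotient by $a=1$; hence $C_*(\dr)\cong\moddmass/(a=1)\cong C_*(D)/(a=1)$, compatibly with all compositions and contractions.

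The genuine work lies not in the homotopy theory, which is packaged in the preceding theorem, but in the two translation steps. First one must verify that $C_*$ really produces dg modular operads here: compatibility of the K\"unneth map with every $\circ_i$ and every self-gluing $\xi_{ij}$, $S_n$-equivariance (harmless over $\mathbb{Q}$), and, because the $n=0$ spaces are orbispaces with corners, a chain model for which homotopy equivalences of orbispaces induce quasi-isomorphisms. Second --- and this is subtler --- one must check that ``forgetting the colouring'' on M\"obius graphs agrees with the algebraic quotient by $a=1$ globally on the modular operad, with no stray signs or identifications, and that the failure of $\dr$ to be a global group quotient of $\coprod D_{g,u,h,n}$ noted in \autoref{rem:notgrpquotient} is no obstruction --- which it is not, since the identification is made on the cellular chain operads rather than on the spaces. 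Once these are in place the theorem follows at once.
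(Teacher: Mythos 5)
Your proposal is correct and takes essentially the same route as the paper: apply the chain functor with its K\"unneth map to the homotopy equivalences $D\hookrightarrow\kb$ and $\dr\hookrightarrow\mb$ from the preceding theorem, and identify $C_*(\dr)$ with $C_*(D)/(a=1)$ by noting that dianalytic ribbon graphs are exactly M\"obius graphs with the colouring forgotten. The paper treats this as an immediate translation (stating the theorem without a separate proof), whereas you spell out the same verifications explicitly, including the identification $C_*(D)\cong\moddmass$ which the paper records just afterwards.
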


Since the spaces $D_{g,u,h,n}$ are orbi-cell complexes we can give a simple description for the operad $C_*(D)$ over $\mathbb{Q}$ using the cellular chain complex. We now identify this dg operad $C_*(D)$ and so relate our results to the previous sections.

\begin{proposition}\label{prop:cellchainiso}
There is an isomorphism
\[C_*(D)\cong\moddmass\]
(up to homological/cohomological grading).
\end{proposition}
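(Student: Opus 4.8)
The plan is to identify the cellular chain complex of the orbi-cell complex $D$ with the modular operad $\moddmass$ by matching up generators, gradings, and differentials, using the combinatorial description of $\moddmass$ established in \autoref{prop:orientedmobgraph} together with the cell decomposition of $D_{g,u,h,n}$ from \autoref{prop:celldecomp}. The key observation is that both sides are built from the same combinatorial data: reduced M\"obius graphs of a fixed topological type, with an orientation datum.

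First I would recall that by \autoref{prop:celldecomp} the space $D_{g,u,h,n}$ decomposes into orbi-cells labelled by reduced M\"obius graphs $G\in\mob\Gamma_{g,u,h,n}$, with the orbi-cell for $G$ being $X(G)/\operatorname{Aut}(G)$ where $X(G)=\prod_v X(v)$ and each $X(v)$ is a cell of the appropriate dimension. The cellular chain complex with $\mathbb{Q}$ coefficients therefore has, in each degree, a summand for each such $G$, but the orbifold structure means the contribution of $G$ is the $\operatorname{Aut}(G)$-coinvariants of the top chain of $X(G)$ tensored with the orientation local system of the cell. The crucial point is that the orientation of the cell $X(G)$ is precisely a choice of ordering of the factors $X(v)$ together with orientations of each, and working through the moduli of discs this is exactly the determinant datum $\det(G)=\Det(k^{\edges(G)})\otimes\Det(H_1(|G|))$ appearing in \autoref{prop:orientedmobgraph}: an ordering of the nodes (edges) corresponds to the ordering of disc components coming from the tree-part, and the $H_1$ factor accounts for the self-gluings. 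So the cellular chain complex in degree matching $e+3-3g-n$ is $\bigoplus_{G}\underline{\mass}((G))\otimes\det(G)$ with $S_n$ acting with the sign twist coming from reordering marked points versus reorienting cells, which by \autoref{prop:orientedmobgraph} is $\moddmass((g,n))$. Here I would use that $\underline{\mass}((g(v),\flags(v)))$ is exactly the vector space generated by M\"obius corollas at $v$, which is what labels the cells $X(v)$ of the moduli of marked Klein discs.

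Next I would check that the differential matches. By \autoref{rem:cellattach} the boundary of the orbi-cell labelled by $G$ consists of the cells labelled by all ways of expanding vertices of $G$ of valence greater than $3$, since two marked points colliding corresponds to bubbling off a disc. This is exactly the "expanding differential" on reduced oriented M\"obius graphs described after \autoref{prop:orientedmobgraph} and in the discussion of $C(\mass)$: expanding a vertex introduces a new internal edge, and the induced orientation on $\det$ is the wedge with the new edge, matching $l_e$ from the cobar construction. One must verify the signs agree, which amounts to checking that the geometric attaching maps carry the same sign as the algebraic differential; this is routine once the orientation conventions are aligned as above. Finally, the operadic structure maps (gluing at marked points) manifestly correspond to gluing legs of graphs and wedging the determinant factors, matching the composition and contraction maps of $\moddmass$ as described after \autoref{prop:orientedmobgraph}. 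The parenthetical "up to homological/cohomological grading" is just the remark that $C_*(D)$ is naturally homologically graded whereas $\moddmass$ as a dual-dg-operad construction is cohomologically graded, so one reverses the grading.

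The main obstacle I expect is the careful bookkeeping of orientations and signs: showing that the orientation local system of the orbi-cell $X(G)$ (which governs whether a given generator survives in the rational cellular chain complex and with what $\operatorname{Aut}(G)$-equivariance) is canonically the determinant line $\det(G)$, and that the cellular boundary map's signs coincide with the expanding differential's signs inherited from the cobar construction. This is exactly the kind of comparison where Costello's argument for the oriented case ($C_*(\nb)\simeq\moddass$, implicit in \cite{costello1,costello2}) can be followed essentially verbatim, the only new feature being the two-colouring of half-edges, which contributes the $\underline{\mass}$ rather than $\underline{\ass}$ decoration at each vertex and is handled exactly as in the passage from ribbon to M\"obius graphs in \autoref{chap:mobius}. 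Everything else — surjectivity onto all topological types, the cell structure of the disc moduli — is already in place from \autoref{prop:celldecomp} and \autoref{lem:riemklein}.
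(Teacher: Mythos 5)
Your proposal is correct and follows essentially the same route as the paper: identify the generators of the cellular chain complex with oriented reduced M\"obius graphs via \autoref{prop:celldecomp}, match the orientation datum of each orbi-cell with the determinant line $\det(G)$ of \autoref{prop:orientedmobgraph} (the paper does this via \autoref{rem:orientation}), and match the differential and operadic gluings using \autoref{rem:cellattach}. The only cosmetic difference is that the paper phrases the orientation comparison as an ordering of vertices together with orderings of half edges at each vertex, and closes by noting that $C_*(D)$ is the modular closure of its genus $0$ part.
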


\begin{remark}
By `up to homological/cohomological grading' we mean that $C_*(D)$ is graded homologically whereas $\moddmass$ is graded cohomologically. Given a cohomologically graded complex $V=\bigoplus V^i$ we set $V_{-i}=V^i$ to obtain an equivalent homologically graded complex. We can swap the grading of operads in this way.
\end{remark}

\begin{proof}
This follows from considering \autoref{prop:celldecomp} and \autoref{rem:cellattach} with \autoref{prop:orientedmobgraph}. The space $C_*(D_{g,u,h,n})$ is generated by oriented orbi-cells so a basis is given by reduced M\"obius graphs $G$ of topological type $(g,u,h,n)$ together with an orientation of the corresponding orbi-cell. An orientation can be given by an ordering of the vertices of $G$ and at each vertex an ordering of the set of half edges attached to it. It is clear by considering \autoref{rem:orientation} that orientations of the corresponding orbi-cell are equivalent to orientations on $G$ as defined earlier. Noting then that $C_*(D)$ is the modular closure of its genus $0$ part, it is not too difficult to check that the operad structures coincide.
\end{proof}

This is the Klein version of the fact by Costello \cite{costello1} that $\moddass$ gives a chain model for the homology of $\nb$ which gives the well known ribbon graph complexes computing the homology of the spaces $\no_{g,h,n}\simeq\nb_{g,h,n}$.

In our case this means we obtain M\"obius graph complexes computing the rational homology of the spaces $\ko_{g,u,h,n}\simeq\kb_{g,u,h,n}$, generated by oriented reduced M\"obius graphs with the differential expanding vertices of valence greater than $3$. When $n>0$ this computes the integral homology since $D_{g,u,h,n}$ is then an ordinary cell complex. When $u=0$ and $n>0$ this complex is a sum of ribbon graph complexes as expected. For $u=n=0$ this complex is the ribbon graph complex quotiented by the action of $\mathbb{Z}_2$ reversing the cyclic ordering at every vertex of a graph. For $u\neq 0$ we obtain combinatorially distinct complexes.

We can also obtain graph complexes for the rational homology of the spaces $\mo_{\tilde{g},n}\cong(\coprod\ko_{g,u,h,n})/\mathbb{Z}_2^{\times n}$ by forgetting the colours of the legs of M\"obius graphs. Additionally by forgetting the colours of all the half edges of M\"obius graphs we obtain graph complexes for the spaces $\mb_{\tilde{g},n}$. We will describe all these graph complexes concretely, without reference to operads, in the next section.

We finish this section with some observations. As already stated we have found two different ways of approaching the problem of allowing nodes on Klein surfaces. In the case of surfaces with oriented marked points we obtain a partial compactification that is homotopy equivalent to the space of smooth surfaces. We should note that $H_0(\kb)\cong\modmass$. In the second case the partial compactification is quite different.

Since the spaces $\mo_{\tilde{g},n}$ (which are in general not connected) are obtained from the disjoint union of the spaces $\ko_{g,u,h,n}$ modulo the action of the finite group $\mathbb{Z}_2^{\times n}$ then the non-zero degree rational homology of $\mo_{0,n}$ is trivial (as $\mass$ is Koszul).

There is a map of operads given by the composition $q\co \moddass\hookrightarrow\moddmass\twoheadrightarrow\moddmass/(a=1)$ and this map is surjective. Geometrically this corresponds to the fact that the spaces of $\dr$ are subspaces of the loci of admissible Riemann surfaces in $\mb$ having an orientable quotient (and so up to homotopy we do not need to worry about unorientable surfaces in $\mb$). It is easy to see that $H_0(\mb)\cong H_0(\dr)\cong\modcom$ and so the spaces $\mb_{\tilde{g},n}$ are connected.

More interestingly the spaces $\mb_{0,n}$ have non-trivial rational homology in higher degrees. To see this we first note that if $T\in\dmass/(a=1)$ is a cycle and $d(T')=T$ for some $T'$ then there is a $G\in\dass$ such that $q(G)=T'$ so $T=q(dG)$ and so the cycle $T$ lifts to a cycle $dG$ in $\dass$. Therefore if we find a cycle in $\dmass/(a=1)$ that does not lift to another cycle we know it gives a non-trivial homology class. It is easy to write down an example, see \autoref{fig:nontrivialclass}. This fact also justifies \autoref{rem:notgrpquotient}.

\begin{figure}[ht!]
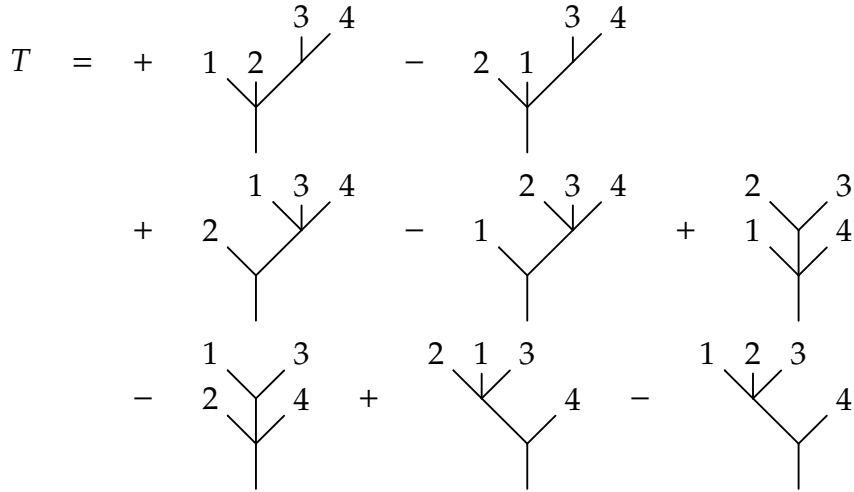

\centering
\begin{align*}
T\quad=\quad &+ \quad
{\xygraph{!{<0cm,0cm>;<0.6cm,0cm>:<0cm,0.6cm>::}
&&3="3"&4="4"\\
1="1"&2="2"&="w" \\
&="v"\\
&="0"
"v"-"1" "v"-"2" "v"-"0" "v"-"w" "w"-"3" "w"-"4"
}}
\quad - \quad
{\xygraph{!{<0cm,0cm>;<0.6cm,0cm>:<0cm,0.6cm>::}
&&3="3"&4="4"\\
2="2"&1="1"&="w" \\
&="v"\\
&="0"
"v"-"1" "v"-"2" "v"-"0" "v"-"w" "w"-"3" "w"-"4"
}}\\
\quad &+ \quad
{\xygraph{!{<0cm,0cm>;<0.6cm,0cm>:<0cm,0.6cm>::}
&1="1"&3="3"&4="4"\\
2="2"&&="w" \\
&="v"\\
&="0"
"v"-"2" "v"-"w" "v"-"0" "w"-"1" "w"-"3" "w"-"4"
}}
\quad - \quad
{\xygraph{!{<0cm,0cm>;<0.6cm,0cm>:<0cm,0.6cm>::}
&2="1"&3="3"&4="4"\\
1="2"&&="w" \\
&="v"\\
&="0"
"v"-"2" "v"-"w" "v"-"0" "w"-"1" "w"-"3" "w"-"4"
}}
\quad + \quad
{\xygraph{!{<0cm,0cm>;<0.6cm,0cm>:<0cm,0.6cm>::}
2="2"&&3="3"\\
1="1"&="w"&4="4" \\
&="v"\\
&="0"
"v"-"1" "v"-"4" "v"-"0" "v"-"w" "w"-"3" "w"-"2"
}}\\
\quad &- \quad
{\xygraph{!{<0cm,0cm>;<0.6cm,0cm>:<0cm,0.6cm>::}
1="2"&&3="3"\\
2="1"&="w"&4="4" \\
&="v"\\
&="0"
"v"-"1" "v"-"4" "v"-"0" "v"-"w" "w"-"3" "w"-"2"
}}
\quad + \quad
{\xygraph{!{<0cm,0cm>;<0.6cm,0cm>:<0cm,0.6cm>::}
2="2"&1="1"&3="3"\\
&="w"&&4="4"\\
&&="v"\\
&&="0"
"v"-"4" "v"-"w" "v"-"0" "w"-"1" "w"-"3" "w"-"2"
}}
\quad - \quad
{\xygraph{!{<0cm,0cm>;<0.6cm,0cm>:<0cm,0.6cm>::}
1="2"&2="1"&3="3"\\
&="w"&&4="4"\\
&&="v"\\
&&="0"
"v"-"4" "v"-"w" "v"-"0" "w"-"1" "w"-"3" "w"-"2"
}}
\end{align*}
\caption{A non-trivial homology class in $H_1(\mb_{0,5})$. When the differential is applied to the first two terms of $T$ two of the resulting trees cancel as elements of $\dmass/(a=1)$ but not as elements of $\dass$ no matter how we lift $T$.}
\label{fig:nontrivialclass}
\end{figure}

\section{Summary of graph complexes}
To make our results explicit we will finish by unwrapping \autoref{thm:main} and \autoref{prop:cellchainiso} and defining the graph complexes in a more explicit and straightforward manner, without reference to operads.

Recall an orientation of a graph $G$ is a choice of orientation of the vector space $\mathbb{Q}^{\edges(G)}\oplus H_1(|G|,\mathbb{Q})$. Denote by $\Gamma_{g,h,n}$ the vector space over $\mathbb{Q}$ generated by oriented reduced ribbon graphs of topological type $(g,h,n)$. That is, equivalence classes of pairs $(G,\mathrm{or})$ with $G$ a reduced ribbon graph of genus $g$ with $h$ boundary components and $n$ legs and $\mathrm{or}$ an orientation of $G$, subject to the relations $(G,-\mathrm{or})=-(G,\mathrm{or})$. 

Denote by $\mob\Gamma_{g,u,h,n}$ the vector space over $\mathbb{Q}$ generated by oriented reduced M\"obius graphs of topological type $(g,u,h,n)$. 

Denote by $\Gamma_{\tilde{g},n}$ the space 
\[\Gamma_{\tilde{g},n}=\bigoplus_{2g+h-1=\tilde{g}}\Gamma_{g,h,n}\]
and denote by $\mob\Gamma_{\tilde{g},n}$ the space:
\[\mob\Gamma_{\tilde{g},n}=\bigoplus_{2g+u+h-1=\tilde{g}}\mob\Gamma_{g,u,h,n}\]
The finite group $\mathbb{Z}_2^{\times n}$ acts on $\mob\Gamma_{\tilde{g},n}$ by switching the colours of legs.

Denote by $\Gamma^\mathbb{R}_{\tilde{g},n}$ the space of oriented reduced dianalytic ribbon graphs (see the proof of \autoref{prop:dianalyticgraphs}) of topological type $(\tilde{g},n)$. Observe that $\Gamma^\mathbb{R}_{\tilde{g},n}=\Gamma_{\tilde{g},n}/I=\mob\Gamma_{\tilde{g},n}/J$ where $I$ is the subspace generated by relations of the form $(G,\mathrm{or})=(H,\mathrm{or}')$ whenever $(G,\mathrm{or})$ is isomorphic to $(H,\mathrm{or}')$ after reversing the cyclic ordering at some of the vertices of $G$ if necessary and $J$ is the subspace generated by relations of the form $(G,\mathrm{or})=(H,\mathrm{or}')$ whenever $(G,\mathrm{or})$ is isomorphic to $(H,\mathrm{or}')$ after changing the colours of any of the half edges of $G$ if necessary. What this means is that $\Gamma^\mathbb{R}_{\tilde{g},n}$ is obtained from $\Gamma_{\tilde{g},n}$ by identifying cyclic orderings at vertices of ribbon graphs with the reverse cyclic orderings or that it is obtained from $\mob\Gamma_{\tilde{g},n}$ by forgetting the colourings of M\"obius graphs.

These spaces are finite dimensional and cohomologically graded by the number of internal edges in a graph. We define a differential on these spaces by
\[d(G,\mathrm{or})=\sum (G',\mathrm{or'})
\]
where the sum is taken over classes $(G',\mathrm{or'})$ arising from all ways of expanding one vertex of $G$ into two vertices each of valence at least $3$ so that $G'/e=G$ where $e$ is the new edge joining the two new vertices. The orientation $\mathrm{or'}$ is the product of the natural orientations on $\mathbb{Q}^{\edges(G')}\supset \mathbb{Q}^{\edges(G')\setminus e}$ and $H_1(|G'|)\cong H_1(|G'/e|)$.

We can then unwrap the main theorems.

\begin{theorem}
\Needspace*{3\baselineskip}\mbox{}
\begin{itemize}
\item There are isomorphisms:
\[
H_\bullet(\no_{g,h,n},\mathbb{Q})\cong H_\bullet(\nb_{g,h,n},\mathbb{Q})\cong H^{6g+3h+n-6-\bullet}(\Gamma_{g,h,n})
\]
Further for $n\geq 1$ such isomorphisms also hold for integral homology. This is the well known ribbon graph decomposition.
\item There are isomorphisms:
\[
H_\bullet(\ko_{g,u,h,n},\mathbb{Q})\cong H_\bullet(\kb_{g,u,h,n},\mathbb{Q})\cong H^{6g+3u+3h+n-6-\bullet}(\mob\Gamma_{g,u,h,n})
\]
Further for $n\geq 1$ such isomorphisms also hold for integral homology.
\item There are isomorphisms:
\[
H_\bullet(\mo_{\tilde{g},n},\mathbb{Q})\cong H^{3\tilde{g}+n-3-\bullet}(\mob\Gamma_{\tilde{g},n})/\mathbb{Z}_2^{\times n}
\]
\item There are isomorphisms:
\[
H_\bullet(\mb_{\tilde{g},n},\mathbb{Q})\cong H^{3\tilde{g}+n-3-\bullet}(\Gamma^\mathbb{R}_{\tilde{g},n})
\]
\end{itemize}
\end{theorem}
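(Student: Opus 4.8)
The plan is to deduce all four isomorphisms from the homotopy equivalences and orbi-cell decompositions already established, by matching each relevant rational orbi-cellular chain complex with the stated graph complex up to a uniform degree shift, and reading off (co)homology.

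For the bordered Riemann surface case I would combine \autoref{prop:costello} (the homotopy equivalence $D_{g,h,n}\hookrightarrow\nb_{g,h,n}$) with the homotopy equivalence $\no_{g,h,n}\hookrightarrow\nb_{g,h,n}$ from the lemma describing the topology of these moduli spaces, reducing everything to $D_{g,h,n}$. Costello's orbi-cell decomposition of $D_{g,h,n}$ is indexed by reduced ribbon graphs, and a short Euler characteristic count places the cell of a graph $G$ with $e$ internal edges in dimension $6g+3h+n-6-e$: a disc component with $k$ boundary marked points contributes a cell of dimension $k-3$, and $\sum_v(n(v)-3)=2e+n-3V$ with $V-e=\chi=2-2g-h$. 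By \autoref{rem:cellattach} the orbi-cellular boundary map sends the cell of $G$ to the cells obtained by expanding a vertex of $G$, which is exactly the differential of $\Gamma_{g,h,n}$; since the $\mathbb{Q}$-orbi-cellular complex assigns to each cell the $\operatorname{Aut}(G)$-coinvariants of its orientation line, a graph with an orientation-reversing automorphism vanishes on both sides, and the chain complex is identified with $(\Gamma_{g,h,n}^{\,6g+3h+n-6-\bullet},d)$. Taking homology gives the first statement over $\mathbb{Q}$; since stable bordered surfaces with $n\geq1$ marked points carry no automorphisms, $D_{g,h,n}$ is then an honest CW complex and the identification holds over $\mathbb{Z}$.

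The Klein statement is formally identical: I would rerun this with \autoref{prop:main1} in place of \autoref{prop:costello}, the homotopy equivalence $\ko_{g,u,h,n}\hookrightarrow\kb_{g,u,h,n}$, the decomposition by reduced M\"obius graphs of \autoref{prop:celldecomp}, and the Euler characteristic $\chi=2-2g-u-h$ (giving cell dimension $6g+3u+3h+n-6-e$); equivalently one may simply quote \autoref{prop:cellchainiso} and the graph description of $\moddmass$ in \autoref{prop:orientedmobgraph}, which displays the $(g,u,h,n)$-summand of $C_*(D)$ as $\mob\Gamma_{g,u,h,n}$ with the stated shift. Integrality for $n\geq1$ uses the corollary that $D_{g,u,h,n}$ is then an ordinary cell complex. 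For $\mo_{\tilde{g},n}\cong\bigl(\coprod_{2g+h+u-1=\tilde{g}}\ko_{g,u,h,n}\bigr)/\mathbb{Z}_2^{\times n}$, I would use that rational homology of a quotient by a finite group is the coinvariant module, observe that flipping the orientation of a marked point on a surface corresponds to switching the colour of a leg of its labelling graph so the action is compatible with the previous identification, and note that $6g+3u+3h+n-6=3\tilde{g}+n-3$ whenever $2g+u+h-1=\tilde{g}$, so the shift is uniform across the summands constituting $\mob\Gamma_{\tilde{g},n}$; passing to $\mathbb{Z}_2^{\times n}$-coinvariants (equivalently, since $\mathbb{Q}[\mathbb{Z}_2^{\times n}]$ is semisimple, quotienting) then gives the third statement. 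For the fourth I would use \autoref{prop:main2} ($\dr_{\tilde{g},n}\hookrightarrow\mb_{\tilde{g},n}$ a homotopy equivalence) and the decomposition by dianalytic ribbon graphs of \autoref{prop:dianalyticgraphs}; its rational orbi-cellular complex is $(\Gamma^{\mathbb{R}}_{\tilde{g},n},d)$ with shift $3\tilde{g}+n-3$, which is also $\moddmass/(a=1)$ by \autoref{thm:main} since setting $a=1$ forgets the colourings of M\"obius graphs.

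The substance of the argument is entirely imported from the cited results, so the only genuinely delicate points are the orientation-sign bookkeeping needed to see that the orbi-cellular differential coincides on the nose with the vertex-expansion differential — this is already subsumed in \autoref{prop:cellchainiso} and the conventions of \autoref{rem:orientation} — and the verification that the $\mathbb{Z}_2^{\times n}$-action in the third statement is precisely the colour-switching one. I expect the sign bookkeeping to be the fiddliest part, but I do not anticipate any real obstruction.
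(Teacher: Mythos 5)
Your proposal is correct and follows essentially the same route as the paper, which presents this theorem as a direct unwrapping of \autoref{thm:main}, \autoref{prop:cellchainiso}, the homotopy equivalences (\autoref{prop:costello}, \autoref{prop:main1}, \autoref{prop:main2}) and the orbi-cell decompositions, with the dimension shift and the $\mathbb{Z}_2^{\times n}$-coinvariant identification handled exactly as you describe. Your explicit Euler-characteristic count of cell dimensions and the coinvariants argument for the quotient are just spelled-out versions of what the paper leaves implicit, so there is nothing to add.
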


\chapter{Dihedral cohomology}\label{chap:dihedral}
In this chapter algebras over the operad $\dual\mass$ and over the modular operad $\moddmass$, which we will call involutive $A_\infty$--algebras and cyclic involutive $A_\infty$--algebras respectively, will be studied in more detail.

Associated to any operad is a corresponding cohomology theory for algebras over (a cofibrant replacement for) that operad. For example, it is well known that for associative algebras, or more generally $A_\infty$--algebras, the corresponding cohomology theory is Hochschild cohomology. We will recall this setup and compare it to the analogous theory for involutive $A_\infty$--algebras. In particular we will relate this to dihedral cohomology theory \cite{lodaybook,lodaydihedral} for involutive associative algebras, obtaining along the way a generalisation of dihedral cohomology to involutive $A_\infty$--algebras. 

It is well known that Hochschild cohomology and cyclic cohomology govern deformations of associative and cyclic associative algebras. This holds true for the corresponding involutive cohomology theories and we shall explain more precisely what this means.

The general construction of M\"obiusisation of an operad and \autoref{thm:mobiusisation} suggest that one could also generalise these ideas to arbitrary operads and so there is a version of dihedral cohomology for other algebras, such as dihedral Chevalley--Eilenberg cohomology or dihedral Harrison cohomology.

The results of this chapter can be seen as an application of the general theory of algebraic operads and a detailed exposition of the general approach is contained in the forthcoming book by Loday and Vallette \cite{lodayvallette}. However, we neither require nor desire heavy use of operad machinery since our aim is to understand in more detail the case for the specific operads $\ass$ and $\mass$.

\section{Involutive Hochschild cohomology of involutive $A_\infty$--algebras}

\begin{definition}
Let $A$ be a $\mathcal{P}$--algebra. A \emph{graded derivation} of $A$ is a graded map $f\co A\rightarrow A$ such that for any $m \in \mathcal{P}(n)$
\[
f(m(a_1, \dots, a_n )) = \sum_{i=1}^n (-1)^{\epsilon} m(a_1, \dots, a_{i-1}, f(a_i), a_{i+1}, \dots, a_n)
\]
where $\epsilon = \degree{f}(\degree{m}+\degree{a}_1+\dots+\degree{a}_{i-1})$. The space spanned by all graded derivations forms a Lie subalgebra of the space $\IHom(A,A)$ of linear maps with the commutator bracket and is denoted by $\Der(A)$.
\end{definition}

An $A_\infty$--algebra is an algebra over $\dass$. The following `operad free' definition in terms of derivations of the free $\ass$--algebra is well known to be equivalent to this, see for example \cite[Proposition 4.2.14]{ginzburgkapranov}.

\begin{definition}
Let $V$ be a graded vector space. An $A_\infty$--algebra structure on $V$ is a derivation $m\co \widehat{T}_{\geq 1}\Sigma^{-1}V^*\rightarrow \widehat{T}_{\geq 1}\Sigma^{-1}V^*$ of degree one such that $m^2 = 0$.
\end{definition}

\begin{remark}
Since we are considering the \emph{completed} tensor algebra, with is a formal vector space, maps and derivations are of course required to be continuous.
\end{remark}

Recall that such a derivation $m$ is determined completely by its restriction to $\Sigma^{-1}V^*$. Denote by $m_n\co\Sigma^{-1}V^*\rightarrow (\Sigma^{-1}V^*)^{\otimes n}$ the order $n$ part of this restriction so that $m = m_1 + m_2 + \dots$ on the subspace $\Sigma^{-1}V^*$. Such a collection of $m_n$ is equivalent to a collection of maps $\hat{m}_n\co V^{\otimes n} \rightarrow V$ of degrees $2-n$ satisfying the usual $A_\infty$--conditions, familiar from the operadic definition.

Note that $m_1^2 = 0$ so $(V,\hat{m}_1)$ is a differential graded vector space. If $m_n = 0$ for $n > 2$ then $\hat{m}_2$ is an associative product on $V$ respecting the differential and so a differential graded associative algebra is a special case of an $A_\infty$--algebra.

\begin{definition}
Let $(V,m)$ and $(W,m')$ be $A_\infty$--algebras. Then an \emph{$\infty$--morphism} of $A_\infty$--algebras is a map $\phi$ of associative algebras $\phi\co\widehat{T}_{\geq 1}\Sigma^{-1}W^*\rightarrow \widehat{T}_{\geq 1}\Sigma^{-1}V^*$ such that $m \circ\phi = m'\circ \phi$.
\end{definition}

\begin{definition}
Let $(V,m)$ be an $A_\infty$--algebra. Then the space of derivations $\Der(\widehat{T}\Sigma^{-1}V^*)$ is a differential graded Lie algebra with bracket the commutator bracket and differential given by $d(\xi)=[m,\xi]$.

The Hochschild cohomology complex of $V$ with coefficients in itself is the differential graded vector space $\hoch^{\bullet}(V,V) = \Sigma^{-1}\Der(\widehat{T}\Sigma^{-1}V^*)$. The cohomology of this will be denoted by $\Hhoch^{\bullet}(V,V)$.
\end{definition}

In the case that $V$ is an associative algebra then note that this coincides with the classical definition of the Hochschild cohomology. This is also the reason for the desuspension in this definition.

\subsection{Involutive $A_\infty$--algebras}
\Needspace*{4\baselineskip}

\begin{definition}
Let $V$ be a graded vector space. An involution on $V$ is a map $v\mapsto v^*$ with $(v^*)^*=v$. An \emph{involutive differential graded associative algebra} is a differential graded associative algebra $A$ with an involution satisfying $(xy)^* = (-1)^{\degree{x} \degree{y}}y^*x^*$ and $d(x)^* = d(x^*)$.
\end{definition}

If $V$ has an involution then $\Sigma V$ and $\Sigma^{-1} V$ do in the obvious way. $V^*$ also has an involution defined by $\phi^*(v) = -\phi(v^*)$ for $\phi\in V^*$ (note the appearance of the minus sign).

Let $W$ be an graded vector space with an involution. Then $W^{\otimes n}$ admits an involution defined by
\[
(w_1\otimes w_2 \otimes \dots \otimes w_n)^* = (-1)^{\epsilon} w_n^*\otimes\dots\otimes w_2^* \otimes w_1^*
\]
where $\epsilon = \sum_{i=1}^n \degree{w}_i\left ( \sum_{j=i+1}^n \degree{w}_j \right )$ arises from permuting the $w_i$ with degrees $\degree{w}_i$. It follows that $\widehat{T}\Sigma^{-1}V^*$ has an involution induced by that on $V$ making it into an involutive graded associative algebra.

\begin{definition}
Let $V$ be a graded vector space with an involution. An \emph{involutive $A_\infty$--algebra structure} on $V$ is a derivation $m\co \widehat{T}_{\geq 1} \Sigma^{-1} V^* \rightarrow \widehat{T}_{\geq 1} \Sigma^{-1} V^*$ of degree one such that $m^2=0$ and $m$ preserves the involution: $m(x^*) = m(x)^*$.
\end{definition}

The requirement $m(x^*) = m(x)^*$ can be unwrapped in terms of the $\hat{m}_n\co V^{\otimes n}\rightarrow V$ and the involution on $V$ to obtain
\[
\hat{m}_n(x_1,\dots , x_n)^* = (-1)^{\epsilon}(-1)^{n(n+1)/2-1} \hat{m}_n(x_n^*,\dots,x_1^*)
\]
where $\epsilon = \sum_{i=1}^n \degree{x}_i\left ( \sum_{j=i+1}^n \degree{x}_j \right )$ arises from permuting the $x_i\in V$ with degrees $\degree{x}_i$. In this way we see that this notion of an involutive $A_\infty$--algebra is indeed equivalent to an algebra over the operad $\dmass$. In particular if $m_n = 0 $ for $n > 2$ then this corresponds to the structure on an involutive differential graded associative algebra, which is thus a special case of an involutive $A_\infty$--algebra.

\begin{definition}
Let $(V,m)$ and $(W,m')$ be involutive $A_\infty$--algebras. Then an \emph{$\infty$--morphism} of involutive $A_\infty$--algebras is a map $\phi\co\widehat{T}_{\geq 1}\Sigma^{-1}W^*\rightarrow \widehat{T}_{\geq 1}\Sigma^{-1}V^*$ of associative algebras such that $m \circ\phi = m'\circ \phi$ and $\phi$ preserves the involution: $\phi(x^*) = \phi(x)^*$.
\end{definition}

\begin{definition}
Let $(V,m)$ be an involutive $A_\infty$--algebra. Then the subspace of derivations $\Der_+(\widehat{T}\Sigma^{-1}V^*)\subset \Der(\widehat{T}\Sigma^{-1}V^*)$ preserving the involution is a differential graded Lie subalgebra with bracket the commutator bracket and differential given by $d(\xi)=[m,\xi]$. 

The \emph{involutive Hochschild cohomology complex} of $V$ with coefficients in itself is the differential graded vector space $\hoch^{\bullet}_+(V,V) = \Sigma^{-1}\Der_+(\widehat{T}\Sigma^{-1}V^*)$. The cohomology of this will be denoted by $\Hhoch^{\bullet}_+(V,V)$.
\end{definition}

\subsection{Decomposition of Hochschild cohomology}
Let $V$ be a graded vector space with an involution. For $\xi\in\Der(\widehat{T}\Sigma^{-1}V^*)$ define $\xi^*$ by $\xi^*(x) = \xi(x^*)^*$. This is then an involution on $\Der(\widehat{T}\Sigma^{-1}V^*)$.

\begin{proposition}
The Lie algebra $\Der(\widehat{T}\Sigma^{-1}V^*)$ is an involutive Lie algebra, by which it is meant that $[\xi,\eta]^* = [\xi^*,\eta^*]$.
\end{proposition}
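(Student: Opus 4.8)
The plan is to realise the operation $\xi\mapsto\xi^{*}$ as conjugation by a single fixed involution of the underlying algebra, and then to observe that conjugation by an involution is multiplicative for composition, and hence respects the commutator bracket.

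First I would set $A=\widehat{T}\Sigma^{-1}V^{*}$ and write $s\co A\to A$ for the linear map $s(x)=x^{*}$ furnished by the involution on $V$ (extended to $\Sigma^{-1}V^{*}$ and to the completed tensor algebra as in the paragraph above). The two properties of $s$ that I need are that it is homogeneous of degree $0$ --- which holds because the involution on $V$, and hence every induced involution on $\Sigma^{-1}V^{*}$ and its tensor powers, preserves degree, the Koszul factors in the formula for $s$ being scalars --- and that $s\circ s=\id_{A}$, which is precisely the identity $(x^{*})^{*}=x$. With this notation the defining formula $\xi^{*}(x)=\xi(x^{*})^{*}$ reads simply $\xi^{*}=s\circ\xi\circ s$, and the same formula makes sense for an arbitrary homogeneous $k$--linear endomorphism $f$ of $A$, yielding $f^{*}=s\circ f\circ s$ with $\degree{f^{*}}=\degree{f}$.

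The key step is then purely formal. For homogeneous endomorphisms $f,g$ of $A$ one has
\[
(f\circ g)^{*}=s\circ f\circ g\circ s=(s\circ f\circ s)\circ(s\circ g\circ s)=f^{*}\circ g^{*},
\]
where the middle equality inserts $s\circ s=\id_{A}$ and no Koszul sign intervenes, since composition of linear maps is sign--free (all signs pertinent to $s$ are already internal to $s$). As $*$ is in addition additive and degree--preserving, applying it to the graded commutator bracket $[\xi,\eta]=\xi\circ\eta-(-1)^{\degree{\xi}\degree{\eta}}\eta\circ\xi$ on $\Der(\widehat{T}\Sigma^{-1}V^{*})$ gives
\[
[\xi,\eta]^{*}=\xi^{*}\circ\eta^{*}-(-1)^{\degree{\xi}\degree{\eta}}\eta^{*}\circ\xi^{*}=[\xi^{*},\eta^{*}],
\]
which is the asserted identity. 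Here one uses that $\Der(\widehat{T}\Sigma^{-1}V^{*})$ is closed under $*$, which is the content of the preceding sentence; alternatively the computation is valid on all of $\IHom(A,A)$ and then restricts to derivations. There is essentially no hard step: the only point requiring care is the sign bookkeeping, namely checking that packaging $\xi^{*}$ as $s\circ\xi\circ s$ introduces no extra Koszul factor and that $s$ has degree $0$, so that the graded commutator is preserved on the nose. Once this is in place the statement is immediate.
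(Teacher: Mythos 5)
Your proposal is correct and is essentially the paper's argument: the paper's one-line observation $\xi(\eta(x^*))^* = \xi^*(\eta^*(x))$ is exactly your statement that $*$ is multiplicative for composition, which you obtain by writing $\xi^* = s\circ\xi\circ s$ with $s^2=\id$ and then applying it to the graded commutator. Your version merely spells out the conjugation-by-$s$ packaging and the (trivial) sign bookkeeping that the paper leaves implicit.
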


\begin{proof}
This follows from the observation that $\xi(\eta(x^*))^* = \xi^*(\eta^*(x))$.
\end{proof}

\begin{remark}
If one instead preferred to define the involutive Hochschild cohomology complex as the \emph{quotient} of the usual Hochschild cohomology complex by the action of $\mathbb{Z}_2$ given by $\xi\mapsto \xi^*$ this is of course equivalent to the definition used above by the isomorphism of invariants and coinvariants.
\end{remark}

Note that $\Der_+(\widehat{T}\Sigma^{-1}V^*)$ is the eigenspace of the eigenvalue $+1$ of this involution. So denote by $\Der_-(\widehat{T}\Sigma^{-1}V^*)$ the eigenspace of the eigenvalue $-1$. For $\xi\in\Der(\widehat{T}\Sigma^{-1}V^*)$ denote by $\xi\mapsto\xi^+$ and $\xi\mapsto\xi^-$ the projections onto these eigenspaces given by $\xi^+ = 1/2(\xi+\xi^*)$ and $\xi^- = 1/2(\xi - \xi^*)$. Then $\xi=\xi^+ + \xi^-$ and as graded vector spaces $\Der(\widehat{T}\Sigma^{-1}V^*)=\Der_+(\widehat{T}\Sigma^{-1}V^*)\oplus \Der_-(\widehat{T}\Sigma^{-1}V^*)$. Note that this is not a decomposition of Lie algebras, however
\[
[\Der_-(\widehat{T}\Sigma^{-1}V^*),\Der_-(\widehat{T}\Sigma^{-1}V^*)] \subset \Der_+(\widehat{T}\Sigma^{-1}V^*).
\]
Now let $(V,m)$ be an involutive $A_\infty$--algebra. Then since $[m,\xi]^* = [m,\xi^*]$ it follows that this decomposition is in fact a decomposition of \emph{differential} graded vector spaces.

\begin{definition}
The \emph{skew involutive Hochschild cohomology complex} of an involutive $A_\infty$--algebra $(V,m)$ with coefficients in itself is the differential graded vector space $\hoch^{\bullet}_-(V,V) = \Sigma^{-1}\Der_-(\widehat{T}\Sigma^{-1}V^*)$. The cohomology of this will be denoted by $\Hhoch^{\bullet}_-(V,V)$.
\end{definition}

\begin{theorem}\label{thm:hochschilddecomposition}
For an involutive $A_\infty$--algebra $(V,m)$ the Hochschild cohomology of $V$ decomposes as $\Hhoch^\bullet(V,V)\cong \Hhoch^{\bullet}_+(V,V)\oplus\Hhoch^{\bullet}_-(V,V)$.
\qed
\end{theorem}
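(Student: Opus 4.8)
The statement is a direct consequence of the decomposition of the chain complex established in the preceding discussion. The plan is as follows. First I would observe that we have already constructed, as graded vector spaces, a direct sum decomposition
\[
\Der(\widehat{T}\Sigma^{-1}V^*) = \Der_+(\widehat{T}\Sigma^{-1}V^*) \oplus \Der_-(\widehat{T}\Sigma^{-1}V^*)
\]
via the projections $\xi \mapsto \xi^+ = \tfrac12(\xi + \xi^*)$ and $\xi \mapsto \xi^- = \tfrac12(\xi - \xi^*)$, valid since we work over $\mathbb{Q}$ (so $2$ is invertible). Desuspending, this gives a decomposition of $\hoch^\bullet(V,V)$ as a graded vector space into $\hoch^\bullet_+(V,V) \oplus \hoch^\bullet_-(V,V)$.

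The key point is that this is a decomposition of \emph{complexes}, not merely graded vector spaces. For this I would use that $m$ itself preserves the involution, i.e.\ $m^* = m$, together with the identity $[m,\xi]^* = [m^*,\xi^*] = [m,\xi^*]$ established in the proposition that $\Der(\widehat{T}\Sigma^{-1}V^*)$ is an involutive Lie algebra. Consequently the differential $d(\xi) = [m,\xi]$ commutes with the involution $\xi \mapsto \xi^*$, hence commutes with the eigenprojections onto the $\pm 1$ eigenspaces. Therefore $d$ restricts to each summand $\Der_\pm(\widehat{T}\Sigma^{-1}V^*)$, and the decomposition above is a decomposition in the category of differential graded vector spaces.

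Finally, taking cohomology commutes with finite direct sums of complexes, so
\[
\Hhoch^\bullet(V,V) \cong \Hhoch^\bullet_+(V,V) \oplus \Hhoch^\bullet_-(V,V),
\]
which is exactly the claimed isomorphism. There is no real obstacle here: the entire content has been front-loaded into the earlier propositions (that the derivation Lie algebra is involutive, and that $[m,-]$ is equivariant for the involution); the only thing to check is the elementary fact that a differential commuting with an involution respects the associated eigenspace decomposition, which is immediate over a field of characteristic $\neq 2$. If anything needs care it is the sign bookkeeping in verifying $[m,\xi]^* = [m,\xi^*]$, but this has already been recorded, so the proof is essentially a one-line assembly of facts in hand.
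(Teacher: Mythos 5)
Your proposal is correct and matches the paper's own (implicit) argument: the paper establishes exactly the eigenspace decomposition $\Der = \Der_+ \oplus \Der_-$ and the compatibility $[m,\xi]^* = [m,\xi^*]$ in the paragraphs preceding the theorem, and then states the result with no further proof, just as you assemble it. The only point worth noting is that the paper leaves the final step (cohomology commutes with the direct sum of complexes) entirely tacit, which you have simply made explicit.
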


For an involutive $A_\infty$--algebra $(V,m)$, general theory tells us that $\Hhoch_+^\bullet(V,V)$ governs involutive $A_\infty$--deformations of $m$ and $\Hhoch^\bullet(V,V)$ governs $A_\infty$--deformations of $m$. Therefore in a certain sense $\Hhoch_-^\bullet(V,V)$ should measure the difference between the involutive and usual deformation theory. 

\section{Dihedral cohomology of cyclic involutive $A_\infty$--algebras}
We begin by recalling the cyclic cohomology of $A_\infty$--algebras and how it relates to cyclic $A_\infty$--algebras. Then we define the dihedral cohomology of involutive $A_\infty$--algebras and consider how it relates to cyclic involutive $A_\infty$--algebras.

\subsection{Cyclic $A_\infty$--algebras}
\Needspace*{4\baselineskip}

\begin{proposition}\label{prop:cyclicdifferential}
Let $(V, m)$ be an $A_\infty$--algebra. Denote by $\CC^{\bullet}(V)$ the graded vector space
\[
\CC^{\bullet}(V) = \Sigma \prod_{i=1}^{\infty}[(\Sigma^{-1}V^*)^{\otimes i}]_{\mathbb{Z}_i}
\]
where $\mathbb{Z}_i$ is the cyclic group of order $i$ acting in the obvious way. Then the derivation $m$ on $\widehat{T}\Sigma^{-1}V^*$ induces a well defined derivation on the quotient $\CC^{\bullet}(V)$.
\end{proposition}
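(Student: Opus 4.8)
The plan is to verify that the Hochschild differential $d = [m, -]$ on $\Der(\widehat{T}\Sigma^{-1}V^*)$, equivalently the derivation $m$ itself acting on the cyclic quotient, descends to the cyclic complex $\CC^\bullet(V)$. The key structural observation is that $\CC^\bullet(V)$ is obtained from $\widehat{T}\Sigma^{-1}V^*$ (up to the outer suspension) by quotienting each tensor power $(\Sigma^{-1}V^*)^{\otimes i}$ by the action of the cyclic group $\mathbb{Z}_i$, so it suffices to show that $m$, viewed via its restriction $m_n \co \Sigma^{-1}V^* \to (\Sigma^{-1}V^*)^{\otimes n}$ and extended as a derivation of the tensor algebra, is compatible with passing to $\mathbb{Z}_i$--coinvariants. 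Concretely, I would work with the standard cyclic generator $t_i$ and show that the image of the derivation $\bar m$ on the quotient $[(\Sigma^{-1}V^*)^{\otimes i}]_{\mathbb{Z}_i}$ is well defined by checking that if $x - t_i x$ is a boundary relation then $m(x) - m(t_i x)$ lies in the span of the analogous relations in $\prod_j [(\Sigma^{-1}V^*)^{\otimes j}]_{\mathbb{Z}_j}$.

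First I would recall explicitly how the derivation $m$ acts on a tensor word: $m(y_1 \otimes \cdots \otimes y_i) = \sum_k \pm\, y_1 \otimes \cdots \otimes m(y_k) \otimes \cdots \otimes y_i$ with Koszul signs, where each $m(y_k)$ is itself a sum $\sum_n m_n(y_k)$ of tensors of varying length. Applying the cyclic rotation $t_i$ permutes the positions $1,\dots,i$ cyclically; when we apply $m$ to $t_i(y_1 \otimes \cdots \otimes y_i)$ we get the same collection of terms, each with one $m(y_k)$ inserted, but now sitting in a cyclically rotated word of a possibly longer length. The point is that inserting an $n$--fold expansion $m_n(y_k)$ into position $k$ of the word and then rotating agrees, up to the appropriate power of the relevant cyclic generator (and matching Koszul sign), with first rotating and then inserting; this is exactly the statement that the two elements represent the same class in the coinvariants $[(\Sigma^{-1}V^*)^{\otimes(i+n-1)}]_{\mathbb{Z}_{i+n-1}}$. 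Carrying this out is the heart of the argument and amounts to a careful but routine bookkeeping of how the cyclic group actions on different tensor lengths intertwine with the $m_n$; one must also confirm the Koszul signs produced by the symmetry isomorphism $s$ match those built into the $\mathbb{Z}_i$--actions.

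The main obstacle I anticipate is precisely this sign and index bookkeeping: ensuring that the sign picked up when $t_i$ moves $m(y_k)$ past the other factors is consistent across all cyclic lengths simultaneously, so that the relations defining the coinvariants are genuinely respected. A clean way to organise this, which I would adopt, is to use the well-known identification of $\CC^\bullet(V)$ with (a suspension of) the complex of cyclically invariant, or rather coinvariant, maps and to invoke the cyclic bar picture: the derivation $m$ is dual to the Hochschild $b$--type differential, and cyclic symmetry of $b$ on the reduced bar complex is classical, so the descent follows from the cyclic invariance of the $A_\infty$ structure maps under the standard signed $\mathbb{Z}_i$--action. I would then remark that $\bar m^2 = 0$ on $\CC^\bullet(V)$ is automatic, since it holds before quotienting and the quotient map intertwines $m$ with $\bar m$, so the induced derivation makes $\CC^\bullet(V)$ a complex as claimed.
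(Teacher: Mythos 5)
Your strategy is sound and would yield a correct proof, but it takes a genuinely different (and more laborious) route than the paper. The paper simply observes that, up to the suspension, $\CC^{\bullet}(V)$ is the quotient of $\widehat{T}\Sigma^{-1}V^*$ by the closed subspace $M$ spanned by graded commutators $ab-(-1)^{\degree{a}\degree{b}}ba$ — the $\mathbb{Z}_i$--coinvariants of all tensor lengths at once are exactly this quotient — and then a one-line computation shows that any derivation satisfies $m(M)\subset M$, since $m(ab-(-1)^{\degree{a}\degree{b}}ba)$ is again a sum of commutators; all the Koszul sign bookkeeping is absorbed into this single observation. You instead verify directly that $m$ is compatible with the relations $x-t_ix$ defining the coinvariants, matching each insertion term of $m(x)$ with the corresponding term of $m(t_ix)$ up to a power of the cyclic generator on the longer tensor length. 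That works, and your phrase ``the appropriate power of the relevant cyclic generator'' correctly anticipates the wrap-around case: when $m_n$ hits the factor that was rotated, the two terms differ by $t_{i+n-1}^{\,n}$ rather than a single rotation, and the degree-one shift of $m_n(y_k)$ relative to $y_k$ is precisely what makes the Koszul signs close up. The cost of your route is that this sign/index check is the whole proof and you leave it at ``routine bookkeeping''; its benefit is that it is explicit and connects naturally to the classical cyclic bar picture. If you want the shortest complete argument, replace the coinvariant-relation check by the commutator description of $\CC^{\bullet}(V)$ and the fact that derivations preserve commutators.
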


\begin{proof}
Let $M$ be the subspace of $\widehat{T}\Sigma^{-1}V^*$ of convergent sums of elements of the form $ab-(-1)^{\degree{a}\degree{b}}ba$ (in other words the subspace of commutators). Then $\CC^\bullet(V)=\widehat{T}\Sigma^{-1}V^*/M$. A straightforward calculation using the fact that $m$ is a degree one derivation shows that $m(M)\subset M$ as required.
\end{proof}

\begin{definition}
Let $(V,m)$ be an $A_\infty$--algebra. The \emph{cyclic cohomology complex} of $V$ is the differential graded vector space $\CC^{\bullet}(V)$ with differential induced by $m$ as in \autoref{prop:cyclicdifferential}. The cohomology of this will be denoted by $\HCC^{\bullet}(V)$.
\end{definition}

Let $V$ be a graded vector space with a symmetric bilinear form $\langle -, - \rangle \co V\otimes V \rightarrow \Sigma^d k$ of degree $d$. A derivation $m\in\Der(\widehat{T}\Sigma^{-1}V^*)$ is called a \emph{cyclic derivation} if the maps $\hat{m}_n\co V^{\otimes n}\rightarrow V$ satisfy
\begin{equation}\label{eq:cycliccondition}
\langle \hat{m}_n(x_1,\dots , x_n), x_{n+1} \rangle = (-1)^\epsilon(-1)^n \langle \hat{m}_n(x_{n+1},\dots, x_{n-1}), x_n \rangle
\end{equation}
where $\epsilon = \degree{x}_{n+1}\sum_{i=1}^n \degree{x}_{i}$ arises from permuting the $x_i\in V$. The subspace of cyclic derivations will be denoted by $\Der_\cycl(\widehat{T}\Sigma^{-1}V^*)$. It is a Lie subalgebra.

The bilinear form on $V$ is non-degenerate if the map $V\rightarrow \Sigma^d V^*$ given by $v\mapsto \langle v, - \rangle$ is an isomorphism. This yields a degree $-d$ non-degenerate bilinear form on $V^*$ denoted by $\langle -, - \rangle^{-1}\co V^*\otimes V^* \rightarrow \Sigma^{-d}V^*$ which is symmetric if $d$ is even and anti-symmetric if $d$ is odd.

\begin{definition}
Let $V$ be a graded vector space with a symmetric bilinear form. A \emph{cyclic $A_\infty$--algebra structure} on $V$ is a cyclic derivation $m\in\Der^\cycl(\widehat{T}\Sigma^{-1}V^*)$ of degree one such that $m^2=0$.
\end{definition}

\begin{remark}
In the case that the bilinear form on $V$ is \emph{non-degenerate} then the above definition is equivalent to an algebra over the modular operad $\moddass$.
\end{remark}

\begin{remark}
A differential graded associative algebra with a symmetric bilinear form satisfying $\langle ab, c \rangle = \langle a, bc \rangle $ is a special case of a cyclic $A_\infty$--algebra.
\end{remark}

Note that a symmetric bilinear on $V$ of degree $d$ is represented by a degree $d$ element $V^*\otimes V^*$, or equivalently a degree $d+2$ element in $\Sigma^{-1}V^*\otimes \Sigma^{-1}V^*$.

\begin{definition}
Let $(V,m)$ and $(W,m')$ be cyclic $A_\infty$--algebras with degree $d$ symmetric bilinear forms. Then a cyclic \emph{$\infty$--morphism} is a map $\phi$ of $A_\infty$--algebras such that $\phi(\omega) = \omega'$ where $\omega\in\Sigma^{-1}V^*\otimes\Sigma^{-1}V^*$ and $\omega'\in\Sigma^{-1}W^*\otimes \Sigma^{-1}W^*$ are the degree $d+2$ elements representing the bilinear forms.
\end{definition}

\begin{theorem}\label{thm:cycder}
Let $(V,m)$ be a cyclic $A_\infty$--algebra with a \emph{non-degenerate} symmetric bilinear form. Then as complexes $\Sigma^{d+1}\CC^\bullet(V) \cong \Der^\cycl(\widehat{T}\Sigma^{-1}V^*)$ where the right hand side is equipped with the differential given by $d(\xi) = [m,\xi]$.
\end{theorem}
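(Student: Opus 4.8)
The plan is to realise both sides through the standard dictionary of noncommutative symplectic geometry and then match the differentials. First I would recall the description of derivations of the completed tensor algebra: a continuous derivation of $\widehat{T}\Sigma^{-1}V^*$ is determined by its restriction to the generators, so $\Der(\widehat{T}\Sigma^{-1}V^*)\cong\prod_{n\geq 0}\IHom\bigl(\Sigma^{-1}V^*,(\Sigma^{-1}V^*)^{\otimes n}\bigr)$, the order-$n$ component being $\hat{\xi}_n\co V^{\otimes n}\to V$ (with an order-zero ``curvature'' slot $\IHom(\Sigma^{-1}V^*,k)\cong\Sigma V$ allowed since we use the full $\widehat{T}$). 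Feeding in the non-degenerate form to identify $V\cong\Sigma^d V^*$, hence $\Sigma V\cong\Sigma^{d+2}\Sigma^{-1}V^*$, this becomes $\Der(\widehat{T}\Sigma^{-1}V^*)\cong\Sigma^{d+2}\prod_{i\geq 1}(\Sigma^{-1}V^*)^{\otimes i}$ after reindexing $i=n+1$; explicitly $\xi$ goes to its collection of moments $\langle\hat{\xi}_n(-,\dots,-),-\rangle\in(V^*)^{\otimes(n+1)}$, its ``potential''.

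Next I would check that the cyclic-derivation condition \ref{eq:cycliccondition} is precisely $\mathbb{Z}_{n+1}$-invariance of that moment: rotating the $(n+1)$ tensor slots of $(\Sigma^{-1}V^*)^{\otimes(n+1)}$ by one step introduces a Koszul sign which, because each slot carries a single suspension, is exactly $(-1)^n(-1)^{\epsilon}$ with $\epsilon$ as in \ref{eq:cycliccondition}. Hence $\Der^\cycl(\widehat{T}\Sigma^{-1}V^*)\cong\Sigma^{d+2}\prod_{i\geq 1}\bigl[(\Sigma^{-1}V^*)^{\otimes i}\bigr]^{\mathbb{Z}_i}$. Since $k=\mathbb{Q}$, averaging over $\mathbb{Z}_i$ gives an isomorphism between invariants and coinvariants, so the right-hand side equals $\Sigma^{d+2}\prod_{i\geq 1}\bigl[(\Sigma^{-1}V^*)^{\otimes i}\bigr]_{\mathbb{Z}_i}=\Sigma^{d+1}\CC^{\bullet}(V)$. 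This yields the isomorphism of graded vector spaces, implemented by $\xi\mapsto$ its potential $\mathfrak{h}_\xi\in\Sigma^{d+1}\CC^{\bullet}(V)$.

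It then remains to see this is an isomorphism of \emph{complexes}: that $\xi\mapsto\mathfrak{h}_\xi$ carries the differential $[m,-]$ on $\Der^\cycl$ to the differential induced by $m$ on $\CC^{\bullet}(V)$ from \autoref{prop:cyclicdifferential}. Here I would use the symplectic picture: the form corresponds to a constant (super)symplectic structure $\omega$ on $\Sigma^{-1}V^*$; the inverse assignment $\mathfrak{h}\mapsto\xi_{\mathfrak{h}}$ (Hamiltonian vector field against $\omega$) intertwines the necklace Poisson bracket $\{-,-\}$ on $\CC^{\bullet}(V)$ with the commutator bracket of derivations. Since $m$ is itself cyclic it is the Hamiltonian derivation $\xi_{\mathfrak{m}}$ of its own potential $\mathfrak{m}$, so $[m,\xi_{\mathfrak{h}}]=\xi_{\{\mathfrak{m},\mathfrak{h}\}}$ by the graded Jacobi identity; and one checks directly that the action of the derivation $m$ on a cyclic word as in \autoref{prop:cyclicdifferential} coincides (up to the universal sign already present) with $\{\mathfrak{m},-\}$. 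Both facts are standard in Kontsevich's noncommutative symplectic formalism, but for the operads $\ass$ and $\mass$ at hand they can also be obtained by the same restriction-to-generators computation used above.

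The genuinely delicate part is sign bookkeeping, concentrated in two spots: reconciling the explicit factor $(-1)^n(-1)^{\epsilon}$ of \ref{eq:cycliccondition} with the Koszul sign of the cyclic generator on the suspended space---one must be consistent about working with $V^*$ versus $\Sigma^{-1}V^*$ and absorb the parity of $d$ into the suspensions correctly---and pinning down the universal sign relating ``apply $m$ to a representative in $\widehat{T}\Sigma^{-1}V^*$, then project to $\CC^{\bullet}(V)$'' with ``bracket against $\mathfrak{m}$''. I would fix conventions once, most cleanly by passing to the supergraded setting so that only the parity of $d$ intervenes, after which both checks are routine; everything else in the argument is formal.
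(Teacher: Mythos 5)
Your argument is essentially the paper's own proof: you restrict derivations to generators, use the non-degenerate form to identify $\IHom(\Sigma^{-1}V^*,(\Sigma^{-1}V^*)^{\otimes n})$ with $\Sigma^{d+2}(\Sigma^{-1}V^*)^{\otimes n+1}$, recognise the cyclicity condition \ref{eq:cycliccondition} as invariance under the cyclic action, and pass between invariants and coinvariants (valid over $\mathbb{Q}$) to land in $\Sigma^{d+1}\CC^{\bullet}(V)$. The only divergence is in the final compatibility of differentials, which the paper states as the direct check $f([m,\xi])=m(f(\xi))$ left to the reader, while you route it through the Hamiltonian/necklace-bracket formalism --- precisely the noncommutative symplectic viewpoint the paper itself endorses in the remark following the theorem, so this is a presentational rather than a substantive difference.
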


\begin{proof}
Since $\IHom(\Sigma^{-1}V^*, (\Sigma^{-1}V^*)^{\otimes n})\cong \Sigma V \otimes (\Sigma^{-1}V^*)^{\otimes n}$ then composing with the isomorphism $V \rightarrow \Sigma^{d}V^*$ on the first tensor factor gives an isomorphism
\[
f\co\IHom(\Sigma^{-1}V^*, (\Sigma^{-1}V^*)^{\otimes n})\cong \Sigma^{d+2} (\Sigma^{-1}V^*)^{\otimes n+1}.
\]
Given $\xi_n\in \IHom(\Sigma^{-1}V^*, (\Sigma^{-1}V^*)^{\otimes n})$ then $\hat{\xi}_n$ satisfies \autoref{eq:cycliccondition} if and only if $f(\xi_n)$ is an invariant with respect to the cyclic action on $\Sigma^{d+2} (\Sigma^{-1}V^*)^{\otimes n+1}$. By the isomorphism of coinvariants and invariants there is therefore an isomorphism of graded vector spaces $\Der^\cycl(\widehat{T}\Sigma^{-1}V^*)\cong \Sigma^{d+1}\CC^\bullet(V)$. It remains to verify that the differential coincides, in other words that $f([m,\xi]) = m(f(\xi))$, which is a straightforward check left to the reader.
\end{proof}

\begin{remark}
The viewpoint of Kontsevich's formal noncommutative symplectic geometry \cite{kontsevich2} tells us that by regarding the bilinear form on $V$ as a symplectic structure then the underlying space of $\CC^{\bullet}(V)$ can be understood as the space of noncommutative Hamiltonians, and $\Der^{\cycl}(\widehat{T}\Sigma^{-1}V^*)$ can be understood as the space of symplectic vector fields. This gives a rather enlightening view on \autoref{thm:cycder}. For a detailed account of this point of view see \cite{hamiltonlazarev}.
\end{remark}

\begin{remark}\label{rem:cyclie}
It follows that $\Sigma^{d+1}\CC^\bullet(V)$ has the structure of a differential graded Lie algebra. The Lie bracket can be described explicitly on the summands by the formula
\[
[a_1\dots a_n, b_1\dots b_m ] = (-1)^p\sum_{i=1}^{n}\sum_{j=1}^{m}(-1)^{\epsilon}\langle a_i, b_j \rangle^{-1}  a_{i+1}\dots a_1\dots a_{i-1} b_{j+1}\dots b_1\dots b_{j-1}
\]
where $\epsilon$ arises from permuting the $a_i\in\Sigma^{-1}V^*$ and $b_i\in\Sigma^{-1}V^*$ and $p=d\sum_i \degree{a}_i$.
\end{remark}

\autoref{thm:cycder} explains the well known result that cyclic cohomology governs cyclic $A_\infty$--deformations.

\subsection{Cyclic involutive $A_\infty$--algebras and decomposition of cyclic cohomology}
\Needspace*{4\baselineskip}

\begin{definition}\label{def:dihedralactions}
Let $W$ be a graded vector space with an involution. Denote by $D_n$ the dihedral group of order $2n$, presented by $D_n = \langle r, s \mid r^n = s^2 = 1, srs^{-1}=r^{-1} \rangle $. Then there are the following two actions of $D_n$ on $W^{\otimes n}$.
\begin{enumerate}
\item \label{dihedralaction}The \emph{dihedral action} is defined by
\begin{align*}
r(w_1\otimes w_2 \otimes \dots \otimes w_n) &= (-1)^{\epsilon} w_n\otimes w_1 \otimes \dots\otimes w_{n-1}\\
s(w_1\otimes w_2 \otimes \dots \otimes w_n) &= (w_1\otimes w_2 \otimes \dots \otimes w_n)^*
\end{align*}
\item\label{skewdihedralaction} The \emph{skew-dihedral action} is defined by
\begin{align*}
r(w_1\otimes w_2 \otimes \dots \otimes w_n) &= (-1)^{\epsilon} w_n\otimes w_1 \otimes \dots\otimes w_{n-1}\\
s(w_1\otimes w_2 \otimes \dots \otimes w_n) &= -(w_1\otimes w_2 \otimes \dots \otimes w_n)^*
\end{align*}
\end{enumerate}
Here $\epsilon= \degree{w}_{n}\sum_{i=1}^{n-1}\degree{w}_i$ arises, as usual, from permuting the $w_i\in W$.
\end{definition}

\begin{proposition}\label{prop:dihedraldifferential}
Let $(V, m)$ be an involutive $A_\infty$--algebra.
\begin{itemize}
\item Denote by $\CD^{\bullet}_+(V)$ the graded vector space
\[
\CD^{\bullet}_+(V)= \Sigma \prod_{i=1}^{\infty}[ (\Sigma^{-1} V^*)^{\otimes i} ]_{D_{i}}
\]
where $D_i$ is the dihedral group of order $2i$ acting by the dihedral action \ref{dihedralaction} of \autoref{def:dihedralactions}. Then the derivation $m$ on $\widehat{T}\Sigma^{-1}V^*$ induces a well defined derivation on the quotient $\CD^\bullet_+(V)$.
\item Denote by $\CD^{\bullet}_-(V)$ the graded vector space
\[
\CD^\bullet_-(V) = \Sigma \prod_{i=1}^{\infty}[ (\Sigma^{-1} V^*)^{\otimes i} ]_{D_{i}}
\]
where $D_i$ is the dihedral group of order $2i$ acting by the skew-dihedral action \ref{skewdihedralaction} of \autoref{def:dihedralactions}. Then the derivation $m$ on $\widehat{T}\Sigma^{-1}V^*$ induces a well defined derivation on the quotient $\CD^\bullet_-(V)$.
\end{itemize}
\end{proposition}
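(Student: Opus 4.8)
The plan is to mimic the proof of \autoref{prop:cyclicdifferential}, exploiting the fact that $\CD^\bullet_\pm(V)$ is a further quotient of the cyclic complex $\CC^\bullet(V)$. Since $\mathbb{Z}_i$ is normal in $D_i$ with quotient $\mathbb{Z}_2$ generated by the image of $s$, taking $D_i$--coinvariants of $(\Sigma^{-1}V^*)^{\otimes i}$ amounts to first taking $\mathbb{Z}_i$--coinvariants — which is exactly the $i$--th factor of $\CC^\bullet(V)$ — and then taking $\mathbb{Z}_2$--coinvariants for the residual action of $s$, which acts by $w\mapsto w^*$ in the dihedral case \ref{dihedralaction} and by $w\mapsto -w^*$ in the skew-dihedral case \ref{skewdihedralaction} of \autoref{def:dihedralactions}. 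Thus $\CD^\bullet_+(V)$ and $\CD^\bullet_-(V)$ are the quotients of $\CC^\bullet(V)$ by the closed subspaces spanned by elements of the form $w-w^*$ and $w+w^*$ respectively, where $(-)^*$ is the involution on $(\Sigma^{-1}V^*)^{\otimes i}$ induced from that on $V$.

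Granting this, two things remain. First, the involution $w\mapsto w^*$ on $\widehat{T}\Sigma^{-1}V^*$ descends to $\CC^\bullet(V)=\widehat{T}\Sigma^{-1}V^*/M$, i.e.\ it preserves the closed subspace $M$ of convergent sums of graded commutators; this is immediate from the identity $(ab-(-1)^{\degree{a}\degree{b}}ba)^* = (-1)^{\degree{a}\degree{b}}\bigl(b^*a^*-(-1)^{\degree{a}\degree{b}}a^*b^*\bigr)$, which follows from $(xy)^*=(-1)^{\degree{x}\degree{y}}y^*x^*$ on $\widehat{T}\Sigma^{-1}V^*$. Second, writing $M_\pm$ for the closed subspace spanned by $M$ together with the elements $w\mp w^*$ (so that, up to the overall suspension, $\CD^\bullet_\pm(V)=\widehat{T}\Sigma^{-1}V^*/M_\pm$), one checks $m(M_\pm)\subseteq M_\pm$: indeed $m(M)\subseteq M$ by \autoref{prop:cyclicdifferential}, while for $w\in(\Sigma^{-1}V^*)^{\otimes i}$ the defining condition $m(x^*)=m(x)^*$ of an involutive $A_\infty$--algebra, together with the fact that $m$ is a continuous derivation, gives $m(w\mp w^*)=m(w)\mp m(w)^*\in M_\pm$. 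Hence $m$ induces a well-defined (square-zero) derivation on each quotient, as required. As in \autoref{prop:cyclicdifferential}, everything takes place in the formal setting, so $M$ and $M_\pm$ are taken closed and all maps continuous; this causes no difficulty.

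The genuinely routine but slightly delicate point — and the only real work — is the sign bookkeeping: one must verify that the skew-dihedral assignment of \autoref{def:dihedralactions} really is a $D_i$--action, i.e.\ that $srs^{-1}=r^{-1}$ holds against the Koszul signs in the cyclic action $r$ and in the $-(-)^*$ reflection, and that the compatibility ``$m$ preserves $(-)^*$ on $(\Sigma^{-1}V^*)^{\otimes i}$'' unwinds, via the usual identification with a collection of maps $\hat m_n\co V^{\otimes n}\to V$, to precisely the stated relation $\hat m_n(x_1,\dots,x_n)^* = (-1)^{\epsilon}(-1)^{n(n+1)/2-1}\hat m_n(x_n^*,\dots,x_1^*)$, so that no spurious sign obstructs the descent in either the $+$ or the $-$ case. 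I would relegate these sign checks to ``a straightforward calculation left to the reader'', exactly in the style of \autoref{prop:cyclicdifferential} and \autoref{thm:cycder}.
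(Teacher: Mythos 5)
Your proof is correct and follows essentially the same route as the paper: realise $\CD^\bullet_\pm(V)$ as the quotient of $\widehat{T}\Sigma^{-1}V^*$ (equivalently of $\CC^\bullet(V)$) by the closed subspace generated by graded commutators together with the elements $a\mp a^*$, and then use $m(M)\subset M$ from \autoref{prop:cyclicdifferential} plus the involutivity $m(a^*)=m(a)^*$ to see that $m$ preserves this larger subspace. The only difference is presentational — you pass through the decomposition $D_i=\mathbb{Z}_2\ltimes\mathbb{Z}_i$ explicitly, while the paper writes down the combined subspace directly — which changes nothing of substance.
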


\begin{proof}
Let $M$ be the subspace of $\widehat{T}\Sigma^{-1}V^*$ of convergent sums of elements of the forms $ab - (-1)^{\degree{a}\degree{b}}ba$ and $a - a^*$. Then $\CD^\bullet_+(V) = \Sigma\widehat{T}\Sigma^{-1}V^*/M$. Since $m$ is involutive $m(a-a^*) = m(a)-m(a)^* \in M$ and together with the proof of \autoref{prop:cyclicdifferential} this means $m(M)\subset M$ and the first part follows. The second part is essentially the same.
\end{proof}

\begin{definition}
Let $(V,m)$ be an involutive $A_\infty$--algebra.
\begin{itemize}
\item The \emph{dihedral cohomology complex} of $V$ is the differential graded vector space $\CD^\bullet_+(V)$ with differential induced by $m$ as in \autoref{prop:dihedraldifferential}. The cohomology of this complex will be denoted by $\HCD^{\bullet}_+(V)$.
\item The \emph{skew-dihedral cohomology complex} of $V$ is the differential graded vector space $\CD^\bullet_-(V)$ with differential induced by $m$ as in \autoref{prop:dihedraldifferential}. The cohomology of this complex will be denoted by $\HCD^\bullet_-(V)$.
\end{itemize}
\end{definition}

\begin{theorem}\label{thm:cyclicdecomposition}
For an involutive $A_\infty$--algebra $(V, m)$ the cyclic cohomology of $V$ decomposes as $\HCC^\bullet(V) \cong \HCD^\bullet_+(V) \oplus \HCD^\bullet_-(V)$.
\end{theorem}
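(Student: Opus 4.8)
The plan is to mimic the proof of \autoref{thm:hochschilddecomposition}: exhibit an involution on the cyclic cochain complex, split off its $\pm 1$ eigenspaces as subcomplexes, and then recognise these as the dihedral and skew-dihedral complexes. This is the exact cyclic-cohomology analogue of the Hochschild splitting, and can be phrased as the observation that the extra involution makes $\CC^\bullet(V)$ a complex of $\mathbb{Q}[\mathbb{Z}_2]$-modules, together with semisimplicity of $\mathbb{Q}[\mathbb{Z}_2]$.

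First I would note that the involution $x\mapsto x^*$ on $\widehat{T}\Sigma^{-1}V^*$ preserves the subspace $M$ of convergent sums of commutators $ab-(-1)^{\degree{a}\degree{b}}ba$, since $(ab-(-1)^{\degree{a}\degree{b}}ba)^*$ is again (a sign times) a commutator. Hence it descends to an involution, call it $\theta$, on $\CC^\bullet(V)=\Sigma\,\widehat{T}\Sigma^{-1}V^*/M$. Because $(V,m)$ is an involutive $A_\infty$--algebra we have $m(x^*)=m(x)^*$, so the derivation induced by $m$ on $\CC^\bullet(V)$ (via \autoref{prop:cyclicdifferential}) commutes with $\theta$; thus $\theta$ is a chain map. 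Working over $k=\mathbb{Q}$, the complex splits as a direct sum of subcomplexes $\CC^\bullet(V)=\CC^\bullet(V)^{\theta=+1}\oplus\CC^\bullet(V)^{\theta=-1}$, with projectors $\tfrac12(1\pm\theta)$.

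Next I would identify the two summands. Since $\mathbb{Z}_i$ sits inside $D_i$ as an index-two normal subgroup with quotient generated by the class of $s$, the $D_i$--coinvariants of $(\Sigma^{-1}V^*)^{\otimes i}$ for either action of \autoref{def:dihedralactions} are the further $\mathbb{Z}_2$--coinvariants of the cyclic coinvariants $[(\Sigma^{-1}V^*)^{\otimes i}]_{\mathbb{Z}_i}$, where the generator acts by $\rho:=\theta$ (dihedral action \ref{dihedralaction}) or by $-\rho$ (skew-dihedral action \ref{skewdihedralaction}). Over $\mathbb{Q}$, for an involution $\rho$ on a vector space $W$ one has $W/\operatorname{im}(1-\rho)\cong W^{\rho=+1}$ and $W/\operatorname{im}(1+\rho)\cong W^{\rho=-1}$. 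Applying this factorwise and reassembling under $\Sigma\prod_i(-)$ gives natural isomorphisms $\CD^\bullet_+(V)\cong\CC^\bullet(V)^{\theta=+1}$ and $\CD^\bullet_-(V)\cong\CC^\bullet(V)^{\theta=-1}$, compatible with the differentials induced by $m$ (the derivation descends in all cases by \autoref{prop:cyclicdifferential} and \autoref{prop:dihedraldifferential}). Passing to cohomology then yields $\HCC^\bullet(V)\cong\HCD^\bullet_+(V)\oplus\HCD^\bullet_-(V)$.

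The only genuinely fiddly point, and the one I would treat with care, is the sign bookkeeping: checking that $\theta$ really is the operator induced on cyclic coinvariants by $s$ in the dihedral action and $-\theta$ the one in the skew-dihedral action, and that the Koszul signs $\epsilon$ appearing in the cyclic action and in the involution on $(\Sigma^{-1}V^*)^{\otimes i}$ match after desuspension to $V^{\otimes i}$. Everything else is formal.
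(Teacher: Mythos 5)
Your proposal is correct and follows essentially the same route as the paper: the paper's proof likewise uses $D_n=\mathbb{Z}_2\ltimes\mathbb{Z}_n$ to view $\CD^\bullet_\pm(V)$ as quotients of $\CC^\bullet(V)$ by the two $\mathbb{Z}_2$--actions coming from the involution, identifies them (via coinvariants $\cong$ invariants over $\mathbb{Q}$) with the $\pm 1$ eigenspaces of the induced involution, and concludes as in \autoref{thm:hochschilddecomposition}. Your write-up simply spells out the eigenspace splitting and the descent of the differential in more detail than the paper does.
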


\begin{proof}
Since $D_n = \mathbb{Z}_2\ltimes \mathbb{Z}_n$, the complexes $\CD^\bullet_+(V)$ and $\CD^\bullet_-(V)$ are the quotients of $\CC^\bullet(V)$ by two different actions of $\mathbb{Z}_2$ arising from the involution on $V$. By the isomorphism of coinvariants with invariants these spaces can be identified with the eigenspaces of the eigenvalues $+1$ and $-1$ of the involution on $\CC^\bullet(V)$ and the result follows as for \autoref{thm:hochschilddecomposition}.
\end{proof}

\begin{proposition}
Let $V$ be a graded vector space with an involution and a symmetric bilinear form such that $\langle x^*, y^* \rangle = \langle x, y \rangle$. Then $\Der^\cycl(\widehat{T}\Sigma^{-1} V^*)$ is an involutive Lie subalgebra of $\Der(\widehat{T}\Sigma^{-1} V^*)$.
\end{proposition}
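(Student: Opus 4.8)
The plan is to separate the statement into the part already established and the single new fact that needs checking. By the preceding proposition, $\Der(\widehat{T}\Sigma^{-1}V^*)$ carries the involution $\xi\mapsto\xi^*$ with $\xi^*(x)=\xi(x^*)^*$ and is an involutive Lie algebra, and, as noted after \eqref{eq:cycliccondition}, $\Der^\cycl(\widehat{T}\Sigma^{-1}V^*)$ is a Lie subalgebra; moreover the identity $[\xi,\eta]^*=[\xi^*,\eta^*]$ for elements of the subalgebra is automatic, being inherited from the ambient algebra. Hence the entire content of the proposition is the claim that the involution \emph{preserves} the subspace of cyclic derivations: if $\xi$ satisfies \eqref{eq:cycliccondition}, then so does $\xi^*$. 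This is where the hypothesis $\langle x^*,y^*\rangle=\langle x,y\rangle$ must enter.

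To organise the argument cleanly I would reformulate \eqref{eq:cycliccondition} in the language of \autoref{def:dihedralactions}. Given a derivation $\xi$ with structure maps $\hat\xi_n\colon V^{\otimes n}\to V$, the bilinear form produces, for each $n$, an element $\omega^\xi_n\in(\Sigma^{-1}V^*)^{\otimes(n+1)}$ representing the multilinear functional $(x_1,\dots,x_{n+1})\mapsto\langle\hat\xi_n(x_1,\dots,x_n),x_{n+1}\rangle$; this assignment makes sense whether or not the form is non-degenerate. After accounting for the suspension shifts, condition \eqref{eq:cycliccondition} is exactly the statement that $\omega^\xi_n$ is fixed by the cyclic rotation $r$ of $D_{n+1}$ in \autoref{def:dihedralactions}. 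Thus $\xi\in\Der^\cycl$ if and only if every $\omega^\xi_n$ is $\langle r\rangle$-invariant, and the problem becomes one about the action of the reflection $s$.

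The next step is to compute $\omega^{\xi^*}_n$. Unwrapping $\xi^*(x)=\xi(x^*)^*$ into structure maps expresses $\widehat{(\xi^*)}_n$ as a sign-twisted composite of reversal of arguments, the involution on $V$, and a final star — the same kind of formula as the one derived earlier for involutive $A_\infty$--structures. Feeding this through the form and using $\langle x^*,y^*\rangle=\langle x,y\rangle$ to move the stars across the pairing (together with the definitions of $*$ on $V^*$ and on tensor powers, with their Koszul signs) should yield $\omega^{\xi^*}_n=\pm\,s\cdot\omega^\xi_n$, with $s\in D_{n+1}$ the reflection of \autoref{def:dihedralactions} and the global sign irrelevant. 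Since $srs^{-1}=r^{-1}$ in $D_{n+1}$, the element $s$ sends $\langle r\rangle$-invariants to $\langle r\rangle$-invariants (if $r\omega=\omega$ then $r(s\omega)=s(r^{-1}\omega)=s\omega$), so every $\omega^{\xi^*}_n$ is $\langle r\rangle$-invariant, i.e. $\xi^*\in\Der^\cycl(\widehat{T}\Sigma^{-1}V^*)$, which completes the proof.

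The one genuinely delicate point is the sign in the identity $\omega^{\xi^*}_n=\pm\,s\cdot\omega^\xi_n$: the reversal sign hidden in $\widehat{(\xi^*)}_n$, the minus sign in the involution on $V^*$, the Koszul signs in the involution on tensor powers, and the $(-1)^n$ built into \eqref{eq:cycliccondition} (hence into the $r$-action) all have to combine to reproduce the $s$-action of \autoref{def:dihedralactions} up to an overall sign that does not affect invariance. If one prefers, the dihedral-group language can be dropped and one can instead verify directly that $\widehat{(\xi^*)}_n$ satisfies \eqref{eq:cycliccondition}; the computation is the same and is of the routine "straightforward check left to the reader" type, so I would relegate it accordingly.
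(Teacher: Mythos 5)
Your proposal is correct and in essence coincides with the paper's proof: the only new content is closure of $\Der^\cycl(\widehat{T}\Sigma^{-1}V^*)$ under $\xi\mapsto\xi^*$ (the subalgebra and involutive-bracket facts being inherited, as you say), and the sign computation you defer --- moving stars across the pairing via $\langle x^*,y^*\rangle=\langle x,y\rangle$ and invoking cyclicity of $\xi$ in the middle --- is precisely the chain of equalities the paper displays. Your dihedral-group packaging (cyclicity as $r$-invariance, the involution acting through the reflection coset, and $srs^{-1}=r^{-1}$ preserving $r$-invariants) is an equivalent reorganisation of that same check rather than a different argument.
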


\begin{proof}
Let $m\in \Der^\cycl(\widehat{T}\Sigma^{-1} V^*)$ be a cyclic derivation. Then a straightforward calculation gives
\begin{align*}
\langle \hat{m}^*_n(x_1,\dots , x_n), x_{n+1} \rangle &= (-1)^{\epsilon}(-1)^{n(n+1)/2 - 1} \langle \hat{m}_n(x_n^*,\dots,x_1^*)^*, x_{n+1} \rangle
\\
&= (-1)^{\epsilon}(-1)^{n(n+1)/2 - 1} \langle \hat{m}_n(x_n^*,\dots,x_1^*), x_{n+1}^* \rangle
\\
&=  (-1)^{\epsilon'}(-1)^{n(n+1)/2 - 1}(-1)^n \langle \hat{m}_n(x_{n-1}^*,\dots, x_{n+1}^*), x_n^* \rangle
\\
&= (-1)^{\epsilon'}(-1)^{n(n+1)/2 - 1}(-1)^n \langle \hat{m}_n(x_{n-1}^*,\dots, x_{n+1}^*)^*, x_n \rangle
\\
&= (-1)^{\epsilon''}(-1)^n \langle \hat{m}_n^*(x_{n+1},\dots, x_{n-1}), x_n \rangle
\end{align*}
where $\epsilon$, $\epsilon'$ and $\epsilon''$ arise from the Koszul sign rule permuting the $x_i$, remembering that $\degree{x}_i^* = \degree{x}_i$. Therefore $m^*$ is also a cyclic derivation as required. 
\end{proof}

\begin{corollary}
The space $\Der^{\cycl}_+(\widehat{T}\Sigma^{-1}V^*)=\Der^\cycl(\widehat{T}\Sigma^{-1}V^*)\cap\Der_+(\widehat{T}\Sigma^{-1}V^*)$ of cyclic derivations preserving the involution is a Lie subalgebra and as graded vector spaces $\Der^\cycl(\widehat{T}\Sigma^{-1}V^*) = \Der^{\cycl}_+(\widehat{T}\Sigma^{-1}V^*)\oplus\Der^{\cycl}_-(\widehat{T}\Sigma^{-1}V^*)$.
\qed
\end{corollary}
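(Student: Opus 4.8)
The plan is to deduce the statement formally from the immediately preceding proposition — which says precisely that $\Der^\cycl(\widehat{T}\Sigma^{-1}V^*)$ is stable under the involution $\xi\mapsto\xi^*$ — together with the already-recorded fact that $\Der^\cycl(\widehat{T}\Sigma^{-1}V^*)$ is a Lie subalgebra of $\Der(\widehat{T}\Sigma^{-1}V^*)$. Since the ground field is $\mathbb{Q}$, the projectors $\xi\mapsto\xi^{\pm}=\frac{1}{2}(\xi\pm\xi^*)$ are available; and since the $*$-operation on $\widehat{T}\Sigma^{-1}V^*$ is visibly degree-preserving, every splitting produced below is automatically a splitting of \emph{graded} vector spaces. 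So there is essentially no computation to do here beyond invoking the preceding proposition.

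First I would observe that $\Der_+(\widehat{T}\Sigma^{-1}V^*)$ is itself a Lie subalgebra of $\Der(\widehat{T}\Sigma^{-1}V^*)$: it is the $+1$-eigenspace of the involution $\xi\mapsto\xi^*$, and because $[\xi,\eta]^*=[\xi^*,\eta^*]$ this eigenspace is closed under the commutator bracket. Consequently $\Der^{\cycl}_+(\widehat{T}\Sigma^{-1}V^*)=\Der^\cycl(\widehat{T}\Sigma^{-1}V^*)\cap\Der_+(\widehat{T}\Sigma^{-1}V^*)$ is an intersection of two Lie subalgebras of $\Der(\widehat{T}\Sigma^{-1}V^*)$, hence a Lie subalgebra; this is the first assertion. (Alternatively, one checks directly that the commutator of two cyclic derivations preserving the involution is again cyclic — by the preceding proposition's proof — and preserves the involution.)

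For the direct sum decomposition, take any $\xi\in\Der^\cycl(\widehat{T}\Sigma^{-1}V^*)$. By the preceding proposition $\xi^*\in\Der^\cycl(\widehat{T}\Sigma^{-1}V^*)$ as well, and since this space is a linear subspace the projections $\xi^{+}=\frac{1}{2}(\xi+\xi^*)$ and $\xi^{-}=\frac{1}{2}(\xi-\xi^*)$ again lie in it; by construction $\xi^{\pm}$ lies in the $\pm1$-eigenspace $\Der_{\pm}(\widehat{T}\Sigma^{-1}V^*)$. Hence $\xi=\xi^{+}+\xi^{-}$ with $\xi^{\pm}\in\Der^{\cycl}_{\pm}(\widehat{T}\Sigma^{-1}V^*)$, so $\Der^\cycl = \Der^{\cycl}_+ + \Der^{\cycl}_-$; the sum is direct because the eigenspaces $\Der_+$ and $\Der_-$ meet only in $0$, and it is grading-compatible since $*$ is degree-preserving. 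This is the cyclic analogue of the splitting $\Der=\Der_+\oplus\Der_-$ that underlies \autoref{thm:hochschilddecomposition}, and the only step with any real content is the preceding proposition — namely that the cyclicity condition \autoref{eq:cycliccondition} is preserved by $\xi\mapsto\xi^*$, which is where the hypothesis $\langle x^*,y^*\rangle=\langle x,y\rangle$ enters; so I do not expect any genuine obstacle. (If one wishes, one can add that when $m$ is an involutive cyclic $A_\infty$--structure the same argument, using $[m,\xi]^*=[m,\xi^*]$, upgrades this to a decomposition of complexes.)
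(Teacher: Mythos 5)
Your argument is correct and is exactly the route the paper intends: the corollary is stated with no written proof precisely because it follows immediately from the preceding proposition (closure of $\Der^\cycl$ under $\xi\mapsto\xi^*$) together with the eigenspace projectors $\xi^{\pm}=\tfrac{1}{2}(\xi\pm\xi^*)$ and the involutive Lie algebra identity $[\xi,\eta]^*=[\xi^*,\eta^*]$ already recorded in the Hochschild section. Nothing is missing.
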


\begin{definition}
Let $V$ be a graded vector space with an involution and a symmetric bilinear form such that $\langle x^*, y^* \rangle = \langle x, y \rangle$. A \emph{cyclic involutive $A_\infty$--algebra structure} on $V$ is a cyclic involutive derivation $m \in \Der^\cycl_+(\widehat{T} \Sigma^{-1} V^*)$ of degree one such that $m^2 = 0$.
\end{definition}

\begin{remark}
An involutive differential graded associative algebra with a symmetric bilinear form satisfying $\langle a^*, b^* \rangle = \langle a, b \rangle$ and $\langle ab, c \rangle = \langle a, bc \rangle$ is a special case of a cyclic involutive $A_\infty$--algebra.
\end{remark}

Note that a symmetric bilinear on $V$ of degree $d$ is represented by a degree $d$ element $V^*\otimes V^*$, or equivalently a degree $d+2$ element in $\Sigma^{-1}V^*\otimes \Sigma^{-1}V^*$.

\begin{definition}
Let $(V,m)$ and $(W,m')$ be cyclic involutive $A_\infty$--algebras with degree $d$ symmetric bilinear forms. Then a cyclic involutive \emph{$\infty$--morphism} is a map $\phi$ of involutive $A_\infty$--algebras which is also a map of cyclic $A_\infty$--algebras.
\end{definition}

\begin{theorem}
Let $(V, m)$ be a cyclic involutive $A_\infty$--algebra with a \emph{non-degenerate} symmetric bilinear form. Then as complexes
\begin{itemize}
\item $\Sigma^{d+1}\CD^\bullet_+(V) \cong \Der^\cycl_+(\widehat{T} \Sigma^{-1} V^*)$
\item $\Sigma^{d+1}\CD^\bullet_-(V) \cong \Der^\cycl_-(\widehat{T} \Sigma^{-1} V^*)$
\end{itemize}
where $\Der^\cycl_+(\widehat{T} \Sigma^{-1} V^*)$ and $\Der^\cycl_-(\widehat{T} \Sigma^{-1} V^*)$ are each equipped with the differential given by $d(\xi)=[m,\xi]$.
\end{theorem}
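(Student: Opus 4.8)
The plan is to upgrade the isomorphism $f$ of \autoref{thm:cycder} to a $\mathbb{Z}_2$--equivariant statement and then pass to $\pm 1$ eigenspaces on both sides. Recall from the proof of \autoref{thm:cycder} that $f$ is assembled from the canonical identifications $\IHom(\Sigma^{-1}V^*,(\Sigma^{-1}V^*)^{\otimes n})\cong\Sigma V\otimes(\Sigma^{-1}V^*)^{\otimes n}$ followed by the isomorphism $V\rightarrow\Sigma^{d}V^*$ induced by the non-degenerate form on the first tensor factor, and that a cyclic derivation corresponds exactly to an element of $\Sigma^{d+1}\CC^\bullet(V)$, i.e.\ to an invariant with respect to the cyclic action. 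On the cyclic side the involution on $V$ induces (using the sign conventions of the earlier subsection, in particular the extra minus in $\phi^*(v)=-\phi(v^*)$ on $V^*$) an involution $\iota$ on $\Sigma^{d+1}\CC^\bullet(V)$, which by the argument in the proof of \autoref{thm:cyclicdecomposition} has $\CD^\bullet_+(V)$ and $\CD^\bullet_-(V)$ as its $+1$ and $-1$ eigenspaces (via coinvariants $=$ invariants). On the derivation side the involution is $\xi\mapsto\xi^*$ with $\xi^*(x)=\xi(x^*)^*$, and the preceding corollary identifies $\Der^\cycl_{\pm}(\widehat{T}\Sigma^{-1}V^*)$ with its $\pm 1$ eigenspaces.

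The key computation is to verify that $f$ intertwines $\xi\mapsto\xi^*$ with $\iota$ on the subspace of cyclic elements; equivalently, that under $f$ the operator $\xi\mapsto\xi^*$ corresponds to the $\mathbb{Z}_2$--part of the dihedral action, with the generator $s$ acting as in case \ref{dihedralaction} of \autoref{def:dihedralactions}. This is where the hypothesis $\langle x^*,y^*\rangle=\langle x,y\rangle$ is used: it makes conjugation of $V\rightarrow\Sigma^{d}V^*$ by the involutions on $V$ and on $V^*$ compatible, so that the single sign in the involution on $V^*$ is precisely the thing that separates the dihedral action (yielding $\CD^\bullet_+$, matching $\Der^\cycl_+$) from the skew--dihedral one (yielding $\CD^\bullet_-$, matching $\Der^\cycl_-$).

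I expect this sign bookkeeping --- propagating the Koszul signs through $f$, through the coinvariants-to-invariants identification, and through the desuspension $\Sigma^{d+1}$ --- to be the main, and essentially the only, obstacle; conceptually nothing new happens beyond \autoref{thm:cycder} and \autoref{thm:cyclicdecomposition}. It would be cleanest to do the bookkeeping first on a single homogeneous summand $[(\Sigma^{-1}V^*)^{\otimes n}]$, where both actions are explicit, and then note that everything is compatible with the products/projections.

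Finally, granting the equivariance, restricting $f$ to the $+1$ and $-1$ eigenspaces gives the two claimed isomorphisms of graded vector spaces. The differential $d(\xi)=[m,\xi]$ preserves the eigenspace decomposition on the derivation side, since $m$ is involutive and hence $[m,\xi]^*=[m,\xi^*]$, and by \autoref{thm:cycder} $f$ carries $d(\xi)=[m,\xi]$ to the differential induced by $m$ on $\CC^\bullet(V)$, which in turn restricts to the differentials of \autoref{prop:dihedraldifferential} on $\CD^\bullet_{\pm}(V)$; therefore the two isomorphisms are isomorphisms of complexes, as required.
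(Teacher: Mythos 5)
Your proposal is correct and follows essentially the same route as the paper: it invokes the isomorphism of \autoref{thm:cycder}, observes (as the paper does, with the sign bookkeeping left implicit) that it intertwines the involution $\xi\mapsto\xi^*$ with the $\mathbb{Z}_2$--action on $\CC^\bullet(V)$, and then identifies $\CD^\bullet_\pm(V)$ with the $\pm 1$ eigenspaces exactly as in \autoref{thm:cyclicdecomposition}, with compatibility of the differentials following from $[m,\xi]^*=[m,\xi^*]$.
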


\begin{proof}
By \autoref{thm:cycder} $\Sigma^{d+1} \CC^\bullet(V) \cong \Der^\cycl(\widehat{T}\Sigma^{-1}V^*)$ as complexes. Furthermore it is clear this isomorphism also preserves the involution. As in the proof of \autoref{thm:cyclicdecomposition} $\CD^\bullet_+(V)$ and $\CD^\bullet_-(V)$ can be identified with the eigenspaces of the eigenvalues $+1$ and $-1$ of the involution on $\CC^\bullet(V)$, which correspond under this isomorphism to $\Der^\cycl_+(\widehat{T}\Sigma^{-1}V^*)$ and $\Der^\cycl_-(\widehat{T}\Sigma^{-1}V^*)$ respectively.
\end{proof}

\begin{remark}
It follows from \autoref{rem:cyclie} that $\Sigma^{d+1}\CD^\bullet_+(V)$ is a differential graded Lie subalgebra of $\Sigma^{d+1}\CC^\bullet(V)$.
\end{remark}

\section{Deformation theory of involutive $A_\infty$--algebras}
Just as the Hochschild cohomology and cyclic cohomology in a certain sense govern deformations of $A_\infty$--algebras and cyclic $A_\infty$--algebras, so the involutive Hochschild cohomology and dihedral cohomology govern deformations of involutive $A_\infty$--algebras and cyclic involutive $A_\infty$--algebras.

There is much well established general theory concerning deformation functors of homotopy algebras and their representability by differential graded Lie algebras so this section will be very succinct and not at all comprehensive.

To any differential graded Lie algebra $\mathfrak{g}$ and an augmented finite dimensional nilpotent commutative algebra (or more generally an inverse limit of such objects) $R$ with maximal ideal $R_+$, there is a deformation functor $\Def_{\mathfrak{g}}$ associated to $\mathfrak{g}$ which assigns to $R$ the \emph{Maurer--Cartan moduli set} $\MCmoduli(\mathfrak{g}\otimes R_+)$. This functor will be defined in \autoref{chap:mc}.

Furthermore if $A$ is an associative algebra, or more generally an $A_\infty$--algebra and $R$ is an augmented finite dimensional nilpotent commutative algebra (or more generally an inverse limit of such objects) then the classical deformation functor $\Def_A$ assigns to $R$ the set of $\infty$--isomorphism classes of $R$--linear $A_\infty$--algebras $\tilde{A}$ with an isomorphism $\tilde{A}\otimes_R k \rightarrow A$ (here we are considering only free deformations, by which we mean that $\tilde{A}$ is a free $R$--module, or equivalently $\tilde{A}\cong R\otimes A$ as $R$--modules).

Then it is a well known and straightforward result that setting $\mathfrak{g}$ to be the differential graded Lie algebra $\mathfrak{g}=\Sigma\hoch^\bullet(A,A)_{\geq 1} = \Der(\widehat{T}_{\geq 1}\Sigma^{-1}A^*)$ then $\Def_{\mathfrak{g}}\cong \Def_A$, so in this sense the Hochschild cohomology of $A$ governs deformations of $A$. In this same manner, for a cyclic $A_\infty$--algebra $A$ with non-degenerate bilinear form of degree $d$ then $\mathfrak{g}=\Sigma^{d+1}\CC^\bullet(A)_{\geq 1}=\Der^{\cycl}(\widehat{T}_{\geq 1}\Sigma^{-1}V^*)$ governs cyclic deformations of $A$. It is straightforward to see from the definitions and theorems shown above that by simply replacing `Hochschild' with `involutive Hochschild' and `cyclic' with `dihedral' one can obtain analogous results for involutive $A_\infty$--algebras and cyclic involutive $A_\infty$--algebras.

\chapter{Maurer--Cartan elements and lifts}\label{chap:mc}

In this chapter we will first review the construction of the Maurer--Cartan moduli set associated to a differential graded Lie algebra. We extend the theory to curved Lie algebras. We then use this construction and its applications to deformation theory to study a certain class of deformation problems in substantial generality.

\section{Curved Lie algebras and Maurer--Cartan elements}

\begin{definition}
Let $\mathfrak{g}$ be a differential graded Lie algebra. A \emph{Maurer--Cartan element in $\mathfrak{g}$} is a degree one element $\xi\in\mathfrak{g}$ satisfying the Maurer--Cartan equation
\[
d\xi + \frac{1}{2}[\xi, \xi]=0.
\]
We denote the set of Maurer--Cartan elements in $\mathfrak{g}$ by $\MC(\mathfrak{g})$.
\end{definition}

Since a map of $\mathfrak{g}\rightarrow\mathfrak{h}$ takes Maurer--Cartan elements to Maurer--Cartan elements we see that $\MC$ defines a functor on differential graded Lie algebras.

We will also need the notion of a Maurer--Cartan element in a \emph{curved} Lie algebra.

\begin{definition}
A \emph{curved Lie algebra} is a graded Lie algebra $\mathfrak{g}$ with a degree two element $\Omega\in\mathfrak{g}$ called the curvature and a degree one derivation $d$ of $\mathfrak{g}$ called the predifferential such that for all $\eta\in\mathfrak{g}$:
\begin{itemize}
\item $d^2\eta = [\eta, \Omega]$
\item $d\Omega = 0$
\end{itemize}
\end{definition}

\begin{definition}
A morphism of curved Lie algebras $f\co\mathfrak{g}\rightarrow\mathfrak{h}$ is a morphism of graded Lie algebras such that $fd_\mathfrak{g} = d_\mathfrak{h}f$ and $f(\Omega_{\mathfrak{g}}) = \Omega_{\mathfrak{h}}$.
\end{definition}

\begin{definition}
Let $\mathfrak{g}$ be a curved Lie algebra. A \emph{Maurer--Cartan element in $\mathfrak{g}$} is a degree one element $\xi\in\mathfrak{g}$ satisfying the Maurer--Cartan equation
\[
\Omega + d\xi + \frac{1}{2}[\xi,\xi] = 0.
\]
We denote the set of Maurer--Cartan elements in $\mathfrak{g}$ by $\MC(\mathfrak{g})$.
\end{definition}

Note that there is a functor from differential graded Lie algebras to curved Lie algebras by defining the curvature of a differential graded Lie algebra to be zero and the two notions of Maurer--Cartan elements correspond in this way.

\subsection{Twistings}
\Needspace*{4\baselineskip}

\begin{proposition}
 Let $\mathfrak{g}$ be a curved Lie algebra with curvature $\Omega$ and predifferential $d$. Let $\xi\in\mathfrak{g}$ be a degree one element. Then the underlying graded Lie algebra of $\mathfrak{g}$ equipped with the degree two element $\Omega + d\xi + \frac{1}{2}[\xi,\xi]$ and the derivation $d + \ad_\xi$ is a curved Lie algebra, which we denote $\mathfrak{g}^\xi$.
\end{proposition}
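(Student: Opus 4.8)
The plan is to verify the two curved-Lie-algebra axioms directly for the proposed structure $\mathfrak{g}^\xi$, namely that the new predifferential $D := d + \ad_\xi$ satisfies $D^2\eta = [\eta, \Omega']$ for all $\eta$, where $\Omega' := \Omega + d\xi + \frac{1}{2}[\xi,\xi]$ is the new curvature, and that $D\Omega' = 0$. This is a routine but sign-sensitive computation, so the main work is bookkeeping rather than ideas.

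First I would establish the two ingredients I need about the original data: that $d$ is a degree-one derivation with $d^2 = \ad_\Omega$ and $d\Omega = 0$, and that for any degree-one element $\xi$ the operator $\ad_\xi$ is a degree-one derivation with $(\ad_\xi)^2 = \frac{1}{2}\ad_{[\xi,\xi]}$ (this last identity is exactly the graded Jacobi identity applied to $\xi$ of odd degree, since $[[\xi,\xi],\eta] = 2[\xi,[\xi,\eta]]$). I would also record that $d\ad_\xi + \ad_\xi d = \ad_{d\xi}$, which is just the statement that $d$ is a derivation of the bracket evaluated with the odd element $\xi$. Then $D^2 = d^2 + d\ad_\xi + \ad_\xi d + (\ad_\xi)^2 = \ad_\Omega + \ad_{d\xi} + \frac{1}{2}\ad_{[\xi,\xi]} = \ad_{\Omega'}$, which gives the first axiom. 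For the second, $D\Omega' = (d + \ad_\xi)(\Omega + d\xi + \tfrac{1}{2}[\xi,\xi])$; here $d\Omega = 0$, $d^2\xi = [\xi,\Omega]$, $\tfrac{1}{2}d[\xi,\xi] = [d\xi,\xi]$, while $\ad_\xi\Omega = [\xi,\Omega]$, $\ad_\xi d\xi = [\xi,d\xi]$, and $\tfrac{1}{2}\ad_\xi[\xi,\xi] = \tfrac{1}{2}[\xi,[\xi,\xi]] = 0$ by Jacobi. Summing, the $[\xi,\Omega]$ terms cancel (note $[d\xi,\xi]$ and $[\xi,d\xi]$ agree up to the appropriate sign for odd-degree elements) and one gets $D\Omega' = 0$.

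The only genuine obstacle is getting every Koszul sign right: $\xi$ has odd degree, $d$ and $\ad_\xi$ are odd operators, so all the anticommutator identities ($d^2 = \frac12[d,d]$ realised as $\ad_\Omega$, the cross-term $[d,\ad_\xi] = \ad_{d\xi}$, and $[\ad_\xi,\ad_\xi] = \ad_{[\xi,\xi]}$) must be interpreted as graded commutators, and the cancellations in $D\Omega'$ hinge on the identity $[d\xi,\xi] = [\xi,d\xi]$ for odd elements. I would present the argument as a short sequence of displayed equalities rather than spelling out each sign, since the computation is standard (it is the curved analogue of the classical fact that twisting a dg Lie algebra by a Maurer--Cartan element yields a new dg Lie algebra, and indeed reduces to that when $\xi$ is Maurer--Cartan, in which case $\Omega' = 0$). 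No deeper structural input is required.
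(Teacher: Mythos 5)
Your verification is correct and is exactly the ``straightforward check'' the paper leaves to the reader: the graded-commutator identities $[d,\ad_\xi]=\ad_{d\xi}$ and $\ad_\xi^2=\tfrac{1}{2}\ad_{[\xi,\xi]}$ give $(d+\ad_\xi)^2=\ad_{\Omega+\Omega^\xi}$, and the Bianchi-type cancellation gives $(d+\ad_\xi)(\Omega+\Omega^\xi)=0$. One small caution: for the $[\xi,\Omega]$ terms in the second computation to cancel you must read the curvature axiom as $d^2\eta=[\Omega,\eta]$ (since $\Omega$ is even, the paper's $[\eta,\Omega]$ differs from this by a sign), so this is a convention wrinkle to fix in the write-up rather than a gap in the argument.
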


\begin{proof}
This is a straightforward check.
\end{proof}

We write $\Omega^\xi = d\xi + \frac{1}{2}[\xi,\xi]$ and call it the curvature of $\xi$. We also write $d^\xi = \ad_\xi$. Note that in general for any two elements $\xi,\eta\in\mathfrak{g}$ we have
\[\Omega^{\xi+\eta} = \Omega^{\xi} + \Omega^{\eta} + d^{\xi}\eta.\]

With this notation $\mathfrak{g}^\xi$ has curvature $\Omega+\Omega^\xi$ and predifferential $d+d^\xi$. We say that $\mathfrak{g}^\xi$ is the curved Lie algebra obtained by twisting $\mathfrak{g}$ by $\xi$.

\begin{proposition}
Let $\mathfrak{g}$ be a curved Lie algebra with curvature $\Omega$ and predifferential $d$. Let $\xi\in\mathfrak{g}$ be a degree one element. Then $\mathfrak{g}^\xi$ is a differential graded Lie algebra (and so has zero curvature) if and only if $\xi\in\MC(\mathfrak{g})$.
\end{proposition}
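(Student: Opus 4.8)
The statement to prove is that $\mathfrak{g}^\xi$ is a differential graded Lie algebra if and only if $\xi\in\MC(\mathfrak{g})$. The plan is simply to unwind the definitions and use the description of $\mathfrak{g}^\xi$ already established in the preceding proposition. By construction $\mathfrak{g}^\xi$ has curvature $\Omega+\Omega^\xi = \Omega + d\xi + \tfrac{1}{2}[\xi,\xi]$ and predifferential $d+d^\xi = d+\ad_\xi$. A differential graded Lie algebra is precisely a curved Lie algebra whose curvature is zero (and then the predifferential becomes an honest differential, since the axiom $d^2\eta = [\eta,\Omega]$ collapses to $d^2 = 0$). So the claim is really the tautology that the curvature of $\mathfrak{g}^\xi$ vanishes if and only if $\xi$ satisfies the Maurer--Cartan equation $\Omega + d\xi + \tfrac12[\xi,\xi] = 0$.

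First I would recall that $\mathfrak{g}^\xi$ is a well-defined curved Lie algebra by the previous proposition, so there is nothing further to check about the curved Lie algebra axioms. Then I would argue the forward direction: if $\mathfrak{g}^\xi$ is a differential graded Lie algebra, its curvature is by definition zero, i.e. $\Omega + d\xi + \tfrac12[\xi,\xi] = 0$, which is exactly the condition $\xi\in\MC(\mathfrak{g})$. Conversely, if $\xi\in\MC(\mathfrak{g})$ then $\Omega^{\mathfrak{g}^\xi} = \Omega + \Omega^\xi = 0$, so $\mathfrak{g}^\xi$ is a curved Lie algebra with zero curvature; the curved Lie algebra axiom $(d+d^\xi)^2\eta = [\eta, 0] = 0$ then shows $d+d^\xi$ is a differential, so $\mathfrak{g}^\xi$ is a genuine differential graded Lie algebra.

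There is no real obstacle here; this is a one-line verification once the bookkeeping of the previous proposition is in hand. The only mild point worth being explicit about is the identification ``curved Lie algebra with zero curvature $=$ differential graded Lie algebra,'' which follows immediately from comparing the two definitions given earlier in the excerpt (the functor from dg Lie algebras to curved Lie algebras assigns zero curvature, and conversely zero curvature makes the predifferential square to zero). I would state the proof in two or three sentences and mark it with \qedhere or a simple closing remark.

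\begin{proof}
By the previous proposition $\mathfrak{g}^\xi$ is a curved Lie algebra with curvature $\Omega + \Omega^\xi = \Omega + d\xi + \tfrac{1}{2}[\xi,\xi]$ and predifferential $d + d^\xi$. A curved Lie algebra is a differential graded Lie algebra precisely when its curvature vanishes, since in that case the axiom $(d+d^\xi)^2\eta = [\eta, \Omega + \Omega^\xi]$ reduces to $(d+d^\xi)^2 = 0$. Hence $\mathfrak{g}^\xi$ is a differential graded Lie algebra if and only if $\Omega + d\xi + \tfrac{1}{2}[\xi,\xi] = 0$, which is exactly the statement that $\xi\in\MC(\mathfrak{g})$.
\end{proof}
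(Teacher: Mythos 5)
Your proof is correct and follows exactly the paper's argument: the curvature of $\mathfrak{g}^\xi$ is $\Omega + \Omega^\xi = \Omega + d\xi + \tfrac{1}{2}[\xi,\xi]$, and this vanishes precisely when $\xi\in\MC(\mathfrak{g})$. The only difference is that you spell out the (immediate) identification of zero-curvature curved Lie algebras with differential graded Lie algebras, which the paper leaves implicit.
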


\begin{proof}
The curvature of $\mathfrak{g}^\xi$ vanishes if and only if $\Omega + \Omega^\xi = \Omega + d\xi + \frac{1}{2}[\xi,\xi] = 0$.
\end{proof}

\begin{proposition}
Let $\mathfrak{g}$ be a curved Lie algebra and let $\xi\in\mathfrak{g}$ be a degree one element. Then there is a bijection $\MC(\mathfrak{g}^\xi)\rightarrow\MC(\mathfrak{g})$ given by $\eta \mapsto \eta + \xi$.
\end{proposition}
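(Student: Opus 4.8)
The plan is to verify directly that the shift map $\eta \mapsto \eta + \xi$ carries $\MC(\mathfrak{g}^\xi)$ into $\MC(\mathfrak{g})$, and that the shift by $-\xi$ provides a two-sided inverse. First I would unwind the definitions: $\mathfrak{g}^\xi$ has curvature $\Omega + \Omega^\xi$ and predifferential $d + d^\xi = d + \ad_\xi$, so for a degree one element $\eta$ the Maurer--Cartan equation in $\mathfrak{g}^\xi$ reads
\[
(\Omega + \Omega^\xi) + (d + \ad_\xi)\eta + \tfrac{1}{2}[\eta,\eta] = 0.
\]
I would then compute the left-hand side of the Maurer--Cartan equation in $\mathfrak{g}$ evaluated at $\eta + \xi$, namely $\Omega + d(\eta+\xi) + \tfrac12[\eta+\xi,\eta+\xi]$, expand using bilinearity of the bracket and the derivation property of $d$, and regroup. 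Using $\Omega^\xi = d\xi + \tfrac12[\xi,\xi]$, $\ad_\xi \eta = [\xi,\eta]$, and the identity $[\eta,\xi] = [\xi,\eta]$ for degree one elements (the Koszul sign is $(-1)^{1\cdot 1} = -1$, but combined with the graded antisymmetry this yields $[\xi,\eta] = [\eta,\xi]$ — I should double-check this sign carefully), the two expressions match term for term. This shows $\eta \in \MC(\mathfrak{g}^\xi)$ if and only if $\eta + \xi \in \MC(\mathfrak{g})$.

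The remaining point is that the map is a bijection. The cleanest argument uses the previous proposition iterated: twisting $\mathfrak{g}^\xi$ by $-\xi$ recovers $\mathfrak{g}$, since the curvature becomes $(\Omega + \Omega^\xi) + \Omega^{-\xi} + d^\xi(-\xi)$, and I would check via the additivity formula $\Omega^{\xi+\eta} = \Omega^\xi + \Omega^\eta + d^\xi\eta$ (applied with $\eta = -\xi$, noting $\Omega^{\xi + (-\xi)} = \Omega^0 = 0$) that this collapses back to $\Omega$, and similarly the predifferential returns to $d$. Hence $(\mathfrak{g}^\xi)^{-\xi} = \mathfrak{g}$, so the shift $\zeta \mapsto \zeta - \xi$ from $\MC(\mathfrak{g})$ to $\MC((\mathfrak{g}^\xi)^{-\xi}) = \MC(\mathfrak{g}^\xi)$ is well defined by the first part, and it is visibly inverse to $\eta \mapsto \eta + \xi$ as a set map. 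Alternatively, one can simply observe that the inverse map $\zeta \mapsto \zeta - \xi$ sends $\MC(\mathfrak{g})$ to $\MC(\mathfrak{g}^\xi)$ by the same term-by-term computation run in reverse, which avoids even invoking the iterated-twisting identity.

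I do not anticipate a genuine obstacle here — the statement is essentially a bookkeeping lemma. The one place demanding care is the sign arithmetic: getting the graded brackets, the factor $\tfrac12$, and the derivation signs to line up so that the expanded Maurer--Cartan polynomial at $\eta + \xi$ is exactly $\Omega^\xi + (\ad_\xi)\eta$ plus the Maurer--Cartan polynomial of $\mathfrak{g}^\xi$ at $\eta$. So the write-up will consist of one displayed expansion with the regrouping made explicit, a one-line appeal to either iterated twisting or the symmetric reverse computation for the inverse, and a remark that the check is straightforward. I would present it as: "This is a straightforward verification" followed by the key expansion, keeping the proof to a few lines.
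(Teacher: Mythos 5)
Your proposal is correct and matches the paper's proof: the paper simply expands $\Omega + d(\eta+\xi) + \tfrac12[\eta+\xi,\eta+\xi]$, groups $\tfrac12[\eta,\xi]+\tfrac12[\xi,\eta]=[\xi,\eta]=d^\xi\eta$ (your sign worry resolves correctly, since graded antisymmetry gives $[\eta,\xi]=-(-1)^{\degree{\eta}\degree{\xi}}[\xi,\eta]=[\xi,\eta]$ for two degree one elements), and reads off that $\eta\in\MC(\mathfrak{g}^\xi)$ if and only if $\eta+\xi\in\MC(\mathfrak{g})$, which already yields the bijection. Your extra discussion of the inverse via $(\mathfrak{g}^\xi)^{-\xi}=\mathfrak{g}$ is fine but not needed beyond the observed equivalence.
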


\begin{proof}
We calculate that
\begin{align*}
\Omega + d(\eta + \xi) + \frac{1}{2}[\eta+\xi,\eta+\xi] &= \Omega + d\xi + \frac{1}{2}[\xi,\xi] + d\eta + +\frac{1}{2}[\eta,\xi] + \frac{1}{2}[\xi,\eta] +\frac{1}{2}[\eta,\eta]
\\
&= \Omega + \Omega^\xi + d\eta + d^\xi\eta + \frac{1}{2}[\eta,\eta]
\end{align*}
and so $\eta\in\MC(\mathfrak{g}^\xi)$ if and only if $\eta+\xi\in\MC(\mathfrak{g})$.
\end{proof}

\subsection{Maps of commutative algebras}
Maurer--Cartan elements correspond to maps between commutative differential graded algebras, which we shall now recall.

Given a Lie algebra $\mathfrak{g}$ define the ideals $[\mathfrak{g}]^n$ recursively by $[\mathfrak{g}]^1 = \mathfrak{g}$ and $[\mathfrak{g}]^{n} = [\mathfrak{g},\mathfrak{g}^{n-1}]$. Then $\mathfrak{g}$ is called \emph{nilpotent} if the \emph{descending central series} $[\mathfrak{g}]^1\supset [\mathfrak{g}]^2\supset [\mathfrak{g}]^3\supset\dots$ stabilises at $0$. Note that in the case $\mathfrak{g}$ is finite dimensional this is equivalent to the definition that for every $\xi\in\mathfrak{g}$, $\ad_\xi$ is nilpotent. A differential graded/curved Lie algebra is called nilpotent if the underlying Lie algebra is nilpotent.

By a \emph{formal} differential graded Lie algebra (or curved Lie algebra) we mean an inverse limit of \emph{finite dimensional} \emph{nilpotent} differential graded Lie algebras (or curved Lie algebras). Note that this is naturally a Lie algebra object in $\fdgvect$, although not all Lie algebra objects in $\fdgvect$ are formal.

Given a formal differential graded Lie algebra $\mathfrak{g}$ with continuous differential $d\co\Sigma^{-1}\mathfrak{g}\rightarrow\mathfrak{g}$ and continuous Lie bracket $m\co\mathfrak{g}\otimes\mathfrak{g}\rightarrow\mathfrak{g}$ we define the cobar construction of $\mathfrak{g}$ to be $\CE^{\bullet}(\mathfrak{g})$, the Chevalley--Eilenberg complex with trivial coefficients. More precisely set $\CE^{\bullet}(\mathfrak{g})$ to be the augmented differential graded commutative algebra $S\Sigma^{-1}\mathfrak{g}^*$ with differential $\delta$ defined on $\Sigma^{-1}\mathfrak{g}^*$ by $\delta = \Sigma^{-1}(d^* + m^*)$ and extended to $S\Sigma^{-1}\mathfrak{g}^*$ by the Leibniz rule.

Similarly, given a formal curved Lie algebra $\mathfrak{g}$ with curvature $\Omega\co \Sigma^{-2}k\rightarrow \mathfrak{g}$, predifferential $d\co\Sigma^{-1}\mathfrak{g}\rightarrow\mathfrak{g}$ and Lie bracket $m\co\mathfrak{g}\otimes\mathfrak{g}\rightarrow\mathfrak{g}$ we define the cobar construction $\CE^{\bullet}(\mathfrak{g})$ to be the unital (but not augmented) differential graded commutative algebra $S\Sigma^{-1}\mathfrak{g}^*$ with differential $\delta$ defined on $\Sigma^{-1}\mathfrak{g}^*$ by $\delta = \Sigma^{-1}(\Omega^* + d^* + m^*)$ and extended to $S\Sigma^{-1}\mathfrak{g}^*$ by the Leibniz rule.

Note that there is a functor from augmented differential graded commutative algebras to unital differential graded commutative algebras by forgetting the augmentation map $A\rightarrow k$ and the cobar construction of a given formal differential graded Lie algebra corresponds via this functor to the cobar construction of the corresponding formal curved Lie algebra with zero curvature.

\begin{definition}
\Needspace*{3\baselineskip}\mbox{}
\begin{itemize}
\item Let $\mathfrak{g}$ be a formal differential graded Lie algebra and $A$ be an augmented differential graded commutative algebra with augmentation ideal $A_+$. Noting that $\mathfrak{g}\otimes A_+$ is a differential graded Lie algebra, we write $\MC(\mathfrak{g},A)$ for the functor $(\mathfrak{g}, A)\mapsto \MC(\mathfrak{g}\otimes A_+)$.
\item Let $\mathfrak{g}$ be a formal curved Lie algebra and $A$ be a unital differential graded commutative algebra. Noting that $\mathfrak{g}\otimes A$ is a curved Lie algebra, we write $\MC(\mathfrak{g},A)$ for the functor $(\mathfrak{g}, A)\mapsto \MC(\mathfrak{g}\otimes A)$.
\end{itemize}
\end{definition}

\begin{remark}
Note in particular that $\MC(\mathfrak{g},k) = \MC(\mathfrak{g})$.
\end{remark}

Let $\mathfrak{g}$ be a formal differential graded Lie algebra and $A$ be an augmented differential graded commutative algebra. Note that a degree one element $\xi\in \mathfrak{g}\otimes A_+$ is a degree zero element in $\Sigma\mathfrak{g}\otimes A_+$ which determines and is determined by a map $\xi\co\Sigma^{-1}\mathfrak{g}^*\rightarrow A_+$. In turn this determines and is determined by a map of augmented graded commutative algebras $S\Sigma^{-1} \mathfrak{g}^*\rightarrow A$. The condition that $\xi$ determines a map of augmented \emph{differential} graded commutative algebras $\CE^{\bullet}(\mathfrak{g})\rightarrow A$ is precisely the condition that $\xi\in\MC(\mathfrak{g},A)$.

Similarly let $\mathfrak{g}$ be a formal curved Lie algebra and $A$ be a unital differential graded commutative algebra. A degree one element $\xi\in \mathfrak{g}\otimes A$ determines and is determined by a map $\xi\co\Sigma^{-1}\mathfrak{g}^*\rightarrow A$. In turn this determines and is determined by a map of unital graded commutative algebras $S\Sigma^{-1} g^*\rightarrow A$. The condition that $\xi$ determines a map of unital \emph{differential} graded commutative algebras $\CE^{\bullet}(\mathfrak{g})\rightarrow A$ is precisely the condition that $\xi\in\MC(\mathfrak{g},A)$.

The following proposition now follows from this discussion.

\begin{proposition}\label{prop:mcnatiso}
The functors $(\mathfrak{g},A)\mapsto\Hom(\CE^{\bullet}(\mathfrak{g}),A)$ and $(\mathfrak{g},A)\mapsto\MC(\mathfrak{g},A)$ (whether considered as functors from formal differential graded Lie algebras and augmented differential graded commutative algebras, or from formal curved Lie algebras and unital differential graded commutative algebras) are naturally isomorphic.
\qed
\end{proposition}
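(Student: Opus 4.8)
The plan is to make precise the correspondence sketched in the two paragraphs immediately preceding the statement, treating the differential graded and curved cases uniformly (the former being the zero-curvature special case, as already noted). First I would set up the underlying bijection at the level of graded objects. Given a formal (curved) Lie algebra $\mathfrak{g}$ and a unital (respectively augmented) differential graded commutative algebra $A$, a degree one element $\xi\in\mathfrak{g}\otimes A$ (respectively $\xi\in\mathfrak{g}\otimes A_+$) is the same datum as a degree zero element of $\Sigma\mathfrak{g}\otimes A$, which by the duality and completed tensor product conventions of \autoref{chap:operads} is the same as a continuous linear map $\Sigma^{-1}\mathfrak{g}^*\to A$ (respectively into $A_+$). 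By the universal property of the free graded commutative algebra $S\Sigma^{-1}\mathfrak{g}^*$ this extends uniquely to a morphism of unital (respectively augmented) graded commutative algebras $\CE^{\bullet}(\mathfrak{g})\to A$, and every such morphism arises in this way. This gives a bijection, natural in both variables, between degree one elements of $\mathfrak{g}\otimes A$ and morphisms of the underlying \emph{graded} algebras.

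Second I would identify, under this bijection, the Maurer--Cartan elements with the morphisms of \emph{differential} graded algebras. Since a morphism out of a free graded commutative algebra is determined by its restriction to the generators, and since it commutes with the differentials on all of $\CE^{\bullet}(\mathfrak{g})$ as soon as it does so on $\Sigma^{-1}\mathfrak{g}^*$ (both $\delta$ and $d_A$ being derivations), it suffices to compare the two sides on generators. On generators $\delta=\Sigma^{-1}(\Omega^*+d^*+m^*)$, and unwinding the dualisations shows that the condition for the associated algebra map to commute with differentials is precisely $\Omega+d\xi+\frac{1}{2}[\xi,\xi]=0$, that is, $\xi\in\MC(\mathfrak{g},A)$; in the differential graded case the $\Omega^*$ term is absent and one recovers the ordinary Maurer--Cartan equation in $\mathfrak{g}\otimes A_+$. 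Combining this with the first step yields the natural isomorphism $\Hom(\CE^{\bullet}(\mathfrak{g}),A)\cong\MC(\mathfrak{g},A)$.

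Finally I would record naturality explicitly. A morphism $\mathfrak{g}\to\mathfrak{h}$ of formal (curved) Lie algebras induces a morphism $\CE^{\bullet}(\mathfrak{h})\to\CE^{\bullet}(\mathfrak{g})$ by functoriality of $(-)^*$ and $S(-)$, and carries Maurer--Cartan elements to Maurer--Cartan elements, as was already observed for the functor $\MC$; a morphism $A\to B$ of (augmented) differential graded commutative algebras acts by postcomposition on both sides. The bijection constructed above manifestly intertwines these operations, since it is built entirely from the universal property of $S\Sigma^{-1}\mathfrak{g}^*$ together with the functoriality of duality and of the (completed) tensor product.

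The only point requiring genuine care --- rather than a real obstacle --- is the bookkeeping in the formal setting: one must use that $\mathfrak{g}$ is an inverse limit of finite dimensional nilpotent pieces so that $\mathfrak{g}^{**}\cong\mathfrak{g}$ and so that the relevant tensor products compute the correct spaces of continuous maps, and one must check that the Koszul signs coming from the desuspensions $\Sigma^{-1}$ and from dualising the bracket $m$ are normalised so that the quadratic part of $\delta$ on generators corresponds to $\frac{1}{2}[\xi,\xi]$ on the nose. This sign verification, together with the analogous check for the linear term $d\xi$ and the constant term $\Omega$, is where the actual content of the proposition lies.
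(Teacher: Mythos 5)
Your proposal is correct and follows essentially the same route as the paper, which simply records that a degree one element of $\mathfrak{g}\otimes A_+$ (resp.\ $\mathfrak{g}\otimes A$) corresponds via the universal property of $S\Sigma^{-1}\mathfrak{g}^*$ to a map of augmented (resp.\ unital) graded commutative algebras, and that compatibility with the differentials on generators is exactly the Maurer--Cartan equation. Your additional remarks on naturality and the sign/formality bookkeeping are consistent with this and add nothing contrary to the paper's argument.
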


\section{The Maurer--Cartan moduli set}
\autoref{prop:mcnatiso} motivates the natural notion of homotopy between Maurer--Cartan elements. In this section the pair $\mathfrak{g}$ and $A$ are either a formal differential graded Lie algebra and an augmented differential graded commutative algebra (whose augmentation ideal we denote by $A_+$), or a formal curved Lie algebra and a unital differential graded commutative algebra.

Denote by $k[z,dz]$ the free unital differential graded commutative algebra on the generators $z$ and $dz$ with $\degree{z} = 0$, $\degree{dz} = 1$ and $d(z) = dz$. Denote by $A[z,dz]$ the augmented or unital differential graded commutative algebra given by $A\otimes k[z,dz]$. We denote the quotient maps given by setting $z$ to $0$ or $1$ by $|_0,|_1\co A[z,dz] \rightarrow A$.

\begin{definition}
Two elements $\xi,\eta\in\MC(\mathfrak{g},A)$ are called \emph{homotopic} if there is an element $h\in\MC(\mathfrak{g}, A[z,dz])$ with $h|_{0} = \xi$ and $h|_{1} = \eta$.
\end{definition}

\begin{remark}\label{rem:homotopyissullivanhomotopy}
By \autoref{prop:mcnatiso} we see that $h\in\MC(\mathfrak{g},A[z,dz])$ corresponds to a map $h\co \CE^{\bullet}(\mathfrak{g}) \rightarrow A[z,dz]$ restricting to the maps corresponding to $\xi,\eta\in\MC(\mathfrak{g},A)$ at $z=0$ and $z=1$. Therefore a homotopy of Maurer--Cartan elements is precisely a Sullivan homotopy (right homotopy with $A[z,dz]$ a path object for $A$) between the corresponding maps of augmented/unital differential graded commutative algebras.
\end{remark}

Homotopy of Maurer--Cartan elements defines a relation that may not be transitive (unless $\CE^{\bullet}(\mathfrak{g})$ is cofibrant) so we will consider the transitive closure.

\begin{definition}
We denote by $\MCmoduli(\mathfrak{g},A)$ the set of equivalence classes under the transitive closure of the homotopy relation. We call this \emph{the Maurer--Cartan moduli set of $\mathfrak{g}$ with coefficients in $A$}.
\end{definition}

\subsection{Gauge equivalence}
Let $\mathfrak{g}$ be a \emph{pronilpotent} differential graded/curved Lie algebra, by which we mean an inverse limit of nilpotent algebras, but which may not necessarily be finite dimensional. Recall that for every such Lie algebra there is an associative product $\bullet\co\mathfrak{g}\times\mathfrak{g}\rightarrow \mathfrak{g}$ given by the Baker--Campbell--Hausdorff formula which is functorial (given $f\co\mathfrak{g}\rightarrow\mathfrak{h}$ then $f(x\bullet y) = f(x)\bullet f(y)$) and for any unital associative algebra $A$ with pronilpotent ideal $I$ it holds for any $a,b\in I$ that $e^a e^b = e^{a\bullet b}$ where $e^a = \sum_{n\geq 0} \frac{a^n}{n!}\in A$ and $\bullet$ is taken with respect to the commutator Lie bracket on $A$. A property of $\bullet$ is that for any $x,y\in\mathfrak{g}$ if $[x,y]=0$ then $x\bullet y = x + y$.

Define the group $\exp(\mathfrak{g}) = \{e^x : x\in\mathfrak{g} \}$ with product defined as $e^x\cdot e^y = e^{x\bullet y}$. The identity is $1=e^0$ and $e^x\cdot e^{-x} = e^{-x}\cdot e^{x} = 1$. It follows from the pronilpotency of $\mathfrak{g}$ and the above properties of $\bullet$ that the adjoint representation $y\mapsto \ad_y$ exponentiates to an action of $\exp(\mathfrak{g})$ on $\mathfrak{g}$ given by $e^y\mapsto e^{\ad_y}$.

 Let $\xi\in\MC(\mathfrak{g})$ and $y\in\mathfrak{g}^0$. Define the \emph{gauge action} by
\[
e^y\cdot \xi = e^{\ad_y}\xi + (de^{\ad_y})y = \xi + \sum_{n=1}^{\infty} \frac{1}{n!}(\ad_y)^{n-1}(\ad_y\xi-dy).
\]
Then this indeed gives an action of $\exp(\mathfrak{g}^0)$ on $\MC(\mathfrak{g})$.

\begin{proposition}\label{prop:stabcycles}
Let $\mathfrak{g}$ be a pronilpotent curved Lie algebra. Given $\xi\in\MC(\mathfrak{g})$ then $\exp(\mathfrak{g}^0)_\xi = \{ e^y : (d+d^\xi)y = 0 \}$ where $\exp(\mathfrak{g}^0)_\xi$ is the stabiliser of $\xi$ by the gauge action.
\end{proposition}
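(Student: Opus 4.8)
The plan is to turn the statement into one short algebraic identity and then invoke pronilpotency. First I would rewrite the defining formula for the gauge action,
\[
e^y\cdot\xi \;=\; \xi + \sum_{n=1}^{\infty}\frac{1}{n!}(\ad_y)^{n-1}\bigl(\ad_y\xi - dy\bigr),
\]
using graded antisymmetry: since $\degree{y}=0$ we have $\ad_y\xi = [y,\xi] = -[\xi,y] = -\ad_\xi y = -d^\xi y$, hence $\ad_y\xi - dy = -(d+d^\xi)y$. Writing $\phi(t) = \sum_{n\geq 1}t^{n-1}/n! = (e^t-1)/t$, this yields
\[
e^y\cdot\xi - \xi \;=\; -\,\phi(\ad_y)\bigl((d+d^\xi)y\bigr).
\]
Here $(d+d^\xi)y$ lies in degree one (both $d$ and $d^\xi = \ad_\xi$ raise degree by one), $\ad_y$ preserves degree, and all sums are finite on each nilpotent quotient of $\mathfrak{g}$, so $\phi(\ad_y)$ is a genuine degree-preserving endomorphism.

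One inclusion is then immediate: if $(d+d^\xi)y = 0$ the right-hand side vanishes, so $e^y\cdot\xi = \xi$ and $e^y$ stabilises $\xi$. For the converse I would observe that $\phi(0)=1$, so $\phi(\ad_y) = \id + N$ where $N$ involves only positive powers of $\ad_y$. Writing $\mathfrak{g} = \lim_{\leftarrow}\mathfrak{g}_i$ with each $\mathfrak{g}_i$ nilpotent, $\ad_y$ is nilpotent on each $\mathfrak{g}_i$, so $N$ is topologically nilpotent and $\phi(\ad_y)$ is invertible (its inverse is $\sum_{k\geq 0}(-N)^k$, convergent in the inverse-limit topology). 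Therefore $e^y\cdot\xi = \xi$ forces $\phi(\ad_y)\bigl((d+d^\xi)y\bigr) = 0$ and hence $(d+d^\xi)y = 0$. Since every element of $\exp(\mathfrak{g}^0)$ is uniquely $e^y$ for some $y\in\mathfrak{g}^0$, this gives exactly $\exp(\mathfrak{g}^0)_\xi = \{\,e^y : (d+d^\xi)y = 0\,\}$.

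There is no real obstacle here: the content is the rearrangement above together with the standard fact that the identity plus a topologically nilpotent operator is invertible. The only points needing care are bookkeeping ones — checking that the formal series defining the gauge action and $\phi(\ad_y)^{-1}$ converge level-by-level in the pronilpotent setting, so the manipulations are legitimate on each $\mathfrak{g}_i$ and pass to the limit, and tracking the graded signs in $\ad_y\xi = -d^\xi y$. It is perhaps worth remarking that $d + d^\xi$ is precisely the differential of the twisted differential graded Lie algebra $\mathfrak{g}^\xi$, so the conclusion says that the gauge stabiliser of $\xi$ is the exponential of the space of $(d+d^\xi)$-closed degree-zero elements of $\mathfrak{g}$.
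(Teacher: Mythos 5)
Your proof is correct and follows essentially the same route as the paper: both rewrite the gauge action as the power series $\sum_{n\geq 1}\frac{1}{n!}(\ad_y)^{n-1}(\ad_y\xi-dy)$ with $\ad_y\xi-dy=-(d+d^\xi)y$, and both reduce the converse to the nilpotent quotients $\mathfrak{g}_i$. The only cosmetic difference is the final injectivity step, where you invert $\phi(\ad_y)=\id+N$ with $N$ nilpotent on each $\mathfrak{g}_i$, while the paper applies $(\ad_y)^{N-1}$ for the least $N$ annihilating $\ad_y\xi-dy$; these are equivalent devices.
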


\begin{proof}
Let $y\in \mathfrak{g}^0$. Since $(d+d^{\xi})y=-(\ad_y\xi - dy)$ then if $(d+d^{\xi})y=0$ it is clear that $e^y\cdot \xi = \xi$. Conversely since $\mathfrak{g}=\lim_{\leftarrow}\mathfrak{g}_i$ it is sufficient to show that $\ad_y\xi - dy=0$ under the image of every $\mathfrak{g}\rightarrow\mathfrak{g_i}$. The $\mathfrak{g}_i$ are nilpotent so for each $\mathfrak{g}_i$ there exists some least $N$ such that $(\ad_y)^{N}(\ad_y\xi-dy)=0$. Then $(\ad_y)^{N-1}(e^y\cdot\xi - \xi) = (\ad_y)^{N-1}(\ad_y\xi - dy) = 0$ so $\ad_y\xi-dy=0$ as required.
\end{proof}

Now let $\mathfrak{g}$ be a \emph{formal} curved/differential graded Lie algebra and let $A$ be a unital/augmented differential graded commutative algebra. Note that in this case $\mathfrak{g}\otimes A$ is pronilpotent.

\begin{definition}
Two Maurer--Cartan elements $\xi,\eta\in\MC(\mathfrak{g},A)$ are called \emph{gauge equivalent} if there is an element $y\in(\mathfrak{g}\otimes A)^0$ such that $e^y\cdot\xi = \eta$.
\end{definition}

It is natural to also consider the quotient of $\MC(\mathfrak{g},A)$ by the gauge action. In fact the following important result, due originally to Schlessinger--Stasheff \cite{schlessingerstasheff}, tells us the quotient is precisely $\MCmoduli(\mathfrak{g},A)$:

\begin{theorem}[Schlessinger--Stasheff theorem]
Two Maurer--Cartan elements are gauge equivalent if and only if they are homotopic.
\end{theorem}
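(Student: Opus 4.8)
The plan is to prove the two directions separately, using the reformulation (\autoref{rem:homotopyissullivanhomotopy}) that a homotopy of Maurer--Cartan elements is a path $h\in\MC(\mathfrak{g},A[z,dz])$, i.e. a degree one element satisfying the Maurer--Cartan equation in the curved/differential graded Lie algebra $\mathfrak{g}\otimes A[z,dz]$ (respectively $\mathfrak{g}\otimes A_+[z,dz]$), restricting to $\xi$ at $z=0$ and to $\eta$ at $z=1$. Throughout, the key point is that $\mathfrak{g}\otimes A$ and $\mathfrak{g}\otimes A[z,dz]$ are pronilpotent, so the gauge action and the exponential group $\exp((\mathfrak{g}\otimes A)^0)$ are available, along with the functoriality of the Baker--Campbell--Hausdorff product under the evaluation maps $|_0,|_1\co A[z,dz]\to A$.

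\emph{Gauge equivalent implies homotopic.} Suppose $e^y\cdot\xi=\eta$ for some $y\in(\mathfrak{g}\otimes A)^0$. First I would reduce to the case where $y$ is replaced by $zy$ regarded as an element of degree zero in $\mathfrak{g}\otimes A[z,dz]$, or more precisely I would look for a path $h(z)$ of the form $h(z)=e^{y(z)}\cdot\xi$ where $y(z)$ is a degree zero element of $\mathfrak{g}\otimes A[z]\subset\mathfrak{g}\otimes A[z,dz]$ with $y(0)=0$ and $y(1)=y$. The natural choice is $y(z)=zy$. One then checks that $h(z):=e^{zy}\cdot\xi$, together with the ``$dz$-part'' forced on it, genuinely solves the Maurer--Cartan equation in $\mathfrak{g}\otimes A[z,dz]$: differentiating the gauge action formula with respect to $z$ gives the standard identity $\frac{d}{dz}(e^{zy}\cdot\xi)=-(d+d^{e^{zy}\cdot\xi})(y)$ in the twisted predifferential, which is exactly the flatness condition that makes $h(z)+\text{(}dz\text{-term)}$ a Maurer--Cartan element of $\mathfrak{g}\otimes A[z,dz]$. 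Since $h|_0=\xi$ and $h|_1=e^y\cdot\xi=\eta$ this exhibits the required homotopy; in particular gauge equivalent elements are homotopic, hence equivalent in $\MCmoduli(\mathfrak{g},A)$.

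\emph{Homotopic implies gauge equivalent.} Conversely, given $h\in\MC(\mathfrak{g},A[z,dz])$ with $h|_0=\xi$, $h|_1=\eta$, write $h=h_0(z)+h_1(z)\,dz$ with $h_0(z)\in(\mathfrak{g}\otimes A[z])^1$ and $h_1(z)\in(\mathfrak{g}\otimes A[z])^0$. Expanding the Maurer--Cartan equation for $h$ and separating the part containing $dz$ yields a differential equation of the form $\frac{d}{dz}h_0(z)=-(d+d^{h_0(z)})(h_1(z))$, which says precisely that the family $h_0(z)$ is obtained from $h_0(0)=\xi$ by integrating the infinitesimal gauge action generated by $h_1(z)$. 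Because everything is pronilpotent one can integrate this equation: define $g(z)\in\exp((\mathfrak{g}\otimes A[z])^0)$ as the solution of $\frac{d}{dz}g(z)=h_1(z)\cdot g(z)$ with $g(0)=1$ (a standard Picard iteration which terminates at each finite stage of the inverse limit by nilpotence), and verify by a Gr\"onwall-type uniqueness argument, essentially the computation in \autoref{prop:stabcycles}, that $g(z)\cdot\xi=h_0(z)$ for all $z$. Setting $z=1$ gives $g(1)\cdot\xi=\eta$ with $g(1)\in\exp((\mathfrak{g}\otimes A)^0)$, so $\xi$ and $\eta$ are gauge equivalent. Since gauge equivalence is already an equivalence relation (it is a group action), taking transitive closures on both sides then shows the two equivalence relations coincide and $\MCmoduli(\mathfrak{g},A)=\MC(\mathfrak{g},A)/\exp((\mathfrak{g}\otimes A)^0)$.

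\emph{Main obstacle.} The routine bookkeeping is in keeping track of signs and of the twisted predifferentials $d+d^{h_0(z)}$ as $z$ varies. The genuinely substantive step is the second direction: solving the flow equation $\frac{d}{dz}g(z)=h_1(z)\cdot g(z)$ and proving it really converges and really integrates the gauge action in the pronilpotent (and merely graded-commutative, not cofibrant) setting. I would handle convergence by working one nilpotent quotient $\mathfrak{g}\otimes A\to\mathfrak{g}_i\otimes A$ at a time, where the exponential series and the Picard iteration are finite, and then pass to the inverse limit; the compatibility of the solutions across the tower is automatic from uniqueness. One should also note at the end that the argument does not require $\mathfrak{g}$ to be a differential graded Lie algebra: the curved case works verbatim since the curvature $\Omega$ is central in the relevant formulas and is annihilated by both evaluation maps, so the whole proof is uniform across the two cases covered by the statement.
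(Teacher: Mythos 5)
Your forward direction is exactly the paper's proof: the paper disposes of it in one line by observing that $e^{yz}\cdot\xi$ is the required homotopy (the gauge action in $\mathfrak{g}\otimes A[z,dz]$ preserves Maurer--Cartan elements and commutes with the evaluation maps $|_0,|_1$, so no separate flatness check is actually needed). For the converse the paper gives no argument at all --- it is explicitly omitted and deferred to \cite{chuanglazarev2} --- whereas you sketch the standard proof: split $h=h_0(z)+h_1(z)\,dz$, read the flow equation off the $dz$-component of the Maurer--Cartan equation, and integrate the gauge flow generated by $h_1(z)$ using pronilpotence, concluding $g(1)\cdot\xi=\eta$. That is essentially the argument of the cited reference, so what you have written is the ``full'' proof rather than a genuinely different route; it buys self-containedness at the cost of the ODE bookkeeping the paper chose to outsource. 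Two small caveats on that bookkeeping: the Picard iteration does not literally terminate --- the honest statement is that in each nilpotent quotient the solution $g(z)$ exists, is unique, and is polynomial in $z$ because the exponential series truncates, and one then passes to the inverse limit --- and your justification of the curved case is off as stated, since $\Omega$ is neither central nor annihilated by the evaluation maps (morphisms of curved Lie algebras \emph{preserve} the curvature); the correct point is simply that $\Omega$ lies in $\mathfrak{g}\otimes A$, so it has no $dz$-component and does not appear in the gauge-action formula, and hence the flow argument is unaffected. Neither caveat undermines the approach.
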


\begin{proof}
If $\eta = e^y\cdot\xi$ then $e^{yz}\cdot\xi$ is a homotopy from $\xi$ to $\eta$. The converse is less straightforward and omitted here. Instead see, for example, \cite{chuanglazarev2}.
\end{proof}

Gauge equivalence is often more convenient to work with than homotopy equivalence.

\section{Lifts of Maurer--Cartan elements}
Given a map $f\co\mathfrak{g}\rightarrow \mathfrak{h}$ and an element $\xi\in\MC(\mathfrak{h})$ we wish to examine if $\xi$ lifts to a Maurer--Cartan element in $\mathfrak{g}$ and if so, in how many ways. This can be thought of as a general version of certain deformation theory problems and the associated obstruction theory.

The following proposition allows us to understand the space of lifts as the Maurer--Cartan set of a curved Lie algebra.

\begin{proposition}\label{prop:mcfibre}
Let $f\co\mathfrak{g}\rightarrow \mathfrak{h}$ be a map of curved Lie algebras. Let $\mathfrak{k}\subset\mathfrak{g}$ be the kernel of $f$ (regarded as a map of graded Lie algebras). Let $\xi_0\in\MC(\mathfrak{h})$ and $\xi\in\mathfrak{g}$ with $f(\xi)=\xi_0$.
\begin{enumerate}
\item The space $\mathfrak{k}$ is a curved Lie subalgebra of $\mathfrak{g}^{\xi}$, which we will denote by $\mathfrak{k}^{\xi}$.
\item The fibre over $\xi_0\in\MC(\mathfrak{h})$ of $\MC(\mathfrak{g})\rightarrow\MC(\mathfrak{h})$ is isomorphic to $\MC(\mathfrak{k}^\xi)$ by the map $\MC(\mathfrak{k}^{\xi})\subset\MC(\mathfrak{g}^{\xi})\xrightarrow{\eta\mapsto\eta+\xi}\MC(\mathfrak{g})$. In particular it is independent of the choice of $\xi$.
\end{enumerate}
\end{proposition}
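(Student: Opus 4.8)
The plan is to verify the two claims directly from the definitions of twisting and Maurer--Cartan elements, using the results already established about curved Lie algebras.

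For part (1), the key observation is that $\mathfrak{k}$ is closed under the bracket (it is the kernel of a Lie algebra map, hence an ideal, so certainly a Lie subalgebra of the underlying graded Lie algebra of $\mathfrak{g}$), so it only remains to check that the curvature and predifferential of $\mathfrak{g}^\xi$ restrict to $\mathfrak{k}$. The predifferential of $\mathfrak{g}^\xi$ is $d + \ad_\xi$; since $f$ is a chain map with $f(\Omega_\mathfrak{g}) = \Omega_\mathfrak{h}$ it takes $\mathfrak{k}$ to $0$ under $d$, and $\ad_\xi$ preserves $\mathfrak{k}$ because $f([\xi,\eta]) = [\xi_0, f(\eta)] = [\xi_0, 0] = 0$ for $\eta\in\mathfrak{k}$. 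The curvature of $\mathfrak{g}^\xi$ is $\Omega_\mathfrak{g} + \Omega^\xi = \Omega_\mathfrak{g} + d\xi + \tfrac12[\xi,\xi]$; applying $f$ gives $\Omega_\mathfrak{h} + d\xi_0 + \tfrac12[\xi_0,\xi_0] = 0$ since $\xi_0\in\MC(\mathfrak{h})$, so this element lies in $\mathfrak{k}$. Hence $(\mathfrak{k}, \Omega_\mathfrak{g} + \Omega^\xi, d + d^\xi)$ is a curved Lie subalgebra, which I denote $\mathfrak{k}^\xi$. (One should remark that as a curved Lie algebra $\mathfrak{k}^\xi$ depends on $\xi$ only through its image, but the underlying set $\mathfrak{k}$ does not; the dependence is entirely in the curvature and predifferential, which is exactly what is needed for part (2).)

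For part (2), the idea is to combine the bijection $\MC(\mathfrak{g}^\xi)\to\MC(\mathfrak{g})$, $\eta\mapsto\eta+\xi$ (the last proposition before the statement) with the identification of $\MC(\mathfrak{k}^\xi)$ as a subset of $\MC(\mathfrak{g}^\xi)$. Concretely: an element of the fibre over $\xi_0$ is some $\zeta\in\MC(\mathfrak{g})$ with $f(\zeta) = \xi_0$. Writing $\zeta = \eta + \xi$, the bijection above says $\zeta\in\MC(\mathfrak{g})\iff\eta\in\MC(\mathfrak{g}^\xi)$, and the condition $f(\zeta) = \xi_0 = f(\xi)$ becomes $f(\eta) = 0$, i.e.\ $\eta\in\mathfrak{k}$. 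So $\eta\in\mathfrak{k}$ and $\eta$ satisfies the Maurer--Cartan equation of $\mathfrak{g}^\xi$; since the curved Lie structure on $\mathfrak{k}^\xi$ is by definition the restriction of that on $\mathfrak{g}^\xi$, the Maurer--Cartan equation for $\eta$ in $\mathfrak{k}^\xi$ is literally the same equation, so $\eta\in\MC(\mathfrak{k}^\xi)$. This establishes the bijection $\MC(\mathfrak{k}^\xi)\to f^{-1}(\xi_0)\subset\MC(\mathfrak{g})$. Independence of the choice of $\xi$ is then automatic: the left-hand side $\MC(\mathfrak{k}^\xi)$ is defined using a curved Lie structure that depends on $\xi$, but the bijection exhibits it as canonically isomorphic to the fibre $f^{-1}(\xi_0)$, which manifestly does not involve $\xi$; so for two choices $\xi, \xi'$ with the same image the composite $\MC(\mathfrak{k}^\xi)\to f^{-1}(\xi_0)\to\MC(\mathfrak{k}^{\xi'})$ is the resulting identification.

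The proof is essentially a matter of carefully unwinding definitions, and I do not expect any serious obstacle. The one point that needs a little care is part (1): one must check both that the relevant elements land in $\mathfrak{k}$ \emph{and} that the two curved-Lie-algebra axioms ($d^2\eta = [\eta,\Omega]$ and $d\Omega = 0$) continue to hold on the subalgebra --- but these are inherited for free from $\mathfrak{g}^\xi$ once we know $\mathfrak{k}$ is closed under the structure maps, so really the content is just the three containment checks above. The rest follows formally from the already-proved twisting bijection.
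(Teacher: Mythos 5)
Your proof is correct and follows essentially the same route as the paper's: check that $f(\Omega_{\mathfrak{g}}+\Omega^{\xi})=\Omega_{\mathfrak{h}}+\Omega^{\xi_0}=0$ and that the predifferential of $\mathfrak{g}^{\xi}$ preserves $\mathfrak{k}$, then identify the fibre via the twisting bijection $\eta\mapsto\eta+\xi$. If anything, you are slightly more explicit than the paper in noting that $d$ itself preserves $\mathfrak{k}$ (because $f$ commutes with the predifferentials) and in spelling out the independence of the choice of $\xi$.
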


\begin{proof}
To show $\mathfrak{k}$ is a curved Lie subalgebra of $\mathfrak{g}^{\xi}$ we must show $\Omega_{\mathfrak{g}} + \Omega^{\xi}\in\mathfrak{k}$ and for $\omega\in\mathfrak{k}$ we have $d^{\xi}\omega\in\mathfrak{k}$. But $f(\Omega_{\mathfrak{g}} + \Omega^{\xi}) = \Omega_{\mathfrak{h}} + \Omega^{\xi_0}= 0$ since $\xi_0$ is a Maurer--Cartan element in $\mathfrak{h}$. Also $f(d^{\xi}\omega) = [\xi_0,f(\omega)] = 0$.

That $\MC(\mathfrak{k}^{\xi})$ is the fibre over $\xi_0$ follows since $\eta\in\MC(\mathfrak{k}^{\xi})$ if and only if $\eta + \xi\in\MC(\mathfrak{g})$ and $f(\eta+\xi) = f(\eta) + \xi_0 = \xi_0$.
\end{proof}

\begin{remark}
Note that, despite what the notation may suggest, $\mathfrak{k}^{\xi}$ is \emph{not} necessarily obtained by twisting $\mathfrak{k}$ by some element of $\mathfrak{k}$.
\end{remark}

\begin{proposition}\label{prop:gaugeequivoffibres}
Let $f\co\mathfrak{g}\twoheadrightarrow \mathfrak{h}$ be a surjective map of pronilpotent curved Lie algebras. The fibres over any two gauge equivalent (and hence homotopy equivalent) elements $\xi_0, \xi_0'\in\MC(\mathfrak{h})$ are isomorphic.
\end{proposition}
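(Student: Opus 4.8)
The plan is to exhibit an explicit isomorphism between the two fibres by lifting a gauge transformation witnessing $\xi_0\sim\xi_0'$ upstairs. Suppose $e^{y_0}\cdot\xi_0=\xi_0'$ for some $y_0\in\mathfrak h^0$. Since $f$ is surjective and $\mathfrak g$, $\mathfrak h$ are pronilpotent, I would first lift $y_0$ to an element $y\in\mathfrak g^0$ with $f(y)=y_0$; because $f$ is a morphism of curved Lie algebras it commutes with the predifferentials and preserves brackets, hence it is compatible with the Baker--Campbell--Hausdorff product and the exponentiated adjoint action, so $f(e^y\cdot\eta)=e^{y_0}\cdot f(\eta)$ for every $\eta\in\MC(\mathfrak g)$. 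Pick any $\xi\in\mathfrak g$ with $f(\xi)=\xi_0$ (which exists by surjectivity) and set $\xi'=e^y\cdot\xi$; then $f(\xi')=e^{y_0}\cdot\xi_0=\xi_0'$, and $\xi'\in\mathfrak g$ lies over $\xi_0'$.

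The key step is then to observe that the gauge action $\eta\mapsto e^y\cdot\eta$ restricts to a bijection between the fibre of $\MC(\mathfrak g)\to\MC(\mathfrak h)$ over $\xi_0$ and the fibre over $\xi_0'$. Indeed if $\eta\in\MC(\mathfrak g)$ with $f(\eta)=\xi_0$, then $e^y\cdot\eta\in\MC(\mathfrak g)$ (the gauge action preserves Maurer--Cartan elements) and $f(e^y\cdot\eta)=e^{y_0}\cdot\xi_0=\xi_0'$, so $e^y\cdot(-)$ maps the first fibre into the second; its inverse is $e^{-y}\cdot(-)$, which by the same argument maps the second fibre into the first. Composing with the identifications of \autoref{prop:mcfibre}, namely $\MC(\mathfrak k^\xi)\cong f^{-1}(\xi_0)\cap\MC(\mathfrak g)$ and $\MC(\mathfrak k^{\xi'})\cong f^{-1}(\xi_0')\cap\MC(\mathfrak g)$, gives the desired isomorphism $\MC(\mathfrak k^\xi)\cong\MC(\mathfrak k^{\xi'})$, and in particular the fibres are isomorphic as sets. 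By \autoref{prop:mcfibre} the isomorphism type is independent of the choices of $\xi$ and $\xi'$, so the statement is unambiguous.

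The main obstacle is purely bookkeeping: checking that $f$ really is compatible with $\bullet$ and with $e^{\ad_{(-)}}$, i.e. that the lifting $y\mapsto y_0$ interacts correctly with the gauge formula $e^y\cdot\xi=e^{\ad_y}\xi+(de^{\ad_y})y$. This follows from functoriality of the Baker--Campbell--Hausdorff product recalled just before the definition of $\exp(\mathfrak g)$ and from $f$ commuting with the predifferential, so it is routine; one must only be slightly careful that the pronilpotency of $\mathfrak g\otimes A$ (here $A=k$) guarantees all the relevant series converge, which is exactly the hypothesis in the statement. I would also remark that by the Schlessinger--Stasheff theorem the same conclusion holds for homotopy equivalent $\xi_0,\xi_0'$, which is how the proposition is phrased; no extra work is needed since gauge equivalence and homotopy equivalence coincide for pronilpotent algebras.
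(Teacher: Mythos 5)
Your proposal is correct and follows essentially the same route as the paper: choose lifts $\xi$ of $\xi_0$ and $y$ of $y_0$, and use the gauge action of $e^y$ (compatible with $f$ by functoriality of the Baker--Campbell--Hausdorff product) to carry the fibre $\MC(\mathfrak{k}^{\xi})$ over $\xi_0$ isomorphically onto the fibre $\MC(\mathfrak{k}^{e^y\cdot\xi})$ over $\xi_0'$, invoking \autoref{prop:mcfibre} for the identifications. The only difference is presentational: you act on the Maurer--Cartan elements $\eta+\xi\in\MC(\mathfrak{g})$ and spell out the inverse $e^{-y}\cdot(-)$, which is if anything slightly cleaner than the paper's wording.
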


\begin{proof}
There is a degree zero element $y_0\in\mathfrak{h}$ such that $e^{y_0}\cdot \xi_0 = \xi_0'$. Choose elements $\xi,y\in\mathfrak{g}$ with $f(\xi)=\xi_0$ and $f(y)=y_0$ so that $\MC(\mathfrak{k}^{\xi})$ is the fibre over $\xi_0$ and $\MC(\mathfrak{k}^{e^{y}\cdot\xi})$ is the fibre over $\xi_o'$ (since $f(e^y\cdot\xi) = e^{y_0}\cdot\xi_0 = \xi_0'$). Given $\eta\in\MC(\mathfrak{k}^{\xi})$ set $g(\eta) = e^{y}\cdot \eta$. Then $g(\eta)\in \MC(\mathfrak{k}^{e^{y}\cdot\xi})$ since $f(e^y\cdot\eta + e^y\cdot\xi)= \xi_0'$. This gives an isomorphism $g\co\MC(\mathfrak{k}^{\xi})\rightarrow\MC(\mathfrak{k}^{e^y\cdot\xi})$ as required.
\end{proof}

We now wish to understand the space of lifts \emph{up to homotopy}, or equivalently the fibre in $\MCmoduli(\mathfrak{g})$ over $\xi_0\in\MCmoduli(\mathfrak{h})$. From now it will be assumed that $f\co\mathfrak{g}\twoheadrightarrow\mathfrak{h}$ is a surjective map between curved Lie algebras. 

There are two natural ways of speaking about equivalence of lifts of $\xi_0\in\MC(\mathfrak{h})$. Given $\xi\in\mathfrak{g}$ such that $f(\xi)=\xi_0$ we could of course say that $\eta,\eta'\in\MC(\mathfrak{k}^\xi)$ are equivalent if they are equivalent as Maurer--Cartan elements in $\mathfrak{k}^\xi$. In terms of Sullivan homotopy this means that there is an element $h\in\MC(\mathfrak{k}^\xi[z,dz])$ with $h|_0 = \eta$ and $h|_1 = \eta'$. In particular at every value for $z$ it is the case that $h$ is an element of $\MC(\mathfrak{k}^\xi)$. In other words this is a homotopy through lifts of $\xi_0$. The space of lifts of $\xi_0$ up to homotopy in this sense is then just $\MCmoduli(\mathfrak{k}^\xi)$ and does not depend on the choice of $\xi$. Note that if $\xi_0'=e^{y_0}\cdot\xi_0$ then given $y\in\mathfrak{g}$ with $f(y)=y_0$ and a homotopy $h\in\MC(\mathfrak{k}^{\xi}[z,dz])$ with $h|_0 = \eta$ and $h|_1 = \eta'$ then $e^y\cdot h\in\MC(\mathfrak{k}^{e^y\cdot\xi}[z,dz])$ is a homotopy from $e^y\cdot\eta$ to $e^y\cdot\eta'$ so by \autoref{prop:gaugeequivoffibres} gauge equivalent elements have the same space of lifts up to homotopy in this sense. Therefore the space of lifts of up to homotopy in this sense is well defined for a homotopy class $\xi_0\in\MCmoduli(\mathfrak{h})$.

Alternatively we could say that $\eta,\eta'\in\MC(\mathfrak{k}^\xi)$ are equivalent if they are equivalent as Maurer--Cartan elements in $\mathfrak{g}$. That is, the elements $\eta+\xi, \eta'+\xi\in\MC(\mathfrak{g})$ are equivalent as Maurer--Cartan elements. In general this will not be the same since two elements may now be homotopic via a homotopy not necessarily through lifts of $\xi_0$. The space of lifts up to homotopy in this sense is just the fibre in $\MCmoduli(\mathfrak{g})$ over $\xi_0\in\MCmoduli(\mathfrak{h})$. These two different notions are related as follows.

\begin{theorem}\label{thm:fibration}
Let $f\co\mathfrak{g}\twoheadrightarrow \mathfrak{h}$ be a surjective map of pronilpotent curved Lie algebras. Then $\exp(H^0(\mathfrak{h}^{\xi_0}))$ acts on $\MCmoduli(\mathfrak{k}^\xi)$ and $\MCmoduli(\mathfrak{k}^\xi)/\exp(H^0(\mathfrak{h}^{\xi_0}))$ is isomorphic to the fibre in $\MCmoduli(\mathfrak{g})$ over $\xi_0\in\MCmoduli(\mathfrak{h})$.
\end{theorem}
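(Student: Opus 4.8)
The plan is to reduce first to the case $\xi_0=0$. Choose $\xi\in\mathfrak{g}$ with $f(\xi)=\xi_0$ and pass to the twisted algebras $\mathfrak{g}^{\xi}$, $\mathfrak{h}^{\xi_0}$, $\mathfrak{k}^{\xi}$; since $\xi_0\in\MC(\mathfrak{h})$ the target $\mathfrak{h}^{\xi_0}$ is now an honest differential graded Lie algebra, $f\co\mathfrak{g}^{\xi}\twoheadrightarrow\mathfrak{h}^{\xi_0}$ is still surjective with kernel $\mathfrak{k}^{\xi}$ (a curved Lie subalgebra by \autoref{prop:mcfibre}), and $H^{0}(\mathfrak{h}^{\xi_0})$ is exactly the cohomology appearing in the statement. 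A routine check shows that the bijection $\MC(\mathfrak{g}^{\zeta})\to\MC(\mathfrak{g})$, $\eta\mapsto\eta+\zeta$, intertwines the gauge actions, hence descends to a bijection on $\MCmoduli$ carrying the fibre over $[\xi_0]$ to the fibre over $[0]$. So I may as well assume $\mathfrak{h}$ is a differential graded Lie algebra, $\mathfrak{k}=\ker f$, $\MC(\mathfrak{k})$ is (by \autoref{prop:mcfibre}) precisely the fibre of $\MC(\mathfrak{g})\to\MC(\mathfrak{h})$ over $0$, and I must identify the fibre in $\MCmoduli(\mathfrak{g})$ over $[0]\in\MCmoduli(\mathfrak{h})$ with $\MCmoduli(\mathfrak{k})/\exp(H^{0}(\mathfrak{h}))$.

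Next I would carry out the group-theoretic bookkeeping. If $[\zeta]$ lies in the fibre over $[0]$, write $f(\zeta)=e^{y_0}\cdot 0$ and lift $y_0$ to $y\in\mathfrak{g}^{0}$; then $e^{-y}\cdot\zeta$ lies in $\MC(\mathfrak{k})$ and represents the same class, so every fibre class is represented in $\MC(\mathfrak{k})$. Two such representatives $\eta,\eta'$ represent the same fibre class iff $\eta'=e^{y}\cdot\eta$ for some $y\in\mathfrak{g}^{0}$, and applying $f$ together with \autoref{prop:stabcycles} (for the Maurer--Cartan element $0$) forces $f(y)\in Z^{0}(\mathfrak{h})$; let $\Gamma=\{e^{y}:f(y)\in Z^{0}(\mathfrak{h})\}$, a subgroup of $\exp(\mathfrak{g}^{0})$ preserving $\MC(\mathfrak{k})$. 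Because $\mathfrak{k}$ is an ideal, $\exp(\mathfrak{k}^{0})$ is normal in $\Gamma$, the gauge action of $\mathfrak{g}$ restricted to $\exp(\mathfrak{k}^{0})$ agrees with the gauge action of $\mathfrak{k}$ on itself (so $\MC(\mathfrak{k})/\exp(\mathfrak{k}^{0})=\MCmoduli(\mathfrak{k})$ by the Schlessinger--Stasheff theorem), and $\exp(f)$ identifies $\Gamma/\exp(\mathfrak{k}^{0})$ with $\exp(Z^{0}(\mathfrak{h}))$. Hence the fibre over $[0]$ is $\MCmoduli(\mathfrak{k})/\exp(Z^{0}(\mathfrak{h}))$.

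It then remains to prove that the coboundaries act trivially, i.e.\ that $\exp(B^{0}(\mathfrak{h}))$ acts trivially on $\MCmoduli(\mathfrak{k})$; granting this the action factors through $\exp(Z^{0}(\mathfrak{h})/B^{0}(\mathfrak{h}))=\exp(H^{0}(\mathfrak{h}))$ and the theorem follows. This is the crux. Given $\omega=d\beta$ with $\beta\in\mathfrak{h}^{-1}$, lift $\beta$ to $b\in\mathfrak{g}^{-1}$ and represent $\omega$ by the specific lift $y=Db$, where $D$ is the predifferential of $\mathfrak{g}$. Fix $\eta\in\MC(\mathfrak{k})$; after twisting by $\eta$ it is enough to show $e^{Db}\cdot_{\mathfrak{g}^{\eta}}0$ is gauge equivalent to $0$ in $\mathfrak{k}^{\eta}$, where $\mathfrak{g}^{\eta}$ and $\mathfrak{k}^{\eta}$ are now genuine differential graded Lie algebras with differential $D^{\eta}=D+\ad_{\eta}$. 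The idea is the Baker--Campbell--Hausdorff splitting $Db=(D^{\eta}b)\bullet C$ with $C=(-D^{\eta}b)\bullet(Db)$; applying $f$ gives $f(C)=(-\omega)\bullet\omega=0$, so $C\in\mathfrak{k}^{0}$. Therefore $e^{Db}\cdot_{\mathfrak{g}^{\eta}}0=e^{D^{\eta}b}\cdot_{\mathfrak{g}^{\eta}}(e^{C}\cdot_{\mathfrak{g}^{\eta}}0)$, where $e^{C}\cdot_{\mathfrak{g}^{\eta}}0$ is gauge equivalent to $0$ in $\mathfrak{k}^{\eta}$, while $e^{D^{\eta}b}$ fixes $0$ (because $(D^{\eta})^{2}b=0$), preserves $\MC(\mathfrak{k}^{\eta})$ (because $f(e^{D^{\eta}b}\cdot(\,\cdot\,))=e^{\omega}\cdot_{\mathfrak{h}}(\,\cdot\,)$ and $d\omega=0$), and sends $\mathfrak{k}^{\eta}$-gauge classes to $\mathfrak{k}^{\eta}$-gauge classes since $\mathfrak{k}^{0}$ is an ideal in $\mathfrak{g}^{0}$. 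Hence $e^{Db}\cdot_{\mathfrak{g}^{\eta}}0\sim 0$ in $\mathfrak{k}^{\eta}$, as needed.

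The main obstacle is precisely this last step: all the preceding reductions are formal manipulations with twistings, \autoref{prop:mcfibre}, \autoref{prop:stabcycles} and the Schlessinger--Stasheff theorem, whereas proving that a coboundary acts trivially forces one to choose the lift $Db$ and to control, via Baker--Campbell--Hausdorff, the discrepancy between the predifferential $D$ of $\mathfrak{g}$ and the twisted differential $D^{\eta}$ in which the computation becomes transparent.
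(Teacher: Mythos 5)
Your proposal is correct, but it reaches the result by a noticeably different route than the paper at the crucial step. The paper never leaves the homotopy-theoretic definition of $\MCmoduli$: it defines the action directly by $e^{y_0}\star\eta = e^{y}\cdot(\eta+\xi)-\xi$ and proves both independence of the lift $y$ and invariance under coboundaries by writing down explicit Sullivan homotopies in $\MC(\mathfrak{g}[z,dz])$ (namely $e^{y'z}\cdot e^{-yz}\cdot e^{y}\cdot(\eta+\xi)$ and $e^{y+(d+d^{\xi})(bz)}\cdot(\eta+\xi)$), checking via \autoref{prop:stabcycles} applied in $\mathfrak{h}^{\xi_0}[z,dz]$ that these are homotopies through lifts of $\xi_0$; the identification of the quotient with the fibre is then asserted from the construction. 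You instead twist to reduce to $\xi_0=0$, replace homotopy by gauge equivalence throughout (so you lean on the Schlessinger--Stasheff theorem in the curved pronilpotent setting, which the paper's proof does not need to invoke), organise the bookkeeping as a quotient in stages via the extension $\exp(\mathfrak{k}^0)\trianglelefteq\Gamma$ with $\Gamma/\exp(\mathfrak{k}^0)\cong\exp(Z^0(\mathfrak{h}))$, and prove triviality of the coboundary action by the Baker--Campbell--Hausdorff factorisation $Db=(D^{\eta}b)\bullet C$ with $C\in\mathfrak{k}^0$, using \autoref{prop:stabcycles} in $\mathfrak{g}^{\eta}$ for the exact element $D^{\eta}b$ and normality of $\exp(\mathfrak{k}^0)$. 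What each buys: the paper's argument is shorter and works directly with the homotopy relation defining $\MCmoduli$, at the cost of leaving the surjectivity/injectivity of the final identification rather terse; yours makes the group-theoretic structure of the fibre completely explicit (the fibre is literally $\MC(\mathfrak{k})/\Gamma$) and isolates the coboundary step as a clean gauge computation with the canonical lift $Db$, at the cost of routing everything through gauge equivalence and the (implicit, in both proofs) interpretation of $\exp(H^0(\mathfrak{h}^{\xi_0}))$ as the quotient $\exp(Z^0)/\exp(B^0)$. Both arguments are sound and use the same basic ingredients (\autoref{prop:mcfibre}, \autoref{prop:stabcycles}, equivariance of $f$ under the gauge action), so yours is a legitimate alternative proof rather than a paraphrase of the paper's.
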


\begin{proof}
Given $y_0\in\mathfrak{h}^0$ such that $(d+d^{\xi_0})y_0=0$ and $y,y'\in\mathfrak{g}^0$ with $f(y)=f(y')=y_0$ then for any $\eta\in\MC(\mathfrak{k}^\xi)$ let
\[h=e^{y'z}\cdot e^{-yz}\cdot e^{y}\cdot(\eta+\xi)\]
so that $h\in\MC(\mathfrak{g}[z,dz])$ is a homotopy from $e^y\cdot(\eta+\xi)$ to $e^{y'}\cdot(\eta+\xi)$. Then $f(h)=e^{y_0}\cdot \xi_0 = \xi_0$ by \autoref{prop:stabcycles} so $h$ is a homotopy through lifts of $\xi_0$ and so $e^y\cdot(\eta+\xi) - \xi$ and $e^{y'}\cdot(\eta + \xi) - \xi$ are homotopy equivalent as elements in $\MC(\mathfrak{k}^\xi)$. Therefore this gives a well defined action $e^{y_0}\star\eta = e^y\cdot(\eta+\xi)-\xi$ on $\MCmoduli(\mathfrak{k}^\xi)$ for cycles in $(\mathfrak{h}^{\xi_0})^0$. Furthermore given $x_0\in\mathfrak{h}^0$ such that $x_0 = y_0 + (d+d^{\xi_0})b_0$ for some $b_0\in\mathfrak{h}^{-1}$ and $b\in\mathfrak{g}^{-1}$ with $f(b)=b_0$ set $x=y+(d+d^{\xi})b$ so that $f(x)=x_0$. Set
\[h = e^{y + (d + d^\xi)(bz)}\cdot (\eta+\xi)\]
for $\eta\in\MC(\mathfrak{k}^\xi)$ and then $h\in\MC(\mathfrak{g}[z,dz])$ so $h$ gives a homotopy from $e^y\cdot(\eta+\xi)$ to $e^x\cdot(\eta+\xi)$. Since $y_0+(d+d^{\xi_0})(b_0z)$ is a cycle in $\mathfrak{h}^{\xi_0}[z,dz]$ then again by \autoref{prop:stabcycles} $f(h) = \xi_0$ so $h$ is a homotopy through lifts of $\xi_0$ and hence $e^{y_0}\star\eta$ and $e^{x_0}\star\eta$ are equivalent as elements in $\MC(\mathfrak{k}^\xi)$. Therefore $\star$ descends to a well defined action of $\exp(H^0(h^{\xi_0}))$ on $\MCmoduli(\mathfrak{k}^\xi)$.

That $\MCmoduli(\mathfrak{k}^\xi)/\exp(H^0(\mathfrak{h}^{\xi_0}))$ is isomorphic to the fibre in $\MCmoduli(\mathfrak{g})$ over $\xi_0\in\MCmoduli(\mathfrak{h})$ follows from how the action of $\exp(H^0(\mathfrak{h}^{\xi_0}))$ was defined above together with \autoref{prop:stabcycles}.
\end{proof}

\begin{remark}
Note that, like all these results, \autoref{thm:fibration} continues to hold in the supergraded setting, replacing $H^0(\mathfrak{h}^{\xi_0})$ with $H^{\mathrm{even}}(\mathfrak{h}^{\xi_0})$.
\end{remark}

\begin{remark}\label{rem:conceptualfibration}
There is perhaps a more conceptual, albeit less elementary, way of proving \autoref{thm:fibration}. The category of unital differential graded commutative algebras is almost a simplicial model category, for example using the results of \cite{hinich}. In particular it satisfies the corner axiom: Given a cofibration $i\co A\rightarrow B$ and a fibration $p\co X\rightarrow Y$ the induced map 
\[(i^*,p_*)\co \Hom(B,X)_\bullet \rightarrow \Hom(A,X)_\bullet\times_{\Hom(A,Y)_\bullet}\Hom(B,Y)_\bullet\]
is a fibration of simplicial sets. In particular for formal\footnote{Requiring formal as opposed to just pronilpotent here seems weaker, but in practice all the pronilpotent Lie algebras one encounters normally arise as the tensor product of a formal Lie algebra and discrete commutative algebra. Therefore considering $\MCmoduli(\mathfrak{g},A)$ instead of just $\MCmoduli(\mathfrak{g},k)$ is usually sufficient.} curved Lie algebras $\mathfrak{h}$ and $\mathfrak{g}$ with a map $f\co\mathfrak{g}\twoheadrightarrow\mathfrak{h}$, setting $A=\CE^{\bullet}(\mathfrak{h})$ and $B=\CE^{\bullet}(\mathfrak{g})$ then the map $f^*\co A\rightarrow B$ is a cofibration (see, for example, the characterisation of cofibrations given in \cite{hinich}). Furthermore $\MC_\bullet(\mathfrak{g},k)\simeq\Hom(B,k)_\bullet$ and $\MC_\bullet(\mathfrak{h},k)\simeq\Hom(A,k)_\bullet$ where $\MC_\bullet$ is the \emph{Maurer--Cartan simplicial set}. So a surjective map of formal curved Lie algebras yields a fibration of Maurer--Cartan simplicial sets and \autoref{thm:fibration} can be obtained by considering the long exact sequence in homotopy together with the facts that for any pronilpotent curved Lie algebra $\pi_0\MC_\bullet(\mathfrak{g})=\MCmoduli(\mathfrak{g})$ and $H^0(\mathfrak{g}^\xi)$ is $\pi_1$ of the connected component of $\MC_\bullet(\mathfrak{g})$ containing $\xi$. In particular, from this point of view the standard picture in \autoref{fig:mcfibre} now becomes quite enlightening to keep in mind.
\end{remark}

\begin{figure}[ht!]
\centering
\[{\footnotesize
\begin{xy}
*\xybox{+(0,7),{\ellipse va(210),_,va(-30){-}};
+(0,14),{\ellipse va(210),_,va(-30){-}};
+(0,14),{\ellipse va(210),_,va(-30){-}};}
!UC+(0,5)*\xybox{\ellipse(8,3){-}}="centery"
-(0,45)*\xybox{\ellipse(8,3){-}}+(0,5);p+(0,8);**\dir{-}*\dir2{>}
-(4,10)*{\xi_0}="xi",
+(0,2.4)*\dir{*};
p+(0,19)*\dir{*}="1",
+(0,7)*\dir{*}="2",
+(0,7)*\dir{*}="3",
+(0,12)*\dir{*}="4",
+(0,7),
**\dir{.},
"xi"+(24,5)*{\MC_\bullet(\mathfrak{h})}
+(0,30)*{\MC_\bullet(\mathfrak{g})}
-(40,0)*{\MC_\bullet(\mathfrak{k^{\xi}})}+(6,0)="k";
"1"-(1.5,0.5)**\crv{~**\dir{.} "k"-(0,15)}?(1)*\dir{>},
"2"-(3.5,-0.5)**\crv{~**\dir{.} "k"-(-2,5)}?(1)*\dir{>},
"3"-(3.5,-0.7)**\crv{~**\dir{.} "k"+(2,2)}?(1)*\dir{>},
"4"-(1.5,0.5)**\crv{~**\dir{.} "k"+(0,10)}?(1)*\dir{>},
\end{xy}
}\]
\caption{In this standard picture of a fibration $\MCmoduli(\mathfrak{h})$ has one element, $\MCmoduli(\mathfrak{g})$ has two elements and $\MCmoduli(\mathfrak{k^{\xi}})$ has four elements, although the fibre in $\MCmoduli(\mathfrak{g})$ over $\xi_0$ has two elements.}
\label{fig:mcfibre}
\end{figure}
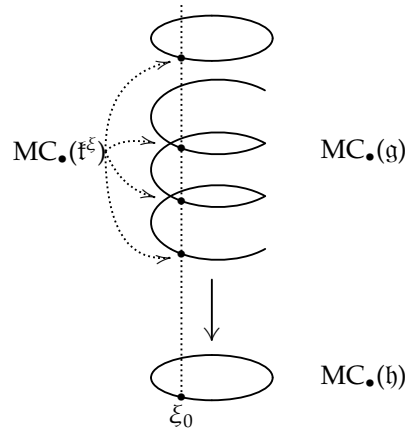

In practice we often have more structure than this general setup, normally in the form of a filtration. By a filtration of a curved Lie algebra $\mathfrak{g}$ we mean a descending filtration $\mathfrak{g}=F_0 \supset F_1 \supset F_2 \dots$ of subspaces such that the bracket and differential on $\mathfrak{g}$ preserves the filtration degree:
\[
[F_p,F_q] \subset F_{p+q}\qquad d(F_n)\subset F_n
\]
Furthermore we will require filtrations to be Hausdorff, so that $\bigcap F_i = 0$. Recall that such a filtration is \emph{complete} if $\mathfrak{g} = \lim_{\leftarrow} \mathfrak{g}/F_i$ (complete filtrations will always be assumed Hausdorff). If $\mathfrak{g}$ is a curved Lie algebra the \emph{canonical filtration} is given by $F_i = \ker(\mathfrak{g}\rightarrow \mathfrak{g}/[\mathfrak{g}]^i)$. This is a descending filtration which is Hausdorff if $\mathfrak{g}$ is pronilpotent.

\begin{proposition}\label{prop:liftuptower}
Let $\mathfrak{g}$ be a curved Lie algebra with a descending Hausdorff filtration $\mathfrak{g}=F_0 \supset F_1 \supset F_2 \dots$. Then $\xi\in\MC(\mathfrak{g})$ if and only if for all $n$, $\xi\in\MC(\mathfrak{g}/F_n)$. Moreover, if this filtration is complete then $\MC(\mathfrak{g}) = \lim_{\leftarrow} \MC(\mathfrak{g}/F_n)$.
\end{proposition}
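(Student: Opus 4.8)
The plan is to prove the statement in two parts, corresponding to the two sentences. For the first part I would argue by unwinding what the Maurer--Cartan equation means in $\mathfrak{g}$ versus in each quotient $\mathfrak{g}/F_n$. Given a degree one $\xi\in\mathfrak{g}$, write $\mathrm{MC}(\xi) = \Omega + d\xi + \tfrac{1}{2}[\xi,\xi]$ for the Maurer--Cartan expression, which is a degree two element of $\mathfrak{g}$. Since $d$ and the bracket preserve the filtration degree (and $\Omega=d0+\tfrac12[0,0]$ lies in $F_0=\mathfrak{g}$, with $\Omega\in F_0$ sufficing), the image of $\mathrm{MC}(\xi)$ under the projection $\mathfrak{g}\to\mathfrak{g}/F_n$ is precisely the Maurer--Cartan expression computed for the image of $\xi$ in the curved Lie algebra $\mathfrak{g}/F_n$ (which inherits a curved Lie structure since $F_n$ is an ideal for the filtration-compatible bracket and is preserved by $d$). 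Hence $\xi\in\mathrm{MC}(\mathfrak{g}/F_n)$ for all $n$ if and only if $\mathrm{MC}(\xi)\in F_n$ for all $n$, and by the Hausdorff property $\bigcap_n F_n = 0$ this is equivalent to $\mathrm{MC}(\xi)=0$, i.e.\ $\xi\in\mathrm{MC}(\mathfrak{g})$.

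For the second part I would exhibit the natural bijection $\mathrm{MC}(\mathfrak{g})\to\lim_{\leftarrow}\mathrm{MC}(\mathfrak{g}/F_n)$. The compatible projections $\mathfrak{g}\to\mathfrak{g}/F_n$ send Maurer--Cartan elements to Maurer--Cartan elements (as just observed) and are compatible with the tower maps $\mathfrak{g}/F_{n+1}\to\mathfrak{g}/F_n$, so there is a canonical map $\mathrm{MC}(\mathfrak{g})\to\lim_{\leftarrow}\mathrm{MC}(\mathfrak{g}/F_n)$. Completeness gives $\mathfrak{g}=\lim_{\leftarrow}\mathfrak{g}/F_n$ as graded vector spaces, so a compatible family $(\xi_n)$ of degree one elements $\xi_n\in\mathfrak{g}/F_n$ determines a unique degree one $\xi\in\mathfrak{g}$ projecting to each $\xi_n$; injectivity of the map is then immediate, and surjectivity follows because if each $\xi_n$ is a Maurer--Cartan element then by the first part $\xi\in\mathrm{MC}(\mathfrak{g})$. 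One should just check that the limit of the sets $\mathrm{MC}(\mathfrak{g}/F_n)$, taken inside $\lim_{\leftarrow}(\mathfrak{g}/F_n)^1 = \mathfrak{g}^1$, really is the subset cut out by ``$\xi_n\in\mathrm{MC}$ for all $n$'', which is exactly the content of the first part.

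There is essentially no hard step here; the only place requiring a little care is verifying that $\mathfrak{g}/F_n$ genuinely carries a curved Lie algebra structure for which the projection is a morphism of curved Lie algebras, so that the phrase ``$\xi\in\mathrm{MC}(\mathfrak{g}/F_n)$'' is meaningful. This is where the hypotheses $[F_p,F_q]\subset F_{p+q}$ and $d(F_n)\subset F_n$ are used: the former (with $p=0$) makes $F_n$ a Lie ideal and the latter makes it $d$-stable, so $d^2\eta=[\eta,\Omega]$ and $d\Omega=0$ descend. The rest is a routine diagram chase using only the definitions of inverse limit and of the Maurer--Cartan set, together with the Hausdorff and completeness conditions already assumed in the statement.
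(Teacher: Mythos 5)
Your proposal is correct and is essentially the paper's own argument: both reduce the first claim to the observation that the Maurer--Cartan expression $\Omega+d\xi+\tfrac12[\xi,\xi]$ projects to the Maurer--Cartan expression in each quotient $\mathfrak{g}/F_n$ and then invoke Hausdorffness ($\bigcap_n F_n=0$), and both obtain the second claim by assembling a compatible family into an element of $\mathfrak{g}=\lim_{\leftarrow}\mathfrak{g}/F_n$ via completeness and applying the first part. The only blemish is the parenthetical ``$\Omega=d0+\tfrac12[0,0]$'', which is not a correct identity (the curvature need not vanish); all you need, and all you actually use, is $\Omega\in F_0=\mathfrak{g}$ together with the fact that each projection $\mathfrak{g}\to\mathfrak{g}/F_n$ is a morphism of curved Lie algebras.
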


\begin{proof}
Clearly if $\xi\in\MC(\mathfrak{g})$ then $\xi\in\MC(\mathfrak{g}/F_n)$. Conversely assume $\xi\notin\MC(\mathfrak{g})$. Then $\Omega + d\xi + \frac{1}{2}[\xi,\xi] = \alpha \neq 0$. Since the filtration is Hausdorff there is some $n > 0$ such that $\alpha\notin F_n$ so $\alpha \neq 0 \mod F_n$. If the filtration is complete then any compatible sequence $\xi_n\in\MC(\mathfrak{g}/F_n)$ assembles to an element $\xi\in\mathfrak{g}$ that is then clearly Maurer--Cartan.
\end{proof}

It is commonly the case when one is interested in lifting Maurer--Cartan elements along a surjective map $\mathfrak{g}\rightarrow\mathfrak{h}$ that $\mathfrak{g}$ has a complete filtration with $\mathfrak{g}/F_1\cong\mathfrak{h}$. Therefore from now on let $\mathfrak{k}$ be a curved Lie algebra with curvature $\Omega$ and predifferential $d$ and a descending complete Hausdorff filtration $\mathfrak{k}=F_0 \supset F_1 \supset F_2 \dots$ with the additional property $\mathfrak{k}=F_1$. Note that $\mathfrak{k}$ is then pronilpotent.

Finding lifts is now reduced to the general problem of finding $\xi\in\MC(\mathfrak{k})$. \autoref{prop:liftuptower} allows us to reinterpret the problem as finding a sequence of lifts up the tower of curved Lie algebras:
\[
0\cong\mathfrak{k}/F_1 \twoheadleftarrow \mathfrak{k}/F_2 \twoheadleftarrow \dots \twoheadleftarrow \mathfrak{k}/F_n \twoheadleftarrow \dots
\]
This is a standard picture in deformation theory and the problem is governed by an explicit obstruction theory, which we shall now unwrap. 

Let $\xi_n\in\MC(\mathfrak{k}/F_{n+1})$. We regard $\xi_n$ as an element $\xi_n\in\mathfrak{k}/F_{n+2}$ using the decomposition of vector spaces $\mathfrak{k}/F_{n+2}\cong \mathfrak{k}/F_{n+1} \oplus F_{n+1}/F_{n+2}$. As in \autoref{prop:mcfibre} the space $F_{n+1}/F_{n+2}$ is a curved Lie subalgebra of $(\mathfrak{k}/F_{n+2})^{\xi_n}$ with curvature $\Omega + \Omega^{\xi_n} \in F_{n+1}/F_{n+2}$ and predifferential $d + d^{\xi_n}$. However, since the bracket preserves filtration degree then $d^{\xi_n}$ only depends on $\xi_n \mod F_1$ which is just zero, so the predifferential is in fact just $d$. Furthermore, since $\Omega \in F_1$ then for $\eta\in F_{n+1}$ we have that $d^2(\eta) = [\Omega, \eta] \in F_{n+2}$ so $d^2 = 0 \mod F_{n+2}$. It now also follows that $\Omega + \Omega^{\xi_n}$ is a cocycle with respect to $d$. Moreover $\eta \in F_{n+1}/F_{n+2}$ is Maurer--Cartan if and only if $\Omega + \Omega^{\xi_n} + d\eta +\frac{1}{2}[\eta,\eta] = \Omega + \Omega^{\xi_n} + d\eta = 0 \mod F_{n+2}$, or in other words if and only if $\Omega + \Omega^{\xi_n}$ is the coboundary of $\eta$. We call the curvature $\Omega + \Omega^{\xi_n}$ the \emph{obstruction at level $n+1$}.

Denote by $C^{\bullet}_n(\mathfrak{k})$ the differential graded vector space $F_{n}/F_{n+1}$, with differential $d$ and the corresponding cohomology by $H^\bullet_n(\mathfrak{k})$. This discussion now leads to the following theorem, which is a generalisation of standard results having a certain flavour familiar from deformation theory.

\begin{theorem}\label{thm:liftobs}
An element $\xi_n\in\MC(\mathfrak{k}/F_{n+1})$ at level $n$ lifts to an element $\xi_{n+1}\in\MC(\mathfrak{k}/F_{n+2})$ at level $n+1$ if and only if the obstruction $\Omega + \Omega^{\xi_n}$ at level $n+1$, which is a cocycle in $C^2_{n+1}(\mathfrak{k})$, vanishes as a cohomology class in $H^2_{n+1}(\mathfrak{k})$.
\qed
\end{theorem}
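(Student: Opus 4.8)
The plan is to unwrap the definitions and reduce everything to the discussion immediately preceding the statement, where all the relevant structure has already been set up. Recall that $\xi_n \in \MC(\mathfrak{k}/F_{n+1})$ is lifted to an element $\xi_n \in \mathfrak{k}/F_{n+2}$ using the vector space decomposition $\mathfrak{k}/F_{n+2} \cong \mathfrak{k}/F_{n+1} \oplus F_{n+1}/F_{n+2}$; any actual lift $\xi_{n+1} \in \MC(\mathfrak{k}/F_{n+2})$ must then be of the form $\xi_{n+1} = \xi_n + \eta$ with $\eta \in F_{n+1}/F_{n+2}$, since reducing mod $F_{n+1}$ must recover $\xi_n$. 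So the set of lifts of $\xi_n$ is, by \autoref{prop:mcfibre} applied to the surjection $\mathfrak{k}/F_{n+2} \twoheadrightarrow \mathfrak{k}/F_{n+1}$, in bijection (via $\eta \mapsto \eta + \xi_n$) with $\MC(\mathfrak{k}')$ where $\mathfrak{k}' = F_{n+1}/F_{n+2}$ is the curved Lie subalgebra of $(\mathfrak{k}/F_{n+2})^{\xi_n}$ with curvature $\Omega + \Omega^{\xi_n}$ and predifferential $d + d^{\xi_n}$.

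First I would argue, exactly as in the paragraph before the theorem, that on $F_{n+1}/F_{n+2}$ the twisted predifferential $d + d^{\xi_n}$ coincides with $d$: since the bracket raises filtration degree, $d^{\xi_n}\eta = [\xi_n, \eta]$ depends only on $\xi_n \bmod F_1$, which is $0$ because $\mathfrak{k} = F_1$. Similarly $d^2\eta = [\Omega, \eta] \in F_{n+2}$ because $\Omega \in F_1$, so $d^2 = 0$ on this subquotient and $(C^\bullet_{n+1}(\mathfrak{k}), d)$ is genuinely a complex. Next I would observe that $\Omega + \Omega^{\xi_n}$ lies in $F_{n+1}/F_{n+2} = C^2_{n+1}(\mathfrak{k})$: indeed $\Omega \in F_1$ and $\Omega^{\xi_n} = d\xi_n + \tfrac12[\xi_n,\xi_n]$ vanishes mod $F_{n+1}$ precisely because $\xi_n$ is Maurer--Cartan in $\mathfrak{k}/F_{n+1}$, while it trivially lies in $\mathfrak{k}$; that it is a $d$-cocycle follows from $d^2 = 0$ together with $d\Omega = 0$ and the Bianchi-type identity $d\Omega^{\xi_n} = -[\xi_n, \Omega^{\xi_n}] \in F_{n+2}$ (or simply: $\Omega + \Omega^{\xi_n}$ is the curvature of the curved Lie algebra $\mathfrak{k}'$, and the curvature of any curved Lie algebra is closed under its predifferential).

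Then the core computation: for $\eta \in F_{n+1}/F_{n+2}$, the element $\xi_n + \eta$ lies in $\MC(\mathfrak{k}/F_{n+2})$ iff $\eta \in \MC(\mathfrak{k}')$, i.e.\ iff
\[
\Omega + \Omega^{\xi_n} + d\eta + \tfrac12[\eta,\eta] = 0 \pmod{F_{n+2}}.
\]
But $\eta, \eta \in F_{n+1}$ forces $[\eta,\eta] \in F_{2n+2} \subseteq F_{n+2}$, so this equation collapses to $\Omega + \Omega^{\xi_n} = -d\eta \bmod F_{n+2}$. Hence a lift exists iff the cocycle $\Omega + \Omega^{\xi_n}$ is a $d$-coboundary in $C^\bullet_{n+1}(\mathfrak{k})$, i.e.\ iff its class in $H^2_{n+1}(\mathfrak{k})$ vanishes; this is exactly the assertion of \autoref{thm:liftobs}.

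There is no serious obstacle here — the statement is essentially a bookkeeping exercise once \autoref{prop:mcfibre} is in hand. The one point that needs a little care is the chain of filtration-degree estimates ($d^{\xi_n}$ trivial, $d^2$ trivial, $[\eta,\eta]$ negligible), all of which rely crucially on the hypothesis $\mathfrak{k} = F_1$ and the completeness/Hausdorff assumptions; I would state these explicitly rather than leave them implicit. I would also note that, as with the other results in this section, the same argument goes through verbatim in the supergraded setting (the relevant cohomology being $H^{\mathrm{even}}$), though for the statement as phrased no change is needed.
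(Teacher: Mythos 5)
Your argument is correct and follows essentially the same route as the paper, whose ``proof'' is precisely the discussion preceding the theorem: apply \autoref{prop:mcfibre} to $\mathfrak{k}/F_{n+2}\twoheadrightarrow\mathfrak{k}/F_{n+1}$, use $\mathfrak{k}=F_1$ and the filtration estimates to kill $d^{\xi_n}$, $d^2$ and $[\eta,\eta]$ modulo $F_{n+2}$, and read off that a lift exists iff $\Omega+\Omega^{\xi_n}$ is a $d$--coboundary in $C^\bullet_{n+1}(\mathfrak{k})$. One phrasing slip worth fixing: it is the sum $\Omega+\Omega^{\xi_n}$, not $\Omega^{\xi_n}$ alone, that vanishes mod $F_{n+1}$ by the Maurer--Cartan condition on $\xi_n$ (equivalently, this is the statement of \autoref{prop:mcfibre} that the curvature of the twisted subalgebra lies in the kernel), which is what you actually use, so the argument is unaffected.
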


Let $\xi_{n+1}=\xi_n + \eta$ and $\xi_{n+1}' = \xi_n + \eta'$ be two lifts at level $n+1$ of $\xi_n$. Then they are equivalent as lifts of $\xi_n$ if and only if there is some degree zero $y\in F_{n+1}/F_{n+2}$ such that $\eta = e^y\cdot \eta' = \eta' - dy$, so in fact if and only if $\eta$ and $\eta'$ differ by a coboundary in $C^1_{n+1}(\mathfrak{k})$. Clearly adding a cocycle in $C^1_{n+1}(\mathfrak{k})$ to a level $n+1$ lift gives another level $n+1$ lift and any two lifts differ by such a cocycle. Therefore one obtains another general version of familiar deformation theory results.

\begin{theorem}\label{thm:affinespaceoflifts}
The cohomology $H^1_{n+1}(\mathfrak{k})$ acts freely and transitively on the moduli space (up to homotopy preserving the element at level $n$) of level $n+1$ lifts.
\qed
\end{theorem}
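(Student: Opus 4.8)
The plan is to recast the statement entirely in terms of the curved Lie subalgebra controlling a single step of the tower — the one already isolated in the discussion preceding \autoref{thm:liftobs} — and then to exploit the fact that this subalgebra has vanishing bracket. Concretely, assume there is at least one level $n+1$ lift $\xi_{n+1}^0 = \xi_n + \eta_0$ (otherwise the statement is vacuous), and write $\mathfrak{a} = F_{n+1}/F_{n+2}$, viewed as the curved Lie subalgebra of $(\mathfrak{k}/F_{n+2})^{\xi_n}$ with curvature $\Omega + \Omega^{\xi_n}$ and predifferential $d$; as noted before \autoref{thm:liftobs}, the predifferential really is $d$ because $[F_1, F_{n+1}] \subset F_{n+2}$, and the bracket of $\mathfrak{a}$ vanishes because $[F_{n+1}, F_{n+1}] \subset F_{n+2}$. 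By \autoref{prop:mcfibre} applied to $\mathfrak{k}/F_{n+2} \twoheadrightarrow \mathfrak{k}/F_{n+1}$, the assignment $\eta \mapsto \xi_n + \eta$ identifies the set of level $n+1$ lifts of $\xi_n$ with $\MC(\mathfrak{a})$. I would then check that a ``homotopy of lifts preserving the element at level $n$'' is exactly a homotopy of Maurer--Cartan elements inside $\mathfrak{a}$: a homotopy in $\mathfrak{a}[z,dz]$, after adding $\xi_n$, lies over the constant path at $\xi_n$ in $(\mathfrak{k}/F_{n+1})[z,dz]$; conversely a homotopy $H \in \MC((\mathfrak{k}/F_{n+2})[z,dz])$ lying over the constant path $\xi_n$ has $H - \xi_n$ supported in $\mathfrak{a}[z,dz]$. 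Hence the moduli space in the statement is precisely $\MCmoduli(\mathfrak{a})$.

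Next I would identify $\MCmoduli(\mathfrak{a})$. Since the bracket of $\mathfrak{a}$ vanishes, the Maurer--Cartan equation in $\mathfrak{a}$ reduces to the affine-linear equation $d\eta = -(\Omega + \Omega^{\xi_n})$, so the cocycle group $Z^1_{n+1}(\mathfrak{k}) = \ker(d\colon C^1_{n+1}(\mathfrak{k}) \to C^2_{n+1}(\mathfrak{k}))$ acts on the nonempty set $\MC(\mathfrak{a})$ simply transitively by translation. The gauge action of $\exp(\mathfrak{a}^0)$ likewise collapses, only the linear term of the gauge formula surviving, to $e^y \cdot \eta = \eta - dy$ (as already recorded just before the theorem). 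By the Schlessinger--Stasheff theorem — or, since $\mathfrak{a}$ is abelian, by a direct integration of the $dz$-component of a homotopy over $k[z,dz]$ — homotopy of Maurer--Cartan elements in $\mathfrak{a}$ coincides with gauge equivalence; in particular the homotopy relation is already transitive, so the transitive closure in the definition of the moduli set is harmless and $\MCmoduli(\mathfrak{a}) = \MC(\mathfrak{a})/\exp(\mathfrak{a}^0)$. Passing to this quotient, the simply transitive $Z^1_{n+1}(\mathfrak{k})$-action descends to a simply transitive action of $Z^1_{n+1}(\mathfrak{k})/B^1_{n+1}(\mathfrak{k}) = H^1_{n+1}(\mathfrak{k})$, which is the assertion.

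Almost all of this is bookkeeping with the filtration and with the vanishing of brackets, for which the remarks preceding \autoref{thm:liftobs} do the work. The one point that genuinely needs care — and which I would spell out with both inclusions proved explicitly by projecting along $\mathfrak{k}/F_{n+2} \twoheadrightarrow \mathfrak{k}/F_{n+1}$ — is the claim that ``homotopy preserving the level-$n$ element'' is literally homotopy taking place in $\mathfrak{a}$, rather than the a priori coarser relation obtained from homotopies in $\mathfrak{k}/F_{n+2}$ whose endpoints merely happen to be lifts of $\xi_n$. Once that identification is in hand, verifying that the $Z^1$-action and the gauge action are the obvious affine ones, and that they descend to the claimed $H^1_{n+1}(\mathfrak{k})$-action, is routine.
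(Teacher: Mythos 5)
Your proposal is correct and follows essentially the same route as the paper: the paper's own argument is exactly the discussion preceding the theorem, which identifies level $n+1$ lifts of $\xi_n$ with solutions $\eta\in F_{n+1}/F_{n+2}$ of the affine equation $d\eta=-(\Omega+\Omega^{\xi_n})$ (the bracket vanishing modulo $F_{n+2}$), observes that the gauge action collapses to $e^y\cdot\eta=\eta-dy$ so equivalence of lifts is exactly differing by a coboundary in $C^1_{n+1}(\mathfrak{k})$, and concludes the free transitive $H^1_{n+1}(\mathfrak{k})$--action. Your extra care in checking that ``homotopy preserving the level-$n$ element'' is homotopy (equivalently, gauge equivalence, via Schlessinger--Stasheff or direct integration) inside the abelian curved subalgebra $F_{n+1}/F_{n+2}$ is a worthwhile elaboration of a point the paper treats implicitly, not a different method.
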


Maps which preserve the cohomology associated to the obstruction theory are of particular interest.

\begin{definition}
Let $\mathfrak{k}$ and $\mathfrak{k'}$ be curved Lie algebras with descending complete filtrations $\mathfrak{k}=F_1\supset F_2\dots$ and $\mathfrak{k}'=F_1\supset F_2\dots$. A \emph{filtered quasi-isomorphism} $\theta\co\mathfrak{k}\rightarrow\mathfrak{k}'$ is a map of curved Lie algebras preserving the filtration degree so that $\theta(F_n)\subset \theta(F_n')$ and where the induced maps $\theta^*\co C_n^{\bullet}(\mathfrak{k})\rightarrow C_n^\bullet(\mathfrak{k}')$ are quasi-isomorphisms.
\end{definition}

The following result is a curved variation on a well established theme concerning the homotopy invariance of the Maurer--Cartan moduli set, see for example \cite{kontsevich3,lazarev}. From the obstruction theory perspective it can be understood as saying that the cohomology associated to the obstruction theory effectively determines the problem. The proof given here keeps this perspective in mind.

\begin{theorem}\label{thm:quasiisomc}
Let $\theta\co\mathfrak{k}\rightarrow\mathfrak{k}'$ be a filtered quasi-isomorphism of curved Lie algebras. Then $\theta$ induces an isomorphism $\MCmoduli(\mathfrak{k}) \cong \MCmoduli(\mathfrak{k}')$.
\end{theorem}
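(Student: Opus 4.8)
The plan is to prove the theorem by the standard obstruction-theoretic induction up the two filtration towers, using the fact that $\theta$ is a filtered quasi-isomorphism to lift Maurer--Cartan elements and homotopies level by level. By \autoref{prop:liftuptower} (applied to both $\mathfrak{k}$ and $\mathfrak{k}'$, which are complete and Hausdorff with $F_1=\mathfrak{k}$, $F_1'=\mathfrak{k}'$) we have $\MC(\mathfrak{k})=\lim_{\leftarrow}\MC(\mathfrak{k}/F_n)$ and similarly for $\mathfrak{k}'$, so it suffices to show that $\theta$ induces a bijection on the moduli sets at each finite level and that these bijections are compatible with the tower maps. First I would set up the surjectivity of $\MCmoduli(\theta)$: starting from $\xi'\in\MC(\mathfrak{k}')$, one inductively constructs compatible $\xi_n\in\MC(\mathfrak{k}/F_{n+1})$ with $\theta(\xi_n)$ homotopic to the reduction of $\xi'$. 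The inductive step uses \autoref{thm:liftobs}: given $\xi_n$ lifting, the obstruction to lifting it to level $n+1$ lies in $H^2_{n+1}(\mathfrak{k})$, and since $\theta^*\co C^\bullet_{n+1}(\mathfrak{k})\to C^\bullet_{n+1}(\mathfrak{k}')$ is a quasi-isomorphism while the corresponding obstruction for $\xi'$ vanishes (as $\xi'$ is genuinely Maurer--Cartan), the obstruction in $H^2_{n+1}(\mathfrak{k})$ must vanish too, so a lift exists; a small diagram chase is needed to make sure the chosen lift maps to something homotopic to $\xi'$, adjusting by an element of $H^1_{n+1}(\mathfrak{k})$ via \autoref{thm:affinespaceoflifts} if necessary.

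For injectivity, I would take $\xi,\eta\in\MC(\mathfrak{k})$ with $\theta(\xi)$ and $\theta(\eta)$ homotopic in $\mathfrak{k}'$, and show $\xi\sim\eta$ in $\mathfrak{k}$. By the Schlessinger--Stasheff theorem and \autoref{prop:liftuptower} it is cleanest to work with the Maurer--Cartan moduli sets of the truncations $\mathfrak{k}/F_n$ and build a homotopy (equivalently, by Schlessinger--Stasheff, a gauge equivalence) level by level. At level $n$ one has $\xi_n\sim\eta_n$ already; to promote this to level $n+1$ one must lift the degree-zero gauge element, and the obstruction to doing so, together with the ambiguity of the lift, is again controlled by $H^1_{n+1}$ and $H^0_{n+1}$ of the associated graded, which $\theta^*$ identifies with those of $\mathfrak{k}'$. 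Since the homotopy exists downstairs, the relevant obstruction class is zero, so the homotopy lifts; completeness of the filtration then assembles the tower of gauge elements into an honest gauge equivalence $\xi\sim\eta$. Alternatively, and perhaps more transparently, one can phrase this using the Maurer--Cartan simplicial sets as in \autoref{rem:conceptualfibration}: a filtered quasi-isomorphism induces a weak equivalence $\MC_\bullet(\mathfrak{k})\to\MC_\bullet(\mathfrak{k}')$ by an inductive argument over the tower, since on associated graded pieces it is a quasi-isomorphism of abelian (square-zero) differential graded Lie algebras, and weak equivalences of the truncations assemble in the limit; taking $\pi_0$ gives the claim.

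The main obstacle I anticipate is the bookkeeping in the inductive step for injectivity: one is not merely lifting a Maurer--Cartan element but lifting a \emph{homotopy} (or gauge transformation) between two given genuine Maurer--Cartan elements $\xi,\eta$, so the truncation $\mathfrak{k}/F_{n+2}$ must be twisted appropriately (by $\xi_{n+1}$, say) and one works inside the curved subalgebra $F_{n+1}/F_{n+2}$ of $(\mathfrak{k}/F_{n+2})^{\xi_{n+1}}$ exactly as in the discussion preceding \autoref{thm:liftobs}; one must check that the predifferential there is genuinely $d$ (which follows because the bracket raises filtration degree, as noted in the excerpt) so that $\theta^*$ being a quasi-isomorphism of the $C^\bullet_n$ is precisely what is needed. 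Once that identification is in place the argument is the familiar one, and completeness of both filtrations guarantees the inverse limit of the level-wise bijections is the desired bijection $\MCmoduli(\mathfrak{k})\cong\MCmoduli(\mathfrak{k}')$.
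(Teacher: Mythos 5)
Your proposal is correct and follows essentially the same route as the paper: induction up the filtration towers using \autoref{prop:liftuptower}, with surjectivity handled by the obstruction/ambiguity statements \autoref{thm:liftobs} and \autoref{thm:affinespaceoflifts}, and injectivity by lifting gauge equivalences level by level using the fact that the induced map of twisted truncations $(\mathfrak{k}/F_{n+1})^{\xi_n}\rightarrow(\mathfrak{k}'/F_{n+1}')^{\theta(\xi_n)}$ is a quasi-isomorphism because it is one on associated graded pieces (the paper phrases this via \autoref{thm:fibration} and the action of $\exp(H^0)$, which is what your ``lifting the gauge element'' step amounts to). Your closing remark about the simplicial/model-categorical alternative likewise matches the paper's own remark following its proof.
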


\begin{proof}
Let $\xi'\in\MC(\mathfrak{k}')$ and write $\xi_n'$ for the projection to $\MC(\mathfrak{k}'/F_{n+1}')$. Assume for some $n$ there is $\xi_n\in\MC(\mathfrak{k}/F_{n+1})$ with $\theta(\xi_n)$ equivalent to $\xi_n'$ as elements in $\MC(\mathfrak{k}'/F_{n+1}')$. By applying some gauge equivalence to $\xi'$ we may assume that $\xi_n'=\theta(\xi_n) \mod F_{n+1}$. Then since the obstruction at level $n+1$ in $C_{n+1}^2(\mathfrak{k'})$ vanishes in cohomology so does the obstruction in $C_{n+1}^2(\mathfrak{k})$. Let $\eta' = \xi_{n+1}' - \xi_n'\in C_{n+1}^1(\mathfrak{k})$. Since $\theta^*$ is a quasi-isomorphism choose some lift $\eta\in C_{n+1}^1(\mathfrak{k})$ of $\xi_n$ such that $\theta^*(\eta)-\eta'$ is a coboundary (this is possible by \autoref{thm:affinespaceoflifts}). Then $\theta(\xi_{n}+\eta)$ is equivalent to $\xi_{n+1}'$ as elements in $\MC(\mathfrak{k}'/F_{n+2}')$. Therefore by induction and \autoref{prop:liftuptower} $\theta$ induces a surjection of Maurer--Cartan moduli spaces.

Given $\xi_n\in\MC(\mathfrak{k}/F_{n+1})$ consider the map $(\mathfrak{k}/F_{n+1})^{\xi_n}\rightarrow (\mathfrak{k'}/F_{n+1}')^{\theta(\xi_n)}$ induced by $\theta$. This is a quasi-isomorphism on the associated graded complexes so this map is a quasi-isomorphism.

Now given $\xi, \zeta\in\MC(\mathfrak{k})$ assume $\theta(\xi)$ and $\theta(\zeta)$ are equivalent. Assume further that $\xi_n = \zeta_n$ for some $n$. Then $\theta(\xi_{n+1} - \xi_n)$ and $\theta(\zeta_{n+1} -\zeta_n)$ are, up to a coboundary, in the same orbit under the action of $H^0((\mathfrak{k}'/F_{n+1}')^{\theta(\xi_n)})$ by \autoref{thm:fibration}. But this is isomorphic to $H^0((\mathfrak{k}/F_{n+1})^{\xi_n})$ so the same holds true for $\xi_{n+1}-\xi_n$ and $\zeta_{n+1}-\zeta_n$ so by applying some gauge equivalence we may assume $\xi_{n+1} = \zeta_{n+1}$. Therefore by induction $\xi$ and $\zeta$ are equivalent and $\theta$ induces an injection and hence an isomorphism of Maurer--Cartan moduli spaces.
\end{proof}

\begin{remark}
As in \autoref{rem:conceptualfibration} there is again a more conceptual way of approaching \autoref{thm:quasiisomc} via the model category structure on unital differential graded commutative algebras. Take a map $\theta\co\mathfrak{k}\rightarrow\mathfrak{k}'$ of \emph{formal} curved Lie algebras such that $\theta^*\co\CE^\bullet(\mathfrak{k}')\rightarrow \CE^{\bullet}(\mathfrak{k})$ is a weak equivalence of unital differential graded commutative algebras ($\theta$ being a \emph{filtered} quasi-isomorphism is sufficient to ensure this). As noted in \autoref{rem:homotopyissullivanhomotopy} the Maurer--Cartan moduli set $\MCmoduli(\mathfrak{k},A)$ is the set of homotopy classes of maps $[\CE^{\bullet}(\mathfrak{k}),A]$ so it now follows that $\MCmoduli(\mathfrak{k},A)\cong\MCmoduli(\mathfrak{k}',B)$ for any $A$ and $B$ which are weakly equivalent.
\end{remark}

\chapter{Quantum homotopy algebras}\label{chap:quantum}

In this chapter we begin a study of quantum homotopy algebras and quantum lifts.

We will begin by reviewing the theory of hyperoperads which extends the basic theory of modular operads reviewed in \autoref{chap:operads}. For simplicity in this chapter we will \emph{not} consider modular operads to be extended modular operads in the sense of \autoref{chap:operads} so, for example, we will only need to consider stable graphs and stable $S$--modules.

We will then develop the theory of Maurer--Cartan elements in a modular operad and prove some of the basic results we will need concerning maps from the Feynman transform of a modular operad. After introducing the definitions of quantum and semi-quantum homotopy algebras we examine in some detail the problem of quantum lifting and provide a simple solution to the problem of lifting strict algebras.

We finish the chapter by considering applications to manifold invariants arising in the quantisation of Chern--Simons field theory.

\section{Hyperoperads}
The cobar construction of an operad is again an operad, in comparison to how the cobar construction of an associative algebra is an associative algebra. However, the Feynman transform, which is the corresponding notion of the cobar construction for modular operads, of a modular operad is not a modular operad but rather a \emph{modular $\mathfrak{K}$--operad}, similar to how the cobar construction of a commutative algebra is not a commutative algebra but rather a Lie algebra. Therefore it is somewhat necessary to consider \emph{hyperoperads}, the objects which govern algebraic structures such as modular operads.

Caveat lector: The technical issues surrounding hyperoperads are not the primary concern here and consequently only the pertinent details will be recalled here and the reader should consult the original paper of Getzler--Kapranov \cite{getzlerkapranov} for a more complete reference. Additionally, experts should note that some slight modifications and generalisations to the setup and notation commonly found in the literature have been made.

\subsection{Modular $\mathfrak{D}$--operads}
Denote by $\tilde{\Gamma}((g,n))$ the category of unlabelled stable graphs obtained from $\Gamma((g,n))$ by forgetting the labelling of legs.

A \emph{hyperoperad} $\mathfrak{D}$ consists of a collection of functors from each of the categories $\iso \tilde{\Gamma}((g,n))$ to $\dgvect$ such that the object associated to the graph with $1$ vertex and no edges is $k$ and for every morphism $f\co G\rightarrow G'$ in $\tilde{\Gamma}((g,n))$ there is an assigned morphism $\nu_f\co\mathfrak{D}(G')\otimes \bigotimes_{v\in\vertices(G')}\mathfrak{D}(f^{-1}(v))\rightarrow \mathfrak{D}(G)$, natural with respect to isomorphisms and satisfying certain unit and associativity conditions. If additionally each $\mathfrak{D}(G)$ is one dimensional and each map $\nu_f$ is an isomorphism we call $\mathfrak{D}$ a \emph{cocycle}. More succinctly a cocycle is a hyperoperad that is invertible in the symmetric monoidal category of hyperoperads. Denote its inverse by $\mathfrak{D}^{-1}$.

\begin{definition}
Let $\mathfrak{D}$ be a hyperoperad. Define the functor $\mathbb{M}_\mathfrak{D}$ by
\[
\mathbb{M}_{\mathfrak{D}}W((g,n))=\operatorname*{colim}_{G\in\iso\Gamma((g,n))}\mathfrak{D}(G)\otimes W((G)).
\]
Then this is an endofunctor on the category of stable $S$--modules and has the structure of a monad (triple) as shown in \cite{getzlerkapranov}. An algebra over this monad is called a \emph{modular $\mathfrak{D}$--operad}.
\end{definition}

Since $\mathbb{M}_\mathfrak{D}$ is a monad then it of course follows that $\mathbb{M}_{\mathfrak{D}}W$ itself has the natural structure of a modular $\mathfrak{D}$--operad, the free modular $\mathfrak{D}$--operad generated by $W$.

\begin{convention}
Unless otherwise stated, it will normally be assumed that a hyperoperad is in fact a cocycle.
\end{convention}

Let $\mathfrak{l}$ be a stable $S$--module with each $\mathfrak{l}((g,n))$ invertible (that is, one dimensional). Define an associated cocycle by
\[
\mathfrak{D}_{\mathfrak{l}}(G) = \mathfrak{l}((g,n))\otimes\bigotimes_{v\in\vertices(G)} \mathfrak{l}((g,n))^{-1}
\]
and call this cocycle the \emph{coboundary of $\mathfrak{l}$}. Denote the functor of tensoring $S$--modules with $\mathfrak{l}$ by $W\mapsto \mathfrak{l}W$. This induces an equivalence between the categories of modular $\mathfrak{D}$--operads and modular $\mathfrak{D}\otimes\mathfrak{D}_{\mathfrak{l}}$--operads via the natural isomorphism of monads $\mathbb{M}_{\mathfrak{D}\otimes\mathfrak{D}_\mathfrak{l}} \cong \mathfrak{l}\circ\mathbb{M}_{\mathfrak{D}}\circ\mathfrak{l}^{-1}$.

In practice it will only be necessary to consider a few cocycles. The trivial cocycle is simply $\mathbf{1}(G)=k$ and modular $\mathbf{1}$--operads are just modular operads. Of particular importance is the \emph{dualising cocycle} $\mathfrak{K}(G)=\Sigma^{-n}\bigwedge^n (k^{\edges(G)})$. This is not the coboundary of any stable $S$--module. For any hyperoperad $\mathfrak{D}$ define the dual hyperoperad by $\mathfrak{D}^{\vee}=\mathfrak{K}\otimes\mathfrak{D}^{-1}$. Note that $\mathbf{1}^{\vee}\cong\mathfrak{K}$.

Denote by $\mathfrak{s}$ and $\Sigma$ the $S$--modules
\begin{gather*}
\mathfrak{s}((g,n)) = \Sigma^{2-2g-n}\mathrm{sgn}_n
\\
\Sigma((g,n)) = \Sigma k
\end{gather*}
where $\mathrm{sgn}_n$ is the sign representation of $S_n$. Observe that $\mathfrak{D}_{\mathfrak{s}}^{\otimes 2}\cong \mathbf{1}$. The cocycle $\mathfrak{D}_{\Sigma}\otimes\mathfrak{D}_{\mathfrak{s}}$ coincides with $\mathfrak{K}$ on the category of trees.

\subsection{Cyclic $\mathfrak{D}$--operads}
It will also be convenient to consider cyclic operads over a cocycle. This is not entirely necessary since the cocycle $\mathfrak{K}$ is a coboundary when restricted to trees so cyclic $\mathfrak{K}$--operads will be equivalent to normal cyclic operads. However by considering cyclic $\mathfrak{K}$--operads separately the statement and proof of certain results will be more succinct and avoid excessively dense and complex notation.

Since the categories $T((n))$ and  $\Gamma((0,n))$ are isomorphic, it is clear what the corresponding monad will be:
\begin{definition}
Let $\mathfrak{D}$ be a hyperoperad. Define the functor $\mathbb{T}_{\mathfrak{D}}$ by
\[
\mathbb{T}_{\mathfrak{D}}U((n)) = \operatorname*{colim}_{T\in\iso T((n))}\mathfrak{D}(T)\otimes U((T)).
\]
Then this is an endofunctor on the category of cyclic $S$--modules and has the structure of a monad. An algebra over this monad is called a \emph{cyclic $\mathfrak{D}$--operad}.
\end{definition}

Once again $\mathbb{T}_{\mathfrak{D}}U$ itself has the structure of a cyclic $\mathfrak{D}$--operad, the free cyclic $\mathfrak{D}$--operad generated by $U$.

\begin{definition}
Given a modular $\mathfrak{D}$--operad $\mathcal{O}$ then the genus zero part of $\mathcal{O}$ is a cyclic $\mathfrak{D}$--operad. Denote this functor by $\mathcal{O}\mapsto \cyc{\mathcal{O}}$. If $\mathcal{Q}$ is a cyclic $\mathfrak{D}$--operad then the modular $\mathfrak{D}$--closure is the left adjoint to this functor and will be denoted by $\mathcal{Q}\mapsto\modc{\mathcal{Q}}$. The na\"ive $\mathfrak{D}$--closure is the right adjoint and will be denoted by $\mathcal{Q}\mapsto\modn{\mathcal{Q}}$.
\end{definition}

Since $\mathfrak{D}_{\Sigma}\otimes\mathfrak{D}_{\mathfrak{s}}$ coincides with $\mathfrak{K}$ on trees then for $d\in\mathbb{Z}$ the functors $\mathcal{Q}\mapsto (\Sigma\mathfrak{s})^{d} \mathcal{Q}$ give equivalences of cyclic $\mathfrak{D}$--operads and cyclic $\mathfrak{D}\otimes\mathfrak{K}^{\otimes d}$--operads. Since $\mathfrak{D}_{\mathfrak{s}}^2\cong\mathbf{1}$ this is also true for the functors $\mathcal{Q}\mapsto (\Sigma\mathfrak{s}^{-1})^d \mathcal{Q}$. Denote this latter functor by $\mathcal{Q}^d=(\Sigma\mathfrak{s}^{-1})^d\mathcal{Q}$. Note that if $d$ is even then $\mathfrak{D}_{\Sigma^d}\otimes\mathfrak{D}_{\mathfrak{s}^{-d}}\cong\mathfrak{K}^{\otimes d}$ on all graphs so then the functor $\mathcal{O}\mapsto\mathcal{O}^d$ takes modular $\mathfrak{D}$--operads to modular $\mathfrak{D}\otimes\mathfrak{K}^{d}$--operads. It now follows that if $\mathcal{Q}$ is a cyclic operad and $d$ is even then $\modc{Q^d} \cong \modc{Q}^d$.

\subsection{Algebras over modular and cyclic operads}
Let $V\in\dgvect$ have a \emph{symmetric} non-degenerate bilinear form of degree $d$, where by degree $d$ it is meant a map $\langle -, - \rangle\co V\otimes V \rightarrow \Sigma^d k$. Then $\mathcal{E}[V]((g,n))=V^{\otimes n}$ has the natural structure of a modular $\mathfrak{K}^{\otimes d}$--operad by the contraction of tensors with the bilinear form. The genus zero part is a cyclic $\mathfrak{K}^{\otimes d}$--operad with $\mathcal{E}[V]((n))=V^{\otimes n}$.

Let $W\in\dgvect$ have an \emph{anti-symmetric} non-degenerate bilinear form of degree $d$. Then $\mathcal{E}[W]((g,n)) = V^{\otimes n}$ has the natural structure of a modular $\mathfrak{K}^{\otimes d}\otimes\mathfrak{D}_{\mathfrak{s}}^{-1}$--operad by the contraction of tensors with the bilinear form. The genus zero part is a cyclic $\mathfrak{K}^{\otimes d}\otimes\mathfrak{D}_{\mathfrak{s}}^{-1}$--operad with $\mathcal{E}[W]((n))=W^{\otimes n}$.

The non-degeneracy of the bilinear form on $V$ means that there is an isomorphism $V\rightarrow \Sigma^dV^*$ given by $v\mapsto \langle v, - \rangle$. This yields a degree $-d$ non-degenerate bilinear form $\langle -, - \rangle^{-1} \co V^* \otimes V^* \rightarrow \Sigma^{-d}k$ which is symmetric if $d$ is even and anti-symmetric if $d$ is odd. Similarly $W^*$ has a non-degenerate bilinear form which is symmetric if $d$ is odd and anti-symmetric if $d$ is even.

Furthermore $\Sigma^{-1}V$ is equipped with an \emph{anti-symmetric} non-degenerate bilinear form of degree $d-2$ and $\Sigma^{-1}W$ is equipped with a symmetric non-degenerate bilinear form of degree $d-2$.

Consider the coboundary $\mathfrak{D}_{\tilde{\mathfrak{s}}}$ associated to the $S$--module $\tilde{\mathfrak{s}}((g,n))=\Sigma^{-n}\mathrm{sgn}_n$ and observe that $\mathfrak{D}_{\mathfrak{s}}\cong \mathfrak{D}_{\tilde{\mathfrak{s}}}\otimes\mathfrak{K}^{\otimes 2}$. Then the following identities hold: $\tilde{\mathfrak{s}}\mathcal{E}[V]\cong\mathcal{E}[\Sigma^{-1}V]$, $\tilde{\mathfrak{s}}\mathcal{E}[W]\cong\mathcal{E}[\Sigma^{-1}W]$, $\tilde{\mathfrak{s}}^d\mathcal{E}[V]\cong\mathcal{E}[V^*]$ and $\tilde{\mathfrak{s}}^d\mathcal{E}[W]\cong\mathcal{E}[W^*]$. Also $\mathcal{E}[V]\cong\mathcal{E}[\Sigma^{d}V^*]$ and $\mathcal{E}[W]\cong\mathcal{E}[\Sigma^dW^*]$.

Furthermore $\mathcal{E}[V]^{-d}$ and $\tilde{\mathfrak{s}}\mathcal{E}[W]^{-d}$ are cyclic operads and the bilinear form induces isomorphisms of (non-cyclic) operads $\End[V]\cong\mathcal{E}[V]^{-d}$, and $\End[\Sigma^{-1}W]\cong\mathfrak{s}\mathcal{E}[W]^{-d}$ where $\End[V]$ is the usual endomorphism operad given by $\End[V](n) = \IHom(V^{\otimes n}, V)$.

\begin{definition}\label{def:modalg}
Let $\mathcal{O}$ be a cyclic/modular $\mathfrak{K}^{\otimes d}$--operad. An algebra over $\mathcal{O}$ is $V\in\dgvect$ equipped with a symmetric non-degenerate bilinear form of degree $d$ and a map $\mathcal{O}\rightarrow\mathcal{E}[V]$ of cyclic/modular $\mathfrak{K}^{\otimes d}$--operads.
Let $\mathcal{O}'$ be a cyclic/modular $\mathfrak{K}^{\otimes d}\otimes\mathfrak{D}_{\mathfrak{s}}^{-1}$--operad. An algebra over $\mathcal{O}'$ is $W\in\dgvect$ equipped with an anti-symmetric non-degenerate bilinear form of degree $d$ and a map $\mathcal{O}'\rightarrow\mathcal{E}[W]$ of cyclic/modular $\mathfrak{K}^{\otimes d}\otimes\mathfrak{D}_{\mathfrak{s}}^{-1}$--operads.
\end{definition}

\begin{remark}
Since $\tilde{\mathfrak{s}}\mathcal{E}[W]\cong\mathcal{E}[\Sigma^{-1}W]$ then by simply replacing $W$ with $\Sigma^{-1} W$ it will be sufficient to consider only symmetric non-degenerate bilinear forms and hence only algebras over cyclic/modular $\mathfrak{K}^{\otimes d}$--operads.
\end{remark}

It can be shown (see \cite{getzlerkapranov}) that for any $d$, by tensoring with coboundaries, $\mathfrak{K}^{\otimes d}$ can be reduced to either the trivial cocycle if $d$ is even or $\mathfrak{K}$ if $d$ is odd. Therefore for most purposes it will be sufficient to consider just the two categories of modular operads and modular $\mathfrak{K}$--operads. Obviously in the supergraded setting this is true for more straightforward reasons.

Let $\mathcal{Q}$ be a cyclic operad and let $V\in\dgvect$ have a degree $d$ symmetric non-degenerate bilinear form. Given a map of (non-cyclic) operads $\mathcal{Q}\rightarrow \End[V]$ such that the composition $Q((n))\rightarrow\End[V](n-1)\rightarrow \Sigma^d\mathcal{E}[V^*]((n))$, given by applying the map $V\rightarrow \Sigma^d V^*$, yields a map of \emph{cyclic} $S$--modules then this extends via the isomorphism of operads $\End[V]\cong\mathcal{E}[V]^{-d}$ to a map of \emph{cyclic} operads $\mathcal{Q}\rightarrow\mathcal{E}[V]^{-d}$. In particular this is then equivalent to a map of cyclic $\mathfrak{K}^{\otimes d}$--operads $\mathcal{Q}^d\rightarrow\mathcal{E}[V]$, in other words $\mathcal{Q}^d$ controls $\mathcal{Q}$--algebras with a cyclically invariant degree $d$ symmetric non-degenerate bilinear form.

Note that the condition that the composition $Q((n))\rightarrow\End[V](n-1)\rightarrow \Sigma^d \mathcal{E}[V^*]((n))$ gives a map of cyclic $S$--modules does not require the bilinear form to be non-degenerate (although note that the cyclic $S$--module $\mathcal{E}[V^*]$ does not then have the structure of a cyclic operad) and so it makes sense for infinite dimensional $V\in\dgvect$, therefore it is possible to extend the notion of an algebra over a cyclic $\mathfrak{K}^{\otimes d}$--operad to $V\in\dgvect$ with a not necessarily non-degenerate bilinear form.

\begin{definition}
Let $\mathcal{Q}$ be a cyclic $\mathfrak{K}^{\otimes d}$--operad. A cyclic $\mathcal{Q}$--algebra is $V\in\dgvect$ equipped with a symmetric bilinear form of degree $d$ and a map of (non-cyclic) operads $\mathcal{Q}^{-d}\rightarrow\End[V]$ such that the composition $Q^{-d}((n))\rightarrow\End[V](n-1)\rightarrow \Sigma^d \mathcal{E}[V^*]((n))$ defines a map of cyclic $S$--modules. If the bilinear form is non-degenerate this is equivalent to \autoref{def:modalg}.
\end{definition}

Obviously it is a requirement that the bilinear form be non-degenerate when considering algebras over a \emph{modular} $\mathfrak{K}^{\otimes d}$--operad.

\begin{example}\label{ex:derhamalgebra}
Let $M$ be a closed oriented manifold of dimension $d$. Then the de Rham algebra $\Omega^{\bullet}(M)$ is a non-negatively graded commutative algebra with a \emph{degenerate} cyclically invariant bilinear form $\langle \omega, \eta \rangle = \int_M \omega\wedge \eta$ of degree $-d$. Therefore $\Omega^\bullet (M)$ is a cyclic $\com^{-d}$--algebra and Poincar\'e duality implies that $H^\bullet(M)$ is a $\modc{\com^{-d}}$--algebra with a \emph{non-degenerate} bilinear form.
\end{example}

\begin{convention}
To avoid confusion when discussing algebras over cyclic $\mathfrak{K}^{\otimes d}$--operads it will always be assumed that the bilinear form is in fact non-degenerate unless explicitly stated otherwise.
\end{convention}

\subsection{The Feynman transform of a modular $\mathfrak{D}$--operad}
Given a cocycle $\mathfrak{D}$ and a modular $\mathfrak{D}$--operad $\mathcal{O}$ denote by $\gamma_2$ the structures map of $\mathcal{O}$ restricted to the subspace
\[
\colim_{\substack{g\in\iso\Gamma((g,n))\\|\edges(G)|=1}} \mathfrak{D}(G)\otimes\mathcal{O}((G))
\]
of $\mathbb{M}_{\mathfrak{D}}\mathcal{O}$ corresponding to graphs with one internal edge. If $G$ is a graph with precisely one internal edge $e$ then $\mathfrak{K}^{-1}(G)\cong\Sigma k$ and so $\Sigma\gamma_2$ is a map of $S$--modules from the corresponding subspace of $\mathbb{M}_{\mathfrak{K}^{-1}\otimes\mathfrak{D}}\mathcal{O}$ of graphs with one internal edge to the $S$--module $\Sigma\mathcal{O}$. Denote the dual to this map by $\delta_2\co\Sigma^{-1}\mathcal{O}^*\rightarrow \mathbb{M}_{\mathfrak{D}^{\vee}}\mathcal{O}^*$. Then this extends by the modular operad version of the Leibniz rule to a degree zero map $\delta\co\Sigma^{-1}\mathbb{M}_{\mathfrak{D}^{\vee}}\mathcal{O}^*\rightarrow\mathbb{M}_{\mathfrak{D}^{\vee}}\mathcal{O}^*$, in other words $\delta$ is a differential on the free modular $\mathfrak{D}^{\vee}$--operad generated by $\mathcal{O}^*$. Taking the total differential $d_{\feyn} = d_{\mathcal{O}^*} + \delta$, where $d_{\mathcal{O}^*}$ is the internal differential on the $S$--module $\mathcal{O}^*$, we obtain a differential graded modular $\mathfrak{D}^{\vee}$--operad that is denoted by $\feyn_{\mathfrak{D}}\mathcal{O}$ and called the \emph{Feynman transform of $\mathcal{O}$}.

\begin{convention}
To avoid notation excessively decorated by symbols $\feyn\mathcal{O}$ will from now on be used instead of $\feyn_\mathfrak{D}\mathcal{O}$ when it is clear from context that $\mathcal{O}$ is a modular $\mathfrak{D}$--operad.
\end{convention}

\subsection{The cobar construction of a cyclic $\mathfrak{D}$--operad}
Let $\mathcal{Q}$ be a cyclic $\mathfrak{D}$--operad. Denote by $\cobar\mathcal{Q}$ the cyclic $\mathfrak{D}^{\vee}$--operad $\cyc{\feyn\modn{\mathcal{Q}}}$. In the case that $\mathcal{Q}$ is a cyclic $\mathfrak{K}^{\otimes d}$--operad then with this notation $\cobar\mathcal{Q} = (\Sigma\mathfrak{s})^{1-d}\dual(\Sigma\mathfrak{s})^{-d}\mathcal{Q}$ where $\dual$ is the usual cyclic cobar operad functor, see \cite{ginzburgkapranov,getzlerkapranov2}. From now on, the term cobar construction will normally refer to $\cobar$ and not $\dual$. Now observe that if $\mathcal{Q}$ is a cyclic $\mathfrak{D}$--operad then $\feyn\modn{\mathcal{Q}} \cong \modc{\cobar\mathcal{Q}}$.

One further generalisation of the usual notation will be necessary. If $\mathcal{Q}$ is a cyclic $\mathfrak{K}^{\otimes d}$--operad then it will be called a \emph{cyclic quadratic $\mathfrak{K}^{\otimes d}$--operad} if ${Q}^{-d}$ is a cyclic quadratic operad. In this case denote by $\mathcal{Q}^\twistshriek$ the \emph{dual cyclic quadratic $\mathcal{K}^{\otimes 1-d}$--operad} $\mathcal{Q}^\twistshriek = ((\mathfrak{s}^{-2}\mathcal{Q}^{-d})^{!})^{1-d}=(\mathfrak{s}^{2}(\mathcal{Q}^{-d})^{!})^{1-d}$. With this notation there is a canonical map $\cobar\mathcal{Q}^\twistshriek\rightarrow \mathcal{Q}$ and $\mathcal{Q}$ will be called Koszul with Koszul dual $\mathcal{Q}^\twistshriek$ if this is a quasi-isomorphism of cyclic $\mathfrak{K}^{\otimes 1-d}$--operads. Then $\mathcal{Q}$ is Koszul if and only if $\mathcal{Q}^{-d}$ is Koszul.

Let $\mathcal{Q}$ be a cyclic Koszul operad, then there is an isomorphism $\cobar(\mathcal{Q}^d)^\twistshriek\cong (\dual\mathcal{Q}^!)^{d}$. Therefore this means that $\cobar(\mathcal{Q}^d)^\twistshriek$ governs $\dual\mathcal{Q}^!$--algebras with a degree $d$ cyclically invariant bilinear form, in other words homotopy $\mathcal{Q}$--algebras with a degree $d$ cyclically invariant bilinear form. Also note that $(\mathcal{Q}^d)^\twistshriek \cong \mathfrak{s}^2(\mathcal{Q}^!)^{1-d}$ which means $\mathfrak{s}^{-2}(\mathcal{Q}^d)^\twistshriek$ governs $\mathcal{Q}^!$--algebras with a degree $1-d$ cyclically invariant bilinear form. Of particular importance is the supergraded setting, where $\mathfrak{s}^{-2}=\id$. Then $(\mathcal{Q}^d)^\twistshriek \cong (\mathcal{Q}^{!})^{1-d}$ so in particular it now holds that $\com^\twistshriek\cong\lie^1$, $\lie^\twistshriek\cong\com^1$ and $\ass^\twistshriek\cong \ass^1$.

\section{Maps from the Feynman transform of a modular operad}
In \autoref{chap:mc} we saw that maps from differential graded commutative algebras of the form $\CE^{\bullet}(\mathfrak{g})$ for some differential graded Lie algebra $\mathfrak{g}$ can be represented as Maurer--Cartan elements in some Lie algebra. In this section we will recall the corresponding facts about maps from certain operads being equivalent to Maurer--Cartan elements in a operad. However we will avoid talking about Maurer--Cartan elements in an operad directly by instead considering Maurer--Cartan elements in an associated Lie algebra. We will also develop some of the general results that we will use.

We will work with modular operads since they are the most complex objects we will deal with, but all of the results in this section will hold for other types of operads, in particular operads and cyclic operads, by replacing the Feynman transform $\feyn$ with the appropriate version of the cobar construction.

\subsection{Maurer--Cartan elements in an operad}
Recall the following standard construction associating a differential graded Lie algebra to a (modular) operad.

\begin{proposition}
Let $\mathcal{O}$ be a modular $\mathfrak{K}$--operad. Let
\[
\mathbf{L}(\mathcal{O}) = \Sigma^{-1} \prod_{g,n} \mathcal{O}((g,n))_{S_n}.
\]
Then $\mathbf{L}(\mathcal{O})$ has the structure of a differential graded Lie algebra. Furthermore this defines a functor from modular $\mathfrak{K}$--operads to formal differential graded Lie algebras.
\end{proposition}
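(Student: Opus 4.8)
The plan is to construct the bracket on $\mathbf{L}(\mathcal{O})$ from the composition and contraction maps of the modular $\mathfrak{K}$--operad $\mathcal{O}$, and to verify the Jacobi identity and compatibility with the differential by a bookkeeping argument using graphs. First I would assemble the product space $P = \prod_{g,n}\mathcal{O}((g,n))_{S_n}$ and define a pre-Lie-type operation: given coinvariant classes $a\in\mathcal{O}((g,n))_{S_n}$ and $b\in\mathcal{O}((g',m))_{S_m}$, form the sum over all ways of composing $a$ and $b$ along a single edge. Concretely one sums the operadic compositions $a\circ_{i} b$ (creating a new edge joining two distinct vertices, yielding something in $\mathcal{O}((g+g', n+m-2))$) together with the self-contraction terms $\xi_{ij}$ applied after a composition, and symmetrises so that the result is well defined on coinvariants. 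The dualising-cocycle twist is precisely what is needed here: since $\mathfrak{K}(G)=\Sigma^{-n}\bigwedge^n(k^{\edges(G)})$, a graph with one internal edge contributes a single desuspension $\Sigma k$ in the appropriate place, and this is what makes the desuspension $\Sigma^{-1}$ in the definition of $\mathbf{L}(\mathcal{O})$ produce an operation of degree $0$ (so the bracket lands in the right degree) and introduces the sign that forces graded antisymmetry.

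Next I would show the operation so defined, after antisymmetrising (or observing it is already a commutator of a pre-Lie product), satisfies the graded Jacobi identity. The standard mechanism is: the double bracket $[[a,b],c]$ is computed by summing over graphs with exactly two internal edges built from the three corollas $a,b,c$, and the Jacobi identity becomes the statement that each two-edge graph appears with the cancelling signs coming from the two orders in which its edges can be contracted. This is exactly the associativity/equivariance axiom of a modular $\mathfrak{K}$--operad repackaged: the axioms say the iterated structure map $\mathbb{M}_{\mathfrak{K}}\mathcal{O}\to\mathcal{O}$ is well defined independently of the order of edge contractions, and the $\bigwedge^n(k^{\edges})$ factor in $\mathfrak{K}$ records the sign of reordering edges, so swapping the two edges produces the sign needed for Jacobi. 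I would then check that $d_\mathcal{O}$ (the internal differential on the $S$--module, extended to $P$) is a graded derivation of this bracket; this follows because the structure maps of $\mathcal{O}$ are chain maps, so $d_\mathcal{O}$ commutes with composition and contraction up to the Koszul signs.

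Then I would address the two finiteness/topology points. The product $\prod_{g,n}$ rather than a direct sum is essential because a single bracket $[a,b]$ of elements supported in fixed bidegrees lands in a single bidegree, but to make $\mathbf{L}(\mathcal{O})$ a genuine object of $\fdgvect$ I must exhibit it as an inverse limit of finite-dimensional nilpotent pieces; I would filter by, say, total genus plus number of inputs (or by the Euler-characteristic-type quantity $2g-2+n$), note that the bracket strictly increases this quantity (composition along an edge raises $2g-2+n$), and conclude both that the associated graded quotients are nilpotent and that $\mathbf{L}(\mathcal{O})$ is complete with respect to this filtration — hence formal in the sense of the excerpt. Finally, functoriality: a morphism $\mathcal{O}\to\mathcal{O}'$ of modular $\mathfrak{K}$--operads commutes with all composition and contraction maps and with the differentials, and is $S_n$--equivariant, so it induces a map on coinvariants commuting with bracket and differential; this is immediate once the bracket has been described purely in terms of the structure maps. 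The main obstacle I anticipate is not any single step but the careful sign management: getting the $\mathfrak{K}$--twist, the desuspension $\Sigma^{-1}$, the coinvariant symmetrisation, and the Koszul rule for the internal differential all to line up so that antisymmetry, Jacobi, and the derivation property hold on the nose — this is where I would spend most of the effort, and where I would lean on the graph-contraction picture (as in Getzler--Kapranov) to organise the combinatorics rather than brute-force index chasing.
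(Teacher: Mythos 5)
There is a genuine misassembly of the structure here. In the paper's construction the bracket on $\mathbf{L}(\mathcal{O})$ uses \emph{only} the compositions along a single edge joining two distinct vertices (the graph $T$ with one edge and two vertices, for which $\mathfrak{K}(T)\cong\Sigma^{-1}k$), while the contractions $\xi_{ij}$ do not enter the bracket at all: they form an extra summand of the \emph{differential}, $d=d_{\mathcal{O}}+\xi$, where $\xi$ is the sum over all ways of self-contracting a single element. Your proposal folds ``self-contraction terms $\xi_{ij}$ applied after a composition'' into the bracket and then checks only that the internal differential $d_{\mathcal{O}}$ is a derivation. This cannot work as stated: on $\mathbf{L}(\mathcal{O})=\Sigma^{-1}\prod\mathcal{O}((g,n))_{S_n}$ the one-edge composition (with its $\mathfrak{K}$-twist) is a degree $0$ operation, but a contraction $\xi\co\Sigma^{-1}\mathcal{O}((g,n))\to\mathcal{O}((g+1,n-2))$ raises degree by $1$, so the operation you describe is not homogeneous of degree $0$ and cannot be a graded Lie bracket; antisymmetry and Jacobi fail on degree grounds before any sign bookkeeping begins.

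Consequently you also omit the verifications that actually use the modular (rather than merely cyclic) structure and the sign in $\mathfrak{K}$: that the contraction part $\xi$ of the differential squares to zero (two contractions performed in either order agree up to the edge-reordering sign in $\bigwedge^{\bullet}(k^{\edges(G)})$, so the two orders cancel), that $\xi$ anticommutes with $d_{\mathcal{O}}$, and that $\xi$ is a derivation of the composition bracket (compatibility of $\xi_{ij}$ with $\circ_k$ in the modular operad axioms). These are the substantive checks behind the proposition; the Jacobi identity for the composition-only bracket and the derivation property of $d_{\mathcal{O}}$, which you do address via the two-edge-graph argument, are correct but are only part of the story. Your treatment of formality (filtering by $2g+n-2$, noting the bracket strictly increases it by stability and that the contraction term preserves it) and of functoriality matches the paper once the bracket and differential are put in their proper places.
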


\begin{proof}
The differential graded Lie algebra structure arises as follows: The Lie bracket is the sum over all possible ways of composing two elements and the differential is given by the sum of the internal differential on $\mathcal{O}$ and the differential obtained by summing over all possible ways of contracting a given element.

More precisely let $T$ be the stable graph with $1$ edge, $n+m-2$ legs and $2$ vertices $v$ and $v'$ of genus $g$ and $g'$ respectively with $n-1$ legs attached to $v$ and $m-1$ legs attached to $v'$. Then $T$ induces the composition map
\[
\circ \co \Sigma^{-1} [\mathcal{O}((g,n)) \otimes \mathcal{O}((g',m))] \longrightarrow \mathcal{O}((g+g', n+m-2))
\]
noting that $\mathfrak{K}(T) \cong \Sigma^{-1} k$.

Then the Lie bracket on $\mathbf{L}(\mathcal{O})$ is given by the composition:
\[
\xymatrix{
\Sigma^{-1} \mathcal{O}((g,n))_{S_n} \otimes \Sigma^{-1} \mathcal{O}((g',m))_{S_m} \ar[d]^{r \otimes r}
\\
\Sigma^{-1} \mathcal{O}((g,n)) \otimes \Sigma^{-1} \mathcal{O}((g',m)) \ar[d]^\cong
\\
\Sigma^{-2} [\mathcal{O}((g,n)) \otimes \mathcal{O}((g',m))] \ar[r]^{\Sigma^{-1}\circ} & \Sigma^{-1} \mathcal{O}((g+g', n+m-2))\ar[d]^q
\\
& \Sigma^{-1} \mathcal{O}((g+g', n+m-2))_{S_{n+m-2}}
}
\]
Note that it does not matter how we labelled the tree $T$ or how we identify $\mathcal{O}((g,n))\otimes \mathcal{O}((g',m))\cong \mathcal{O}((T))$ in the definition of $\circ$. The maps $q$ are $r$ are defined as follows. For $V$ a vector space with an action by a finite group $G$ we denote by $q\co V^G\rightarrow V_G$ the projection of invariants onto coinvariants. We denote its inverse by $r\co V_G\rightarrow V^G$ with $r(v)=\frac{1}{|G|}\sum_{\sigma \in G}\sigma(v)$. For convenience we also denote by $r$ the composition $V_G\rightarrow V^G \hookrightarrow V$.

Similarly let $G$ be the stable graph of genus $g+1$ with $1$ vertex of genus $g$, $1$ edge and $n$ legs. Then $G$ induces the contraction map
\[
\xi \co \Sigma^{-1} \mathcal{O}((g,n)) \longrightarrow \mathcal{O}((g+1,n-2))
\]
noting that $\mathfrak{K}(G)\cong \Sigma^{-1}k$.

The differential is then given by the composition:
\[
\xymatrix{
\Sigma^{-1} \mathcal{O}((g,n))_{S_n} \ar[d]^{r}
\\
\Sigma^{-1} \mathcal{O}((g,n))\ar[r]^{\xi} & \mathcal{O}((g+1, n-2))\ar[d]^q
\\
& \mathcal{O}((g+1, n-2))_{S_{n-2}}
}
\]
If $\mathcal{O}$ has an internal differential we of course take the sum of this and the differential just defined.

We leave it to the reader to check that this does indeed give a differential graded Lie algebra.

For $N\geq 1$ Set
\[
\mathbf{L}_{\geq N}(\mathcal{O}) = \Sigma^{-1}\prod_{2g + n-2\geq N} \mathcal{O}((g,n))_{S_n}.
\]
This gives a complete descending filtration of $\mathbf{L}(\mathcal{O})$. Then $\mathbf{L}(\mathcal{O})/\mathbf{L}_{\geq N}(\mathcal{O})$ is a finite dimensional nilpotent Lie algebra so $\mathbf{L}(\mathcal{O})$ is formal. The functoriality is easy to see.
\end{proof}

\begin{definition}
Let $\mathcal{O}$ be a modular $\mathfrak{D}$--operad and let $\mathcal{P}$ be a modular $\mathfrak{D}^\vee$--operad. Noting that $\mathcal{O}\otimes\mathcal{P}$ is a modular $\mathfrak{K}$--operad, we write $\MC(\mathcal{O},\mathcal{P})$ for the functor $(\mathcal{O},\mathcal{P})\mapsto \MC(\mathbf{L}(\mathcal{O}\otimes\mathcal{P}))$.
\end{definition}

We recall the following general theorem of Barannikov \cite{barannikov}.

\begin{theorem}\label{thm:barannikov}
Let $\mathcal{O}$ be a modular $\mathfrak{D}$--operad and let $\mathcal{P}$ be a modular $\mathfrak{D}^{\vee}$--operad. The maps $\feyn \mathcal{O} \rightarrow \mathcal{P}$ are in one-to-one correspondence with Maurer--Cartan elements of $\mathbf{L}(\mathcal{O} \otimes \mathcal{P})$.
\end{theorem}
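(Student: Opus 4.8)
The plan is to unwind both sides of the claimed bijection in terms of explicit generators and relations and show they match, following the template of the classical statement that maps out of a cobar construction are Maurer--Cartan elements (Ginzburg--Kapranov, \cite{ginzburgkapranov}, and its modular analogue in \cite{getzlerkapranov}). First I would recall that $\feyn\mathcal{O}=\feyn_\mathfrak{D}\mathcal{O}$ is, as a $\mathfrak{D}^\vee$--operad, the free modular $\mathfrak{D}^\vee$--operad $\mathbb{M}_{\mathfrak{D}^\vee}\mathcal{O}^*$ equipped with the differential $d_\feyn=d_{\mathcal{O}^*}+\delta$, where $\delta$ is dual to the one-edge structure maps $\gamma_2$ of $\mathcal{O}$. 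Because the underlying operad is free on the $S$--module $\Sigma^{-1}\mathcal{O}^*$ (after the appropriate shift built into $\mathfrak{D}^\vee=\mathfrak{K}\otimes\mathfrak{D}^{-1}$), a morphism of \emph{graded} modular $\mathfrak{D}^\vee$--operads $\feyn\mathcal{O}\to\mathcal{P}$ is determined freely by, and is equivalent to, a morphism of $S$--modules $\Sigma^{-1}\mathcal{O}^*\to\mathcal{P}$, i.e.\ a collection of $S_n$--equivariant maps $\Sigma^{-1}\mathcal{O}((g,n))^*\to\mathcal{P}((g,n))$ (twisted by the relevant one-dimensional pieces of $\mathfrak{D}^\vee$). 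Dualizing and assembling over all $(g,n)$, and passing to coinvariants, such a collection is exactly the datum of a degree one element $\xi\in\mathbf{L}(\mathcal{O}\otimes\mathcal{P})=\Sigma^{-1}\prod_{g,n}(\mathcal{O}\otimes\mathcal{P})((g,n))_{S_n}$: here one uses that $(\mathcal{O}\otimes\mathcal{P})((g,n))=\mathcal{O}((g,n))\otimes\mathcal{P}((g,n))$ is a $\mathfrak{K}$--operad, that $\mathfrak{K}(T)\cong\Sigma^{-1}k$ on one-edge graphs (matching the $\Sigma^{-1}$ in the definitions of the bracket and of $\mathbf{L}$), and that invariants and coinvariants agree over $\mathbb{Q}$, so the projection/averaging maps $q,r$ are mutually inverse.

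Next I would show that the condition that such a $\xi$ be a \emph{chain} map — equivalently, a morphism of \emph{differential} graded modular $\mathfrak{D}^\vee$--operads — is precisely the Maurer--Cartan equation $d\xi+\tfrac12[\xi,\xi]=0$ in $\mathbf{L}(\mathcal{O}\otimes\mathcal{P})$. Since $\feyn\mathcal{O}$ is free as a graded operad, compatibility of an operad morphism $F\colon\feyn\mathcal{O}\to\mathcal{P}$ with differentials need only be checked on generators $\Sigma^{-1}\mathcal{O}^*$, where $d_\feyn=d_{\mathcal{O}^*}+\delta$. The $d_{\mathcal{O}^*}$ part, composed with $F$, contributes the internal differential $d_{\mathcal{O}\otimes\mathcal{P}}$ applied to $\xi$, together with the contraction differential coming from $\mathcal{P}$ (the genus-raising one-edge map $\xi\colon\Sigma^{-1}\mathcal{P}((g,n))\to\mathcal{P}((g+1,n-2))$); this is exactly the $d\xi$ term of $\mathbf{L}(\mathcal{O}\otimes\mathcal{P})$. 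The $\delta$ part is dual to $\gamma_2$ for $\mathcal{O}$, so $F\circ\delta$ on a generator lands in the image of a two-vertex, one-edge composition, and chasing the definitions shows $F(\delta(-))$ equals the self-composition $\tfrac12[\xi,\xi]$ built from the $S_n$--coinvariant product in $\mathbf{L}$ (the factor $\tfrac12$ and the absence of ordering on the edge/vertices of the one-edge graph $T$ account precisely for the coinvariants and the symmetry of the bracket). Thus $F$ is a chain map iff $d\xi+\tfrac12[\xi,\xi]=0$, and this correspondence is plainly a bijection since each direction is given by the free/cofree adjunction and its inverse by restriction to generators; naturality in $\mathcal{O}$ and $\mathcal{P}$ is immediate from functoriality of $\mathbf{L}$ and of $\feyn$.

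The step I expect to be the main obstacle is the careful bookkeeping of the suspensions, sign twists, and one-dimensional coefficient systems — that is, verifying that the shift $\Sigma^{-1}$ and the sign representation hidden inside $\mathfrak{D}^\vee=\mathfrak{K}\otimes\mathfrak{D}^{-1}$ on the $\feyn\mathcal{O}$ side exactly cancel against the $\Sigma^{-1}$ in the definition of $\mathbf{L}$ and the $\mathfrak{K}(T)\cong\Sigma^{-1}k$, $\mathfrak{K}(G)\cong\Sigma^{-1}k$ identifications used to define the bracket and the contraction differential, all compatibly with the Koszul sign rule and with passing to $S_n$--coinvariants. Getting these signs to line up so that $\delta$ dualizes to \emph{half} the bracket (not the bracket, and with the correct sign) is the delicate point; everything else is a formal consequence of freeness of the Feynman transform and the invariants--coinvariants isomorphism. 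I would handle this by fixing, once and for all, the convention that a morphism out of a free $\mathfrak{D}^\vee$--operad is restriction to generators, reducing all checks to one-edge graphs, and then citing the analogous sign computation in \cite{ginzburgkapranov,getzlerkapranov,barannikov} rather than reproducing it in full.
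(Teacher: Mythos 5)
Your overall strategy is the standard one, and it is the only argument on offer here: the paper does not actually prove this theorem but cites Barannikov, and the identification you use --- a degree zero map of stable $S$--modules $\mathcal{O}^*\to\mathcal{P}$ is the same thing as a degree one element of $\mathbf{L}(\mathcal{O}\otimes\mathcal{P})$ (invariants and coinvariants agreeing over $\mathbb{Q}$), with the Maurer--Cartan equation expressing exactly that this map extends to a map of dg modular $\mathfrak{D}^{\vee}$--operads out of the free operad $\feyn\mathcal{O}$ --- is precisely the identification the paper invokes one theorem later when it adds naturality. One small point: the generating $S$--module of $\feyn\mathcal{O}=\mathbb{M}_{\mathfrak{D}^\vee}\mathcal{O}^*$ is $\mathcal{O}^*$ itself, not $\Sigma^{-1}\mathcal{O}^*$; the degree shift making $\xi$ a degree one element lives in the $\Sigma^{-1}$ of $\mathbf{L}$ and in the cocycle $\mathfrak{K}$, not on the generators.

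The one place where your bookkeeping genuinely misfires is the matching of terms in the chain-map condition. The differential $\delta$ of $\feyn\mathcal{O}$ is dual to \emph{all} one-edge structure maps of $\mathcal{O}$, and graphs with one internal edge include the one-vertex graphs with a loop, i.e.\ the contractions $\xi_{ij}$ of $\mathcal{O}$, not only the two-vertex compositions $\gamma_2$. So $F\circ\delta$ on a generator produces two kinds of terms: the two-vertex part, which after applying the compositions of $\mathcal{P}$ gives $\tfrac12[\xi,\xi]$, and the loop part, which paired with the contractions of $\mathcal{P}$ gives precisely the genus-raising contraction summand of $d\xi$ in $\mathbf{L}(\mathcal{O}\otimes\mathcal{P})$. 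By contrast, $F\circ d_{\mathcal{O}^*}$ contributes only the $d_{\mathcal{O}}\otimes 1$ part of the internal differential, and the $1\otimes d_{\mathcal{P}}$ part comes from the other side of the chain-map condition, $d_{\mathcal{P}}\circ F$; it is not the source of the contraction differential as you claim. With the terms reallocated in this way (and the suspension/sign conventions handled as you indicate), the verification closes up and the bijection follows from freeness exactly as you describe.
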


We will now reword this theorem to take into account functoriality.

\begin{theorem}\label{thm:barannikovfunctor}
Let $\mathcal{O}$ be a modular $\mathfrak{D}$--operad and let $\mathcal{P}$ be a modular $\mathfrak{D}^{\vee}$--operad. The functors $(\mathcal{O},\mathcal{P}) \mapsto \Hom(\feyn \mathcal{O}, \mathcal{P})$ and $(\mathcal{O},\mathcal{P}) \mapsto \MC(\mathcal{O}, \mathcal{P})$ are naturally isomorphic.
\end{theorem}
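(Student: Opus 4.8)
The plan is to upgrade \autoref{thm:barannikov}, which provides a bijection of sets $\Hom(\feyn\mathcal{O},\mathcal{P}) \cong \MC(\mathbf{L}(\mathcal{O}\otimes\mathcal{P}))$ for each fixed pair $(\mathcal{O},\mathcal{P})$, to a natural isomorphism of bifunctors. So the only thing to verify is that the bijection supplied by \autoref{thm:barannikov} is compatible with morphisms in both variables, i.e.\ that the two squares obtained from a morphism $\mathcal{O}\to\mathcal{O}'$ of modular $\mathfrak{D}$--operads and a morphism $\mathcal{P}\to\mathcal{P}'$ of modular $\mathfrak{D}^\vee$--operads commute. First I would recall precisely how the bijection of \autoref{thm:barannikov} is constructed: a map $\phi\co\feyn\mathcal{O}\to\mathcal{P}$ is determined by its restriction to the generators $\Sigma^{-1}\mathcal{O}^*$ of the free modular $\mathfrak{D}^\vee$--operad underlying $\feyn\mathcal{O}$, which is a degree-preserving $S$--equivariant map $\Sigma^{-1}\mathcal{O}^*\to\mathcal{P}$; dualising and using finite-dimensionality this is the same as an element $\xi\in\Sigma^{-1}\prod_{g,n}(\mathcal{O}\otimes\mathcal{P})((g,n))_{S_n}=\mathbf{L}(\mathcal{O}\otimes\mathcal{P})$ of degree one, and the condition that $\phi$ is a chain map (compatible with $d_{\feyn}=d_{\mathcal{O}^*}+\delta$ on the source and $d_{\mathcal{P}}$ on the target) unwinds, via the Leibniz rule defining $\delta$ and the definition of the bracket and differential on $\mathbf{L}$, exactly to the Maurer--Cartan equation $d\xi+\tfrac12[\xi,\xi]=0$.

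Given this description, naturality is essentially formal. For a morphism $\psi\co\mathcal{P}\to\mathcal{P}'$ the induced map on Hom is post-composition $\phi\mapsto\psi\circ\phi$, and on the Maurer--Cartan side the induced map is $\MC$ applied to $\mathbf{L}(\id_\mathcal{O}\otimes\psi)$; since the bijection reads off the generator-restriction of $\phi$ and then dualises, and $\psi$ is a morphism of modular $\mathfrak{D}^\vee$--operads (hence commutes with all the structure maps used to define the bracket, the contraction differential, and the internal differential on $\mathbf{L}$), the restriction of $\psi\circ\phi$ to generators is $\psi$ composed with the restriction of $\phi$, which corresponds precisely to $\mathbf{L}(\id\otimes\psi)$ applied to $\xi$. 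For a morphism $\theta\co\mathcal{O}\to\mathcal{O}'$, the functoriality of $\feyn$ gives a map $\feyn\theta\co\feyn\mathcal{O}'\to\feyn\mathcal{O}$ (contravariant, as $\feyn$ dualises), inducing pre-composition $\phi\mapsto\phi\circ\feyn\theta$ on $\Hom(\feyn(-),\mathcal{P})$; on generators $\feyn\theta$ is $\Sigma^{-1}\theta^*$, so the restriction of $\phi\circ\feyn\theta$ to the generators of $\feyn\mathcal{O}'$ is the restriction of $\phi$ precomposed with $\Sigma^{-1}\theta^*$, which dualises to $\mathbf{L}(\theta\otimes\id_\mathcal{P})$ applied to $\xi$. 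In both cases one also checks the relevant maps respect the already-established correspondence between the chain-map condition and the Maurer--Cartan equation, which is immediate because all the maps in sight are morphisms of differential graded objects.

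The one genuinely careful point — and the step I expect to be the main obstacle — is bookkeeping the suspensions, the sign/cocycle twists ($\mathfrak{K}^{-1}(G)\cong\Sigma k$ for a one-edge graph, the passage between $\mathfrak{D}$ and $\mathfrak{D}^\vee=\mathfrak{K}\otimes\mathfrak{D}^{-1}$, and the $S_n$--coinvariants), and verifying that the isomorphisms $\mathfrak{D}(G)\otimes\mathfrak{D}^\vee(G)\cong\mathfrak{K}(G)$ are used coherently so that $\theta\otimes\psi$ really does induce a morphism of modular $\mathfrak{K}$--operads $\mathcal{O}\otimes\mathcal{P}\to\mathcal{O}'\otimes\mathcal{P}'$ and hence a morphism $\mathbf{L}(\mathcal{O}\otimes\mathcal{P})\to\mathbf{L}(\mathcal{O}'\otimes\mathcal{P}')$ of formal differential graded Lie algebras. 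Once that functoriality of $(\mathcal{O},\mathcal{P})\mapsto\mathbf{L}(\mathcal{O}\otimes\mathcal{P})$ is in place (it follows directly from the functoriality asserted in the preceding proposition together with the monoidal structure on hyperoperads), the naturality squares commute by inspection of the generator-restriction description, and the proof is complete. I would keep the write-up short, citing \autoref{thm:barannikov} for the pointwise bijection and reducing everything else to the explicit formula for that bijection.
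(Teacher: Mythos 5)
Your proposal is correct and follows essentially the same route as the paper: the paper's proof also cites \autoref{thm:barannikov} for the pointwise bijection and obtains naturality by identifying the underlying vector space of $\mathbf{L}(\mathcal{O}\otimes\mathcal{P})$ naturally with maps of stable $S$--modules $\mathcal{O}^*\rightarrow\mathcal{P}$, with the Maurer--Cartan condition corresponding exactly to such a map extending to a differential-compatible map $\feyn\mathcal{O}\rightarrow\mathcal{P}$. Your explicit check of the two naturality squares in each variable is just a more detailed unwinding of that same observation.
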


\begin{proof}
The only addition to \autoref{thm:barannikov} is the naturality. The underlying vector space $\mathbf{L}(\mathcal{O}\otimes\mathcal{P})$ is naturally isomorphic to the vector space of maps of stable $S$--modules $\mathcal{O}^*\rightarrow\mathcal{P}$. This is sufficient since in this way the Maurer--Cartan condition is just equivalent to a map of $S$--modules $\mathcal{O}^*\rightarrow\mathcal{P}$ extending to a map $\feyn\mathcal{O}\rightarrow \mathcal{P}$ that is compatible with the differential.
\end{proof}

\begin{remark}
The differential graded Lie algebra $\mathbf{L}(\mathcal{O}\otimes\mathcal{P})$ is formal so the notions of gauge equivalence and homotopy of Maurer--Cartan elements coincide. As one might expect, a homotopy between Maurer--Cartan elements corresponds to a Sullivan homotopy between maps of operads, parallel to \autoref{rem:homotopyissullivanhomotopy}. In particular we have a good notion of a homotopy between two $\feyn\mathcal{O}$--structures on a vector space $V$.
\end{remark}

\subsection{Maps from $\feyn\modc{\com}$}
\Needspace*{4\baselineskip}

\begin{theorem}\label{thm:natiso}
Let $\mathcal{O}$ be a modular $\mathfrak{D}$--operad, $\mathcal{Q}$ be a modular $\mathfrak{E}$--operad and $\mathcal{P}$ be a modular $\mathfrak{D}^{\vee}\otimes\mathfrak{E}^{\vee}$--operad. Then there is a natural isomorphism
\[
\Hom(\feyn(\mathcal{O}\otimes \mathcal{P}), \mathcal{Q}) \cong \Hom(\feyn\mathcal{O}, \mathcal{P}\otimes \mathcal{Q}).
\]
\end{theorem}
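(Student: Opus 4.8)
The plan is to deduce both sides of the claimed isomorphism from Barannikov's theorem in its functorial form (\autoref{thm:barannikovfunctor}) and then observe that the two resulting objects are computed by one and the same Maurer--Cartan functor. Recall that \autoref{thm:barannikovfunctor} identifies, for a modular $\mathfrak{A}$--operad $\mathcal{A}$ and a modular $\mathfrak{A}^\vee$--operad $\mathcal{B}$, the set $\Hom(\feyn\mathcal{A},\mathcal{B})$ with $\MC(\mathcal{A},\mathcal{B})=\MC(\mathbf{L}(\mathcal{A}\otimes\mathcal{B}))$, naturally in both variables. The first step is to verify the hypotheses apply to the left-hand side with $\mathcal{A}=\mathcal{O}\otimes\mathcal{P}$ and $\mathcal{B}=\mathcal{Q}$: here $\mathcal{O}\otimes\mathcal{P}$ is a modular operad over $\mathfrak{D}\otimes\mathfrak{D}^\vee\otimes\mathfrak{E}^\vee$, and using $\mathfrak{D}\otimes\mathfrak{D}^\vee\cong\mathfrak{K}$ together with the canonical coboundary trivialising $\mathfrak{K}^{\otimes 2}$ afforded by $\mathfrak{D}_{\Sigma^2}\otimes\mathfrak{D}_{\mathfrak{s}^{-2}}$, its dual cocycle is (after these identifications) the one over which $\mathcal{Q}$ is a modular operad. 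Symmetrically, the right-hand side is handled with $\mathcal{A}=\mathcal{O}$ and $\mathcal{B}=\mathcal{P}\otimes\mathcal{Q}$, where $\mathcal{P}\otimes\mathcal{Q}$ is a modular operad over $\mathfrak{D}^\vee\otimes\mathfrak{E}^\vee\otimes\mathfrak{E}$ and, using $\mathfrak{E}^\vee\otimes\mathfrak{E}\cong\mathfrak{K}$ and the same trivialisation, becomes a modular $\mathfrak{D}^\vee$--operad. This yields
\[
\Hom(\feyn(\mathcal{O}\otimes\mathcal{P}),\mathcal{Q})\cong\MC(\mathbf{L}((\mathcal{O}\otimes\mathcal{P})\otimes\mathcal{Q})),\qquad \Hom(\feyn\mathcal{O},\mathcal{P}\otimes\mathcal{Q})\cong\MC(\mathbf{L}(\mathcal{O}\otimes(\mathcal{P}\otimes\mathcal{Q}))).
\]

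Since the tensor product of stable $S$--modules (hence of modular operads) is associative and commutative, $(\mathcal{O}\otimes\mathcal{P})\otimes\mathcal{Q}$ and $\mathcal{O}\otimes(\mathcal{P}\otimes\mathcal{Q})$ are literally the same object, and the coboundary identifications used above are coherent, so $\mathbf{L}$ returns the same formal differential graded Lie algebra in both cases; consequently the two right-hand sides coincide and composing the isomorphisms gives the asserted bijection. Naturality is then inherited: \autoref{thm:barannikovfunctor} is already natural in both slots, and the auxiliary identifications (associativity of $\otimes$ and the coboundary trivialisation of $\mathfrak{K}^{\otimes 2}$) are natural in $\mathcal{O}$, $\mathcal{P}$ and $\mathcal{Q}$. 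Unwinding the definition of $\mathbf{L}$ and of the correspondence in \autoref{thm:barannikovfunctor} shows the composite isomorphism is the evident one, induced by the canonical identification of maps of stable $S$--modules $(\mathcal{O}\otimes\mathcal{P})^*\to\mathcal{Q}$ with maps $\mathcal{O}^*\to\mathcal{P}\otimes\mathcal{Q}$.

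The step I expect to be the main obstacle is precisely this hyperoperad bookkeeping: pinning down the canonical coboundary equivalences — notably $\mathfrak{K}^{\otimes 2}\cong\mathfrak{D}_{\Sigma^2}\otimes\mathfrak{D}_{\mathfrak{s}^{-2}}$ — so that \autoref{thm:barannikovfunctor} genuinely applies to each side with matching cocycle types, and checking that these choices are made compatibly enough that the two differential graded Lie algebras agree on the nose rather than merely up to a non-canonical isomorphism. Once the cocycle arithmetic is settled, the remainder is the purely formal observation that $\Hom(\feyn(-\otimes-),-)$ is computed through $\mathbf{L}$ of the (associative, commutative) threefold tensor product $\mathcal{O}\otimes\mathcal{P}\otimes\mathcal{Q}$, and is therefore symmetric under transferring a tensor factor between the argument of $\feyn$ and the target.
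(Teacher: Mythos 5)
Your proposal is correct and is essentially the paper's own argument: the paper likewise observes that, by \autoref{thm:barannikovfunctor}, both $\Hom$--sets are the Maurer--Cartan set of the single differential graded Lie algebra $\mathbf{L}(\mathcal{O}\otimes\mathcal{P}\otimes\mathcal{Q})$, with naturality obtained by applying the Maurer--Cartan functor to the identity diagram. The cocycle bookkeeping you single out as the delicate point is passed over silently in the paper's proof, so your explicit treatment of it (via $\mathfrak{D}\otimes\mathfrak{D}^{\vee}\cong\mathfrak{K}$ and the coboundary reduction of $\mathfrak{K}^{\otimes 2}$) is if anything more careful than the original.
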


\begin{proof}
Note that both sides are simply the set of Maurer--Cartan elements in the same differential graded Lie algebra $\mathbf{L}(\mathcal{O}\otimes\mathcal{P}\otimes\mathcal{Q})$. The result now follows by applying the Maurer--Cartan functor to the following diagram
\[
\xymatrix{
\mathbf{L}(\mathcal{O}\otimes\mathcal{P}\otimes\mathcal{Q})\ar[r]^{\id}\ar[d]^{\mathbf{L}f} & \mathbf{L}(\mathcal{O}\otimes\mathcal{P}\otimes\mathcal{Q})\ar[d]^{\mathbf{L}f}
\\
\mathbf{L}(\mathcal{O'}\otimes\mathcal{P'}\otimes\mathcal{Q'})\ar[r]^{\id} & \mathbf{L}(\mathcal{O'}\otimes\mathcal{P'}\otimes\mathcal{Q'})
}
\]
and using \autoref{thm:barannikovfunctor}
\end{proof}

We have the following result which is a modular operad version of \cite[Proposition 3.2.18]{ginzburgkapranov}.

\begin{theorem}\label{thm:feyncommap}
Let $\mathcal{O}$ be a modular $\mathfrak{D}$--operad. There is a natural map of modular $\mathfrak{K}$--operads $\feyn\modc{\com}\rightarrow \feyn\mathcal{O}\otimes \mathcal{O}$.
\end{theorem}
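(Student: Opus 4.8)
The plan is to deduce the statement from Barannikov's correspondence (\autoref{thm:barannikovfunctor}) together with two structural facts: that $\modc{\com}$ is the unit for the Hadamard tensor product of modular operads, and that the Feynman transform is a functor. This is the modular analogue of the argument for \cite[Proposition 3.2.18]{ginzburgkapranov}, with $\feyn$ in place of the cobar construction.

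First I would record the cocycle bookkeeping so that the assertion even typechecks. If $\mathcal{O}$ is a modular $\mathfrak{D}$--operad then $\feyn\mathcal{O}$ is a modular $\mathfrak{D}^{\vee}$--operad, and since $\mathfrak{D}^{\vee}\otimes\mathfrak{D}\cong\mathfrak{K}$ the Hadamard product $\feyn\mathcal{O}\otimes\mathcal{O}$ is a modular $\mathfrak{K}$--operad; likewise $\feyn\modc{\com}=\feyn_{\mathbf{1}}\modc{\com}$ is a modular $\mathbf{1}^{\vee}=\mathfrak{K}$--operad, so a morphism between them is a morphism in a single category of modular $\mathfrak{K}$--operads. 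By \autoref{thm:barannikovfunctor}, applied with $\mathcal{A}=\modc{\com}$ (so $\mathfrak{D}=\mathbf{1}$) and $\mathcal{P}=\feyn\mathcal{O}\otimes\mathcal{O}$, morphisms $\feyn\modc{\com}\to\feyn\mathcal{O}\otimes\mathcal{O}$ correspond naturally to Maurer--Cartan elements of the formal differential graded Lie algebra $\mathbf{L}(\modc{\com}\otimes\feyn\mathcal{O}\otimes\mathcal{O})$. It therefore suffices to produce such a Maurer--Cartan element, naturally in $\mathcal{O}$.

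The key observation is that $\modc{\com}$ is one--dimensional in every stable arity: it is the modular closure of the unit cyclic operad $\com$, and is isomorphic to the endomorphism modular operad $\mathcal{E}[k]$ of the one--dimensional Frobenius algebra $k$ (concretely, the colimit defining each $\modc{\com}((g,n))$ collapses to $k$ because the relevant graph category is connected). Hence $\modc{\com}$ is the unit for the Hadamard tensor product, so $\modc{\com}\otimes\mathcal{R}\cong\mathcal{R}$ naturally for every modular $\mathfrak{K}$--operad $\mathcal{R}$, and in particular $\mathbf{L}(\modc{\com}\otimes\feyn\mathcal{O}\otimes\mathcal{O})\cong\mathbf{L}(\feyn\mathcal{O}\otimes\mathcal{O})=\mathbf{L}(\mathcal{O}\otimes\feyn\mathcal{O})$ as formal differential graded Lie algebras. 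Now I would invoke \autoref{thm:barannikovfunctor} a second time, with $\mathcal{A}=\mathcal{O}$ and $\mathcal{P}=\feyn\mathcal{O}$: this identifies $\MC(\mathbf{L}(\mathcal{O}\otimes\feyn\mathcal{O}))$ with $\Hom(\feyn\mathcal{O},\feyn\mathcal{O})$, and the identity morphism $\id_{\feyn\mathcal{O}}$ picks out a canonical Maurer--Cartan element $\pi_{\mathcal{O}}$. Transporting $\pi_{\mathcal{O}}$ back through the isomorphisms above yields the required Maurer--Cartan element and hence the canonical morphism $\feyn\modc{\com}\to\feyn\mathcal{O}\otimes\mathcal{O}$. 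Equivalently, running the same argument through \autoref{thm:natiso} one checks this morphism is the image of the canonical quasi--isomorphism $\feyn\feyn\mathcal{O}\to\mathcal{O}$ under $\Hom(\feyn\feyn\mathcal{O},\mathcal{O})\cong\Hom(\feyn(\modc{\com}\otimes\feyn\mathcal{O}),\mathcal{O})\cong\Hom(\feyn\modc{\com},\feyn\mathcal{O}\otimes\mathcal{O})$. Naturality is then automatic: Barannikov's correspondence and the unit isomorphism are natural, and $\mathcal{O}\mapsto\id_{\feyn\mathcal{O}}$ is compatible with $\feyn f$ for every morphism $f$, which is precisely the (di)naturality required.

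The main obstacle I anticipate is not conceptual but bookkeeping: one must justify carefully that $\modc{\com}\cong\mathcal{E}[k]$ is the tensor unit, and, more delicately, keep track of the cocycle twists, suspensions and $\mathfrak{s}$--factors introduced by $\feyn$ so that $\pi_{\mathcal{O}}$ genuinely lies in degree one and satisfies the Maurer--Cartan equation on the nose. A secondary point worth pinning down is the exact sense of ``natural'' in the statement: since $\mathcal{O}\mapsto\feyn\mathcal{O}\otimes\mathcal{O}$ is neither covariant nor contravariant, naturality here should be read as naturality with respect to isomorphisms (and dinaturality in general), and this is what the argument delivers.
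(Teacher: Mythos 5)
Your argument is essentially the paper's own proof: the paper also obtains the map as the image of $\id\in\Hom(\feyn\mathcal{O},\feyn\mathcal{O})$ under the isomorphism $\Hom(\feyn\mathcal{O},\feyn\mathcal{O})\cong\Hom(\feyn\modc{\com},\feyn\mathcal{O}\otimes\mathcal{O})$ coming from $\modc{\com}\otimes\mathcal{O}\cong\mathcal{O}$ and \autoref{thm:natiso}, which is itself proved by exactly the Maurer--Cartan identification via \autoref{thm:barannikovfunctor} that you spell out. So your proposal is correct and takes the same route, merely unrolling \autoref{thm:natiso} to the level of Maurer--Cartan elements and adding the (correct) justification that $\modc{\com}$ is the Hadamard unit.
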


\begin{proof}
Since $\modc\com\otimes\mathcal{O}\cong\mathcal{O}$ the map is given by the image of the identity map $\id\in\Hom(\feyn \mathcal{O}, \feyn \mathcal{O})$ under the natural isomorphism 
\[\Hom(\feyn \mathcal{O}, \feyn \mathcal{O})\longrightarrow \Hom(\feyn\modc{\com}, \mathcal{O}\otimes\feyn\mathcal{O})\cong\Hom(\feyn\modc{\com}, \feyn\mathcal{O}\otimes\mathcal{O})\]
of \autoref{thm:natiso}.
\end{proof}

\begin{corollary}\label{cor:feynscommap}
Let $\mathcal{O}$ be a modular $\mathfrak{D}$--operad. There is a natural map of modular $\mathfrak{K}$--operads $\feyn\mathfrak{s}^{2}\modc{\com}\rightarrow \feyn\mathcal{O}\otimes \mathfrak{s}^{-2}\mathcal{O}$.
\end{corollary}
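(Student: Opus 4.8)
The plan is to obtain this as a formal consequence of \autoref{thm:feyncommap} by applying the coboundary twist $\mathfrak{s}^{-2}(-)$ to the natural map constructed there. Before doing so I would set up the bookkeeping of cocycles. Since $\mathfrak{D}_{\mathfrak{s}}^{\otimes 2}\cong\mathbf{1}$, the invertible $S$--module $\mathfrak{s}^{\pm 2}$ has coboundary $\mathfrak{D}_{\mathfrak{s}^{\pm 2}}\cong\mathbf{1}$, so by the equivalence $\mathbb{M}_{\mathfrak{D}\otimes\mathfrak{D}_{\mathfrak{l}}}\cong\mathfrak{l}\circ\mathbb{M}_{\mathfrak{D}}\circ\mathfrak{l}^{-1}$ recalled above, tensoring with $\mathfrak{s}^{\pm 2}$ is an autoequivalence of the category of modular $\mathfrak{K}$--operads. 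In particular $\mathfrak{s}^{2}\modc{\com}$ is (equivalent to) an ordinary modular operad, its Feynman transform $\feyn\mathfrak{s}^{2}\modc{\com}$ is a modular $\mathfrak{K}$--operad, and $\feyn\mathcal{O}\otimes\mathfrak{s}^{-2}\mathcal{O}$ is a modular $\mathfrak{D}^{\vee}\otimes(\mathfrak{D}\otimes\mathfrak{D}_{\mathfrak{s}^{-2}})\cong\mathfrak{K}$--operad, so both sides of the asserted arrow are legitimately modular $\mathfrak{K}$--operads.

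Next I would apply the functor $\mathfrak{s}^{-2}(-)$ to the natural map of modular $\mathfrak{K}$--operads $\feyn\modc{\com}\to\feyn\mathcal{O}\otimes\mathcal{O}$ supplied by \autoref{thm:feyncommap} (whose own construction rests on \autoref{thm:natiso}). Two identifications then finish the job. On the target, the Hadamard (objectwise) tensor product satisfies $\mathfrak{s}^{-2}(\feyn\mathcal{O}\otimes\mathcal{O})\cong\feyn\mathcal{O}\otimes(\mathfrak{s}^{-2}\mathcal{O})$ as modular $\mathfrak{K}$--operads, because the one--dimensional twist $\mathfrak{s}^{-2}((g,n))$ attached in degree $(g,n)$ may equally be absorbed into the second tensor factor, compatibly with all structure maps. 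On the source, I would check straight from the definition of $\feyn$ --- the free modular $\mathfrak{D}^{\vee}$--operad on the dual $S$--module with the dualised contraction differential --- that replacing $\modc{\com}$ by $\mathfrak{l}\modc{\com}$ replaces $\feyn\modc{\com}$ by $\mathfrak{l}^{-1}\feyn\modc{\com}$: the dual $S$--module $\mathfrak{l}^{*}\cong\mathfrak{l}^{-1}$ appears, and the cocycle $(\mathbf{1}\otimes\mathfrak{D}_{\mathfrak{l}})^{\vee}=\mathfrak{K}\otimes\mathfrak{D}_{\mathfrak{l}}^{-1}=\mathfrak{K}\otimes\mathfrak{D}_{\mathfrak{l}^{-1}}$ is precisely that of $\mathfrak{l}^{-1}\feyn\modc{\com}$. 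Taking $\mathfrak{l}=\mathfrak{s}^{2}$ gives $\feyn\mathfrak{s}^{2}\modc{\com}\cong\mathfrak{s}^{-2}\feyn\modc{\com}$. Composing these two identifications with $\mathfrak{s}^{-2}$ applied to the map of \autoref{thm:feyncommap} yields the required natural map $\feyn\mathfrak{s}^{2}\modc{\com}\to\feyn\mathcal{O}\otimes\mathfrak{s}^{-2}\mathcal{O}$ of modular $\mathfrak{K}$--operads; naturality in $\mathcal{O}$ is inherited from that of \autoref{thm:feyncommap}, every construction used being functorial.

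The only substantive work is the routine verification that these coboundary twists are compatible with the differentials and the modular operad structure, and that is exactly what is packaged in the statements that tensoring with an invertible $S$--module is an equivalence of categories and that $\feyn$ is a functor; I therefore do not expect a genuine obstacle here, only careful bookkeeping. As a sanity check, in the supergraded setting one has $\mathfrak{s}^{-2}=\id$, so the corollary collapses to \autoref{thm:feyncommap}, exactly as it should.
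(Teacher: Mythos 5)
Your proposal is correct and follows essentially the same route as the paper, whose proof consists exactly of the observation $\feyn\mathfrak{s}^{2}\modc{\com}\cong\mathfrak{s}^{-2}\feyn\modc{\com}$ followed by an application of \autoref{thm:feyncommap}. Your additional bookkeeping (the coboundary computation showing twisting by $\mathfrak{s}^{\pm 2}$ preserves the relevant operad categories, and the identification $\feyn(\mathfrak{l}\mathcal{O})\cong\mathfrak{l}^{-1}\feyn\mathcal{O}$) just makes explicit what the paper leaves as an observation.
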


\begin{proof}
Observe that $\feyn\mathfrak{s}^{2}\modc{\com}\cong\mathfrak{s}^{-2}\feyn\modc{\com}$ and apply \autoref{thm:feyncommap}.
\end{proof}

\begin{proposition}\label{prop:feynassmap}
There is a map $\feyn\modc{\ass}\rightarrow\feyn\modc{\ass}\otimes\modc{\ass}$.
\end{proposition}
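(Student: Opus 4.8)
The plan is to deduce this from the general machinery established immediately above, specifically \autoref{thm:feyncommap} together with the properties of the M\"obiusisation and modular closure functors developed in \autoref{chap:mobius}. The key observation is that $\modc{\ass}$ is itself a modular operad (a modular $\mathbf{1}$--operad), so it is a legitimate choice of $\mathcal{O}$ in the statement of \autoref{thm:feyncommap}. However, that theorem produces a map $\feyn\modc{\com}\rightarrow\feyn\mathcal{O}\otimes\mathcal{O}$, with $\modc{\com}$ on the left rather than $\modc{\ass}$; the genuine content here is to replace the commutative operad by the associative one on both sides. First I would recall that the diagonal-style map of \autoref{thm:feyncommap} arises, via \autoref{thm:natiso} and \autoref{thm:barannikovfunctor}, as the image of the identity in $\Hom(\feyn\mathcal{O},\feyn\mathcal{O})$; the point is that an analogous construction works whenever we have a cyclic operad $\mathcal{Q}$ with a ``co-associative-like'' structure, i.e. a map $\modc{\mathcal{Q}}\rightarrow\modc{\mathcal{Q}}\otimes\modc{\mathcal{Q}}$ of modular operads lifting the comultiplication, but for $\mathcal{Q}=\ass$ we have the extra rigidity that $\ass^!\cong\ass$ and $\ass$ is Koszul (and self-dual as a cyclic operad).

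The cleanest route is as follows. First I would exhibit a morphism of cyclic operads $\ass\rightarrow\ass\otimes\ass$ encoding the fact that the tensor product of two associative algebras is again associative (this is the cyclic-operad shadow of the Hopf-algebra-like diagonal; it exists because $\ass$ is a Hopf operad in an appropriate sense, or more elementarily because $\End[V\otimes W]$ receives a map from $\End[V]\otimes\End[W]$ compatible with the associative products and invariant forms). Passing to modular closures via the functoriality of $\modc{(-)}$ — using that $\modc{(-)}$ is a left adjoint and hence sends this diagonal to a map $\modc{\ass}\rightarrow\modc{\ass\otimes\ass}$ — and then composing with the natural lax-monoidal comparison $\modc{\ass\otimes\ass}\rightarrow\modc{\ass}\otimes\modc{\ass}$ (coming from the universal property relating decorations on graphs), I obtain a map of modular operads $\Delta\co\modc{\ass}\rightarrow\modc{\ass}\otimes\modc{\ass}$. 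Next I would apply the contravariant Feynman transform functor $\feyn$, which by its construction sends a map $\mathcal{O}\rightarrow\mathcal{P}$ of modular $\mathfrak{D}$--operads to a map $\feyn\mathcal{P}\rightarrow\feyn\mathcal{O}$, but here I instead want to use \autoref{thm:feyncommap}-style reasoning: take $\mathcal{O}=\modc{\ass}$ in \autoref{thm:feyncommap} to get $\feyn\modc{\com}\rightarrow\feyn\modc{\ass}\otimes\modc{\ass}$, and then precompose with the canonical map $\feyn\modc{\ass}\rightarrow\feyn\modc{\com}$ induced by the operad inclusion $\com\hookrightarrow\ass$ (equivalently the surjection $\modc{\ass}\twoheadrightarrow\modc{\com}$ of cyclic/modular operads obtained by abelianising, to which $\feyn$ is applied). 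The composite $\feyn\modc{\ass}\rightarrow\feyn\modc{\com}\rightarrow\feyn\modc{\ass}\otimes\modc{\ass}$ is the desired map.

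I would then check that this composite is indeed a map of modular $\mathfrak{K}$--operads and that it is natural; both follow formally from the fact that every arrow in the composition is, by construction, such a map (the inclusion/abelianisation is a map of modular operads, $\feyn$ preserves the relevant structure, and the map of \autoref{thm:feyncommap} is already asserted to be a map of modular $\mathfrak{K}$--operads). The verification that the composite is nontrivial — that it restricts correctly in genus $0$ and low valence to the expected diagonal on $\dass$--structures — reduces to unwinding the Maurer--Cartan description via \autoref{thm:barannikovfunctor}: the map corresponds to a Maurer--Cartan element in $\mathbf{L}(\modc{\ass}\otimes\feyn\modc{\ass}\otimes\modc{\ass})$, and one identifies it with the image of the identity element under the natural isomorphism of \autoref{thm:natiso} applied with $\mathcal{O}=\modc{\ass}$, $\mathcal{Q}=\feyn\modc{\ass}$, $\mathcal{P}=\modc{\ass}$, using the $\ass\otimes\ass$--algebra structure on tensor products to define the requisite structure map.

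The main obstacle I anticipate is the construction and careful justification of the diagonal $\Delta\co\modc{\ass}\rightarrow\modc{\ass}\otimes\modc{\ass}$ as an honest map of modular operads respecting all the structure (and in particular the dualising-cocycle twists and signs): while the underlying cyclic-operad statement ``$A\otimes B$ is associative when $A,B$ are'' is elementary, promoting it through the modular closure and checking compatibility with the contractions $\xi_{ij}$ — which on the tensor side involves contracting with the product of the two invariant forms and correctly tracking the $\mathfrak{K}$--twist $\Sigma^{-n}\bigwedge^n k^{\edges(G)}$ — requires some care. An alternative that sidesteps part of this is to mimic the proof of \autoref{thm:feyncommap} directly: observe that $\modc{\ass}\otimes\modc{\ass}$ carries, in each genus, the structure making $\Hom(\feyn\modc{\ass},\modc{\ass}\otimes(-))$ representable, and produce the map as the image of $\id_{\feyn\modc{\ass}}$ under a suitable instance of \autoref{thm:natiso}; the hard point then shifts to verifying that $\modc{\ass}$ plays the role $\modc{\com}$ plays in \autoref{thm:feyncommap}, i.e. that $\modc{\ass}\otimes\mathcal{O}\cong\mathcal{O}$ fails but is replaced by the diagonal $\mathcal{O}\rightarrow\modc{\ass}\otimes\mathcal{O}$ for $\mathcal{O}$ a modular $\ass$-type operad — which is exactly the content of the $\ass\otimes\ass$ diagonal again. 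Either way, the combinatorial-sign bookkeeping around the $\mathfrak{K}$--twist is where the real work lies; everything else is formal manipulation of adjunctions and the Maurer--Cartan correspondence.
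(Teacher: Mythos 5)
Your main route contains a genuine gap: the composite $\feyn\modc{\ass}\rightarrow\feyn\modc{\com}\rightarrow\feyn\modc{\ass}\otimes\modc{\ass}$ requires a map $\feyn\modc{\ass}\rightarrow\feyn\modc{\com}$, and neither of your justifications produces one. There is no morphism of (cyclic) operads $\com\rightarrow\ass$ — the only canonical comparison is the abelianisation $\ass\twoheadrightarrow\com$ — and since the Feynman transform is contravariant (as you yourself note), applying $\feyn$ to $\modc{\ass}\twoheadrightarrow\modc{\com}$ yields a map $\feyn\modc{\com}\rightarrow\feyn\modc{\ass}$, the opposite of what you need. So the arrow you propose to precompose with is not constructed, and the ``cleanest route'' collapses at its first step.

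The irony is that you already built the ingredient that makes the proof work and then never used it. The diagonal $\Delta\co\modc{\ass}\rightarrow\modc{\ass}\otimes\modc{\ass}$ (which you obtain from the Hopf-style diagonal $\ass\rightarrow\ass\otimes\ass$, functoriality of the modular closure, and the comparison $\modc{\ass\otimes\ass}\rightarrow\modc{\ass}\otimes\modc{\ass}$ — all of which is fine) is exactly what is needed: apply the contravariant $\feyn$ to $\Delta$ to get $\feyn(\modc{\ass}\otimes\modc{\ass})\rightarrow\feyn\modc{\ass}$, then apply \autoref{thm:natiso} with $\mathcal{O}=\mathcal{P}=\modc{\ass}$ and $\mathcal{Q}=\feyn\modc{\ass}$ to transport this to an element of $\Hom(\feyn\modc{\ass},\modc{\ass}\otimes\feyn\modc{\ass})$. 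This is precisely the paper's proof, and no detour through $\modc{\com}$ or \autoref{thm:feyncommap} is required; your closing ``alternative'' gestures at this but leaves the key step unverified. The paper also obtains $\Delta$ more directly, as the linearisation of the set-theoretic diagonal on the modular operad of ribbon graphs (of which $\modc{\ass}$ is the linearisation), which sidesteps the $\mathfrak{K}$--twist and sign bookkeeping you worry about: $\Delta$ is a map of plain modular operads, and the dualising cocycle only enters through $\feyn$ and \autoref{thm:natiso}, which are already in place.
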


\begin{proof}
The modular operad $\modc{\ass}$ is the linearisation of the modular operad in the category of sets consisting of equivalence classes of ribbon graphs. Therefore there is a map $\modc{\ass}\rightarrow\modc{\ass}\otimes\modc{\ass}$ given by the linearisation of the diagonal map. This gives a map $\feyn(\modc{\ass}\otimes\modc{\ass})\rightarrow\feyn\modc{\ass}$ and we now use that $\Hom(\feyn(\modc{\ass}\otimes\modc{\ass}),\feyn\modc{\ass})\cong\Hom(\feyn\modc{\ass},\feyn\modc{\ass}\otimes\modc{\ass})$.
\end{proof}

\begin{remark}
It is worth noting that \autoref{prop:feynassmap} could also be approached by considering a suitable theory of \emph{ribbon modular operads}, modelled not on stable graphs but on stable ribbon graphs. We could then prove a corresponding version of \autoref{thm:natiso} and would then obtain a version of \autoref{thm:feyncommap} replacing $\modc{\com}$ with $\modc{\ass}$ since this would now be the identity object for ribbon modular operads.
\end{remark}

\begin{remark}
It is perhaps useful to describe the map of \autoref{thm:feyncommap} in more detail. The underlying stable $S$--module of $\feyn\modc{\com}$ is given by
\[\feyn\modc{\com}((g,n)) = \colim_{G\in\iso((g,n))}\modc{\com}^*((G))\otimes\mathfrak{K}(G)\cong\colim_{G\in\iso((g,n))}\mathfrak{K}(G)\]
and the underlying $S$--module of $\feyn\mathcal{O}\otimes \mathcal{O}$ is given by
\[[\feyn\mathcal{O}\otimes \mathcal{O}]((g,n))= \colim_{G\in\iso((g,n))} \mathfrak{K}(G)\otimes\mathfrak{D}^{-1}(G)\otimes\mathcal{O}^*((G))\otimes\mathcal{O}((g,n)).\]

For $G$ a stable graph of genus $g$ with $n$ legs denote by $\gamma_G:\mathfrak{D}(G)\otimes\mathcal{O}((G))\rightarrow \mathcal{O}((g,n))$ the structure maps of $\mathcal{O}$. Denote by $\hat{\gamma}_G$ the corresponding elements in
\[\mathfrak{D}^{-1}(G)\otimes\mathcal{O}^*((G))\otimes\mathcal{O}((g,n))\cong\Hom(\mathfrak{D}(G)\otimes\mathcal{O}((G)),\mathcal{O}((g,n))).\]
We define maps on the summands
\[\mathfrak{K}(G)\rightarrow \mathfrak{K}(G)\otimes\mathfrak{D}^{-1}(G)\otimes\mathcal{O}^*((G))\otimes\mathcal{O}((g,n))\]
by $\id_{\mathfrak{K}(G)}\otimes \hat{\gamma}_G$. These maps assemble to give a map of stable $S$--modules which is in fact the map of operads in \autoref{thm:feyncommap}.
\end{remark}

Let $\feyn\mathcal{O}\rightarrow\mathcal{P}$ be a map of operads. Given a map $f\co\mathfrak{s}^{-2}\mathcal{O}\rightarrow \mathcal{Q}$ we obtain a map $\feyn\mathcal{O}\otimes\mathfrak{s}^{-2}\mathcal{O}\rightarrow\mathcal{P}\otimes\mathcal{Q}$. Applying \autoref{cor:feynscommap} we can then obtain a map $\feyn\mathfrak{s}^2\modc{\com}\rightarrow\mathcal{P}\otimes\mathcal{Q}$. This procedure gives a map $\theta_f\co\MC(\mathcal{O},\mathcal{P})\rightarrow \MC(\mathfrak{s}^2\modc{\com},\mathcal{P}\otimes\mathcal{Q})$. It is natural to ask whether this map of Maurer--Cartan elements is induced by a map of differential graded Lie algebras. There is an obvious candidate: the map $f$ induces a map $\mathbf{L}(\mathfrak{s}^2f\otimes\id)\co\mathbf{L}(\mathcal{O}\otimes\mathcal{P})\rightarrow\mathbf{L}(\mathfrak{s}^2\mathcal{Q}\otimes\mathcal{P})\cong\mathbf{L}(\mathfrak{s}^2\modc{\com}\otimes\mathcal{P}\otimes\mathcal{Q})$. This does indeed induce the same map of Maurer--Cartan elements:

\begin{proposition}
The map on Maurer--Cartan elements induced by $\mathbf{L}(\mathfrak{s}^2f\otimes\id)$ is $\theta_f$.
\end{proposition}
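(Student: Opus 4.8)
The plan is to reduce the statement to a diagram chase and an application of the naturality established in \autoref{thm:barannikovfunctor}, rather than to compute anything explicitly with the structure maps. The point is that both $\theta_f$ and the map induced by $\mathbf{L}(\mathfrak{s}^2 f\otimes\id)$ are, by construction, obtained by composing the natural isomorphisms from \autoref{thm:natiso} and \autoref{thm:barannikovfunctor} with the single morphism of differential graded Lie algebras $\mathbf{L}(\mathfrak{s}^2 f\otimes\id)$, so they must agree once we unwind the definitions carefully enough to see that the underlying vector-space maps coincide.

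First I would spell out precisely how $\theta_f$ is assembled. Starting from $\xi\in\MC(\mathcal{O},\mathcal{P})$, i.e.\ a map $\feyn\mathcal{O}\to\mathcal{P}$, we tensor with the map $f\co\mathfrak{s}^{-2}\mathcal{O}\to\mathcal{Q}$ to get $\feyn\mathcal{O}\otimes\mathfrak{s}^{-2}\mathcal{O}\to\mathcal{P}\otimes\mathcal{Q}$, and then precompose with the map $\feyn\mathfrak{s}^2\modc{\com}\to\feyn\mathcal{O}\otimes\mathfrak{s}^{-2}\mathcal{O}$ of \autoref{cor:feynscommap}. Under the identification $\MC(\mathcal{O},\mathcal{P})\cong\MC(\mathbf{L}(\mathcal{O}\otimes\mathcal{P}))$ and $\MC(\mathfrak{s}^2\modc{\com},\mathcal{P}\otimes\mathcal{Q})\cong\MC(\mathbf{L}(\mathfrak{s}^2\modc{\com}\otimes\mathcal{P}\otimes\mathcal{Q}))$, the map of \autoref{cor:feynscommap} is itself obtained by transporting $\id_{\feyn\mathcal{O}}$ through the natural isomorphism of \autoref{thm:natiso} (with $\modc{\com}$, resp.\ $\mathfrak{s}^2\modc{\com}$, playing the role of the identity object), so at the level of underlying vector spaces everything in sight is an identity map on $\mathbf{L}(\mathcal{O}\otimes\mathcal{P}\otimes\mathcal{Q})$ — exactly as in the proof of \autoref{thm:natiso}. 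Tensoring with $f$ corresponds, on the level of $\mathbf{L}$, precisely to applying the functor $\mathbf{L}(\mathfrak{s}^2 f\otimes\id)$ to this common vector space (here one uses $\mathbf{L}(\mathfrak{s}^2\mathcal{Q}\otimes\mathcal{P})\cong\mathbf{L}(\mathfrak{s}^2\modc{\com}\otimes\mathcal{P}\otimes\mathcal{Q})$, again since $\mathfrak{s}^2\modc{\com}$ is the identity object). The naturality clause of \autoref{thm:barannikovfunctor} then guarantees that the square relating the Maurer--Cartan functors along $\mathbf{L}(\mathfrak{s}^2 f\otimes\id)$ commutes, which is the assertion.

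Concretely the steps are: (1) identify $\mathbf{L}(\mathcal{O}\otimes\mathcal{P})$ with the vector space of $S$--module maps $\mathcal{O}^*\to\mathcal{P}$, as in the proof of \autoref{thm:barannikovfunctor}, and similarly for the other $\mathbf{L}$'s; (2) check that under these identifications the map of \autoref{cor:feynscommap} and the isomorphism of \autoref{thm:natiso} are literally identity maps on the relevant vector space $\mathbf{L}(\mathcal{O}\otimes\mathcal{P}\otimes\mathcal{Q})$, so that $\theta_f$ is realized on underlying spaces by the same linear map as $\mathbf{L}(\mathfrak{s}^2 f\otimes\id)$; (3) invoke functoriality of $\MC$ together with \autoref{thm:barannikovfunctor} to conclude the two maps of Maurer--Cartan sets agree.

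The main obstacle I anticipate is bookkeeping rather than conceptual: one has to be careful that the suspension/sign twists by $\mathfrak{s}^{\pm 2}$ are threaded consistently through \autoref{thm:feyncommap}, \autoref{cor:feynscommap} and \autoref{thm:natiso} so that the vector space appearing on both sides really is the \emph{same} $\mathbf{L}(\mathfrak{s}^2\modc{\com}\otimes\mathcal{P}\otimes\mathcal{Q})\cong\mathbf{L}(\mathfrak{s}^2\mathcal{Q}\otimes\mathcal{P})$ with matching identifications, and that the map $\mathbf{L}(\mathfrak{s}^2 f\otimes\id)$ is applied on the correct tensor factor. Once that identification of underlying spaces is pinned down, the commutativity is immediate from the naturality already proved, so no genuinely new argument is required.
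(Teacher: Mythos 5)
Your proposal is correct and follows essentially the same route as the paper: the paper also reduces the claim to the naturality of the correspondence in \autoref{thm:barannikovfunctor}/\autoref{thm:natiso}, chasing $\id\in\Hom(\feyn\mathcal{O},\feyn\mathcal{O})$ around the resulting commutative square of Hom-sets, whose clockwise route is $\theta_f(\xi)$ and whose anticlockwise route is the image of $\xi$ under $\mathbf{L}(\mathfrak{s}^2f\otimes\id)$. The only slight imprecision is calling the map of \autoref{cor:feynscommap} ``literally an identity map'' — it is rather the Maurer--Cartan element corresponding to $\id_{\feyn\mathcal{O}}$ under the identification of both Hom-sets with $\MC(\mathbf{L}(\mathcal{O}\otimes\mathcal{P}\otimes\mathcal{Q}))$ — but this does not affect the argument.
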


\begin{proof}
 Let $\xi\in\MC(\mathcal{O},\mathcal{P})$ be a map $\xi\co\feyn\mathcal{O}\rightarrow\mathcal{P}$. By \autoref{thm:natiso} we have the following commutative diagram
\[
\xymatrix{
\Hom(\feyn\mathcal{O},\feyn\mathcal{O})\ar[r]\ar[d]^{\xi\co\feyn\mathcal{O}\rightarrow\mathcal{P}} & \Hom(\feyn\modc{\com},\feyn\mathcal{O}\otimes\mathcal{O})\ar[dd]^{\xi\otimes \mathfrak{s}^2f \co \feyn\mathcal{O}\otimes\mathcal{O}\rightarrow\mathcal{P}\otimes\mathfrak{s}^2\mathcal{Q}}
\\
\Hom(\feyn\mathcal{O},\mathcal{P})\ar[d]^{\mathfrak{s}^2f\co\mathcal{O}\rightarrow\mathfrak{s}^2\mathcal{Q}}
\\
\Hom(\feyn\mathfrak{s}^2\mathcal{Q},\mathcal{P})\ar[r] & \Hom(\feyn\modc{\com},\mathcal{P}\otimes\mathfrak{s}^2\mathcal{Q})\cong\MC(\mathfrak{s}^2\modc{\com},\mathcal{P}\otimes\mathcal{Q})
}
\]
where the horizontal maps are isomorphisms and the vertical maps are induced by the maps of operads shown. Then the image of $\id\in\Hom(\feyn\mathcal{O},\feyn\mathcal{O})$ via the clockwise route is the image of $\xi$ under $\theta_f$. The image of $\id$ via the anticlockwise route is the image of $\xi$ under the map of Maurer--Cartan elements induced by $\mathbf{L}(\mathfrak{s}^2f\otimes\id)$.
\end{proof}

We will later be particularly interested in the case that $\mathcal{P}$ and $\mathcal{Q}$ are endomorphism operads.

\section{Quantum homotopy algebras and the lifting problem}
In this section we will recall the definitions of cyclic homotopy algebras (homotopy algebras with an inner product) and define quantum and semi-quantum homotopy algebras.

\subsection{Unimodular $\mathcal{Q}$--algebras}
Let $\mathcal{Q}$ be an operad and $A$ a $\mathcal{Q}$--algebra. Recall that a module $V$ over $A$ is given by maps
\[\mu_{n,i}\co \mathcal{Q}(n)\otimes A^{\otimes i-1}\otimes V \otimes A^{\otimes n-i}\rightarrow V\]
for $1\leq i\leq n$ satisfying the natural unit, associativity and equivariance conditions (see Ginzburg and Kapranov \cite{ginzburgkapranov}).

\begin{remark}\label{rem:moduledefinition2}
Note that the equivariance condition in fact means that a module $V$ over $A$ is completely determined by the maps $\mu_{n,n}$ and so we could also view an $A$--module as a set of maps 
\[\mu_{n}\co \mathcal{Q}(n)\otimes A^{\otimes n-1}\otimes V \rightarrow V\]
satisfying the unit and associativity conditions and equivariance with respect to the $S_{n-1}$ action permuting the factors of $A$.
\end{remark}

\begin{definition}
Let $A$ be a $\mathcal{Q}$--algebra. Then $A$ is an $A$--module in the natural way by 
\[\mu_{n,i}(\phi,a_1,\dots,a_i,\dots, a_n) = \phi(a_1,\dots,a_i,\dots, a_n).\]
This is the \emph{adjoint representation} and we define $\ad^{\phi}_{a_1,\dots,a_{n-1}}\co A \rightarrow A$ by
\[\ad^{\phi}_{a_1,\dots,a_{n-1}}(a) = \mu_{n,n}(\phi,a_1,\dots,a_{n-1},a).\]
\end{definition}

The notion of a unimodular algebra is standard and the definition can be extended to any algebra over a quadratic operad.

\begin{definition}
Let $\mathcal{Q}$ be a quadratic operad and $V$ a finite dimensional $\mathcal{Q}$--algebra. If for all $v\in V$ and $\phi\in\mathcal{Q}(2)$ we have $\tr(\ad^\phi_v)=0$ then we call $V$ a \emph{unimodular $\mathcal{Q}$--algebra}.
\end{definition}

\begin{example}\label{ex:2dassunimod}
Let $A$ be the two dimensional unital associative superalgebra generated by an element $a$ of odd degree with $a^2=1$ and with an odd scalar product given by $\langle a, 1 \rangle = 1$. Direct calculation shows the (super)traces of the left and right multiplications by $1$ and $a$ are zero and therefore $A$ is unimodular.
\end{example}

\begin{proposition}\label{prop:unimodtensor}
Let $\mathcal{Q},\mathcal{Q}'$ be quadratic operads, $V$ a $\mathcal{Q}$--algebra and $W$ a $\mathcal{Q}'$--algebra. Then the $\mathcal{Q}\otimes\mathcal{Q}'$--algebra $V\otimes W$ is unimodular if and only if at least one of $V$ or $W$ is.
\end{proposition}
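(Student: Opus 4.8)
The plan is to reduce the statement to a linear-algebra computation about traces of operators on a tensor product. First I would unwind the definitions: for a $\mathcal{Q}\otimes\mathcal{Q}'$-algebra of the form $V\otimes W$, a degree-two operation is (a linear combination of) $\phi\otimes\psi$ with $\phi\in\mathcal{Q}(2)$ and $\psi\in\mathcal{Q}'(2)$, and for $v\otimes w\in V\otimes W$ the adjoint operator $\ad^{\phi\otimes\psi}_{v\otimes w}$ acts on $V\otimes W$ as $\ad^{\phi}_v\otimes\ad^{\psi}_w$ (up to the usual Koszul sign, which is harmless inside a trace). The key point is then the multiplicativity of the trace: $\tr(\ad^{\phi}_v\otimes\ad^{\psi}_w)=\tr(\ad^{\phi}_v)\cdot\tr(\ad^{\psi}_w)$. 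Hence if either $V$ or $W$ is unimodular, every such trace vanishes, so $V\otimes W$ is unimodular.

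For the converse I would argue contrapositively: suppose neither $V$ nor $W$ is unimodular, so there exist $\phi\in\mathcal{Q}(2)$, $v\in V$ with $\tr(\ad^{\phi}_v)\neq 0$ and $\psi\in\mathcal{Q}'(2)$, $w\in W$ with $\tr(\ad^{\psi}_w)\neq 0$. Then $\tr(\ad^{\phi\otimes\psi}_{v\otimes w})=\tr(\ad^{\phi}_v)\tr(\ad^{\psi}_w)\neq 0$, so $V\otimes W$ is not unimodular. This direction is where one must be slightly careful: the algebra structure on $V\otimes W$ uses the symmetry isomorphism of $\dgvect$, so I would spell out once that $\mu^{V\otimes W}_{2,2}(\phi\otimes\psi; v_1\otimes w_1, v_2\otimes w_2) = \pm\,\mu^V_{2,2}(\phi;v_1,v_2)\otimes\mu^W_{2,2}(\psi;w_1,w_2)$, with the sign depending only on $\degree{\psi}$, $\degree{v_1}$, $\degree{w_1}$ and not on the second slot, so that as an operator in the last variable $\ad^{\phi\otimes\psi}_{v_1\otimes w_1}$ is genuinely $\pm(\ad^{\phi}_{v_1}\otimes\ad^{\psi}_{w_1})$; the global sign is irrelevant for the vanishing of the trace.

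The only genuine subtlety is the quantifier structure in the definition of unimodular: a priori ``not unimodular for $\mathcal{Q}\otimes\mathcal{Q}'$'' could be witnessed by an element of $\mathcal{Q}(2)\otimes\mathcal{Q}'(2)$ that is not a pure tensor $\phi\otimes\psi$. I would handle this by noting that $(\mathcal{Q}\otimes\mathcal{Q}')(2)=\mathcal{Q}(2)\otimes\mathcal{Q}'(2)$ and that $v\mapsto\tr(\ad^{(-)}_v)$ is linear in the operadic variable, so the condition $\tr(\ad^\theta_u)=0$ for all $\theta$ and all $u$ is equivalent to the same condition for all $\theta=\phi\otimes\psi$ and all $u=v\otimes w$ by bilinearity (the trace of a sum is the sum of traces, and elements $v\otimes w$ span $V\otimes W$). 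This reduces everything to the pure-tensor case already treated. I expect this bookkeeping with the tensor decomposition to be the main thing to get right; the trace computation itself is immediate.

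\begin{proof}
By bilinearity of $\theta\otimes u\mapsto\tr(\ad^{\theta}_u)$ in both arguments, together with the identifications $(\mathcal{Q}\otimes\mathcal{Q}')(2)\cong\mathcal{Q}(2)\otimes\mathcal{Q}'(2)$ and the fact that elements of the form $v\otimes w$ span $V\otimes W$, the algebra $V\otimes W$ is unimodular if and only if $\tr(\ad^{\phi\otimes\psi}_{v\otimes w})=0$ for all $\phi\in\mathcal{Q}(2)$, $\psi\in\mathcal{Q}'(2)$, $v\in V$ and $w\in W$. For such pure tensors, the definition of the $\mathcal{Q}\otimes\mathcal{Q}'$--algebra structure on $V\otimes W$ gives, for fixed $\phi,\psi,v,w$, that the operator $\ad^{\phi\otimes\psi}_{v\otimes w}\co V\otimes W\rightarrow V\otimes W$ equals $(-1)^{\epsilon}\,\ad^{\phi}_v\otimes\ad^{\psi}_w$ for a sign $\epsilon$ depending only on $\degree{\psi}$, $\degree{v}$ and $\degree{w}$ and not on the argument. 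Hence
\[
\tr(\ad^{\phi\otimes\psi}_{v\otimes w}) = (-1)^{\epsilon}\,\tr(\ad^{\phi}_v\otimes\ad^{\psi}_w) = (-1)^{\epsilon}\,\tr(\ad^{\phi}_v)\cdot\tr(\ad^{\psi}_w).
\]
If $V$ is unimodular then $\tr(\ad^{\phi}_v)=0$ for all $\phi,v$, so every such trace vanishes and $V\otimes W$ is unimodular; symmetrically if $W$ is unimodular. Conversely, if neither $V$ nor $W$ is unimodular, pick $\phi,v$ with $\tr(\ad^{\phi}_v)\neq 0$ and $\psi,w$ with $\tr(\ad^{\psi}_w)\neq 0$; then $\tr(\ad^{\phi\otimes\psi}_{v\otimes w})\neq 0$, so $V\otimes W$ is not unimodular.
\end{proof}
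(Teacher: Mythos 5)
Your proof is correct and is essentially the paper's argument: the whole content is the identity $\tr(\ad^{\phi\otimes\psi}_{v\otimes w})=\pm\,\tr(\ad^{\phi}_v)\tr(\ad^{\psi}_w)$, which the paper records (with the sign $(-1)^{\degree{\phi}\degree{\psi}}$) and calls a straightforward calculation. Your additional bilinearity/pure-tensor bookkeeping is a fine way to make explicit what the paper leaves implicit.
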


\begin{proof}
This follows from the straightforward calculation that for all $\phi\in\mathcal{Q}(2)$, $\psi\in\mathcal{Q}'(2)$ and $v\in V$, $w\in W$ then $\tr (\ad^{\phi\otimes\psi}_{v\otimes w}) = (-1)^{\degree{\phi} \degree{\psi}}\tr (\ad^\phi_v)\tr(\ad^{\psi}_w)$.
\end{proof}

\begin{definition}
Let $\mathcal{P}$ be a cyclic quadratic $\mathfrak{K}^{\otimes d}$--operad and $V$ a cyclic $\mathcal{P}$--algebra. Then $V$ is a \emph{cyclic unimodular $\mathcal{P}$--algebra} if the underlying algebra over the (non-cyclic) operad $\mathcal{P}^{-d}$ is unimodular.
\end{definition}

It will be useful to have an operad governing cyclic unimodular $\mathcal{P}$--algebras.

\begin{definition}
Let $\mathcal{P}$ be a cyclic quadratic $\mathfrak{K}^{\otimes d}$--operad. Then we define $\unimod\mathcal{P} = \modc{\mathcal{P}}/\langle \modc{\mathcal{P}}((1,1)) \rangle$.
\end{definition}

\begin{proposition}\label{prop:unimodisunimod}
A cyclic algebra over $\mathcal{P}$ lifts to an algebra over $\unimod\mathcal{P}$ if and only if it is unimodular.
\end{proposition}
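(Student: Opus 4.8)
The plan is to unwind the definition of $\unimod\mathcal{P} = \modc{\mathcal{P}}/\langle \modc{\mathcal{P}}((1,1)) \rangle$ and compare the condition for a cyclic $\mathcal{P}$--algebra to factor through this quotient with the vanishing of the relevant traces. Recall that a cyclic $\mathcal{P}$--algebra (with non-degenerate bilinear form of degree $d$) is, by \autoref{def:modalg}, a map of modular $\mathfrak{K}^{\otimes d}$--operads $\modc{\mathcal{P}}\rightarrow\mathcal{E}[V]$, since $\modc{\mathcal{P}}$ is the modular closure of the cyclic operad $\mathcal{P}$ and algebras over a cyclic operad extend to the modular closure exactly as in \cite[Proposition 2.4]{chuanglazarev}. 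Such a map factors through $\unimod\mathcal{P}$ if and only if it kills the ideal generated by $\modc{\mathcal{P}}((1,1))$, which (since a map of modular operads is determined by where it sends generators and the ideal is generated in the single component $((1,1))$) happens if and only if the composite $\modc{\mathcal{P}}((1,1))\rightarrow\mathcal{E}[V]((1,1)) = V$ is zero.

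The heart of the argument is therefore the identification of this component map with the trace of the adjoint action. First I would recall that $\modc{\mathcal{P}}((1,1))$ is obtained from the cyclic part $\mathcal{P}((2))$ (equivalently the binary operations $\mathcal{P}^{-d}(2)$, via the shift identities $\End[V]\cong\mathcal{E}[V]^{-d}$ recorded in the hyperoperad section) by applying a single contraction $\xi_{ij}$; concretely, $\modc{\mathcal{P}}((1,1))$ is spanned by elements of the form $\xi(\phi)$ for $\phi\in\mathcal{P}((2))$. Under a cyclic $\mathcal{P}$--algebra structure, the contraction $\xi$ corresponds on $\mathcal{E}[V]$ to contracting two tensor factors with the bilinear form $\langle-,-\rangle$. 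Writing $\phi$ as a binary operation $\hat\phi\in\Hom(V\otimes V, V)$ via the pairing and choosing a dual basis $\{e_\alpha\}$, $\{e^\alpha\}$ for $\langle-,-\rangle$, the image of $\xi(\phi)$ in $V$ is precisely $\sum_\alpha \pm\,\hat\phi(e_\alpha, e^\alpha)$ (up to the contraction being in the first-and-third or second-and-third slot, which accounts for left versus right multiplication), and pairing this against an arbitrary $v\in V$ recovers, up to sign, $\tr(\ad^\phi_v)$ in the sense of the definition preceding \autoref{ex:2dassunimod}. Hence the component map vanishes for all $\phi\in\mathcal{P}((2))$ and all test elements exactly when $\tr(\ad^\phi_v)=0$ for all binary $\phi$ and all $v$, i.e.\ exactly when the underlying $\mathcal{P}^{-d}$--algebra is unimodular, which is the definition of a cyclic unimodular $\mathcal{P}$--algebra.

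The main obstacle I anticipate is bookkeeping the shifts and signs: one must keep careful track of the suspension twists $\mathfrak{s}$, $\Sigma$ relating $\mathcal{P}$, $\mathcal{P}^{-d}$, $\End[V]$ and $\mathcal{E}[V]$, and of the Koszul signs introduced by the contraction map and by moving the bilinear form past tensor factors, so that the abstract contraction $\xi$ genuinely matches the classical trace $\tr(\ad^\phi_v)$ on the nose rather than merely up to an undetermined scalar. I would handle this by first checking the identification in the ungraded associative case — where $\unimod\mathcal{P}$ is the classical construction and the statement is standard — and then observing that the general graded/cyclic case differs only by the functorial shift identities already established in the hyperoperad section, so no new sign computation beyond those is needed. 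A small additional point to address is that one should note $\modc{\mathcal{P}}((1,1))$ is the only component one needs to test, since contractions and compositions of a cyclic generator that already lies in a contraction-component stay in the ideal; this is immediate from the construction of the modular closure by freely adjoining contractions subject only to associativity and equivariance.
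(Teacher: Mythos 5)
Your proposal is correct and follows essentially the same route as the paper's proof: reduce lifting to the vanishing of the structure map on $\modc{\mathcal{P}}((1,1))$, note that this component is spanned by contractions $\xi_{23}(\phi)$ of binary operations, and identify the contracted element $\sum\pm\langle b_i,c_i\rangle a_i$ with the traces $\tr(\ad^\phi_v)$ using non-degeneracy of the bilinear form. The only blemish is notational: the binary part should be written $\mathcal{P}(2)=\mathcal{P}((3))$ rather than $\mathcal{P}((2))$, as your own parenthetical clarification already indicates.
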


\begin{proof}
Given a map of modular $\mathfrak{K}^{\otimes d}$--operads $\Gamma\co\modc{\mathcal{P}}\rightarrow\mathcal{E}[V]$ then $\Gamma$ vanishes on $\modc{\mathcal{P}}((1,1))$ if and only if for all $\phi\in\mathcal{P}(2)$ we have $\Gamma(\xi_{23}(\phi))=\xi_{23}(\Gamma(\phi))=0$. If $\Gamma(\phi)=\sum a_i\otimes b_i\otimes c_i$ then $\Gamma(\xi_{23}(\phi)) = \sum (-1)^{\epsilon}\langle b_i, c_i \rangle  a_i$. For $v,w\in V$ then $\ad^\phi_v(w)=\sum (-1)^\epsilon \langle a_i, v\rangle \langle b_i, w \rangle c_i$ and so $\tr(\ad^\phi_v)=0$ for all $v \in V$ if and only if $\Gamma(\xi_{23}(\phi))=0$.
\end{proof}

\begin{example}\label{ex:oddcomunimod}
By \autoref{prop:unimodisunimod} it follows that for $d$ even any cyclic $\lie^d$--algebra is unimodular since $\unimod\lie^d \cong \modc{\lie^d}$. Similarly for $d$ odd $\modn{\com^{d}} \cong \modc{\com^{d}}$ and so any Frobenius algebra with a scalar product of odd degree is unimodular.
\end{example}

\begin{remark}
\autoref{prop:unimodisunimod} tells us that the modular $\mathfrak{K}^{\otimes d}$--operad $\unimod\mathcal{P}$ governs the cyclic unimodular algebras over $\mathcal{P}$. Given a (not necessarily cyclic) quadratic operad $\mathcal{Q}$ there is also a corresponding structure, a wheeled operad, governing unimodular $\mathcal{Q}$--algebras. See \cite{granaker} for example.
\end{remark}

\subsection{Cyclic, semi-quantum and quantum homotopy $\mathcal{P}$--algebras}
Let $\mathcal{P}$ be a cyclic Koszul $\mathfrak{K}^{\otimes d}$--operad with Koszul dual $\mathcal{P}^\twistshriek$.

\begin{definition}
\Needspace*{3\baselineskip}\mbox{}
\begin{itemize}
\item We call an algebra over $\feyn\modn{\mathcal{P}^\twistshriek}$ a \emph{cyclic homotopy $\mathcal{P}$--algebra}.
\item We call an algebra over $\feyn{\unimod\mathcal{P}^\twistshriek}$ a \emph{semi-quantum homotopy $\mathcal{P}$--algebra}.
\item We call an algebra over $\feyn\modc{\mathcal{P}^\twistshriek}$ a \emph{quantum homotopy $\mathcal{P}$--algebra}.
\end{itemize}
\end{definition}

\begin{notation}
When $\mathcal{P}$ is one of $\ass^{d}$, $\com^{d}$ or $\lie^{d}$ we replace the words `homotopy $\mathcal{P}$' with $A_\infty^d$, $C_\infty^d$ or $L_\infty^d$ respectively.
\end{notation}

\begin{remark}\label{rem:evenlinftytrivial}
Since for $d$ even $\modn{\com^{1-d}}\cong\modc{\com^{1-d}}$ it follows that quantum, semi-quantum and cyclic homotopy $L_\infty^d$--algebras are all the same for even $d$. Similarly for $d$ odd $\unimod\lie^{1-d} \cong \modc{\lie^{1-d}}$ and so quantum and semi-quantum $C_\infty^d$--algebras are the same for odd $d$.
\end{remark}

\begin{remark}
It is worth noting that the notions of semi-quantum and quantum homotopy $\mathcal{P}$--algebras are not well-defined up to homotopy. By this we mean that if $\mathcal{P}$ is quasi-isomorphic to $\mathcal{Q}$ it is not necessarily the case that semi-quantum and quantum homotopy $\mathcal{Q}$--algebras are equivalent to semi-quantum and quantum homotopy $\mathcal{P}$--algebras. In particular the general notion of a homotopy $\mathcal{P}$--algebra as an algebra over some cofibrant replacement for $\mathcal{P}$ does not require $\mathcal{P}$ to be Koszul, however the notion of a quantum homotopy $\mathcal{P}$--algebras does since it depends on the choice of a dual operad to $\mathcal{P}$. While it is tempting to call algebras over $\feyn\modc{\cobar\mathcal{P}}$ a quantum homotopy $\mathcal{P}$--algebra for a general operad $\mathcal{P}$ it would not necessarily coincide with the definition we have given for when $\mathcal{P}$ is Koszul. Indeed $\feyn\modc{\cobar\mathcal{P}}\cong\feyn\feyn\modn{\mathcal{P}}$ so is a cofibrant replacement for $\modn{\mathcal{P}}$ which in general is not the case for $\feyn\modc{\mathcal{P}^\twistshriek}$.
\end{remark}

\begin{theorem}
Let $\mathcal{P}$ be a cyclic Koszul $\mathfrak{K}^{\otimes d}$--operad with Koszul dual $\mathcal{P}^\twistshriek$.
\begin{enumerate}
\item The tensor product of a quantum homotopy $\mathcal{P}$--algebra with an $\mathfrak{s}^{-2}\mathcal{P}^\twistshriek$--algebra has the natural structure of a quantum $L_\infty^1$--algebra.
\item The tensor product of a semi-quantum homotopy $\mathcal{P}$--algebra with a unimodular $\mathfrak{s}^{-2}\mathcal{P}^\twistshriek$--algebra has the natural structure of a quantum $L_\infty^1$--algebra.
\item The tensor product of a cyclic homotopy $\mathcal{P}$--algebra with an $\underline{\mathfrak{s}^{-2}\mathcal{P}^\twistshriek}$--algebra has the natural structure of a quantum $L_\infty^1$--algebra.
\item The tensor product of a cyclic homotopy $\mathcal{P}$--algebra with a cyclic homotopy $\mathfrak{s}^{-2}\mathcal{P}^\twistshriek$--algebra has the natural structure of a cyclic $L_\infty^1$--algebra. 
\end{enumerate}
\end{theorem}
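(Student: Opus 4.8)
The plan is to package all four statements uniformly through the correspondence between maps out of a Feynman transform and Maurer--Cartan elements (\autoref{thm:barannikov} and \autoref{thm:barannikovfunctor}) together with the diagonal-type maps constructed in \autoref{thm:feyncommap}, \autoref{cor:feynscommap} and \autoref{prop:feynassmap}. The common mechanism is this: a quantum homotopy $\mathcal{P}$--algebra on $V$ is a map $\feyn\modc{\mathcal{P}^\twistshriek}\to\mathcal{E}[V]$, and tensoring with an algebra on $W$ over the ``dual'' operad gives, via a diagonal map of operads applied on the source and a tensor of endomorphism operads on the target, a map $\feyn\modc{\com^1}\to\mathcal{E}[V\otimes W]$ after suitable grading/cocycle shifts --- i.e.\ a quantum $L_\infty^1$--structure on $V\otimes W$. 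So the real content is (a) exhibiting the relevant map of modular operads out of $\modc{\com^1}$ (or $\modc{\lie^1}$, $\modc{\com}$ as appropriate) landing in the correct tensor product, and (b) checking the cocycle and suspension bookkeeping so that the target is genuinely $\mathcal{E}[V\otimes W]$ as a modular $\mathfrak{K}$--operad.

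First I would treat (1). Here $\mathcal{P}$ is a cyclic Koszul $\mathfrak{K}^{\otimes d}$--operad, a quantum homotopy $\mathcal{P}$--algebra is $\xi\in\MC(\mathfrak{s}^{-2}\mathcal{P}^\twistshriek,\mathcal{E}[V])$ wait --- more precisely a map $\feyn\modc{\mathcal{P}^\twistshriek}\to\mathcal{E}[V]$, and an $\mathfrak{s}^{-2}\mathcal{P}^\twistshriek$--algebra is a map $\mathfrak{s}^{-2}\mathcal{P}^\twistshriek\to\mathcal{E}[W]$. Applying \autoref{cor:feynscommap} with $\mathcal{O}=\modc{\mathcal{P}^\twistshriek}$ gives $\feyn\mathfrak{s}^2\modc{\com}\to\feyn\modc{\mathcal{P}^\twistshriek}\otimes\mathfrak{s}^{-2}\modc{\mathcal{P}^\twistshriek}$; composing with $\xi\otimes(\mathfrak{s}^{-2}\text{-algebra map})$ and using $\mathcal{E}[V]\otimes\mathcal{E}[W]\cong\mathcal{E}[V\otimes W]$ (after the degree shift turning a degree-$d$ form on $V$ and a degree-$(1-d)$ form on $W$ into a degree-$1$ form on $V\otimes W$), together with $\feyn\mathfrak{s}^2\modc{\com}\cong\feyn\modc{\com^{1}}$ up to the cocycle identifications recorded in the hyperoperad section, yields the desired quantum $L_\infty^1$--structure. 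The key input is that $\mathfrak{s}^{-2}\mathcal{P}^\twistshriek$ is precisely the operad whose algebras pair correctly with quantum homotopy $\mathcal{P}$--algebras --- this is forced by the formula $(\mathcal{Q}^d)^\twistshriek\cong\mathfrak{s}^2(\mathcal{Q}^!)^{1-d}$ and the $d\leftrightarrow 1-d$ duality of bilinear-form degrees stated earlier.

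For (2) I would refine the argument: when $W$ is only a unimodular $\mathfrak{s}^{-2}\mathcal{P}^\twistshriek$--algebra, by \autoref{prop:unimodisunimod} the structure map factors through $\unimod(\mathfrak{s}^{-2}\mathcal{P}^\twistshriek)=\modc{\mathfrak{s}^{-2}\mathcal{P}^\twistshriek}/\langle(1,1)\rangle$, so on the $V$ side we only need a map out of $\feyn{\unimod\mathcal{P}^\twistshriek}$ rather than $\feyn\modc{\mathcal{P}^\twistshriek}$ --- i.e.\ a semi-quantum homotopy $\mathcal{P}$--algebra suffices, because the diagonal map of \autoref{cor:feynscommap} is compatible with killing the genus-$(1,1)$ part on one factor provided it is killed on the other (the relevant graphs with a genus-one one-valent vertex contribute trivially once the pairing on $W$ vanishes on $\modc{\mathcal{P}^\twistshriek}((1,1))$). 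Statements (3) and (4) are the ``honest'' cases with no unimodularity needed: for (3) one uses the na\"ive closure $\modn{\mathfrak{s}^{-2}\mathcal{P}^\twistshriek}$ (all contractions set to zero on the $W$ side) so that only genus-zero pieces interact, and for (4) one symmetrises --- tensoring a map out of $\feyn\modn{\mathcal{P}^\twistshriek}$ with a map out of $\feyn\modn{\mathfrak{s}^{-2}\mathcal{P}^\twistshriek}$ and using the genus-zero version of the diagonal (essentially \autoref{prop:feynassmap}'s mechanism restricted to trees, where $\modc{\com}$ becomes the unit) to land in $\feyn\modn{\com^1}$, which is exactly a cyclic $L_\infty^1$--algebra.

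\textbf{Main obstacle.} The routine-looking but genuinely delicate part is the cocycle and suspension bookkeeping: one must verify that under the functors $\mathcal{O}\mapsto\mathcal{O}^d$, the identifications $\mathfrak{D}_{\Sigma}\otimes\mathfrak{D}_{\mathfrak{s}}\cong\mathfrak{K}$ on trees and $\mathfrak{D}_{\Sigma^d}\otimes\mathfrak{D}_{\mathfrak{s}^{-d}}\cong\mathfrak{K}^{\otimes d}$ for even $d$, and the isomorphisms $\tilde{\mathfrak{s}}\mathcal{E}[V]\cong\mathcal{E}[\Sigma^{-1}V]$ etc., the composite map really does produce a modular $\mathfrak{K}$--operad map into $\mathcal{E}[V\otimes W]$ with the stated form degree (namely $1$, independent of $d$ --- this is the point of the ``$L_\infty^1$'' rather than ``$L_\infty^d$''). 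I expect the sign/parity subtleties here (especially the interplay of $\mathfrak{s}^{-2}$ with the symmetry of the induced pairing, and whether one lands in the symmetric or antisymmetric case before applying $\Sigma^{-1}$) to be where the care is required; the operadic skeleton of the argument is otherwise a direct iteration of \autoref{thm:natiso} and \autoref{cor:feynscommap}. Finally one should remark that part (4) is the symmetric specialisation of (3) and (1) is the specialisation of (2) to the trivially-unimodular case covered by \autoref{ex:oddcomunimod}, so the four statements are really one statement viewed through progressively weaker hypotheses on the second factor.
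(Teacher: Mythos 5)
Your proposal is correct in substance and follows the paper's own route: the paper's proof is exactly the observation $(\lie^1)^\twistshriek\cong\mathfrak{s}^2\com$ together with \autoref{cor:feynscommap} applied with $\mathcal{O}$ equal to $\modc{\mathcal{P}^\twistshriek}$, $\unimod\mathcal{P}^\twistshriek$ and $\modn{\mathcal{P}^\twistshriek}$ for parts (1)--(3) (the unimodular case is handled simply by invoking the corollary for the quotient operad $\unimod\mathcal{P}^\twistshriek$ and \autoref{prop:unimodisunimod}, which is what your ``compatibility with killing the genus $(1,1)$ part'' amounts to), and the cyclic-operad version of the same corollary for part (4). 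Two slips are worth correcting: in (4) the second tensor factor, being a cyclic homotopy $\mathfrak{s}^{-2}\mathcal{P}^\twistshriek$--algebra, is governed by $\feyn\modn{(\mathfrak{s}^{-2}\mathcal{P}^\twistshriek)^\twistshriek}\cong\feyn\modn{\mathfrak{s}^{2}\mathcal{P}}$ (genus-zero part $\cobar\mathfrak{s}^2\mathcal{P}$), not by $\feyn\modn{\mathfrak{s}^{-2}\mathcal{P}^\twistshriek}$, and the target is $\cobar\mathfrak{s}^2\com$ rather than ``$\feyn\modn{\com^1}$''; and your closing remark is inaccurate, since (1) is not a special case of (2) (its second factor need not be unimodular, while its first factor must be fully quantum) and (4) is not a specialisation of (3) (its second factor is only a homotopy algebra and the conclusion is correspondingly only cyclic) --- the four statements are parallel applications of the corollary, not a chain of weakenings.
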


\begin{proof}
Observe first that $(\lie^1)^\twistshriek\cong\mathfrak{s}^2\com$ and recall that $\feyn\mathfrak{s}^2\mathcal{O}\cong\mathfrak{s}^{-2}\feyn\mathcal{O}$.
\begin{enumerate}
\item By \autoref{cor:feynscommap} there is a natural map $\feyn\modc{\mathfrak{s}^{2}\com}\rightarrow \feyn \modc{\mathcal{P}^\twistshriek}\otimes\modc{\mathfrak{s}^{-2}\mathcal{P}^\twistshriek}$.
\item By \autoref{cor:feynscommap} there is a natural map $\feyn\modc{\mathfrak{s}^2\com}\rightarrow \feyn \unimod\mathcal{P}^\twistshriek\otimes\unimod\mathfrak{s}^{-2}\mathcal{P}^\twistshriek$.
\item By \autoref{cor:feynscommap} there is a natural map $\feyn\modc{\mathfrak{s}^2\com}\rightarrow \feyn\modn{\mathcal{P}^\twistshriek}\otimes \modn{\mathfrak{s}^{-2}\mathcal{P}^\twistshriek}$.
\item Since $(\mathfrak{s}^2\mathcal{P})^\twistshriek\cong\mathfrak{s}^{-2}\mathcal{P}^\twistshriek$ then the cyclic operad version of \autoref{cor:feynscommap} implies that there is a natural map $\cobar\mathfrak{s}^2\com \rightarrow \cobar \cobar \mathcal{P} \otimes \cobar\mathfrak{s}^2\mathcal{P}$.\qedhere
\end{enumerate}
\end{proof}

\begin{remark}
Recall that for $\mathcal{Q}$ a cyclic operad $\mathfrak{s}^{-2}(\mathcal{Q}^d)^\twistshriek$ governs $\mathcal{Q}^!$--algebras with a degree $1-d$ cyclically invariant bilinear form.
\end{remark}

\begin{example}\label{ex:comlinftytensor}
Let $A$ be an algebra over $\com^{-d}$, in other words a Frobenius algebra with a degree $-d$ scalar product. Let $\mathfrak{g}$ be a cyclic $L_\infty^{1+d}$--algebra (for example it could just be a cyclic Lie algebra with a degree $1+d$ scalar product). Since $\mathfrak{s}^{-2}(\lie^{1+d})^\twistshriek\cong\com^{-d}$ then $A\otimes \mathfrak{g}$ has a degree $1$ bilinear form and the natural structure of a cyclic $L_\infty^1$--algebra. If $d$ is odd then $\modc{\com^{-d}}\cong\modn{\com^{-d}}$ so this means that $A\otimes\mathfrak{g}$ is in fact naturally a quantum $L_\infty^1$--algebra.
\end{example}

\subsection{Quantum lifting}
The maps $\modc{\mathcal{P}^\twistshriek}\rightarrow \unimod\mathcal{P}^\twistshriek \rightarrow \modn{\mathcal{P}^\twistshriek}$ induce maps $\feyn\modn{\mathcal{P}^\twistshriek}\rightarrow \feyn\unimod\mathcal{P}^\twistshriek \rightarrow \feyn\modc{\mathcal{P}^\twistshriek}$. This leads naturally to the question of lifting, considered by Hamilton in \cite{hamilton2} for the case $\mathcal{P}=\ass$ in supergraded vector spaces.

More precisely let $V$ have the structure of a cyclic homotopy $\mathcal{P}$--algebra. Then one can ask if there exists a map $g$ or a map $h$ lifting this structure:

\[
\xymatrix{
\feyn\modn{\mathcal{P}^\twistshriek}\ar[r]\ar[d] & \feyn\unimod\mathcal{P}^\twistshriek\ar@{.>}[ld]_{g}\ar[r] & \feyn\modc{\mathcal{P}^\twistshriek}\ar@{.>}[lld]^{h}
\\
\mathcal{E}(V)
}
\]

Furthermore one can study how many lifts there are up to homotopy. With the theory developed previously, this problem could now be translated into the problem of lifting Maurer--Cartan elements in the associated differential graded Lie algebras.

For $\mathcal{O}$ be a modular $\mathfrak{D}$--operad denote by $g\mathcal{O}\subset \mathcal{O}$ the suboperad of positive genus, that is the kernel of the map $\mathcal{O} \twoheadrightarrow \modn{\cyc{\mathcal{O}}}$. The following follows from \autoref{thm:barannikovfunctor} and \autoref{prop:mcfibre}.

\begin{theorem}\label{thm:quantumlifting}
Let $V$ be a cyclic homotopy $\mathcal{P}$--algebra which is represented by a Maurer--Cartan element $\xi\in\MC(\mathbf{L}(\modn{P^\twistshriek}\otimes \mathcal{E}[V]))$.
\begin{itemize}
\item The set of lifts to a quantum homotopy $\mathcal{P}$--algebra is given by $\MC(\mathbf{L}(g\modc{\mathcal{P}^\twistshriek}\otimes \mathcal{E}[V])^{\xi})$, the set of Maurer--Cartan elements in the curved Lie algebra $\mathbf{L}(\modc{g\mathcal{P}^\twistshriek}\otimes \mathcal{E}[V])^{\xi}$. The corresponding Maurer--Cartan moduli set is the space of lifts up to homotopy fixing the cyclic homotopy $\mathcal{P}$--algebra structure.
\item The set of lifts to a semi-quantum homotopy $\mathcal{P}$--algebra is given by $\MC(\mathbf{L}(g\unimod\mathcal{P}^\twistshriek\otimes \mathcal{E}[V])^\xi)$, the set of Maurer--Cartan elements in the curved Lie algebra $\mathbf{L}(g\unimod\mathcal{P}^\twistshriek\otimes \mathcal{E}[V])^{\xi}$. The corresponding Maurer--Cartan moduli set is the space of lifts up to homotopy fixing the cyclic homotopy $\mathcal{P}$--algebra structure.
\end{itemize}
\end{theorem}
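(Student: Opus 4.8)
The plan is to deduce \autoref{thm:quantumlifting} directly from the general lifting theory of \autoref{chap:mc} together with the operadic dictionary established above, so that essentially no new computation is required. The two bullet points are proved in exactly the same way, so I would treat them uniformly by considering the surjection of modular operads $f\co\mathcal{O}'\twoheadrightarrow\modn{\mathcal{P}^\twistshriek}$, where $\mathcal{O}'$ is either $\modc{\mathcal{P}^\twistshriek}$ or $\unimod\mathcal{P}^\twistshriek$; the map exists and is surjective because $\modn{\mathcal{P}^\twistshriek}$ is the quotient of either by the ideal generated by positive-genus (respectively, positive-genus and genus-$1$-valence-$1$) pieces, and its kernel as a graded $S$-module is exactly the positive genus part $g\mathcal{O}'$.

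First I would apply the Feynman transform and \autoref{thm:barannikovfunctor}: the maps $\feyn\modn{\mathcal{P}^\twistshriek}\to\mathcal{E}[V]$, $\feyn\unimod\mathcal{P}^\twistshriek\to\mathcal{E}[V]$ and $\feyn\modc{\mathcal{P}^\twistshriek}\to\mathcal{E}[V]$ are naturally identified with Maurer--Cartan elements in $\mathbf{L}(\modn{\mathcal{P}^\twistshriek}\otimes\mathcal{E}[V])$, $\mathbf{L}(\unimod\mathcal{P}^\twistshriek\otimes\mathcal{E}[V])$ and $\mathbf{L}(\modc{\mathcal{P}^\twistshriek}\otimes\mathcal{E}[V])$ respectively (here $\mathcal{E}[V]$, being the relevant endomorphism operad, is a modular $\mathfrak{D}^\vee$-operad for the appropriate $\mathfrak{D}$, so the tensor products are modular $\mathfrak{K}$-operads and $\mathbf{L}$ applies). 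Since $\mathbf{L}$ is a functor to formal differential graded Lie algebras, the surjection $f\otimes\id_{\mathcal{E}[V]}$ of modular $\mathfrak{K}$-operads induces a surjection of formal dg Lie algebras $\mathbf{L}(f\otimes\id)\co\mathbf{L}(\mathcal{O}'\otimes\mathcal{E}[V])\twoheadrightarrow\mathbf{L}(\modn{\mathcal{P}^\twistshriek}\otimes\mathcal{E}[V])$, whose kernel is $\mathbf{L}(g\mathcal{O}'\otimes\mathcal{E}[V])$. The lift being sought is precisely a Maurer--Cartan element in $\mathbf{L}(\mathcal{O}'\otimes\mathcal{E}[V])$ mapping to the given $\xi$, i.e.\ an element of the fibre of $\MC(\mathbf{L}(f\otimes\id))$ over $\xi$.

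Next I would invoke \autoref{prop:mcfibre}: choosing any graded preimage $\tilde\xi\in\mathbf{L}(\mathcal{O}'\otimes\mathcal{E}[V])^1$ of $\xi$, the kernel $\mathbf{L}(g\mathcal{O}'\otimes\mathcal{E}[V])$ becomes a curved Lie subalgebra of the twist $\mathbf{L}(\mathcal{O}'\otimes\mathcal{E}[V])^{\tilde\xi}$ --- this is the algebra denoted $\mathbf{L}(g\mathcal{O}'\otimes\mathcal{E}[V])^{\xi}$ in the statement --- and the fibre over $\xi$ is in bijection with $\MC\bigl(\mathbf{L}(g\mathcal{O}'\otimes\mathcal{E}[V])^{\xi}\bigr)$ via $\eta\mapsto\eta+\tilde\xi$, independently of the choice of $\tilde\xi$. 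Specialising $\mathcal{O}'=\modc{\mathcal{P}^\twistshriek}$ gives the quantum case and $\mathcal{O}'=\unimod\mathcal{P}^\twistshriek$ the semi-quantum case. For the statement about the moduli set I would note that, by the remark after \autoref{thm:barannikovfunctor}, homotopy of Feynman-transform maps corresponds to homotopy (equivalently, by formality, gauge equivalence) of the associated Maurer--Cartan elements, and a homotopy of lifts that fixes the underlying $\feyn\modn{\mathcal{P}^\twistshriek}$-structure is exactly a homotopy inside the fibre, i.e.\ a homotopy through $\MC(\mathbf{L}(g\mathcal{O}'\otimes\mathcal{E}[V])^{\xi})$; hence the space of lifts up to such homotopy is $\MCmoduli(\mathbf{L}(g\mathcal{O}'\otimes\mathcal{E}[V])^{\xi})$, as in the first notion of equivalence discussed before \autoref{thm:fibration}.

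The one point requiring genuine care --- and what I expect to be the main obstacle --- is verifying that the kernel of $f$ as a graded $S$-module really is $g\mathcal{P}^\twistshriek$-type data and that the twisted curved Lie algebra structure on $\mathbf{L}(g\mathcal{O}'\otimes\mathcal{E}[V])^{\xi}$ coincides with the one named in the theorem; this amounts to checking that $g\mathcal{O}'$ is an ideal (not merely a subspace) so that $\mathbf{L}(g\mathcal{O}'\otimes\mathcal{E}[V])$ is closed under the bracket of $\mathbf{L}(\mathcal{O}'\otimes\mathcal{E}[V])$, which follows since composing or contracting a positive-genus element with anything yields a positive-genus element, and that $f$ is a map of modular operads with the described kernel, which is immediate from the definitions of $\modn{-}$, $\unimod{-}$ and $\modc{-}$. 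Once this bookkeeping is in place the theorem is a direct translation of \autoref{prop:mcfibre} through \autoref{thm:barannikovfunctor}, so I would keep the written proof to a short paragraph citing exactly these two results.
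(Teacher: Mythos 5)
Your proposal is correct and is essentially the paper's own argument: the paper proves \autoref{thm:quantumlifting} by exactly this route, translating lifts of the map $\feyn\modn{\mathcal{P}^\twistshriek}\rightarrow\mathcal{E}[V]$ through \autoref{thm:barannikovfunctor} into the fibre of the induced surjection of formal differential graded Lie algebras over $\xi$, and then identifying that fibre via \autoref{prop:mcfibre} with the Maurer--Cartan set of the twisted curved Lie algebra on the positive-genus kernel. The bookkeeping you flag (that the kernel is $g\mathcal{O}'$, that it is closed under the operations, and the identification of moduli with homotopy through lifts) is the same routine verification the paper leaves implicit.
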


Note that the curved Lie algebras $\mathbf{L}(g\modc{\mathcal{P}^\twistshriek}\otimes \mathcal{E}[V])^\xi$ and $\mathbf{L}(g\unimod\mathcal{P}^\twistshriek\otimes \mathcal{E}[V])^\xi$ admit natural complete filtrations by genus. It follows from our general Maurer--Cartan theory that the spaces of lifts of two homotopy equivalent cyclic homotopy $\mathcal{P}$--structures coincide.

The following lemma will be helpful later.

\begin{lemma}\label{lem:liesimplify}
Let $V\in\dgvect$ have a degree $d$ non-degenerate symmetric bilinear form and let $\mathcal{P}$ be a Koszul cyclic operad. Then
\[
(\mathcal{P}^d)^\twistshriek\otimes\mathcal{E}[V]\cong\Sigma^{d+3}\mathcal{P}^!\otimes\mathcal{E}[\Sigma^{-1}V^*]
\]
\end{lemma}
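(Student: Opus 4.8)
The plan is to unravel both sides into tensor powers of $V$ decorated by sign and suspension twists, and check they agree as cyclic $S$--modules (with their operad structures). First I would recall the definitions assembled in the subsection on algebras over modular and cyclic operads: for a Koszul cyclic operad $\mathcal{P}$ we have $(\mathcal{P}^d)^\twistshriek \cong \mathfrak{s}^2 (\mathcal{P}^!)^{1-d}$, where the twisting functors are $\mathcal{Q}^e = (\Sigma\mathfrak{s}^{-1})^e\mathcal{Q}$ and $\mathfrak{s}((g,n)) = \Sigma^{2-2g-n}\mathrm{sgn}_n$, $\Sigma((g,n)) = \Sigma k$. Since we only care about the $S$--module level here (both sides are built from $\mathcal{E}[V]$ which has underlying $S$--module $V^{\otimes n}$ in arity $n$), I would work with the restriction of all these $S$--modules to the cyclic/tree setting, i.e. on $((0,n))$-components, where $\mathfrak{s}((0,n)) = \Sigma^{2-n}\mathrm{sgn}_n$.

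Next I would compute the left side. Tensoring, $(\mathcal{P}^d)^\twistshriek \otimes \mathcal{E}[V] \cong \mathfrak{s}^2 (\mathcal{P}^!)^{1-d} \otimes \mathcal{E}[V] = \mathfrak{s}^2 (\Sigma\mathfrak{s}^{-1})^{1-d}\mathcal{P}^! \otimes \mathcal{E}[V]$. Now I would use the identities from the excerpt relating $\mathcal{E}[V]$ under twisting by $\tilde{\mathfrak{s}}$ (where $\tilde{\mathfrak{s}}((g,n)) = \Sigma^{-n}\mathrm{sgn}_n$ and $\mathfrak{s} \cong \tilde{\mathfrak{s}} \otimes \mathfrak{K}^{\otimes 2}$): namely $\tilde{\mathfrak{s}}\mathcal{E}[V] \cong \mathcal{E}[\Sigma^{-1}V]$, $\tilde{\mathfrak{s}}^d\mathcal{E}[V] \cong \mathcal{E}[V^*]$, and $\mathcal{E}[V] \cong \mathcal{E}[\Sigma^d V^*]$. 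On trees $\mathfrak{K}$ is a coboundary ($\mathfrak{D}_\Sigma \otimes \mathfrak{D}_{\mathfrak{s}}$ agrees with $\mathfrak{K}$ on trees), so I can freely move the $\mathfrak{s}$-twists across the tensor product onto the $\mathcal{E}[V]$ factor, converting $V$ into $\Sigma^{-1}V^*$ at the cost of accumulating suspension shifts and sign-representation twists. Carefully bookkeeping: each factor of $\tilde{\mathfrak{s}}$ moved onto $\mathcal{E}[V]$ contributes a $\Sigma^{-1}$ per leg and a sign twist; the net effect of the $\mathfrak{s}^2(\Sigma\mathfrak{s}^{-1})^{1-d}$ combination is to produce $\mathcal{P}^! \otimes \mathcal{E}[\Sigma^{-1}V^*]$ tensored with a global shift. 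Tracking the global suspension degree: the $\Sigma$-factors and $\mathfrak{s}$-factors contribute, on $((0,n))$, a total shift that I expect to simplify to $\Sigma^{d+3}$ after the $n$-dependent parts are absorbed into the conversion $V \rightsquigarrow \Sigma^{-1}V^*$ (which itself eats $\Sigma^{-1}$ and a dualization per leg, i.e. exactly $2n$ worth of the leg-indexed shifts) — this is the computation I would do explicitly in the proof.

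The main obstacle will be the sign and suspension bookkeeping: making sure the sign-representation twists $\mathrm{sgn}_n$ coming from the various $\mathfrak{s}$'s exactly cancel (since $\mathfrak{D}_{\mathfrak{s}}^{\otimes 2} \cong \mathbf{1}$, an even number survive trivially, but the parity must be checked against $d$), and that the leg-indexed suspensions $\Sigma^{-n}$ are precisely what is needed to turn each copy of $V$ into $\Sigma^{-1}V^*$ via the non-degenerate form $V \cong \Sigma^d V^*$ — leaving behind the claimed uniform $\Sigma^{d+3}$. I would also need to check that the identification is compatible with the cyclic $S_n$-actions and with the operad compositions (contractions of tensors against the bilinear form on one side, the quadratic-dual structure maps on the other), but this is routine once the underlying graded vector space identification is pinned down, since all the functors involved ($\mathfrak{s}$-twisting, $\mathcal{E}[-]$) are monoidal and the isomorphisms $\tilde{\mathfrak{s}}\mathcal{E}[V]\cong\mathcal{E}[\Sigma^{-1}V]$ etc.\ are isomorphisms of (cyclic) operads. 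So the proof would conclude: apply the three displayed $\mathcal{E}$-identities in sequence, collect the global shift, and observe the operad structures match by naturality.
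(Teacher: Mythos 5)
Your plan is essentially the paper's proof: from $(\mathcal{P}^d)^\twistshriek\cong\mathfrak{s}^2(\mathcal{P}^!)^{1-d}=\Sigma^{1-d}\mathfrak{s}^{d+1}\mathcal{P}^!$ one uses $\mathfrak{s}\cong\Sigma^2\tilde{\mathfrak{s}}$ in genus zero to extract the global $\Sigma^{2d+2}$ (giving $\Sigma^{d+3}$ overall) and then absorbs the twist into the endomorphism operad via $\tilde{\mathfrak{s}}^{d+1}\mathcal{E}[V]\cong\tilde{\mathfrak{s}}\mathcal{E}[V^*]\cong\mathcal{E}[\Sigma^{-1}V^*]$ — precisely the bookkeeping you defer, with the only slip being that the leg-indexed shifts absorbed are $(d+1)n$ rather than $2n$. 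So the proposal is correct and takes the same route as the paper.
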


\begin{proof}
Since $(\mathcal{P}^d)^\twistshriek\cong\mathfrak{s}^2(\mathcal{P}^!)^{1-d}\cong\Sigma^{1-d}\mathfrak{s}^{d+1}\mathcal{P}^{!}$ and for genus zero $\mathfrak{s}^{d+1}\cong\Sigma^{2d+2}\tilde{\mathfrak{s}}^{d+1}$ then $(\mathcal{P}^d)^\twistshriek\otimes\mathcal{E}[V]\cong\Sigma^{d+3}\mathcal{P}^!\otimes\tilde{\mathfrak{s}}^{d+1}\mathcal{E}[V]$. But $\tilde{\mathfrak{s}}^{d+1}\mathcal{E}[V]\cong\tilde{\mathfrak{s}}\mathcal{E}[V^*]\cong\mathcal{E}[\Sigma^{-1}V^*]$.
\end{proof}

\section{Quantum lifts of strict algebras}
The problem of lifting a $\mathcal{P}$--algebra to a (semi-)quantum homotopy $\mathcal{P}$--algebra admits a simple solution. In this section $\mathcal{P}$ will be a cyclic Koszul $\mathfrak{K}^{\otimes d}$--operad with Koszul dual $\mathcal{P}^\twistshriek$.

Recall there is a commutative diagram
\[
\xymatrix{
\feyn\modn{\mathcal{P}^\twistshriek}\ar[r]\ar[d]_\Phi &  \feyn\modc{\mathcal{P}^\twistshriek}\ar[d]_\Psi\\
\modc{\mathcal{P}}\ar[r]&\modn{\mathcal{P}}
}
\]
where the map $\Psi$ is the extension of $\Phi$ by zero. More precisely, given a stable graph $G$ we define $\Psi$ on the space of $\modc{\mathcal{P}^\twistshriek}^*$-decorations of $G$ to be $\Phi$ if all the vertices of $G$ have zero genus and zero otherwise. 

We can extend this diagram with the following theorem.

\begin{theorem}\label{thm:unimoddiagram}
There are maps $f$ and $g$ making the following diagram commute:
\[
\xymatrix{
\feyn\modn{\mathcal{P}^\twistshriek}\ar[r]\ar[d]_\Phi & \feyn\unimod\mathcal{P}^\twistshriek\ar[r]\ar@{.>}[ld]_f & \feyn\modc{\mathcal{P}^\twistshriek}\ar[d]\ar@{.>}[ld]_g\\
\modc{\mathcal{P}}\ar[r] & \unimod\mathcal{P}\ar[r] & \modn{\mathcal{P}}
}
\]
\end{theorem}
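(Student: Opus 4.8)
The plan is to construct the dotted maps $f$ and $g$ by exhibiting the correct Maurer--Cartan elements and appealing to the functorial correspondence of \autoref{thm:barannikovfunctor}, rather than by manipulating graphs directly. Recall that a map $\feyn\unimod\mathcal{P}^\twistshriek\rightarrow\modc{\mathcal{P}}$ is, by \autoref{thm:barannikovfunctor}, the same datum as a Maurer--Cartan element in $\mathbf{L}(\unimod\mathcal{P}^\twistshriek\otimes(\modc{\mathcal{P}})^\vee)$; similarly the canonical map $\Phi\co\feyn\modn{\mathcal{P}^\twistshriek}\rightarrow\modc{\mathcal{P}}$ corresponds to a Maurer--Cartan element $\phi\in\mathbf{L}(\modn{\mathcal{P}^\twistshriek}\otimes(\modc{\mathcal{P}})^\vee)$. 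Since $\unimod\mathcal{P}^\twistshriek$ is a quotient of $\modc{\mathcal{P}^\twistshriek}$ which receives a map from $\modn{\mathcal{P}^\twistshriek}$, I first observe that the composite $\modn{\mathcal{P}^\twistshriek}\rightarrow\modc{\mathcal{P}^\twistshriek}\rightarrow\unimod\mathcal{P}^\twistshriek$ is precisely the natural inclusion of the genus--zero part, i.e.\ it is an isomorphism onto the image of $\modn{\cdot}$. Dualising and applying $\mathbf{L}$, the surjection $\feyn\modn{\mathcal{P}^\twistshriek}\leftarrow\feyn\unimod\mathcal{P}^\twistshriek$ (wait --- the arrow goes the other way after $\feyn$) means $\mathbf{L}(\unimod\mathcal{P}^\twistshriek\otimes-)\rightarrow\mathbf{L}(\modn{\mathcal{P}^\twistshriek}\otimes-)$ is a surjection of formal differential graded Lie algebras.

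First I would produce $f$. The key point is that $\modc{\mathcal{P}}$ itself, being a genus--graded modular operad whose genus--zero part is the cyclic operad $\mathcal{P}$, admits the obvious quotient $\modc{\mathcal{P}}\twoheadrightarrow\unimod\mathcal{P}$, and the map $\Phi$ factors: since $\modn{\mathcal{P}^\twistshriek}$ has only genus--zero pieces, $\Phi$ has image landing (in each arity) in the part of $\modc{\mathcal{P}}$ that survives to $\unimod\mathcal{P}$ --- more precisely the composite $\feyn\modn{\mathcal{P}^\twistshriek}\xrightarrow{\Phi}\modc{\mathcal{P}}\twoheadrightarrow\unimod\mathcal{P}$ is again a map of the same formal shape, and one checks it equals the canonical cobar-type map for $\unimod\mathcal{P}$. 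Because $\feyn$ is a functor, the inclusion $\modn{\mathcal{P}^\twistshriek}\hookrightarrow\unimod\mathcal{P}^\twistshriek$ (genus--zero part) gives $\feyn\unimod\mathcal{P}^\twistshriek\twoheadrightarrow\feyn\modn{\mathcal{P}^\twistshriek}$, and I set $f$ to be the composite $\feyn\unimod\mathcal{P}^\twistshriek\twoheadrightarrow\feyn\modn{\mathcal{P}^\twistshriek}\xrightarrow{\Phi}\modc{\mathcal{P}}$. The left triangle then commutes by construction, since the top map $\feyn\modn{\mathcal{P}^\twistshriek}\rightarrow\feyn\unimod\mathcal{P}^\twistshriek$ composed with this surjection is the identity (one is a section of the other up to the genus filtration, and they agree on the genus--zero part which is all of $\feyn\modn{\mathcal{P}^\twistshriek}$).

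For $g$, I would use \autoref{prop:unimodisunimod} together with the fact that $\unimod\mathcal{P}=\modc{\mathcal{P}}/\langle\modc{\mathcal{P}}((1,1))\rangle$: the genus-raising composition that produces $\modc{\mathcal{P}}((1,1))$ is exactly the image of a contraction $\xi_{23}$ applied to $\mathcal{P}(2)$, and on the level of $\feyn\modc{\mathcal{P}^\twistshriek}$ this contraction is dual to a piece of the Feynman differential. So I would define $g\co\feyn\modc{\mathcal{P}^\twistshriek}\rightarrow\modn{\mathcal{P}}$ as the ``extend by zero'' map $\Psi$ followed by the quotient $\modn{\mathcal{P}}\twoheadrightarrow\unimod\mathcal{P}$ --- no, rather the point is that $\Psi$ already factors through $\unimod\mathcal{P}$ does not make sense since $\Psi$ lands in $\modn{\mathcal{P}}$; instead the right square of the original diagram tells us $\feyn\modc{\mathcal{P}^\twistshriek}\xrightarrow{\Psi}\modn{\mathcal{P}}$, and I would lift $\Psi$ through $\unimod\mathcal{P}\twoheadrightarrow\modn{\mathcal{P}}$ by checking that the composite $\feyn\unimod\mathcal{P}^\twistshriek\rightarrow\feyn\modc{\mathcal{P}^\twistshriek}\xrightarrow{\Psi}\modn{\mathcal{P}}$ agrees with $\unimod\mathcal{P}\twoheadrightarrow\modn{\mathcal{P}}$ precomposed with $f$, which pins down $g$ on the image; then one shows the extension-by-zero recipe is consistent with the $\unimod$ relation. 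Concretely: $g$ is the extension by zero of $f$ across the genus filtration beyond the $\unimod$-truncation, exactly mirroring how $\Psi$ extends $\Phi$ by zero.

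The main obstacle I anticipate is verifying that $g$ is genuinely a map of modular operads --- i.e.\ that the extension-by-zero is compatible with \emph{all} compositions and contractions, not just the ones visible in low genus. In the diagram for $\Psi$ this worked because $\modn{\mathcal{P}}$ kills every positive-genus composition outright; for $\unimod\mathcal{P}$ we only kill the genus--one unary piece $\modc{\mathcal{P}}((1,1))$ and its operadic ideal, so one must check that the Feynman-transform structure maps of $\feyn\modc{\mathcal{P}^\twistshriek}$ that would land in positive-genus parts of $\unimod\mathcal{P}$ other than that ideal in fact still land in the ideal, which is where the Koszulness of $\mathcal{P}$ and the precise quadratic-dual bookkeeping of $\mathcal{P}^\twistshriek$ enter. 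I expect this to reduce, via \autoref{thm:barannikovfunctor} and \autoref{prop:mcfibre}, to a statement about the curved Lie algebra $\mathbf{L}(g\unimod\mathcal{P}^\twistshriek\otimes-)$ having vanishing obstruction in the relevant bidegree, and the bulk of the work is identifying that bidegree and quoting \autoref{prop:unimodisunimod} to see the obstruction is forced to be zero. The commutativity of the two squares, once $f$ and $g$ are correctly defined, should then be a formal consequence of functoriality of $\feyn$ and the naturality in \autoref{thm:barannikovfunctor}.
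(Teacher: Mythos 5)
The place where your argument genuinely breaks is the construction of $f$. You produce the surjection $\feyn\unimod\mathcal{P}^\twistshriek\twoheadrightarrow\feyn\modn{\mathcal{P}^\twistshriek}$ by applying $\feyn$ to a claimed inclusion $\modn{\mathcal{P}^\twistshriek}\hookrightarrow\unimod\mathcal{P}^\twistshriek$, but no such map of modular operads exists: a map out of the na\"ive closure restricting to the identity in genus zero would have to send \emph{every} contraction of a genus-zero element to zero, whereas $\unimod\mathcal{P}^\twistshriek$ only kills the ideal generated by the $((1,1))$--part, and the contractions $\xi_{ij}\co\mathcal{P}^\twistshriek((0,n))\rightarrow\unimod\mathcal{P}^\twistshriek((1,n-2))$ for $n\geq 4$ survive in general (this is exactly the difference between $\unimod$ and the na\"ive closure; compare $\kass=\unimod\ass$). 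Nor can you substitute the evident linear projection that kills decorations having a positive-genus vertex: that projection is \emph{not} a chain map, because the part of the Feynman differential dual to contractions converts a genus-one vertex into a genus-zero vertex with a simple loop, creating terms supported on graphs all of whose vertices have genus zero, which the projection does not annihilate. So the existence of $f$ as a map of dg modular operads cannot be extracted from functoriality of $\feyn$; the compatibility with the differentials is the entire content of the theorem and must be verified.

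That verification is short, and your proposal never carries it out: for $g$ you defer it to an unspecified vanishing of an obstruction, but \autoref{prop:unimodisunimod} is a statement about algebras over $\unimod\mathcal{P}$ and does not supply it, and Koszulness is not what is needed. What is actually used is this. Define both $f$ and $g$ as the extension of $\Phi$ by zero and check them on the differential of a decoration of a graph with a positive-genus vertex. For $g$, the loop-creating terms of the differential are sent by $\Phi$ into the operadic ideal generated by $\modc{\mathcal{P}}((1,1))$, hence vanish in $\unimod\mathcal{P}$. For $f$, whose target $\modc{\mathcal{P}}$ does not kill that ideal, one uses instead that $\unimod\mathcal{P}^\twistshriek((1,1))=0$, so a positive-genus vertex carrying a nonzero decoration has valence at least two; the looped vertex created by the differential therefore has valence at least four and is annihilated by $\Phi$, which vanishes on decorations with non-trivalent vertices. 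These two observations are precisely what make ``extension by zero'' a map of operads, and neither appears in your argument (the commutativity of the squares, as you say, is then immediate).
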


\begin{proof}
The maps $f$ and $g$ are again the extensions of $\Phi$ by zero. To verify these are maps of operads we need to verify that the differentials coincide. Given an element $x$ in the space of $\modc{\mathcal{P}^\twistshriek}^*$--decorations of a graph $G$ with at least one vertex having non-zero genus then $d(x)$ is a sum of decorations of graphs with either a vertex with non-zero genus or a simple loop at a vertex and so $g(d(x))$ is zero in $\unimod\mathcal{P}$. Similarly given a non-zero element $x$ in the space of ${\unimod\mathcal{P}^\twistshriek}^*$--decorations of a graph $G$ with at least one vertex having non-zero genus then $d(x)$ is a sum of decorations of graphs with either a vertex having non-zero genus or a simple loop at a vertex. In the latter case the vertex cannot also be trivalent and so $f(d(x))=0$.
\end{proof}

\begin{corollary}
Let $V$ be a cyclic $\mathcal{P}$--algebra so that $V$ is naturally a homotopy $\mathcal{P}$--algebra. Then $V$ lifts to a semi-quantum homotopy $\mathcal{P}$--algebra.
\qed
\end{corollary}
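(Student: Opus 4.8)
The statement to be proved is the corollary: a cyclic $\mathcal{P}$--algebra $V$, regarded in the natural way as a (strict, hence homotopy) cyclic homotopy $\mathcal{P}$--algebra, lifts to a semi-quantum homotopy $\mathcal{P}$--algebra. The plan is to deduce this immediately from \autoref{thm:unimoddiagram}. First I would spell out what data a cyclic $\mathcal{P}$--algebra structure on $V$ gives us: by \autoref{def:modalg} (and the surrounding discussion of cyclic $\mathfrak{K}^{\otimes d}$--operads), a cyclic $\mathcal{P}$--algebra is $V\in\dgvect$ with a symmetric non-degenerate bilinear form of degree $d$ together with a map of cyclic $\mathfrak{K}^{\otimes d}$--operads $\mathcal{P}\to\mathcal{E}[V]$. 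Passing to the modular closure, since $\mathcal{E}[V]$ is a modular $\mathfrak{K}^{\otimes d}$--operad this extends canonically by adjunction to a map of modular $\mathfrak{K}^{\otimes d}$--operads $\modc{\mathcal{P}}\to\mathcal{E}[V]$ — indeed $\modc{\mathcal{P}}$ is the left adjoint to the genus-zero functor, so a cyclic map $\mathcal{P}\to\cyc{\mathcal{E}[V]}$ is the same as a modular map $\modc{\mathcal{P}}\to\mathcal{E}[V]$.

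Next I would recall that $V$ being a cyclic homotopy $\mathcal{P}$--algebra means a map $\feyn\modn{\mathcal{P}^\twistshriek}\to\mathcal{E}[V]$, and that the strict structure on $V$ gives such a map precisely by composing the map $\Phi\co\feyn\modn{\mathcal{P}^\twistshriek}\to\modc{\mathcal{P}}$ of \autoref{thm:unimoddiagram} with the modular map $\modc{\mathcal{P}}\to\mathcal{E}[V]$ just constructed. Then the lift is obtained simply by the commutativity of the right-hand square and the existence of the diagonal $f$ in \autoref{thm:unimoddiagram}: the composite
\[
\feyn\unimod\mathcal{P}^\twistshriek \xrightarrow{\ f\ } \modc{\mathcal{P}} \longrightarrow \mathcal{E}[V]
\]
is a map of modular $\mathfrak{K}^{\otimes d}$--operads, hence by definition a semi-quantum homotopy $\mathcal{P}$--algebra structure on $V$, and restricting along $\feyn\modn{\mathcal{P}^\twistshriek}\to\feyn\unimod\mathcal{P}^\twistshriek$ recovers the original strict structure because $f$ restricts to $\Phi$ (the left triangle of \autoref{thm:unimoddiagram} commutes). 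So the proof is essentially one diagram chase.

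There is really no serious obstacle here; the work has all been done in \autoref{thm:unimoddiagram}. The only point that needs a word of care — and the step I expect to be the most (mildly) delicate — is the identification of "$V$ is naturally a homotopy $\mathcal{P}$--algebra'' with "the strict structure factors through $\Phi$'': one should check that the homotopy $\mathcal{P}$--algebra structure induced on a strict cyclic $\mathcal{P}$--algebra (via $\feyn\modn{\mathcal{P}^\twistshriek}\to\modc{\mathcal{P}}$, which on genus zero is the usual map $\dual(\mathcal{P}^!)\to\mathcal{P}$ exhibiting a strict algebra as a homotopy algebra) is indeed the one under consideration, so that the lift we produce sits over the right thing. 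This is immediate from the commutativity of the left square in \autoref{thm:unimoddiagram} together with the genus-zero comparison recalled at the end of \autoref{chap:operads}. I would state the proof in two or three sentences: take the modular map $\modc{\mathcal{P}}\to\mathcal{E}[V]$ adjoint to the cyclic structure, precompose with $f$ from \autoref{thm:unimoddiagram} to get the semi-quantum lift, and invoke commutativity of the left triangle to see it restricts to the given homotopy $\mathcal{P}$--algebra.
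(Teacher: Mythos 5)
Your proposal is correct and is exactly the argument the paper intends: the corollary is stated with \qed as an immediate consequence of \autoref{thm:unimoddiagram}, obtained by composing the adjoint modular map $\modc{\mathcal{P}}\rightarrow\mathcal{E}[V]$ with the map $f$ and using commutativity to see the lift restricts to the natural cyclic homotopy $\mathcal{P}$--algebra structure. Your extra care about identifying the induced homotopy structure via the genus-zero comparison is a reasonable elaboration but does not change the route.
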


\begin{corollary}
Let $V$ be a cyclic unimodular $\mathcal{P}$--algebra so that $V$ is naturally a homotopy $\mathcal{P}$--algebra. Then $V$ lifts to a quantum homotopy $\mathcal{P}$--algebra.
\qed
\end{corollary}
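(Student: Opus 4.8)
The statement to prove is that a cyclic unimodular $\mathcal{P}$--algebra $V$, viewed naturally as a (cyclic) homotopy $\mathcal{P}$--algebra, lifts to a quantum homotopy $\mathcal{P}$--algebra. The plan is to read this off immediately from the commutative diagram of \autoref{thm:unimoddiagram} together with \autoref{prop:unimodisunimod}. First I would recall that since $V$ is a cyclic $\mathcal{P}$--algebra, it is given by a map of modular $\mathfrak{K}^{\otimes d}$--operads $\modc{\mathcal{P}}\rightarrow\mathcal{E}[V]$, and composing with $\feyn\modn{\mathcal{P}^\twistshriek}\xrightarrow{\Phi}\modc{\mathcal{P}}$ exhibits the natural cyclic homotopy $\mathcal{P}$--algebra structure on $V$. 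The claim is that this factors through $\feyn\modc{\mathcal{P}^\twistshriek}$.

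The key step is to invoke \autoref{prop:unimodisunimod}: because $V$ is unimodular, the map $\modc{\mathcal{P}}\rightarrow\mathcal{E}[V]$ vanishes on $\modc{\mathcal{P}}((1,1))$ and hence descends to a map $\unimod\mathcal{P}=\modc{\mathcal{P}}/\langle\modc{\mathcal{P}}((1,1))\rangle\rightarrow\mathcal{E}[V]$ of modular $\mathfrak{K}^{\otimes d}$--operads. Then I would paste this together with the map $g\co\feyn\modc{\mathcal{P}^\twistshriek}\rightarrow\unimod\mathcal{P}$ produced in \autoref{thm:unimoddiagram}, obtaining a composite
\[
\feyn\modc{\mathcal{P}^\twistshriek}\xrightarrow{\ g\ }\unimod\mathcal{P}\longrightarrow\mathcal{E}[V].
\]
This composite is a quantum homotopy $\mathcal{P}$--algebra structure on $V$ by definition. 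Finally I would check that it restricts to the original cyclic homotopy $\mathcal{P}$--algebra structure along $\feyn\modn{\mathcal{P}^\twistshriek}\rightarrow\feyn\modc{\mathcal{P}^\twistshriek}$: this is exactly the commutativity of the left square and the factorisation $\modc{\mathcal{P}}\rightarrow\unimod\mathcal{P}\rightarrow\mathcal{E}[V]$ of the cyclic algebra structure, so the two routes around the combined diagram agree. Hence $V$ lifts, as required.

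I do not expect a serious obstacle here, since \autoref{thm:unimoddiagram} has already done the work of producing the operad map $g$; the only thing to be careful about is bookkeeping with the cocycle twists ($\mathfrak{K}^{\otimes d}$ versus $\mathfrak{K}$) so that all the maps in sight are genuinely maps of modular $\mathfrak{K}^{\otimes d}$--operads and \autoref{prop:unimodisunimod} applies verbatim, but this is routine given the setup in the preceding section. The analogous (weaker) statement for semi-quantum lifts of arbitrary cyclic $\mathcal{P}$--algebras is the preceding corollary and follows the same way using the map $f$ instead of $g$ and no unimodularity hypothesis.
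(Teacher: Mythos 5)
Your proposal is correct and matches the paper's (implicit) argument: the corollary is stated with a \qed precisely because it follows by factoring the structure map through $\unimod\mathcal{P}$ via \autoref{prop:unimodisunimod} and composing with the map $g$ from \autoref{thm:unimoddiagram}, with compatibility given by commutativity of that diagram. Nothing further is needed.
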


\begin{proposition}\label{prop:unimodnecessary}
Let $\mathcal{O}$ be a modular $\mathfrak{K}^d$--operad with zero differential and a map $h\co\modc{\mathcal{P}}\rightarrow\mathcal{O}$. There is a map $h'$ completing the diagram
\[
\xymatrix{
\feyn\modn{\mathcal{P}^\twistshriek}\ar[r]\ar[d] &  \feyn\modc{\mathcal{P}^\twistshriek}\ar@{.>}[d]^{h'}\\
\modc{\mathcal{P}}\ar[r]^h&\mathcal{O}
}
\]
if and only if $h$ factors through $\modc{\mathcal{P}}\rightarrow\unimod\mathcal{P}$.
\end{proposition}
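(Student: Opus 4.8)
The plan is to analyse the two directions separately using the explicit description of the Feynman transform and the extension-by-zero maps from \autoref{thm:unimoddiagram}.

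First I would treat the ``if'' direction, which is essentially a consequence of the machinery already in place. Suppose $h$ factors as $\modc{\mathcal{P}}\rightarrow\unimod\mathcal{P}\xrightarrow{\bar{h}}\mathcal{O}$. By \autoref{thm:unimoddiagram} there is a map $f\co\feyn\unimod\mathcal{P}^\twistshriek\rightarrow\modc{\mathcal{P}}$ sitting over the projection $\feyn\modn{\mathcal{P}^\twistshriek}\rightarrow\feyn\unimod\mathcal{P}^\twistshriek$, and composing with the canonical map $\feyn\modc{\mathcal{P}^\twistshriek}\twoheadleftarrow\feyn\unimod\mathcal{P}^\twistshriek$ one wants a map $h'\co\feyn\modc{\mathcal{P}^\twistshriek}\rightarrow\mathcal{O}$. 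Here I would instead construct $h'$ directly as the extension of $h\circ\Phi$ by zero, exactly as in the proof of \autoref{thm:unimoddiagram}: given a graph $G$ with at least one vertex of positive genus, set $h'$ to be zero on the corresponding summand, and equal to $h\circ\Phi$ on the genus-zero part. The only thing to check is that this is compatible with $d_{\feyn}$, and this is the same computation as in \autoref{thm:unimoddiagram}: the Feynman differential applied to a decoration of a positive-genus graph is a sum of decorations of graphs each of which still has either a positive-genus vertex or a loop, and $h$ kills these because it factors through $\unimod\mathcal{P}=\modc{\mathcal{P}}/\langle\modc{\mathcal{P}}((1,1))\rangle$; on the genus-zero part the differential and $\Phi$ already match up by the commutativity of the square relating $\feyn\modn{\mathcal{P}^\twistshriek}$, $\modc{\mathcal{P}}$ and $\modn{\mathcal{P}}$. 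Commutativity of the resulting square is then immediate.

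Next I would do the ``only if'' direction, which is the substantive point. Assume $h'$ exists. The key is to evaluate the commuting square on the summand of $\feyn\modc{\mathcal{P}^\twistshriek}$ indexed by the graph $G$ with one vertex, genus zero, one internal loop edge and $n$ legs --- the graph whose contraction produces $\modc{\mathcal{P}}((1,n-2))$. Tracing the identity element through $\feyn\modn{\mathcal{P}^\twistshriek}$: the internal Feynman differential $\delta$ produces, from a binary generator of $\mathcal{P}^\twistshriek$, a term supported on this one-loop graph whose image in $\modc{\mathcal{P}}$ under $\Phi$ is precisely the contraction $\xi_{23}(\phi)\in\modc{\mathcal{P}}((1,1))$ for $\phi\in\mathcal{P}(2)$ (up to the suspension bookkeeping), and in $\feyn\modc{\mathcal{P}^\twistshriek}$ this one-loop decoration is a $\delta$-cocycle (the loop cannot be further expanded). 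Since $\mathcal{O}$ has zero differential, commutativity of the square forces $h(\xi_{23}(\phi))=h'(\text{that cocycle})$ but the genus-one part maps through $\delta$-exact elements of $\feyn\modn{\mathcal{P}^\twistshriek}$, so $h(\xi_{23}(\phi))=0$ for all $\phi\in\mathcal{P}(2)$; equivalently $h$ vanishes on the ideal generated by $\modc{\mathcal{P}}((1,1))$ and hence factors through $\unimod\mathcal{P}$. I would need to be careful here that $\modc{\mathcal{P}}((1,1))$ is generated as claimed by the single contractions $\xi_{23}(\phi)$ of binary elements --- but this is exactly the statement used in the proof of \autoref{prop:unimodisunimod}, so I can cite that.

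The main obstacle I anticipate is the bookkeeping in the ``only if'' direction: pinning down precisely which cocycle in $\feyn\modc{\mathcal{P}^\twistshriek}((1,n-2))$ the image of the identity under $\Psi\co\feyn\modn{\mathcal{P}^\twistshriek}\rightarrow\feyn\modc{\mathcal{P}^\twistshriek}$ lands on, and verifying it is genuinely not $\delta$-exact in $\feyn\modc{\mathcal{P}^\twistshriek}$ while its preimage description in $\feyn\modn{\mathcal{P}^\twistshriek}$ \emph{is} exact there (this asymmetry is the whole content). The cleanest route is probably to phrase everything in terms of the associated Lie algebras $\mathbf{L}(-\otimes\mathcal{O})$ and \autoref{thm:barannikovfunctor}: a map $\feyn\modc{\mathcal{P}^\twistshriek}\rightarrow\mathcal{O}$ is a Maurer--Cartan element of $\mathbf{L}(g\modc{\mathcal{P}^\twistshriek}\otimes\mathcal{O})$ lifting the given genus-zero one, and since $\mathcal{O}$ has zero differential the Maurer--Cartan equation at the bottom of the genus filtration (genus one, one loop) reads as the vanishing of the obstruction class $\Omega+\Omega^{\xi}$ in the sense of \autoref{thm:liftobs}, which is exactly the condition that the trace of the relevant adjoint operator vanish, i.e. unimodularity. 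Running this through \autoref{thm:quantumlifting} makes the genus-filtration argument a direct application of the obstruction theory of \autoref{chap:mc} rather than a hand computation with graphs.
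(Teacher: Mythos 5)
Your ``if'' direction is correct and is essentially the paper's own argument: extending $h\circ\Phi$ by zero is literally the composite $\bar{h}\circ g$ with $g$ the map of \autoref{thm:unimoddiagram}, so the verification you redo is the one already done there.

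The ``only if'' direction, however, has the homological mechanism inverted, and this is a genuine gap. You state that the key point is ``verifying it is genuinely not $\delta$-exact in $\feyn\modc{\mathcal{P}^\twistshriek}$ while its preimage description in $\feyn\modn{\mathcal{P}^\twistshriek}$ \emph{is} exact there'', and your deduction of $h(\xi_{23}(\phi))=0$ leans on exactness in $\feyn\modn{\mathcal{P}^\twistshriek}$. Both claims are backwards, and the second would in fact kill the argument: $\Phi\co\feyn\modn{\mathcal{P}^\twistshriek}\rightarrow\modc{\mathcal{P}}$ is a chain map into an operad with zero differential, so if the one-loop decorations were exact in $\feyn\modn{\mathcal{P}^\twistshriek}$ then $\Phi$ would vanish on them and the square would give no constraint on $h$ whatsoever. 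The actual situation is the reverse. Let $G$ be the graph with one genus-zero trivalent vertex, a simple loop and one leg. The Feynman differential adds an edge by expanding a vertex, so the only summand that can map onto the $G$-summand is the genus-one corolla summand; in $\feyn\modn{\mathcal{P}^\twistshriek}$ this is $\modn{\mathcal{P}^\twistshriek}^*((1,1))=0$, so the $G$-decorations are \emph{not} exact there --- which is exactly why $\Phi$ can send the $G$-summand onto $\xi_{23}(\mathcal{P}((3)))=\modc{\mathcal{P}}((1,1))$. In $\feyn\modc{\mathcal{P}^\twistshriek}$, by contrast, the corolla decorations $\modc{\mathcal{P}^\twistshriek}^*((1,1))$ are nonzero and their image under the dual of the contraction $\modc{\mathcal{P}^\twistshriek}((0,3))\rightarrow\modc{\mathcal{P}^\twistshriek}((1,1))$ covers the part of the $G$-summand that $\Phi$ sees (the $(2\,3)$-invariant part; $\Phi$ factors through the coinvariants on this summand). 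Since $\mathcal{O}$ has zero differential, any $h'$ must annihilate the image of the differential of $\feyn\modc{\mathcal{P}^\twistshriek}$, hence annihilates these $G$-decorations, and commutativity forces $h$ to vanish on $\modc{\mathcal{P}}((1,1))$, i.e.\ to factor through $\unimod\mathcal{P}$; that is the paper's proof. Your fallback via \autoref{thm:barannikovfunctor} and the obstruction theory of \autoref{chap:mc} is only a sketch and does not close the gap as stated: it implicitly specialises to $\mathcal{O}=\mathcal{E}[V]$ although the proposition concerns an arbitrary $\mathcal{O}$ with zero differential, and vanishing of the genus-one obstruction \emph{as a cohomology class} is a priori weaker than the on-the-nose vanishing $h(\xi_{23}(\phi))=0$ needed for the exact factorisation through $\unimod\mathcal{P}$.
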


\begin{proof}
If $h$ factors as described then $h'$ can arise by composition with the map $g$ in \autoref{thm:unimoddiagram}. Assume $h'$ exists, then we must show $h$ vanishes on $\mathcal{P}((1,1))$. Let $G$ be the graph with $1$  genus $0$ vertex with a simple loop and $1$ leg. Then the image of $\mathcal{P}((1,1))$ under $h$ is the same as the image under $h'$ of $\modc{\mathcal{P}^\twistshriek}^*$--decorations on $G$. But the differential of $\feyn\modc{\mathcal{P}^\twistshriek}$ is surjective onto this space, so $h'$ must map it to zero since $\mathcal{O}$ has zero differential.
\end{proof}

\begin{corollary}
Let $V$ be a cyclic $\mathcal{P}$--algebra with zero differential. Then $V$ lifts to a quantum homotopy $\mathcal{P}$--algebra if and only if $V$ is unimodular.
\end{corollary}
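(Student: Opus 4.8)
The statement to prove is that a cyclic $\mathcal{P}$--algebra $V$ with zero differential lifts to a quantum homotopy $\mathcal{P}$--algebra if and only if $V$ is unimodular. The plan is to deduce this as a direct combination of the results already established in this section, specialising the abstract lifting criterion of \autoref{prop:unimodnecessary} to the endomorphism operad $\mathcal{O} = \mathcal{E}[V]$.

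First I would observe that since $V$ carries a non-degenerate symmetric bilinear form (it is a \emph{cyclic} $\mathcal{P}$--algebra) and has zero differential, the modular operad $\mathcal{E}[V]$ is a modular $\mathfrak{K}^{\otimes d}$--operad with zero differential, so it satisfies the hypothesis of \autoref{prop:unimodnecessary}. The cyclic $\mathcal{P}$--algebra structure on $V$ is precisely a map of modular operads $h\co\modc{\mathcal{P}}\rightarrow\mathcal{E}[V]$ (using that $\modc{\mathcal{P}}$ governs cyclic $\mathcal{P}$--algebras, as in the discussion preceding \autoref{def:modalg}), and being a homotopy $\mathcal{P}$--algebra means precomposing with $\feyn\modn{\mathcal{P}^\twistshriek}\rightarrow\modc{\mathcal{P}}$ to get an algebra over $\feyn\modn{\mathcal{P}^\twistshriek}$; a lift to a quantum homotopy $\mathcal{P}$--algebra is by definition exactly a map $h'\co\feyn\modc{\mathcal{P}^\twistshriek}\rightarrow\mathcal{E}[V]$ fitting into the square of \autoref{prop:unimodnecessary}.

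Then \autoref{prop:unimodnecessary} tells me that such an $h'$ exists if and only if $h$ factors through the quotient $\modc{\mathcal{P}}\rightarrow\unimod\mathcal{P}$. By \autoref{prop:unimodisunimod}, the cyclic $\mathcal{P}$--algebra $V$ (equivalently, the map $h$) factors through $\unimod\mathcal{P}$ precisely when $V$ is unimodular. Concatenating these two equivalences gives the claim: $V$ lifts to a quantum homotopy $\mathcal{P}$--algebra $\iff$ $h$ factors through $\unimod\mathcal{P}$ $\iff$ $V$ is unimodular.

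There is essentially no hard step here — the work has all been done in \autoref{prop:unimodisunimod} and \autoref{prop:unimodnecessary}. The only point requiring a sentence of care is checking the hypotheses of \autoref{prop:unimodnecessary} genuinely apply: namely that $\mathcal{E}[V]$ has zero differential, which is where the assumption that $V$ has zero differential is used (this is exactly what fails for a general homotopy $\mathcal{P}$--algebra, which is why the statement is restricted to strict algebras with zero differential). I would also note in passing that the ``if'' direction alone was already recorded in the preceding corollary, so the genuinely new content is the ``only if'' direction, which is the content of \autoref{prop:unimodnecessary} applied to $\mathcal{E}[V]$.
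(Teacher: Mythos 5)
Your argument is exactly the paper's: apply \autoref{prop:unimodnecessary} with $\mathcal{O}=\mathcal{E}[V]$ (noting the zero differential makes the hypothesis hold) and translate the factorisation condition via \autoref{prop:unimodisunimod}. The proposal is correct and matches the paper's proof, only with the intermediate bookkeeping spelled out more explicitly.
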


\begin{proof}
Apply \autoref{prop:unimodnecessary} with $\mathcal{O}=\mathcal{E}[V]$.
\end{proof}

\begin{remark}
Note that in the case of $V$ being unimodular it can always be lifted trivially (so that all higher operations are zero).
\end{remark}

\Needspace*{3\baselineskip}
\begin{examples}\label{ex:strictexamples}
The following examples all take place in the category of supergraded vector spaces for reasons of clarity.
\begin{enumerate}
\item \label{ex:evenlielifts}Recall that $\klie \cong \modc{\lie}$ and $\modc{\com^1}\cong\modn{\com^1}$. In this case \autoref{thm:unimoddiagram} gives the following diagram:
\[
\xymatrix{
\feyn\modn{\lie}\ar[r]\ar[d] & \feyn\unimod\lie\ar@{=}[r]\ar[ld] & \feyn\modc{\lie}\ar[d]\ar[ld]\\
\modc{\com^1}\ar@{=}[r] & \unimod\com^1\ar@{=}[r] & \modn{\com^1}
}
\]
Therefore all cyclic $\com^1$--algebras, that is Frobenius algebras with an odd scalar product, lift to quantum $C_\infty^1$--algebras. By applying $\feyn$ to this diagram we also see that any cyclic $\lie$--algebra, that is a cyclic differential graded Lie algebra with an even scalar product, lifts to a quantum $L_\infty$--algebra, but these are just the same as cyclic $L_\infty$--algebras.
\item Let $A$ be the two dimensional unital associative superalgebra generated by an element $a$ of odd degree with $a^2=1$ and with an odd scalar product given by $\langle a, 1 \rangle = 1$. As noted in \autoref{ex:2dassunimod} $A$ is unimodular and therefore $A$ is unimodular and is hence a quantum $A_\infty^1$--algebra.
\item Any cyclic differential graded associative algebra is a semi-quantum $A_\infty$--algebra.
\item Let $B$ be any cyclic differential graded associative algebra with even scalar product. Then $B\otimes A$ is an algebra over $\modc{\ass}\otimes\feyn\modc{\ass}$ and therefore is a trivial quantum $A_\infty^1$--algebra via the map $\feyn\modc{\ass}\rightarrow\modc{\ass}\otimes\feyn\modc{\ass}$. In particular $\widetilde{\Mat_n(V)}=\Mat_n(V)\otimes A$ is a quantum $A_\infty$--algebra for $V$ any cyclic $\ass$--algebra.
\end{enumerate}
\end{examples}

\section{Quantum lifts of cyclic $L_\infty$--algebras}
In this section the obstruction theory associated to lifting cyclic $L_\infty^d$--algebras will be studied more concretely and related to Chevalley--Eilenberg cohomology.

\begin{convention}
As observed in \autoref{rem:evenlinftytrivial} only the case when $d$ is odd will be of interest, so throughout this section $d$ will be assumed to be odd.
\end{convention}

Let $V$ be a (not necessarily cyclic) $L_\infty$--algebra, in other words and algebra over the operad $\dual\com$. By the operad version of \autoref{thm:barannikov} this is equivalent to a Maurer--Cartan element in the Lie algebra
\[
\Sigma^{-1}\prod_n(\mathfrak{s}^2\com^1\otimes\End[V])(n)_{S_n}
\]
and this space is isomorphic to
\[
\prod_{n=2}^{\infty}\IHom(\Sigma^{-1}V^*,(\Sigma^{-1}V^*)^{\otimes n}_{S_n}).
\]
It is now straightforward to see that this can be identified with the space of derivations of quadratic order and higher on the differential graded commutative algebra $\widehat{S}\Sigma^{-1}V^*$ which we denote by $\Der(\widehat{S}\Sigma^{-1}V^*)_+$. Furthermore this is an isomorphism of differential graded Lie algebras. The structure of an $L_\infty$--algebra on $V$ is then equivalent to a degree one derivation $m\in\Der(\widehat{S}\Sigma^{-1}V^*)_+$ with $m^2=0$. Denote the order $n$ part of $m$ by $m_n$ so that $m=m_2 + m_3 + \dots$ on the subspace $\Sigma^{-1}V^*$.

\begin{definition}\label{def:linftycecomplex}
Let $(V,m)$ be a (not necessarily cyclic) $L_\infty$--algebra. Then the Chevalley--Eilenberg cohomology complex with trivial coefficients $\CE^\bullet(V)$ is the differential graded vector space $\widehat{S} \Sigma^{-1} V^*$ with differential $m + d_V$ where $d_V$ is the internal differential on $\widehat{S}\Sigma^{-1}V^*$ arising from the internal differential on $V$.

Denote $\HCE^\bullet(V)$ the cohomology of this complex. Denote by $\CE^{\bullet}_{\geq i}(V)=\widehat{S}_{\geq i}\Sigma^{-1}V^*$ the subspace generated by tensors of order at least $i$. Denote by $\CE^{\bullet}(V)_+=\CE^{\bullet}_{\geq 3}(V)$.
\end{definition}

\begin{remark}
\autoref{def:linftycecomplex} generalises the Chevalley--Eilenberg complex of a differential graded Lie algebra.
\end{remark}

\begin{definition}\label{def:cycliclie}
Let $V\in\dgvect$ have a degree $d$ symmetric non-degenerate bilinear form. Denote by $\langle -, - \rangle^{-1}$ the degree $-d-2$ symmetric non-degenerate bilinear form on $\Sigma^{-1}V^*$. Set $\frak{g}(V)=\Sigma^{d+2} \widehat{S} \Sigma^{-1}V^*$. Define a differential graded Lie algebra structure on this space by the formula
\[\{ a_1\dots a_n, b_1\dots b_m \} = \sum_{i=1}^{n}\sum_{j=1}^{m}(-1)^{\epsilon}\langle a_i, b_j \rangle^{-1}  a_{i+1}\dots a_1\dots a_{i-1} b_{j+1}\dots b_1\dots b_{j-1}\]
where $(-1)^\epsilon$ is the sign arising from the Koszul sign rule. The differential, denoted $d_V$, is just that arising from the internal differential on $V$.

Denote by $\mathfrak{g}(V)_+=\Sigma^{d+2}\widehat{S}_{\geq 3}\Sigma^{-1}V^*$ the subalgebra generated by elements of order at least $3$.
\end{definition}

The Maurer--Cartan equation for $\mathfrak{g}(V)_+$ is sometimes called \emph{the classical master equation}.

\begin{remark}
Note that if we regard $V$ as having the zero $L_\infty$--structure then $\frak{g}(V)=\Sigma^{d+2}\CE^\bullet(V)$ as differential graded vector spaces.
\end{remark}

The well known fact that $\CE^{\bullet}(V)$ is the complex governing the deformation theory of cyclic $L_\infty^d$--algebras is the content of the following proposition.

\begin{proposition}
Let $V$ be a cyclic $L_\infty^d$--algebra.
\begin{itemize}
\item There is an isomorphism of differential graded Lie algebras $\mathbf{L}((\lie^d)^\twistshriek\otimes\mathcal{E}[V])\cong\mathfrak{g}(V)_+$.
\item If $\xi^0\in\MC(\mathfrak{g}(V))$ is the Maurer--Cartan element representing the $L_\infty^d$--algebra structure on $V$ then as differential graded vector spaces $\mathfrak{g}(V)_+^{\xi^0}\cong\Sigma^{d+2}\CE^{\bullet}(V)_+$.
\end{itemize}
\end{proposition}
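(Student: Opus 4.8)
The plan is to prove the two items in turn, the first being essentially an unravelling of definitions and the second following by a twisting argument. For the first item, I would start from \autoref{thm:barannikov} (or rather its operadic analogue, which is invoked just above \autoref{def:linftycecomplex}): the differential graded Lie algebra $\mathbf{L}((\lie^d)^\twistshriek\otimes\mathcal{E}[V])$ is, by definition, $\Sigma^{-1}\prod_{g,n}((\lie^d)^\twistshriek\otimes\mathcal{E}[V])((g,n))_{S_n}$, but since we are working in the cyclic (genus zero) setting only the genus zero part contributes, so this is $\Sigma^{-1}\prod_n((\lie^d)^\twistshriek\otimes\mathcal{E}[V])((n))_{S_n}$. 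Now I would apply \autoref{lem:liesimplify} with $\mathcal{P}=\com$, using $\com^!\cong\lie$, to rewrite $(\lie^d)^\twistshriek\otimes\mathcal{E}[V]$. Wait — one must be careful about which operad plays the role of $\mathcal{P}$: we want $(\mathcal{P}^d)^\twistshriek$ to be $(\lie^d)^\twistshriek$, so we take $\mathcal{P}=\lie$ and $\mathcal{P}^!=\com$, giving $(\lie^d)^\twistshriek\otimes\mathcal{E}[V]\cong\Sigma^{d+3}\com\otimes\mathcal{E}[\Sigma^{-1}V^*]$. Since $\com((n))=k$ concentrated in degree zero (with trivial $S_n$-action), the coinvariants $(\com\otimes\mathcal{E}[\Sigma^{-1}V^*])((n))_{S_n}$ are just $((\Sigma^{-1}V^*)^{\otimes n})_{S_n}$, and taking the product over $n\geq 3$ (the stability/quadraticity range, matching the subscript $+$) and applying the overall suspension $\Sigma^{-1}\Sigma^{d+3}=\Sigma^{d+2}$ yields exactly $\Sigma^{d+2}\widehat{S}_{\geq 3}\Sigma^{-1}V^*=\mathfrak{g}(V)_+$ as graded vector spaces. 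The remaining work is to check that the Lie bracket transported from $\mathbf{L}$ (which, per the construction in the proof of the proposition defining $\mathbf{L}(\mathcal{O})$, is the sum over ways of composing two elements, i.e.\ gluing two corollas along one leg and contracting against the bilinear form) agrees with the explicit bracket $\{-,-\}$ in \autoref{def:cycliclie}; this is a bookkeeping computation with the pairing $\langle-,-\rangle^{-1}$ on $\Sigma^{-1}V^*$ and the cyclic symmetry, and the differentials match because both are induced purely by the internal differential $d_V$ on $V$ (there are no genus-raising contractions in the cyclic setting).

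For the second item, I would invoke the general twisting formalism: by \autoref{prop:mcfibre}(1), or more directly by the discussion of $\mathfrak{g}^\xi$, twisting the differential graded Lie algebra $\mathfrak{g}(V)_+$ by the Maurer--Cartan element $\xi^0$ replaces the differential $d_V$ by $d_V+\ad_{\xi^0}=d_V+\{\xi^0,-\}$, while leaving the underlying graded vector space unchanged. But $\xi^0$ is precisely the element encoding the $L_\infty^d$-operations $m_n$ on $V$, and a direct check — expanding $\{\xi^0,-\}$ using the formula in \autoref{def:cycliclie} — shows that $\{\xi^0,-\}$ acting on $\Sigma^{d+2}\widehat{S}_{\geq 3}\Sigma^{-1}V^*$ is exactly the Chevalley--Eilenberg differential $m$ of \autoref{def:linftycecomplex} restricted to the subspace $\CE^\bullet(V)_+=\CE^\bullet_{\geq 3}(V)$. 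Hence $\mathfrak{g}(V)_+^{\xi^0}\cong\Sigma^{d+2}\CE^\bullet(V)_+$ as differential graded vector spaces. This uses the remark already in the excerpt that with the zero $L_\infty$-structure $\mathfrak{g}(V)=\Sigma^{d+2}\CE^\bullet(V)$, so that turning on $\xi^0$ is exactly what upgrades the zero differential to the full CE differential.

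I expect the only genuine obstacle to be the sign bookkeeping in the identification of the brackets and the CE differential: matching the Koszul signs $(-1)^\epsilon$ arising in the operadic composition (which involve the degree shifts coming from $\mathfrak{K}$, $\mathfrak{s}$, and the suspensions in \autoref{lem:liesimplify}) with the signs in \autoref{def:cycliclie} and \autoref{def:linftycecomplex}. The conceptual content is entirely contained in \autoref{thm:barannikov}, \autoref{lem:liesimplify}, and the twisting construction; what needs care is verifying that all the suspension/desuspension functors $\mathfrak{s}$, $\tilde{\mathfrak{s}}$, $\Sigma$ combine to give precisely the shift $\Sigma^{d+2}$ and the sign conventions of Kontsevich-style noncommutative (here, commutative) symplectic geometry, as in \autoref{rem:cyclie}. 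I would relegate these sign verifications to a ``straightforward check left to the reader'' in the style already adopted elsewhere in the paper, and otherwise present the proof as the chain of identifications sketched above.
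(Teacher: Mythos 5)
Your proposal is correct and follows essentially the same route as the paper: the first item via \autoref{lem:liesimplify} (with $\mathcal{P}=\lie$, so that $\com((n))=k$ drops out and the $S_n$--coinvariants give $\widehat{S}_{\geq 3}\Sigma^{-1}V^*$ up to the shift $\Sigma^{d+2}$), followed by a check that the brackets agree, and the second item by observing that twisting by $\xi^0$ adds $\{\xi^0,-\}$ to the differential and that this is the Chevalley--Eilenberg derivation. The only difference is cosmetic: where you defer the identification of $\{\xi^0,-\}$ with $m$ to a ``direct check,'' the paper pins it down on generators by showing $[\xi^0_{n+1},a]=m_n(a)$ for $a\in\Sigma^{-1}V^*$ via the explicit composition induced by $V\mapsto\Sigma^d V^*$.
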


\begin{proof}
By \autoref{lem:liesimplify} $\mathbf{L}((\lie^d)^\twistshriek\otimes\mathcal{E}[V])\cong\mathbf{L}\Sigma^{d+3}\mathcal{E}[\Sigma^{-1}V^*]$. Clearly as differential graded vector spaces $\mathbf{L}\Sigma^{d+3}\mathcal{E}[\Sigma^{-1}V^*]\cong\mathfrak{g}(V)_+$. The first part now follows by a straightforward check that the Lie brackets coincide.

Denote by $m\in\Der(\widehat{S}\Sigma^{-1}V^*)_+$ the derivation corresponding to the underlying $L_\infty$--structure on $V$. For the second part let $\xi_n^0$ be the order $n$ component of $\xi$, then it is sufficient to show that for any $a\in\Sigma^{-1}V^*$ then $[\xi^0_{n+1},a] = m_n(a)$. But this follows from the fact that the composition
\[\IHom(\Sigma^{-1}V^*,(\Sigma^{-1}V^*)^{\otimes n}_{S_n})\cong\End[\Sigma V](n)_{S_n}\rightarrow\Sigma^{d+2}\mathcal{E}[\Sigma^{-1}V^*](n)_{S_n}\twoheadrightarrow\Sigma^{d+2}\widehat{S}_{n+1}\Sigma^{-1}V^*
\]
induced by $V\mapsto\Sigma^d V^*$ takes $m_n$ to $\xi^0_{n+1}$.
\end{proof}

\begin{corollary}
If $V$ is a cyclic $L_\infty^d$--algebra then $\Sigma^{d+2}\CE^{\bullet}(V)_+$ has the structure of a differential graded Lie algebra. Maurer--Cartan elements correspond to cyclic $L_\infty^d$--algebra structures on $V$.
\qed
\end{corollary}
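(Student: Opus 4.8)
The statement to prove is the Corollary: if $V$ is a cyclic $L_\infty^d$--algebra, then $\Sigma^{d+2}\CE^{\bullet}(V)_+$ carries the structure of a differential graded Lie algebra, and its Maurer--Cartan elements correspond exactly to cyclic $L_\infty^d$--algebra structures on $V$. My plan is to deduce this essentially formally from the preceding Proposition, which has already done the real work.

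First I would unwind the two bullet points of the Proposition. The first bullet gives an isomorphism of differential graded Lie algebras $\mathbf{L}((\lie^d)^\twistshriek\otimes\mathcal{E}[V])\cong\mathfrak{g}(V)_+$, and the second gives, for the Maurer--Cartan element $\xi^0$ representing the given $L_\infty^d$--structure, an isomorphism of differential graded vector spaces $\mathfrak{g}(V)_+^{\xi^0}\cong\Sigma^{d+2}\CE^{\bullet}(V)_+$. The key observation is that twisting a differential graded Lie algebra by a degree one element $\xi^0$ (here a Maurer--Cartan element, so the twist has zero curvature by the Proposition preceding \autoref{thm:fibration} in \autoref{chap:mc}) produces again a differential graded Lie algebra on the same underlying graded Lie algebra; hence $\mathfrak{g}(V)_+^{\xi^0}$ is a differential graded Lie algebra. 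Transporting this structure across the vector space isomorphism of the second bullet endows $\Sigma^{d+2}\CE^{\bullet}(V)_+$ with a differential graded Lie algebra structure. This gives the first assertion.

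For the second assertion, I would invoke the bijection $\MC(\mathfrak{g}^{\xi^0})\xrightarrow{\ \eta\mapsto\eta+\xi^0\ }\MC(\mathfrak{g})$ (the Proposition on twistings in \autoref{chap:mc}) applied to $\mathfrak{g}=\mathbf{L}((\lie^d)^\twistshriek\otimes\mathcal{E}[V])\cong\mathfrak{g}(V)_+$. Combined with \autoref{thm:barannikov} (or its functorial form \autoref{thm:barannikovfunctor}), Maurer--Cartan elements of $\mathbf{L}((\lie^d)^\twistshriek\otimes\mathcal{E}[V])$ correspond to maps $\feyn\modn{(\lie^d)^\twistshriek}\to\mathcal{E}[V]$, i.e.\ since $\mathcal{P}=\lie^d$ and $\feyn\modn{\mathcal{P}^\twistshriek}$ governs cyclic homotopy $\mathcal{P}$--algebras, to cyclic $L_\infty^d$--algebra structures on $V$. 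Under the twisting bijection and the identification of $\mathfrak{g}(V)_+^{\xi^0}$ with $\Sigma^{d+2}\CE^{\bullet}(V)_+$, a Maurer--Cartan element $\eta$ of $\Sigma^{d+2}\CE^{\bullet}(V)_+$ corresponds to the Maurer--Cartan element $\eta+\xi^0$ of $\mathbf{L}((\lie^d)^\twistshriek\otimes\mathcal{E}[V])$, hence to a cyclic $L_\infty^d$--algebra structure on $V$; and this correspondence is a bijection.

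The only genuinely delicate point is bookkeeping: one must check that the degree one element $\xi^0$ really is a Maurer--Cartan element of $\mathfrak{g}(V)_+$ (so that the twist is honestly differential graded rather than merely curved) — but this is exactly what "the Maurer--Cartan element representing the $L_\infty^d$--algebra structure" means, together with the second bullet of the Proposition showing the twisted differential on $\mathfrak{g}(V)_+^{\xi^0}$ matches $d_V+m$ on $\CE^{\bullet}(V)_+$, i.e.\ the Chevalley--Eilenberg differential, which squares to zero precisely because $m^2=0$. So I expect no real obstacle; the proof is a two-line assembly of the twisting formalism of \autoref{chap:mc} with the Proposition just proved, and the write-up amounts to: twist by $\xi^0$, transport along the isomorphisms, and read off the Maurer--Cartan sets.
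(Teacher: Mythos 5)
Your proposal is correct and is exactly the deduction the paper intends: the corollary is stated with no separate proof precisely because it follows immediately from the preceding proposition by twisting $\mathfrak{g}(V)_+$ by the Maurer--Cartan element $\xi^0$ (zero curvature), transporting the bracket along the identification $\mathfrak{g}(V)_+^{\xi^0}\cong\Sigma^{d+2}\CE^{\bullet}(V)_+$, and composing the bijection $\eta\mapsto\eta+\xi^0$ with \autoref{thm:barannikov}. No gaps; your bookkeeping remark about the twisted differential matching the Chevalley--Eilenberg differential is the right point to check and you handle it correctly.
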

 
\begin{definition}\label{def:quantummaster}
Let $V\in\dgvect$ have a degree $d$ symmetric non-degenerate bilinear form. Define $\Delta\co\mathfrak{g}(V)\rightarrow\mathfrak{g}(V)$ by
\[\Delta(a_1\dots a_n) = \sum_{i < j} (-1)^{\epsilon}\langle a_i, a_j \rangle^{-1} a_1\dots a_{i-1}a_{i+1}\dots a_{j-1}a_{j+1} \dots a_n\]
where $(-1)^{\epsilon}$ is the sign arising from the Koszul sign rule. Define the Lie algebra $\modc{\mathfrak{g}}(V) = \mathfrak{g}(V)\otimes k[[\hbar]]$ where $\hbar$ has degree $2d+2$ so as vector spaces $k[[\hbar]]\cong \prod_{n=0}^\infty\Sigma^{-(2d+2)n}k$. with Lie bracket given by extending $\hbar$--linearly the Lie bracket on $\mathfrak{g}(V)$ and differential given by $\hbar\Delta + d_V$.

For $a\in\mathfrak{g}(V)$ with order $n$ define the order of the element $a\hbar^g\in\modc{\mathfrak{g}}(V)$ to be $2g+n$. Denote by $\modc{\mathfrak{g}}(V)_+$ the subalgebra of $\modc{\mathfrak{g}}(V)$ generated by elements of order at least $3$.
\end{definition}

The Maurer--Cartan equation for $\modc{\mathfrak{g}}(V)_+$ is sometimes called \emph{the quantum master equation}.

\begin{proposition}
Let $V\in\dgvect$ have an odd degree $d$ non-degenerate symmetric bilinear form.
There is an isomorphism of differential graded Lie algebras $\mathbf{L}(\modc{(\lie^d)^\twistshriek}\otimes\mathcal{E}[V])\cong\modc{\mathfrak{g}}(V)_+$.
\end{proposition}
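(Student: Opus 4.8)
The plan is to parallel the proof of the analogous statement for the non-modular case, namely the isomorphism $\mathbf{L}((\lie^d)^\twistshriek\otimes\mathcal{E}[V])\cong\mathfrak{g}(V)_+$ shown just above, and to keep track of the extra structure introduced by allowing positive genus graphs. First I would use \autoref{lem:liesimplify} in its modular form: since $(\lie^d)^\twistshriek\cong\mathfrak{s}^2(\lie^!)^{1-d}\cong\mathfrak{s}^2(\com)^{1-d}$, and since genus is now allowed to vary, the suspension bookkeeping gives $\modc{(\lie^d)^\twistshriek}\otimes\mathcal{E}[V]\cong\Sigma^{d+3}\modc{\com}\otimes\mathcal{E}[\Sigma^{-1}V^*]$ (one must check the $\mathfrak{s}^{d+1}\cong\Sigma^{2d+2}\tilde{\mathfrak{s}}^{d+1}$ identity used in \autoref{lem:liesimplify} still applies summand-by-summand on graphs of arbitrary genus, which it does because that identity was genus-local). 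Applying $\mathbf{L}(-)=\Sigma^{-1}\prod_{g,n}(-)((g,n))_{S_n}$ then produces, as a graded vector space, $\Sigma^{d+2}\prod_{g}\hbar^g\widehat{S}\Sigma^{-1}V^*$ where I introduce the formal variable $\hbar$ of degree $2d+2$ to book-keep the genus grading, exactly matching the underlying graded vector space of $\modc{\mathfrak{g}}(V)_+$ from \autoref{def:quantummaster} once the stability/order restriction $2g+n\geq 3$ is imposed.

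Next I would identify the Lie bracket. The bracket on $\mathbf{L}(\mathcal{O})$ is the sum over all single-edge contractions joining two vertices, and under the identification above an edge joins two distinct vertices of genera $g$ and $g'$ to produce a vertex of genus $g+g'$; this is precisely the $\hbar$-linear extension of the genus-zero bracket, which by the non-modular computation is the bracket $\{-,-\}$ of \autoref{def:cycliclie}. So the bracket matches. The differential is the sum of the internal differential on $\mathcal{E}[V]$ (which becomes $d_V$) and the contraction differential $\xi$, which contracts a loop at a single vertex, raising its genus by one; under the correspondence a loop contraction pairs two tensor factors of $\widehat{S}\Sigma^{-1}V^*$ at the same vertex using $\langle-,-\rangle^{-1}$ and multiplies by one power of $\hbar$, which is exactly the operator $\hbar\Delta$ of \autoref{def:quantummaster}. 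Hence the differential is $\hbar\Delta+d_V$, as required. Finally the order-$\geq 3$ restriction on both sides corresponds to the stability condition $2g+n-2\geq 1$ defining $g\modc{\mathcal{P}^\twistshriek}$ versus $\modc{\mathcal{P}^\twistshriek}$, matching $\modc{\mathfrak{g}}(V)_+$.

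I expect the main obstacle to be the sign and suspension bookkeeping: verifying that the Koszul signs produced by the symmetrizer $(-)_{S_n}$, the suspensions $\Sigma^{d+2}$, $\Sigma^{-1}$, and the sign twists hidden in $\mathfrak{s}$ and $\tilde{\mathfrak{s}}$ conspire to give exactly the sign $(-1)^\epsilon$ in the formulas for $\{-,-\}$ and $\Delta$, and that the odd degree of $\hbar$ (coming from $d$ odd, so $2d+2\equiv 0\pmod 2$ — wait, $2d+2$ is even regardless, so $\hbar$ is even, which is why $k[[\hbar]]$ is an ordinary power series ring and no extra signs arise from $\hbar$ itself) is correctly reconciled with the degree shifts. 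Since the genus-zero case is already established, the honest content is checking that passing to positive genus — i.e. replacing $\modn{}$ by $\modc{}$ and allowing loop contractions — introduces precisely the single new operator $\hbar\Delta$ and nothing else; this is a routine but careful check that the one-edge part of the structure map $\gamma_2$ of $\modc{\com}$ decomposes into the "join two vertices" and "loop at one vertex" pieces with no cross terms. I would present this as a straightforward verification, as the proof of the preceding proposition does, referencing \autoref{lem:liesimplify}, \autoref{def:cycliclie}, \autoref{def:quantummaster}, and the explicit description of $\mathbf{L}(-)$.
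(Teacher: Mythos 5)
Your overall strategy is the paper's (run the argument of \autoref{lem:liesimplify}, identify the underlying graded vector space, then check brackets and differentials), and your identification of the two pieces of structure --- edges joining distinct vertices giving the $\hbar$--linear extension of $\{-,-\}$, loop contractions giving $\hbar\Delta$ --- is exactly the ``straightforward check'' the paper leaves to the reader. But there is a genuine error at precisely the point where the modular case differs from the cyclic one. The identity $\mathfrak{s}^{d+1}\cong\Sigma^{2d+2}\tilde{\mathfrak{s}}^{d+1}$ is \emph{not} genus-local: $\mathfrak{s}((g,n))=\Sigma^{2-2g-n}\mathrm{sgn}_n$ depends on $g$ while $\tilde{\mathfrak{s}}((g,n))=\Sigma^{-n}\mathrm{sgn}_n$ does not, so at genus $g$ the two sides differ by $\Sigma^{-(2d+2)g}$. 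Consequently your intermediate claim $\modc{(\lie^d)^\twistshriek}\otimes\mathcal{E}[V]\cong\Sigma^{d+3}\modc{\com}\otimes\mathcal{E}[\Sigma^{-1}V^*]$ is false as stated (off by $(2d+2)g$ in the genus-$g$ component), and your subsequent insertion of $\hbar$ in degree $2d+2$ is inconsistent with it: if that isomorphism held, the genus grading would carry no degree shift and $\hbar$ would have to sit in degree $0$, contradicting \autoref{def:quantummaster}. The genus-dependent discrepancy is not a nuisance to be absorbed into bookkeeping --- it is the \emph{source} of the degree of $\hbar$. The paper records it as $\mathfrak{s}^{d+1}\cong\tilde{\Sigma}^{2d+2}\tilde{\mathfrak{s}}^{d+1}$ with $\tilde{\Sigma}((g,n))=\Sigma^{1-g}k$, whence $\Sigma^{1-d}\tilde{\Sigma}^{2d+2}((g,n))\cong\Sigma^{d+3}\Sigma^{-(2d+2)g}k$ and the genus-$g$ summand lands exactly on $\hbar^g\,\widehat{S}\Sigma^{-1}V^*$.

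A related omission: you never actually use the hypothesis that $d$ is odd. It is needed before any of the above, to identify the underlying stable $S$--module of $\modc{(\lie^d)^\twistshriek}=\modc{\mathfrak{s}^2\com^{1-d}}$ with a twist of $\modc{\com}$ on all graphs: the twisting functor $\mathcal{Q}\mapsto\mathcal{Q}^{1-d}$ commutes with modular closure only when $1-d$ is even, since $\mathfrak{K}^{\otimes(1-d)}$ is a coboundary on all graphs (not just trees) only in that case; this is the paper's observation that $\modc{\mathcal{Q}^d}\cong\modc{\mathcal{Q}}^d$ for $d$ even, applied as $\modc{\mathfrak{s}^2\com^{1-d}}\cong\mathfrak{s}^2\modc{\com}^{1-d}$. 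Your parenthetical about $2d+2$ being even is beside the point; the oddness of $d$ enters here, not in the parity of $\hbar$. With these two corrections your verification of the bracket and the differential goes through and the argument coincides with the paper's.
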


\begin{proof}
Since $1-d$ is even then $\modc{\mathfrak{s}^2\com^{1-d}}\cong\mathfrak{s}^2\modc{\com}^{1-d}$ and arguing as in the proof of \autoref{lem:liesimplify}, observing that as stable $S$--modules $\mathfrak{s}^{d+1}\cong\tilde{\Sigma}^{2d+2}\tilde{\mathfrak{s}}^{d+1}$ where $\tilde{\Sigma}((g,n))=\Sigma^{1-g}k$, it follows that $\mathbf{L}(\modc{(\lie^d)^\twistshriek}\otimes\mathcal{E}[V])\cong\mathbf{L}\Sigma^{1-d}\tilde{\Sigma}^{2d+2}\mathcal{E}[\Sigma^{-1}V^*]$.

Since $\Sigma^{1-d}\tilde{\Sigma}^{2d+2}((g,n))\cong\Sigma^{d+3}\Sigma^{-(2d+2)g}k$ then clearly as graded vector spaces $\mathbf{L}\Sigma^{1-d}\tilde{\Sigma}^{2d+2}\mathcal{E}[\Sigma^{-1}V^*]\cong\modc{\mathfrak{g}}(V)_+$. The proposition now follows by a straightforward check that the Lie brackets and differentials coincide.
\end{proof}

\begin{theorem}
Let $V$ be a cyclic $L_\infty^d$--algebra which is represented by a Maurer--Cartan element $\xi^0\in\mathfrak{g}(V)_+$. The space of lifts to a quantum $L_\infty^d$--algebra is the fibre over $\xi^0$ of
\[
\MC(\modc{\mathfrak{g}}(V)_+)\rightarrow \MC(\mathfrak{g}(V)_+)
\]
which is $\MC(\hbar\modc{\mathfrak{g}}(V)_+^{\xi^0})$. The differential graded Lie algebra $\modc{\mathfrak{g}}(V)_+$ has a complete filtration $\modc{\mathfrak{g}}(V)=F_0\supset F_2\dots$ by powers of $\hbar$.

Furthermore a lift $\xi^0+\xi^1\hbar+\dots + \xi^n\hbar^n$ to level $n$ lifts to level $n+1$ if and only if the obstruction
\[
\Delta\xi^n + \sum_{\substack{1\leq i \leq j \leq n\\i+j=n+1}}[\xi^i,\xi^j]
\]
which is a cocycle in $\CE^{2-d-(2d+2)n}(V)$, vanishes as a cohomology class. In addition the cohomology $\HCE^{1-d-(2d+2)n}(V)$ (or $\HCE^{1-d-(2d+2)n}_{\geq 1}(V)$ if $n=0$) acts freely and transitively on the moduli space (up to homotopy preserving $\xi^0+\dots+\xi^n\hbar^n$) of level $n+1$ lifts.
\end{theorem}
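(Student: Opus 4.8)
The plan is to combine the general obstruction theory developed in \autoref{chap:mc} with the identifications of differential graded Lie algebras established just above. First I would invoke the previous theorem: the space of quantum $L_\infty^d$--lifts of the cyclic $L_\infty^d$--algebra $(V,\xi^0)$ is the fibre over $\xi^0$ of $\MC(\modc{\mathfrak{g}}(V)_+)\to\MC(\mathfrak{g}(V)_+)$, which by \autoref{prop:mcfibre} is $\MC(\mathfrak{k}^{\xi^0})$ where $\mathfrak{k}=\ker(\modc{\mathfrak{g}}(V)_+\twoheadrightarrow\mathfrak{g}(V)_+)=\hbar\modc{\mathfrak{g}}(V)_+$. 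The filtration $F_n=\hbar^n\modc{\mathfrak{g}}(V)_+$ is descending, complete and Hausdorff (since $k[[\hbar]]$ is complete in the $\hbar$--adic topology), the bracket is $\hbar$--bilinear so $[F_p,F_q]\subset F_{p+q}$, the differential $\hbar\Delta+d_V$ preserves each $F_n$, and $\mathfrak{k}=F_1$. So the hypotheses of \autoref{thm:liftobs} and \autoref{thm:affinespaceoflifts} are met for $\mathfrak{k}^{\xi^0}$ with this filtration, and it only remains to unwind what the abstract obstruction and the abstract $H^1,H^2$ become in these explicit terms.

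Next I would identify the associated graded pieces. The curved Lie algebra $\mathfrak{k}^{\xi^0}$ has predifferential $d+d^{\xi^0}$, but as noted in the text the twisting $d^{\xi^0}$ only depends on $\xi^0\bmod F_1$, which is zero, so on $F_n/F_{n+1}$ the differential is just the untwisted one $\hbar\Delta+d_V$ acting on $\hbar^n\mathfrak{g}(V)_+\cong\mathfrak{g}(V)_+$; but $\Delta$ raises $\hbar$--degree by one, hence vanishes on the quotient, leaving $d_V$. Since $\mathfrak{g}(V)_+=\Sigma^{d+2}\widehat S_{\ge3}\Sigma^{-1}V^*$ with differential $d_V=$ the internal differential, we get $C^\bullet_n(\mathfrak{k}^{\xi^0})=F_n/F_{n+1}\cong\Sigma^{d+2}\CE^\bullet(V)_+$ with its internal differential only, and here I must be careful to track the degree shift: an element living in $\hbar^n$ carries the extra shift $\Sigma^{-(2d+2)n}$ coming from $\deg\hbar=2d+2$, together with $\Sigma^{d+2}$, so $C^2_{n}$ corresponds to $\CE^{2-d-(2d+2)n}(V)$ and $C^1_n$ to $\CE^{1-d-(2d+2)n}(V)$, with the $n=0$ (i.e. level $1$) case using $\CE^\bullet_{\ge1}$ instead of $\CE^\bullet_+$ because at the first stage the $\xi^1$ component is only required to have order $\ge1$ after the $\hbar$ is accounted for. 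I would spell this bookkeeping out once.

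Then I would compute the obstruction explicitly. Writing the candidate lift as $\xi^0+\xi^1\hbar+\dots+\xi^n\hbar^n$, the curvature $\Omega^{\xi_n}$ of this element in $\modc{\mathfrak{g}}(V)_+$ is $(\hbar\Delta+d_V)(\sum\xi^i\hbar^i)+\tfrac12[\sum\xi^i\hbar^i,\sum\xi^j\hbar^j]$; subtracting the level-$n$ master equation already satisfied, the $\hbar^{n+1}$--component is $\Delta\xi^n+\sum_{i+j=n+1,\,1\le i\le j\le n}[\xi^i,\xi^j]$ (the factor $\tfrac12$ disappearing by symmetrising, with the convention $1\le i\le j$ handling the diagonal term correctly), which is exactly the stated obstruction; by \autoref{thm:liftobs} it is a $d_V$--cocycle in $C^2_{n+1}$ and the lift exists iff it is a coboundary, i.e.\ vanishes in $\HCE^{2-d-(2d+2)n}(V)$. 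Finally \autoref{thm:affinespaceoflifts} says $H^1_{n+1}(\mathfrak{k}^{\xi^0})$ acts freely transitively on the moduli of level $n+1$ lifts fixing level $n$, and under the identification above this is $\HCE^{1-d-(2d+2)n}(V)$ (respectively $\HCE^{1-d-(2d+2)n}_{\ge1}(V)$ when $n=0$), completing the proof. The main obstacle I anticipate is purely the sign-and-degree bookkeeping: verifying that the Koszul-sign conventions in \autoref{def:quantummaster}, the shifts $\Sigma^{d+2}$ and $\Sigma^{-(2d+2)n}$, and the passage through \autoref{lem:liesimplify} all conspire to give precisely the cohomological degrees $2-d-(2d+2)n$ and $1-d-(2d+2)n$ claimed, and checking the combinatorial identity $\sum_{i+j=n+1}[\xi^i,\xi^j]$ against the $\tfrac12$ in the Maurer--Cartan equation — none of this is conceptually hard, but it is where an error would most easily creep in.
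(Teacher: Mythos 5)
Your overall strategy is the paper's own: combine \autoref{thm:quantumlifting} with the identifications $\mathbf{L}((\lie^d)^\twistshriek\otimes\mathcal{E}[V])\cong\mathfrak{g}(V)_+$ and $\mathbf{L}(\modc{(\lie^d)^\twistshriek}\otimes\mathcal{E}[V])\cong\modc{\mathfrak{g}}(V)_+$, then run the obstruction theory of \autoref{thm:liftobs} and \autoref{thm:affinespaceoflifts} against the $\hbar$--adic filtration. However, there is a genuine error in your identification of the associated graded complexes. You argue that on $F_n/F_{n+1}$ the twisted part $d^{\xi^0}=\ad_{\xi^0}$ disappears ``since $\xi^0\equiv 0 \bmod F_1$''. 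That is a misapplication of the remark in the general theory: there the element whose adjoint action is discarded is the Maurer--Cartan element of $\mathfrak{k}=F_1$ being lifted up the tower, which does lie in $F_1$. Here $\xi^0$ is the twist, it carries no power of $\hbar$, so $\xi^0\notin F_1=\hbar\modc{\mathfrak{g}}(V)_+$ and $\ad_{\xi^0}$ preserves (rather than raises) the $\hbar$--filtration degree; it therefore survives on every graded quotient. Under the identification $\mathfrak{g}(V)_+^{\xi^0}\cong\Sigma^{d+2}\CE^{\bullet}(V)_+$ established in the proposition immediately preceding the theorem, this surviving term is exactly the Chevalley--Eilenberg part $m$ of the differential. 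With your differential (internal $d_V$ only) the obstructions would be measured in the cohomology of $(\widehat{S}_{\geq 3}\Sigma^{-1}V^*,d_V)$, not in $\HCE^{\bullet}(V)$, and the stated conclusion --- that the obstruction is a cocycle in $\CE^{2-d-(2d+2)n}(V)$ vanishing in $\HCE$, and that $\HCE^{1-d-(2d+2)n}(V)$ acts on the lifts --- would not follow; indeed your final step silently conflates ``$d_V$--coboundary'' with ``vanishes in $\HCE$''. The fix is to keep $\ad_{\xi^0}$ and invoke the second bullet of that proposition, which is precisely why the paper proves it first.

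Two smaller points. Your degree bookkeeping is internally inconsistent: you state $C^2_n\leftrightarrow\CE^{2-d-(2d+2)n}(V)$ but then place the level $n+1$ obstruction, which lives in $C^2_{n+1}$, in $\CE^{2-d-(2d+2)n}(V)$; with $\degree{\hbar}=2d+2$ and the shift $\Sigma^{d+2}$ the correct correspondence is $C^2_{n+1}\leftrightarrow\CE^{2-d-(2d+2)n}(V)$, so only the labelling needs repair. Your computation of the obstruction itself (the $\hbar^{n+1}$--coefficient of the curvature, with the $\tfrac12$ absorbed into the sum over $1\leq i\leq j\leq n$, and the $n=0$ case giving $\Delta\xi^0$ with the $\CE_{\geq 1}$ rather than $\CE_+$ caveat) is consistent with the statement and with \autoref{thm:liftobs}, since the terms $(d_V+\ad_{\xi^0})(\xi^i\hbar^i)$ with $i\leq n$ contribute only to levels already killed by the level--$n$ master equation.
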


\begin{proof}
This is obtained by \autoref{thm:quantumlifting}, \autoref{thm:liftobs} and \autoref{thm:affinespaceoflifts}.
\end{proof}

\begin{corollary}
If $\HCE^{\mathrm{odd}}(V)=0$ then $V$ admits a quantum $L_\infty^d$--lift. If $\HCE^{\mathrm{even}}(V)=0$ then any two lifts are homotopic. If $\HCE^{\bullet}(V)=0$ then $V$ admits a unique (up to homotopy) quantum $L_\infty^d$--lift.
\qed
\end{corollary}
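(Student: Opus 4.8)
The plan is to deduce this corollary directly from the preceding theorem by a routine application of obstruction theory, exactly as \autoref{thm:liftobs} and \autoref{thm:affinespaceoflifts} feed into such statements. First I would recall from the theorem that lifting a cyclic $L_\infty^d$--algebra $V$ (represented by $\xi^0\in\mathfrak{g}(V)_+$) to a quantum $L_\infty^d$--algebra amounts to finding a Maurer--Cartan element in $\hbar\modc{\mathfrak{g}}(V)_+^{\xi^0}$, and that this curved Lie algebra carries the complete filtration $F_n$ by powers of $\hbar$, with associated graded pieces $C^\bullet_{n}$ whose cohomology is (a shift of) $\HCE^\bullet(V)$. Concretely, the theorem identifies the obstruction at level $n+1$ as a class in $\HCE^{2-d-(2d+2)n}(V)$ and shows that $\HCE^{1-d-(2d+2)n}(V)$ (or $\HCE^{1-d-(2d+2)n}_{\geq 1}(V)$ when $n=0$) acts freely transitively on the moduli of level $n+1$ lifts fixing the previous stage.

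The first statement then follows: if $\HCE^{\mathrm{odd}}(V)=0$, then in particular every obstruction group $\HCE^{2-d-(2d+2)n}(V)$ vanishes — here I would note that since $d$ is odd (by the running \autoref{rem:evenlinftytrivial}), $2-d$ is odd and $2d+2$ is even, so $2-d-(2d+2)n$ is odd for all $n\geq 0$. Hence by \autoref{thm:liftobs} every partial lift extends, and by \autoref{prop:liftuptower} applied to the complete filtration these assemble to an honest element of $\MC(\hbar\modc{\mathfrak{g}}(V)_+^{\xi^0})$, i.e. a quantum $L_\infty^d$--lift exists. For the second statement, if $\HCE^{\mathrm{even}}(V)=0$ then every group $\HCE^{1-d-(2d+2)n}(V)$ (and the truncated version for $n=0$) vanishes — again $1-d$ is even and $2d+2$ is even, so $1-d-(2d+2)n$ is even. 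By \autoref{thm:affinespaceoflifts} the moduli of level $n+1$ lifts over a fixed level $n$ lift is a single point, so by induction (using the same filtration argument, now with \autoref{thm:quasiisomc} or just the transitivity of the homotopy relation) any two global lifts are homotopic. The third statement is the conjunction of the first two.

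I would phrase the actual proof in two or three sentences, citing \autoref{thm:liftobs}, \autoref{thm:affinespaceoflifts}, and \autoref{prop:liftuptower}, and spelling out the parity computation $1-d-(2d+2)n$ even, $2-d-(2d+2)n$ odd for odd $d$. The only mild subtlety to flag is the $n=0$ case, where the relevant acting cohomology is $\HCE^{1-d}_{\geq 1}(V)$ rather than $\HCE^{1-d}(V)$; since $\HCE^{1-d}_{\geq 1}(V)$ is a subquotient (indeed a direct summand in the relevant graded sense) of $\HCE^{1-d}(V)$, the hypothesis $\HCE^{\mathrm{even}}(V)=0$ still forces it to vanish, so the argument goes through uniformly. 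I do not anticipate any real obstacle here — the work was done in the preceding theorem, and this corollary is the standard packaging of its obstruction-theoretic content; the one thing to get right is the bookkeeping of degrees, which is why I would state the parity check explicitly rather than leave it implicit.
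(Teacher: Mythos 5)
Your proposal is correct and matches the paper's (implicit) argument: the corollary is stated with no proof precisely because it is the parity bookkeeping you spell out — with $d$ odd the obstruction groups $\HCE^{2-d-(2d+2)n}(V)$ sit in odd degree and the torsor groups $\HCE^{1-d-(2d+2)n}(V)$ (or the $n=0$ truncation) in even degree, so the preceding theorem together with \autoref{thm:liftobs}, \autoref{thm:affinespaceoflifts} and \autoref{prop:liftuptower} gives existence, uniqueness, and both respectively. Your handling of the $n=0$ case via the splitting $\HCE^{\bullet}(V)\cong k\oplus\HCE^{\bullet}_{\geq 1}(V)$ is also fine.
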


\section{Application to Chern--Simons field theory}
In \cite{costello4} Costello considers the quantisation of Chern--Simons field theory via an infinite dimensional Batalin--Vilkovisky formalism. We will not describe Chern--Simons field theory here since that would take us too far afield.

However, one of the main results of the approach to quantisation of Chern--Simons field theory in \cite{costello4} is the construction of a certain quantum $L_\infty$--algebra structure associated to a closed oriented manifold, canonical up to homotopy. In this section we wish to interpret these ideas in terms of minimal models and quantum lifts.

\subsection{Minimal models for algebras over cyclic and modular operads}
We will first briefly recall some of the pertinent definitions and theorems concerning the construction of minimal models over cyclic and modular operads, taken from Chuang--Lazarev \cite{chuanglazarev2,chuanglazarev3}.

For simplicity in this section we will only consider supergraded vector spaces. It should be emphasised that this is by no means necessary but will be helpful due to the variety of different conventions concerning degrees in the literature.

\begin{definition}
Let $V$ be a differential supergraded vector space with an odd or even symmetric bilinear form. A \emph{Hodge decomposition} of $V$ is a choice of a maps $s\co V\rightarrow \Pi V$ and $t\co V\rightarrow V$ such that:
\begin{itemize}
\item $s^2 = 0$
\item $t^2 = t$
\item $d\circ t = t\circ d$
\item $d\circ s + s \circ d = \id - t$
\item $s\circ t = t\circ s = 0$
\item $\langle s(x), y \rangle = (-1)^{\degree{x}}\langle x, s(y) \rangle$
\item $\langle t(x), y \rangle = \langle x, t(y) \rangle$
\end{itemize}
A Hodge decomposition is called \emph{harmonious} if $d\circ t = 0$.
\end{definition}

This definition of a Hodge decomposition can be reformulated in geometric terms \cite[Proposition 2.5]{chuanglazarev3}.

\begin{proposition}
Let $V$ be a differential supergraded vector space with an odd or even symmetric bilinear form. A harmonious Hodge decomposition on $V$ is equivalent to a decomposition of $V$ into a direct sum of three subspaces
\[
V = W \oplus \im d \oplus U
\]
with $W\cong H^\bullet(V)$ and such that $W^{\perp} = \im d \oplus U$ and $U$ is an isotropic subspace of $V$.
\end{proposition}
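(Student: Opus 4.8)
The plan is to show the two structures are equivalent by constructing each from the other. Starting from a harmonious Hodge decomposition, we have $s,t\co V\to V$ (with $s$ odd) satisfying the seven listed axioms plus $d\circ t=0$. First I would set $W=\im t$, and show $t$ being an idempotent chain map with $d\circ t=0$ forces $W\subset\ker d$; combined with $d\circ s+s\circ d=\id-t$ this gives that $t$ restricted to $\ker d$ is the projection onto a complement of $\im d$ inside $\ker d$, hence the inclusion $W\hookrightarrow\ker d\twoheadrightarrow H^\bullet(V)$ is an isomorphism. Next I would set $U=\im s$. The relation $s^2=0$ gives $s(\im s)=0$, and $s\circ t=t\circ s=0$ gives $s W=0$ and $t U=0$; together with $d s+ s d=\id-t$ one checks that on $\im d$ the operator $d s$ acts as the identity (since $t$ and $s$ both kill $\im d$ appropriately — here one uses $ds(dx)=dx - t(dx) - sd(dx)=dx$), so $s$ restricts to an isomorphism $\im d\to U$ and in particular $d|_U$ is injective with $\im(d|_U)=\im d$. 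This yields the direct sum decomposition $V=W\oplus\im d\oplus U$: spanning follows from $x=t(x)+d(s(x))+s(d(x))$ and the three summands being $W$, $\im d$, $U$ respectively; directness follows from the mutual annihilation relations among $s,t$ and a degree/rank count.

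For the orthogonality and isotropy claims I would use the two bilinear-form axioms. From $\langle t(x),y\rangle=\langle x,t(y)\rangle$ and $t|_W=\id$, $t|_{\im d\oplus U}=0$ we get $\langle W, \im d\oplus U\rangle=0$, i.e. $W^\perp\supseteq \im d\oplus U$, and since the form is non-degenerate and $\dim W^\perp=\dim V-\dim W=\dim(\im d\oplus U)$, equality holds. For isotropy of $U=\im s$: given $s(a),s(b)\in U$, the axiom $\langle s(x),y\rangle=(-1)^{\degree{x}}\langle x,s(y)\rangle$ applied with $x=a$, $y=s(b)$ gives $\langle s(a),s(b)\rangle=\pm\langle a, s(s(b))\rangle=\pm\langle a, 0\rangle=0$. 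So $U$ is isotropic, completing one direction.

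For the converse, given $V=W\oplus\im d\oplus U$ with $W\cong H^\bullet(V)$, $W^\perp=\im d\oplus U$, $U$ isotropic, I would define $t$ to be the projection onto $W$ along $\im d\oplus U$, and define $s$ to be zero on $W\oplus U$ and to be the inverse of the isomorphism $d|_U\co U\to\im d$ on the summand $\im d$ (extended by $0$). Then $d\circ t=0$ is immediate since $W\subset\ker d$ (as $W\cong H^\bullet(V)$ sits inside $\ker d$), $t^2=t$ and $d\circ t=t\circ d$ are clear from the definition (checking $t\circ d=0=d\circ t$ on each summand), $s^2=0$ and $s\circ t=t\circ s=0$ are built in, and $d\circ s+s\circ d=\id-t$ is a summand-by-summand check (identity on $\im d$ and $U$ pieces, zero on $W$). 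The two form-compatibility axioms are where the hypotheses $W^\perp=\im d\oplus U$ and $U$ isotropic get used: $\langle t(x),y\rangle=\langle x,t(y)\rangle$ because the decomposition $V=W\oplus W^\perp$ is orthogonal, and $\langle s(x),y\rangle=(-1)^{\degree x}\langle x,s(y)\rangle$ reduces, after decomposing $x,y$ into components, to checking the relevant pairing $\langle U,\im d\rangle$ against itself via the isotropy of $U$ and the invariance $\langle dx,y\rangle=\pm\langle x,dy\rangle$ of the form under $d$. I expect the main obstacle to be the bookkeeping in this last verification — getting the signs in the odd/super setting correct and confirming that isotropy of $U$ together with $d$-invariance of the form is exactly what forces the sign axiom for $s$ — rather than anything structurally deep; everything else is essentially linear algebra once the three subspaces are pinned down.
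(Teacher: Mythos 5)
The thesis never proves this proposition itself --- it is quoted from Chuang--Lazarev (the citation to \cite[Proposition 2.5]{chuanglazarev3} immediately preceding it) --- so there is no internal proof to compare against; judged on its own terms, your argument is the standard one and is essentially correct. The forward direction ($W=\im t$, $U=\im s$, spanning from $x=t(x)+ds(x)+sd(x)$, isotropy of $\im s$ from $s^2=0$ together with the $s$--axiom, orthogonality from the $t$--axiom) and the converse construction ($t$ the projection onto $W$, $s$ equal to $(d|_U)^{-1}$ on $\im d$ and zero on $W\oplus U$) both check out. Two small points of hygiene: in the converse you should record why $d|_U\co U\rightarrow\im d$ is an isomorphism (injectivity because $\ker d=W\oplus\im d$, which follows from directness and $W\cong H^\bullet(V)$; surjectivity because $d^2=0$ and $d(W)=0$), and the appeal to a ``rank count'' is both unnecessary and unavailable in infinite dimensions --- directness follows by applying $t$ and then using injectivity of $d|_U$, and $W^\perp\subseteq\im d\oplus U$ follows because an element $w$ of $W\cap W^\perp$ is orthogonal to all of $V=W\oplus(\im d\oplus U)$ by symmetry of the form, hence zero by non-degeneracy; your $\dim W^\perp=\dim V-\dim W$ argument silently assumes $V$ finite dimensional, whereas these decompositions are later applied to infinite-dimensional spaces.

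The more substantive remark concerns two hypotheses you use that are not literally in the displayed definition. Non-degeneracy of the form is needed for the equality $W^\perp=\im d\oplus U$ (with the zero form one only gets the inclusion), and, as you correctly isolate, the $s$--axiom in the converse reduces exactly to $\langle da,b\rangle=-(-1)^{\degree{a}}\langle a,db\rangle$ for $a,b\in U$, i.e.\ to compatibility of $d$ with the pairing. This compatibility is genuinely indispensable and does not follow from the stated hypotheses: for $V$ spanned by an even $u$ and odd $x$ with $du=x$, equipped with the odd form $\langle u,x\rangle=\langle x,u\rangle=1$, the decomposition $W=0$, $\im d=\langle x\rangle$, $U=\langle u\rangle$ satisfies every condition of the proposition, yet the unique candidate $(s,t)$ (forced to be $t=0$, $s(x)=u$, $s(u)=0$) violates the $s$--axiom, so no harmonious Hodge decomposition exists. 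So your invocation of $\langle dx,y\rangle=\pm\langle x,dy\rangle$ is not a cosmetic convenience but a standing assumption of the source's setting (there the pairing underlies a cyclic structure and is in particular a chain map), and it should be stated explicitly rather than used silently. With those assumptions made explicit, your proof is complete.
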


When $V$ is finite dimensional a harmonious Hodge decomposition always exists.

The main construction from \cite{chuanglazarev3} is summarised in the following theorem.

\begin{theorem}
Let $\mathcal{P}$ be a cyclic operad (or cyclic $\mathfrak{K}$--operad). Then any choice of a harmonious Hodge decomposition $V = W \oplus \im d \oplus U$ on a $\cobar\mathcal{P}$--algebra $V$ gives rise to a $\cobar\mathcal{P}$--structure on $W\cong H^\bullet(V)$ called a \emph{cyclic minimal model} for $V$. Furthermore any two such minimal models are homotopy equivalent as $\cobar\mathcal{P}$--algebra structures on $H^\bullet(V)$.
\end{theorem}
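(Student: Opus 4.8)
The statement to prove is the final theorem: for a cyclic operad (or cyclic $\mathfrak{K}$--operad) $\mathcal{P}$, a harmonious Hodge decomposition $V = W \oplus \im d \oplus U$ on a $\cobar\mathcal{P}$--algebra $V$ induces a $\cobar\mathcal{P}$--structure on $W \cong H^\bullet(V)$, and any two such minimal models are homotopy equivalent. The plan is to reduce everything to the Maurer--Cartan formalism already developed: by \autoref{thm:barannikovfunctor}, a $\cobar\mathcal{P}=\cyc{\feyn\modn{\mathcal{P}}}$--algebra structure on $V$ (with its non-degenerate bilinear form) is the same as a Maurer--Cartan element $\xi$ in the differential graded Lie algebra $\mathbf{L}(\modn{\mathcal{P}}\otimes\mathcal{E}[V])$, or rather its cyclic/genus-zero analogue. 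So a cyclic minimal model is to be produced as an explicit Maurer--Cartan element in $\mathbf{L}(\modn{\mathcal{P}}\otimes\mathcal{E}[W])$, and homotopy equivalence of minimal models will be homotopy (equivalently, by formality of $\mathbf{L}$, gauge equivalence) of Maurer--Cartan elements, as in the remark following \autoref{thm:barannikovfunctor}.

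First I would set up the transfer. Given the Hodge data $(s,t)$, the contracting homotopy $s$ together with the projection onto $W$ along $\im d \oplus U$ is exactly the input to a homological-perturbation / sum-over-trees construction: one writes the transferred structure on $W$ as a sum over (stable, genus-zero, i.e. tree) graphs, decorating internal vertices by the structure maps of $\xi$, internal edges by $s$, and leaves/root by the inclusion $W\hookrightarrow V$ and projection $V\twoheadrightarrow W$. The fact that $\mathcal{P}$ is a \emph{cyclic} operad and that the Hodge decomposition is compatible with the bilinear form (the two symmetry conditions $\langle s(x),y\rangle=(-1)^{\bar x}\langle x,s(y)\rangle$ and $\langle t(x),y\rangle=\langle x,t(y)\rangle$, plus isotropy of $U$) is precisely what guarantees the transferred maps are again cyclically (anti)symmetric with respect to the induced form on $W\cong H^\bullet(V)$, so that we land in $\mathbf{L}(\modn{\mathcal{P}}\otimes\mathcal{E}[W])$ and not just in the non-cyclic endomorphism operad. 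I would verify the Maurer--Cartan equation for the transferred element either directly (the usual cancellation in perturbation theory, using $ds+sd=\id-t$ and $d^2=0$, $s^2=0$) or by citing the corresponding statements in Chuang--Lazarev \cite{chuanglazarev3}; since the excerpt permits assuming earlier results and this theorem is quoted \emph{from} \cite{chuanglazarev3}, the cleanest route is to invoke their construction and only check that it fits the operadic Maurer--Cartan picture.

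For the uniqueness/homotopy statement I would argue as follows. Two harmonious Hodge decompositions give two Maurer--Cartan elements $\xi_W, \xi_W'$ in $\mathbf{L}(\modn{\mathcal{P}}\otimes\mathcal{E}[W])$; both are obtained by transfer from the same $\xi$ on $V$, and both $(V,\xi)$ and the inclusions $W\hookrightarrow V$ are quasi-isomorphisms of $\cobar\mathcal{P}$--algebras. The natural move is to compare through $V$: the two transfer maps assemble into $\infty$--morphisms $W\to V$, and standard homotopy-transfer yoga (or \autoref{thm:quasiisomc} applied to a filtered quasi-isomorphism between the relevant curved/dg Lie algebras — here filtered by arity, which is complete since the trees are stable) shows that any two minimal models of a fixed algebra are connected by an $\infty$--isomorphism, hence their Maurer--Cartan elements are gauge equivalent, hence homotopic by the Schlessinger--Stasheff theorem, translating back to homotopy equivalence of $\cobar\mathcal{P}$--algebra structures on $H^\bullet(V)$ as in the remark after \autoref{thm:barannikovfunctor}. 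The main obstacle I anticipate is bookkeeping rather than conceptual: namely, keeping the cyclic-invariance and the sign conventions straight when passing between $\mathcal{E}[V]$, $\End[V]$, the suspensions, and the identifications $\tilde{\mathfrak{s}}\mathcal{E}[W]\cong\mathcal{E}[\Sigma^{-1}W]$ etc., and making sure the harmonious condition $d\circ t=0$ (as opposed to a general Hodge decomposition) is genuinely used — it is what forces the transferred differential on $W$ to vanish, i.e. what makes the model \emph{minimal}. Since the theorem is explicitly attributed to \cite{chuanglazarev3}, I would in the end present the proof as: recall the construction there, observe it is the genus-zero (cyclic operad) specialisation of the Maurer--Cartan setup of this chapter, and conclude.
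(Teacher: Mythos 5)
The paper gives no proof of this statement: it is explicitly recalled as a result of Chuang--Lazarev \cite{chuanglazarev2,chuanglazarev3}, so your closing move --- invoking their construction and observing that it sits inside the Maurer--Cartan formalism of this chapter --- is exactly what the thesis itself does. Your sketch of the transfer (summation over trees with $s$ on internal edges, cyclicity of the transferred maps coming from the compatibility of $s$, $t$ and the isotropy of $U$ with the bilinear form, minimality from the harmonious condition $d\circ t=0$, and uniqueness via gauge/homotopy equivalence of Maurer--Cartan elements) is consistent with how the cited papers actually argue, so there is nothing to fault beyond the fact that the detailed verification lives in the reference rather than in the thesis.
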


\begin{remark}
Note also that Chuang--Lazarev \cite{chuanglazarev2} in fact prove this result for the case when the bilinear form on $V$ is not necessarily non-degenerate.
\end{remark}

In \cite{chuanglazarev2} a version for modular operads is also given.

\begin{theorem}\label{thm:modminmod}
Let $\mathcal{O}$ be a modular operad (or modular $\mathfrak{K}$--operad). Then any choice of a harmonious Hodge decomposition $V = W \oplus \im d \oplus U$ on a $\feyn\mathcal{O}$--algebra $V$ gives rise to a $\feyn\mathcal{O}$--structure on $W\cong H^\bullet(V)$ called a \emph{modular minimal model} for $V$. Furthermore any two such minimal models are homotopy equivalent as $\feyn\mathcal{O}$--algebra structures on $H^\bullet(V)$.
\end{theorem}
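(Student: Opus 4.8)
\textbf{Proof proposal for \autoref{thm:modminmod}.}

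The plan is to deduce the modular statement from the cyclic case together with the Maurer--Cartan formalism developed above, exactly mirroring the approach of Chuang--Lazarev \cite{chuanglazarev2}. The key observation is that, by \autoref{thm:barannikov} and \autoref{thm:barannikovfunctor}, a $\feyn\mathcal{O}$--algebra structure on a differential supergraded vector space $V$ (with its non-degenerate bilinear form) is the same thing as a Maurer--Cartan element in the differential graded Lie algebra $\mathbf{L}(\mathcal{O}\otimes\mathcal{E}[V])$, and homotopy of $\feyn\mathcal{O}$--structures corresponds to homotopy (equivalently, since this Lie algebra is formal, gauge equivalence) of Maurer--Cartan elements. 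So the theorem becomes a statement about transferring Maurer--Cartan elements along the inclusion of harmonic representatives $W \hookrightarrow V$ induced by a harmonious Hodge decomposition $V = W \oplus \im d \oplus U$.

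First I would fix a harmonious Hodge decomposition, so that by the geometric reformulation $W \cong H^\bullet(V)$, $W^\perp = \im d \oplus U$, and $U$ is isotropic; in particular the bilinear form on $V$ restricts to a non-degenerate form on $W$, so $\mathcal{E}[W]$ makes sense as a modular $\mathfrak{K}^{\otimes d}$--operad and $\mathbf{L}(\mathcal{O}\otimes\mathcal{E}[W])$ is defined. The contracting homotopy $s$ together with the projection onto $W$ gives the standard homological perturbation / homotopy transfer data. The heart of the argument is then the homotopy transfer theorem in this operadic setting: the Maurer--Cartan element $\xi \in \MC(\mathbf{L}(\mathcal{O}\otimes\mathcal{E}[V]))$ representing the given $\feyn\mathcal{O}$--structure can be transferred, via sums over graphs with the internal edges decorated by $s$ and external legs/vertices decorated by the projection and inclusion, to a Maurer--Cartan element $\xi_W \in \MC(\mathbf{L}(\mathcal{O}\otimes\mathcal{E}[W]))$, and the resulting structure on $W$ is a \emph{minimal} $\feyn\mathcal{O}$--algebra because the induced differential on $W \cong H^\bullet(V)$ vanishes. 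I would note that for the modular (as opposed to merely cyclic) transfer one must check that the graph sums converge: this is where the positive-genus filtration $\mathbf{L}_{\geq N}$ from the construction of $\mathbf{L}(\mathcal{O})$ enters, since in each fixed genus and arity only finitely many graphs contribute, so the sums are finite modulo each $\mathbf{L}_{\geq N}$ and assemble by completeness of the filtration (\autoref{prop:liftuptower}). The uniqueness up to homotopy then follows from the homotopy invariance of the Maurer--Cartan moduli set under filtered quasi-isomorphisms: two harmonious Hodge decompositions give two transferred structures which are connected by a filtered quasi-isomorphism of the relevant (pronilpotent, genus-filtered) curved Lie algebras, and one invokes \autoref{thm:quasiisomc}, or more directly the fact that $\mathbf{L}(\mathcal{O}\otimes\mathcal{E}[V])$ is formal so homotopy equals gauge equivalence and the inclusion $W \hookrightarrow V$ is a quasi-isomorphism compatible with the genus filtration.

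The main obstacle I anticipate is not conceptual but bookkeeping: one must set up the operadic homotopy transfer formula carefully enough to see that it lands in $\mathbf{L}(\mathcal{O}\otimes\mathcal{E}[W])$ and respects the bilinear form (i.e.\ produces a genuinely \emph{cyclic}/modular minimal model, using the symmetry properties $\langle s(x),y\rangle = (-1)^{\degree x}\langle x, s(y)\rangle$ and the isotropy of $U$), and that the grading/sign conventions in the various suspensions $\mathfrak{s}$, $\Sigma$, $\mathfrak{K}$ are consistent throughout. Since all of this is carried out in detail in Chuang--Lazarev \cite{chuanglazarev2,chuanglazarev3}, I would streamline the exposition by citing those results for the precise transfer formula and the verification of the cyclic symmetry, and confine the new content to the remark that the genus filtration guarantees convergence and that \autoref{thm:quasiisomc} (or formality) yields the uniqueness statement.
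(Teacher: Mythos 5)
Your proposal is fine: the paper itself gives no proof of this theorem, quoting it directly from Chuang--Lazarev \cite{chuanglazarev2}, and your outline---encoding a $\feyn\mathcal{O}$--structure as a Maurer--Cartan element in $\mathbf{L}(\mathcal{O}\otimes\mathcal{E}[V])$, transferring it by graph sums built from the contracting homotopy $s$ (with convergence controlled by the genus/arity filtration), and getting uniqueness from homotopy invariance of the Maurer--Cartan moduli set---is exactly the strategy of that cited source. Since you defer the transfer formula and the cyclic/sign bookkeeping to \cite{chuanglazarev2,chuanglazarev3}, just as the paper does, your treatment matches the paper's approach.
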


\begin{remark}
Note that these minimal model structures are homotopy equivalent to the original structure in the following sense: regarding the contractible subspace $\im d \oplus U$ as having the trivial $\feyn\mathcal{O}$--algebra structure then the sum of $\feyn\mathcal{O}$--algebra structures on $W \oplus \im d \oplus U$ is homotopy equivalent to the original $\feyn\mathcal{O}$--algebra structure on $V$. Also note that in this case the modular minimal model is a lift of the cyclic minimal model associated to the structure on $V$ of an algebra over the genus $0$ part of $\feyn\mathcal{O}$.
\end{remark}

\begin{remark}
By disregarding any mention of bilinear forms one also obtains analogous results for normal operads, see \cite{chuanglazarev2}, which is familiar from the classical context of homological perturbation theory.
\end{remark}

\subsection{The Cattaneo--Mn\"ev finite dimensional Chern--Simons model}
Cattaneo and Mn\"ev consider a `toy model' of Chern--Simons field theory in \cite{cattaneomnev}. In particular they consider a finite dimensional differential graded Frobenius algebra $A$ (which we will consider as supergraded) with a odd degree bilinear form (to be thought of as modelling the de Rham algebra of a closed oriented $3$--manifold, cf \autoref{ex:derhamalgebra}) and a finite dimensional cyclic Lie algebra $\mathfrak{g}$ with zero differential and an even degree non-degenerate bilinear form. Then starting with the data of a Hodge decomposition of $A$ they construct an `effective action' on the cohomology of $A\otimes \mathfrak{g}$. This is, in essence, a particular solution to the quantum master equation in the differential supergraded Lie algebra $\Pi\widehat{S}\Pi(H^\bullet(A)\otimes \mathfrak{g})^*[[h]]$ from \autoref{def:quantummaster}, in other words a quantum $L_\infty^{1}$--structure on $H^\bullet(A)\otimes \mathfrak{g}$.

In our language, the algebra $A$ is an algebra over $\modc{\com^1}$. But as noted in \autoref{ex:oddcomunimod} $\modc{\com^1}\cong\modn{\com^1}$. Then, regarding $\mathfrak{g}$ as a cyclic $L_\infty$--algebra, $A\otimes\mathfrak{g}$ is a $\modn{\com^1} \otimes\feyn\modn{\com^1}$--algebra and hence an algebra over $\feyn\modcom$ (which in the supergraded setting is a quantum $L_\infty^1$--algebra, cf \autoref{ex:comlinftytensor}). From this perspective the construction of Cattaneo--Mn\"ev follows immediately from the general theory of minimal models.

\begin{theorem}\label{thm:cattaneomnev}
The quantum $L_\infty^1$--algebra modular minimal model of $A\otimes \mathfrak{g}$ is equivalent to the quantum $L_\infty^1$--algebra obtained from the construction of the effective action of Cattaneo--Mn\"ev \cite{cattaneomnev}.
\end{theorem}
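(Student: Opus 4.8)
\textbf{Proof proposal for \autoref{thm:cattaneomnev}.}

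The plan is to show that the Cattaneo--Mn\"ev effective action is \emph{literally} an instance of the modular minimal model construction of \autoref{thm:modminmod}, applied to the $\feyn\modcom$--algebra $A\otimes\mathfrak{g}$. First I would make precise the identification, already sketched in the discussion preceding the theorem, of $A\otimes\mathfrak{g}$ as a $\feyn\modcom$--algebra: the Frobenius algebra $A$ with its odd scalar product is an algebra over $\modc{\com^1}\cong\modn{\com^1}$ (using \autoref{ex:oddcomunimod}), and $\mathfrak{g}$ with its even scalar product, regarded as a cyclic Lie algebra hence a cyclic $L_\infty$--algebra, is an algebra over $\feyn\modn{\com^1}$ via the identification $(\lie^1)^\twistshriek\cong\mathfrak{s}^2\com$. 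Tensoring and invoking \autoref{cor:feynscommap} (in the form used in the proof of the tensor-product theorem, i.e. the natural map $\feyn\modc{\mathfrak{s}^2\com}\to\feyn\modn{\com^1}\otimes\modn{\mathfrak{s}^{-2}\com^1}$ followed by the supergraded simplification $\mathfrak{s}^{-2}=\id$) exhibits the quantum $L_\infty^1$--structure on $A\otimes\mathfrak{g}$. This is the same structure referred to in \autoref{ex:comlinftytensor}.

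Next I would unwind the correspondence of \autoref{thm:barannikov}/\autoref{thm:barannikovfunctor}: a $\feyn\modcom$--algebra structure on a space $V$ with scalar product is a Maurer--Cartan element in $\mathbf{L}(\modcom\otimes\mathcal{E}[V])$, and for $V=H^\bullet(A)\otimes\mathfrak{g}$ this differential graded Lie algebra is, by the computation behind \autoref{def:quantummaster} together with \autoref{lem:liesimplify}, isomorphic to $\modc{\mathfrak{g}}(V)_+$ — precisely the Lie algebra $\Pi\widehat{S}\Pi(H^\bullet(A)\otimes\mathfrak{g})^*[[\hbar]]$ whose Maurer--Cartan equation is the quantum master equation that Cattaneo--Mn\"ev solve. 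So both the "modular minimal model" output and the "effective action" output live in the same set $\MC(\modc{\mathfrak{g}}(V)_+)$, and it remains only to see they are the \emph{same} element up to the equivalence in \autoref{thm:modminmod} (homotopy of $\feyn\modcom$--algebra structures, equivalently gauge equivalence of Maurer--Cartan elements, equivalently the physicists' notion of effective actions related by a change of Hodge data / BV canonical transformation).

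For that, I would compare the two explicit formulas. The modular minimal model of \cite{chuanglazarev2,chuanglazarev3} is built by a sum over (stable) graphs of contractions of the structure maps of $A\otimes\mathfrak{g}$ with the propagator $s$ coming from the chosen harmonious Hodge decomposition $A=W\oplus\im d\oplus U$, with the genus grading of the graph matching the power of $\hbar$; the Cattaneo--Mn\"ev effective action is by construction the same Feynman-graph sum (their perturbative expansion of the BV pushforward along the Hodge decomposition of $A$, with $\mathfrak{g}$ supplying the colour/Lie factors at each vertex and along each edge), with tadpole/loop contributions producing exactly the $\hbar\Delta$ term. The key observation making this matching clean is that the maps $\nu_f$ and $\gamma_G$ organizing the Feynman transform $\feyn\modcom$ are, vertex-by-vertex, just the Lie bracket of $\mathfrak{g}$ and the multiplication of $A$, while the edge gluings use the scalar products — so both constructions are the same assembly of the same local data over the same index set of graphs. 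I expect the main obstacle to be purely bookkeeping: reconciling sign and normalization conventions (the $\Pi$'s, the $\Sigma^{d+2}$ shifts in \autoref{def:cycliclie} and \autoref{def:quantummaster}, the symmetrization factors $\tfrac1{|G|}$ versus $1/|\mathrm{Aut}(G)|$, and the placement of $\hbar$) between the operadic formalism of \autoref{chap:quantum} and the BV/Feynman-diagram conventions of \cite{cattaneomnev}, and checking that any residual discrepancy is absorbed by a gauge transformation, so that the conclusion is equality in $\MCmoduli$ rather than on the nose. Once the dictionary is fixed, the theorem follows immediately from \autoref{thm:modminmod} and the uniqueness up to homotopy of minimal models.
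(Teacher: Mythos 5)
Your proposal follows essentially the same route as the paper: the paper's proof simply states that the result follows from a straightforward comparison of the explicit formulae of Cattaneo--Mn\"ev with those of the modular minimal model construction in \cite{chuanglazarev2} (with the operadic identification of $A\otimes\mathfrak{g}$ as a $\feyn\modcom$--algebra already set up in the preceding discussion), and omits the details. Your write-up is a more explicit version of exactly that comparison, down to locating both structures as Maurer--Cartan elements of the quantum master equation Lie algebra and matching the graph sums, so it is the same argument elaborated rather than a different one.
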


\begin{proof}
This follows from a straightforward comparison of the explicit formulae used in \cite{cattaneomnev} to the formulae in \cite{chuanglazarev2}. We omit the details.
\end{proof}

This perspective may be significantly more conceptual for those with certain mathematical tastes.

We now immediately obtain from the general theory the result that this construction does not depend, up to homotopy, on the choice of Hodge decomposition. Furthermore this argument immediately works for the case when $\mathfrak{g}$ is more generally a cyclic $L_\infty$--algebra with not necessarily zero differential in which case we get a quantum $L_\infty^1$--algebra structure on $H^\bullet(A)\otimes H^\bullet(\mathfrak{g})$.

Additionally, since $A$ is an algebra over $\modn{\com^1}$ it is in particular unimodular. Therefore $A$ itself lifts trivially to a quantum $C_\infty^1$--algebra and so the cyclic $C_\infty^1$--minimal model structure on $H^\bullet(A)$ lifts to a quantum $C_\infty^1$--minimal model. For any cyclic Lie algebra $\mathfrak{g}$, the quantum $L_\infty^1$--algebra structure on $H^\bullet(A)\otimes\mathfrak{g}$ is simply obtained from tensoring an $\feyn\modc{\lie}$--algebra with a $\modc{\lie}$--algebra.

From the perspective of \autoref{thm:cattaneomnev} the effective action is a particular quantum lift of the usual cyclic $L_\infty$--minimal model of $A\otimes\mathfrak{g}$. The reason such a lift exists is because $A$ is equipped with an \emph{odd} bilinear form.

Let us now consider the case when $A$ is instead equipped with an even bilinear form, so that $A$ is an algebra over $\modc{\com}$. Then $\mathfrak{g}$ should be a cyclic Lie algebra with an odd bilinear form. But now we have no guarantee that $A\otimes\mathfrak{g}$ necessarily lifts to a quantum $L_\infty^1$--algebra so it is only possible to apply the general cyclic minimal model construction, but not necessarily the modular minimal model construction. If $\mathfrak{g}$ is unimodular then it lifts trivially to a quantum $L_\infty^1$--algebra and so $A\otimes \mathfrak{g}$ is a $\modc{\com}\otimes\feyn\modc{\com}$--algebra and hence a quantum $L_\infty^1$--algebra. We can now proceed as above. 

In general there do exist cyclic Lie algebras with odd bilinear forms which are not unimodular. For example pick any finite dimensional Lie algebra (concentrated in degree $0$ say) $\mathfrak{h}$ which is not unimodular then it is not too difficult to see that the odd double extension $\mathfrak{g}=\mathfrak{h}\oplus\Pi\mathfrak{h}^*$ is also not unimodular. Here the Lie bracket on $\mathfrak{g}$ is given by $[x\oplus f, y\oplus g] = [x,y] \oplus(f\circ\ad_y - g\circ\ad_x)$ and the bilinear form is given by $\langle x\oplus f, y\oplus g) = f(y) + g(x)$.

\subsection{Costello's quantisation of Chern--Simons field theory}
The perspective of the preceding section is enlightening and the language of algebras over modular operads gives a new more conceptual perspective.

However `real' Chern--Simons field theory is more complicated since, of course, the de Rham algebra of a closed oriented manifold is in general infinite dimensional. However, in \cite{costello4} Costello succeeds in constructing a canonical (up to homotopy) quantum $L_\infty^1$--lift of the classical cyclic minimal model.

Let $M$ be a closed oriented manifold of odd dimension and let $\mathfrak{g}$ be a cyclic supergraded Lie algebra with an even non-degenerate bilinear form. Then $\Omega^\bullet(M)\otimes\mathfrak{g}$ is a supergraded $\lie^1$--algebra with a degenerate bilinear form. The existence of a harmonious Hodge decomposition on $\Omega^\bullet(M)$ follows from choosing a Riemannian metric on $M$ and applying Hodge's theorem, see \cite[Example 2.9 (2)]{chuanglazarev3}. Therefore one can ask when the cyclic minimal model $H^\bullet(M)\otimes\mathfrak{g}$ admits a quantum lift.

The difficulty in determining whether a lift exists arises since $\Omega^\bullet(M)$ is now infinite dimensional so we are not able to apply the general theory we have developed. However, recall the following nice theorem due to Lambrechts--Stanley \cite{lambrechtsstanley}, which we restate in our current language.

\begin{theorem}[Lambrechts--Stanley theorem]
Let $A$ be a differential $\mathbb{Z}$--graded unital cyclic commutative algebra with a possibly degenerate bilinear form of degree $d$ and which is \emph{simply-connected}, meaning $H^0(A)=k$ and $H^1(A)=0$. Furthermore assume that the form on $H^\bullet(A)$ is non-degenerate. Then there exists a differential $\mathbb{Z}$--graded cyclic commutative algebra $A'$ with a \emph{non-degenerate} bilinear form of degree $d$ which is \emph{weakly equivalent} to $A$, meaning it can be connected to $A$ by a zig-zag of morphisms of differential $\mathbb{Z}$--graded commutative algebras inducing isomorphisms of cyclic algebras on homology.
\end{theorem}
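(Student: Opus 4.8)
The plan is to reduce the statement to the model-categorical yoga already implicit in the paper and then combine it with a Poincar\'e-duality-type construction at the level of commutative algebras. The Lambrechts--Stanley theorem is a statement about finding a Poincar\'e duality model, so I would first recall that a commutative differential graded algebra $A$ with a (possibly degenerate) invariant pairing of degree $d$ gives a morphism $A \to \Sigma^{-d}A^{*}$ of $A$-modules (or better, of $A$-bimodules, which here coincide) via $a \mapsto \langle a, - \rangle$, and that non-degeneracy of the induced pairing on $H^{\bullet}(A)$ says precisely that this morphism is a quasi-isomorphism on cohomology. The goal object $A'$ is then a commutative model which is at the same time its own dualising module up to a shift. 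So the first step is to set up the correct notion of a ``cdga with orientation'', i.e. a cdga $A$ together with a chain map $\epsilon\co A \to \Sigma^{-d}k$ (the integration map) whose associated pairing is the one we started with, and to reformulate the theorem as: under simple-connectivity plus non-degeneracy on cohomology, $A$ is weakly equivalent through orientation-preserving zig-zags to one where $\epsilon$ induces a perfect pairing on the nose.

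The key steps, in order, would be: (1) Replace $A$ by a minimal Sullivan model $(\Lambda V, d)$, using simple-connectivity so that $V$ is concentrated in degrees $\geq 2$ and is of finite type; this is standard rational homotopy theory and costs nothing. (2) Transport the pairing: since $\Lambda V \simeq A$, the cohomology pairing is non-degenerate, and one obtains an $A$-module quasi-isomorphism type statement $\Lambda V \simeq \Sigma^{-d}(\Lambda V)^{\vee}$ in the derived category of $\Lambda V$-modules. (3) The heart of the argument \`a la Lambrechts--Stanley: build the actual non-degenerate model by forming a ``mapping-cylinder'' style construction that glues $\Lambda V$ to its dual along the duality quasi-isomorphism, then take a quotient by the acyclic ideal of elements in too-high degree, i.e. $A' = \Lambda V \oplus \Sigma^{-d}(\Lambda V / \Lambda^{\geq 1} V)^{\vee}$-type construction with a carefully chosen differential making it a cdga; here simple-connectivity is used again to control degrees so the truncation is acyclic and the result is genuinely commutative. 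One checks directly that the natural maps $\Lambda V \hookleftarrow (\text{intermediate}) \twoheadrightarrow A'$ (or the appropriate zig-zag) are quasi-isomorphisms of cdgas and respect the orientation maps, and that the pairing on $A'$ is non-degenerate by construction. (4) Finally, compose zig-zags to connect $A'$ back to the original $A$ and observe the induced maps are isomorphisms of cyclic algebras on homology, which is immediate since all maps in sight are orientation-preserving quasi-isomorphisms.

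The main obstacle, and the step I would spend the most care on, is step (3): arranging that the glued object is honestly a \emph{commutative} dga and not merely an $A_{\infty}$ or $E_{\infty}$ one, while simultaneously keeping the pairing non-degenerate and strictly invariant. Naively the dual of a free cdga is not free and the multiplication on a ``$A \oplus A^{\vee}$'' square-zero-ish extension need not be associative-commutative in the graded sense without sign and degree bookkeeping; this is exactly where Lambrechts--Stanley's technical input (and the simple-connectivity hypothesis, which forces the relevant obstruction groups in degrees $0$ and $1$ to vanish) is essential, and it is why the theorem genuinely requires $H^{0}(A)=k$ and $H^{1}(A)=0$ rather than being a formal consequence of ordinary rational homotopy theory. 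I would therefore, in a full write-up, either reproduce the Lambrechts--Stanley construction in detail or, more honestly given the scope of this thesis, cite \cite{lambrechtsstanley} for the construction and only verify here that the zig-zag they produce is compatible with the pairings and hence gives the stated weak equivalence of cyclic commutative algebras.
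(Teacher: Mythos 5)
The thesis gives no proof of this statement at all: it is an external result of Lambrechts--Stanley, merely restated in the language of cyclic commutative algebras and cited as \cite{lambrechtsstanley}. Your concluding fallback --- cite \cite{lambrechtsstanley} for the construction and only check that the statement as phrased (zig-zag of cdga quasi-isomorphisms inducing isomorphisms of cyclic algebras on homology) is a faithful translation of their theorem --- is therefore exactly what the paper does, and is the appropriate level of detail here.

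That said, if the sketch in your steps (1)--(3) were intended as an actual proof, it has a genuine gap at step (3). The ``glue $\Lambda V$ to $\Sigma^{-d}(\Lambda V)^{\vee}$ along the duality quasi-isomorphism'' construction, in any of its naive forms (square-zero extension, mapping cylinder, followed by quotienting an allegedly acyclic ideal), does not reproduce the Lambrechts--Stanley argument and does not obviously work: the trivial extension $A\oplus\Sigma^{-d}A^{\vee}$ has a non-degenerate pairing but roughly doubles the cohomology, so it is not a model of $A$, and identifying the two copies along the duality map is precisely the point at which commutativity, strict invariance of the pairing and the correct cohomology must all be arranged simultaneously --- this is not a truncation-by-an-acyclic-ideal argument. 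What Lambrechts--Stanley actually do is an inductive surgery: starting from a finite-type model with a chosen orientation $\epsilon\co A\rightarrow\Sigma^{-d}k$, they successively kill the ``orphans'' (elements in the kernel of $a\mapsto\langle a,-\rangle$ in the relevant range of degrees) by adjoining generators and passing to quotients, and simple-connectivity enters to control the degrees in which new orphans can appear so that the induction terminates --- not to kill obstruction groups for a gluing. So either reproduce that inductive construction or, as you and the thesis both ultimately do, cite it; the intermediate sketch should not be presented as a proof.
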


Since weakly equivalent algebras have $\infty$--isomorphic minimal models then if $M$ is simply connected we can replace $\Omega^\bullet(M)$ by a weakly equivalent differential graded cyclic commutative algebra $A$ with a non-degenerate odd bilinear form such that the cyclic minimal models of $A$ and $\Omega^\bullet(M)$ are homotopy equivalent cyclic $C_\infty^1$--structures on $H^\bullet(A)\cong H^\bullet(M)$. Now by the above arguments the cyclic minimal model of $A$ admits a quantum $C_\infty^1$--lift and we have now obtained in a rather straightforward manner the following result.

\begin{theorem}
Let $M$ be a simply connected manifold of odd dimension and let $\mathfrak{g}$ be a cyclic Lie algebra with an even non-degenerate bilinear form. Then the cyclic $C_\infty^1$--minimal model structure on $H^\bullet(M)$ admits a quantum $C_\infty^1$--lift and the cyclic $L_\infty^1$--minimal model structure on $H^\bullet(M)\otimes \mathfrak{g}$ admits a quantum $L_\infty^1$--lift.
\qed
\end{theorem}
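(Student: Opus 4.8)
The plan is to reduce the statement about the infinite-dimensional de Rham algebra $\Omega^\bullet(M)$ to the finite-dimensional setting where the general theory of quantum lifts of strict algebras already developed applies. First I would invoke the Lambrechts--Stanley theorem: since $M$ is simply connected, $H^0(\Omega^\bullet(M)) = k$, $H^1(\Omega^\bullet(M)) = 0$, and Poincar\'e duality guarantees the induced form on $H^\bullet(M)$ is non-degenerate, so we may replace $\Omega^\bullet(M)$ by a differential $\mathbb{Z}$-graded cyclic commutative algebra $A$ with a \emph{non-degenerate} odd bilinear form which is weakly equivalent to $\Omega^\bullet(M)$. Regarding everything in the supergraded setting, $A$ is then an algebra over $\modc{\com^1}$.

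Next I would use \autoref{thm:cattaneomnev}'s underlying mechanism, namely that weakly equivalent cyclic commutative algebras have $\infty$-isomorphic cyclic minimal models (this follows from the cyclic minimal model theorem of Chuang--Lazarev recalled above, applied to a zig-zag): the cyclic $C_\infty^1$-minimal model of $A$ on $H^\bullet(A) \cong H^\bullet(M)$ is homotopy equivalent, as a $\cobar\com^1$-algebra structure, to that of $\Omega^\bullet(M)$. Now the key input is \autoref{ex:oddcomunimod}, which says that for $d$ odd $\modc{\com^d} \cong \modn{\com^d}$, so any Frobenius algebra with a scalar product of odd degree is unimodular; hence $A$, being an algebra over $\modc{\com^1} \cong \modn{\com^1}$, is unimodular. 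By the corollary to \autoref{thm:unimoddiagram} (a cyclic unimodular $\mathcal{P}$-algebra lifts to a quantum homotopy $\mathcal{P}$-algebra, with $\mathcal{P} = \com^1$), the cyclic $C_\infty^1$-minimal model of $A$ lifts to a quantum $C_\infty^1$-minimal model on $H^\bullet(M)$. Transporting this lift along the $\infty$-isomorphism of minimal models gives a quantum $C_\infty^1$-lift of the minimal model of $\Omega^\bullet(M)$.

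For the second assertion, I would tensor: with $\mathfrak{g}$ a cyclic Lie algebra with an even non-degenerate bilinear form, $\mathfrak{g}$ is an algebra over $\modc{\lie}$, and since $\mathfrak{s}^{-2}(\lie^1)^\twistshriek \cong \com^{-1}$ in the relevant sense (cf.\ \autoref{ex:comlinftytensor} and the remark that $\mathfrak{s}^{-2}(\mathcal{Q}^d)^\twistshriek$ governs $\mathcal{Q}^!$-algebras with a degree $1-d$ cyclically invariant bilinear form), the tensor product of a quantum $C_\infty^1$-algebra with a $\modc{\lie}$-algebra carries a natural quantum $L_\infty^1$-algebra structure by part (1) of the tensor-product theorem, i.e.\ via the map $\feyn\modc{\mathfrak{s}^2\com} \to \feyn\modc{\mathcal{P}^\twistshriek} \otimes \modc{\mathfrak{s}^{-2}\mathcal{P}^\twistshriek}$ of \autoref{cor:feynscommap}. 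Applying this to the quantum $C_\infty^1$-lift of $H^\bullet(M)$ and to $\mathfrak{g}$ yields a quantum $L_\infty^1$-lift of the cyclic $L_\infty^1$-minimal model structure on $H^\bullet(M) \otimes \mathfrak{g}$, which is exactly the cyclic minimal model of $\Omega^\bullet(M) \otimes \mathfrak{g}$.

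The main obstacle is the bookkeeping of degree shifts, parity reversions, and the precise compatibility of the cyclic and modular minimal model constructions with the weak equivalence supplied by Lambrechts--Stanley: one must check that transporting a quantum (modular) lift along an $\infty$-isomorphism of cyclic minimal models is well defined up to homotopy, which is where the homotopy-invariance of the Maurer--Cartan moduli set (\autoref{thm:quasiisomc}) and the filtered-quasi-isomorphism machinery of \autoref{chap:mc} do the real work. The rest is assembling already-established results; I would not expect to need any new computation beyond verifying that the operadic identities quoted above hold with the stated grading conventions.
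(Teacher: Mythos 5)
Your proposal is correct and follows essentially the same route as the paper: Lambrechts--Stanley to replace $\Omega^\bullet(M)$ by a weakly equivalent cyclic commutative algebra with a non-degenerate odd form, unimodularity of odd-form Frobenius algebras ($\modc{\com^1}\cong\modn{\com^1}$) to obtain the quantum $C_\infty^1$--lift of the minimal model, homotopy invariance of the space of lifts to transport along the weak equivalence, and then tensoring with the $\modc{\lie}$--algebra $\mathfrak{g}$ via the tensor-product theorem to get the quantum $L_\infty^1$--lift. No substantive differences from the paper's argument.
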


Of course this theorem also follows from the substantially stronger, although much more involved, result of Costello \cite{costello4} and indeed without the requirement $M$ be simply connected. Moreover Costello constructs a \emph{canonical} quantum lift whereas the above argument only shows the existence of a quantum lift. However the operadic approach here is of a rather different flavour and hence of independent interest.

\begin{remark}\label{rem:whatinvariant}
Note that, in the context of the previous section, Costello's construction of a quantum lift could be interpreted as an extension of the modular minimal model construction to the infinite dimensional case. Of course this is a rather imprecise statement but an ambitious project would be to try to make this more precise and attempt to understand what sort of invariant Costello's quantum lift is. In particular, unlike the cyclic minimal model, there is no obvious reason it should necessarily be a homotopy invariant of $M$.
\end{remark}

We finish by considering the case that $M$ is even dimensional. Note that $H^\bullet(M)$ has a unit. But since the bilinear form on $H^\bullet(M)$ is even it follows that the (super)trace of the identity map on $H^\bullet(M)$ is non-zero so $H^{\bullet}(M)$ is not unimodular. If there exists any quantum $L_\infty^1$--algebra structure on $H^\bullet(M)\otimes\mathfrak{g}$ then by a straightforward calculation it can be seen that the underlying $\lie^1$--algebra is necessarily unimodular since the differential is zero. So it is a necessary condition that $\mathfrak{g}$ be unimodular by \autoref{prop:unimodtensor}. It now follows as before that this is also sufficient (at least for $M$ simply connected). The requirement that $\mathfrak{g}$ be unimodular was not included in \cite{costello4}.

\chapter{Epilogue}
In this final closing chapter we'll briefly consider a couple of possible directions for future research triggered by the work in this thesis.

\section{Closed KTCFT structure on the involutive Hochschild complex}
The results of \autoref{chap:moduli} and \autoref{chap:dihedral} leads one naturally to wonder whether there is an analogue of the theorem of Costello \cite{costello3} which equips the Hochschild chain complex of a cyclic $A_\infty$--algebra (which is equivalent to an open topological conformal field theory) with the natural structure of a \emph{closed} topological conformal field theory. \autoref{chap:dihedral} suggests such an analogue should exist, equipping the involutive Hochschild chain complex of a cyclic involutive $A_\infty$--algebra (which is equivalent to an open Klein topological conformal field theory) with the natural structure of a \emph{closed} Klein topological conformal field theory. If such a structure does exist, how does it relate to the open topological conformal field on the usual Hochschild chain complex in terms of the splitting of Hochschild cohomology as involutive and skew involutive Hochschild cohomology?

The main difficulty with this question is in understanding what a `closed Klein topological conformal field theory' should be. Even before considering the conformal version there is an added subtlety. One might guess that a closed Klein topological field theory is equivalent to a commutative Frobenius algebra with an involution, in comparison with the oriented case. However, as we have seen, this is not true since there is additional structure due to the projective plane not being obtained by gluing genus zero surfaces. This subtlety will no doubt reappear in the conformal version.

\section{Invariance of quantum minimal models}
The following question motivated by the last section of \autoref{chap:quantum} is clearly important: what sort of invariant of the manifold $M$ is the quantum $L_\infty^1$--structure on $H^{\bullet}(M)\otimes\mathfrak{g}$ constructed by Costello? It is known that the usual cyclic $L_\infty^1$--minimal model structure encodes much of the rational homotopy type of $M$ for varying $\mathfrak{g}$ and in particular it is homotopy invariant.

However, as noted in \autoref{rem:whatinvariant} there is no clear reason to believe the quantum $L_\infty^1$--structure is homotopy invariant. The powerful feature of the usual cyclic minimal models of spaces is the ability to algebraically encode rational homotopy information about a space. From this point of view the quantum $L_\infty^1$--structure can be thought of as a `quantisation' of the rational homotopy type. Therefore, understanding what (if any) additional information is algebraically encoded by this extra structure is of particular interest. However, it seems as though this could be a rather challenging problem.

\appendix
\def\chapterautorefname{Appendix}

\backmatter

\bibliography{references}
\bibliographystyle{alphaurl}

\end{document}